\newenvironment{NB}{
\color{red}{\bf NB}. \footnotesize
}{}
\newenvironment{NB2}{
\color{blue}{\bf NB}. \footnotesize
}{}
\newtheorem{Theorem}[equation]{Theorem}
\newtheorem{Corollary}[equation]{Corollary}
\newtheorem{Lemma}[equation]{Lemma}
\newtheorem{Proposition}[equation]{Proposition}
\theoremstyle{definition}
\newtheorem{Definition}[equation]{Definition}
\newtheorem{Example}[equation]{Example}
\newtheorem{Conjecture}[equation]{Conjecture}
\theoremstyle{remark}
\newtheorem{Remark}[equation]{Remark}
\newtheorem{Remarks}[equation]{Remarks}
\newtheorem*{Claim}{Claim}
\numberwithin{equation}{subsection}
\newcommand{\thmref}[1]{Theorem~\ref{#1}}
\newcommand{\secref}[1]{\S\ref{#1}}
\newcommand{\lemref}[1]{Lemma~\ref{#1}}
\newcommand{\propref}[1]{Proposition~\ref{#1}}
\newcommand{\corref}[1]{Corollary~\ref{#1}}
\newcommand{\subsecref}[1]{\S\ref{#1}}
\newcommand{\remref}[1]{Remark~\ref{#1}}
\newcommand{\defeq}{\overset{\operatorname{\scriptstyle def.}}{=}}
\newcommand{\CC}{{\mathbb C}}
\newcommand{\Z}{{\mathbb Z}}
\newcommand{\Q}{{\mathbb Q}}
\newcommand{\RR}{{\mathbb R}}
\newcommand{\proj}{{\mathbb P}}
\newcommand{\SL}{\operatorname{\rm SL}}
\newcommand{\algsl}{\operatorname{\mathfrak{sl}}} 
\newcommand{\gl}{\operatorname{\mathfrak{gl}}}
\newcommand{\g}{{\mathfrak g}}
\newcommand{\h}{{\mathfrak h}}
\newcommand{\Spec}{\operatorname{Spec}\nolimits}
\newcommand{\End}{\operatorname{End}}
\newcommand{\Hom}{\operatorname{Hom}}
\newcommand{\Ext}{\operatorname{Ext}}
\newcommand{\Ker}{\operatorname{Ker}}
\newcommand{\Coker}{\operatorname{Coker}}
\newcommand{\Ima}{\operatorname{Im}}
\newcommand{\codim}{\mathop{\text{\rm codim}}\nolimits}
\newcommand{\rank}{\operatorname{rank}}
\newcommand{\id}{\operatorname{id}}
\newcommand{\ve}{\varepsilon}
\newcommand{\topdeg}{{\operatorname{top}}} 
\newcommand{\Wedge}{{\textstyle \bigwedge}}
\newcommand{\Mreg}{M^{\operatorname{reg}}}
\newcommand{\shfO}{\mathcal O}
\newcommand{\bA}{\mathbf A}
\newcommand{\MR}[1]{}
\newcommand{\linf}{\ell_\infty}
\newcommand{\fT}{\mathfrak T}
\renewcommand{\AA}{{\mathbb A}}
\newcommand{\cL}{\mathcal L}
\newcommand{\Uh}[2][G]{{\mathcal U}_{#1}^{#2}}
\newcommand{\Bun}[2][G]{\operatorname{Bun}_{#1}^{#2}}
\newcommand{\UhL}[1]{\Uh[L]{#1}}
\newcommand{\BunL}[1]{\operatorname{Bun}_L^{#1}}
\newcommand{\UhGl}[1]{\Uh[{G,\lambda}]{#1}}
\newcommand{\BunGl}[1]{\operatorname{Bun}_{G,\lambda}^{#1}}
\newcommand{\BunLl}[1]{\operatorname{Bun}_{L,\lambda}^{#1}}
\newcommand{\UhP}[1]{\Uh[P]{#1}}
\newcommand{\UhPm}[1]{\Uh[{P_-}]{#1}}
\newcommand{\Gi}[1]{\widetilde{\mathcal U}_{#1}}
\newcommand{\cUh}[2][G]{{}^c\Uh[#1]{#2}}
\newcommand{\cBun}[2][G]{{}^c\Bun[#1]{#2}}
\newcommand{\cGi}[1]{{}^c\Gi{#1}}
\newcommand\ZZ{\mathbb Z}
\newcommand\grh{\mathfrak h}
\newcommand\grg{\mathfrak g}
\newcommand{\scr}{\mathscr}
\newcommand{\cC}{\mathcal C}
\newcommand{\Perv}{\operatorname{Perv}}
\newcommand{\W}[2]{W^{(#1)}_{#2}}
\newcommand{\bW}[2]{\widehat W^{(#1)}_{#2}}
\newcommand{\mW}[2]{\widetilde{W}^{(#1)}_{#2}}
\newcommand{\sW}[2]{w^{(#1)}_{#2}}
\newcommand{\GG}{\mathbb G}
\newcommand{\TT}{\mathbb T}
\newcommand{\blam}{{\boldsymbol\lambda}}
\newcommand{\bmu}{{\boldsymbol\mu}}
\newcommand{\ba}{{\boldsymbol a}}
\newcommand{\mL}{\widetilde{L}}
\newcommand{\bigzerol}{\smash{\hbox{\Huge 0}}}
\newcommand{\bigzerou}{\lower1.7ex\hbox{\Huge 0}}
\newcommand{\normal}[1]{\mbox{\large\bf:}#1\mbox{\large\bf:}}
\newcommand{\bara}{{\bar a}}
\newcommand{\barb}{{\bar b}}
\newcommand{\dst}{d_{\mathrm{st}}}
\newcommand{\bF}{\mathbf F}
\newcommand{\cohdeg}{{}^{\text{c}}\!\operatorname{deg}}
\newcommand{\bB}{\mathbf B}
\newcommand{\bR}{\mathbf R}
\newcommand{\bn}{{\boldsymbol n}}
\newcommand{\bm}{{\boldsymbol m}}
\newcommand\alp{\alpha}
\newcommand\aff{\operatorname{aff}}
\newcommand\PP{\mathbb{P}}
\newcommand\calZ{\mathcal{Z}}
\newcommand\x{\times}
\newcommand{\pol}{\delta}
\newcommand{\Heis}{\mathfrak{Heis}}
\newcommand{\Vir}{\mathfrak{Vir}}
\newcommand{\IC}{\operatorname{IC}}
\newcommand{\br}{\mathbf r}
\newcommand{\fa}{\mathfrak a}
\newcommand{\ft}{\mathfrak t}
\newcommand{\Stab}{\operatorname{Stab}}
\newcommand{\Aut}{{\operatorname{Aut}}}
\newcommand{\scF}{\mathscr F}
\newcommand{\scG}{\mathscr G}
\newcommand{\scH}{\mathscr H}
\newcommand{\cA}{\mathcal A}
\newcommand{\hR}{\mathcal I}
\newcommand\Sym{\operatorname{Sym}}
\newcommand{\gr}{\operatorname{gr}}
\newcommand{\la}{\langle}
\newcommand{\ra}{\rangle}
\newcommand{\fU}{\mathfrak U}
\newcommand{\scW}{\mathscr W}
\newcommand{\texorpdfstring}[2]{#1}
\begin{document}

\title[Instanton moduli spaces and $\scW$-algebras]
{Instanton moduli spaces and $\scW$-algebras
}
\author[A.~Braverman]{Alexander Braverman}
\address{Department of Mathematics, Brown University, 151 Thayer st.,
Providence, Rhode Island 02912, USA}
\curraddr{Department of Mathematics, University of Toronto and Perimeter
Institute of Theoretical Physics, Waterloo, Ontario, Canada, N2L 2Y5}
\email{braval@math.toronto.edu}
\author[M.~Finkelberg]{Michael Finkelberg}
\address{National Research University Higher School of Economics,
Russian Federation,
Department of Mathematics, 6 Usacheva st., Moscow 119048;
Skolkovo Institute of Science and Technology}
\email{fnklberg@gmail.com}
\author[H.~Nakajima]{Hiraku Nakajima}
\address{Research Institute for Mathematical Sciences,
Kyoto University, Kyoto 606-8502,
Japan}
\email{nakajima@kurims.kyoto-u.ac.jp}

\dedicatory{In memory of Kentaro Nagao}

\subjclass{Primary 14C05; Secondary 14D21, 14J60}

\keywords{AGT, equivariant intersection cohomology, Gieseker space, Heisenberg algebra, hyperbolic restriction, instanton moduli space, Kac-Shapovalov form, Poincar\'e pairing, stable envelope, Uhlenbeck space, $\scW$-algebra, Whittaker vector}

\altkeywords{AGT, cohomologie d'intersection \'equivariante, espace de
  Gieseker, alg\`ebre de Heisenberg, restriction hyperbolique, espace
  de modules d'instantons, forme de Kac-Shapovalov, accouplement de
  Poincar\'e, espace d'Uhlenbeck, enveloppe stable, W-alg\`ebre,
  vecteur de Whittaker}

\begin{abstract}
We describe the (equivariant) intersection cohomology of certain moduli spaces (``framed Uhlenbeck spaces") together with some structures on them (such as e.g.\ the Poincar\'e pairing) in terms of representation theory of some vertex operator algebras (``$\scW$-algebras").
\end{abstract}

\begin{altabstract}
    Nous d\'ecrivons la cohomologie d'intersection (\'equivariante) de
    certains espaces de modules (les ``espaces d'Uhlenbeck
    encadr\'es'') ainsi que certaines structures sur ces espaces de
    cohomologie (comme par exemple l'accouplement de dualit\'e de
    Poincar\'e) en termes de la th\'eorie des repr\'esentations de certaines
    alg\`ebres vertex (les ``$\scW$-alg\`ebres'').
\end{altabstract}

\maketitle

\begin{NB}
\begin{verbatim}
\newcommand{\Uh}[2][G]{{\mathcal U}_{#1}^{#2}}
\end{verbatim}

\verb+\Uh{d}+ yields $\Uh{d}$.
\end{NB}

\setcounter{tocdepth}{2}
\tableofcontents

\begin{NB}
\subsection*{To do list}

\begin{itemize}
\item Complete \secref{sec:whittaker-state}. Give the proof of
  Conjectures~\ref{c1} : {\color{red}$\checkmark$}
\item Write introduction : SB {\color{red}$\checkmark$}
\item Check the proof of \lemref{lem:Fock} : SB,MF
\item Give the reference to Laumon's theorem at line \ref{Laumon} : SB {\color{red}$\checkmark$}
\item Give the reference to the dimension estimate at lines
  \ref{dim_est1}, \ref{dim_est2} : HN {\color{red}$\checkmark$}
\item Generalize the description of irreducible components of
  $\Uh[P,0]d$ in \propref{prop:Un} to arbitrary $G$. This is not
  necessary now, but may be useful for type $B_2$, $G_2$ : ALL {\color{red}$\checkmark$}
\item Give the proof of \lemref{lem:inter=pair} : ALL {\color{red}$\checkmark$}
\item Study the additional condition (2) in \lemref{lem:cochar} : SB,MF
\item Check the computation \eqref{eq:Vira} : SB,MF
\item Check the computation \eqref{eq:92} : SB,MF
\item Check the assertion \eqref{eq:32} : SB,MF
\item Check \subsecref{sec:highest-weight} : SB,MF
\item Check \subsecref{sec:kac-shapovalov-form} : SB,MF
\item Use character formula to complete the proof of
  \propref{prop:lambda}, \propref{prop:span} : Either SB or MF, as it
  depends on \cite{BFG}. {\color{red}$\checkmark$}
\item Fill the gap in the proof of \propref{prop:diff_proof} : ALL {\color{red}$\checkmark$}
\item Give the proof of \propref{prop:SpanP} : ALL {\color{red}$\checkmark$}
\item Give the proof of \lemref{lem:geomind} : ALL {\color{red}$\checkmark$}
\item Check HN's comments in \secref{sec:append-exactn} : SB {\color{red}$\checkmark$}
\item Write the proof of the embedding of $\scW_\bA(\g)$ into the
  $\scW$-algebra of Levi : SB {\color{red}$\checkmark$}
\end{itemize}
\end{NB}

\newcommand\calW{\mathscr W}
\newcommand\grn{\mathfrak n}
\newcommand\fin{\operatorname{fin}}
\newcommand\calA{\mathcal A}
\newcommand\calF{\mathcal F}
\newcommand\calG{\mathcal G}
\newcommand\IH{\operatorname{IH}}
\section{Introduction}

The main purpose of this paper is to describe the (equivariant) intersection cohomology of certain moduli spaces (``framed Uhlenbeck spaces")
together with some structures on them (such as e.g.\ the Poincar\'e pairing) in terms of representation theory of some vertex
operator algebras (``$\calW$-algebras"). In this introduction we first briefly introduce the relevant geometric and algebraic objects
(cf.\ Subsections \ref{int-geom} and \ref{int-alg}) and then state our main result (in a somewhat weak form) in Subsection \ref{int-main-local} (a more precise version is discussed in \ref{int-main-global}). In Subsection \ref{int-motiv} we discuss the motivation for our results and relate them to some previous works.
In \subsecref{int-tech} we mention earlier works from which we obtain
strategy and techniques of the proof.

\subsection{Uhlenbeck spaces}\label{int-geom}

Let $G$\index{G@$G$ (group)} be an almost simple simply-connected algebraic
group over $\CC$ with Lie algebra $\mathfrak g$. Let also $\mathfrak h$ be a Cartan subalgebra of $\mathfrak g$.

Let $\Bun{d}$\index{Bun@$\Bun{d}$} be the moduli space of algebraic
$G$-bundles over the projective plane $\proj^2$ (over $\mathbb C$)
with the instanton number $d$ and with trivialization at the line at
infinity $\linf$.\index{linfinity@$\linf$ (line at infinity)} It is a
non-empty smooth quasi-affine algebraic variety of dimension
$2dh^\vee$ for $d\in\Z_{\ge 0}$, where $h^\vee$ is the dual Coxeter
number of $G$.

By results of Donaldson \cite{MR763753} (when $G$ is classical) and
Bando \cite{Bando} (when $G$ is arbitrary) $\Bun{d}$ is homeomorphic
to the moduli space of anti-self-dual connections (instantons) on
$S^4$ modulo gauge transformations $\gamma$ with $\gamma(\infty) = 1$
where the structure group is the maximal compact subgroup of $G$.  We
will use an algebro-geometric framework, as we can use various tools.

It is well-known that $\Bun{d}$ has a natural partial compactification $\Uh{d}$\index{Uh@$\Uh{d}$}, called the Uhlenbeck space.
Set-theoretically, $\Uh{d}$ can be described as follows:
$$
\Uh{d}=\bigsqcup\limits_{0\leq d'\leq d}\Bun{d'}\times S^{d-d'}(\AA^2),
$$
where $S^{d-d'}(\AA^2)$ denotes the corresponding symmetric power of the affine plane $\AA^2$.

The variety $\Uh{d}$ is affine and it is always singular unless $d=0$. It has a natural action of the group $G\times GL(2)$, where
$G$ acts by changing the trivialization at $\linf$ and $GL(2)$ just acts on $\proj^2$ (preserving $\linf$). In what follows,
it will be convenient for us to restrict ourselves to the action of $\GG=G\times \CC^*\times \CC^*$ where $\CC^*\times \CC^*$ is the diagonal subgroup of $GL(2)$.

\begin{Remark}
    The compactification of the moduli space of instantons on a
    compact $C^\infty$ $4$-manifolds, as a topological space, was
    introduced by Donaldson, based on the earlier fundamental work by
    Uhlenbeck. See \cite[Notes to Section 4.4.1]{MR1079726} for
    further historical comments. This construction works for any
    compact Lie group, i.e., any reductive group $G$, and also the
    case when we take the quotient only by gauge transformations
    $\gamma$ with $\gamma(\infty) = 1$ as above.

    A construction as an affine variety was given in \cite{BFG}, which
    is one of our main references. See \remref{rem:Gieseker} for
    comments in type $A$.
\end{Remark}

\subsection{Main geometric object}\label{int-ih}
The main object of our study on the geometric side is the
$\GG$-equivariant intersection cohomology
$\IH^*_{\GG}(\Uh{d})$.\index{IHG@$\IH^*_{\GG}(\Uh{d})$} By the
definition, it is endowed with the following structures:

\medskip
1) It is a module over $H^*_{\GG}(\mathrm{pt})$. The latter algebra can be canonically identified with the algebra of polynomial functions
on $\mathfrak h\times \CC^2$ which are invariant under $W$, where $W$ is the Weyl group of $G$. In what follows we shall denote this ring by $\bA_G$; let also $\bF_G$\index{FFG@$\bF_G = \CC(\ve_1,\ve_2,\ba)^W$} denote its field of fractions.
We shall typically denote an element of $\grh\times \CC^2$ by
$(\ba,\varepsilon_1,\varepsilon_2)$.

2) There exists a natural symmetric (Poincar\'e) pairing $\IH^*_{\GG}(\Uh{d})\underset {\bA_G}\otimes \IH^*_{\GG}(\Uh{d})\to \bF_G$
(this follows from the fact that $(\Uh{d})^{T\times \CC^2}$ consists of one point).

3) For every $d\geq 0$ we have a canonical unit cohomology class $|1^d\rangle\in \IH^*_{\GG}(\Uh{d})$.\index{1d@$\vert 1^d\rangle$}

\medskip
\noindent
The main purpose of this paper is to describe the above structures in terms of representation theory.
To formulate our results, we need to introduce the main algebraic player -- the $\calW$-algebra.

\subsection{Main algebraic object: $\calW$-algebras}\label{int-alg}
In this subsection we recall some basic facts and constructions from the theory of $\calW$-algebras (cf.\ \cite{F-BZ} and references therein). First, we need to recall the notion of Kostant-Whittaker reduction for finite-dimensional Lie algebras.

Let $\grg$ be as before a simple Lie algebra over $\CC$ with the universal enveloping algebra $U(\grg)$. Let us choose a triangular decomposition
$\grg=\grn_+\oplus\grh\oplus \grn_-$ for $\grg$. Let $\chi\colon\grn_+\to \CC$ be a non-degenerate character of $\grn_+$, i.e.\ a Lie algebra
homomorphism such that $\chi|_{\grn_{+,i}}\neq 0$ for every vertex $i$ of the Dynkin diagram of $\grg$ (here $\grn_{+,i}$ denotes the corresponding simple root subspace). Then we can define the finite $\calW$-algebra of $\grg$ (to be denoted by $\calW_{\fin}(\grg)$) as
the quantum Hamiltonian reduction of $U(\grg)$ with respect to $(\grn_+,\chi)$. In other words,
we have
$$
\calW_{\fin}(\grg)=\Hom_{U(\grg)}(U(\grg)\underset{U(\grn_+)}\otimes\CC_{\chi},U(\grg)\underset{U(\grn_+)}\otimes\CC_{\chi}).
$$

\noindent
A well-known result of Kostant~\cite[Theorem~2.4.2]{MR0507800} asserts that

(1f) $\calW_{\fin}(\grg)$ is naturally isomorphic to the center ${\mathcal Z}(\grg)$ of $U(\grg)$.

In particular, we have

(2f) The algebra $\calW_{\fin}(\grg)$ has a natural embedding into $S(\grh)$, whose image coincides with the algebra
$S(\grh)^W$.

(3f) The algebra $\calW_{\fin}(\grg)$ is a polynomial algebra in some variables $F^{(1)},...,F^{(\ell)}$, where $\ell=\operatorname{rank}(\grg)$.
Each $F^{(\kappa)}$ is homogeneous as an element of $S(\grh)^W$ of some degree $d_{\kappa}+1\geq 2$.

(4f) The algebra $\calW_{\fin}(\grg)$ is isomorphic to the algebra $\calW_{\fin}(\grg^{\vee})$.

\noindent
Feigin and Frenkel (cf.\ \cite{F-BZ} and references therein) have generalized the above results to the case of affine Lie algebras.
Namely, let $\grg((t))$ denote the Lie algebra of $\grg$-valued formal loops.
It has a natural central extension
$$
0\to \CC\to \hat{\grg}\to\grg((t))\to 0
$$
(this extension depends on a choice of an invariant form on $\grg$ which we choose so that the the squared length of every short
coroot is equal to 2).
\begin{NB}
    Long roots have square length 2.
\end{NB}%
The group $\CC^*$ acts naturally on $\hat{\grg}$ by ``loop rotation" and the same is true for its Lie algebra $\CC$.
We let $\grg_{\aff}$ be the semi-direct product of $\hat{\grg}$ and $\CC$ (for the above action).

For every $k\in \CC$ one can consider the algebra $U_k(\hat{\grg})$ --- this is the quotient of $U(\hat{\grg})$ by the ideal generated
by $\mathbf{1}-k$ where $\mathbf{1}$ denotes the generator of the central $\CC\subset \grg_{\aff}$.
Let us also extend $\chi$ to $\grn_+((t))$ by taking the composition of the residue map $\grn_+((t))\to \grn_+$ with $\chi:\grn_+\to \CC$. Abusing slightly the notation, we shall denote this map again by $\chi$.

The $\calW$-algebra $\calW_k(\grg)$\index{Wkg@$\scW_k(\g)$} is roughly
speaking the Hamiltonian reduction of $U_k(\hat{\grg})$ with respect
to $(\grn_+((t)),\chi)$.  However, the reader must be warned that
rigorously this reduction must be performed in the language of vertex
operator algebras; in particular, $\calW_k(\grg)$ is a vertex operator
algebra (cf.\ again \cite{F-BZ} for the relevant definitions).

Unlike in the finite case, the algebra $\calW_k(\grg)$ is usually non-commutative (unless $k=-h^{\vee}$).
The main results of Feigin and Frenkel about $\calW_k(\grg)$ can be summarized as follows (notice the similarities between
(1f)-(4f) and (1w)-(4w)):

(1w)
  The algebra $\calW_{-h^{\vee}}(\grg)$ can be naturally identified with the center of the \textup(vertex operator algebra version of\textup)
  $U_{-h^{\vee}}(\hat{\grg})$.

(2w)
  Let $\Heis(\h)$ denote the central extension of $\grh((t))$ corresponding to the bilinear form on $\grh$ chosen above. Abusing the notation we shall use the same symbol for the corresponding vertex operator algebra. Also for any $k\in \CC$ we can consider the corresponding algebra $\Heis_k(\grh)$ \textup(``Heisenberg algebra of level $k$''\textup).\footnote{Note that for all $k\neq 0$ these algebras are isomorphic.}
  Then for generic $k$ there exists a canonical embedding $\calW_k(\grg)\hookrightarrow \Heis_{k+h^{\vee}}(\grh)$.

(3w)
  The algebra $\calW_k(\grg)$ is generated \textup(in the sense of \cite[15.1.9]{F-BZ}\textup)
  by certain elements $W^{(\kappa)}$, $\kappa=1,\cdots,\ell$ of conformal dimension $d_\kappa+1$. This \textup(among other things\textup)
  means that for every module $M$ over $\calW_k(\grg)$ and every $\kappa=1,\ldots,\ell$ there is a well defined
  field $Y(W^{(\kappa)},z)=\sum\limits_{n\in \ZZ} W^{(\kappa)}_n z^{-n-d_{\kappa}-1}$ where $W^{(\kappa)}_n$ can be regarded as a linear endomorphism
  of $M$.

(4w)  
  Suppose $k$ is generic.  There is a natural isomorphism
  $\scW_k(\grg)\simeq \scW_{k^{\vee}}(\grg^{\vee})$ where
  $(k+h^{\vee}_{\grg})(k^{\vee}+h^{\vee}_{\grg^{\vee}})=r^{\vee}$
  where $r^{\vee}$ is the lacing number of $\grg$ \textup(i.e.\ the
  maximal number of edges between two vertices of the Dynkin diagram
  of $\grg$\textup). We shall call this isomorphism \emph{the
  Feigin-Frenkel duality}.


The representation theory of $\scW_k(\grg)$ has been extensively studied (cf.\ for example \cite{Arakawa2007}).
In particular, to any $\lambda\in\mathfrak h^*$ one can attach a Verma module $M(\lambda)$\index{Mlambda@$M(\lambda)$} over $\calW_k(\grg)$ and $M(\lambda_1)$ is isomorphic to $M(\lambda_2)$ if $\lambda_1+\rho$ and $\lambda_2+\rho$ are on the same orbit of the Weyl group. This module carries
a natural (Kac-Shapovalov) bilinear form, with respect to which the  operator $W^{(\kappa)}_n$ is conjugate to $W^{(\kappa)}_{-n}$ (up to sign).
This module can be obtained as the Hamiltonian reduction of the corresponding Verma module for $\grg$.
\subsection{The main result: localized form}\label{int-main-local}
Let us set
$$
M^d_{\bF_G}(\ba)=\IH^*_{\GG}(\Uh{d})\underset {\bA_G}\otimes \bF_G;\quad M_{\bF_G}(\ba)=\bigoplus\limits_{d=0}^{\infty} M^d_{\bF_G}(\ba).
\index{MFG@$M_{\bF_G}(\ba)$}
$$
It is easy to see that $M^d_{\bF_G}(\ba)$ is also naturally isomorphic to  $\IH^*_{\GG,c}(\Uh{d})\underset{\bA_G}\otimes\bF_G$ where the subscript
$_c$ stands for cohomology with compact support.

Let us also set
$$
k=-h^{\vee}-\frac{\ve_2}{\ve_1}.
$$

Then (a somewhat weakened) form of our main result is the following:
\begin{Theorem}\label{main-intro-loc}
Assume that $G$ is simply laced and let us identify $\grh$ with $\grh^*$ by means of the invariant form such that $(\alp,\alp)=2$ for
every root of $\grg$. Then
there exists an action
of the algebra $\calW_k(\grg)$ on $M_{\bF_G}(\ba)$ such that

  \textup{(1)} 
  The resulting module is isomorphic to the Verma module $M(\lambda)$ over $\calW_k(\grg)$ where
  $$
  \lambda=\frac{\ba}{\ve_1}-\rho
  $$
  \textup(here we take $\bF_T =
  \operatorname{Frac}(H^*_T(\mathrm{pt}))$\index{FFT@$\bF_T = \CC(\ve_1,\ve_2,\ba)$|textit} as our field of
  scalars\textup).

  \textup{(2)} 
  Under the above identification a twisted Poincar\'e pairing
  on $M_{\bF_G}(\ba)$ goes over to the Kac-Shapovalov form on
  $M(\lambda)$.  \textup(The twisting will be explained in
  \subsecref{sec:kac-shapovalov-form}.\textup)
  \begin{NB}
      Corrected on Sep.~14, 2014
  \end{NB}%

  \textup{(3)} 
  Under the above identification the grading by $d$ corresponds to the grading by eigenvalues of $L_0$.
  \item
  Let $d\ge 1$, $n > 0$. We have
\begin{equation}\label{eq:Whittaker-intro}
  W^{(\kappa)}_n |1^d\rangle =
  \begin{cases}
     \pm \ve_1^{-1}\ve_2^{-h^{\vee}+1}|1^{d-1}\rangle &
    \text{if $\kappa=\ell$ and $n=1$},
    \\
    0 & \text{otherwise}.
  \end{cases}
\end{equation}
\end{Theorem}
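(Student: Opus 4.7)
The plan is to construct the $\scW_k(\grg)$-action on $M_{\bF_G}(\ba)$ geometrically via a rank-$\ell$ Heisenberg action and then to invoke the Feigin--Frenkel embedding of Theorem~\ref{FF-intro}(2). Choose a Borel $P = T N_+ \subset G$ and consider the parabolic Uhlenbeck space $\UhP{d}$ together with its forgetful maps to $\Uh{d}$ and to $\UhL{d}$ where $L = T$. Hyperbolic restriction (Braden's theorem) with respect to a cocharacter of $\CC^*\subset\GG$ contracting $N_+$ identifies, after localization to $\bF_G$, the graded pieces of $M_{\bF_G}(\ba)=\bigoplus_d\IH^*_\GG(\Uh{d})\otimes_{\bA_G}\bF_G$ with the correspondingly localized IH cohomology of the Levi side $\UhL{d}$.

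On the Levi side, $\UhL{d}$ is a disjoint union of products of symmetric powers of $\AA^2$ indexed by coroot decompositions of $d$. A Nakajima--Grojnowski-type construction of creation/annihilation correspondences produces a Heisenberg action on $\bigoplus_d \IH^*_\GG(\UhL{d})\otimes \bF_G$; tracking the $\CC^*\times\CC^*$-weights in the two-point correspondence identifies this with $\Heis_{k+h^\vee}(\grh)$ at level $k+h^\vee=-\ve_2/\ve_1$. Transporting this action back to $M_{\bF_G}(\ba)$ via hyperbolic restriction and composing with $\scW_k(\grg)\hookrightarrow \Heis_{k+h^\vee}(\grh)$ gives the $\scW_k(\grg)$-action asserted in the theorem. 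The identification with the Verma module $M(\lambda)$ then follows from universal properties: the class $|1^0\rangle$ is annihilated by the positive Heisenberg modes (they would lower the $d$-grading below $0$), and its weight, computed from the $\bA_G$-action on $|1^0\rangle$, corresponds under the free-field realization to $\lambda=\ba/\ve_1-\rho$, the $\rho$-shift being the standard screening shift. A character count, using the G\"ottsche-type Poincar\'e polynomial of $\IH^*_\GG(\Uh{d})$ together with the Weyl--Kac denominator identity on the Fock side, shows that the graded dimensions of $M_{\bF_G}(\ba)$ and $M(\lambda)$ agree, so cyclicity gives (1). Claim (3) is automatic from the matching of gradings, and (2) follows from the fact that both pairings are uniquely determined by the adjointness relation between $W^{(\kappa)}_n$ and $W^{(\kappa)}_{-n}$ up to sign, plus the normalization on $|1^0\rangle$.

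For the Whittaker condition (4), one realizes $W^{(\kappa)}_n$ through a geometric correspondence induced by $\UhP{d}$ and computes its action on the fundamental class $|1^d\rangle$ stratum by stratum via the decomposition $\Uh{d}=\bigsqcup_{0\le d'\le d}\Bun{d'}\times S^{d-d'}(\AA^2)$. Vanishing outside the single case $(\kappa,n)=(\ell,1)$ reduces to a dimension estimate on the relevant irreducible components of $\UhP{d}$ intersecting the deepest stratum, where the top generator $W^{(\ell)}$ (of conformal weight $h^\vee$) is the only one whose correspondence can land nontrivially one stratum deeper; the explicit constant $\pm\ve_1^{-1}\ve_2^{-h^\vee+1}$ is extracted by equivariant localization at the unique $T\times(\CC^*)^2$-fixed point of the smallest stratum, whose tangent weights produce precisely that product.

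I expect the main obstacles to be twofold: first, verifying that the hyperbolic restriction isomorphism really does intertwine the geometric Heisenberg action with the Feigin--Frenkel free-field realization at the level $-\ve_2/\ve_1$ (this pins down the identification of the geometric action with \emph{the} $\scW_k$-action rather than some twist or deformation); and second, carrying out the Whittaker dimension estimate uniformly across all simply-laced types in a way that isolates only the last generator $W^{(\ell)}$ and the mode $n=1$. Step two is where the specific combinatorics of the highest coroot enters, and it is also the place where the restriction to simply-laced $\grg$ becomes essential, since in the non-simply-laced case the Feigin--Frenkel embedding has a different shape and the Heisenberg on the Levi side must be enlarged accordingly.
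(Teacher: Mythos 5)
Your overall outline (Heisenberg action transported through hyperbolic restriction, Feigin--Frenkel embedding, character count for the Verma structure, degree reasons for the Whittaker property) matches the paper's strategy, but the two places where the real work lies are not addressed, and as written both steps fail. First, the construction of the Heisenberg action: you propose Nakajima--Grojnowski-type creation/annihilation correspondences ``on the Levi side.'' For $L=T$ the Levi side is $\UhL{d}=S^d\AA^2$, whose localized intersection cohomology is one-dimensional in each degree; the rank-$\ell$ Fock structure does not live there but in the multiplicity spaces $U^d=\Hom(\cC_{S_{(d)}\AA^2},\Phi_{T,G}(\IC(\Uh{d})))$ of the hyperbolic restriction. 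To get a $\Heis_{k+h^\vee}(\grh)$-action one must (i) prove that $\Phi_{T,G}(\IC(\Uh{d}))$ is perverse and semisimple (Theorem~\ref{perverse-intro}, proved in the paper by a dimension estimate for attracting sets via Zastava spaces, precisely because no symplectic resolution of $\Uh{d}$ exists outside type $A$), (ii) construct a \emph{canonical} identification $U^d\cong\grh$, which the paper obtains by factoring through the sub-minimal Levis $L_i$ with $[L_i,L_i]\cong SL(2)$ and using the Maulik--Okounkov stable envelope on the rank-two Gieseker space together with a choice of polarization, and (iii) verify the Heisenberg relations, reduced to $SL(3)$ (or to geometric $R$-matrices). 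Baranovsky-type correspondences are simply not available on the singular $\Uh{d}$ for general $G$, so your first step has no construction behind it; likewise the ``G\"ottsche-type'' character input does not exist here --- what is actually used is the theorem of \cite{BFG} that $\bigoplus_d\IH^*(\Uh{d})$ has the character of $S(t\grg^f[t])$.

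Second, part (4): you realize $W^{(\kappa)}_n$ by a correspondence induced by $\UhP{d}$ and extract the constant by localization at the deepest fixed point. But the modes $W^{(\kappa)}_n$ admit no correspondence or cup-product description on Uhlenbeck spaces (the paper states this explicitly; it is the reason the $\bA$-form $\scW_\bA(\grg)$ is introduced). The vanishing for $(\kappa,n)\neq(\ell,1)$ is obtained by showing that the renormalized modes preserve the \emph{non-localized} modules $\bigoplus_d\IH^*_{\GG,c}(\Uh{d})$, etc., and then arguing by cohomological degree --- a grading that is lost after localization, so your localized setup cannot even state the degree argument; and the constant is pinned down only up to sign by comparing $\langle 1^d\vert 1^d\rangle$ with $\langle w^d\vert w^d\rangle$ at $\ve_1=\ve_2=0$, using factorization of $\Uh{d}$ geometrically and the identification of $\widetilde W^{(\kappa)}_1$ with invariant polynomials in Heisenberg modes algebraically. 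A genuine fixed-point computation of a correspondence would also fix the sign, which the authors cannot do --- a strong indication the computation you sketch cannot be carried out as stated. Finally, for (2) it does not suffice to invoke uniqueness of a contravariant form: one must prove that the \emph{twisted} Poincar\'e pairing (twisted by $\varphi_\sigma\varphi_{w_0}$, i.e.\ $\ba\mapsto-\ba$) satisfies $\langle u,xv\rangle=\langle\theta(x)u,v\rangle$ for Arakawa's anti-involution $\theta$, including the zero-mode shift; this verification is absent from your argument.
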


\begin{Remarks}
1) We believe that the sign in (\ref{eq:Whittaker-intro}) is actually always $``+"$, however, currently we don't know how to eliminate the sign issue. Note, however, that (\ref{eq:Whittaker-intro}) still defines the scalar product $\langle 1^d|1^d\rangle$
unambiguously. Also (assuming that the above sign issue can be settled) it follows from (\ref{eq:Whittaker-intro}) that if we formally
set $w=\sum_d |1^d\rangle$ then we have
$$
W^{(\kappa)}_n (w) =
  \begin{cases}
   \ve_1^{-1}\ve_2^{-h^{\vee}+1} w &
    \text{if $\kappa=\ell$ and $n=1$},
    \\
    0 & \text{otherwise}.
\end{cases}
$$

Sometimes we shall write $w_{\ba,\ve_1,\ve_2}$ to emphasize the dependence on the corresponding parameters.

2) The assumption that $G$ is simply laced is essential for Theorem \ref{main-intro-loc} to hold as stated. However, we believe that
a certain modified version of Theorem \ref{main-intro-loc} holds in the non-simply laced case as well, although at the moment we don't
 have a proof of this modified statement (cf.\ subsection \ref{non-simply-laced} for a brief discussion of the non-simply laced case).

3) Since $\Uh{d}$ is acted on by the full $GL(2)$ and not just by $\CC^*\times \CC^*$, it follows that the vector space $M_{\bF_G}(\ba)$ has a natural automorphism which induces the involution $\ve_1\leftrightarrow \ve_2$ on $\bF$ (and leaves $\ba$ untouched).
Note that changing $\ve_1$ to $\ve_2$ amounts to changing $k=-h^{\vee}-\frac{\ve_2}{\ve_1}$ to $k^{\vee}=-h^{\vee}-\frac{\ve_1}{\ve_2}$
and we have $(k+h^{\vee})(k^{\vee}+h^{\vee})=1$. Note also that we are assuming that $\grg$ is simply laced, so
$\grg$ is isomorphic to $\grg^{\vee}$ and the above geometrically defined automorphism is in fact a corollary of the Feigin-Frenkel duality (cf.\ (1w)--(4w)).
\end{Remarks}

\begin{NB}
\subsection{Relation to previous works}\label{int-motivNB}
There are several motivations for the above result. One comes from the so called Alday-Gaiotto-Tachikawa conjecture (cf.\ \cite{AGT} for $G=SL(2)$ and \cite{ABCDEFG} for general $G$) in mathematical physics, which claims certain duality between $N=2$ super-symmetric gauge theory on ${\mathbb R}^4$
and the 2-dimensional Toda conformal field theory. Theorem \ref{main-intro-loc} might probably be viewed as a mathematical corollary of that physical conjecture, although we are not aware of a reference with the physical derivation of something like \ref{main-intro-loc}. However, it is worthwhile to note that the pairing $\langle 1^d|1^d\rangle$ is sometimes called ``the equivariant integral of 1 over $\Uh{d}$"
and their generating function
\begin{equation}
Z(Q,\ba,\ve_1,\ve_2)=\sum\limits_{d=0}^{\infty} Q^d \langle 1^d|1^d\rangle
\end{equation}
is called ``the instanton part of the Nekrasov partition function for pure $N=2$ gauge theory".
It follows from \ref{main-intro-loc} that
$$
Z(Q,\ba,\ve_1,\ve_2)=\langle Q^{-L_0} w_{\ba,\ve_1,\ve_2}|w_{\ba,\ve_1,\ve_2}\rangle.
$$
This statement appears as a conjecture in \cite{Gaiotto} for $G=SL(2)$ and in \cite{ABCDEFG} for general $G$.

In addition, there are now many purely mathematical results in the spirit of Theorem \ref{main-intro-loc}. First, the fact that the character of  $M_{\bF_G}(\ba)$ is equal to the character of a Verma module over $\calW_k(\grg)$ follows from the main result of
\cite{BFG}. In addition, the paper \cite{BraInstantonCountingI} by the first-named author contains a result, which is very similar to Theorem \ref{main-intro-loc} (but technically much simpler). Namely, in the situation of {\em loc. cit.} on the representation theory side one deals with the affine
Lie algebra $\grg_{\aff}$ instead of the corresponding $W$-algebra, and on the geometric side one needs to replace the Uhlenbeck
spaces $\Uh{d}$ by certain {\em flag Uhlenbeck spaces}. In fact, it is important to note that when the original group $G$ is not simply laced, the main result of \cite{BraInstantonCountingI} relates the equivariant intersection cohomology of the flag Uhlenbeck spaces for
the group $G$ with the representation theory of the affine Lie algebra $\grg_{\aff}^{\vee}$, whose root system is dual to that
of $\grg_{\aff}$. A somewhat simpler construction exists also for the finite-dimensional Lie algebra $\grg^{\vee}$ -- in that case on the geometric side one has to work with the so called space of {\em based quasi-maps into the flag variety of $\grg$}, also known as Zastava
spaces (cf.\ \cite{Bra-icm} for a survey on these spaces). In \cite{BFFR} a similar result is conjectured (and proved in type A) for
{\em finite $W$-algebras} for $\grg^{\vee}$ (in that case on the geometric side one works with the so called parabolic Zastava spaces - cf.\ again \cite{Bra-icm} for the relevant definitions).

Last but not least we must mention the fundamental work \cite{MO}, which contains (among many other things) a result which is essentially
equivalent to Theorem \ref{main-intro-loc} in the case when $G=SL(r)$. More precisely, when $G=SL(r)$, the Uhlenbeck spaces $\Uh{d}$ have a natural semi-small symplectic resolution $\Gi{r}^d\to \Uh{d}$; here $\Gi{r}^d$ is the moduli space of torsion-free sheaves
on $\proj^2$ together with a trivialization at $\proj^1_{\infty}$ of generic rank $r$ and of second Chern class $r$.
The authors of \cite{MO} work with the equivariant cohomology of $\Gi{r}^d$ rather than with equivariant intersection cohomology
of $\Uh{d}$, which is slightly bigger. As a result on the representation theory side they get a Verma module over
$\calW(\gl_r)$ (this algebra is isomorphic to the tensor product of $\calW(\mathfrak{s}\mathfrak{l}_r)$
with a (rank 1) Heisenberg algebra). We should also mention that we use the construction of \cite{MO} for $r=2$ in a crucial way for the proof of Theorem \ref{main-intro-loc}.

Let us also mention the work \cite{SV}
where the authors prove a result similar to the above result of \cite{MO} by a different method.
\end{NB}

\subsection{Relation to previous works}\label{int-motiv}

We discuss previous works related to the above result here and later
in \subsecref{int-tech}. This subsection is devoted for those works
related to statements themselves, and \subsecref{int-tech} is for
those which give us a strategy and techniques of the proof.

\begin{NB}
    I do not refer any physical papers earlier than AGT, nor even
    mention there are.
\end{NB}

First we discuss the statements (1),(2),(3).
There are many previous works in almost the same pattern: We consider
moduli spaces of instantons or variants on complex surfaces, and their
homology groups or similar theory.
Then some algebras similar to affine Lie algebras act on direct sums
of homology groups, where we sum over various Chern classes.

The first example of such a result was given by the third-named author
\cite{Na-quiver,Na-alg}. The $4$-manifold is $\CC^2/\Gamma$ for a
nontrivial finite subgroup $\Gamma\subset SU(2)$, and the gauge group
is $U(r)$. The direct sum of homology groups of symplectic resolutions
of Uhlenbeck spaces, called {\it quiver varieties\/} in more general
context, is an integrable representation of the affine Lie algebra
$\mathfrak g_{\Gamma,\aff}$ of level $r$.
Here $\mathfrak g_\Gamma$ is a simple Lie algebra of type $ADE$
corresponding to $\Gamma$ via the McKay correspondence, and
$\mathfrak g_{\Gamma,\aff}$ is its affine Lie algebra.

This result nicely fitted with the $S$-duality conjecture on the
modular invariance of the partition function of $4d$ $N=4$
supersymmetric gauge theory by Vafa-Witten \cite{Vafa-Witten}, as
characters of integrable representations are modular forms.
It was understood that the correspondence \cite{Na-quiver,Na-alg}
should be understood in the framework of a duality in string theories
\cite{Vafa2}. There are lots of subsequent developments in physics
literature since then.

In mathematics, the case $\Gamma = \{e\}$ was subsequently treated by
\cite{MR1441880} and Grojnowski \cite{MR1386846} for $r=1$, and by
Baranovsky \cite{Baranovsky} for general $r$. The corresponding
$\mathfrak g_{\Gamma,\aff}$ is the Heisenberg algebra, i.e., the
affine Lie algebra associated with the trivial Lie algebra $\gl_1$, in
this case.

For $\Gamma=\{e\}$, the symplectic resolution $\Gi{r}^d\to
\Uh{d}$\index{UhTilder@$\Gi{r}^d$} of the Uhlenbeck space $\Uh{d}$ is
given by the moduli space of torsion-free sheaves on $\proj^2$
together with a trivialization at $\linf$ of generic rank
$r$ and of second Chern class $d$. We call it {\it the Gieseker
  space\/} in this paper.
For general $\Gamma$, we have its variant. All have description
in terms of representations of quivers by variants of the ADHM
description, and hence are examples of quiver varieties.
(See \remref{rem:Gieseker} for historical comments.)

\begin{NB}
    The integrable representation becomes irreducible if we only take
    middle degree homology groups (for $\Gamma\neq\{e\}$).
\end{NB}

This result was extended to an action of the quantum toroidal algebra
$\mathbf U_q(\mathbf L\mathfrak g_{\Gamma,\aff})$ on the equivariant
K-theory of the moduli spaces when $\Gamma\neq\{e\}$
\cite{Na-qaff,MR1989196}. A variant for equivariant homology groups
was given by Varagnolo \cite{Varagnolo}.

In all these works, the action was given by introducing
correspondences in products of moduli spaces, which give generators of
the algebra. In particular, the constructions depend on {\it good\/}
presentations of algebras.
The case $\Gamma=\{e\}$ was studied much later, as we explain below,
as the corresponding algebra, which would be $\mathbf U_q(\mathbf
L({\gl}_1)_{\aff})$, was considerably more difficult.

\begin{NB}
    Misha, I add your result.
\end{NB}

Let us also mention that the second-named author with Kuznetsov
\cite{MR1799936} constructed an action of the affine Lie algebra
$\widehat{\gl}_r$ on the homology group of moduli spaces of parabolic
sheaves on a surface, called {\it flag Gieseker spaces\/} or {\it
  affine Laumon spaces\/} when the surface is $\proj^2$, the parabolic
structure is put on a line and the framing is added. (Strictly
speaking, the action was constructed on the homology group of the
fibers of morphisms from flag Gieseker spaces to flag Uhlenbeck
spaces. The action for the whole variety is constructed much later by
Negut \cite{Negut} in the equivariant K-theory framework.)

Let us turn to works on the inner product $\la 1^d|1^d\ra$, which
motivate the statement (4). It is given by the equivariant integration
of $1$ over $\Uh{d}$, and their generating function
\begin{equation}\label{Nekrasov function}
   Z(Q,\ba,\ve_1,\ve_2)=\sum\limits_{d=0}^{\infty} Q^d \langle 1^d|1^d\rangle
\end{equation}
is called ``the instanton part of the Nekrasov partition function for
pure $N=2$ supersymmetric gauge theory" \cite{Nekrasov}.
This partition function has been studied intensively in both
mathematical and physical literature. In particular, a result, which
is very similar to Theorem~\ref{main-intro-loc}(1)$\sim$(4) (but
technically much simpler) was proved by the first-named author
\cite{BraInstantonCountingI}. Namely, in the situation of
\cite{BraInstantonCountingI} on the representation theory side one
deals with the affine Lie algebra $\grg_{\aff}$ instead of the
corresponding $\scW$-algebra, and on the geometric side one needs to
replace the Uhlenbeck spaces $\Uh{d}$ by
{\em flag Uhlenbeck spaces} $\calZ^{\alp}_G$\index{ZalphaG@$\calZ^\alpha_G$}.
In fact, it is important to note that when the original
group $G$ is not simply laced, the main result of
\cite{BraInstantonCountingI} relates the equivariant intersection
cohomology of the flag Uhlenbeck spaces for the group $G$ with the
representation theory of the affine Lie algebra $\grg_{\aff}^{\vee}$,
whose root system is dual to that of $\grg_{\aff}$.
A somewhat simpler construction exists also for the finite-dimensional Lie algebra $\grg^{\vee}$ -- in that case on the geometric side one has to work with the so called space of {\em based quasi-maps into the flag variety of $\grg$}, also known as Zastava
spaces (cf.\ \cite{Bra-icm} for a survey on these spaces).
\begin{NB}
    \cite{BFFR} = [BFFR] is later than \cite{AGT} = [AGT]. Therefore
    this part should be moved later, I think.
\end{NB}

The Nekrasov partition functions are equal for $\Uh{d}$ and for flag
Uhlenbeck spaces at $\ve_2 = 0$, and it is enough for some purposes,
say to determine Seiberg-Witten curves, but they are different in
general. Therefore it was clear that we must replace
$\grg_{\aff}^\vee$ by something else, but we did not know what it is.

A breakthrough was given in a physics context by
Alday-Gaiotto-Tachikawa \cite{AGT} (AGT for short). They conjectured
that the partition functions for $G=SL(2)$ with four fundamental
matters and adjoint matters are conformal blocks of the Virasoro
algebra.
They provided enough mathematically rigorous evidence, say numerical
checks for small instanton numbers. They also give physical intuition
that this correspondence is coming from an observation that $4d$ $N=2$
supersymmetric gauge theories are obtained by compactifying the $6d$
theory on a Riemann surface: the Virasoro algebra naturally lives on
the Riemann surface, which cannot be directly seen from the $4d$ side.
They also guessed that the Virasoro algebra is replaced by the
$\scW$-algebra for a group $G$ of type $ADE$.

There is a large literature in physics after AGT, especially for type
$A$. We do not give the list, though those works are implicitly
related to ours. We mention only one which was most relevant for us,
it is \cite{ABCDEFG} by Keller et al, where the statement (4) was
written down for the first time for general $G$. (There is an earlier
work by Gaiotto for $G=SL(2)$ \cite{Gaiotto}, and various others for
classical groups.)

Around the same time when \cite{AGT} appeared in a physics context,
there was an independent advance on the understanding of the algebra
$\mathbf U_q(\mathbf L({\gl}_1)_{\aff})$ acting on the K-theory of
resolutions of Uhlenbeck spaces of type $A$ by Feigin-Tsymbaliuk
\cite{MR2854154} and Schiffmann-Vasserot \cite{MR3018956}. They
noticed that that $\mathbf U_q(\mathbf L({\gl}_1)_{\aff})$ is
isomorphic to various algebras, which had been studied in different
contexts: a Ding-Iohara algebra, a shuffle algebra with the wheel
conditions, the Hall algebra for elliptic curves, and an algebra
studied by Miki \cite{MR2377852}.
Combined with the AGT picture, we understand that $\mathbf U_q(\mathbf
L({\gl}_1)_{\aff})$ is the limit of the deformed $\scW(\algsl_r)$, or
$\scW(\gl_r)$ by the reason explained below, when $r\to\infty$.

In \cite{BFFR} a similar result is conjectured (and proved in type A) for
{\em finite $W$-algebras} associated with a nilpotent element $e\in \grg^{\vee}$, which is principal in some Levi subalgebra (in that case on the geometric side one works with the so called parabolic Zastava spaces - cf. \cite{Bra-icm} for the relevant definitions).

Finally Maulik-Okounkov \cite{MO} and Schiffmann-Vasserot \cite{SV} proved
Theorem~\ref{main-intro-loc} in the case when $G=SL(r)$.
More precisely, they work with the equivariant cohomology of
$\Gi{r}^d$ rather than with equivariant intersection cohomology of
$\Uh{d}$, which is slightly bigger. As a result on the representation
theory side they get a Verma module over $\calW(\gl_r)$ (this algebra
is isomorphic to the tensor product of
$\calW(\mathfrak{s}\mathfrak{l}_r)$ with a (rank 1) Heisenberg
algebra). We should also mention that we use the construction of
\cite{MO} for $r=2$ in a crucial way for the proof of Theorem
\ref{main-intro-loc}.

\begin{Remark}\label{rem:Gieseker}
    Gieseker constructed a moduli space of semistable sheaves on a
    projective surface \cite{MR466475}. A morphism from Gieseker's
    moduli space to Uhlenbeck compactification was constructed by Li
    and Morgan \cite{MR1205451,MR1231956}. See \cite[Ch.~8]{MR2665168}
    as a modern reference.

    There is an alternative approach for the case of bundles with
    trivialization over $\proj^2$: The ADHM description \cite{ADHM} of
    instantons on $S^4$ describes the moduli space as a space of
    certain linear maps modulo the action of the unitary group. The
    Uhlenbeck space naturally arises by dropping an open condition,
    and considering a larger space (see
    \cite[Ch.~3]{MR1079726}). Furthermore this description is an
    affine algebro-geometric quotient \cite{MR763753}, and one can
    introduce a GIT quotient by perturbing the stability condition
    \cite[Ch.~3]{Lecture}. It gives the moduli space of torsion free
    sheaves with trivialization. The morphism from Gieseker space to
    Uhlenbeck space is also naturally defined.
\end{Remark}

\subsection{Hyperbolic restriction}
One of the main technical tools used in the proof of Theorem~\ref{main-intro-loc} is the notion of {\em hyperbolic restriction}.
Let us recall the general definition of this notion.

Let $X$ be an algebraic variety endowed with an action of $\CC^*$. Then $X^{\CC^*}$ is a closed subvariety of $X$.
Let $\calA_X$ denote the corresponding attracting set. Let $i\colon X^{\CC^*}\to\calA_X$ and $j\colon \calA_X\to X$ be the natural embeddings.
Then we have the functor $\Phi=i^*j^!$\index{UZphi@$\Phi$ (hyperbolic restriction functor)} from the derived category of constructible sheaves on $X$ to the derived
category of constructible sheaves on $X^{\CC^*}$. This functor has been extensively studied by
Braden in \cite{Braden}. In particular, the main result of \cite{Braden} says that $\Phi$ preserves the semi-simplicities of complexes.

Assume that we have a symplectic resolution $\pi\colon Y\to X$ in the sense of \cite{MO} and assume in addition that the above $\CC^*$-action
lifts to $Y$ preserving the symplectic structure. Let $\calF=\pi_*\CC_Y[\dim X]$ (where $\CC_Y$  denotes the constant sheaf
on $Y$). Then we have
\begin{Theorem}\label{nakajima-symplectic}
      \textup{(1)} 
      \cite{VV2}
        $\Phi(\calF)$ is isomorphic to $\pi_*\CC_{Y^{\CC^*}}[\dim X^{\CC^*}]$.
        
       \textup{(2)} 
       Maulik-Okounkov's stable envelope \cite{MO} gives us a
        choice of an isomorphism in \textup{(1)}.
\end{Theorem}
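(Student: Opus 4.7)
The plan is to prove (1) by combining a base-change identity with a local computation, and to handle (2) by interpreting the stable envelope sheaf-theoretically. First, I would establish the commutation
\[
\Phi \circ \pi_* \;\cong\; \pi^{\CC^*}_* \circ \Phi_Y,
\]
where $\Phi_Y = i_Y^* j_Y^!$ denotes the hyperbolic restriction functor for $Y$. Since $\pi$ is proper and $\CC^*$-equivariant, $\pi^{-1}(\calA_X) = \calA_Y$ and $\pi^{-1}(X^{\CC^*}) = Y^{\CC^*}$, so the relevant squares in the attracting-set diagram are Cartesian; proper base change lets one interchange $j^!$ with $\pi_*$ and $i^*$ with $\pi^{\CC^*}_*$. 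Applied to $\calG = \CC_Y[\dim X]$, this reduces (1) to computing $\Phi_Y(\CC_Y[\dim Y])$, using $\dim X = \dim Y$ and the equality $\dim X^{\CC^*} = \dim Y^{\CC^*}$, which follows from $\pi^{\CC^*}$ being a surjective, generically finite restriction of the resolution.

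Next I would perform the local computation on $Y$. Because $Y$ is smooth and the $\CC^*$-action preserves the symplectic form, at each fixed component the tangent space decomposes as $T_yY = T^+ \oplus T^0 \oplus T^-$ with $\dim T^+ = \dim T^-$, since the symplectic pairing identifies $T^+$ with $(T^-)^*$. Consequently $\calA_Y \hookrightarrow Y$ is a regular closed embedding of codimension $\dim T^-$, and
\[
j_Y^! \CC_Y[\dim Y] \;=\; \CC_{\calA_Y}[\dim Y - 2\dim T^-], \qquad i_Y^* \CC_{\calA_Y} \;=\; \CC_{Y^{\CC^*}},
\]
so $\Phi_Y(\CC_Y[\dim Y]) = \CC_{Y^{\CC^*}}[\dim Y - 2\dim T^-] = \CC_{Y^{\CC^*}}[\dim Y^{\CC^*}]$. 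Pushing forward by $\pi^{\CC^*}$ and using the base-change identity of the previous paragraph yields (1).

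For (2), I would recall that Maulik-Okounkov's stable envelope assigns to each connected component $F_\alpha \subset Y^{\CC^*}$ (and a choice of chamber) a canonical Lagrangian class $\mathrm{Stab}_\alpha \in H^*_T(Y)$ supported on the closed attractor $\overline{\calA_\alpha}$, whose restriction to $F_\alpha$ equals the Euler class of the negative normal bundle and whose restrictions to the other fixed components satisfy a strict triangularity in a partial order. In sheaf-theoretic terms this produces, for each $\alpha$, a morphism $\CC_{F_\alpha}[\dim F_\alpha] \to \Phi_Y(\CC_Y[\dim Y])$; the Euler-class normalization ensures that it induces an isomorphism onto the $F_\alpha$-summand after the local identification of the previous paragraph, and the triangularity ensures that the collection over all $\alpha$ assembles into a single canonical isomorphism. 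Applying $\pi^{\CC^*}_*$ transports this to the isomorphism in (1).

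The main obstacle I expect is in part (2): reconciling the cohomological normalization of the stable envelope with the tautological identification coming from the local symplectic computation. In particular, one has to check that the chamber-dependent choices of positive weights in the two constructions match, and that the Euler-class shift accounts exactly for the cohomological shift by $\dim Y^{\CC^*} - \dim Y$ appearing in $j^!$. Once those normalizations are aligned, the rest of the argument is essentially a matter of combining Braden's semisimplicity with proper base change.
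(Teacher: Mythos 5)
Your overall plan (commute the hyperbolic restriction past $\pi_*$, compute on the smooth symplectic $Y$, then read the stable envelope sheaf-theoretically) is in the right spirit, but two of your key steps fail as stated. First, while $\pi^{-1}(\calA_X)=\calA_Y$ does follow from properness and equivariance, your second Cartesian claim $\pi^{-1}(X^{\CC^*})=Y^{\CC^*}$ is false whenever $\pi$ contracts something over a fixed point, which is exactly the situation of interest: for $\pi\colon\Gi{r}^d\to\Uh{d}$ the fiber over the fixed point $d\cdot 0$ is the punctual Quot scheme $\Gi{r,0}^d$, not a fixed locus (already $T^*\proj^1\to\overline{\mathcal O}_{\min}(\algsl_2)$ is a counterexample). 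Hence the square you invoke to move $i^*$ past the pushforward is not Cartesian and the base-change step collapses. The commutation $\Phi\circ\pi_*\cong\pi^{\CC^*}_*\circ\Phi_Y$ is nevertheless true, but it needs Braden's identification $i^*\cong p_*$ on weakly equivariant objects together with the equality of maps $p_X\circ\pi|_{\calA_Y}=\pi^{\CC^*}\circ p_Y$, not naive proper base change over the fixed locus.

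The more serious gap is the ``local computation''. The attracting set $\calA_Y$ is not a smooth closed subvariety of pure codimension $\dim T^-$: it is the union of the Bialynicki--Birula cells over the components $F_\alpha$ of $Y^{\CC^*}$, of dimensions $\tfrac12(\dim Y+\dim F_\alpha)$ which vary with $\alpha$, and it is in general singular; consequently $j_Y^!\CC_Y[\dim Y]$ is not a shifted constant sheaf on $\calA_Y$, and the Thom-type identity you write down is only valid cell by cell. Your conclusion $\Phi_Y(\CC_Y[\dim Y])\cong\CC_{Y^{\CC^*}}[\dim Y^{\CC^*}]$ is essentially the theorem itself, and if it held \emph{canonically} as you assert, part (2) would be vacuous --- whereas the point emphasized in the paper is that the argument of Varagnolo--Vasserot only yields a canonical filtration of $\Phi(\calF)$, indexed by the order on the fixed components, whose associated graded is $\pi_*\CC_{Y^{\CC^*}}$; one then uses the symplectic equality $n^+_\alpha=n^-_\alpha$ of attracting and repelling ranks together with Braden's purity/semisimplicity to split the filtration, non-canonically. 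That component-by-component filtration and splitting argument is the missing content of (1). Relatedly, in (2) the step you gloss as ``in sheaf-theoretic terms this produces a morphism'' is precisely the nontrivial identification $H_{[0]}(Z_P)\cong\Hom_{\Perv(\UhL{})}(p_!j^*\pi_!\cC_{\Gi{r}},\pi_!\cC_{\Gi{L}})$ of \eqref{eq:isom}, which converts the lagrangian stable-envelope class into a homomorphism of perverse sheaves; without it, and without the support/normalization (triangularity) argument showing this homomorphism is invertible, the stable envelope has not been shown to give a choice of isomorphism.
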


See \cite{tensor2} for the proof. Though both $\Phi(\calF)$ and
$\pi_*\CC_{Y^{\CC^*}}[\dim X^{\CC^*}]$ are isomorphic semi-simple
perverse sheaves, the proof of \cite{VV2} only gives us a canonical
filtration on the former whose associated graded is canonically
isomorphic to the latter. Then the stable envelope \cite{MO} gives us a
choice of a splitting.

Now we specialize the above discussion to the following situation. Let $P\subset G$ be a parabolic subgroup of $G$ with Levi subgroup $L$. Let us choose a subgroup $\CC^*\subset Z(L)$ (here $Z(L)$ stands for the center of $L$) such that the fixed point set of its adjoint action on $P$ is $L$ and the attracting set is equal to all of $P$. Let now $X=\Uh{d}$. We denote by $\UhL{d}$ the fixed point set of
the above $\CC^*$ on $\Uh{d}$ and by $\UhP{d}$ the corresponding attracting set.
It is easy to see that if $L$ is not a torus, then $\UhL{d}$ is just homeomorphic to ${\mathcal U}_{[L,L]}^d$ (and if $L$ is a torus, then
$\UhL{d}$ is just $S^d(\CC^2)$). See \subsecref{subsec:fixed}.
 Often we are going to drop the instanton number $d$ from
the notation, when there is no fear of confusion. We let
$i$\index{i@$i$ (inclusion $\UhL{d}\to\UhP{d}$)} and $p$\index{p@$p$
  (projection $\UhP{d}\to\UhL{d}$)} denote the corresponding maps from
$\UhL{}$ to $\UhP{}$ and from $\UhP{}$ to $\UhL{}$. Also we denote by
$j$\index{j@$j$ (inclusion $\UhP{d}\to \Uh{d}$)} the embedding of
$\UhP{}$ to $\Uh{}$. We have the
diagram
\begin{equation}
    \label{eq:1intro}
    \UhL{} \overset{p}{\underset{i}{\leftrightarrows}}
    \UhP{} \overset{j}{\rightarrow} \Uh{},
\end{equation}

Thus we can consider the corresponding hyperbolic restriction functor
$\Phi_{L,G}=i^* j^!$\index{UZphiLG@$\Phi_{L,G}$} (note that the
functor actually depends on $P$ and not just on $L$, but
it 
depend on the choice of $\CC^*\subset Z(L)$ made above, as we will
explain in \subsecref{sec:hyperb-restr}).

The following is one of the main technical results used in the proof of Theorem~\ref{main-intro-loc}:
\begin{Theorem}\label{perverse-intro}
 \textup{(1)} 
  Let $P_1\subset P_2$ be two parabolic subgroups and let $L_1\subset L_2$ be the corresponding Levi subgroups. Then
  we have a natural isomorphism of functors $\Phi_{L_1,G}\simeq \Phi_{L_1,L_2}\circ \Phi_{L_2,G}$.

    \textup{(2)} 
  For $P$ and $L$ as above the complex $\Phi_{L,G}(\IC(\Uh{d}))$ is perverse and semi-simple.
  Moreover, the same is true for any semi-simple perverse sheaf on $\Uh{d}$ which is constructible with respect to the natural stratification.
\end{Theorem}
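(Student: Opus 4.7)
For part~(1), my strategy is to realize $\Phi_{L_1,G}$ as the composition of two hyperbolic restrictions attached to commuting cocharacters. After fixing a common maximal torus $T\subset L_1\subset L_2$, one has $Z(L_2)\subset Z(L_1)$, so I pick $\chi_2\colon\CC^*\to Z(L_2)\subset Z(L_1)$ realizing the pair $(L_2,G)$ on $\Uh{d}$ together with $\chi_1\colon\CC^*\to Z(L_1)$ whose restriction to $\Uh[L_2]{d}$ realizes $(L_1,L_2)$. For $N\gg 0$ the combined cocharacter $\chi_1\chi_2^N$ realizes $(L_1,G)$ on $\Uh{d}$, with intermediate fixed locus $\Uh[L_2]{d}$ and intermediate attractor $\Uh[P_2]{d}$; note that $\Uh[P_1]{d}$ is precisely the $\chi_1$-attractor inside $\Uh[P_2]{d}$, via the inclusion $P_1\cap L_2\subset P_2$. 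The standard base-change identity for iterated hyperbolic restriction, a direct consequence of the definition $\Phi=i^*j^!$, then yields $\Phi_{L_1,G}\simeq\Phi_{L_1,L_2}\circ\Phi_{L_2,G}$.

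For part~(2), semi-simplicity is supplied for free by Braden's theorem once perversity is proved, so the work lies in establishing perversity. First I would use the factorization structure of the Uhlenbeck stratification to reduce the claim for an arbitrary semi-simple stratification-constructible perverse sheaf on $\Uh{d}$ to the single case $\IC(\Uh{d})$: IC extensions from lower strata $\Bun{d'}\times S^{d-d'}(\AA^2)$ split, over the factorized stratum, as exterior products whose symmetric-product factor is elementary and whose $\Bun{d'}$-factor is a smaller instance of the same claim. For $G=SL(r)$, perversity of $\Phi_{L,G}(\IC(\Uh{d}))$ then follows directly from Theorem~\ref{nakajima-symplectic}: the semi-small resolution $\pi\colon\Gi{r}^d\to\Uh{d}$ makes $\IC(\Uh{d})$ a direct summand of the perverse sheaf $\pi_*\CC_{\Gi{r}^d}[\dim\Uh{d}]$, and the theorem identifies its image under $\Phi_{L,G}$ with $\pi_*\CC_{(\Gi{r}^d)^{\CC^*}}[\dim\Uh[L]{d}]$, which is again the pushforward under a semi-small map from a disjoint union of products of smaller Gieseker spaces, hence perverse.

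The principal obstacle will be handling general $G$, where no symplectic resolution is available. My plan is to bootstrap from the $SL(r)$ case using the BFG description of $\Uh{d}$ together with the \'etale-local (transversal-slice) structure of its stratification: at a generic point of a stratum $\Bun{d'}\times S^{d-d'}(\AA^2)$ the germ of $\Uh{d}$ is, up to a smooth factor, a product of smaller Uhlenbeck spaces, so an induction on $d$ should reduce the general-$G$ claim to base cases where the link with $SL(r)$-Uhlenbeck spaces can be exploited to invoke the type-$A$ result. Making the transversal-slice analysis compatible with the functor $\Phi_{L,G}$ is the technical heart of the argument.
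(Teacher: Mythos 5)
Your treatment of part (1) is essentially sound and close to the paper's. The whole content is the identification of attracting sets $\Uh[P_1]{d}=p^{-1}\bigl(\Uh[P_1\cap L_2]{d}\bigr)$ inside $\Uh[P_2]{d}$, after which the isomorphism of functors is formal base change, exactly as in \propref{prop:trans}. The paper verifies this identity by embedding $G$ into $SL(N)$ and using the ADHM description; your ``$\chi_1\chi_2^N$ with $N\gg0$'' argument is a legitimate alternative (a version of it, the ``good coweight'' lemma, appears in the paper's appendix for a related composition), but be careful that the statement you dispose of with ``note that'' is precisely the point: the iterated functor involves the locus where $p(x)$, not $x$ itself, attracts under $\chi_1$, so you must show the attractor of the combined cocharacter equals $p^{-1}$ of the attractor in the fixed locus; this does follow for $N\gg0$ from a $\GG$-equivariant closed embedding of the affine variety $\Uh{d}$ into a linear representation, and that reduction should be made explicit.

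For part (2) there is a genuine gap at the only hard point, general $G$. Semi-simplicity via Braden, the $SL(r)$ case via the semi-small resolution $\Gi{r}^d\to\Uh{d}$, and the reduction of the ``moreover'' clause to $\IC(\Uh{d'})$ are all fine and agree with the paper. But the proposed bootstrap fails: the \'etale-local (transversal slice) structure of $\Uh{d}$ along a stratum $\Bun{d_1}\times S_\lambda\AA^2$ is a smooth factor times a product of smaller Uhlenbeck spaces \emph{for the same group $G$}, so factorization plus induction on $d$ only pushes the problem down to the punctual locus $S_{(d)}\AA^2$ for $G$ itself; no type-$A$ Uhlenbeck space ever appears in a slice, and an embedding $G\hookrightarrow SL(N)$ does not transport the result either, since $\IC(\Uh{d})$ is neither the restriction of nor a direct summand of $\IC(\Uh[SL(N)]{d})$ in any way compatible with $\Phi_{L,G}$. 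What is actually needed at that point is a perverse-degree bound for the hyperbolic restriction over the punctual locus, equivalently a dimension bound on attracting sets such as $\dim\Uh[P,0]{d}\le dh^\vee-1$, and the paper says explicitly that it does not know how to prove such estimates directly.

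The paper's substitute — the key idea missing from your plan — is to bound instead the attracting set over $S^d(\AA^1)$ using the factorization morphism and the flag Uhlenbeck (Zastava) space $\calZ^{d\delta}_G$ of \cite{BFG}: the attractor over $S^d(\AA^1_v\setminus 0)$ lifts into the central fiber of the Zastava factorization morphism, whose dimension is $dh^\vee$, giving \propref{prop:sd} and \corref{cor:dim-cor}. This estimate is then converted into exactness by composing with the extra hyperbolic $\CC^*$-action on $\AA^2$ via a good coweight (so that one restricts to the single $\TT$-fixed point), applying Braden's theorem and a Cousin spectral sequence over the stratification (\thmref{thm:twistedmain}), and removing the remaining ambiguity on the main diagonal by translation-equivariance and factorization; the case of a general Levi $L$ is then deduced from the $L=T$ case using associativity together with a separate lemma that the local systems occurring in $\Phi_{L,G}(\IC(\Uh{d}))$ are constant along the $\Bun[L]{d_1}$-directions. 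Without the Zastava-space dimension estimate (or some equivalent degree bound at the punctual stratum), your induction has no anchor, so the heart of part (2) for general $G$ remains unproved in your proposal.
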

Note that when $G=SL(r)$, it is easy to deduce Theorem~\ref{perverse-intro} from \thmref{nakajima-symplectic}(1), since in this case
the scheme $\Uh{d}$ has a symplectic resolution $\Gi{r}^d$.


\subsection{Sketch of the proof}\label{sketch}
The proof of Theorem \ref{main-intro-loc} will follow the following plan:

1) Replace $\GG=G\times \CC^*\times \CC^*$-equivariant cohomology with $\TT=T\times \CC^*\times \CC^*$-equivariant cohomology.
Note that the former is just equal to the space of $W$-invariants in the latter, so if we define an action
of $\calW_k(\grg)$ on $\oplus \IH^*_{\TT}(\Uh{d})\underset {\bA_T}\otimes \bF_T$
\index{IHT@$\IH^*_{\TT}(\Uh{d})$}
(where $\bA_T=H^*_{T\times \CC^*\times \CC^*}(\mathrm{pt})$
and $\bF_T$ is its field of fractions) and check that if commutes with the action of $W$, we get an action of $\calW_k(\grg)$ on
$\oplus_d \IH^*_{\GG}(\Uh{d})\underset {\bA_G}\otimes \bF_G$.

2) We are going to construct an action of $\Heis_{k+h^{\vee}}(\grh)$ on $\oplus_d \IH^*_{\TT}(\Uh{d})\underset {\bA_T}\otimes \bF_T$
and then get the action of $\calW_k(\grg)$ by using the embedding $\calW_k(\grg)\hookrightarrow \Heis_{k+h^{\vee}}(\grh)$.
It should be noted that the above $\Heis_{k+h^{\vee}}(\grh)$-action will have several ``disadvantages" that will disappear
when we restrict ourselves to $\calW_k(\grg)$. For example, this action will depend on a certain auxiliary choice (a choice of a Weyl
chamber).
\begin{NB}
I do not agree with this.

Also it will not have any nice compatibility properties with the Poincar\'e pairing on $\oplus_d \IH^*_{\TT}(\Uh{d})\underset {\bA_T}\otimes \bF_T$.
\end{NB}

3) The action of the Heisenberg algebra on $\oplus_d \IH^*_{\TT}(\Uh{d})\underset {\bA_T}\otimes \bF_T$ will be constructed
in the following way. Let us choose a Borel subgroup $B$ containing the chosen maximal torus $T$.
We can identify $\oplus_d \IH^*_{\TT}(\Uh{d})\underset {\bA_T}\otimes \bF_T$ with $\oplus_d H^*_{\TT}\Phi_{T,G}(\IC(\Uh{d}))\underset {\bA_T}\otimes \bF_T$, so it is enough to define an action of the Heisenberg algebra on the latter. For this it is enough to define
the action of $\Heis(\CC\alp_i^{\vee})$ for every simple coroot $\alp_i^{\vee}$ of $G$ (and then check the corresponding relations).
Let $P_i$ denote the corresponding sub-minimal parabolic subgroup containing $B$.
Let also $L_i$ be its Levi subgroup (it is canonical after the choice of $T$). Note that $[L_i,L_i]\simeq SL(2)$.
Using the isomorphism
$\Phi_{T,G}(\IC(\Uh{d}))\simeq \Phi_{T,L_i}\circ\Phi_{L_i,G}(\IC(\Uh{d}))$ and Theorem~\ref{nakajima-symplectic}, we define
the action of $\Heis(\CC\alp_i^{\vee})$ on $\oplus_d H^*_{\TT}\Phi_{T,G}(\IC(\Uh{d}))\underset {\bA_T}\otimes \bF_T$ using the results
of \cite{MO} for $G=SL(2)$.

Here it is important for us to write down $\Phi_{L_i,G}(\IC(\Uh{d}))$
in terms of $\IC(\Uh[L_i]{d'})$ ($d'\le d$) and local systems on
symmetric products in a `canonical' way.
In particular, we need to construct a base in the multiplicity space of
$\IC(\Uh[L_i]{d'})$ in $\Phi_{L_i,G}(\IC(\Uh{d}))$.
For $G=SL(r)$, this follows from the stable envelope, thanks to
Theorem~\ref{nakajima-symplectic}(2).
For general $G$, this argument does not work, and we use the
factorization property of Uhlenbeck spaces together with the special
case $G=SL(2)$. A further detail is too complicated to be explained in
Introduction, so we ask an interested reader to proceed to the main text.

4) We now need to check the relations between various $\Heis(\CC\alp_i^{\vee})$. For this we have two proofs. One reduces it again to
the results of
\cite{MO} for $G=SL(3)$ (note that since we assume that $G$ is simply laced, any connected rank 2 subdiagram of the Dynkin diagram of $G$ is of type $A_2$). The other goes through the theory of certain ``geometric" $R$-matrices (cf.\  Chapter~\ref{sec:R-matrix}).
The proof of assertions (2) and (3) of Theorem~\ref{main-intro-loc} is more or less straightforward. The proof of assertion (4) is more technical and we are not going to discuss it in the Introduction. Let us just mention that for that proof we need a stronger form of the first 3 statements of Theorem~\ref{main-intro-loc} which is briefly discussed below.

\subsection{Relation to previous works -- technical parts}\label{int-tech}

Let us mention previous works which give us a strategy and techniques of the proof.

First of all, we should mention that the overall framework of the
proof is the same as those in \cite{MO,SV}. We realize the
Feigin-Frenkel embedding of $\scW_k(\mathfrak g)$ into
$\Heis_{k+h^\vee}(\mathfrak h)$ in a geometric way via the fixed point
$(\Uh{d})^{\CC^*} = \Uh[L]{d}$,
as is explained the geometric realization in 3),4) in
\subsecref{sketch}.
This was first used in \cite{MO,SV} for type $A$.

What we do here is to replace the equivariant homology of Gieseker
spaces $\Gi{r}^d$ by intersection cohomology of $\Uh{d}$ as the former
exists only in type $A$.
Various foundational issues were discussed in the joint work of the
first and second-named authors with Gaitsgory \cite{BFG}.
In particular, the fact that the character of $M_{\bF_G}(\ba)$ is
equal to the character of a Verma module over $\calW_k(\grg)$ follows
from the main result of \cite{BFG}.
(For type A, it was done earlier in the joint work of the third-named
author with Yoshioka. See \cite[Exercise~5.15]{Lecture} and its
solution in \cite{MR2095899}.)

A search of a replacement of Maulik-Okounkov's stable envelope
\cite{MO} was initiated by the third-named author \cite{tensor2}.
In particular, the relevance of the hyperbolic restriction functor
$\Phi$ and the statement \thmref{nakajima-symplectic}(2) were found.
Therefore our technical aim is to find a `canonical' isomorphism
between $\Phi_{L,G}(\IC(\Uh{d}))$ and a certain perverse sheaf on $\Uh[L]d$.

Let us also mention that \thmref{nakajima-symplectic}(1) was proved
much earlier by Varagnolo-Vasserot \cite{VV2} in their study of quiver
varieties. The functor $\Phi$ realized tensor products of
representations of $\mathfrak g_{\Gamma,\aff}$.  (Strictly speaking,
only quiver varieties of finite types were considered in \cite{VV2}. A
slight complication occurs for quiver varieties of affine types which
give $\mathfrak g_{\Gamma,\aff}$. See \cite[Remark~1]{tensor2} for
detail.)

When we do not have a symplectic resolution like $\Gi{d}^r$, we need another tool to analyze $\Phi$.
Fortunately the hyperbolic restriction functor was studied by
Mirkovi\'c-Vilonen \cite{MV,MV2} in the context of the geometric
Satake isomorphism, which asserts the category of
$G(\CC[[t]])$-equivariant perverse sheaves on the affine Grassmannian
$\operatorname{Gr}_G = G(\CC((t)))/G(\CC[[t]])$ is equivalent to the
category of finite dimensional representations of the Langlands dual
$G^\vee$ of $G$ as tensor categories.
The hyperbolic restriction functor realizes the restriction from
$G^\vee$ to its Levi subgroup.

In particular, it was proved that $\Phi$ sends perverse sheaves to
perverse sheaves. This was proved by estimating dimension of certain
subvarieties of $\operatorname{Gr}_G$, now called Mirkovi\'c-Vilonen
cycles. The proof of \thmref{perverse-intro} is given in the same
manner, replacing Mirkovi\'c-Vilonen cycles by attracting sets of the
$\CC^*$-action.

It is clear that we should mimic the geometric Satake isomorphism from
the conjecture of the first and second-named authors
\cite{braverman-2007} which roughly says the following: it is
difficult to make sense of perverse sheaves on the double affine
Grassmannian, i.e., the affine Grassmannian
$\operatorname{Gr}_{G_{\aff}}$ for the affine Kac-Moody group
$G_{\aff}$. But perverse sheaves on $\Uh{d}$ (and more generally
instanton moduli spaces on $\CC^2/\Gamma$ with $\Gamma = \ZZ/k\ZZ$)
serve as their substitute. Then they control the representation theory
of $G_{\aff}^\vee$ at level $k$.

This conjecture nicely fits with the third-named author's works
\cite{Na-quiver,Na-alg} on quiver varieties via I.~Frenkel's
level-rank duality for the affine Lie algebra of type $A$
\cite{MR675108}. Namely in the correspondence between moduli spaces
and representation theory, the gauge group determines the rank, and
$\Gamma$ the level respectively in the double affine Grassmannian. And
the role is reversed in quiver varieties.

In \cite{BF2}, the first and second-named authors proposed a functor,
acting on perverse sheaves, which conjecturally gives tensor products
of $G_{\aff}^\vee$. This proposal was checked in \cite{Na-branching}
for type $A$, by observing that the same functor gives the branching
from $\mathfrak g_{\Gamma,\aff}$ to the affine Lie algebra of a Levi
subalgebra. The interchange of tensor products and branching is
again compatible with the level-rank duality.

Here in this paper, tensor products and branching appear in the
opposite side: The hyperbolic restriction functor $\Phi$ realizes the
tensor product in the quiver variety side, as we mentioned
above. Therefore it should correspond to branching in the dual affine
Grassmannian side. This is a philosophical explanation why the study of
analog of Mirkovi\'c-Vilonen cycles is relevant here.

\subsection{The main result: integral form}\label{int-main-global}
The formulation of Theorem~\ref{main-intro-loc} has an obvious drawback: it is only formulated in terms
of {\em localized} equivariant cohomology.
\begin{NB}
I do not agree:

     In fact, localization with respect to $\ba$ is not very essential,
but
\end{NB}%
First of all,
it is clear that as stated Theorem~\ref{main-intro-loc} only has a chance to work over
the the localized field $\bF=\CC(\ve_1,\ve_2)$
\index{FF@$\bF = \CC(\ve_1,\ve_2)$|textit}
rather than over $\bA=\CC[\ve_1,\ve_2]$.
\index{AA@$\bA = \CC[\ve_1,\ve_2]$|textit}
The reason is that our formula for the level
$k=-h^{\vee}-\frac{\ve_2}{\ve_1}$ and the highest weight $\lambda =
\frac{\ba}{\ve_1} - \rho$ are not elements of $\bA$.
\begin{NB}
However, for many purposes,
\end{NB}%
For many purposes, it is convenient to have an $\bA$-version of
Theorem~\ref{main-intro-loc}. In fact, technically in order to prove
the last assertion of Theorem~\ref{main-intro-loc} we need such a
refinement of the first 3 assertions (the reason is that we need to
use the cohomological grading which is lost after localization).
In earlier works \cite{MO,SV} for type $A$, the $\bA$-version appears
only implicitly, as operators $\W{\kappa}{n}$ are given by cup
products on Gieseker spaces. But in our case, Uhlenbeck spaces are
singular, and we need to work with intersection cohomology groups. Hence
$\W{\kappa}n$ do not have such descriptions.

So, in order to formulate a non-localized version of Theorem~\ref{main-intro-loc} one needs to define
an $\bA$-version $\calW_{\bA}(\grg)$
\index{WAg@$\scW_\bA(\g)$|textit}
of the $\calW$-algebra (such that after tensoring with $\bF$ we get the algebra
$\calW_k(\grg)$ with $k=-h^{\vee}-\frac{\ve_2}{\ve_1}$). We also want this algebra to be graded (such that
the degrees of $\ve_1$ and $\ve_2$ are equal to 2); in addition we need analogs of statements~(2w) and~(3w). 
This is performed in the Appendix B. Let us note, that although this $\bA$-form is motivated by geometry, it can be defined
purely in an algebraic way, following the work of Feigin and Frenkel.
As far as we know, this $\bA$-form does not appear in the literature before.
As a purely algebraic application, we can remove the genericity
assumption in~(4w). 
The third named author learns from
Arakawa that this was known to him before, but the proof is not written.
After this we prove an  $\bA$-version of Theorem~\ref{main-intro-loc} in Chapter~\ref{sec:whittaker-state}.

The non-localized equivariant cohomology groups also give us a refined
structure in our construction. We construct $\scW_\bA(\g)$-module
structures on four modules
\begin{multline*}
    \bigoplus_d \IH^*_{\GG,c}(\Uh{d}), \quad
    \bigoplus_d H^*_{\TT,c}(\Phi_{T,G}(\IC(\Uh{d})))
\\
    \bigoplus_d H^*_{\TT}(\Phi_{T,G}(\IC(\Uh{d}))), \quad
    \bigoplus_d \IH^*_{\GG}(\Uh{d}),
\end{multline*}
where the subscript $c$ stands for cohomology with compact support.
They become isomorphic if we take tensor products with $\bF_T$, i.e.,
in the localized equivariant cohomology. But they are different over
$\bA_G$ and $\bA_T$.
We show that they are {\it universal\/} Verma, Wakimoto modules
$M_\bA(\ba)$\index{MA@$M_\bA(\ba)$|textit},
$N_\bA(\ba)$\index{NA@$N_\bA(\ba)$|textit},
and
their duals respectively.
Here by a Wakimoto module, we mean the pull-back of a Fock space via the embedding of $\scW(\g)$ in $\Heis(\mathfrak h)$.
They are universal in the sense that we can specialize to Verma/Wakimoto and their duals at any evaluation $\bA_G\to\CC$, $\bA_T\to\CC$.
This will be important for us to derive character formulas for simple
modules, which will be discussed in a separate publication.

The importance of the integral form and the application to character
formulas were first noticed in the context of the equivariant K-theory
of the Steinberg variety and the affine Hecke algebra (see \cite{CG}),
and then in quiver varieties \cite{Na-qaff} and parabolic Laumon spaces (=
handsaw quiver varieties) \cite{handsaw}.

\subsection{Remarks about non-simply laced case}\label{non-simply-laced}
We have already mentioned above that verbatim Theorem~\ref{main-intro-loc} doesn't hold for non-simply laced $G$.
However, we expect that the following modification of Theorem \ref{main-intro-loc} should hold.

First, let $\calG$ be any affine Lie algebra in the sense of \cite{Kac} with connected Dynkin diagram.
For example, $\calG$ can be untwisted, and in this case it is isomorphic to a Lie algebra of the form $\grg_{\aff}$ for some
simple finite-dimensional Lie algebra. But in addition there exist twisted affine Lie algebras. We refer the reader to
\cite{Kac} for the relevant definitions; let us just mention that every twisted $\calG$ comes from a pair $(\calG',\sigma)$
where $\calG'=\grg_{\aff}$ for some simply laced simple finite-dimensional Lie algebra $\grg$ and $\sigma$ is a certain automorphism of
$\grg$ of finite order.

The Dynkin diagram of $\calG$ comes equipped with a special ``affine" vertex. We let $G_{\calG}$ denote the semi-simple and simply connected group whose Dynkin diagram is obtained from that of $\calG$ by removing that vertex.

To such an algebra one can attach another affine Lie algebra $\calG^{\vee}$ ---
``the Langlands dual Lie algebra". By definition, this is just the Lie algebra whose generalized Cartan matrix is transposed to that of $\calG$. It is worthwhile to note that:

1) If $\grg$ is a simply laced finite-dimensional simple Lie algebra, then $\grg_{\aff}^{\vee}$ is isomorphic to $\grg_{\aff}$ (which
is also the same as $(\grg^{\vee})_{\aff}$ in this case).

2) In general, if $\grg$ is not simply laced, then $\grg_{\aff}^{\vee}$ is not isomorphic to $(\grg^{\vee})_{\aff}$. In fact, if $\grg$
is not simply laced, then $\grg_{\aff}^{\vee}$ is always a twisted Lie algebra.

It turns our that one can define the Uhlenbeck spaces $\mathcal U^d_{\calG}$ for any affine Lie algebra $\calG$ in such a way that
that $\mathcal U^d_{\calG}=\Uh{d}$ when $\calG=\grg_{\aff}$ and $\grg=\operatorname{Lie}(G)$ (the definition uses the corresponding simply laced
algebra $\grg$ and its automorphism $\sigma$ mentioned above). We are not going to explain the definition here
(we shall postpone it for a later publication). This scheme is endowed with an action of the group $G_{\calG}\times \CC^*\times \CC^*$.

In addition to $\calG$ as above one can also attach a $W$-algebra $\calW(\calG)$. Then we expect the following to be true:
\begin{Conjecture}\label{non-simp}
There exists an action of $\calW(\calG)$ on $\oplus \IH^*_{G(\calG^{\vee})\times \CC^*\times \CC^*}(\mathcal U_{\calG^{\vee}}^d)$ satisfying properties similar to those of Theorem~\ref{main-intro-loc}.
\end{Conjecture}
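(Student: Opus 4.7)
\textbf{Proof proposal for Conjecture~\ref{non-simp}.} The natural strategy is folding. An arbitrary affine Lie algebra $\calG$ arises from a pair $(\grg,\sigma)$ with $\grg$ simply-laced and $\sigma$ a finite-order diagram automorphism (trivial in the untwisted case). The Uhlenbeck space $\mathcal U^d_{\calG^\vee}$ is expected to be constructed from the simply-laced Uhlenbeck space $\Uh{d'}$ for $\grg$ together with an action of $\sigma$, via $\sigma$-fixed points or a closely related $\sigma$-equivariant construction. The plan is to descend the $\scW_k(\grg)$-action on $\bigoplus_{d'} \IH^*_{\GG}(\Uh{d'}) \otimes \bF$ furnished by Theorem~\ref{main-intro-loc} to the $\sigma$-equivariant subobject and to match the result, via a twisted analogue of the Feigin-Frenkel embedding, with an action of $\scW(\calG)$ on the algebraic side.

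First I would verify that the entire proof of Theorem~\ref{main-intro-loc} sketched in \subsecref{sketch} runs $\sigma$-equivariantly. Starting from a $\sigma$-stable Borel pair $(T,B)$, the fixed loci $\UhL{d}$, attracting sets $\UhP{d}$, and hyperbolic restriction functors $\Phi_{L,G}$ are all canonically $\sigma$-equivariant, and so is the resulting $\Heis_{k+h^{\vee}}(\grh)$-action on $\bigoplus_d \IH^*_{\TT}(\Uh{d}) \otimes \bF_T$ constructed through the $SL(2)$ and $SL(3)$ inputs of \cite{MO}. Passing to $\sigma$-invariants yields an action of a twisted Heisenberg vertex algebra attached to $\grh^\sigma$ on the cohomology of $\mathcal U^d_{\calG^\vee}$. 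Algebraically, the twisted Feigin-Frenkel theorem should realize $\scW(\calG)$ as the $\sigma$-invariants of $\scW_k(\grg) \subset \Heis_{k+h^{\vee}}(\grh)$, giving the desired $\scW(\calG)$-action; the $\bA$-form $\scW_\bA(\g)$ of Appendix B should make this identification go through at the integral level. The Whittaker formula \eqref{eq:Whittaker-intro} and the identification with a Verma module would then follow from their simply-laced counterparts via $\sigma$-invariance of the canonical class $|1^d\rangle$.

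The principal obstacle is geometric and foundational: one first needs a rigorous definition of $\mathcal U^d_{\calG^\vee}$ and proofs that factorization, hyperbolic restriction (Theorem~\ref{perverse-intro}), the Maulik-Okounkov input at rank $2$, and the canonical choice of multiplicity bases used in the simply-laced construction are all compatible with $\sigma$. Without this framework the folding reduction cannot even be set up, and it is far from obvious that the $\sigma$-fixed locus inside $\Uh{d'}$ carries the right stratification to reproduce the expected combinatorics of $\calG^\vee$. A secondary but substantial difficulty is the precise algebraic identification of the $\sigma$-folded $\scW$-algebra with $\scW(\calG)$ over $\bA$, where the subtleties of non-simply-laced Feigin-Frenkel duality — in particular the asymmetric level relation $(k+h^{\vee}_\grg)(k^\vee + h^{\vee}_{\grg^\vee}) = r^\vee$ of Theorem~\ref{FF-intro}(4), which is what permits the switch between $\calG$ and $\calG^\vee$ — come into full play.
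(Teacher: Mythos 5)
The statement you are addressing is stated in the paper only as a conjecture: the authors explicitly postpone even the definition of $\mathcal U^d_{\calG}$ to a later publication, so there is no proof in the paper to compare yours against. Your folding strategy is consistent with the hint the paper does give (that $\mathcal U^d_{\calG}$ is built from the simply-laced $\grg$ and its automorphism $\sigma$), and you have correctly identified the main foundational obstacle. But as written your proposal is a roadmap, not a proof, and it passes over the two points where the simply-laced argument genuinely breaks rather than merely needing to be checked for $\sigma$-equivariance.

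First, the claim that the construction of \subsecref{sketch} ``runs $\sigma$-equivariantly'' and that passing to $\sigma$-invariants yields the right Heisenberg action is not a routine verification. The relations among the $\Heis(\CC\alpha_i^{\vee})$ are proved in the paper by reduction to rank-$2$ subdiagrams, which in the simply-laced case are all of type $A_1\times A_1$ or $A_2$ and are handled by the Maulik--Okounkov results on Gieseker spaces. After folding, the relevant rank-$2$ configurations are of type $B_2$ and $G_2$; the $\sigma$-invariant part of the $A$-type computation does not directly produce these relations, because the folded generator attached to a $\sigma$-orbit of simple roots involves a sum of operators whose mutual commutators must be recomputed with the new (non-symmetric) Cartan pairing, and there is no symplectic resolution or stable envelope available to anchor the computation. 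The paper itself flags exactly this gap in \secref{sec:R-matrix} (``We hope that this proof could be generalized to other rank $2$ cases $B_2$, $G_2$''). Second, your identification of the algebraic side is too quick: the conjecture pairs $\scW(\calG)$ with the cohomology of $\mathcal U^d_{\calG^{\vee}}$, i.e.\ there is a Langlands-duality flip between the two sides, and the asymmetric level relation $(k+h^{\vee}_{\grg})(k^{\vee}+h^{\vee}_{\grg^{\vee}})=r^{\vee}$ means that the $\sigma$-invariants of $\scW_k(\grg)\subset\Heis_{k+h^{\vee}}(\grh)$ for the simply-laced $\grg$ are not tautologically $\scW(\calG)$ at the level dictated by the geometric $\ve_1,\ve_2$; establishing this twisted Feigin--Frenkel statement (and its $\bA$-form) is itself an open algebraic problem that your proposal assumes rather than proves. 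Until the twisted Uhlenbeck spaces are defined, the analogue of Theorem~\ref{perverse-intro} is proved for them, and the $B_2$/$G_2$ commutation relations and the twisted embedding are established, the conjecture remains open.
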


Let us discuss one curious corollary of the above conjecture. Let $\grg$ be a finite-dimensional simple Lie algebra.
Set $\calG_1=\grg_{\aff}^{\vee}, \calG_2=(\grg^{\vee})_{\aff}^{\vee}$. Then Conjecture \ref{non-simp} together with Feigin-Frenkel
duality imply that there should be an isomorphism between $\IH^*_{G(\calG_1)\times \CC^*\times \CC^*}(\mathcal U_{\calG_1}^d)$
and
$\IH^*_{G(\calG_2)\times \CC^*\times \CC^*}(\mathcal U_{\calG_2}^d)$ which sends $\frac{\ve_2}{\ve_1}$ and to $r^{\vee}\frac
{\ve_1}{\ve_2}$.
It would be interesting to see whether this isomorphism can be constructed geometrically (let us note that the naive guess
that
there exists an isomorphism between $\mathcal U_{\calG_1}^d$ and $\mathcal U_{\calG_2}^d$ giving rise to the above isomorphism between
$\IH^*_{G(\calG_1)\times \CC^*\times \CC^*}(\mathcal U_{\calG_1}^d)$
and
$\IH^*_{G(\calG_2)\times \CC^*\times \CC^*}(\mathcal U_{\calG_2}^d)$ is probably wrong).
This question might be related to the work \cite{Vafa} where the author explains how to derive
the 4-dimensional Montonen-Olive duality for non-simply laced groups from 6-dimensional (2,0) theory.

\subsection{Further questions and open problems}
In this subsection we indicate some possible directions for future research on the subject (apart from generalizing everything
to the non-simply laced case, which was discussed before).

\subsubsection{VOA structure and CFT}
Our results imply that the space $M_{\bF_G}(\ba)$ has a natural vertex operator algebra structure. It would be extremely interesting
to construct this structure geometrically.

The AGT conjecture predicts a duality between $N=2$ $4d$ gauge
theories and $2d$ conformal field theories (CFT). The equivariant
intersection cohomology group $M_{\bF_G}(\ba)$ is just the quantum
Hilbert space associated with $S^1$, appeared as a boundary of a
Riemann surface. We should further explore the $4d$ gauge theory from
CFT perspective, as almost nothing is known so far.

\subsubsection{Gauge theories with matter}
Our results give a representation-theoretic interpretation of the Nekrasov partition function of the {\em pure} $N=2$ super-symmetric
gauge theory on $\RR^4$. For physical reasons it is also interesting to study gauge theories with matter. Mathematically it usually means that in the definition of the partition function (\ref{Nekrasov function}) one should replace the equivariant integral of 1 by the equivariant integral of some other (intersection) cohomology class. However, when $G$ is not of type $A$ even the definition of
the partition function is not clear to us. Namely, for $G=SL(r)$ one usually works with the Gieseker space $\Gi{r}^d$ instead
of $\Uh{d}$. In this case the cohomology classes in question are usually defined as Chern classes of certain natural sheaves
$\Gi{r}^d$ (such as, for example, the tangent sheaf). Since $\Uh{d}$ is singular and we work with {\em intersection cohomology}
such constructions don't literally make sense for $\Uh{d}$.

\subsubsection{The case of $\mathbb C^2/\Gamma$}It would be interesting to try and generalize our results to Uhlenbeck space
of $\mathbb C^2/\Gamma$. Here we expect the case when $\Gamma$ is a cyclic group to be more accessible than the general case;
in fact, in this case one should be able to see connections with  \cite{braverman-2007},\cite{BF2} and on the other hand with
\cite{2011JHEP...07..079B,2013CMaPh.319..269B}.
On the other hand the theory of quiver varieties deals with general
$\Gamma$, but the group $G$ is of type $A$, as we mentioned in \subsecref{int-motiv}.
The case when both $\Gamma$ and $G$ are not of type $A$ seems more
difficult. Note that we must impose $\ve_1=\ve_2$, therefore the level
$k = -h^\vee - \ve_2/\ve_1$ cannot be deformed. In particular, the
would-be $\scr W$-algebra does not have a classical limit.

\subsubsection{Surface operators}

As we have already mentioned in \subsecref{int-motiv}, there are {\it
  flag Uhlenbeck spaces\/} parametrizing (generalized) $G$-bundles on
$\proj^2$ with parabolic structure on the line $\proj^1$. A type of
parabolic structure corresponds to a parabolic subgroup $P$ of $G$.
Generalizing results in two extreme cases, $P=B$ in
\cite{BraInstantonCountingI} and $P=G$ in this paper, it is expected
that the equivariant intersection cohomology group admits a
representation of the $\scr W$-algebra associated with the principal
nilpotent element in the Lie algebra $\mathfrak l$ of the Levi part of
$P$. (We assume $G$ is of type $ADE$, and the issue of Langlands
duality does not occur, for brevity.) This is an affine version of the conjecture in \cite{BFFR} mentioned before.
Moduli of $G$-bundles with parabolic structure of type $P$ is called a
surface operator of Levi type $\mathfrak l$ in the context of $N=4$
supersymmetric gauge theory \cite{MR2459305}.

However there is a surface operator corresponding to
{\it arbitrary\/} nilpotent element $e$ in $\operatorname{Lie}G$
proposed in \cite{MR3027737}, which is supposed to have the symmetry
of $\scr W(\g,e)$, the $\scr W$-algebra associated with $e$. We do not understand what kind of parabolic structures nor equivariant intersection cohomology groups we should consider if $e$ is not regular in Levi.


\subsection{Organization of the paper}
In Chapter~\ref{sec:preliminaries}  we discuss some generalities about Uhlenbeck spaces. Chapter~\ref{sec:local}
is devoted to the general discussion of hyperbolic restriction
and Chapter~\ref{sec:hyper-uhlenbeck} ---
to hyperbolic restriction on Uhlenbeck spaces. In Chapter~\ref{sec:typeA} we relate the constructions and results of Chapter~\ref{sec:hyper-uhlenbeck} to certain
constructions of \cite{MO} in the case when $G$ is of type $A$. Chapter~\ref{sec:w-algebra-repr} is devoted to the construction of the action of the algebra $\calW_k(\grg)$ on $M_{\bF_G}(\ba)$
along the lines presented above. Chapter~\ref{sec:R-matrix} is devoted to the discussion of ``geometric $R$-matrices".
\subsection{Some notational conventions}\label{sec:convent}

\begin{enumerate}
  \renewcommand{\theenumi}{\roman{enumi}}%
  \renewcommand{\labelenumi}{\textup{(\theenumi)}}%

\item A partition $\lambda$ is a nonincreasing sequence
  $\lambda_1\ge\lambda_2\ge \cdots$ of nonnegative integers with
  $\lambda_N = 0$ for sufficiently large $N$. We set $|\lambda| = \sum
  \lambda_i$, $l(\lambda) = \# \{ i \mid \lambda_i \neq 0\}$. We also
  write $\lambda = (1^{n_1} 2^{n_2}\cdots)$ with $n_k = \# \{ i\mid
  \lambda_i = k\}$.

    \item The equivariant cohomology group $H^*_\GG(\mathrm{pt})$ of a
  point is canonically identified with the ring of invariant
  polynomials on the Lie algebra $\operatorname{Lie}\GG$ of $\GG$. The
  coordinate functions for the two factors $\CC^*$ are denoted by
  $\ve_1$, $\ve_2$ respectively. We identify the ring of invariant
  polynomials on $\g = \operatorname{Lie}G$ with the ring of the Weyl
  group invariant polynomials on the Cartan subalgebra $\h$ of $\g$.
  When we consider the simple root $\alpha_i$ as
  a polynomial on $\h$, we denote it by $a^i$.

    \item For a variety $X$, let $D^b(X)$ denote the bounded derived
  category of complexes of constructible $\CC$-sheaves on $X$. 
  Let $\IC(X_0,\mathcal L)$ denote the intersection cohomology complex
  associated with a local system $\mathcal L$ over a Zariski open
  subvariety $X_0$ in the smooth locus of $X$. We denote it also by
  $\IC(X)$ if $\mathcal L$ is trivial.
  When $X$ is smooth and irreducible, $\cC_X$\index{CCX@$\cC_X$}
  denotes the constant sheaf on $X$ shifted by $\dim X$. If $X$ is a
  disjoint union of irreducible smooth varieties $X_\alpha$, we
  understand $\cC_X$ as the direct sum of $\cC_{X_\alpha}$.

\item\label{conv2} We make a preferred degree shift for the
  Borel-Moore homology group (with complex coefficients), and denote
  it by $H_{[*]}(X)$\index{HstarX@$H_{[*]}(X)$}. The shift is coming
  from a related perverse sheaf, which is clear from the context.
  For example, if $X$ is smooth, $\cC_X$ is a perverse sheaf. Hence
  $H_{[*]}(X) = H_{*+\dim X}(X)$ is a natural degree shift, as it is
  isomorphic to $H^{-*}(X, \cC_X)$.
  More generally, if $L$ is a closed subvariety in a smooth variety
  $X$, we consider $H_{[*]}(L) = H_{*+\dim X}(L) = H^{-*}(L,j^!
  \cC_X)$, where $j\colon L\to X$ is the inclusion.

\item We use the ADHM description of framed torsion free sheaves on
  $\proj^2$ at several places. We change the notation $(B_1,B_2,i,j)$
  in \cite[Ch.~2]{Lecture} to $(B_1,B_2,I,J)$ as $i$, $j$ are used for
  different things.
\end{enumerate}

\subsection{Acknowledgments}

We dedicate this paper to the memory of Kentaro Nagao, who passed away
on Oct. 22, 2013. His enthusiasm to mathematics did not stop until the
end, even under a very difficult health condition. H.N.\ enjoyed
discussions on results in this paper, and other mathematics with him
last summer.

It is clear that this paper owes a lot to earlier works by D.~Maulik,
A.~Okounkov, O.~Schiffmann and E.~Vasserot. The authors thank them for
their patient explanation on the works. They also thank the referee for his/her careful reading of the manuscript and useful comments, and a suggestion of a different proof of \corref{cor:dimest}.

A.B. is grateful to D.~Gaitsgory for very helpful discussions related to Appendix A as well as with the proof of
Theorem \ref{thm:nonlocal}.

H.N.\ thanks to Y.~Tachikawa who explained physical intuition to him,
and to T.~Arakawa, R.~Kodera and S.~Yanagida who taught him various
things on $\scW$-algebras.

This work was started at 2012 fall, while H.N.\ was visiting the
Simons Center for Geometry and Physics, and he wishes to thank the
Center for warm hospitality. We thank the Abdus Salam International
Centre for Theoretical Physics, the Euler International Mathematical
Institute, the Higher School of Economics, Independent University of
Moscow, where we continued the discussion.

A.B.\ was partially supported by the NSF grant DMS-1200807 and by the Simons foundation.
Also this work was completed when A.B. was visiting the New Economic School of Russia and the Mathematics
Department of the Higher School of Economics and he would like to thank those institutions for their hospitality.

M.F.\ was supported by The National Research
University Higher School of Economics Academic Fund Program in 2014/2015
(research grant No. 14-01-0047).

The research of H.N.\ was supported in part by JSPS Kakenhi Grant
Numbers 00201666, 25220701, 30165261, 70191062.



\section{Preliminaries}\label{sec:preliminaries}

A basic reference to results in this chapter is \cite{BFG}, where
\cite{AHS,Lecture} are quoted occasionally.

\subsection{Instanton number}\label{subsec:instnumber}

We define an instanton number of a $G$-bundle $\mathcal F$ over
$\proj^2$. It is explained in, for example, \cite{AHS}.
Since it is related to our assumption that $G$ is simply-laced, we
briefly recall the definition.

The instanton number is the characteristic class associated with an
invariant bilinear form $(\ ,\ )$ on the Lie algebra $\g$ of
$G$. Since we assume $G$ is simple, the bilinear form is unique up to
scalar. We normalize it so that the square length of the highest root
$\theta$ is $2$.
\index{( , )@$(\ ,\ )$}

When $G = SL(r)$, it is nothing but the second Chern class of the
associated complex vector bundle.

For an embedding $SL(2)\to G$ corresponding to a root $\alpha$, we can
induce a $G$-bundle $\mathcal F$ from an $SL(2)$-bundle $\mathcal
F_{SL(2)}$. Then the corresponding instanton numbers are related by
\begin{equation}
  d(\mathcal F) = d(\mathcal F_{SL(2)})\times \frac2{(\alpha,\alpha)}.
\end{equation}
Since we assume $G$ is simply-laced, we have $(\alpha,\alpha) = 2$ for
any root $\alpha$. Thus the instanton number is preserved under the
induction.

\subsection{Moduli of framed \texorpdfstring{$G$}{G}-bundles}

Let $\Bun{d}$ be the moduli space of $G$-bundles with trivialization
at $\linf$ of instanton number $d$ as before.
\begin{NB}
  If the following is explained in Introduction, it will be deleted.
\end{NB}%
We often call them {\it framed $G$-bundles}.

The tangent space of $\Bun{d}$\index{Bun@$\Bun{d}$} at $\mathcal F$ is
equal to the cohomology group $H^1(\proj^2,\g_{\mathcal F}(-\linf))$,
where $\g_{\mathcal F}$ is the vector bundle associated with $\mathcal
F$ by the adjoint representation $G\to GL(\g)$ (\cite[3.5]{BFG}).
Other degree cohomology groups vanish, and hence the dimension of
$H^1$ is given by the Riemann-Roch formula.
It is equal to $2dh^\vee$ (\cite{AHS}). Here
$h^\vee$\index{hcheck@$h^\vee$ (dual Coxeter number)} is the dual
Coxeter number of $G$, appearing as the ratio of the Killing form and
our normalized inner product $(\ ,\ )$.

It is known that $\Bun{d}$ is connected, and hence irreducible
(\cite[Prop.~2.25]{BFG}).

It is also known that $\Bun{d}$ is a holomorphic symplectic
manifold. Here the symplectic form is given by the isomorphism
\begin{equation}
  H^1(\proj^2,\g_{\mathcal F}(-2\linf))
  \xrightarrow{\cong}
  H^1(\proj^2,\g^*_{\mathcal F}(-\linf)),
\end{equation}
where $\g\cong\g^*$ is induced by the invariant bilinear form, and
$\shfO_{\proj^2}(-\linf)\to\shfO_{\proj^2}(-2\linf)$ is given by the
multiplication by the coordinate $z_0$ corresponding to $\linf$.
The tangent space $T_{\mathcal F}\Bun{d}\cong H^1(\proj^2,\g_{\mathcal
  F}(-\linf))$ is isomorphic also to $H^1(\proj^2,\g_{\mathcal
  F}(-2\linf))$ and the above isomorphism can be regarded as
$T_{\mathcal F}\Bun{d}\to T_{\mathcal F}^*\Bun{d}$. It is
nondegenerate and closed. (See \cite[Ch.~2, 3]{Lecture} for
$G=SL(r)$. General cases can be deduced from the $SL(r)$-case by a
faithful embedding $G\to SL(r)$.)

\subsection{Stratification}

Let $\Uh{d}$ be the Uhlenbeck space for $G$.
It has a stratification
\begin{equation}
  \label{eq:strat}
  \Uh{d} = \bigsqcup \BunGl{d_1},\quad
  \BunGl{d_1} = \Bun{d_1}\times S_\lambda\AA^2,
  \index{BunGlambda@$\BunGl{d_1}$}
\end{equation}
where the sum runs over pairs of integers $d_1$ and partitions
$\lambda$ with $d_1+|\lambda| = d$. Here
$S_\lambda\AA^2$\index{SlambdaA@$S_\lambda\AA^2$} is a stratum of the
symmetric product $S^{|\lambda|}\AA^2$, consisting of configurations
of points whose multiplicities are given by $\lambda$, that is
\begin{equation}
  \label{eq:Sl}
  S_\lambda\AA^2 = \left\{
    \sum \lambda_i x_i\in S^{|\lambda|}\AA^2 \middle|\,
      \text{$x_i\neq x_j$ for $i\neq j$}\right\}
\end{equation}
for $\lambda = (\lambda_1\ge\lambda_2\ge\cdots)$. We have
\begin{equation}
  \dim \BunGl{d_1} = 2(d_1 h^\vee + l(\lambda)).
\end{equation}

Let $\UhGl{d_1}$ be the closure of $\BunGl{d_1}$. We
have a finite morphism
\begin{equation}\label{eq:finite}
  \Uh{d_1}\times\overline{S_\lambda\AA^2}\to \UhGl{d_1},
\end{equation}
extending the identification $\Bun{d_1}\times S_\lambda\AA^2 =
\BunGl{d_1}$, where $\overline{S_\lambda\AA^2}$ is the closure of
$S_\lambda\AA^2$ in $S^{|\lambda|}\AA^2$.

\subsection{Factorization}\label{sec:factorization}

For any projection $a\colon\AA^2 \to \AA^1$\index{a@$a$ (projection)}
we have a natural map $\pi^d_{a,G}\colon \Uh{d} \to
S^d\AA^1$\index{paidpg@$\pi^d_{p,G}$ (factorization morphism)}. See
\cite[\S6.4]{BFG}. It is equivariant under $\GG =
G\times\CC^*\times\CC^*$: it is purely invariant under $G$. We also
change the projection $a$ according to the $\CC^*\times\CC^*$-action.

Let us explain a few properties. Let $\mathcal F\in\Bun{d}$. It is a
principal $G$-bundle over $\proj^2$ trivialized at $\linf$, but can be
also considered as a $G$-bundle over $\proj^1\times\proj^1$
trivialized at the union of two lines $\{\infty\}\times\proj^1$ and
$\proj^1\times\{\infty\}$. We extend $a$ to
$\proj^1\times\proj^1\to\proj^1$.
Then $\pi^d_{a,G}(\mathcal F)$ measures how the restriction of
$\mathcal F$ to a projective line $a^{-1}(x)$ differs from the trivial
$G$-bundle for $x\in\proj^1$. If $x$ is disjoint from
$\pi^d_{a,G}(\mathcal F)$, then $\mathcal F|_{a^{-1}(x)}$ is a trivial
$G$-bundle. If not, the coefficient of $x$ in $\pi^d_{a,G}(\mathcal
F)$ counts non-triviality with an appropriate multiplicity. (See
\cite[\S4]{BFG}.)

On the stratum $\Bun{d_1}\times S_{\lambda}\AA^2$, $\pi^d_{a,G}$ is
given as the sum of $\pi^{d_1}_{a,G}$ and the natural morphism
$S_{\lambda}\AA^2\to S^{|\lambda|}\AA^1$ induced from $a$. This
property comes from the definition of the Uhlenbeck as a space of
quasi-maps. (See \cite[\S\S1,2]{BFG}.)

For type $A$, it is given as follows in terms of the ADHM description
$(B_1,B_2,I,J)$ (see \cite[Ch.~2]{Lecture}): let $B_a$ be the linear
combination of $B_1$, $B_2$ corresponding to the projection
$a\colon\AA^2\to \AA^1$. Then $\pi^d_{a,G}$ is the characteristic
polynomial of $B_a$. (See \cite[Lem.~5.9]{BFG}.)

Moreover, most importantly, this map enjoys the factorization
property, which says the following. Let us write $d = d_1+d_2$ with
$d_1, d_2 > 0$. Let $(S^{d_1}\AA^1\times S^{d_2}\AA^1)_0$ be the open
subset of $S^{d_1}\AA^1\times S^{d_2}\AA^1$ where the first divisor is
disjoint from the second divisor. Then we have a natural isomorphism
\begin{equation}
    \label{eq:4}
    \Uh{d}\times_{S^d\AA^1} (S^{d_1}\AA^1\times S^{d_2}\AA^1)_0
    \cong (\pi^{d_1}_{a,G} \times \pi^{d_2}_{a,G})^{-1}(
    (S^{d_1}\AA^1\times S^{d_2}\AA^1)_0).
\end{equation}
See \cite[Prop.~6.5]{BFG}. We call $\pi^d_{a,G}$ the {\it
  factorization morphism}. Often we are going to make statements
about $\Uh{d}$ and we are going to prove them by induction on $d$;
\eqref{eq:4} will usually allow us to say that the inductive step is
trivial away from the preimage under $\pi^d_{a,G}$ of the main
diagonal in $S^d\AA^1$. In this case we are going to say that (the
generic part of) the induction step ``follows by the factorization
argument''.

\begin{NB}
\subsection{Parabolic subgroups}

Let us fix a Borel subgroup $B$ and a maximal torus $T$. Then we have
simple roots $\alpha_i\in X^*(T) = \Hom(T,\GG_m)$. Let $I$ denote the
set of simple roots. Let us choose a one parameter subgroup
$\lambda\in X_*(T) = \Hom(\GG_m,T)$ from the fundamental chamber,
i.e., $\langle \lambda, \alpha_i\rangle > 0$ for any $i\in I$. Then we
recover $B$ and $T$ from $\lambda$ as
\begin{equation}
  B = \left\{ g\in G\, \middle|\,\text{$\lim_{t\to 0}
  \lambda(t) g\lambda(t)^{-1}$ exists}\right\},
\quad
  T = G^{\lambda(\GG_m)}.
\end{equation}

We identify $G/T$ as the space of one parameter subgroups
\begin{equation}
  G/T \ni h \mapsto \{ h\lambda h^{-1}\colon \GG_m\to G \}.
\end{equation}
Let
\begin{equation}
  \begin{split}
    & B^h \defeq \left\{ g\in G\, \middle|\, \text{$\lim_{t\to 0}
  (h\lambda(t) h^{-1}) g (h\lambda(t) h^{-1})^{-1}$
  exists}\right\} = h B h^{-1},
\\
    & T^h \defeq G^{h\lambda(\GG_m) h^{-1}} = h T h^{-1}.
  \end{split}
\end{equation}
All pairs $(B^h, T^h)$ of Borel subgroups and tori appear in this
way. Therefore $G/T$ can be considered as the space of pairs.

Choose a subset $J\subset I$. Then we choose $\lambda_J\in X_*(T)$ in
so that
\begin{equation}
  \begin{split}
  & \langle \lambda_J, \alpha_i\rangle > 0 \quad\text{for $i\notin J$},
\\
  & \langle \lambda_J, \alpha_i\rangle = 0 \quad\text{for $i\in J$}.
  \end{split}
\end{equation}
Then we define
\begin{equation}
  \begin{split}
    & P_J \defeq \left\{ g\in G\, \middle|\, \text{$\lim_{t\to 0}
  \lambda_J(t) g\lambda_J(t)^{-1}$ exists}\right\},
\\
    & L_J \defeq G^{\lambda_J(\GG_m)}.
  \end{split}
\end{equation}
It is a parabolic subgroup and its Levi subgroup corresponding to
$J$. (Note $P_\emptyset = B$, $L_\emptyset = T$, $P_I = L_I = G$.)

We can define $P_J^h$, $L_J^h$ in the same way as above. The space of
pairs $(P, L)$ of parabolic subgroups and its Levi subgroups is equal
to $G/L_J$.
\begin{NB2}
  This is not true. Misha's message on Oct. 30: Consider $G=SL(3)$ and
  $L$ is of type $(2,1)$. The standard parabolic $P$ and opposite
  parabolic $P_-$ is not conjugate. Let us take $w_0$, the longest
  element of the Weyl group. We have a parabolic subgroup $P^{w_0}$,
  but its Levi subgroup is of type $(1,2)$.
\end{NB2}

Let $B^\text{op}$ be the opposite Borel subgroup. It is given by the
longest element $w_0$ in the Weyl group $W = N(T)/T$ as
$B^{w_0}$. Note that $w_0$ induces an involutive diagram automorphism
$\varpi$ of $I$ by
\begin{equation}
  w_0(\alpha_i) = - \alpha_{\varpi(i)}.
\end{equation}
For example, for type $A_\ell$, it is the reflection with respect to
the center in the Dynkin diagram. In the McKay correspondence, $I$ is
the set of nontrivial irreducible representations of a finite subgroup
$\Gamma$ of $SL_2(\CC)$. Then $\varpi$ is given by the dual representation.

We have a projection $G/T\to G/L_J$. If we regard $w_0$ as an element
in $G/L_J$, then $L_J^{w_0} = L_{\varpi(J)}$. This probably means that
what is written after \eqref{eq:decomp2} is {\it wrong}. We have
decomposition
\begin{equation}
  U^B_{T,G} = U^{P_i}_{T,L_i} \oplus (U^B_{T,L_i})^\perp.
\end{equation}
If we move this by $w_0$, we get
\begin{equation}
  U^{B^w_0}_{T,G} = U^{P_i^{w_0}}_{T,L_{\varpi(i)}} \oplus
  (U^{P_i^{w_0}}_{T,L_{\varpi(i)}})^\perp.
\end{equation}

Therefore the inner product using the trivialization of the local
system should not be a standard one. It should be given by
\begin{equation}
  d(x, w_0 y) \qquad\text{for $x,y\in\h$}
\end{equation}
under $U^d_{T,G}\cong\h$.
\end{NB}%

\newcommand{\HHom}{\mathcal Hom}

\section{Localization}\label{sec:local}

\subsection{General Statement}\label{sec:localization}

Let $T$ be a torus acting on $X$ and $Y$ be a closed invariant subset
containing $X^T$. Let $\varphi\colon Y\to X$ be the inclusion. Let
$U\defeq X\setminus Y$ and $\psi\colon U\to X$ be the inclusion.
Let $\scF\in D^b_T(X)$. We consider distinguished triangles
\begin{equation}\label{eq:15}
\begin{gathered}
  \varphi_!\varphi^! \scF \to \scF \to \psi_*\psi^* \scF \xrightarrow{+1},
\\
  \psi_!\psi^! \scF \to \scF \to \varphi_*\varphi^* \scF \xrightarrow{+1}.
\end{gathered}
\end{equation}

Denote the Lie algebra of $T$ by $\ft$.
Natural homomorphisms
\begin{gather}
  \label{eq:tri1}
  H^*_T(X,\scF) \to H^*_T(X, \varphi_*\varphi^* \scF)\cong
  H^*_T(Y, \varphi^* \scF),
  \\
  \label{eq:tri2}
  H^*_T(Y,\varphi^! \scF) \cong H^*_T(X, \varphi_*\varphi^! \scF) \cong
  H^*_T(X, \varphi_!\varphi^! \scF) \to H^*_T(X, \scF)
\end{gather}
become isomorphisms after inverting an element $f\in
\CC[\ft]$ such that
\begin{equation}\label{eq:cond}
  \{ x\in \ft\mid f(x) = 0\} \supset \bigcup_{x\in X\setminus Y}
  \operatorname{Lie}(\operatorname{Stab}_x).
\end{equation}
See \cite[(6.2)]{GKM}. These assertions follow by observing
\(
    H^*_T(X; \psi_!\psi^! \scF) = H^*_T(X, Y ; \scF)
\)
and
\(
    H^*_T(X; \psi_*\psi^*\scF) = H^*_T(U ; \scF)
\)
are torsion in $\CC[\ft]$.
\begin{NB}
  For \eqref{eq:tri1} we note that
  \begin{equation}
    H^*_T(X; \psi_!\psi^! \scF) = H^*_T(X; \psi_! \psi^* \scF) = H^*_T(X, Y ; \scF)
  \end{equation}
  is torsion. For \eqref{eq:tri2} we note that
  \begin{equation}
    H^*_T(X; \psi_*\psi^*\scF) = H^*_T(U ; \scF)
  \end{equation}
  is torsion.
\end{NB}%
The same is true also for cohomology groups with compact supports. We
call these statements the {\it localization theorem}.

\begin{NB}
  One remark which Sasha pointed out me was the following: If $T$ acts
  trivially on $Y$ (say $Y = X^T$), and $\scF = \CC_X$, we have
  \begin{equation}
    H^*_T(Y, \varphi^* \scF) = H^*_T(Y,\CC_Y)
    \cong H^*(Y)\otimes H^*_T(\mathrm{pt}).
  \end{equation}
However for a general equivariant complex $\scF$, we may not have
\begin{equation}
  H^*_T(Y, \varphi^* \scF) \cong H^*(Y, \varphi^* \underline{\scF})\otimes
  H^*_T(\mathrm{pt}),
\end{equation}
where $\underline{\scF}$ is the underlying (non-equivariant) complex of
$\scF$. We have the spectral sequence with the right hand side as
$E_2$-term converging to the left hand side \cite[(5.5.4)]{GKM}. But
it may fail to degenerate. See \cite[(5.6)]{GKM}.

I think that this problem does not arise for a hyperbolic restriction:
It sends a semisimple complex to a semisimple complex, and a
semisimple complex satisfies a degeneration of the spectral sequence
above. It should be written somewhere in \cite{GKM}.
\end{NB}

We now suppose that we have an action of $\CC^*\times\CC^*$ commuting
with the $T$-action such that
\begin{equation}\label{eq:assump}
  \begin{minipage}[htbp]{.85\linewidth}
\begin{itemize}
\item $X^{\CC^*\times\CC^*}$ is a single point, denoted by $0$.
\item If $n_1$, $n_2 > 0$, $(t^{n_1}, t^{n_2})\cdot x$ goes to $0$
  when $t\to 0$.
\end{itemize}
  \end{minipage}
\end{equation}
In fact, it is enough to have a $\CC^*$-action for the result below,
but we consider a $\CC^*\times\CC^*$-action, as the Uhlenbeck space
has natural $\CC^*\times\CC^*$-action.

Let $\TT = T\times\CC^*\times\CC^*$.

\begin{NB}
  The following was stated several times by Sasha. I give the proof
  now.
\end{NB}

\begin{Lemma}\label{lem:centralfiber}
  The natural homomorphisms $H^*_\TT(X, \scF) \to H^*_\TT(Y, \varphi^* \scF)$,
  $H^*_{\TT,c}(Y, \varphi^! \scF)\to H^*_{\TT,c}(X,\scF)$ are isomorphisms for
  $\scF\in D^b_\TT(X)$.
\end{Lemma}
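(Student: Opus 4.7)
The plan is to reformulate both assertions in terms of vanishing of certain cohomology groups and then exploit the contracting $\CC^*\times\CC^*$-action.

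First, from the distinguished triangles in \eqref{eq:15}, the first map is an isomorphism if and only if $H^*_\TT(X, \psi_!\psi^!\scF) = 0$, while the second is an isomorphism if and only if $H^*_{\TT,c}(X, \psi_*\psi^*\scF) = 0$. I would establish both vanishings via a \emph{contraction principle}: for any $\TT$-equivariant complex $\mathcal G$ on $X$, the natural maps
\begin{equation*}
H^*_\TT(X, \mathcal G) \to H^*_\TT(\{0\}, i_0^*\mathcal G),
\qquad
H^*_{\TT,c}(\{0\}, i_0^!\mathcal G) \to H^*_{\TT,c}(X, \mathcal G)
\end{equation*}
are isomorphisms.

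To prove the contraction principle, I would consider the one-parameter subgroup $\mu\colon\CC^*\hookrightarrow\CC^*\times\CC^*$, $\mu(t) = (t,t)$. By~\eqref{eq:assump}, $\mu$ contracts $X$ onto $\{0\}$, so $X^{\CC^*_\mu} = \{0\}$ and every $x\neq 0$ has finite $\CC^*_\mu$-stabilizer. The extended action $m\colon\AA^1\times X\to X$, defined by $m(t,x) = \mu(t)\cdot x$ for $t\neq 0$ and $m(0,x) = 0$, is well-defined algebraically by the contracting hypothesis and provides a $\CC^*_\mu$-equivariant deformation retract of $X$ onto the fixed point. Combined with the localization theorem for $\CC^*_\mu\subset\TT$ (which yields the isomorphism after inverting a generator of $\CC[\operatorname{Lie}\CC^*_\mu]$), this upgrades to an isomorphism without any localization, via a standard equivariant homotopy invariance argument.

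With the contraction principle in hand, the proof concludes as follows: for $\psi_!\psi^!\scF = \psi_!\psi^*\scF$ (since $\psi$ is an open embedding), its stalk at $0 \in Y$ vanishes because the sheaf is extended by zero from $U = X\setminus Y$, which does not contain $0$; for $\psi_*\psi^*\scF$, the triangle $\varphi_!\varphi^!\scF\to\scF\to\psi_*\psi^*\scF$ combined with the identity $i_0^!\varphi_!\varphi^!\scF = i_0^!\scF$ (which holds because $i_0$ factors through $\varphi$) forces $i_0^!\psi_*\psi^*\scF = 0$. Both $H^*_\TT(X,\psi_!\psi^!\scF)$ and $H^*_{\TT,c}(X,\psi_*\psi^*\scF)$ therefore vanish, yielding both isomorphisms in the lemma. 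The main technical obstacle will be the rigorous verification of the contraction principle---specifically, the step of eliminating the localization in $\CC[\operatorname{Lie}\CC^*_\mu]$---but this should be a standard consequence of the algebraic deformation retract provided by $m$, obtained by analyzing the pushforward $p_{1*}(m^*\mathcal G)$ along $p_1\colon\AA^1\times X\to\AA^1$ and comparing fibers at $t=1$ and $t=0$.
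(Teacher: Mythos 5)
Your reduction is sound, and at its core it is the same argument as the paper's: the two triangles in \eqref{eq:15} reduce the lemma to the vanishing of $H^*_\TT(X,\psi_!\psi^!\scF)$ and $H^*_{\TT,c}(X,\psi_*\psi^*\scF)$, and your ``contraction principle'' ($H^*_\TT(X,\mathcal G)\cong H^*_\TT(\{0\},i_0^*\mathcal G)$ and its dual) is precisely the isomorphism $(a_X)_*\xrightarrow{\cong}(b_0^X)^*$ on equivariant objects for a contracting action with unique fixed point, i.e.\ \cite[Lemma~6]{Braden}, which is exactly what the paper invokes; the paper just applies it to identify both sides of each map directly with the stalk (resp.\ costalk) of $\scF$ at $0$ and checks the diagram commutes, rather than passing through the error terms. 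Your evaluation of those error terms at $0$ is correct: $0$ is $T$-fixed because the actions commute, so $0\in X^T\subset Y$ and $i_0^*\psi_!\psi^!\scF=0$; and $\varphi^!\varphi_!\cong\operatorname{id}$ for the closed embedding $\varphi$ gives $i_0^!\psi_*\psi^*\scF=0$ from the first triangle. So modulo the contraction principle your proof is complete and essentially equivalent to the paper's.

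The genuine gap is in your proposed proof of the contraction principle itself. The localization theorem cannot help: it only yields isomorphisms after inverting an element of $\CC[\operatorname{Lie}\TT]$, and nothing in it removes the denominators. What would have to carry the argument is the comparison of $p_{1*}(m^*\mathcal G)$ over $t=1$ and $t=0$, and as stated this begs the question twice. First, $p_1$ is not proper (X is not complete), so the stalk of $p_{1*}(m^*\mathcal G)$ at a point of $\AA^1$ does not compute the cohomology of the corresponding fiber without a base-change argument. Second, weak equivariance only identifies $m^*\mathcal G$ with $p_2^*\mathcal G$ over $\CC^*\times X$, so it gives local constancy of the pushforward over $\CC^*\subset\AA^1$ only; the absence of jumping at $t=0$, where $m$ collapses $X$ to the point, is exactly the content of the contraction lemma and not a formal consequence of constructibility. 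A genuine proof needs the actual argument behind Braden's lemma --- e.g.\ choose a $\CC^*_\mu$-stable stratification adapted to $\mathcal G$, use the contraction to produce a stratum-preserving deformation retraction of $X$ onto $X\cap B_\epsilon$ for a small ball around $0$, and use the local conic structure to identify $H^*_\TT(X\cap B_\epsilon,\mathcal G)$ with the stalk at $0$ --- or simply the citation of \cite[Lemma~6]{Braden}, which is the route the paper takes. With that replacement (and the dual statement for compact supports, obtained by Verdier duality as in the paper), your proof closes.
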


\begin{proof}
    Let $b_0^X\colon \{0\}\to X$, $b_0^Y\colon \{0\}\to Y$ be
    inclusions, and $a_X\colon X\to \{ 0\}$, $a_Y\colon Y\to \{0\}$ be
    the obvious morphisms. Since $0$ is the unique fixed point of an
    attracting action of $\CC^*\times\CC^*$ by our assumption,
    adjunction gives us isomorphisms
    $(a_X)_*\xrightarrow{\cong}(b_0^X)^*$,
    $(a_Y)_*\xrightarrow{\cong}(b_0^Y)^*$ on equivariant objects by
    \cite[Lemma~6]{Braden}. Therefore we have a diagram
  \begin{equation}
    \begin{CD}
    H^*_\TT(X,\scF) @>>> H^*_\TT(Y, \varphi^* \scF)
\\
   @V{\cong}VV @VV{\cong}V
\\
    H^*_\TT((b_0^X)^* \scF) @= H^*_\TT((b_0^Y)^*\varphi^* \scF),
    \end{CD}
  \end{equation}
  where the lower horizontal equality follows from $\varphi b_0^Y =
  b_0^X$. If $\scF$ is a sheaf, other three homomorphisms are given by
  restrictions, therefore the diagram is commutative. Hence it is also
  so for $\scF\in D^b_\TT(X)$ by a standard argument. Taking the dual
  spaces, we obtain the second assertion.
\end{proof}

\subsection{The case of Ext algebras}

Let $\scF$, $\scG\in D^b_T(X)$. We claim that
\begin{gather}\label{eq:31}
  \Ext_{D^b_T(X)}(\scF,\scG)\to \Ext_{D^b_T(Y)}(\varphi^! \scF, \varphi^! \scG),
  \\
  \Ext_{D^b_T(X)}(\scF,\scG)\to \Ext_{D^b_T(Y)}(\varphi^* \scF, \varphi^* \scG)\label{eq:38}
\end{gather}
are isomorphisms after inverting an appropriate element $f$. Taking
adjoint and considering \eqref{eq:15}, we see that it is enough to
show that
\begin{equation}
    \Ext_{D^b_T(X)}(\psi_*\psi^* \scF, \scG), \quad
    \Ext_{D^b_T(X)}(\scF,\psi_!\psi^! \scG)
\end{equation}
are torsion. Let us observe that
\begin{equation}
  \Ext_{D^b_T(X)}(\psi_*\psi^* \scF, \psi_*\psi^* \scF) \cong
  \Ext_{D^b_T(U)}(\psi^* \psi_*\psi^* \scF, \psi^* \scF)
\end{equation}
is torsion, as it is an equivariant cohomology group over $U$. Then
multiplying the identity endomorphism of $\psi_*\psi^* \scF$ to
$\Ext_{D^b_T(X)}(\psi_*\psi^* \scF, \scG)$, we conclude that
$\Ext_{D^b_T(X)}(\psi_*\psi^* \scF, \scG)$ is torsion.
The same argument applies also to $\Ext_{D^b_T(X)}(\scF,\psi_!\psi^! \scG)$.

\begin{NB}
  Here is my earlier NB. It is recorded for my memory.

  We want to have the same statement for the Ext algebras. Namely we
  want to show that
  \begin{gather}
    \Ext_{D^b_T(X)}(\scF,\scG)\to \Ext_{D^b_T(Y)}(\varphi^! \scF, \varphi^! \scG),
\\
   \Ext_{D^b_T(X)}(\scF,\scG)\to \Ext_{D^b_T(Y)}(\varphi^* \scF, \varphi^* \scG)
  \end{gather}
  are isomorphism after inverting an element $f$. Sasha says that these
  follow once we consider the `localized equivariant derived category
  localizing the space of homomorphisms'. It may be OK, but I am not
  comfortable with it, so let me consider a down-to-earth approach.

  We first give a sheaf theoretic interpretation of homomorphisms. We
  have
\begin{equation}
  \varphi_*\HHom(\varphi^! \scF, \varphi^! \scG)
  \cong \HHom(\varphi_!\varphi^! \scF, \scG)
\end{equation}
by \cite[3.1.10]{KaSha}. Hence we have a natural homomorphism
\begin{equation}
  \HHom(\scF,\scG) \to \varphi_* \HHom(\varphi^! \scF, \varphi^! \scG),
\end{equation}
and hence
\begin{equation}\label{eq:ext}
  \Ext_{D^b_T(X)}(\scF,\scG)\to \Ext_{D^b_T(Y)}(\varphi^!\scF, \varphi^! \scG).
\end{equation}
This is compatible with \eqref{eq:tri2}:
\begin{equation}
  \begin{CD}
  H^*_T(Y,\varphi^! \scF)\otimes  \Ext_{D^b_T(X)}(\scF, \scG)
 @>\id\times\eqref{eq:ext}>>
 H^*_T(Y,\varphi^! \scF)\otimes  \Ext_{D^b_T(Y)}(\varphi^!\scF, \varphi^!\scG)
 @>>> H^*_T(Y,\varphi^! \scG)
\\
   @V{\eqref{eq:tri2}\times\id}VV @. @VV\eqref{eq:tri2}V
\\
  H^*_T(X,\scF)\otimes \Ext_{D^b_T(X)}(\scF,\scG)
  @= H^*_T(X,\scF)\otimes \Ext_{D^b_T(X)}(\scF,\scG)
  @>>> H^*_T(X,\scG)
  \end{CD}
\end{equation}
is commutative. If we set $\scF=\scG$, the upper row gives an
$\Ext_{D^b_T(X)}(\scF,\scF)$-module structure on $H^*_T(Y,\varphi^!\scF)$.  The
commutativity means that \eqref{eq:tri2} is a homomorphism of
$\Ext_{D^b_T(X)}(\scF,\scF)$-modules.

To show that \eqref{eq:ext} is an isomorphism after the localization,
we need that
\begin{equation}
  \Ext_{D^b_T(X)}(\psi_*\psi^* \scF, \scG)
\end{equation}
is torsion. But I do not see why it is so.

Similarly we have
\begin{equation}
  \varphi_*\HHom(\varphi^* \scF, \varphi^* \scG) \cong \HHom(\scF, \varphi_*\varphi^*\scG)
\end{equation}
\cite[2.6.15]{KaSha}, and hence
\begin{equation}\label{eq:ext2}
  \Ext_{D^b_T(X)}(\scF,\scG) \to \Ext_{D^b_T(Y)}(\varphi^*\scF, \varphi^*\scG).
\end{equation}
This is compatible with \eqref{eq:tri1}:
\begin{equation}
  \begin{CD}
    H^*_T(Y,\varphi^*\scF) \otimes \Ext_{D^b_T(X)}(\scF,\scG) @>>>
    H^*_T(Y,\varphi^*\scF) \otimes \Ext_{D^b_T(Y)}(\varphi^*\scF,\varphi^* \scG) @>>>
    H^*_T(Y,\varphi^*\scG)
\\
   @AAA @. @AAA
\\
    H^*_T(X,\scF) \otimes \Ext_{D^b_T(X)}(\scF, \scG) @=
    H^*_T(X,\scF) \otimes \Ext_{D^b_T(X)}(\scF, \scG)
    @>>> H^*_T(X,\scG)
  \end{CD}
\end{equation}
is commutative. This diagram has the same meaning as above.
\eqref{eq:ext2} is an isomorphism over localization, if
\begin{equation}
  \Ext_{D^b_T(X)}(\scF, \psi_!\psi^! \scG)
\end{equation}
is torsion. Again I do not see why it is so.
\end{NB}

\subsection{Attractors and repellents}\label{sec:attr-repell}

Let $X$ be a $T$-invariant closed subvariety in an affine space with a
linear $T$-action. Let $A\subset T$ be a subtorus and $X^A$ denote the
fixed point set.

Let $X_*(A)$ be the space of cocharacters of $A$. It is a free
$\Z$-module. Let
\begin{equation}
  \fa_\RR = X_*(A)\otimes_\Z\RR.
\end{equation}

Let $\Stab_x$ be the stabilizer subgroup of a point $x\in X$. A {\it
  chamber\/} $\mathfrak C$ is a connected component of
\begin{equation}
    \label{eq:29}
    \fa_\RR \setminus \bigcup_{x\in X\setminus X^A} X_*(\Stab_x)\otimes_\Z\RR.
\end{equation}

We fix a chamber $\mathfrak C$. Choose a cocharacter $\lambda$ in
$\mathfrak C$. Let $x\in X^A$. We introduce {\it attracting\/} and
{\it repelling\/} sets:
\begin{multline}
    \label{eq:3}
    \mathcal A_x = \left\{y \in X \,\middle|\,
      \begin{minipage}{.6\linewidth}
      the map $t\mapsto \lambda(t)(y)$
      extends to a map $\AA^1 \to X$ sending $0$ to $x$
      \end{minipage}
    \right\},
    \\
    \mathcal R_x = \left\{y \in X\,\middle|\,
      \begin{minipage}{.6\linewidth}
      the map $t\mapsto \lambda(t^{-1})(y)$ extends
      to a map $\AA^1 \to X$ sending $0$ to $x$
      \end{minipage}\right\}.
\end{multline}
These are closed subvarieties of $X$, and independent of the choice
of $\lambda\in\mathfrak C$.
\begin{NB}
    To see the independence, we may assume $X$ is affine. Then the
    assertion is clear.
\end{NB}
 Similarly we can define $\mathcal
A_X$\index{AX@$\mathcal A_X$ (attracting set)}, $\mathcal
R_X$\index{RX@$\mathcal R_X$ (repelling set)} if we do not fix the
point $x$ as above. Note that $X^{A}$ is a closed subvariety of both
$\mathcal A_X$ and $\mathcal R_X$; in addition we have the natural
morphisms $\mathcal A_X \to X^{A}$ and $\mathcal R_X \to X^{A}$.

\begin{NB}
\subsection{Attractors and repellents}

Let us denote by $\Lambda^\vee_{Z(L)}$ its lattice of characters and
by $\Lambda^{\vee +}_{Z(L)}$ the sub-semi-group generated by the
weights of $Z(L)$ on $\mathfrak n_P$.
Let $Z^+(L) = \Spec \CC[\Lambda^{\vee+}Z(L)]$. This is a semi-group,
containing $Z(L)$ as an open dense subgroup. It also has a canonical
point $0$. Let now $X$ be a scheme endowed with a $Z(L)$-action and
let $x$ be a $Z(L)$-fixed point. Set
\begin{multline}
    \mathcal A_x = \{y \in X \mid \text{the map $z\mapsto z(y)$
      extends to a map $Z^+(L) \to X$ sending $0$ to $x$}\},
    \\
    \mathcal R_x = \{y \in X\mid \text{the map $z\mapsto z^{-1}(y)$ extends
      to a map $Z^+(L) \to X$ sending $0$ to $x$}\}.
\end{multline}
These are locally closed subsets of $X$. Similarly we can define
$\mathcal A_X$\index{AX@$\mathcal A_X$ (attracting set)}, $\mathcal
R_X$\index{RX@$\mathcal R_X$ (repelling set)} if we do not fix the
point $x$ as above. Note that $X^{Z(L)}$ is a closed subset of both
$\mathcal A_X$ and $\mathcal R_X$; in addition we have the natural
maps $\mathcal A_X \to X^{Z(L)}$ and $\mathcal R_X \to X^{Z(L)}$.
\end{NB}

\subsection{Hyperbolic restriction}\label{sec:hyperb-restr-1}

We continue the setting in the previous subsection. We choose a
chamber in $\fa_\RR$, and consider the diagram
\begin{equation}\label{eq:113}
  X^A\overset{p}{\underset{i}{\leftrightarrows}}
  \cA_X \xrightarrow{j} X,
\end{equation}
where $i$, $j$ are embeddings, and $p$ is defined by $p(y) =
\lim_{t\to 0}\lambda(t)y$.

\begin{NB}
  Oct. 10: I change the definition of the hyperbolic restriction from
  $i^!j^*$ to $i^* j^!$. Or equivalently exchange $\cA_X$ and
  $\mathcal R_X$.
\end{NB}

We consider Braden's {\it hyperbolic restriction functor\/}
\cite{Braden} defined by $\Phi = i^*j^!$.\index{UZphi@$\Phi$ (hyperbolic restriction functor)} (See also a recent paper
\cite{2013arXiv1308.3786D}.)
Braden's theorem says that we have a canonical isomorphism
\begin{equation}\label{eq:Braden}
  i^* j^!
  \cong  i_-^! j_-^*
\end{equation}
on weakly $A$-equivariant objects, where $i_-$, $j_-$ are defined
as in \eqref{eq:113} for $\mathcal R_X$ instead of $\mathcal A_X$.

Braden proved his theorem for a normal algebraic variety. It is not
known that $\Uh{d}$ is normal or not. Therefore we use a more general
result~\cite[Theorem~3.1.6]{2013arXiv1308.3786D}.

Note also that $i^*$ and $p_*$ are isomorphic on weakly equivariant
objects, we have $\Phi = p_*  j^!$. (See \cite[(1)]{Braden}.)

Let $\scF\in D^b_T(X)$. A homomorphism
\begin{equation}\label{eq:48}
  H^*_T(X^A, i^* j^! \scF) \cong H^*_T(X^A,p_* j^!\scF)
  = H^*_T(\cA_X, j^! \scF) \to
 H^*_T(X, \scF)
\end{equation}
becomes an isomorphism after inverting a certain element by the
localization theorem in the previous subsection, applied to the pair
$\cA_X\subset X$.

We also have two naive restrictions
\begin{equation}\label{eq:naive}
  H^*_T(X^A, (j\circ i)^! \scF), \quad
  H^*_T(X^A, (j\circ i)^* \scF).
\end{equation}
For the first one, we have a homomorphism to the hyperbolic restriction
\begin{equation}\label{eq:26}
  H^*_T(X^A, (j\circ i)^! \scF)\to H^*_T(X^A, i^* j^! \scF),
\end{equation}
which factors through $H^*(\cA_X, j^! \scF)$. Then it also becomes
an isomorphism after inverting an element.

The second one in \eqref{eq:naive} fits into a commutative diagram
\begin{equation}
  \begin{CD}
    H^*_T(X^A, i^* j^! \scF)
    @>>>
    H^*_T(X^A, (j\circ i)^* \scF)
\\
@AAA @AAA
\\
    H^*_T(\cA_X, j^! \scF) @>>>
    H^*_T(\cA_X, j^* \scF).
  \end{CD}
\end{equation}
Two vertical arrows are isomorphisms after inverting an element
$f$. The lower horizontal homomorphism factors through $H^*_T(X,
\scF)$ and the resulting two homomorphisms are isomorphisms after
inverting an element, which we may assume equal to $f$. Therefore the
upper arrow is also an isomorphism after inverting an element.

\subsection{Hyperbolic semi-smallness}\label{subsec:hypsemismall}

Braden's isomorphism $p_* j^! \cong (p_-)_! j_-^*$ implies that $p_*
j^!$ preserves the purity of weakly equivariant mixed
sheaves. (\cite[Theorem~8]{Braden}). In particular, $p_* j^! \IC(X)$ is
isomorphic to a direct sum of shifts of intersection cohomology
complexes (\cite[Theorem~2]{Braden}).

Braden's result could be viewed as a {\it formal\/} analog of the
decomposition theorem (see \cite[Theorem~8.4.8]{CG} for example).
We give a sufficient condition so that $p_* j^! \IC(X)$ remains
perverse (and semi-simple by the above discussion) in this subsection.
This result is a formal analog of the decomposition theorem for {\it
  semi-small\/} morphisms (see
\cite[Proposition~8.9.3]{CG}). Therefore we call the condition the
{\it hyperbolic semi-smallness}.
This condition, without its naming, appeared in \cite{MV,MV2}
mentioned in the introduction.
We give the statement in a general setting, as it might be useful also in other situations.

Let $X$, $X^A$ as before.
Let $X = \bigsqcup X_\alpha$ be a stratification of $X$ such that
$i_\alpha^! \IC(X)$, $i_\alpha^* \IC(X)$ are locally constant sheaves
up to shifts.
Here $i_\alpha$ denotes the inclusion $X_\alpha\to X$.
We suppose that $X_0$ is the smooth locus of $X$ as a convention.
\begin{NB}
    Therefore $i_{0}^! \IC(X) = i_0^* \IC(X) = \cC_{X_0}
    = \CC_{X_0}[\dim X]$.
\end{NB}%

We also suppose that the fixed point set $X^A$ has a stratification
$X^A = \bigsqcup Y_\beta$ such that the restriction of $p$ to
$p^{-1}(Y_\beta)\cap X_\alpha$ is a topologically locally trivial
fibration over $Y_\beta$ for any $\alpha$, $\beta$ (if it is
nonempty). We assume the same is true for $p_-$.
We take a point $y_\beta\in Y_\beta$.

\begin{Definition}\label{def:hypsemismall}
    We say $\Phi$ is {\it hyperbolic semi-small\/} if the following
    two estimates hold
    \begin{equation}\label{eq:est}
        \begin{split}
            & \dim p^{-1}(y_\beta)\cap X_\alpha\le \frac12 
            (\dim X_\alpha - \dim Y_\beta),
\\
            & \dim p_-^{-1}(y_\beta)\cap X_\alpha\le \frac12 
            (\dim X_\alpha - \dim Y_\beta).
        \end{split}
    \end{equation}
\end{Definition}

In order to state the result, we need a little more notation.
We have two local systems over $Y_\beta$, whose fibers at a point
$y_\beta$ are $H_{\dim X - \dim Y_\beta}(p^{-1}(y_\beta)\cap X_0)$ and
$H^{\dim X - \dim Y_\beta}_c(p_-^{-1}(y_\beta)\cap X_0)$ respectively.
\begin{NB}
    Let $i_{y_\beta}$ denote the inclusion of $y_\beta$ to
    $Y_\beta$. We consider components of $\Phi(\IC(X))$ involving
    local systems on $Y_\beta$. To compute the former, we study
    $H^{\dim Y_\beta}(i_{y_\beta}^!)$. We use the base change that the
    fiber is
    \begin{equation*}
        \begin{split}
            & H_{\dim X - \dim Y_\beta}(p^{-1}(y_\beta)\cap X_0) 
    = H^{-\dim X + \dim Y_\beta}(p^{-1}(y_\beta)\cap X_0, \mathbb D_{p^{-1}(y_\beta)\cap X_0})
\\
    =\; & H^{-\dim X + \dim Y_\beta}(
    p^{-1}(y_\beta)\cap X_0,
    \tilde j^! \mathbb D_{X_0})
\\
    =\; & H^{\dim Y_\beta}(
    p^{-1}(y_\beta)\cap X_0, \tilde j^! \cC_{X_0})
    = H^{\dim Y_\beta}(i_{y_\beta}^!, p_* j^! \cC_{X_0}),
        \end{split}
    \end{equation*}
where $\tilde j$ is the inclusion of $p^{-1}(y_\beta)\cap X_0$.
For the latter, we study $H^{-\dim Y_\beta}(i_{y_\beta}^*)$. We have
\begin{equation*}
    \begin{split}
    & H^{\dim X - \dim Y_\beta}_c(p_-^{-1}(y_\beta)\cap X_0)
    = H^{- \dim Y_\beta}_c(p_-^{-1}(y_\beta)\cap X_0, \tilde j_-^* \cC_{X_0})
\\
   =\; & H^{- \dim Y_\beta}(i^*_{y_\beta} (p_-)_! j_-^* \cC_{X_0}).
    \end{split}
\end{equation*}
\end{NB}%
Note that $p^{-1}(y_\beta)\cap X_0$ and $p_-^{-1}(y_\beta)\cap X_0$
are at most $(\dim X - \dim Y_\beta)/2$-dimensional if $\Phi$ is
hyperbolic semi-small. In this case, cohomology groups have bases
given by $(\dim X - \dim Y_\beta)/2$-dimensional irreducible components
of $p^{-1}(y_\beta)\cap X_0$ and $p_-^{-1}(y_\beta)\cap X_0$
respectively.
Let $H_{\dim X - \dim Y_\beta}(p^{-1}(y_\beta)\cap X_0)_\chi$ and
$H^{\dim X - \dim Y_\beta}_c(p_-^{-1}(y_\beta)\cap X_0)_\chi$ denote
the components corresponding to a simple local system $\chi$ on
$Y_\beta$.

\begin{Theorem}\label{thm:hypsemismall}
    Suppose $\Phi$ is hyperbolic semi-small. Then $\Phi(\IC(X))$ is
    perverse and it is isomorphic to
    \begin{equation*}
        \bigoplus_{\beta,\chi} \IC(Y_\beta,\chi)\otimes
        H_{\dim X-\dim Y_\beta}(p^{-1}(y_\beta)\cap X_0)_\chi.
    \end{equation*}
Moreover, we have an isomorphism
\begin{equation*}
    H_{\dim X-\dim Y_\beta}(p^{-1}(y_\beta)\cap X_0)_\chi
    \cong
    H^{\dim X-\dim Y_\beta}_c(p_-^{-1}(y_\beta)\cap X_0)_\chi.
\end{equation*}
\end{Theorem}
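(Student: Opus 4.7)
The plan is to adapt the classical argument that proper semi-small morphisms preserve perversity, using Braden's theorem \cite{Braden} in place of the properness hypothesis. Recall that Braden's isomorphism $\Phi(\IC(X)) \cong p_* j^! \IC(X) \cong (p_-)_! j_-^* \IC(X)$ on weakly $A$-equivariant objects gives two complementary descriptions of $\Phi(\IC(X))$, and moreover $\Phi$ preserves purity (hence semisimplicity) by \cite[Theorem~8]{Braden}. Consequently, once perversity is established, the decomposition
\[
\Phi(\IC(X))\cong \bigoplus_{\beta,\chi}\IC(Y_\beta,\chi)\otimes M_{\beta,\chi}
\]
follows automatically from the structure theorem for semisimple perverse sheaves, and it remains only to identify the multiplicities $M_{\beta,\chi}$.

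To establish perversity, fix a stratum $Y_\beta$ and a point $y_\beta \in Y_\beta$. The hypothesis that $p$ restricts to a locally trivial fibration on each piece $p^{-1}(Y_\beta)\cap X_\alpha$ allows base change stratum-by-stratum to identify the costalk $i_{y_\beta}^!\Phi(\IC(X))$ (up to the appropriate shift) with the cohomology of $j^!\IC(X)$ on the fiber $p^{-1}(y_\beta)$. Stratifying the fiber as $\bigsqcup_\alpha(p^{-1}(y_\beta)\cap X_\alpha)$, the IC cosupport condition places $j^!\IC(X)|_{X_\alpha}$ in cohomological degrees $\geq -\dim X_\alpha$ (strictly when $\alpha\neq 0$), while hyperbolic semi-smallness yields $2\dim(p^{-1}(y_\beta)\cap X_\alpha)\leq \dim X_\alpha-\dim Y_\beta$. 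A standard spectral sequence then forces the costalk to lie in degrees $\geq -\dim Y_\beta$. The mirror argument, using Braden's presentation $\Phi(\IC(X))\cong (p_-)_! j_-^*\IC(X)$ together with the IC support condition and the $p_-$-estimate in \eqref{eq:est}, shows that the stalk $i_{y_\beta}^*\Phi(\IC(X))$ sits in degrees $\leq -\dim Y_\beta$. These two bounds together are precisely the perverse axioms.

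To pin down the multiplicities, restrict to the open part of $Y_\beta$ and examine the extremal cohomology sheaf $\mathcal H^{-\dim Y_\beta}$ of $i_\beta^!\Phi(\IC(X))$. By the same spectral sequence, the strict inequalities from the IC cosupport condition at strata $X_\alpha$ with $\alpha\neq 0$ combine with the dimension estimate to push those contributions into strictly higher degrees, so only the open stratum $X_0$ survives in degree $-\dim Y_\beta$. Identifying the surviving contribution with the top Borel--Moore homology of the open part of the fiber, one obtains
\[
M_{\beta,\chi}\cong H_{\dim X-\dim Y_\beta}(p^{-1}(y_\beta)\cap X_0)_\chi.
\]
Running the parallel argument through the Braden dual $(p_-)_! j_-^*$ and the IC support condition identifies the same $M_{\beta,\chi}$ with $H^{\dim X-\dim Y_\beta}_c(p_-^{-1}(y_\beta)\cap X_0)_\chi$, which is the final assertion.

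The principal obstacle is the careful bookkeeping of inequalities: one must match the strict versus non-strict IC support and cosupport conditions against the hyperbolic semi-smallness estimate so that in the extremal perverse degree only the open stratum $X_0$ contributes, and stratum $X_\alpha\neq X_0$ contributes strictly lower. A secondary concern is handling base change for the non-proper contraction $p$, which is resolved by working stratum-by-stratum using the locally-trivial-fibration hypothesis (and its $p_-$ analogue) rather than invoking global proper base change.
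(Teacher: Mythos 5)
Your proposal is correct and follows essentially the same route the paper intends: the paper itself defers the details to the argument of Mirkovi\'c--Vilonen (\cite[Theorem~3.5]{MV2}) and to the special case worked out in Theorem~\ref{thm:twistedmain}, which is exactly your combination of Braden's two presentations $p_*j^!\cong (p_-)_!j_-^*$, purity/semisimplicity from \cite{Braden}, stratum-by-stratum base change, and the dimension estimates \eqref{eq:est} fed into a Cousin-type spectral sequence, with the multiplicity spaces read off from the extremal degree over the open stratum $X_0$. The only caveat is the shift bookkeeping you flag yourself (point costalks versus stratum-level $i_\beta^!$), which is routine and matches the computation the paper carries out for the fibers of the multiplicity local systems.
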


The proof is similar to one in \cite[Theorem~3.5]{MV2}, hence the
detail is left as an exercise for the reader. In fact, we only use the
case when $X^T$ is a point, and we explain the argument in detail
for that case in \thmref{thm:twistedmain}.

The same assertion holds for $\IC(X_0,\mathcal L)$ the intersection
cohomology complex with coefficients in a simple local system
$\mathcal L$ over $X_0$, if we put $\mathcal L$ also to cohomology
groups of fibers.

Note that $\Phi(\IC(X_\beta,\mathcal L_\beta))$ is also perverse
for a local system $L_\beta$ on $X_\beta$, and isomorphic to
\begin{equation*}
    \bigoplus_{\beta,\chi} \IC(Y_\beta,\chi)\otimes
    H_{\dim X_\beta-\dim Y_\beta}(p^{-1}(y_\beta)\cap X_\beta)_\chi.
\end{equation*}
Conversely, if $\Phi(\IC(X_\beta,\mathcal L_\beta))$ is perverse, we
have the dimension estimates \eqref{eq:est}. It is because the top
degree cohomology groups are nonvanishing, and contribute to nonzero
perverse degrees.  See the argument in Corollary~\ref{cor:dimest} for
detail.

\begin{NB}
  Writing the following subsection was the main purpose of this note,
  but I will probably move it to the second preprint, discussing
  Kazhdan-Lusztig type conjecture for the $\scW$-algebra. The remaining
  part should be absorbed into BFN.tex

\subsection{Ext algebra of hyperbolic restriction}

Let us note that we have also
\begin{equation}
    X^A\overset{p_-}{\underset{i_-}{\leftrightarrows}}
    \mathcal R_X \xrightarrow{j_-} X
\end{equation}
such that $j_-\circ  i_-= j\circ i$. Braden \cite{Braden} proved that
$i_-^! j_-^* = i^* j^!$. Therefore we have
\begin{equation}
  \Ext_{D^b_T(X^A)}(i^* j^! \scF, i^* j^! \scG)
  \cong
  \Ext_{D^b_T(X^A)}(i_-^! j_-^* \scF, i^* j^! \scG)
\end{equation}
for $\scF,\scG\in D^b_T(X)$.
We have natural morphisms $i_-^! \to i_-^*$ and $i^!\to i^*$
\cite[(8.5.1)]{CG}, hence we have a homomorphism
\begin{equation}\label{eq:hom1}
    \Ext_{D^b_T(X^A)}(i_-^* j_-^* \scF, i^! j^! \scG)
    \to
    \Ext_{D^b_T(X^A)}(i_-^! j_-^* \scF, i^* j^! \scG).
\end{equation}
The left hand side is
\begin{equation}
  \begin{split}
     \Ext_{D^b_T(X)}(\scF, (j\circ i)_* (j\circ i)^! \scG)
     \cong
     \Ext_{D^b_T(X)}(\scF, (j\circ i)_! (j\circ i)^! \scG).
  \end{split}
\end{equation}
Therefore we also have a natural homomorphism
\begin{equation}\label{eq:hom2}
    \Ext_{D^b_T(X^A)}(i_-^* j_-^* \scF, i^! j^! \scG)
    \to
    \Ext_{D^b_T(X)}(\scF, \scG).
\end{equation}

\begin{Proposition}
  \textup{(1)} Two homomorphisms \eqref{eq:hom1}, \eqref{eq:hom2}
  commute with Yoneda products, explained as below.

  \textup{(2)} The homomorphism \eqref{eq:hom1} \textup(resp.\
  \eqref{eq:hom2}\textup) becomes an isomorphism after inverting an
  element $f\in\CC[\ft]$ with \eqref{eq:cond} for $x\in
  \cA_X\times_{X^A}\mathcal R_X\setminus X^A$ \textup(resp.\
  $X\setminus X^A$\textup).
\end{Proposition}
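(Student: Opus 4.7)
The plan is to reduce both parts to the generic localization isomorphisms of \subsecref{sec:localization} applied to the closed pairs $X^A\subset\cA_X$, $X^A\subset\mathcal R_X$, and $X^A\subset X$, combined with adjunction and the open--closed distinguished triangles on each of these pairs.

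Part (1) is essentially formal: the homomorphisms \eqref{eq:hom1} and \eqref{eq:hom2} are assembled from natural transformations---the counit $(j\circ i)_!(j\circ i)^!\to\id$, Braden's isomorphism $i_-^!j_-^*\cong i^*j^!$, and the canonical maps $i^!\to i^*$ and $i_-^!\to i_-^*$---together with standard adjunction isomorphisms. Since each ingredient is natural in $\scF$ and $\scG$, the image of a composition $g\circ f$ equals the composition of the images of $g$ and $f$, which is precisely the compatibility with Yoneda products. This reduces Part (1) to a diagram chase.

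For Part (2) applied to \eqref{eq:hom2}, write $\phi\defeq j\circ i=j_-\circ i_-\colon X^A\hookrightarrow X$; as $\phi$ is a closed embedding, $\phi_!=\phi_*$. The adjunction $(\phi^*,\phi_*)=(\phi^*,\phi_!)$ identifies the source of \eqref{eq:hom2} with $\Ext_{D^b_T(X)}(\scF,\phi_!\phi^!\scG)$, and the map itself becomes the one induced by the counit $\phi_!\phi^!\scG\to\scG$. The cone is $\psi_*\psi^*\scG$ for the open inclusion $\psi\colon X\setminus X^A\to X$, so the obstruction is
\[
\Ext_{D^b_T(X)}(\scF,\psi_*\psi^*\scG)\cong\Ext_{D^b_T(X\setminus X^A)}(\psi^*\scF,\psi^*\scG).
\]
This is a cohomology group on $X\setminus X^A$ and, by the argument establishing \eqref{eq:38}, becomes zero after inverting any $f$ satisfying \eqref{eq:cond} for $x\in X\setminus X^A$.

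For Part (2) applied to \eqref{eq:hom1}, factor the map as
\[
\Ext(i_-^*j_-^*\scF,i^!j^!\scG)\xrightarrow{\alpha}\Ext(i_-^!j_-^*\scF,i^!j^!\scG)\xrightarrow{\beta}\Ext(i_-^!j_-^*\scF,i^*j^!\scG),
\]
with $\alpha$ coming from $i_-^!\to i_-^*$ (precomposed on the first argument) and $\beta$ from $i^!\to i^*$ (postcomposed on the second). Combining the two open--closed triangles on $\mathcal R_X$ via the octahedral axiom, the cone of $(i_-)_*i_-^!\to(i_-)_*i_-^*$ applied to $j_-^*\scF$ is an extension of $(k_-)_!(k_-)^*j_-^*\scF$ and $(k_-)_*(k_-)^*j_-^*\scF$, where $k_-\colon\mathcal R_X\setminus X^A\to\mathcal R_X$ is the open complement; passing through the relevant adjunction identifies the cone of $\alpha$ with a cohomology group on $\mathcal R_X\setminus X^A$, and a symmetric argument shows the cone of $\beta$ is a cohomology group on $\cA_X\setminus X^A$. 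The embedding $\cA_X\hookrightarrow\cA_X\times_{X^A}\mathcal R_X$, $a\mapsto(a,i_-p(a))$, sends $\cA_X\setminus X^A$ into $\cA_X\times_{X^A}\mathcal R_X\setminus X^A$, and $T$-equivariance of $p$ forces $\Stab(a)\subset\Stab(p(a))$, hence $\Stab(a,i_-p(a))=\Stab(a)$; symmetrically for $\mathcal R_X$. Consequently, a single $f\in\CC[\ft]$ satisfying \eqref{eq:cond} on $\cA_X\times_{X^A}\mathcal R_X\setminus X^A$ annihilates both cones, yielding the required isomorphism after localization. The principal obstacle is exactly this last step: one must identify the cones of $\alpha$ and $\beta$ with cohomology groups on the correct open subvarieties, and recognize that the fiber product---rather than merely the union of $\cA_X\setminus X^A$ and $\mathcal R_X\setminus X^A$---is the natural ambient object whose stabilizer condition controls the combined localization.
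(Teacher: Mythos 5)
Your part (1) has a genuine gap. The two Yoneda products being compared are not literal compositions, so naturality of the ingredients in $\scF,\scG$ does not by itself give compatibility: on the source of \eqref{eq:hom1} the product of classes in $\Ext(i_-^*j_-^*\scF,i^!j^!\scG)$ and $\Ext(i_-^*j_-^*\scG,i^!j^!\scH)$ is defined by inserting the canonical morphism $i^!j^!\scG\to i^*j^*\scG=i_-^*j_-^*\scG$ in the middle slot, while on the target one must insert Braden's isomorphism $i^*j^!\scG\cong i_-^!j_-^*\scG$. The entire content of part (1) for \eqref{eq:hom1} is therefore the non-formal identity that the composite $i^!j^!\scG\to i^*j^!\scG\xrightarrow{\ \cong\ }i_-^!j_-^*\scG\to i_-^*j_-^*\scG$ coincides with the canonical morphism $(j\circ i)^!\scG\to (j\circ i)^*\scG$; this has to be extracted from Braden's construction (compare the proof of \lemref{lem:inter=pair}, where precisely this kind of statement is verified by testing two morphisms of functors on the image of $a_!$). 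Your ``pure naturality'' argument skips exactly this step, so part (1) is not established as written (the \eqref{eq:hom2} half, by contrast, really is a formal diagram chase).

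For part (2), your treatment of \eqref{eq:hom2} is correct and is the expected easy argument. For \eqref{eq:hom1} you take a genuinely different route from the paper: the paper replaces $i^*$, $i_-^!$ by $p_*$, $(p_-)_!$ (Braden's Lemma~6), rewrites both Ext groups via \cite[(8.3.16)]{CG} and base change as equivariant cohomology of $\tilde\delta^!\scH$, $\scH=(j_-\times j)^!(\scF^\vee\boxtimes\scG)$, over the fiber product $\mathcal R_X\times_{X^A}\cA_X$, and identifies the map with the localization morphism \eqref{eq:tri2} for the pair $X^A\subset \mathcal R_X\times_{X^A}\cA_X$, so the condition \eqref{eq:cond} on the fiber product appears in one stroke; your factorization through $\Ext(i_-^!j_-^*\scF,i^!j^!\scG)$ instead kills two cones separately, and your stabilizer comparison ($\Stab(a)=\Stab(a,p(a))$, and symmetrically) correctly shows that one $f$ as in the statement handles both. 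The route can be completed, but your identification of the cones is looser than stated: the cone of $\alpha$ is $\Ext_{X^A}(i_-^*(k_-)_*k_-^*j_-^*\scF,\,i^!j^!\scG)$, and after the adjunction for $i_-^*$ the first entry is of the form $(k_-)_*k_-^*(\cdot)$, where $(k_-)_*$ is not a left adjoint; one still needs the identity-endomorphism torsion trick from the subsection on Ext algebras (the identity of $(k_-)_*k_-^*j_-^*\scF$ is $f$-torsion because $\Ext$ of it against itself is an equivariant Ext over $\mathcal R_X\setminus X^A$). The cone of $\beta$ is $\Ext_{X^A}(i_-^!j_-^*\scF,\,i^*k_*k^*j^!\scG)$ with $k\colon \cA_X\setminus X^A\to\cA_X$ the open inclusion; here the ``open'' object sits in the second variable, where neither $i^*$ nor $i_-^!$ can be removed by adjunction, so it is not literally ``a cohomology group on $\cA_X\setminus X^A$'' and the claimed symmetry fails; it is nevertheless torsion, since the torsion identity endomorphism of $k_*k^*j^!\scG$ maps under $i^*$ to a torsion identity of $i^*k_*k^*j^!\scG$, which kills the whole Ext group by postcomposition. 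With these two repairs (and the missing verification in part (1)), your argument goes through.
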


In particular, we have an algebra isomorphism
\begin{equation}
  \Ext_{D^b_T(X)}(\scF, \scF)_f \cong
  \Ext_{D^b_T(X^A)}(i^* j^! \scF, i^* j^! \scF)_f
\end{equation}
for an appropriate $f\in\CC[\ft]$.

\begin{proof}
  (1) Let us first consider how the homomorphism \eqref{eq:hom2}
  behaves under Yoneda product. Let us omit the equivariant derived
  category $D^b_X(\ )$ from the notation of $\Ext$ groups for brevity.

  Let $\scF$, $\scG$, $\scH\in D^b_T(X)$. Let $\alpha = j\circ i = j_-\circ
  i_-$. We consider
\begin{equation}
  \begin{CD}
    \Ext(\scF, \alpha_! \alpha^! \scG) \otimes \Ext(\scG, \alpha_! \alpha^! \scH)
    @>>> \Ext(\scF, \alpha_!\alpha^! \scH)
\\
   @VVV
   @VVV
\\
    \Ext(\scF, \scG) \otimes \Ext(\scG, \scH)
    @>>> \Ext(\scF, \scH),
  \end{CD}
\end{equation}
where the upper horizontal arrow is given by composing
$\alpha_!\alpha^!\scG\to \scG$. The vertical arrows are given by
compositions of $\alpha_!\alpha^!\scG\to \scG$ and $\alpha_!\alpha^!\scH\to
\scH$. The commutativity of the diagram is obvious.

Next we turn to the compatibility of \eqref{eq:hom1} with the Yoneda
product. We consider the diagram
\begin{equation}
  \begin{CD}
  \Ext(i_-^* j_-^* \scF, i^! j^! \scG)\otimes \Ext(i_-^* j_-^* \scG, i^! j^! \scH)
  @>>>
  \Ext(i_-^* j_-^* \scF, i^! j^! \scH)
\\
   @VVV @VVV
\\
  \Ext(i_-^! j_-^* \scF, i^* j^! \scG)\otimes \Ext(i_-^! j_-^* \scG, i^* j^! \scH)
@>>>
  \Ext(i_-^! j_-^* \scF, i^* j^! \scH),
  \end{CD}
\end{equation}
where the upper horizontal arrow is given by composing a natural
homomorphism $i^! j^! \scG \to i^* j^* \scG = i_-^* j_-^* \scG$ in the
middle. The lower horizontal arrow is given by Braden's isomorphism
$i^* j^! \scG\xrightarrow{\cong}i_-^! j_-^*\scG$. Vertical arrows are
\eqref{eq:hom1}. The commutativity of the diagram means that
\begin{equation}
  i^! j^! \scG \to i^* j^! \scG \xrightarrow{\cong}i_-^! j_-^* \scG
  \to i_-^* j_-^* \scG
\end{equation}
is the natural homomorphism above.
This follows from the definition of $i^* j^!
\scG\xrightarrow{\cong}i_-^! j_-^*\scG$ in \cite{Braden}.
\begin{NB2}
  I need to check whether this is true. I need to read \cite{Braden}
  carefully. In this \verb+NB+, we swap $i$, $i_-$ and $j$, $j_-$ as
  this was written before we change the definition of the hyperbolic
  restriction.

  Let us recall that $i_-^* j_-^! \scG\to i^! j^* \scG$ was constructed as
  \begin{equation}
    i_-^* j_-^! \scG \to i_-^* j_-^! j_* j^* \scG \cong i_-^* (i_-)_* i^! j^* \scG
    \cong i^! j^* \scG,
  \end{equation}
  where the first homomorphism is the adjunction $\id\to j_* j^*$, the
  second is the base change for the cartesian square
  \begin{equation}
    \begin{CD}
      X^A @>i_->> \mathcal R_X
\\
     @V{i}VV @VV{j_-}V
\\
     \cA_X @>>j> X,
    \end{CD}
  \end{equation}
  and the third one is also the adjunction $i_-^* (i_-)_*\to \id$,
  which is an isomorphism on equivariant objects
  (\cite[Lemma~6]{Braden}).

  Suppose $X^A = \mathrm{pt}$. Then Braden's isomorphism is written by
  the global cohomology group as
  \begin{equation}
    H^*(X,X\setminus\mathcal R_X,\scG) \xrightarrow{j^*}
    H^*(\cA_X,\cA_X\setminus X^A,\scG) \xrightarrow{\cong}
    H^*_c(\cA_X,\scG).
  \end{equation}
  Now $i_-^!j_-^! \scG\to i_-^* j_-^! \scG$ and $i^! j^* \scG\to i^*j^* \scG$ are
  restriction maps and given respectively by
  \begin{equation}
    H^*(X,X\setminus X^A,\scG) \to H^*(X,X\setminus\mathcal R_X,\scG), \qquad
    H^*_c(\cA_X, \scG) \to H^*(X^A,\scG).
  \end{equation}
  The composition is just given by the restriction map
  \begin{equation}
    H^*(X,X\setminus X^A,\scG) \to H^*(X^A,\scG).
  \end{equation}
  This is nothing but $(j_- i_-)^! \scG \to (j_- i_-)^* \scG$.
\end{NB2}

(2)
We replace \eqref{eq:hom1} as
\begin{equation}\label{eq:hom3}
  \Ext(i_-^* j_-^* T, i^! j^! \scG)
  \to
  \Ext((p_-)_! j_-^* T, p_* j^! \scG),
\end{equation}
as natural homomorphisms $p_*\to i^*$, $i_-^! \to (p_-)_!$ given by
adjunction are isomorphisms on objects above \cite[Lemma~6]{Braden}.
\begin{NB2}
  We have two ways to use adjunction. The first one is $i^!\to i^! p^!
  p_! = p_!$. The second one is $i^! = p_! i_! i^! \to p_!$. They are
  the same. But I do not see how this follows from a formal
  manipulation. I consider their adjoint $p_* \to i^*$ on a sheaf
  $D$. Then $(p_* D)(U) = D(p^{-1}(U))$, $(i^* D)(U) = \varinjlim_{V}
  D(V)$ where $V$ ranges through the family of open neighborhoods of
  $i(U)$. Now $p^{-1}(U)\supset i(U)$, we have the canonical
  restriction homomorphism $(p_* D)(U)\to (i^* D)(U)$. Both $p_* = i^*
  p^* p_* \to i^*$ and $p_* \to p_* i_* i^* = i^*$ are equal to this
  homomorphism. For the first one, we have $(p^*p_* D)(V) =
  \varinjlim_W D(p^{-1}(W))$ where $W$ ranges throughout the family of
  open neighborhoods of $p(V)$. Then we have the restriction
  homomorphism $(p^* p_* D)(V) \to D(V)$ as $p^{-1}(W)\supset
  V$. Then its pullback by $i^*$ is the canonical restriction
  homomorphism. For the second one, we have $(i_*i^* D)(V) =
  (i^*D)(i^{-1}(V)) = \varinjlim_W D(W)$ where $W$ ranges thorough the
  family of open neighborhoods of $i(i^{-1}(V))$. Since $V\supset
  i(i^{-1}(V))$, we have a restriction $D(V) \to (i_*i^* D)(V)$. Then
  we pushforward it by $p_*$ to get the canonical restriction
  homomorphism.
\end{NB2}%
We have two ways to go from $(p_-)_! j_-^* \scF$ to $i_-^* j_-^*
\scF$. First, as above, we use the inverse of the adjunction $(p_-)_!
j_-^* \scF\to i_-^! j_-^* \scF$ and then compose it with $i_-^! j_-^* \scF\to
i_-^* j_-^* \scF$. Second we compose $(p_-)_!  j_-^*\scF \to (p_-)_* j_-^*\scF$
and the adjunction $(p_-)_* j_-^*\scF \to i_-^* j_-^* \scF$.
\begin{NB2}
  Similarly two ways for $i^! j^! \scG \to p_* j^! \scG$.
\end{NB2}%
They are the same, i.e., $i_-^!\to i_-^*$ is equal to the composite of
$i_-^!\to (p_-)_!\to (p_-)_* \to i_-^*$.
\begin{NB2}
In fact, the first arrow is given by the
adjunction as $i^! = p_! i_! i^! \to p_!$. Therefore the composite of
the first and the second is $i^! = p_* i_* i^! = p_* i_! i^! \to
p_*$. The arrow $p_*\to i^*$ is given by $p_* = i^* p^* p_* \to
i^*$. Therefore the composite of all is
\(
   i^! = i^* p^* p_* i_! i^! \to i^*
\)
applied first by $i_!i^!\to\id$ and then by $p^* p_*\to\id$.
If we first apply $p^* p_*\to\id$, we get $i^* i_! i^!\to i^*$. But
$i^* i_! i^! = i^!$ (see \cite[3.1.12]{KaSha}) and $i^* i_! i^!\to
i^*$ is nothing but $i^!\to i^*$.
\end{NB2}%

Thus we further replace \eqref{eq:hom3} by
\begin{equation}\label{eq:l2}
  H^*_T(X^A, \delta_L^! (i_-\times i)^! \scH) \to
  H^*_T(X^A, \delta_L^! (p_-\times p)_* \scH),
\end{equation}
where $\delta_L\colon X^A\to X^A\times X^A$ is the diagonal
embedding and $\scH = (j_-\times j)^! (\scF^\vee\boxtimes \scG)$.
See \cite[(8.3.16)]{CG}.

\begin{NB2}
Now the homomorphism is given by composing
\(
   \delta_L^!(i_-\times i)^! \to \delta_L^!(p_-\times p)_!
   \to \delta_L^!(p_-\times p)_*.
\)
\end{NB2}%
We have a cartesian square
\begin{equation}\label{eq:l4}
  \begin{CD}
  \mathcal R_X\times_{X^A}\cA_X @>{\tilde p}>> X^A
\\
  @V{\tilde\delta}VV @VV\delta_LV
\\
  \mathcal R_X\times \cA_X @>>p_-\times p> X^A\times X^A.
  \end{CD}
\end{equation}
Hence the right hand side of \eqref{eq:l2} is written as
\begin{equation}\label{eq:rhs}
  H^*_T(X^A, \tilde p_* \tilde\delta^! \scH)
  \cong H^*_T(\mathcal R_X\times_{X^A}\cA_X,
  \tilde\delta^! \scH).
\end{equation}

On the other hand, the left hand side of \eqref{eq:l2} is
\begin{equation}\label{eq:lhs}
  H^*_T(X^A, \delta_L^! (i_-\times i)^! \scH)
  \cong
    H^*_T(X^A, \tilde i^! \tilde \delta^! \scH),
\end{equation}
where $\tilde i$ is defined by
\begin{equation}
  \begin{CD}
      \mathcal R_X\times_{X^A}\cA_X @<{\tilde i}<< X^A
\\
  @V{\tilde\delta}VV @VV\delta_LV
\\
  \mathcal R_X\times \cA_X @<<i_-\times i< X^A\times X^A.
  \end{CD}
\end{equation}
Now let us check that the homomorphism from \eqref{eq:lhs} to
\eqref{eq:rhs} is the special case of \eqref{eq:tri2}, as $X^A$ is the
fixed point set in $\mathcal R_X\times_{X^A}\cA_X$ with respect
to $T$.

Let us consider the diagram
\begin{equation}\label{eq:l3}
  \begin{CD}
  \tilde \delta^! \scH @>>> \tilde \delta^! (p_-\times p)^! (p_-\times p)_! \scH
  @= \tilde p^! \delta_L^! (p_-\times p)_! \scH
\\
  @VVV @. @VV{!\to *}V
\\
  \tilde p^! \tilde p_! \tilde\delta^! \scH
  @>>{!\to *}>
  \tilde p^! \tilde p_* \tilde\delta^! \scH
  @= \tilde p^! \delta_L^! (p_-\times p)_* \scH.
  \end{CD}
\end{equation}
Our homomorphism from \eqref{eq:lhs} to \eqref{eq:rhs} is $\tilde i^!$
applied to the composite of arrows starting from $\delta^! \scH$ to
$\tilde p^! \tilde p_* \tilde\delta^! \scH$ passing through the right
upper corner.
We prove that this diagram is commutative later. Then our homomorphism
is the composite of
\begin{equation}\label{eq:l1}
  H^*_T(X^A, \tilde i^!\tilde\delta^! \scH) \xrightarrow{\cong}
  H^*_T(X^A, \tilde p_!\tilde\delta^! \scH) \to
  H^*_T(X^A, \tilde p_*\tilde\delta^! \scH).
\end{equation}
The left arrow is induced from the adjunction as
\(
   \tilde i^! \to \tilde i^!\tilde p^! \tilde p_! = \tilde p_!,
\)
as before, but it is the same as
\(
  \tilde i^! = \tilde p_! \tilde i_! \tilde i^!
  \to \tilde p_!,
\)
as can be checked by a standard argument.
\begin{NB2}
  See the above \verb+NB2+.
\end{NB2}%
Now the composition is induced by $\tilde i^! = \tilde p_* \tilde i_*
\tilde i^! = \tilde p_* \tilde i_! \tilde i^! \to \tilde p_*$.
Hence \eqref{eq:l1} is identified with
\begin{equation}
  H^*_T(\mathcal R_X\times_{X^A}\cA_X, \tilde i_!\tilde i^!
  \tilde\delta^!\scH)
  \to
  H^*_T(\mathcal R_X\times_{X^A}\cA_X, \tilde\delta^!\scH)
\end{equation}
given by the adjunction $\tilde i_!\tilde i^!\to\id$. This is nothing
but \eqref{eq:tri2}. Thus the assertion follows.

Let us prove the commutativity of \eqref{eq:l3}. Since all involved
maps appear in the cartesian square \eqref{eq:l4}, let us change the
notation as
\begin{equation}
  \begin{CD}
    X\times_Z Y @>\tilde f>> Y
\\
   @V{\tilde g}VV @VV{g}V
\\
   X @>>f> Z
  \end{CD}
\end{equation}
and show that
\begin{equation}
  \begin{CD}
    \tilde g^! @>>> \tilde g^! f^! f_! @= \tilde f^! g^! f_!
\\
   @VVV @. @VVV
\\
   \tilde f^! \tilde f_! \tilde g^! @>>> \tilde f^! \tilde f_* \tilde g^!
   @= \tilde f^! g^! f_*
  \end{CD}
\end{equation}
is commutative. We apply the Verdier duality to the diagram to get
\begin{equation}
  \begin{CD}
    \tilde g^* @<<< \tilde g^* f^* f_* @= \tilde f^* g^* f_*
\\
   @AAA @. @AAA
\\
   \tilde f^* \tilde f_* \tilde g^* @<<< \tilde f^* \tilde f_! \tilde g^*
   @= \tilde f^* g^* f_!.
  \end{CD}
\end{equation}
Now the commutativity can be checked by considering the case of
functors applied to a sheaf.

The proof for \eqref{eq:hom2} is easier and is omitted.
\end{proof}
\end{NB}

\subsection{Recovering the integral form}

We assume \eqref{eq:assump} and also that $X$ is affine. We consider
the hyperbolic restriction with respect to $T$.
\begin{NB}
  (instead of $A$.)
\end{NB}%

Let $\bA_T = \CC[\operatorname{Lie}(\TT)] = \CC[\ve_1,\ve_2,\ba]$
and $\bF_T$ be its quotient field.
\index{AAT@$\bA_T = \CC[\ve_1,\ve_2,\ba]$|textit}
\index{FFT@$\bF_T = \CC(\ve_1,\ve_2,\ba)$|textit}

We further assume that
\(
  H^*_{\TT,c}(X,\scF)
\)
is torsion free over $H^*_\TT(\mathrm{pt}) = \bA_T$, i.e,
\(
  H^*_{\TT,c}(X,\scF)\to H^*_{\TT,c}(X,\scF)\otimes_{\bA_T}\bF_T
\)
is injective.
This property for the Uhlenbeck space will be proved in
\lemref{lem:free}.
\begin{NB2}
  The above assumption is added on Oct. 13.
\end{NB2}

We consider a homomorphism
\begin{equation}\label{eq:tensorhom}
  H^*_{\TT,c}(X, \scF) \cong H^*_{\TT,c}(X^T,i^! j^! \scF)
  \to H^*_{\TT,c}(X^T, i^* j^! \scF)
\end{equation}
for $\scF\in D^b_\TT(X)$. The first isomorphism is given in
\lemref{lem:centralfiber}. By the localization theorem, the second
homomorphism becomes an isomorphism after inverting an element $f\in
\CC[\operatorname{Lie}\TT]$ which vanishes on the union of the Lie
algebras of the stabilizers of the points $x\in\cA_X\setminus
X^T$.

\begin{Theorem}\label{thm:nonlocal}
  Consider the intersection
  \(
    H^*_{\TT,c}(X^T, i^* j^! \scF)\cap H^*_{\TT,c}(X^T,
    i_-^* j_-^! \scF)
  \)
  in $H^*_{\TT,c}(X, \scF)\otimes_{\bA_T} \bF_T$. It coincides with
  $H^*_{\TT,c}(X,\scF)$.
\end{Theorem}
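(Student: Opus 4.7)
The plan is to view both $L_\pm := H^*_{\TT,c}(X^T, i_\pm^* j_\pm^!\scF)$ (with $\pm$ denoting the given chamber and its opposite) as sub-$\bA_T$-modules of $V := L\otimes_{\bA_T}\bF_T$, where $L := H^*_{\TT,c}(X,\scF)$. The composite
\[
L \xrightarrow{\sim} H^*_{\TT,c}(X^T,(j_\pm i_\pm)^!\scF) \to L_\pm,
\]
with the first arrow given by \lemref{lem:centralfiber} and the second by the canonical map $i_\pm^!\to i_\pm^*$, becomes an isomorphism after inverting an appropriate $f_\pm\in\bA_T$ by the localization theorem of \subsecref{sec:localization}. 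Modulo torsion this identifies each $L_\pm$ with a sublattice of $V$, and the inclusion $L\subseteq L_+\cap L_-$ is then immediate from the factorization above.

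For the reverse, set $M := L_+\cap L_-\subseteq V$. By the localization theorem we may choose $f_+$ so that its zero locus contains $\bigcup_{x\in\cA_X\setminus X^T}\operatorname{Lie}(\operatorname{Stab}_x)$, and similarly $f_-$ for $\mathcal R_X\setminus X^T$. Then $L_\pm/L$ is $f_\pm$-power-torsion in $V/L$, and the submodule $M/L$ is annihilated by both $f_+^N$ and $f_-^N$ for some $N\gg 0$.

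The crux is to show that $f_+$ and $f_-$ can be chosen coprime in the polynomial ring $\bA_T = \CC[\ve_1,\ve_2,\ba]$. Fix a $\TT$-equivariant affine embedding $X\hookrightarrow\AA^m$; every $\TT$-weight $w$ that occurs has a $T$-component given by the pairing $\langle w,\lambda\rangle$ with a fixed cocharacter $\lambda$ generating the chamber $\mathfrak{C}$, and a $\CC^*\times\CC^*$-component $(b_1,b_2)$. The contracting hypothesis \eqref{eq:assump} forces
\[
(b_1, b_2) \in \{(b_1, b_2) : b_1, b_2 \ge 0,\,(b_1, b_2)\neq(0,0)\}
\]
for every nonzero weight. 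For $x\in\cA_X\setminus X^T$ every weight occurring in $x$ satisfies $\langle w,\lambda\rangle\ge 0$ with at least one strict inequality (else $x\in X^T$); symmetrically with the opposite sign for $x\in\mathcal R_X\setminus X^T$. The generically codimension-one Lie stabilizers are then cut out by such linear forms $w$, and two such forms taken one from each side — necessarily with $T$-parts of opposite strict sign — cannot be proportional: a proportionality constant would have to be negative, which would send the $(b_1,b_2)$-part into the third quadrant, violating the constraint above. Taking $f_\pm$ to be a product of the relevant linear forms (lower-codimension stabilizers contribute factors of the same shape) gives $\gcd(f_+, f_-) = 1$ in the UFD $\bA_T$.

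Coprimality yields an identity $gf_+^N + hf_-^N = 1$ in $\bA_T$, so that any $v\in M/L$ satisfies $v = gf_+^N v + hf_-^N v = 0$; hence $M = L$, as required. The principal difficulty is the coprimality step, which requires ruling out any accidental coincidences between attracting- and repelling-side stabilizer hyperplanes; this is exactly where the full strength of the contracting $\CC^*\times\CC^*$-assumption \eqref{eq:assump} enters the argument.
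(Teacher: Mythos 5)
Your geometric analysis is sound and in fact runs parallel to the paper's own argument: the sign constraints you extract from a $\TT$-equivariant affine embedding (weights occurring at points of the attracting set pair $\ge 0$, and at least one $>0$, against the chamber cocharacter; all occurring $\CC^*\times\CC^*$-weights lie in the closed positive quadrant minus the origin because of \eqref{eq:assump}) are exactly the content of the paper's \lemref{lem:cochar}, proved there by the ``$\proj^1\to$ affine variety is constant'' trick, and they do show that $f_+$ and $f_-$ can be chosen with no common irreducible factor. One gloss: for a stabilizer of codimension $\ge 2$ you must still take the cutting form to be one of the \emph{occurring} weights with strictly positive pairing against the chamber (such a weight exists), so that its $\CC^*\times\CC^*$-part stays in the quadrant; your parenthetical about ``lower-codimension stabilizers'' needs to say this, since an arbitrary linear form through such a stabilizer need not satisfy the sign constraints.

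The genuine gap is the final step. Coprimality of $f_+$ and $f_-$ in $\bA_T=\CC[\ve_1,\ve_2,\ba]$ does \emph{not} give an identity $gf_+^N+hf_-^N=1$: in a polynomial ring of Krull dimension $\ge 2$, coprime is strictly weaker than comaximal — already for two non-proportional linear forms $\ell_1,\ell_2$ the ideal $(\ell_1^N,\ell_2^N)$ cuts out a nonempty codimension-two subspace and is therefore proper. So the deduction $v=gf_+^Nv+hf_-^Nv=0$ collapses. Worse, the abstract statement you want (``an element of $V$ carried into $L$ by two coprime elements lies in $L$'') is false for merely torsion-free $L$: take $L=(\ve_1,\ve_2)\subset\bA_T$, $V=\bF_T$, $v=1$. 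The repair requires the extra input that $H^*_{\TT,c}(X,\scF)$ is \emph{free} over $\bA_T$ (purity, \lemref{lem:free} for the Uhlenbeck spaces), after which one argues through denominators rather than Bézout: write $v\in L_+\cap L_-$ over an $\bA_T$-basis of $L$ with reduced common denominator $q$; then $f_+^Nv\in L$ and $f_-^Nv\in L$ force every irreducible factor of $q$ to divide both $f_+^N$ and $f_-^N$, so coprimality makes $q$ a unit and $v\in L$. This is precisely the paper's fractional-ideal argument ($J^+_\alpha\cap J^-_\alpha=\bA_T$ for $\alpha$ not divisible by a non-constant), so with that substitution your proof closes and is essentially the paper's, with your weight computation standing in for \lemref{lem:cochar}.
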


The proof occupies the rest of this subsection. We first give a key
lemma studying stabilizers of points in $\cA_X\setminus X^T$.

\begin{Lemma}\label{lem:cochar}
  Suppose that $(\lambda^\vee,n_1, n_2)$ is a cocharacter of $\TT$
  such that either of the followings holds
  \begin{enumerate}
  \item $\lambda^\vee$ is dominant and $n_1$, $n_2 > 0$.
  \item $\lambda^\vee$ is regular dominant and $n_1$, $n_2 \ge 0$.
  \end{enumerate}
  Then there is no point in $\cA_X\setminus X^T$ whose stabilizer
  contains $(\lambda^\vee,n_1,n_2)(\CC^*)$.
\end{Lemma}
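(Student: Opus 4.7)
I plan to derive a contradiction from the assumption that $x\in\cA_X\setminus X^T$ is fixed by $H=(\lambda^\vee,n_1,n_2)(\CC^*)$, via weight analysis of $\CC[X]$. The case (2) with $n_1=n_2=0$ is immediate: then $H=\lambda^\vee(\CC^*)$, and regularity of $\lambda^\vee$ gives $X^H=X^T$ (the centralizer of a regular cocharacter being $T$), so $x\in X^T$, contradicting the hypothesis.

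For the cases where both $n_1,n_2>0$ (all of case (1) and the ``both positive'' portion of case (2)), the argument proceeds as follows. For every $f\in\CC[X]$ of $\TT$-weight $(\chi,w_1,w_2)$ with $f(x)\ne 0$, $H$-fixedness of $x$ yields
\[
    \langle\chi,\lambda^\vee\rangle+n_1 w_1+n_2 w_2=0.
\]
Three facts will force each summand to vanish: (i) by \eqref{eq:assump}, $\CC[X]$ is non-negatively $\CC^*\times\CC^*$-graded with $\CC[X]_{(0,0)}=\CC$ (the $\CC^*\times\CC^*$-GIT quotient being the single point $\{0\}$); (ii) $x\in\cA_X$ implies $\chi$ lies in the attracting cone $\{\chi : \langle\chi,\mu\rangle\ge 0 \text{ for all } \mu\in\overline{\mathfrak C}\}$; (iii) dominance of $\lambda^\vee$ places it in $\overline{\mathfrak C}$, whence $\langle\chi,\lambda^\vee\rangle\ge 0$. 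All three summands being non-negative forces $w_1=w_2=0$; by (i) the function $f$ is then a constant, so $\chi=0$. Thus every non-constant function in $\CC[X]$ vanishes at $x$, so $x=0\in X^T$, a contradiction.

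The remaining subcase of (2) has $\lambda^\vee$ regular dominant and exactly one of $n_1,n_2$ positive; by symmetry assume $n_1>0$, $n_2=0$. Here I would argue by convergence: since $\lambda^\vee\in\mathfrak C$ and $x\in\cA_X$, $\lim_{t\to 0}\lambda^\vee(t)\cdot x$ exists in $X$. The stabilizer relation $\lambda^\vee(t)\cdot x=(t^{-n_1},1)\cdot x$ then forces the right-hand side to converge as $t\to 0$, which requires $x$ to be fixed by the first $\CC^*$-factor of $\CC^*\times\CC^*$ (otherwise the orbit $s\mapsto (s^{n_1},1)\cdot x$ escapes to infinity as $s\to\infty$, by the non-negative grading in (i)). Substituting back yields $\lambda^\vee(\CC^*)\cdot x=x$, and regularity of $\lambda^\vee$ upgrades this to $x\in X^T$, the final contradiction. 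The main technical obstacle is item (i), verifying $\CC[\Uh{d}]_{(0,0)}=\CC$ for $X=\Uh{d}$; this follows from the $\CC^*\times\CC^*$-contraction in \eqref{eq:assump}, but rigorously it may require appeal to the affine realization of $\Uh{d}$ in \cite{BFG}.
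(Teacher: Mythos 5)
Your proposal is correct, but it proves the lemma by a different mechanism than the paper, so let me compare. The paper's proof is a single uniform trick: the fixed-point condition is rewritten as $\lambda^\vee(t^{-1})\cdot x=(t^{n_1},t^{n_2})\cdot x$; the left side has a limit as $t\to\infty$ (dominance of $\lambda^\vee$ plus $x\in\cA_X$), the right side has a limit as $t\to 0$ (non-negativity of $n_1,n_2$ together with \eqref{eq:assump}), so the orbit map extends to $\proj^1\to X$ and is constant since $X$ is affine; hence $x$ is fixed separately by $\lambda^\vee(\CC^*)$ and by $(t^{n_1},t^{n_2})$, and the two cases conclude exactly as in your last lines. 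Your handling of the case $n_1,n_2>0$ replaces this with a weight computation on $\CC[X]$ — non-negative $\CC^*\times\CC^*$-grading with degree-$(0,0)$ part $\CC$, the attracting-cone constraint for $x\in\cA_X$, and the resulting forced vanishing of each summand — which is a valid alternative and has the merit of making the role of \eqref{eq:assump} completely explicit; in particular your closing worry is unnecessary, since both halves of your item (i) follow formally from \eqref{eq:assump} and affineness (every weight $(w_1,w_2)$ occurring in $\CC[X]$ satisfies $n_1w_1+n_2w_2\ge0$ for all $n_1,n_2>0$, and an invariant function is constant because every point flows to $0$), with no extra input from \cite{BFG}. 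Your mixed subcase ($n_1>0$, $n_2=0$) is in substance the paper's convergence trick applied to one factor, so there the two arguments coincide. Two small wording points, neither a gap: the intermediate statement should be that every $\TT$-weight vector of nonzero weight vanishes at $x$ (not that every non-constant function does), which still yields $x=0$ because such vectors also vanish at $0$ and the weight-zero part is $\CC$; and in the steps where regularity of $\lambda^\vee$ upgrades $\lambda^\vee(\CC^*)$-fixedness to $x\in X^T$ (including your $n_1=n_2=0$ case), the relevant meaning of ``regular'' is membership in the open chamber of the stabilizer arrangement \eqref{eq:29} — for $\Uh{d}$ this is the root arrangement, so it agrees with the root-theoretic notion — rather than the purely group-theoretic fact that the centralizer of a regular cocharacter is $T$; with that reading the step is immediate, just as in the paper's own proof.
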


\begin{NB}
  The above is taken from Sasha's message on June 7, 2013. Originally
  it was assumed only (1) that $\lambda^\vee$ is dominant, i.e.,
  $\langle \lambda^\vee, \alpha_i\rangle \ge 0$ for any simple root
  $\alpha_i$, and $n_1$, $n_2 > 0$. The argument gave a stronger
  consequence $x\in X^\TT$ (rather than $x\in X^T$). It seems that
  this assumption is not enough for our purpose as we cannot exclude,
  say the case $\mu = 0$, $m_1 = 0$, $m_2 > 0$.
\end{NB}%

\begin{proof}
  Assume $\lambda$ is dominant and $n_1$, $n_2\ge 0$.

  Suppose that $x\in \cA_X$ is fixed by
  $(\lambda^\vee,n_1,n_2)(\CC^*)$. Then we have
  \begin{equation}\label{eq:tx}
    \lambda^\vee(t^{-1})\cdot x = (t^{n_1}, t^{n_2})\cdot x.
  \end{equation}
  Since $\lambda^\vee$ is dominant, its attracting set contains
  $\cA_X$. Therefore the left hand side has a limit when
  $t\to\infty$. On the other hand, the right hand side has a limit
  when $t\to 0$. Therefore $\CC^*\ni t\mapsto
  \lambda^\vee(t^{-1})\cdot x\in X$ extends to a morphism $\proj^1\to
  X$. As $X$ is affine, such a morphism must be constant, i.e.,
  \eqref{eq:tx} must be equal to $x$.

  If $n_1$, $n_2 > 0$, $x$ must be the unique $\CC^*\times\CC^*$ fixed
  point. It is contained in $X^T$.

  If $\lambda^\vee$ is regular, $x$ is fixed by $T$, that is $x\in
  X^T$.
\end{proof}

\begin{proof}[Proof of Theorem~{\rm\ref{thm:nonlocal}}]
  Let $\alpha$ be an element in $H^*_{\TT,c}(X, \scF)$ which is not
  divisible by any non-constant element of $\bA_T$. Let $J_\alpha^\pm$
  be two fractional ideals of $\bA_T$ consisting of those rational
  functions $f$ such that $f\alpha\in H^*_{\TT,c}(X^T, i^* j^!\scF)$ and
  $f\alpha\in H^*_{\TT,c}(X^T, i_-^* j_-^!\scF)$ respectively.  We need
  to show that $J_\alpha^+ \cap J_\alpha^- = \bA_T$. Note that a
  priori the right hand side is embedded in the left hand side.

  Let $f\in J_\alpha^+$. Then $f = g/h$ where $g$, $h\in \bA_T$ and
  $h$ is a product of linear factors of the form $(\mu, m_1, m_2)$
  such that
\begin{itemize}
\item $\langle \lambda^\vee, \mu\rangle > 0$ for a regular dominant
  coweight $\lambda^\vee$, and
  \begin{NB}
    Sasha wrote a sum of positive roots. But we can multiply a
    positive root by a positive scalar, right ?
  \end{NB}
\item  $m_1$, $m_2\ge 0$ with at least one of them nonzero.
\end{itemize}
In fact, we have $\langle (\lambda^\vee,n_1,n_2),(\mu, m_1,
m_2)\rangle \neq 0$ for any $(\lambda^\vee,n_1,n_2)$ as in
\lemref{lem:cochar}. Taking a regular dominant coweight $\lambda^\vee$
and $n_1$, $n_2 = 0$, we get the first condition. Next we take
$\lambda = 0$ and $n_1$, $n_2 > 0$ and get the second condition.

Similarly for $f = g/h\in J_\alpha^-$, $h$ is a product of
$(\mu,m_1,m_2)$ with $\langle \lambda^\vee,\mu\rangle < 0$ for a
regular dominant coweight $\lambda^\vee$, and the same conditions for
$(m_1,m_2)$ as above.
\begin{NB}
  Sasha used an involution $\tau$ sending $(\mu,m_1,m_2)$ to
  $(-\mu,m_1,m_2)$. But I do not understand why it is necessary.
\end{NB}%
Then there are no linear factors satisfying both conditions, hence we have
$J_\alpha^+\cap J_\alpha^- = \bA_T$.
\end{proof}

\begin{NB2}
  Question : Do we have a similar characterization for the Ext algebra
  ? If we would have it, we can say probably that generators of the
  integral form of the $\scW$-algebra are in the Ext algebra: generators
  of integral Virasoro are in the Ext algebra for $\Uh[SL(2)]d$.
\end{NB2}




\section{Hyperbolic restriction on Uhlenbeck spaces}\label{sec:hyper-uhlenbeck}

This chapter is of technical nature, but will play a quite important
role later. Feigin-Frenkel realized the $\scW$-algebra $\scW_k(\g)$
\index{Wkg@$\scW_k(\g)$|textit} in the Heisenberg algebra $\Heis(\h)$
associated with the Cartan subalgebra $\h$ of $\g$.  (See
\cite[Ch.~15]{F-BZ}.)

We will realize this picture in a geometric way. In \cite{MO}
Maulik-Okounkov achieved it by {\it stable envelopes\/} which relate the
cohomology group of Gieseker space to that of the fixed point set with
respect to a torus.
The former is a module over $\scW_k(\g)$ and the latter is a Heisenberg
module.
In \cite{SV} Schiffmann-Vasserot also related two cohomology groups by
a different method.

We will take a similar approach, but we need to use a sheaf theoretic
language, as Uhlenbeck space is singular. We use the {\it hyperbolic
  restriction functor} in \subsecref{sec:hyperb-restr-1}, and combine
it with the theory of stable envelopes. This study was initiated by the
third author \cite{tensor2}. A new and main result here is
\thmref{thm:perverse}, which says that perversity is preserved under
the hyperbolic restriction in our situation.

\begin{NB}
    Sasha, please put an explanation on the result of
    Mirkovi\'c-Vilonen \cite{MV2}.

    Sasha says that there is no formal relation. I agree, but at least
    that they proved that a hyperbolic restriction preserves the
    perversity in their situation.
\end{NB}

We fix a pair $T\subset B$ of a maximal torus $T$ and a Borel subgroup
$B$, and consider only parabolic subgroups $P$ containing $B$, except
we occasionally use opposite parabolic subgroups $P_-$ until
\subsecref{sec:autg-invariance}. In \subsecref{sec:autg-invariance},
we consider other parabolic subgroups also.

\subsection{A category of semisimple perverse sheaves}

Let $\IC(\BunGl{d},\rho)$ denote the intersection cohomology (IC)
complexes, where $\rho$ is a simple local system on $\BunGl{d} =
\Bun{d}\times S_\lambda\AA^2$ corresponding to an irreducible
representation of $S_{n_1}\times S_{n_2}\times\cdots$ via the covering
\begin{equation}
  (\AA^2)^{n_1}\times (\AA^2)^{n_2}\times
  \cdots \setminus\text{diagonal}
  \to
  S_{\lambda}\AA^2,
\end{equation}
where $\lambda = (1^{n_1} 2^{n_2}\cdots)$. (Recall $S_\lambda\AA^2$ is
a stratum of $S^{|\lambda|}\AA^2$, see \eqref{eq:Sl}.)

\begin{Definition}
Let $\Perv(\Uh{d})$\index{perveU@$\Perv(\Uh{d})$} be the additive
subcategory of the abelian category of semisimple perverse sheaves on
$\Uh{d}$, consisting of finite direct sums of $\IC(\BunGl{d},\rho)$.
\end{Definition}

By abuse of notation, we use the same notation $\IC(\BunGl{d},\rho)$
even if $\rho$ is a reducible representation of $S_{n_1}\times
S_{n_2}\times\cdots$. It is the direct sum of the corresponding
simple IC sheaves.

If $\rho$ is the trivial rank $1$ local system, we omit $\rho$ from
the notation and denote the corresponding IC complex by
$\IC(\BunGl{d})$, or $\IC(\UhGl{d})$.

Furthermore, we omit $\lambda$ from the notation when it is the empty
partition $\emptyset$. Therefore $\IC(\Uh{d})$ means
$\IC(\Bun[G,\emptyset]{d})$.

Objects in $\Perv(\Uh{d})$ naturally have structures of equivariant
perverse sheaves in the sense of \cite{BL} with respect to the group
action $\GG = G\times\CC^*\times\CC^*$ on $\Uh{d}$. We often view
$\Perv(\Uh{d})$ as the subcategory of equivariant perverse sheaves.

\subsection{Fixed points}\label{subsec:fixed}

Let $P$ be a parabolic subgroup of $G$ with a Levi subgroup $L$. Let
$A = Z(L)^0$ denote the connected center of $L$.
\begin{NB}
  $Z(L)$ is not connected in general. Misha's message on Oct. 30:
  consider $G = SL(4)$ and the Levi subgroup of type $(2,2)$. Then the
  center consists of diagonal matrices of the form $(x,x,y,y)$ with
  $x^2 y^2 = 1$. Therefore we have components correspond to $xy = \pm
  1$.
\end{NB}%
Let $\BunL{d}$ denote the moduli space of $L$-bundles on $\proj^2$
with trivialization at $\linf$ of `instanton number $d$'. The latter
expression makes sense, since the notion of instanton number, defined
as in \subsecref{subsec:instnumber}, corresponds to a choice of a
bilinear form on the coweight lattice, which is the same for $G$ and
for $L$.

Suppose that $\mathcal F\in \Bun{d}$ is fixed by the $A$-action. It
means that bundle automorphisms at $\linf$ parametrized by $A$
extend to the whole space $\proj^2$. The extensions are
unique. Therefore the structure group $G$ of $\mathcal F$ reduces to
the centralizer of $A$, which is $L$. Hence $(\Bun{d})^{A} =
\BunL{d}$.

Let us consider the fixed point subvariety
\begin{equation}
    \UhL{d} = (\Uh{d})^{A} \index{UhL@$\UhL{d}$}
\end{equation}
in the Uhlenbeck space.
Then we have an induced stratification
\begin{equation}
    \UhL{d} = \bigsqcup_{d_1+d_2=d, \lambda \vdash d_2} \BunLl{d_1},
    \quad \BunLl{d_1} = \BunL{d_1} \times S_\lambda\AA^2.
\end{equation}

Strictly speaking, our $\UhL{d}$ depends on the choice of the
embedding $L\to G$, therefore should be denoted, say by
$\Uh[L,G]{d}$. We think that there is no fear of confusion.

Note that $[L,L]$ is again semi-simple and simply-connected.
\begin{NB}
  Do you have a reference ? I googled and find

  \url{http://mathoverflow.net/questions/126516/simply-connected-algebraic-groups-and-reductive-subgroups-of-maximal-rank}.

It is referred to Borel-Tits. Which paper ?

And I give some detail on the relation between $\Uh[{[L,L]}]{d}$ and
$\UhL{d}$. Please check it.
\end{NB}%
(See \cite[Cor.~4.4]{Borel-Tits}.)
Suppose that we have only one simple factor. Since we assume $G$ is
simply-laced, $[L,L]$ is also. The instanton number is the same for
$G$ and $[L,L]$. Otherwise we define the instanton number for $[L,L]$
by the invariant form on $\operatorname{Lie}([L,L])$ induced from one
on $\g$.

We only have trivial framed $L/[L,L]$-bundles as $H^2(\proj^2)$ is
$1$-dimensional hence the first Chern class of a framed bundle
vanishes. Thus we have
\begin{equation}\label{eq:BunLL}
  \BunL{d_1} = \Bun[{[L, L]}]{d_1}.
\end{equation}

Since $[L,L]$ is a subgroup of $G$, we have the induced closed
embedding $\Uh[{[L,L]}]{d}\to \Uh{d}$ (see \cite[Lem.~6.2]{BFG}),
which clearly factors as
\begin{equation}
  \Uh[{[L,L]}]{d}\to \UhL{d}.
\end{equation}
By \eqref{eq:BunLL}, this map is bijective.
\begin{NB}
  We do not know $\UhL{d}$ is normal.
\end{NB}%
Since both spaces are closed subschemes of $\Uh{d}$, we have

\begin{Proposition}\label{prop:LL}
  The morphism $\Uh[{[L,L]}]{d}\to \UhL{d} = (\Uh{d})^A$ is a
  homeomorphism between the underlying topological spaces.
\end{Proposition}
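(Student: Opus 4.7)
The plan is to verify that the two closed subschemes $\Uh[{[L,L]}]{d}$ and $\UhL{d} = (\Uh{d})^A$ of $\Uh{d}$ have the same underlying set, and to then conclude that the natural morphism between them is a homeomorphism, since both inherit their topology from $\Uh{d}$.

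First I would invoke the stratification \eqref{eq:strat} of $\Uh{d}$ into the locally closed pieces $\Bun{d_1}\times S_\lambda\AA^2$. The $A$-action on $\Uh{d}$ comes from $A\subset G$ acting by change of trivialization at $\linf$; in particular it is trivial on the factor $S_\lambda\AA^2$ and preserves each stratum. Hence the fixed locus on each stratum is $(\Bun{d_1})^A\times S_\lambda\AA^2$, and the paragraph preceding the proposition identifies this with $\BunL{d_1}\times S_\lambda\AA^2$ via the observation that $A$-fixed framed $G$-bundles are precisely those whose structure group reduces to $Z_G(A) = L$. Taking the disjoint union over strata gives the set-theoretic description
\[
(\Uh{d})^A = \bigsqcup_{d_1+|\lambda|=d} \BunL{d_1}\times S_\lambda\AA^2.
\]

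Next I would compare with $\Uh[{[L,L]}]{d}$, whose strata are $\Bun[{[L,L]}]{d_1}\times S_\lambda\AA^2$. By \eqref{eq:BunLL} these coincide with $\BunL{d_1}\times S_\lambda\AA^2$, and a stratum-by-stratum inspection of the closed embedding $\Uh[{[L,L]}]{d}\to \Uh{d}$ (which exists by \cite[Lem.~6.2]{BFG}) shows that it takes corresponding strata to corresponding strata. Therefore the image of $\Uh[{[L,L]}]{d}$ in $\Uh{d}$ coincides with $\UhL{d}$ as a set. Since both $\Uh[{[L,L]}]{d}\to \Uh{d}$ and $\UhL{d}\to \Uh{d}$ are closed embeddings, hence homeomorphisms onto their (identical) images, the natural morphism $\Uh[{[L,L]}]{d}\to \UhL{d}$ must be a homeomorphism.

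The main obstacle I foresee is the identification $(\Bun{d_1})^A = \BunL{d_1}$; everything else is a formal consequence of the stratification and the construction of the Uhlenbeck space as a closed subscheme associated to a subgroup. Fortunately this identification was already carried out in the discussion preceding the proposition, using the uniqueness of extensions of framed bundle automorphisms from $\linf$ to all of $\proj^2$ together with the fact that the centralizer of $A = Z(L)^0$ in $G$ equals $L$.
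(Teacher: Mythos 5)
Your proposal is correct and follows essentially the same route as the paper: it identifies $(\Bun{d_1})^A$ with $\BunL{d_1}$ by the centralizer-of-$A$ argument, matches the strata of $\Uh[{[L,L]}]{d}$ and $(\Uh{d})^A$ via \eqref{eq:BunLL}, and concludes the homeomorphism from the fact that both are closed subschemes of $\Uh{d}$ carrying the induced topology. Nothing essential is missing.
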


We are interested in perverse sheaves on $\UhL{d}$, hence we only need
underlying topological spaces. Hence we may identify $\UhL{d}$ and
$\Uh[{[L,L]}]{d}$.
We define the category $\Perv(\UhL{d})$ in the same way as
$\Perv(\Uh{d})$.

\begin{Example}\label{ex:torus_fixed}
  The case when $L$ is a maximal torus $T$ is most important. We have
\begin{equation}
  \Uh[T]{d} = S^d\AA^2 = \bigsqcup_{\lambda\vdash d} S_\lambda\AA^2,
\end{equation}
as we do not have nontrivial framed $T$-bundles.
\end{Example}

\subsection{Polarization}\label{sec:polarization}

Following \cite[\S3.3.2]{MO}, we introduce the notion of a {\it
  polarization\/} of a normal bundle of the smooth part of a fixed
point component.

Let us give a definition in a general situation. Suppose a torus $A$
acts on a holomorphic symplectic manifold $X$, preserving the
symplectic structure. Let $Z$ be a connected component of $X^A$ and
$N_Z$ be its normal bundle in $X$. Consider $A$-weights of a fiber of
$N_Z$.
Let $e(N_Z)|_{H^*_A(\mathrm{pt})}$ be the $H^*_A(\mathrm{pt})$-part of
the Euler class of the normal bundle, namely the product of all
$A$-weights of a fiber of $N_Z$.
Since $A$ preserves the symplectic form, $Z$ is a symplectic
submanifold, and weights of $N_Z$ appear in the pairs
$(\alpha_i,-\alpha_i)$. Hence
\begin{equation}\label{eq:e^2}
  (-1)^{(\codim Z)/2} e(N_Z)|_{H^*_A(\mathrm{pt})} = \prod \alpha_i^2
\end{equation}
is a perfect square. A choice of a square root
$\pol$\index{delta@$\pol$ (polarization)} of \eqref{eq:e^2} is called a
{\it polarization\/} of $Z$ in $X$.

In the next subsection we consider attractors and repellents. We have
a polarization $\pol_{\text{rep}}$ given by product of weights in
repellent directions. However this will not be a right choice to save
signs. Our choice of the polarization $\pol$, which follows
\cite[Ex.~3.3.3]{MO}, will be explained in \subsecref{sec:Gifixed} for
Gieseker spaces, and in \subsecref{sec:anotherbase2} for Uhlenbeck
spaces. Then we understand $\pol = \pm 1$, depending on whether it is
the same as or the opposite to $\pol_{\text{rep}}$, in other words we
identify $\pol$ with $\pol/\pol_{\text{rep}}$, as
$\delta_{\text{rep}}$ is clear from the context.

Note that a polarization does not make sense unless the variety $X$ is
smooth. Therefore we restrict the normal bundle to $Z\cap\Bun{d} =
Z\cap\Bun[L]d$ and consider a polarization there for Uhlenbeck spaces.

However a fixed point component $Z$, in general, does not intersect
with $\Bun{d}$. Say $Z\cap\Bun{d} = \emptyset$ if $L = T$. We do not
consider a polarization of $Z$ in this case, and smooth cases are
enough for our purpose.

\subsection{Definition of hyperbolic restriction functor}\label{sec:hyperb-restr}

We now return to the situation when $X = \Uh{d}$. We choose a
parabolic subgroup $P$ with a Levi subgroup $L$ as before.

We consider the setting in
\S\S\ref{sec:attr-repell},\ref{sec:hyperb-restr-1} with $A =
Z(L)^0$. Then \eqref{eq:29} is the hyperplane arrangement induced by
roots:
\begin{equation}
  \fa_\RR \setminus \bigcup_{\alpha} \{ \left.\alpha\right|_{\fa_\RR} = 0 \},
\end{equation}
where the union runs over all positive roots $\alpha$ which do not vanish
on $\fa_\RR$. The chambers are in one to one correspondence to the parabolic
subgroups containing $L$ as their Levi (associated parabolics).
Therefore the fixed $P$ determines a `positive' chamber.

We denote the corresponding attracting and repelling sets $\mathcal
A_X$, $\mathcal R_X$ by $\UhP{d}$\index{UhP@$\UhP{d}$} and
$\UhPm{d}$. Often we are going to drop the instanton number $d$ from
the notation, when there is no fear of confusion. We let
$i$\index{i@$i$ (inclusion $\UhL{d}\to\UhP{d}$)} and $p$\index{p@$p$
  (projection $\UhP{d}\to\UhL{d}$)} denote the corresponding maps from
$\UhL{}$ to $\UhP{}$ and from $\UhP{}$ to $\UhL{}$. Also we denote by
$j$\index{j@$j$ (inclusion $\UhP{d}\to \Uh{d}$)} the embedding of
$\UhP{}$ to $\Uh{}$. We shall sometimes also use similar maps $i_-$,
$j_-$ and $p_-$ where $\UhP{}$ is replaced with $\UhPm{}$. We have
diagrams
\begin{equation}
    \label{eq:1}
    \UhL{} \overset{p}{\underset{i}{\leftrightarrows}}
    \UhP{} \overset{j}{\rightarrow} \Uh{},
\qquad
    \UhL{} \overset{p_-}{\underset{i_-}{\leftrightarrows}}
    \UhPm{} \overset{j_-}{\rightarrow} \Uh{}.
\end{equation}

\begin{Definition}
    We define the functor $\Phi_{L,G}$\index{UZphiLG@$\Phi_{L,G}$} by
    $i^* j^! = p_* j^!$.
\end{Definition}
\begin{NB}
  Sometimes, we are going to write $\Phi_{L,G}^d$ (when we want to fix
  $d$).

  Do we really need the notation $\Phi_{L,G}^d$ ? It seems to me that
  $P$-dependence is more important. So why do not we write
  $\Phi_{L,G}^P$ when we want to emphasize $P$ ?

  \begin{NB2}
    Oct. 24: Now it is comment out.
  \end{NB2}
\end{NB}

We apply it to weakly $A$-equivariant objects, in particular on
$\Perv(\Uh{d})$.

\noindent
{\bf Warning.} Of course, the functor $\Phi_{L,G}$ depends on $P$ and
not just on $L$. When we want to emphasize $P$, we write
$\Phi^P_{L,G}$. Otherwise $P$ is always chosen so that $P\supset B$
for the fixed Borel subgroup $B$.

Let us justify our notation $\UhP{}$ for the attracting set. We have a
one parameter subgroup $\lambda\colon\GG_m\to G$ such that
\begin{equation}\label{eq:PLoneparam}
  \begin{split}
  P & = \left\{ g\in G\, \middle|\, \text{$\lim_{t\to 0}
  \lambda(t) g\lambda(t)^{-1}$ exists}\right\},
\\
  L &= G^{\lambda(\mathbb G_m)}
  = \left\{ g\in G\, \middle|\, \text{$
  \lambda(t) g = g\lambda(t)$ for any $t\in\mathbb G_m$}\right\}.
  \end{split}
\end{equation}
Then we have
\begin{equation}
  \begin{split}
    \UhP{} & \defeq \left\{ x\in \Uh{}\, \middle|\, \text{$\lim_{t\to 0}
  \lambda(t)\cdot x$ exists}\right\},
\\
  \UhL{} &\defeq (\Uh{})^{\lambda(\mathbb G_m)}
  = \left\{ x\in \Uh{}\, \middle|\, \text{$
  \lambda(t)\cdot x = x$ for any $t\in\mathbb G_m$}\right\}.
  \end{split}
\end{equation}
We embed $G$ into $SL(N)$ and consider the corresponding space for $G
= SL(N)$. We use the ADHM description for $\Uh[SL(N)]{}$ to identify
it with the affine GIT quotient as in \cite[Ch.~3]{Lecture}. Then
$SL(N) = SL(W)$, and $\UhP{}$ coincides with the variety
$\pi(\mathfrak Z)$ studied in \cite[\S3]{Na-Tensor}. Here $\pi$ is
Gieseker-Uhlenbeck morphism, and $\mathfrak Z$ is the attracting set
in the Gieseker space, which will be denoted by $\Gi{P}$ later.

In \cite[Rem.~3.16]{Na-Tensor} it was remarked that $\mathfrak Z$
parametrizes framed torsion free sheaves having a filtration $E = E^0
\supset E^1 \supset \cdots \supset E^k \supset E^{k+1} = 0$.
If all $F^i = E^i/E^{i+1}$ are locally free, $E$ is a $P$-bundle. Thus
$\UhP{}$ contains a possibly empty open subset $p^{-1}(\BunL{})$
consisting of $P$-bundles.

Let us, however, note that $\UhP{}\cap \Bun{}$ is not entirely
consisting of $P$-bundles, hence larger than
$p^{-1}(\BunL{})$: Consider a short exact sequence
\begin{equation*}
   0 \to F^2 \to E \to F^1 = \mathcal I_x \to 0,
\end{equation*}
arising from the Koszul resolution of the skyscraper sheaf at a point
$x\in\AA^2$. Here $\mathcal I_x$ is the ideal sheaf for $x$. Then
$E\in\UhP{}\cap\Bun{}$, but $E$ is not a $P$-bundle as $F^1$ is not
locally free.
More detailed analysis will be given in the proof of \propref{prop:Un}.

\subsection{Associativity}\label{sec:ass}

\begin{Proposition}\label{prop:trans}
  Let $Q$ be another parabolic subgroup of $G$, contained in $P$ and
  let $M$ denote its Levi subgroup. Let $Q_L$ be the image of $Q$ in
  $L$ and we identify $M$ with the corresponding Levi group.
  Then we have a natural isomorphism of functors
    \begin{equation}
        \label{eq:5}
       \Phi_{M, L} \circ \Phi_{L,G} \cong \Phi_{M,G}.
    \end{equation}
\end{Proposition}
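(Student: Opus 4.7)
The plan is to realize both $\Phi_{M,G}$ and $\Phi_{M,L}\circ\Phi_{L,G}$ as hyperbolic restrictions for a single two-parameter torus action, and then to extract the desired isomorphism from a cartesian square together with base change.

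First, I would pick cocharacters $\lambda_1\in X_*(Z(L)^0)$ realizing the pair $(P,L)$ as in \eqref{eq:PLoneparam}, and $\lambda_2\in X_*(Z(M)^0\cap L)$ realizing $(Q_L,M)$ inside $L$; they commute, since both factor through $Z(M)^0$. For $N\gg 0$ set $\lambda_N=N\lambda_1+\lambda_2$. A short root-system computation shows that $\lambda_N$ has centralizer $M$ in $G$ and associated parabolic $Q$: a root $\alpha$ satisfies $\langle\alpha,\lambda_N\rangle>0$ iff either $\langle\alpha,\lambda_1\rangle>0$ (so $\alpha\in\operatorname{roots}(P)$), or $\langle\alpha,\lambda_1\rangle=0$ and $\langle\alpha,\lambda_2\rangle>0$ (so $\alpha$ is a root of $L$ belonging to $\operatorname{roots}(Q_L)$). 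Hence $\Phi_{M,G}$ is computed by the $\GG_m$-action given by $\lambda_N$ via the diagram $\Uh[M]{}\xleftarrow{p_{12}}\Uh[Q]{}\xrightarrow{j_{12}}\Uh{}$ of \eqref{eq:1}.

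The key geometric step will be to establish the identification
\begin{equation*}
\Uh[Q]{}=p_1^{-1}\bigl(\mathcal U_L^{Q_L}\bigr),
\end{equation*}
where $p_1\colon\UhP{}\to\UhL{}$ is the attracting map from \eqref{eq:1} and $\mathcal U_L^{Q_L}\subset\UhL{}$ denotes the $\lambda_2$-attracting set of $Q_L$ inside $\UhL{}$. The inclusion $\subset$ is immediate from the fact that $N\lambda_1$ dominates $\lambda_2$: existence of $\lim_{t\to 0}\lambda_N(t)\cdot x$ forces both $\lim_{t\to 0}\lambda_1(t)\cdot x=p_1(x)$ to exist (so $x\in\UhP{}$) and $\lim_{t\to 0}\lambda_2(t)\cdot p_1(x)$ to exist (so $p_1(x)\in\mathcal U_L^{Q_L}$). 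The reverse inclusion is a weight-by-weight check using affineness of $\Uh{}$: after embedding $A$-equivariantly into a linear space (e.g.\ via ADHM using a faithful embedding $G\hookrightarrow SL(N)$), each nonzero weight component $x_\mu$ of $x\in p_1^{-1}(\mathcal U_L^{Q_L})$ satisfies $\langle\mu,\lambda_1\rangle\ge 0$ (since $x\in\UhP{}$) and, if $\langle\mu,\lambda_1\rangle=0$, also $\langle\mu,\lambda_2\rangle\ge 0$ (since $p_1(x)\in\mathcal U_L^{Q_L}$); as there are only finitely many such weights, for $N$ larger than finitely many ratios we get $\langle\mu,\lambda_N\rangle\ge 0$, in the same spirit as the argument of \lemref{lem:cochar}. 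This produces a cartesian square
\begin{equation*}
\begin{CD}
\Uh[Q]{} @>{\tilde j_1}>> \UhP{} \\
@V{\tilde p_1}VV @VV{p_1}V \\
\mathcal U_L^{Q_L} @>>{j_2}> \UhL{}
\end{CD}
\end{equation*}
together with factorizations $j_{12}=j_1\circ\tilde j_1$ and $p_{12}=p_2\circ\tilde p_1$, where $p_2\colon\mathcal U_L^{Q_L}\to\Uh[M]{}$ and $j_2\colon\mathcal U_L^{Q_L}\to\UhL{}$ are the attracting/inclusion maps for $\Phi_{M,L}$ in \eqref{eq:1} applied inside $L$.

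The final step will be to invoke proper base change around the square above, which gives a natural isomorphism $\tilde p_{1,*}\,\tilde j_1^{\,!}\cong j_2^{\,!}\,p_{1,*}$ on weakly $A$-equivariant objects, and hence
\begin{equation*}
\Phi_{M,G}=p_{12,*}\,j_{12}^{\,!}=p_{2,*}\,\tilde p_{1,*}\,\tilde j_1^{\,!}\,j_1^{\,!}=p_{2,*}\,j_2^{\,!}\,p_{1,*}\,j_1^{\,!}=\Phi_{M,L}\circ\Phi_{L,G}.
\end{equation*}
The main obstacle is the geometric identification $\Uh[Q]{}=p_1^{-1}(\mathcal U_L^{Q_L})$: it is this step that forces the use of affineness of $\Uh{}$ and of the domination of $\lambda_1$ over $\lambda_2$ for $N\gg 0$; once it is secured, the remainder of the proof is formal.
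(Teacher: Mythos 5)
Your proof is correct, and its overall skeleton is the same as the paper's: everything is reduced to the single geometric identity $\Uh[Q]{}=p^{-1}(\Uh[Q_L]{})$, i.e.\ the fiber-product statement \eqref{eq:2}, after which the isomorphism $\Phi_{M,L}\circ\Phi_{L,G}\cong\Phi_{M,G}$ follows formally from the base-change isomorphism $j_2^!\,p_{1*}\cong\tilde p_{1*}\,\tilde j_1^!$ applied to the cartesian square, exactly as in the paper's diagram \eqref{eq:CDpj}. Where you differ is in how this identity is established. The paper embeds $G\hookrightarrow SL(N)$ and invokes the ADHM characterization of the attracting sets (all products $JF(B_1,B_2)I$ lie in $P$, $Q$, $Q_L$ respectively), from which the identity is read off directly. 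You instead run a torus-weight argument with the dominating cocharacter $\lambda_N=N\lambda_1+\lambda_2$, $N\gg0$, after an equivariant linear embedding; this is essentially the ``good coweight'' device the paper itself uses later in Appendix A (\lemref{lem:good}, \lemref{lem:comp-of-hyp}), so your route is self-contained at the level of general torus actions on affine varieties and does not need the specific attracting-set description from \cite{Na-Tensor} — at the mild cost of having to check that $\lambda_N$ realizes $(Q,M)$ and lies in the $Q$-chamber, and that the limit maps compose, $p_{12}=p_2\circ\tilde p_1$. One small caveat: your forward inclusion $\Uh[Q]{}\subset p_1^{-1}(\Uh[Q_L]{})$ is not literally ``immediate'' in the abstract setting (existence of the $\lambda_N$-limit does not formally force existence of the $\lambda_1$-limit); it requires the same finite-weight comparison, with $N$ chosen after fixing the equivariant embedding, that you spell out for the reverse inclusion — since that argument covers both directions, this is a presentational slip rather than a gap.
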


\begin{proof}
    It is enough to show that
    \begin{equation}
        \label{eq:2}
            \UhP{} \times_{\UhL{}} \Uh[{Q_L}]{} = \Uh[Q]{},
    \end{equation}
    as
    \begin{equation*}
      p'_* j^{\prime !} p_* j^!
      = p'_* p''_* j^{\prime\prime !} j^!
      = (p'\circ p'')_* (j\circ j'')^!
    \end{equation*}
    in the diagram
    \begin{equation}\label{eq:CDpj}
      \begin{CD}
        \Uh[Q]{} @>{j''}>> \UhP{} @>{j}>> \Uh{}
\\
        @V{p''}VV @VV{p}V @.
\\
        \Uh[Q_L]{} @>{j'}>> \UhL{} @.
\\
        @V{p'}VV @. @.
\\
        \Uh[M]{} @. @.
      \end{CD}
    \end{equation}

    The left hand side of \eqref{eq:2} is just equal to
    $p^{-1}(\Uh[{Q_L}]{})$. By embedding $G$ into $SL(N)$ we may
    assume that $G = SL(N)$.
    \begin{NB}
        $\UhP{}$ is defined via a one parameter subgroup, therefore it
        is respected under the embedding $G\to SL(N)$.
    \end{NB}%
    In this case, we use the ADHM description
    to describe $\UhP{}$, $\Uh[Q]{}$, $\Uh[Q_L]{}$. By \cite[Proof of
    Lemma~3.6]{Na-Tensor}, they are consisting of data $(B_1,B_2,I,J)$
    such that $J F(B_1,B_2) I$ are in $P$, $Q$, $Q_L$ respectively,
    i.e., upper triangular in appropriate sense, for any products
    $F(B_1,B_2)$ of $B_1$, $B_2$ of arbitrary order. Now the assertion
    is clear.
    \begin{NB}
      Sep.15 : I give a proof via the ADHM.
    \end{NB}
\end{proof}

\subsection{Preservation of perversity}

The following is our first main result:

\begin{Theorem}\label{thm:perverse}
    $\Phi_{L,G}(\IC(\Uh{d}))$ is perverse \textup(and semi-simple,
    according to \cite[Theorem 2]{Braden}\textup). Moreover, the same is
    true for any perverse sheaf in $\Perv(\Uh{d})$.
\end{Theorem}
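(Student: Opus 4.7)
The plan is to apply the hyperbolic semi-smallness criterion (Theorem~\ref{thm:hypsemismall}) to $X = \Uh{d}$, with $A = Z(L)^0$, the stratification $\Uh{d} = \bigsqcup \BunGl{d_1}$ of \eqref{eq:strat}, and the corresponding stratification $\UhL{d} = \bigsqcup \BunLl{d_1'}$ of the fixed-point subvariety. The $\CC^*$-action is given by a cocharacter of $A$ lying in the chamber associated with $P$, so that $\UhP{d}$ and $\UhPm{d}$ are the attracting and repelling sets. Once the dimension estimates
\begin{equation*}
    \dim\bigl(p^{-1}(y)\cap\BunGl{d_1}\bigr),\ \dim\bigl(p_-^{-1}(y)\cap\BunGl{d_1}\bigr)
    \le \tfrac12\bigl(\dim\BunGl{d_1} - \dim\BunLl{d_1'}\bigr)
\end{equation*}
are established for $y\in \BunLl{d_1'}$, \thmref{thm:hypsemismall} delivers both that $\Phi_{L,G}(\IC(\Uh{d}))$ is perverse and semi-simple and, because the estimates are obtained stratum-by-stratum, the same conclusion for each $\IC(\BunGl{d},\rho)$, giving the ``moreover'' clause.

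First I would describe the attracting and repelling loci explicitly on each stratum. Using the one-parameter subgroup $\lambda$ of \eqref{eq:PLoneparam} and the ADHM picture recalled in \subsecref{sec:hyperb-restr} (via an embedding $G\hookrightarrow SL(N)$ as in the proof of \propref{prop:trans}), points of $\UhP{d}\cap\Bun{d_1}$ correspond to framed torsion-free sheaves admitting a filtration whose associated graded lives in $\BunL{\bullet}$, and similarly with $P$ replaced by $P_-$ for $\UhPm{d}$. This identifies the fibres of $p$ and $p_-$ over $\BunL{d_1'}$ with moduli of such filtrations of a fixed $L$-bundle, and identifies the transverse weights of $A$ on the normal bundle in pairs $(\alpha_i,-\alpha_i)$ coming from $\grn_P$ and $\grn_{P_-}$.

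Next I would reduce to the ``centered'' case by the factorization argument of \subsecref{sec:factorization}. The factorization morphism $\pi^d_{a,G}\colon\Uh{d}\to S^d\AA^1$ is $A$-equivariant (since $A$ acts trivially on $\AA^2$) and compatible with $\pi^d_{a,L}$ on $\UhL{d}$, so over the open part $(S^{d'}\AA^1\times S^{d-d'}\AA^1)_0$ of \eqref{eq:4} the claimed dimension estimate follows from the same estimate in lower instanton numbers by induction on $d$. This reduces the problem to a stratum of the form $\Bun{d_0}\times\{d\cdot 0\}$ mapping to $\Bun[L]{d_0'}\times\{d\cdot 0\}$ (or the obvious analogue when $d_0=0$), where the dimension of the fibre becomes a purely local computation on ADHM data. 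There the estimate is exactly the Uhlenbeck analogue of the Mirkovi\'c-Vilonen dimension estimate for $\operatorname{Gr}_G$: the weight-pairing symmetry recalled above gives the exact equality $\dim(p^{-1}(y)\cap\BunGl{d_1}) + \dim(p_-^{-1}(y)\cap\BunGl{d_1}) = \dim\BunGl{d_1}-\dim\BunLl{d_1'}$ on smooth loci, and a separate upper bound on either factor (obtained from the ADHM parametrization of $P$-filtrations of a fixed $L$-bundle) then forces both to be at most half the codimension.

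The hard part will be this last local ADHM dimension estimate for general simply-laced $G$, since one does not have a symplectic resolution at hand to invoke \thmref{nakajima-symplectic}(1) as in type $A$, where the estimate is automatic from the existence of $\Gi{r}^d\to\Uh{d}$. The workaround I have in mind is to transfer the estimate from $SL(N)$ via a faithful representation $G\hookrightarrow SL(N)$: one shows that the attracting and repelling strata for $G$ are scheme-theoretically cut out in those for $SL(N)$ with the expected codimension, so that the Gieseker estimate propagates. A clean alternative, when it suffices, is to use \propref{prop:trans} to factor $\Phi_{L,G} = \Phi_{L,M}\circ\Phi_{M,G}$ through an intermediate Levi $M$ of corank one, reducing to the case of a sub-minimal Levi of type $SL(2)$, which can be handled by direct inspection of the ADHM and the type $A$ result. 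Once perversity is established, semi-simplicity is automatic by \cite[Theorem~2]{Braden}.
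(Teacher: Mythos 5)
Your overall frame (reduce \thmref{thm:perverse} to the hyperbolic semi-smallness criterion \thmref{thm:hypsemismall} via stratum-wise bounds on the fibres of $p$ and $p_-$) is exactly the frame the paper starts from, but the paper explicitly states that it does \emph{not} know how to prove those dimension estimates directly for general $G$, and none of your three routes to them closes the gap. (i) The symplectic pairing of $A$-weights only controls the normal bundle along the smooth fixed locus $\BunL{d}$; it gives the rank computation of \subsecref{sec:open} there, but says nothing about the fibres of $p$, $p_-$ over deeper strata of $\UhL{d}$ -- e.g.\ $\Uh[P,0]d\cap\Bun{d}$, the fibre over $d\cdot 0\in S_{(d)}\AA^2$ inside the \emph{open} stratum of $\Uh{d}$ -- which is precisely where the bound is needed; moreover the logic ``sum of the two fibre dimensions equals the codimension, plus an upper bound on one factor, forces both to be at most half'' is not valid (a bound on one factor pushes the other \emph{up}, not down), and the sum identity itself is not a fibrewise statement off the smooth locus. (ii) The $SL(N)$ transfer does not propagate the estimate: embedding $\Uh{d}\hookrightarrow\Uh[SL(N)]{d'}$ only bounds the $G$-attracting fibres by the $SL(N)$ ones, which is a bound relative to $d'h^\vee_{SL(N)}$, far weaker than the required $dh^\vee_G-1$; the assertion that the $G$-attracting strata sit inside the $SL(N)$ ones ``with the expected codimension'' is unproved and is essentially equivalent to the original estimate. (iii) Factoring $\Phi_{L,G}$ through a corank-one Levi by \propref{prop:trans} does not reduce to $SL(2)$: the outer factor $\Phi_{M,G}$ is still a hyperbolic restriction for a non-type-$A$ pair, and proving that \emph{it} preserves perversity is the original problem, so this route is circular.

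What the paper actually does is different in an essential way: it first bounds only the attracting set over $S^d(\AA^1_v\setminus 0)$ (Proposition \ref{prop:sd}, Corollary \ref{cor:dim-cor}) by transporting the question to the Zastava space $\calZ^{d\delta}_G$ via the forgetful map $f_d$ and using the $dh^\vee$-dimensionality of its central fibre $\calF^d$ from \cite{BFG} (properties (Z1)--(Z4)); it then introduces a ``good'' coweight and the twisted torus $T\times\CC^*$ (with a hyperbolic $\CC^*$ on $\AA^2$) so that the twisted hyperbolic restriction to the unique fixed point factors as $\Phi_0\circ\Phi_{T,G}$, and proves via Braden plus a Cousin spectral sequence that this twisted restriction of $\IC(\Uh{d})$ is concentrated in degree $0$ (\thmref{thm:twistedmain}); perversity of $\Phi_{T,G}$ then follows by factorization, induction on $d$, $\AA^2$-equivariance and the exclusion of shifted constant summands on the diagonal, and the case of general $L$ is deduced \emph{from} the case $L=T$ using associativity and an analysis of the admissible local systems. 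The fibre dimension estimate you want to prove first appears in the paper only afterwards, as Corollary \ref{cor:dimest}, deduced from the exactness. So the missing ingredient in your proposal is precisely the input that replaces the unavailable direct estimate: the Zastava-space dimension result together with the good-coweight/twisted-restriction argument.
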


The proof will be given in \secref{sec:append-exactn}.

Let us remark that the result is easy to prove for type $A$, see
\cite[\S4.4, Lemma~3]{tensor2}. The argument goes back to an earlier
work by Varagnolo-Vasserot \cite{VV2}.

\begin{NB}
  Sep. 16 : The following subsection is moved from
  \subsecref{sec:anotherbase}, as the argument is already used.

  \begin{NB2}
    Oct. 23 : Don't we need the following subsection in the argument
    in Appendix ? It seems to me that you need it at the last
    moment. If so, why do not we move the following subsection to
    before this subsection ?
  \end{NB2}

  \begin{NB2}
    Answer is NO. It uses only the hyperbolic restriction of the
    trivial bundle on a smooth variety is trivial.
  \end{NB2}
\end{NB}

\subsection{Hyperbolic restriction on \texorpdfstring
{$\Bun[L]{d}$}{BunLd}}\label{sec:open}

Let us consider the restriction of $\Phi_{L,G}(\IC(\Uh{d}))$ to the
open subset $\Bun[L]{d}$ in this subsection.

For simplicity, suppose that $[L,L]$ has one simple factor so that the
instanton numbers of $L$-bundles are the same as those of
$[L,L]$-bundles. In particular, $\Bun[L]{d}$ is irreducible. Then
$\IC(\Uh[L]{d})$ is a simple perverse sheaf, and we study
\begin{equation}\label{eq:LG}
  \Hom_{\Perv(\UhL{d})}(\IC(\Uh[L]{d}),\Phi_{L,G}(\IC(\Uh{d}))).
\end{equation}

We restrict \eqref{eq:1} to the open subsets consisting of genuine
bundles:
\begin{equation}
    \label{eq:10}
    \BunL{d} \overset{p}{\underset{i}{\leftrightarrows}}
    p^{-1}(\BunL{d})
    \overset{j}{\rightarrow} \Bun{d}.
\end{equation}
Let us take $\mathcal F\in \BunL{d}$. Then the tangent space of
$\BunL{d}$ at $\mathcal F$ is $H^1(\proj^2, \mathfrak l_{\mathcal
  F}(-\linf))$, where $\mathfrak l$ is the Lie algebra of $L$. This is
the subspace of $H^1(\proj^2, \g_{\mathcal F}(-\linf)) = T_{\mathcal
  F}\Bun{d}$, consisting of $Z(L)^0$-fixed elements.
The normal bundle of $\BunL{d}$ in $\Bun{d}$ splits into the sum of
$H^1(\proj^2, \mathfrak n_{\mathcal F}(-\linf))$ and $H^1(\proj^2,
\mathfrak n^-_{\mathcal F}(-\linf))$, where $\mathfrak n$ is the nil
radical of $\mathfrak p = \operatorname{Lie}P$, and $\mathfrak n^-$ is
its opposite.
They correspond to attracting and repellent directions respectively.
Then $p^{-1}(\BunL{d})$ is a vector bundle over $\BunL{d}$, whose
fiber at $\mathcal F$ is $H^1(\proj^2, \mathfrak n_{\mathcal F}(-\linf))$. It
parametrizes framed $P$-bundles.
\begin{NB}
    For type $A$, this space parametrizes the extension of $E_2$ by
    $E_1$, hence is equal to $\Ext^1(E_1, E_2(-\linf))$. In general,
    it is a generalization of the Ext group, as above.
\end{NB}%
The morphism $p$ is the projection and $i$ is the inclusion of the
zero section. Therefore we have the Thom isomorphism between
$i^*j^!(\cC_{\Bun{d}})$ and $\cC_{\BunL{d}}$ up to shift.

Note further that $\dim p^{-1}(\BunL{d})$ is the half of the sum of
dimensions of $\BunL{d}$ and $\Bun{d}$, as $H^1(\proj^2, \mathfrak
n_{\mathcal F}(-\linf))$ and $H^1(\proj^2, \mathfrak n^-_{\mathcal
  F}(-\linf))$ are dual to each other with respect to the symplectic
form.
Hence a shift is unnecessary, and the Thom isomorphism gives the
canonical identification $i^* j^! (\cC_{\Bun{d}})\cong
\cC_{\BunL{d}}$.
Therefore we normalize the canonical homomorphism
\begin{equation}\label{eq:1LG}
  1^d_{L,G}\in \Hom_{\Perv(\UhL{d})}(\IC(\UhL{d}),
  \Phi_{L,G}(\IC(\Uh{d})))
\end{equation}
so that it is equal to the Thom isomorphism on the open subset.
\index{1LG@$1^d_{L,G}$}

\begin{NB}
  Sep. 16 :
  This homomorphism is not a right one, as it must be corrected by a
  polarization.

  For $L = L_i$, the correction will be given by a bipartite coloring
  of the vertexes of Dynkin diagram, though I still need to check the
  detail.
  \begin{NB2}
    I checked the detail. It is explained in
    \subsecref{sec:anotherbase2}.
  \end{NB2}

  If this problem will be fixed, I may change my definition of
  $1^d_{L,G}$. But it applies only to the case $L = L_i$, so I
  probably will not.
\end{NB}

Note also that a homomorphism in \eqref{eq:LG} is determined by its
restriction to $\BunL{d}$, hence \eqref{eq:LG} is $1$-dimensional from
the above observation. And $1^d_{L,G}$ is its base.

If $[L,L]$ has more than one simple factors $G_1$, $G_2$, \dots,
$\Bun[L]d$ is not irreducible as it is isomorphic to
$\bigsqcup_{d_1+d_2+\dots=d} \Bun[G_1]{d_1}\times
\Bun[G_2]{d_2}\times\cdots$.
Then $\IC(\Uh[L]d)$ must be understood as the direct sum
\begin{equation}
  \bigoplus_{d_1+d_2+\dots=d} \IC(\Bun[G_1]{d_1}\times
  \Bun[G_2]{d_2}\times\cdots).
\end{equation}
In particular, \eqref{eq:LG} is not $1$-dimensional. But it does not
cause us any trouble. We have the canonical isomorphism for each
summand, and $1^d_{L,G}$ is understood as their sum.

\subsection{Space \texorpdfstring{$U^d$}{Ud} and its base}\label{sec:Ud}

We shall introduce the space $U^d$ of homomorphisms from
$\cC_{S_{(d)}\AA^2}$ to $\Phi_{L,G}(\IC(\Uh{d}))$ and study its
properties in this subsection. A part of computation is a byproduct of
the proof of \thmref{thm:perverse} (see \lemref{lem:Ud}).
The study of $U^d$ will be continued in the remainder of this chapter,
and also in the next chapter.

\begin{Definition}
For $d > 0$, we define a vector space
\begin{equation}\label{eq:Ud}
  \begin{split}
  U^d_{L,G} \equiv\index{ULG@$U^d_{L,G}$}
  U^d &\defeq \Hom_{\Perv(\UhL{d})}(\cC_{S_{(d)}\AA^2},\Phi_{L,G}(\IC(\Uh{d})))\\
  &= H^{-2}(S_{(d)}\AA^2,\xi^! \Phi_{L,G}(\IC(\Uh{d}))),
  \end{split}
\end{equation}
where $(d)$ is the partition of $d$ consisting of a single entry $d$,
and $\xi\colon S_{(d)}\AA^2\to \UhL{d}$ is the inclusion.
\index{SdA2@$S_{(d)}\AA^2$}
\end{Definition}

We use the notation $U^d$, when $L$, $G$ are clear from the context.
\begin{NB}
  Sep. 16 : I move the definition of $U^d_{L,G}$ here.
\end{NB}

Since the hyperbolic restriction $\Phi_{L,G}$ depends on $P$, the
space $U^d_{L,G}$ depends also on $P$. When we want to emphasize $P$,
we denote it by $U^{d,P}_{L,G}$\index{ULGdP@$U^{d,P}_{L,G}$} or simply
by $U^{d,P}$.

We have a natural evaluation homomorphism
\begin{equation}
    \label{eq:6}
    U^d\otimes \cC_{S_{(d)}\AA^2}\to \Phi_{L,G}(\IC(\Uh{d})),
\end{equation}
which gives the isotypical component of $\Phi_{L,G}(\IC(\Uh{d}))$
corresponding to the simple perverse sheaf $\cC_{S_{(d)}\AA^2}$.

By the factorization \secref{sec:factorization} together with the
Thom isomorphism $i^*j^!(\cC_{\Bun{d_1}})\cong \cC_{\BunL{d_1}}$,
\begin{NB}
  Sep. 16 : I add `Thom isomorphism'. This was missing in the previous
  version, but we certainly need it.

  We may introduce
  \begin{equation}
    W^d = \Hom_{\Perv(\UhL{d})}(\IC(\UhL{d}), \Phi_{L,G}(\IC(\Uh{d}))),
  \end{equation}
  and the canonical isomorphism is
  \begin{equation}
        \Phi_{L,G}(\IC(\Uh{d})) \cong \bigoplus
    \IC(\BunLl{d_1},\rho)\otimes W^{d_1}.
  \end{equation}
  I do not know which is better, at this moment, but I take an
  economical approach, fixing $W^{d_1}\cong \CC$.
\end{NB}%
we get

\begin{Proposition}\label{prop:hyperbolic}
  We have the canonical isomorphism in $\Perv(\Uh[L]{d})$:
\begin{equation}
    \label{eq:7}
    \Phi_{L,G}(\IC(\Uh{d})) \cong \bigoplus
    \IC(\BunLl{d_1},\rho).
\end{equation}
Here $\rho$ is the \textup(semisimple\textup) local system on
$\BunLl{d_1} = \BunL{d_1}\times S_\lambda\AA^2$ with $\lambda =
(1^{n_1} 2^{n_2}\cdots)$ corresponding to the representation of
$S_{n_1}\times S_{n_2}\times\cdots$ on $(U^1)^{\otimes n_1}\otimes
(U^2)^{\otimes n_2}\otimes\cdots$ given by permutation of factors.

Moreover the isomorphism is also in the equivariant category with
respect to $L\times\CC^*\times\CC^*$.
\end{Proposition}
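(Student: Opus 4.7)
\medskip

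\noindent\textbf{Proof proposal.} The plan is to use \thmref{thm:perverse} to reduce the computation to identifying the multiplicity local systems on each stratum, and then use the factorization property of \subsecref{sec:factorization} to reduce every stratum to a product of the ``fully clustered'' strata $S_{(k)}\AA^2$, where the multiplicity space is $U^k$ by the very definition of $U^k$.

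First, since $\Phi_{L,G}(\IC(\Uh{d}))$ is semisimple and perverse by \thmref{thm:perverse} and its constructibility with respect to the stratification $\UhL{d}=\bigsqcup \BunLl{d_1}$ is inherited from that of $\IC(\Uh d)$, we may write
\[
  \Phi_{L,G}(\IC(\Uh d))\cong \bigoplus_{d_1,\lambda,\rho}
  \IC(\BunLl{d_1},\rho)\otimes M_{d_1,\lambda,\rho}
\]
for some multiplicity vector spaces $M_{d_1,\lambda,\rho}$, where $\rho$ ranges over simple local systems on $\BunLl{d_1}=\BunL{d_1}\times S_\lambda\AA^2$. The task is to identify these local systems and multiplicities. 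Since $\BunL{d_1}$ is irreducible and simply-connected (or, when $[L,L]$ has several simple factors, a disjoint union of such pieces, as in \subsecref{sec:open}), only the trivial local system in the first factor contributes, and the interesting structure is the $S_{n_1}\times S_{n_2}\times\cdots$-representation on the fiber over the second factor.

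Next, I compute the stalk of $\Phi_{L,G}(\IC(\Uh d))$ at a generic point of $\BunLl{d_1}$. On the open stratum $\BunL{d}\subset \UhL d$ (i.e.\ $d_1=d$, $\lambda=\emptyset$), \subsecref{sec:open} together with the Thom isomorphism identifies $\Phi_{L,G}(\IC(\Uh d))$ with $\cC_{\BunL d}$, contributing the summand $\IC(\BunL d)$ with multiplicity one: this is the $d_1=d$ term of \eqref{eq:7}. For a point $(\mathcal F, \sum \lambda_i x_i)\in \BunL{d_1}\times S_\lambda\AA^2$ with the $x_i$ pairwise distinct, choose a linear projection $a\colon \AA^2\to\AA^1$ separating the $x_i$ and apply the factorization isomorphism \eqref{eq:4} simultaneously for $G$ and for $L$ (it is compatible with the inclusion $\Uh[L]d\hookrightarrow \Uh d$ of \propref{prop:LL} and with the $A$-action, since $a$ is $L$-invariant). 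On a small product neighborhood, $\Uh d$ decomposes as a product $\Uh{d_1}\times \prod_i \Uh{\lambda_i}$ and $\UhL d$ decomposes analogously, and $\Phi_{L,G}$ factors into a tensor product of hyperbolic restrictions on the factors. Therefore the stalk at the chosen point is
\[
  \cC_{\BunL{d_1},\mathcal F}\otimes \bigotimes_i U^{\lambda_i},
\]
i.e.\ $(U^1)^{\otimes n_1}\otimes (U^2)^{\otimes n_2}\otimes\cdots$ when $\lambda=(1^{n_1}2^{n_2}\cdots)$.

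Finally, I identify the monodromy of the multiplicity local system. The stratum $S_\lambda\AA^2$ is the quotient of the configuration space $(\AA^2)^{n_1}\times(\AA^2)^{n_2}\times\cdots\setminus\text{diag}$ by $S_{n_1}\times S_{n_2}\times\cdots$, and the factorization isomorphism \eqref{eq:4} is equivariant for the permutation of equal-multiplicity factors. Since the tensor factors $U^{k}$ attached to different points of multiplicity $k$ are permuted exactly according to this covering, the $S_{n_1}\times S_{n_2}\times\cdots$-action on the stalk is the permutation action on $(U^1)^{\otimes n_1}\otimes (U^2)^{\otimes n_2}\otimes\cdots$. This determines the local system $\rho$ and yields the desired decomposition \eqref{eq:7}. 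The main subtlety I anticipate is the verification of the compatibility of the factorization isomorphism with the hyperbolic restriction functor (so that $\Phi_{L,G}$ really distributes over the product decomposition given by \eqref{eq:4}); this is where one needs the $L$-invariance of $\pi^d_{a,G}$ and a base-change argument, and it is the step I would write out in full detail.
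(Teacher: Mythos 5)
Your overall route is exactly the paper's: \thmref{thm:perverse} gives semisimplicity and perversity, the Thom isomorphism of \subsecref{sec:open} handles the open stratum, and the factorization property \eqref{eq:4} (together with the observation that $i$, $j$, $p$ commute with the factorization morphisms) identifies the multiplicity spaces as tensor products of the $U^{k}$'s with the permutation monodromy. The compatibility you flag at the end is indeed the point the paper makes explicit, so that part of your plan is sound.

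There is, however, one genuine gap: the claim that $\Bun[L]{d_1}$ is simply connected, which you use to rule out nontrivial local systems along the $\Bun[L]{d_1}$-factor, is false in general. Already for $[L,L]\cong SL(2)$ and $d_1=1$ the centered moduli space of framed bundles is $(\CC^2/\{\pm 1\})\setminus\{0\}$, so $\pi_1(\Bun[SL(2)]{1})\cong\Z/2$; thus triviality in the $\Bun[L]{d_1}$-direction cannot be obtained this way. What is true, and what the paper proves (the Lemma in \secref{sec:append-exactn}, in the proof of exactness of $\Phi_{L,G}$), is that every local system occurring in $\Phi_{L,G}(\IC(\Uh{d}))$ on a stratum $\Bun[L]{d_1}\times S_\lambda\AA^2$ is of the form $\CC_{\Bun[L]{d_1}}\boxtimes\calE'$: over a Zariski-dense open subset of the stratum the factorization gives an \'etale-local product decomposition, on which the $\Bun[L]{d_1}$-direction contributes a constant sheaf via the Thom isomorphism, and one concludes using that $\pi_1$ of a dense open subset of the smooth stratum surjects onto $\pi_1$ of the stratum. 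Your own factorization-plus-Thom argument can be upgraded to exactly this statement (you must run it over a dense open subset of the stratum, not just at a single generic point), but as written the step is unjustified; with that replacement the proof matches the paper's.
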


For example, the isotypical component for the intersection cohomology
complex $\IC(\Bun[L,\lambda]{d_1})$ for the trivial simple local system
is
\begin{equation}
    \label{eq:9}
    \Sym^{n_1} U^1\otimes \Sym^{n_2} U^2\otimes \cdots,
\end{equation}
where $\Sym$ denotes the symmetric power.

The second statement is the consequence of the first as the spaces of
homomorphisms between objects in $\Perv(\Uh[L]{d})$ are canonically
isomorphic for equivariant category with respect to
$L\times\CC^*\times\CC^*$ and non-equivariant one.  (See
\cite[1.16(a)]{Lu-cus2}.)
\begin{NB}
  I cannot find a statement in \cite{BL}. Is this a standard result ?
  \begin{NB2}
    If we consider the corresponding statement for equivariant
    $D$-modules, it is obvious: Vector fields generated by Lie
    algebras determined the equivariant structure on the $D$-module
    for a {\it connected\/} group.
  \end{NB2}%
\end{NB}%
Therefore \eqref{eq:7} is an isomorphism in the equivariant derived
category, though we use the factorization, which is {\it not\/}
equivariant with respect to $\CC^*\times\CC^*$.

\begin{NB}
  The following will be used later, but the proof was not given
  before. Dec. 12.
\end{NB}

\begin{Lemma}\label{lem:Fock}
  Suppose $L=T$. We have
  \begin{equation}
    H^*(S^d\AA^2, \Phi_{T,G}(\IC(\Uh{d})))
    \cong \bigoplus_{|\lambda|=d} \Sym^{n_1}U^1\otimes\Sym^{n_2}U^2\otimes
    \cdots
  \end{equation}
where $\lambda = (1^{n_1}2^{n_2}\dots)$.
\end{Lemma}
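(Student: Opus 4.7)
The plan is to apply \propref{prop:hyperbolic} to reduce the statement to a computation for each individual $\lambda\vdash d$. Since $L=T$ admits no nontrivial framed bundles on $\proj^2$, only $d_1=0$ contributes in that proposition, giving
\[
\Phi_{T,G}(\IC(\Uh{d}))\cong\bigoplus_{\lambda\vdash d}\IC(S_\lambda\AA^2,\rho_\lambda),
\]
where for $\lambda=(1^{n_1}2^{n_2}\cdots)$ the local system $\rho_\lambda$ is associated to the $\prod_k S_{n_k}$-representation $\bigotimes_k(U^k)^{\otimes n_k}$ with each $S_{n_k}$ permuting its own factors. It therefore suffices to prove $H^*(S^d\AA^2,\IC(S_\lambda\AA^2,\rho_\lambda))\cong\bigotimes_k\Sym^{n_k}U^k$ for each $\lambda$ and then sum.

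For this I exploit the morphism
\[
\alpha_\lambda\colon\prod_k(\AA^2)^{n_k}\longrightarrow S^d\AA^2,\qquad
\bigl((x^k_i)_i\bigr)_k\longmapsto\sum_k k\sum_i x^k_i,
\]
whose image is $\overline{S_\lambda\AA^2}$. It is proper because the image divisor controls the magnitudes of the source points, and its fibers are finite because a given effective divisor of degree $d$ admits only finitely many colored decompositions with prescribed block degrees $n_k$; hence $\alpha_\lambda$ is finite. Over $S_\lambda\AA^2$ it is an \'etale Galois cover with group $\prod_k S_{n_k}$. Source and target both have dimension $2l(\lambda)$, all fibers are $0$-dimensional, and every substratum $S_\mu\AA^2\subsetneq\overline{S_\lambda\AA^2}$ has $l(\mu)<l(\lambda)$; thus $\alpha_\lambda$ is semi-small with the open stratum as the sole relevant one, and the decomposition theorem yields
\[
\alpha_{\lambda,*}\CC[2l(\lambda)]\cong\bigoplus_{\mu_k\vdash n_k}\Bigl(\bigotimes_k V^{\mu_k}\Bigr)\otimes\IC(S_\lambda\AA^2,\mathcal L_{(\mu_k)}),
\]
where $V^{\mu_k}$ is the Specht module of $S_{n_k}$ and $\mathcal L_{(\mu_k)}$ is the local system on $S_\lambda\AA^2$ associated to $\bigotimes_k V^{\mu_k}$.

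Passing to global cohomology, the left-hand side equals $H^{*+2l(\lambda)}\bigl(\prod_k(\AA^2)^{n_k}\bigr)\cong\CC$ with trivial $\prod_k S_{n_k}$-action, since the source is $\prod_k S_{n_k}$-equivariantly contractible to the origin. Hence only the all-trivial tuple $(\mu_k)=((n_k))_k$ survives, and
\[
H^*(S^d\AA^2,\IC(S_\lambda\AA^2,\mathcal L_{(\mu_k)}))=\begin{cases}\CC&\text{if every }\mu_k=(n_k),\\ 0&\text{otherwise.}\end{cases}
\]
Schur--Weyl duality decomposes $\rho_\lambda\cong\bigoplus_{(\mu_k)}\bigl(\bigotimes_k\mathbb{S}^{\mu_k}(U^k)\bigr)\otimes\bigotimes_k V^{\mu_k}$ with $\mathbb{S}^{(n_k)}(U^k)=\Sym^{n_k}U^k$; by the previous display only the $\mu_k=(n_k)$ component contributes, giving $\bigotimes_k\Sym^{n_k}U^k$, and summing over $\lambda\vdash d$ completes the proof. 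The main subtlety is the decomposition in the second paragraph: one has to exclude IC summands supported on proper substrata of $\overline{S_\lambda\AA^2}$, which is where the specific geometry of the stratification enters, although here it reduces immediately to the easy inequality $l(\mu)<l(\lambda)$.
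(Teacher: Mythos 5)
Your proof is correct and takes essentially the same route as the paper's: both reduce via \propref{prop:hyperbolic} to the finite cover $\prod_k(\AA^2)^{n_k}\to\overline{S_\lambda\AA^2}$, and identify the pushforward of the constant sheaf with the IC sheaf of the regular-representation local system on $S_\lambda\AA^2$ (you via semismallness of a finite map plus the decomposition theorem, the paper directly from the finiteness of the extended covering map). The final step is likewise identical: the global cohomology of $\prod_k(\AA^2)^{n_k}$ is one-dimensional with trivial $\prod_k S_{n_k}$-action, so only the trivial isotypic component survives, yielding $\Sym^{n_1}U^1\otimes\Sym^{n_2}U^2\otimes\cdots$ for each $\lambda$.
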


\begin{proof}
  Since $L=T$, we have $\Uh[T]{d} = S^d\AA^2$. See
  Example~\ref{ex:torus_fixed}. Then the assertion means that only
  trivial representation of $S_{n_1}\times S_{n_2}\times\cdots$
  contribute to the global cohomology group.

  Let $U$ be an open subset of $(\AA^2)^{n_1}\times (\AA^2)^{n_2}
  \times\cdots$ consisting of pairwise disjoint $n_1$ ordered points,
  $n_2$ ordered points, and so on in $\AA^2$. Forgetting orderings, we
  get an $(S_{n_1}\times S_{n_2}\times\cdots)$-covering $p \colon U\to
  S_\lambda\AA^2$. The pushforward of the trivial rank $1$ system with
  respect to $p$ is the regular representation $\rho_{\mathrm{reg}}$
  of $(S_{n_1}\times S_{n_2}\times\cdots)$.

  Since $p$ extends to a finite morphism $(\AA^2)^{n_1}\times
  (\AA^2)^{n_2} \times\cdots\to \overline{S_\lambda\AA^2}$, we have
  $\IC(S_\lambda\AA^2,\rho_{\mathrm{reg}}) =
  p_*(\cC_{(\AA^2)^{n_1}\times (\AA^2)^{n_2} \times\cdots})$. By the
  K\"unneth theorem, the global cohomology group $H^*(\bullet)$ of the
  right hand side is $H^*((\AA^2)^{n_1})\otimes
  H^*((\AA^2)^{n_2})\otimes\cdots$. This is $1$-dimensional, and
  corresponds to the trivial isotypical component of
  $\rho_{\mathrm{reg}}$. Now the assertion follows.
\end{proof}

Let us continue the study of $U^d$.
Let us note that all of our spaces $\Uh{d}$, $\Uh[L]{d}$, $\Uh[P]{d}$
have trivial factors $\AA^2$ given by the center of instantons, or the
translation on the base space $\AA^2$ except $d=0$ where $\Uh{0} =
\Uh[L]{0} = \Uh[P]{0} = \mathrm{pt}$. We assume $d\neq 0$
hereafter. Let $\cUh{d}$ denote the centered Uhlenbeck space at the
origin, thus we have $\Uh{d} = \cUh{d}\times \AA^2$.
Let us compose factorization morphisms $\pi^d_{h,G}$, $\pi^d_{v,G}$
for the horizontal and vertical projections $h\colon\AA^2\to\AA^1$,
$v\colon\AA^2\to\AA^1$ with the sum map $\sigma\colon S^d\AA^1\to
\AA^1$. Then $\cUh{d} =
(\sigma\pi^d_{h,G}\times\sigma\pi^d_{v,G})^{-1}(0,0)$.
\index{UhcG@$\cUh{d}$ (centered Uhlenbeck space)}
We use the notation $\cUh[L]{d}$, $\cUh[P]{d}$ for $\Uh[L]{d}$,
$\Uh[P]{d}$ cases. The diagrams \eqref{eq:1} factor and induce the
diagrams for the centered spaces, and the factorization is compatible
with the hyperbolic restriction. Let us use the same notation for $i$,
$j$, $p$ for the centered spaces. Then we have
\begin{equation}
  U^{d} = H^0(\xi_0^! p_* j^!\IC(\cUh{d})),
\end{equation}
where $\xi_0$ is the inclusion of the single point $d\cdot 0$ in
$\cUh[L]{d}$.
Here $d\cdot 0$ is the point in $S_{(d)}\AA^2$, the origin with
multiplicity $d$.

By base change we get
\begin{equation}\label{eq:17}
  U^d 
   \cong H^{0}(p^{-1}(d\cdot 0), \tilde j^{!} \IC(\cUh{d})),
\end{equation}
where $\tilde j\colon p^{-1}(d\cdot 0)\to \cUh{d}$ is the
inclusion.
\begin{NB}
Consider the fiber square
\begin{equation}
    \begin{CD}
        p^{-1}(d\cdot 0) @>{\tilde \xi_0}>> \cUh[P]{d}
\\
       @V{p}VV @VV{p}V
\\
       \{ d\cdot 0\} @>{\xi_0}>> \cUh[L]{d}.
    \end{CD}
\end{equation}
We have $\xi_0^! p_* = p_* \tilde\xi_0^!$. Hence
\begin{equation}
    \begin{split}
    H^{0}(\xi_0^! p_* j^{!} \IC(\cUh{d}))
    &= H^{0}(p^{-1}(d\cdot 0),
    (j\circ \tilde\xi_0)^{!} \IC(\cUh{d}))
\\
   &= H^{0}(p^{-1}(d\cdot 0),
    \tilde j^{!} \IC(\cUh{d})).
    \end{split}
\end{equation}
\end{NB}

We have
\begin{Lemma}\label{lem:dimU}
    \begin{equation}
        \label{eq:8}
        \dim U^d = \rank G - \rank [L,L].
    \end{equation}
\end{Lemma}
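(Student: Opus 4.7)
The plan is to identify $\dim U^d$ with the number of top-dimensional irreducible components of a certain attracting subvariety, reduce the count to $d=1$, and then perform the computation in the minimal nilpotent orbit.

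First, by the expression \eqref{eq:17} we have $U^d \cong H^0(p^{-1}(d\cdot 0),\tilde j^!\IC(\cUh{d}))$. The perversity of $\Phi_{L,G}(\IC(\Uh{d}))$ established in \thmref{thm:perverse}, combined with the converse dimension estimate recorded right after \thmref{thm:hypsemismall}, gives
\[
\dim \left(p^{-1}(d\cdot 0)\cap\cBun{d}\right)\le dh^\vee-1,
\]
and identifies $U^d$ with the top-degree Borel-Moore homology of $p^{-1}(d\cdot 0)\cap\cBun{d}$, where $\cBun{d}$ denotes the centered smooth locus of $\cUh{d}$ (of dimension $2dh^\vee-2$). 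Hence $\dim U^d$ equals the number of irreducible components of $p^{-1}(d\cdot 0)\cap\cBun{d}$ of maximal dimension $dh^\vee-1$.

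Second, I would reduce to the case $d=1$ using the factorization of \secref{sec:factorization}. The morphism $\pi^d_{a,G}$ is $A$-equivariant, and the isomorphism \eqref{eq:4} identifies $\Phi_{L,G}(\IC(\Uh{d}))$ with an exterior tensor product of $\Phi_{L,G}(\IC(\Uh{d_i}))$ on the open locus of separated divisors. A specialization/nearby-cycles argument, together with the factorizable nature of the perverse sheaf $\bigoplus_d \Phi_{L,G}(\IC(\Uh{d}))$, then shows that the multiplicity of the deepest summand $\IC(S_{(d)}\AA^2)$ appearing in \eqref{eq:7} is independent of $d$ for $d\ge 1$.

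Third, I would compute the $d=1$ case directly. The centered space $\cUh{1}$ is canonically isomorphic to the closure of the minimal nilpotent orbit $\overline{\mathcal O_{\min}}\subset\g$, with smooth locus $\mathcal O_{\min}=\cBun{1}$. Under this identification $\cUh[P]1=\overline{\mathcal O_{\min}}\cap\mathfrak p$ and $p^{-1}(0)\cap\cBun{1}=\mathcal O_{\min}\cap\mathfrak u_P$, where $\mathfrak u_P$ is the nilradical of $\operatorname{Lie}P$. Since $G$ is simply-laced, every root vector $e_\alpha\in\mathfrak u_P$ lies in $\mathcal O_{\min}$, and $\mathcal O_{\min}\cap\mathfrak u_P$ is pure of dimension $h^\vee-1$ with top-dimensional irreducible components indexed by the simple roots of $G$ that do not lie in $L$. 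Their number is exactly $\rank G-\rank[L,L]=\dim Z(L)^0$, giving $\dim U^1=\rank G-\rank[L,L]$ as required.

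The main obstacle I expect is Step~2: the diagonal point $d\cdot 0$ is the deepest stratum of $S^d\AA^2$, so the factorization isomorphism only applies after deforming to separated configurations, and tracking the multiplicity of the singular summand $\IC(S_{(d)}\AA^2)$ through the coalescence of points requires care. An alternative route is to exploit the associativity $\Phi_{T,G}=\Phi_{T,L}\circ\Phi_{L,G}$ of \propref{prop:trans} together with \lemref{lem:Fock}: the Fock-space decomposition of $H^*(S^d\AA^2,\Phi_{T,G}(\IC(\Uh{d})))$ controls the numbers $\dim U^d_{T,G}$, and comparing with the analogous statement for the Levi $L$ in place of $G$ extracts $\dim U^d_{L,G}$.
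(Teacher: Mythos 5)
Your Step 2 is where the argument breaks down, and the gap is not cosmetic. By definition $U^d$ is the multiplicity of the simple summand $\cC_{S_{(d)}\AA^2}$ in $\Phi_{L,G}(\IC(\Uh{d}))$, i.e.\ data attached to the deepest stratum of $\Uh[L]{d}$ --- and that is exactly where the factorization isomorphism \eqref{eq:4} says nothing: it identifies the sheaf only over configurations where the divisor splits into disjoint pieces, whereas the point $d\cdot 0$ lies over the main diagonal for every such splitting. A specialization/nearby-cycles argument can compare the stalk at $d\cdot 0$ with stalks at nearby, less degenerate points, but this yields at best semicontinuity-type information; it does not force the multiplicity of the \emph{new} summand supported on $S_{(d)}\AA^2$ to be independent of $d$. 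Indeed, if that independence were a formal consequence of factorization, no external computation would be needed at all, whereas the paper's proof hinges on precisely such an input: by Laumon's theorem \cite{Laumon-chi} the hyperbolic restriction preserves the Euler characteristic of stalks, the Euler characteristic of the stalk of $\IC(\Uh{d})$ at points of $S_{(d)}\AA^2$ is computed in \cite[Theorem~7.10]{BFG}, and comparing this with the $\Sym$-type decomposition of the stalk of $\Phi_{T,G}(\IC(\Uh{d}))$ coming from \propref{prop:hyperbolic} gives $\dim U^d_{T,G}=\rank G$ by induction on $d$; the case of general $L$ then follows from the associativity $\Phi_{T,G}\simeq\Phi_{T,L}\circ\Phi_{L,G}$ and the same comparison. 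Your fallback route (associativity plus \lemref{lem:Fock}) has the same hole in a different place: \lemref{lem:Fock} expresses $H^*(S^d\AA^2,\Phi_{T,G}(\IC(\Uh{d})))$ in terms of the unknown spaces $U^i$, so it cannot determine their dimensions unless you compute that cohomology (equivalently, the relevant stalks of $\IC(\Uh{d})$) by independent means --- which is exactly the missing ingredient supplied by Laumon's theorem together with \cite[Theorem~7.10]{BFG}.

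Two smaller remarks. Step 1 is fine: it is \lemref{lem:Ud}, and \corref{cor:dimest} is proved in the appendix from the perversity theorem alone, so there is no circularity in using it here. Step 3 (the $d=1$ count) is plausible --- the centered space $\cUh{1}$ is the minimal nilpotent orbit closure, $p^{-1}(0)\cap\cBun{1}=\mathcal O_{\mathrm{min}}\cap\mathfrak{n}(\mathfrak p)$, and the count of top-dimensional components matches $\rank G-\rank[L,L]$ in examples --- but you cannot quote \propref{prop:irred-comp} for it, since its proof in the paper uses the very lemma you are proving; you would need an independent argument (for $P=B$, a statement about orbital varieties of the minimal orbit, and its analogue for general $P$). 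Even granting Step 3, however, the proof does not close without an actual argument replacing the unjustified ``multiplicity is independent of $d$'' claim in Step 2.
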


\begin{proof}
  \begin{NB}
    The first sentence is comment out, as I changed the order of
    \propref{prop:hyperbolic} and \lemref{lem:dimU}.

  First, let us note that Proposition \ref{prop:hyperbolic} is
  independent of Lemma \ref{lem:dimU}. In addition,
  \end{NB}
  According to a theorem of Laumon~\cite{Laumon-chi},
  \begin{NB}
      \linelabel{Laumon}
  \end{NB}%
  given a constructible complex $F$ on a complex algebraic variety
  $X$, and a morphism $f\colon X\to Y$, the classes 
  $[Rf_*F]$ and $[Rf_!F]$ in the Grothendieck
  group of constructible complexes on $Y$ coincide.
  In particular, $\chi(X,F)=\chi_c(X,F)$.
  It follows that the Euler characteristic of the
  stalk of $\IC(\Uh{d})$ at a point of $S_{(d)}\AA^2$ is equal to the
  Euler characteristic of the stalk of $\Phi_{L,G}(\IC(\Uh{d}))$ at
  the same point; the former was computed in Theorem 7.10 in \cite{BFG}.
  \begin{NB}
    Put the reference of Laumon.
  \end{NB}%

  \begin{NB}
    What do you mean ?

  Let us not prove Lemma \ref{lem:dimU}.
  \end{NB}%

  Now let us give a proof in the case $L=T$. Then it is easy to see
  that Proposition \ref{prop:hyperbolic} implies that the stalk of
  $\Phi_{T,G}(\IC(\Uh{d}))$ at a point of $S_{(d)}\AA^2$ is isomorphic
  to $\Sym^d(\oplus_i U^i_{T,G})$, where we regard $\oplus_i
  U^i_{T,G}$ as a graded vector space (with the natural grading coming
  from $i$) and the super-script $d$ means degree $d$ with respect to
  that grading. On the other hand,~\cite[Theorem~7.10]{BFG} implies
  that a similar description fits the stalk of $\IC(\Uh{d})$ at a point
  of $S_{(d)}\AA^2$ if we disregard the cohomological grading
  (the ``first'' grading in the language of~\cite{BFG}) and
  take a $\rank(G)$-dimensional space $V^i$ in place of $U^i_{T,G}$ above.
  We get $\dim U^d_{T,G}=\rank(G)$ for every $d$ by induction in $d$.

  Let us now consider the case of arbitrary $L$. Again, it is easy to
  deduce from Proposition \ref{prop:hyperbolic} that the stalk of
  $\Phi_{T,L}(\Phi_{L,G}(\IC(\Uh{d})))\simeq \Phi_{T,G}(\IC(\Uh{d}))$
  at a point of $S_{(d)}\AA^2$ is isomorphic to
$$
\bigoplus\limits_{d_1+d_2=d} \Sym^{d_1}(\oplus_i U^i_{T,L})\otimes \Sym^{d_2}(\oplus_j U^j_{L,G}),
$$
where the meaning of the super-scripts $d_1$ and $d_2$ is as above. In view of the preceding paragraph, we get
$\dim U^d_{L,G}=\rank(G)-\rank([L,L])$.
\end{proof}

The dimension estimate \corref{cor:dimest}
\begin{NB}
    \linelabel{dim_est1}
\end{NB}
\begin{NB}
  Reference to somewhere in the appendix. I must be careful, as the
  argument is not straight.
\end{NB}%
and the argument in \cite[Prop.~3.10]{MV2}
implies that
\begin{equation}\label{eq:107}
  \begin{split}
    & H^{0}(p^{-1}(d\cdot 0), \tilde j^{!} \IC(\cUh{d}))
    \\
    \cong \; & H^{0}(p^{-1}(d\cdot 0)\cap\Bun{d}, \tilde
    j^{!} \IC(\cUh{d}))
    \\
    = \;& H_{[0]}(p^{-1}(d\cdot 0)\cap\Bun{d},\CC).
  \end{split}
\end{equation}
Here we use the degree shift convention of the Borel-Moore homology
group (see Convention~(\ref{conv2})), which is shift by $\dim\cUh{d} =
2dh^\vee-2$ in this case.

Let us set
\begin{equation}\label{247}
  \Uh[P,0]d \defeq p^{-1}(d\cdot 0). \index{UhP0@$\Uh[P,0]d$}
\end{equation}
The subscript $0$ stands for $d\cdot 0$, and this convention will be
also used later. More generally, we denote $p^{-1}(x)$ by $\Uh[P,x]d$ for
$x\in \UhL{d}$\index{UhPx@$\Uh[P,x]d$}.

Then $H_{[0]}(\Uh[P,0]d\cap\Bun{d},\CC)$ has a base given by
$(dh^\vee-1)$-dimensional irreducible components of
$\Uh[P,0]d\cap\Bun{d}$. The dimension estimate \corref{cor:dimest}
\begin{NB}
\linelabel{dim_est2}
\end{NB}
\begin{NB}
  Reference to somewhere in the appendix. I must be careful, as the
  argument is not straight.
\end{NB}%
implies that $\Uh[P,0]d\cap\Bun{d'}$ ($d' < d$) is
lower-dimensional. Therefore

\begin{Lemma}\label{lem:Ud}
We have
\begin{equation}\label{eq:tu}
  U^d \cong H_{[0]}(\Uh[P,0]d).
\end{equation}
This space has a base given by $(dh^\vee-1)$-dimensional irreducible
components of $\Uh[P,0]d$.
\end{Lemma}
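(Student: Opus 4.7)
The plan is to assemble three ingredients that are essentially already in place: the base-change identity \eqref{eq:17}, the dimension estimate \corref{cor:dimest}, and the standard description of top Borel--Moore homology in terms of top-dimensional irreducible components. Starting from
\begin{equation*}
  U^d \cong H^{0}(\Uh[P,0]d, \tilde j^{!} \IC(\cUh{d}))
\end{equation*}
given by \eqref{eq:17}, the task is to replace the IC-sheaf with the shifted constant sheaf of $\cUh{d}$ and then drop all strata below $\Bun{d}$.

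First, I would stratify $\Uh[P,0]d$ according to its intersections with the Uhlenbeck strata, namely $\Uh[P,0]d\cap\Bun{d}$ and $\Uh[P,0]d\cap\BunGl{d_1}$ for $d_1<d$. On the open piece $\Uh[P,0]d\cap\Bun{d}$ the ambient space $\cUh{d}$ is smooth, so $\IC(\cUh{d})$ restricts to $\cC_{\Bun{d}}$; hence $\tilde j^{!}\IC(\cUh{d})$ restricted to this open piece computes, by our shift convention~(\ref{conv2}), precisely $H_{[0]}(\Uh[P,0]d\cap\Bun{d})$. This yields one side of the desired isomorphism at the level of the open stratum.

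Next, I would argue that the closed complement $\Uh[P,0]d\setminus\Bun{d}$ contributes nothing in cohomological degree $0$. For a stratum $\BunGl{d_1}$ with $d_1<d$, the costalk of $\IC(\cUh{d})$ along it is strictly negative in perverse degree, while the dimension estimate \corref{cor:dimest} bounds $\dim\bigl(\Uh[P,0]d\cap\BunGl{d_1}\bigr)$ strictly below $dh^\vee-1$. Running the long exact sequence associated with the open-closed decomposition exactly as in \cite[Prop.~3.10]{MV2}, these two inequalities combine to make the closed stratum contribute only in strictly negative cohomological degrees with our shift, so the restriction map on $H^0$ is an isomorphism. This is \eqref{eq:107}.

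Finally, since $\Uh[P,0]d\cap\Bun{d}$ is open in a smooth variety, $H_{[0]}$ of it is by definition the top Borel--Moore homology, which has a canonical basis given by the fundamental classes of its $(dh^\vee-1)$-dimensional irreducible components. Applying \corref{cor:dimest} once more to each $\Uh[P,0]d\cap\BunGl{d_1}$ shows every top-dimensional irreducible component of $\Uh[P,0]d$ must meet the open stratum $\Bun{d}$ in a dense subset, so these two bases coincide; this gives the asserted description of the basis and identifies $U^d$ with $H_{[0]}(\Uh[P,0]d)$. As a sanity check, the dimension count matches \lemref{lem:dimU}. The main obstacle is the dimension estimate \corref{cor:dimest} itself, which is the technical core and is deferred to \secref{sec:append-exactn}; granted it, everything here is formal manipulation of $!$-restrictions together with the standard support/costalk bounds built into the IC-sheaf.
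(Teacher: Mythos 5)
Your proposal follows the same route as the paper: \eqref{eq:17}, then the dimension estimate \corref{cor:dimest} fed into the stratification argument of \cite[Prop.~3.10]{MV2} to obtain \eqref{eq:107}, and finally the identification of the top Borel--Moore homology with the span of the $(dh^\vee-1)$-dimensional components, using that the intersections of $\Uh[P,0]d$ with the smaller strata are lower-dimensional. So the skeleton and all the needed inputs are the right ones. However, the degree bookkeeping in your key step is inverted, and as written the vanishing you invoke does not come from the statements you make. Since you are restricting by $\tilde j^{!}$, the relevant condition is the \emph{cosupport} condition: the costalk of $\IC(\cUh{d})$ along a stratum $\BunGl{d_1}$ with $d_1<d$ lives in ordinary degrees strictly \emph{greater} than $-\dim\BunGl{d_1}$, i.e.\ in strictly \emph{positive} perverse degrees; it is the stalk ($*$-restriction) that satisfies the strictly negative bound you quote. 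Correspondingly, the closed strata contribute to $H^\bullet$ of the $!$-restriction only in strictly positive degrees, the vanishing in degrees $\le 0$ (and in degree $1$, needed for surjectivity) being obtained by combining the cosupport bound with the vanishing of cohomology supported on a closed subvariety below twice its codimension in the smooth stratum --- not by a ``contributions in strictly negative degrees'' statement. The computation you describe (stalk bound plus $H^k=0$ for $k>2\dim$) is the dual one, with $\tilde j^{*}$ and compact supports, which is exactly what the appendix uses to prove \corref{cor:dimest} itself; mixing the two pictures is what produced the sign flip.

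A second, related imprecision: in the step establishing \eqref{eq:107} you only use $\dim\bigl(\Uh[P,0]d\cap\BunGl{d_1}\bigr)<dh^\vee-1$, but the spectral sequence needs a bound relative to each stratum, namely $\dim\bigl(\Uh[P,0]d\cap\BunGl{d_1}\bigr)\le\tfrac12\dim\BunGl{d_1}$ (strict where the degree-$1$ vanishing is required). This is what \corref{cor:dimest} actually provides, applied to the smaller instanton number $d_1$, after observing (as in the appendix and as you do later for the basis statement) that the intersection of the central fiber with $\Bun{d_1}\times S_\lambda\AA^2$ is identified with $\Uh[P,0]{d_1}\cap\Bun{d_1}$, the symmetric-product points being forced to sit at the origin; the single global bound does not by itself feed the stratumwise estimate. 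With these two corrections your argument coincides with the paper's, and your final step --- every $(dh^\vee-1)$-dimensional component of $\Uh[P,0]d$ meets the open stratum densely, so the fundamental classes of these components give the basis of $U^d\cong H_{[0]}(\Uh[P,0]d)$ --- is fine and consistent with \lemref{lem:dimU}.
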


\subsection{Irreducible components}\label{sec:irred-comp}

Let us describe $(dh^\vee - 1)$-dimensional irreducible components of
$\Uh[P,0]d$ for $P=B$ explicitly.
We believe that there is no irreducible component of smaller dimension (see Remark~\ref{rem:dimofirr}), but we do not have a proof.

First consider the case $G = SL(2)$. By \lemref{lem:dimU} we have
$\dim U^d = 1$, and hence $\Uh[B,0]d$ has only one $(2d-1)$-dimensional
irreducible component. As we have
observed in the previous subsection, it is the closure of
$\Uh[B,0]d\cap \Bun{d}$. In \subsecref{sec:spaceVd}, it will be shown
that $\Uh[B,0]d\cap \Bun{d}$ consists of rank 2 vector bundles $E$
arising from a short exact sequence
\begin{equation}\label{eq:114}
    0 \to \shfO\to E \to \mathcal I\to 0
\end{equation}
compatible with framing, where $\mathcal I$ is an ideal sheaf of
colength $d$.

For a general $G$, consider the diagram \eqref{eq:CDpj} with $M = T$,
$L = L_i$ the Levi subgroup corresponding to a simple root $\alpha_i$.
Note that $[L_i,L_i] \cong SL(2)$, and hence $\Uh[{L_i}]d$ is
homeomorphic to $\Uh[SL(2)]d$. Therefore
$\Uh[B_{L_i},0]d\cap\Bun[L_i]d$ is irreducible of dimension $2d-1$ by
the above consideration.

\begin{Proposition}\label{prop:irred-comp}
    The irreducible components of $\Uh[B,0]d$ of dimension $dh^\vee-1$
    are the closures of $p^{-1}(\Uh[B_{L_i},0]d\cap \Bun[L_i]d)$ for
    $i\in I$.
\end{Proposition}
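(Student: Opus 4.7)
The plan is to construct $\ell = \operatorname{rank} G$ pairwise distinct irreducible subvarieties $Y_i \subset \Uh[B,0]d$ of dimension $dh^\vee - 1$, one for each simple root $i$, and then to invoke \lemref{lem:Ud} together with the formula $\dim U^d = \operatorname{rank} G - \operatorname{rank}[B,B] = \ell$ from \lemref{lem:dimU} to conclude that these exhaust the top-dimensional irreducible components. I interpret the $p$ in the statement as the map $p'' \colon \Uh[B]d \to \Uh[B_{L_i}]d$ arising from the factorization of $\Uh[B]d \to \Uh[T]d$ through $\Uh[B_{L_i}]d$.

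For each simple root $i$, I would apply \propref{prop:trans} with $M = T$, $L = L_i$, $P = P_i$ the minimal parabolic containing $B$ with Levi $L_i$, and $Q = B$, obtaining the factorization $p = p' \circ p''$ with $p'' \colon \Uh[B]d \to \Uh[B_{L_i}]d$, $p' \colon \Uh[B_{L_i}]d \to \Uh[T]d$, together with the fiber-product description $\Uh[B]d = \Uh[P_i]d \times_{\Uh[L_i]d} \Uh[B_{L_i}]d$ from \eqref{eq:2}. Set $V_i := (p'')^{-1}(\Uh[B_{L_i},0]d \cap \Bun[L_i]d) \subset \Uh[B,0]d$ and $Y_i := \overline{V_i}$. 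Because $[L_i, L_i] \cong \SL(2)$, \propref{prop:LL} together with the $\SL(2)$-discussion preceding the proposition identifies $\Uh[B_{L_i},0]d \cap \Bun[L_i]d$ as an irreducible locally closed subvariety of dimension $2d - 1$. Over $\Bun[L_i]d \subset \Uh[L_i]d$, the map $p_i \colon \Uh[P_i]d \to \Uh[L_i]d$ is an affine bundle with fiber $H^1(\proj^2, \mathfrak n_{\mathcal F}(-\linf))$ of rank $d(h^\vee - 2) = \tfrac12(\dim \Bun{d} - \dim \Bun[L_i]d)$ (cf.\ \subsecref{sec:open}), and the fiber-product description above exhibits $V_i$ as an affine bundle of this rank over the $(2d-1)$-dimensional irreducible base. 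Hence $V_i$, and therefore $Y_i$, is irreducible of dimension $(2d - 1) + d(h^\vee - 2) = dh^\vee - 1$.

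It remains to check that the $Y_i$ are pairwise distinct; given this, \lemref{lem:Ud} and \lemref{lem:dimU} force $\{Y_i\}_{i \in I}$ to be the complete list of $(dh^\vee - 1)$-dimensional irreducible components. For distinctness, the idea is that a generic point of $V_i$ represents a framed $G$-bundle in $\Bun{d}$ admitting a $P_i$-reduction whose associated $L_i$-bundle is genuine, while a generic such bundle fails to admit an analogous $L_j$-reduction free of defect for $j \neq i$; consequently $V_i \not\subset Y_j$. The main obstacle I anticipate is making this distinctness argument fully rigorous for arbitrary simply-laced $G$ and $d$. A natural route is to reduce via the factorization property of \secref{sec:factorization} to the base case $d = 1$, where minimal-charge $B$-bundles with genuine $L_i$-reduction can be described explicitly and their dependence on $i$ read off through the corresponding fundamental coweight $\omega_i^\vee$.
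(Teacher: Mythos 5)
The first half of your argument is sound and is essentially the paper's: over $\Bun[L_i]d$ the attraction map $p$ is a vector bundle of rank $d(h^\vee-2)$, half the codimension of $\Bun[L_i]d$ in $\Bun{d}$, so $p^{-1}(\Uh[B_{L_i},0]d\cap\Bun[L_i]d)$ is irreducible of dimension $(2d-1)+d(h^\vee-2)=dh^\vee-1$, and the counting via $\dim U^d=\rank G$ (\lemref{lem:dimU}) together with \lemref{lem:Ud} reduces everything to showing the $Y_i$ are pairwise distinct.

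The gap is exactly that distinctness step, which you acknowledge but whose proposed repair fails. You suggest reducing to $d=1$ by the factorization property of \secref{sec:factorization}, but factorization is vacuous on $\Uh[B,0]d$: for any projection $a\colon\AA^2\to\AA^1$ the morphism $\pi^d_{a,G}$ is $G$-invariant, hence constant along the orbit $t\mapsto\lambda(t)y$, so by continuity $\pi^d_{a,G}(y)=\pi^d_{a,G}(p(y))=\pi^d_{a,G}(d\cdot 0)=d\cdot 0$ for every $y\in\Uh[B,0]d$. Thus the whole of $\Uh[B,0]d$ sits in the fiber of the factorization morphism over the small-diagonal point of $S^d\AA^1$, precisely where the isomorphism \eqref{eq:4} gives no information, and there is no splitting of the instanton charge that would let you induct on $d$. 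Independently of this, the assertion that a generic point of $V_i$ admits no defect-free $L_j$-reduction for $j\neq i$ is only a heuristic and is not argued. The paper closes this step by a different mechanism: embed $G$ into $SL(N)$ (so $B$ is replaced by a parabolic), and in type $A$ use the explicit description of the attracting set by framed sheaves with a compatible filtration, where $p^{-1}(\Uh[B_{L_i},0]d\cap\Bun[L_i]d)$ consists of those with $c_2(E_i/E_{i-1})=d$ and $c_2(E_j/E_{j-1})=0$ for $j\neq i$ (worked out in \subsecref{sec:spaceVd} via the varieties $\fT(d_1,\dots,d_N)_0$); this second-Chern-class bookkeeping, transported through the embedding, is what actually distinguishes the $Y_i$, and some argument of this kind is needed to complete your proof.
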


\begin{Definition}
    Let us denote the closure of $p^{-1}(\Uh[B_{L_i},0]d\cap
    \Bun[L_i]d)$ by $Y_i$.\index{Yi@$Y_i$}
\end{Definition}

\begin{proof}
    Consider the upper right part of \eqref{eq:CDpj}, which is
    \eqref{eq:1}.
    Its restriction to the open subset $\Bun[L_i]{d}$ has been
    described in \subsecref{sec:open}. As $p$ is a vector bundle whose
    rank is equal to the half of the codimension of $\Bun[L_i]{d}$ in
    $\Bun[G]{d}$, it follows that the inverse image
    $p^{-1}(\Uh[B_{L_i},0]d\cap \Bun[L_i]d)$ is irreducible and has
    dimension $dh^\vee - 1$.
    Therefore the closure of $p^{-1}(\Uh[B_{L_i},0]d\cap \Bun[L_i]d)$
    is an irreducible component of $\Uh[B,0]d$.

    Since $\dim U^d = \rank G$ by \lemref{lem:dimU}, it is enough to
    check that $p^{-1}(\Uh[B_{L_i},0]d\cap \Bun[L_i]d)\neq
    p^{-1}(\Uh[B_{L_j},0]d\cap \Bun[L_j]d)$ if $i\neq j$. When $G =
    SL(r)$, $\Uh[B,0]d\cap \Bun[G]{d}$ consists of vector bundles $E$
    having a filtration $0 = E_0\subset E_1\subset\cdots\subset E_r =
    E$ compatible with framing.  Moreover $p^{-1}(\Uh[B_{L_i},0]d\cap
    \Bun[L_i]d)$ consists of those with $c_2(E_i/E_{i-1}) = d$ and
    $c_2(E_j/E_{j-1}) = 0$ for $j\neq i$.
    Therefore $p^{-1}(\Uh[B_{L_i},0]d\cap \Bun[L_i]d)\neq
    p^{-1}(\Uh[B_{L_j},0]d\cap \Bun[L_j]d)$ for $i\neq j$.  (See
    \subsecref{sec:spaceVd} for detail.)
    For a general $G$, we embed $G$ into $SL(N)$. Then we need to
    replace $B$ by a parabolic $P$, but $p^{-1}(\Uh[B_{L_i},0]d\cap
    \Bun[L_i]d)$ is embedded into a corresponding space, and the same
    argument still works.
\end{proof}

\subsection{A pairing on \texorpdfstring{$U^d$}{Ud}}\label{sec:pairing}

Let us introduce a pairing between $U^{d,P}$ and $U^{d,P_-}$ in this
subsection.

We combine Braden's isomorphism \eqref{eq:Braden} with the natural
homomorphism $\xi_0^!\to \xi_0^*$ to get
\begin{equation}
  H^0(\xi_0^! i^* j^!\IC(\cUh{d})) \to
  H^0(\xi_0^* i_-^! j_-^*\IC(\cUh{d})).
\end{equation}
The right hand side is dual to
\begin{equation}
  U^{d,P_-} = H^0(\xi_0^! i_-^* j_-^!\IC(\cUh{d})).
\end{equation}
Thus we have a pairing between $U^{d,P}$ and $U^{d,P_-}$.
Following the convention in \cite[3.1.3]{MO}, we multiply the pairing
by the sign $(-1)^{\dim \cUh{d}/2} = (-1)^{dh^\vee-1}$.
\begin{NB}
  Let us explain why we take $\dim\cUh{d}/2 = dh^\vee-1$ instead of
  $\dim\Uh{d}/2 = dh^\vee$. Consider the Gieseker space with
  $h^\vee=1$, $d=2$. We have $\Gi{1}^1 = \AA^2\times T^*\proj^1$,
  $\cGi{1}^1 = T^*\proj^1$. The intersection pairing of $\proj^1$ in
  $T^*\proj^1$ is $-2$. But we want to identify it with $d (e_i,e_i) =
  d = 2$, where $e_i$ is the coordinate vector of $\Z^{h^\vee}$ with
  the standard inner product.
\end{NB}%
Let us denote it by $\la\ ,\ \ra$.\index{< , >@$\la\ ,\ \ra$} When we want to
emphasize that it depends on the choice of the parabolic subgroup $P$,
we denote it by $\la\ ,\ \ra_P$.

Since $\xi_0^! \CC_{d\cdot 0} \to \xi_0^* \CC_{d\cdot 0}$ is obviously
an isomorphism, this pairing is nondegenerate.

The transpose of the homomorphism $U^{d,P}\to
(U^{d,P_-})^\vee$ is a linear map $U^{d,P_-}\to
(U^{d,P})^\vee$. It is
\begin{equation}
  H^0(\xi_0^! i_-^* j_-^!\IC(\cUh{d})) \to
  H^0(\xi_0^* i^! j^*\IC(\cUh{d})),
\end{equation}
given by the transpose of the composite of $\xi_0^!\to\xi_0^*$ and
Braden's isomorphism $i^*j^!\to i_-^! j_-^*$. They are the same as
original homomorphisms $\xi_0^!\to\xi_0^*$ and $i_-^* j_-^!\to i^!
j^*$ respectively.
\begin{NB}
  The transpose of $\xi_0^! \to \xi_0^*$ is $\xi_0^! = (\xi_0^*)^\vee
  \to (\xi_0^!)^\vee = \xi_0^*$. This is the same as the original
  $\xi_0^!\to \xi_0^*$.
\end{NB}%
It means that
\begin{equation}\label{eq:41}
  \la u, v\ra_{P} = \la v, u\ra_{P_-} \quad
  \text{for $u\in U^{d,P}$, $v\in U^{d,P_-}$},
\end{equation}
where $\la\ ,\ \ra_{P_-}$ is the pairing defined with respect to the
opposite parabolic, i.e., one given after exchanging $i$, $j$ and $i_-$,
$j_-$ respectively.

\begin{NB}
  The following is the previous formulation, defining $\la\ ,\ \ra$ as a
  pairing on $U^d_{L,G}$. But it does not work as $P$ and $P_-$ are not
  conjugate under $G$ in general:

Recall that our hyperbolic restriction $\Phi_{L,G}$ depends on the
choice of a parabolic subgroup $P$, and also on $L$. However as in the
argument in \cite[Proof of Th.~3.6]{MV2}, we observe that $U^d_{L,G} =
H_{[0]}(\Uh[P,0]{d})$ forms a local system when we move $L\subset P$
in $G/L_0$ with a fixed $L_0$. It is a trivial local system as $G/L_0$
is simply connected.
\begin{NB2}
  I wonder that we need a stronger dimension estimate is necessary for
  this argument.
\end{NB2}%
\begin{NB2}
  Let us also remark that the local system is embedded in the local
  system of cohomology over the affine Grassmannian, which is trivial
  in \cite{MV2}. But the latter does not make sense, so we need to use
  that $G/L_0$ is simply connected.
\end{NB2}%
Therefore we have a canonical identification among different choices
of $P$. In particular, we have an isomorphism
\begin{equation}
  U^d_{L,G} =
  H^0(\xi_0^! i^* j^!\IC(\cUh{d}))
  \cong
  H^0(\xi_0^! i_-^* j_-^! \IC(\cUh{d})).
\end{equation}
Note that the constant sheaf $\CC_{d\cdot 0}$ at the point $d\cdot 0$
appears instead of $\cC_{S_{(d)}\AA^2}$ in \eqref{eq:Ud} in the
centered situation. Therefore $\xi_0^!$ does nothing.
The right hand side is isomorphic to
\begin{equation}\label{eq:opp}
  H^0(\xi_0^! i^! j^* \IC(\cUh{d}))
\end{equation}
by \cite[Th.~1]{Braden}. We compose the natural
homomorphism $\xi_0^!\to \xi_0^*$ and see that $H^0(\xi_0^* i^! j^*
(\IC(\cUh{d})))$ is the dual space to $H^0(\xi_0^! i^* j^!\IC(\cUh{d}))$.
Thus we have a pairing on $U^d_{L,G} = H^0(\xi_0^! i^* j^!\IC(\cUh{d}))$.
Following the convention in \cite[3.1.3]{MO}, we multiply the pairing
by the sign $(-1)^{\dim \cUh{d}/2} = (-1)^{dh^\vee-1}$.
\begin{NB2}
  Let us explain why we take $\dim\cUh{d}/2 = dh^\vee-1$ instead of
  $\dim\Uh{d}/2 = dh^\vee$. Consider the Gieseker space with
  $h^\vee=1$, $d=2$. We have $\Gi{1}^1 = \AA^2\times T^*\proj^1$,
  $\cGi{1}^1 = T^*\proj^1$. The intersection pairing of $\proj^1$ in
  $T^*\proj^1$ is $-2$. But we want to identify it with $d (e_i,e_i) =
  d = 2$, where $e_i$ is the coordinate vector of $\Z^{h^\vee}$ with
  the standard inner product.
\end{NB2}%
Let us denote it by $\la\ ,\ \ra$.

Since $\xi_0^! \CC_{d\cdot 0} \to \xi_0^* \CC_{d\cdot 0}$ is obviously
an isomorphism, this pairing is nondegenerate.

\begin{Lemma}
  The pairing $\la\ ,\ \ra$ is symmetric.
\end{Lemma}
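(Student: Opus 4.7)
The plan is to derive symmetry of $\la\ ,\ \ra$ from the transposition identity \eqref{eq:41}, namely $\la u,v\ra_P = \la v,u\ra_{P_-}$, by exploiting the identification of $U^{d,P}$ with $U^{d,P_-}$ coming from the trivial local system structure on the family $\{U^{d,P'}\}_{P'\supset L}$ over the variety of parabolics containing $L$. Once that identification is shown to intertwine the two pairings, symmetry is immediate: $\la u,v\ra = \la u,v\ra_P = \la v,u\ra_{P_-} = \la v,u\ra$.

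First I would spell out the local-system structure explicitly. By \lemref{lem:Ud}, $U^{d,P'} \cong H_{[0]}(\Uh[P',0]{d})$, whose dimension $\rank G - \rank[L,L]$ is independent of $P'$ by \lemref{lem:dimU}. As $P'$ varies in the connected smooth subvariety $G/L_0 \subset G/L$ of parabolics with Levi conjugate to $L_0$, the attractor fibers $\Uh[P',0]{d}$ assemble into a continuous family, so their top Borel-Moore homology groups form a local system. Since $G/L_0$ is a partial flag variety, hence simply connected, this local system is canonically trivial, yielding a well-defined space $U^d_{L,G}$ independent of the choice of $P \supset L$. In particular $U^{d,P}$ and $U^{d,P_-}$ are canonically identified with $U^d_{L,G}$ via parallel transport along any path in $G/L_0$ joining $P$ to $P_-$.

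Next I would verify that under this identification the two pairings coincide. Both $\la\ ,\ \ra_P$ and $\la\ ,\ \ra_{P_-}$ are built from the same ingredients applied to $\IC(\cUh{d})$: Braden's isomorphism $i^*j^! \cong i_-^! j_-^*$, the natural transformation $\xi_0^!\to \xi_0^*$, and the uniform sign $(-1)^{dh^\vee-1}$. By the naturality of Braden's isomorphism for families of $\mathbb{G}_m$-actions (i.e.\ the family of cocharacters in $\fa_\RR$ degenerating across the chamber walls), the pairing varies as a flat section of the (trivial) local system $\mathrm{Hom}(U^d_{L,G}\otimes U^d_{L,G},\bA)$ along $G/L_0$, hence is the same at $P$ and $P_-$. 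Combining with \eqref{eq:41} gives $\la u,v\ra = \la v,u\ra$, which is the desired symmetry.

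The main obstacle is the naturality statement in the previous paragraph: one must show that the parallel-transport isomorphism $U^{d,P} \xrightarrow{\sim} U^{d,P_-}$ intertwines Braden's isomorphism for $P$ with Braden's isomorphism for $P_-$, so that the two pairings really do coincide when transported to $U^d_{L,G}\otimes U^d_{L,G}$. This reduces to checking that Braden's construction commutes with base change in a one-parameter family of cocharacters that sweeps out a path from $P$ to $P_-$ in $G/L_0$; equivalently, that along such a family the canonical map $\xi_0^! i^* j^! \IC \to \xi_0^* i^! j^* \IC$ varies as a flat section. This is a formal consequence of the functoriality of Braden's theorem (already used implicitly in the simple-connectedness argument above) but should be checked carefully. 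A cleaner route, when available, is to mimic \cite[3.1.3]{MO} and exhibit the pairing on explicit bases of irreducible components of $\Uh[P,0]{d}$, where symmetry becomes manifest from the geometry of the opposite attractors.
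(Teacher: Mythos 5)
The step that breaks is the identification of $U^{d,P}$ with $U^{d,P_-}$ by ``parallel transport along a path in $G/L_0$ joining $P$ to $P_-$''. The family you set up is the $G$-orbit of the pair $(P,L)$ (its stabilizer is $L$, so the orbit is $G/L$), and the pair $(P_-,L)$ does not lie in this family in general: $P$ and $P_-$ are associated parabolics but need not be $G$-conjugate. For example, for $G=SL(3)$ and $L=S(GL(2)\times GL(1))$, $P$ is the stabilizer of a plane and $P_-$ the stabilizer of a line, so they lie in different connected components of the variety of parabolics with Levi of this type, and no connecting path exists (simple connectedness of $G/L$ is not the issue). This is precisely why the formulation with a symmetric pairing on a single space $U^d_{L,G}$ was abandoned in the paper: the published version only defines a pairing between the two \emph{different} spaces $U^{d,P}$ and $U^{d,P_-}$ and proves the transposition identity \eqref{eq:41}, and comparisons between different parabolics are made through the $\Aut(G)$-action of \subsecref{sec:autg-invariance}, where even in the Borel case the transport from $B$ to $B_-$ is realized by $w_0$ composed with a diagram automorphism and produces polarization signs (cf.\ \eqref{eq:46} and \lemref{lem:autg}) rather than an on-the-nose equality of pairings. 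So your final display $\la u,v\ra=\la u,v\ra_P=\la v,u\ra_{P_-}=\la v,u\ra$ presupposes exactly the identification that is unavailable.

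Even within the old formulation, your proposed fix for the ``main obstacle'' is not the right mechanism: a one-parameter family of cocharacters sweeping from the chamber of $P$ to that of $P_-$ must cross walls of the arrangement \eqref{eq:29}, where the attracting sets jump, so Braden's construction does not vary flatly along such a path; the local-system argument in the style of \cite{MV2} moves the pair $(P',L')$ by $G$-conjugation, with the cocharacter carried along, not by moving the cocharacter across chambers for a fixed torus. For comparison, the paper's own (superseded) proof of this lemma is shorter and does not use \eqref{eq:41}: it views the pairing as an isomorphism $\varphi\colon U^d_{L,G}\to (U^d_{L,G})^\vee$ and checks $\varphi^\vee=\varphi$ directly from the definition of Braden's isomorphism, the triviality of the local system being taken as given; since that identification fails for $P$ versus $P_-$ in general, the statement survives in the paper only in the weakened form \eqref{eq:41}, supplemented by the $\Aut(G)$-equivariance with signs.
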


\begin{proof}
  If we denote by $\varphi$ the isomorphism
  $U^d_{L,G}\xrightarrow{\cong} (U^d_{L,G})^\vee$, the assertion means
  $\varphi^\vee = \varphi$. Going back to the definition of $\varphi$,
  we see that it is equivalent to the assertion that two homomorphisms
  from $U^d_{L,G}$ to \eqref{eq:opp}, one as above and another again
  by the composition of Braden's isomorphism and the identification by
  the triviality of the local system, but in the opposite order. This
  assertion is clear from the definition of Braden's isomorphism.
\end{proof}

\begin{NB2}
  If we do not like the centered Uhlenbeck space, we can work on
  $\Uh{d}$: We consider $\xi_0\colon \{d\cdot 0\}\to \Uh[L]{d}$, then
  $\xi_0^* \cC_{S_{(d)}\AA^2} = \xi_0^* \CC_{S_{(d)}\AA^2}[2] =
  \CC_{d\cdot 0}[2]$, $\xi_0^! \cC_{S_{(d)}\AA^2} =
  (\xi_0^*\cC_{S_{(d)}\AA^2})^\vee = (\CC_{d\cdot 0}[2])^\vee =
  \CC_{d\cdot 0}[-2]$. Therefore the pairing becomes just $0$. But we
  work on the equivariant setting and consider $H^{-2}(\xi_0^! i^!j^*
  \IC(\Uh{d}))\to H^2(\xi_0^*i^!j^* \IC(\Uh{d}))$, the natural homomorphism
  $\xi_0^!\to\xi_0^*$ multiplied by $-\ve_1\ve_2$.
\end{NB2}
\end{NB}

\subsection{Another base of \texorpdfstring{$U^d$}{Ud}}\label{sec:anotherbase}

We next construct another base of $U^d = U^d_{T,G}$ for $L=T$, which
is ($\rank G$)-dimensional by \lemref{lem:dimU}.
This new base is better behaved under hyperbolic restrictions than the
previous one given by irreducible components.

This subsection is preliminary, and the construction will be completed
in \subsecref{sec:anotherbase2}.

We study $U^d_{T,G}$, using the associativity of the hyperbolic
localization (\propref{prop:trans}) for $M = T$, $L = L_i$ the Levi
subgroup corresponding to a simple root $\alpha_i$.
Since various Levi subgroups appear, we use the notation $U^d_{T,G}$
indicating groups we are considering.

%
Note that $[L_i,L_i] \cong SL(2)$, and hence $\Uh[{L_i}]d$ is
homeomorphic to $\Uh[SL(2)]d$. We understand $\IC(\Uh[{L_i}]d)$ as
$\IC(\Uh[SL(2)]d)$ and apply \lemref{lem:dimU} to see that
\begin{equation}\label{eq:1_Li}
  U_{T,L_i}^d = \Hom_{\Perv(\Uh[T]d)}
  (\cC_{S_{(d)}\AA^2}, \Phi_{T,L_i}(\IC(\Uh[{L_i}]d)))
\end{equation}
is $1$-dimensional.
\begin{NB}
    I hope that there is a canonical element $1_{L_i}$ in
    $U_{T,L_i}^d$.

    Dec. 5, 2012: This could be given by the
    fundamental class of a certain variety, which turns out to be
    irreducible in this situation.

    Apr. 30, 2013: Unfortunately this is not true. We need to choose a
    basis by MO's stable envelop.
\end{NB}%
In the next chapter, we shall introduce an element $1^d_{L_i}$ in
$U_{T,L_i}^d$ using the theory of the stable envelope in \cite{MO}.
\index{1Li@$1^d_{L_i}$|textit}

\begin{NB}
  Sep. 16 :
  A paragraph explaining $\Hom_{\Perv(\UhL{d})}(\IC(\UhL{d}),
  \Phi_{L,G}(\IC(\Uh{d})))\cong\CC$ is moved to \subsecref{sec:open}.
\end{NB}

Taking $L = L_i$ in the construction in \subsecref{sec:open}, we apply
the functor $\Phi_{T,L_i}$. By \propref{prop:trans} we get an element
\begin{equation}
    \label{eq:11}
\Phi_{T,L_i}(1^d_{L_i,G})\in
    \Hom_{\Perv(\Uh[T]d)}(\Phi_{T,L_i}(\IC(\Uh[{L_i}]d)),
    \Phi_{T,G}(\IC(\Uh{d}))).
\end{equation}
\begin{NB}
We have used $\Phi_{T,L_i}\Phi_{L_i,G}(\IC(\Uh{d}))
=\Phi_{T,G}(\IC(\Uh{d}))$.
\end{NB}%
Composing with the element $1^d_{L_i}$ in $U_{T,L_i}^d$ mentioned just
above, we get
\begin{equation}
    \label{eq:12}
 \Phi_{T,L_i}(1^d_{L_i,G})\circ 1^d_{L_i}\in U^d_{T,G}.
\end{equation}

We have ($\rank G$)-choices of $i$. Then we will show that
\begin{equation}\label{eq:anotherbase}
  \{ \Tilde\alpha^d_i \defeq
  \Phi_{T,L_i}(\pol 1^d_{L_i,G})\circ 1^d_{L_i}\}_i
\index{alphatildai@$\Tilde\alpha^d_i$|textit}
\end{equation}
gives a basis of $U^d_{T,G}$ in the next subsection.
Here we will introduce an appropriate polarization $\pol = \pm 1$,
using a consideration of rank $2$ case.  See \eqref{eq:polarization}.
Moreover, this will give us an identification $U^d_{T,G}$ with the
Cartan subalgebra $\h$ of $\g$ such that $\Tilde\alpha^d_i$ is sent to
the $i^{\mathrm{th}}$ simple coroot $\alpha_i^\vee$.
See a remark after \propref{prop:HeisRel}.
\begin{NB}
  Here is a copy of my message on Dec.\ 19, 2012:

  We still need to check that various Heisenberg algebras coming from
  have correct commutation relations, i.e., . But this can be checked
  by the reduction to the rank 2 case. Since we only consider ADE, we
  only need to consider the cases and , and these cases are already OK
  if we know that our construction is the same as [MO]. This is what I
  wrote `We may need a rank 2 consideration for the proof...' in my
  note, p.5. And this I learn what Sasha told us. Once we know the
  correct commutation relation, the linear independence of the base of
  (3.12) is clear, I believe.
\end{NB}%

We normalize the inclusion $\IC(\Uh[L_i]d)\to\Phi_{L_i,G}(\IC(\Uh{d}))$
by $\pol 1^d_{L_i,G}$ as above. Then the projection
$\Phi_{L_i,G}(\IC(\Uh{d}))\to \IC(\Uh[L_i]d)$ is also determined, as
$\IC(\Uh[L_i]d)$ has multiplicity $1$ in $\Phi_{L_i,G}(\IC(\Uh{d}))$
(see \subsecref{sec:open}).
Therefore we have the canonical isomorphism
\begin{equation}\label{eq:decomp}
   \Phi_{L_i,G}(\IC(\Uh{d})) \cong
   \IC(\Uh[L_i]d) \oplus \IC(\Uh[L_i]d)^\perp,
\end{equation}
where $\IC(\Uh[L_i]d)^\perp$ is the sum of isotypical components for
simple factors not isomorphic to $\IC(\Uh[L_i]d)$.
Applying $\Phi_{T,L_i}$ and using $\Phi_{T,L_i}\Phi_{L_i,G} =
\Phi_{T,G}$, we get an induced decomposition
\begin{equation}\label{eq:decomp2}
  U^d_{T,G} = U^d_{T,L_i}\oplus (U^d_{T,L_i})^\perp.
\end{equation}

This decomposition is orthogonal with respect to the pairing in
\subsecref{sec:pairing} in the following sense.
We have the decomposition
\(
  U^{d,B_-}_{T,G} = U^{d,B_-\cap L_i}_{T,L_i}\oplus
  (U^{d,B_-\cap L_i}_{T,L_i})^\perp
\)
for the opposite Borel $B_-$, and
\begin{equation}\label{eq:110}
  \la U^d_{T,L_i}, (U^{d,B_-\cap L_i}_{T,L_i})^\perp\ra = 0
  = \la (U^{d}_{T,L_i})^\perp, U^{d,B_-\cap L_i}_{T,L_i}\ra.
\end{equation}
Moreover the restriction of the pairing to $U^{d,B\cap L_i}_{T,L_i}$,
$U^{d,B_-\cap L_i}_{T,L_i}$ coincides with one defined via $\Uh[L_i]d$.

\begin{NB}
    These assertions follow from as the diagram
    \begin{equation}
        \begin{CD}
        (p'\circ p'')_* (j \circ j'')^! @>>>
        (p_-'\circ p_-'')_! (j_-\circ j_-'')^!
\\
   @| @|
\\
       p'_* j^{\prime !} p_* j^! @>>>
       p'_{-!} j_-^{\prime *} p_{-!} j_-^*
        \end{CD}
    \end{equation}
    is commutative, where the vertical arrows are as in the proof of
    \propref{prop:trans}, and the horizontal arrows are Braden's
    isomorphisms. See 2013-11-20 Note.
\end{NB}

\subsection{Dual base}

Let $\Tilde\alpha^{d,-}_i$ denote the element defined as
$\Tilde\alpha^{d}_i$ for the opposite Borel. We shall prove
\begin{equation}
    \label{eq:111}
    \la [Y_j],\Tilde\alpha^{d,-}_i\ra = \pm \delta_{ij}(-1)^{d-1} d
\end{equation}
modulo the computation for $G=SL(2)$, corresponding to the case $i=j$
in this subsection. The computation for $G=SL(2)$ will be given in
\remref{rem:SL(2)}.
This formula means that $\Tilde\alpha^{d,-}_i$ is the dual base to the
base given by irreducible components $Y_j$ with respect to the pairing
$\frac{(-1)^{d-1}}d\la\ ,\ \ra$ up to sign.

Consider the diagram \eqref{eq:CDpj} for the centered version, where
we take $M=T$, $L=L_i$ as in \subsecref{sec:anotherbase}. Let us
consider the open embedding of $\cBun[L_i]d$ to $\cUh[L_i]d$. We
have the corresponding restriction homomorphism
\begin{equation}
    \label{eq:104}
    \begin{split}
    U^d_{T,G} &= H^0(\xi_0^! (p'\circ p'')_* (j\circ j'')^! \IC(\cUh[G]d))
\\
   &\cong H^0(\xi_0^! p'_* j^{\prime!} \Phi_{L_i,G}(\IC(\cUh[G]d)))
\\
   &\cong H^0(p^{\prime-1}(d\cdot 0),
   \Tilde j^{\prime !}\Phi_{L_i,G}(\IC(\cUh[G]d)))
\\
   & \quad \to
   H^0(p^{\prime-1}(d\cdot 0)\cap\cBun[L_i]d,
   \Tilde j^{\prime !}\Phi_{L_i,G}(\IC(\cUh[G]d))),
    \end{split}
\end{equation}
where $\Tilde{j'}$ is the restriction of $j'$ to $p^{\prime-1}(d\cdot 0)$.
When we restrict $\Phi_{L_i,G}(\IC(\cUh[G]d))$ to the open set
$\cBun[L_i]d$, the first summand $\IC(\Uh[L_i]d)$ in the
decomposition \eqref{eq:decomp} is replaced by the constant sheaf
$\cC_{\cBun[L_i]d}$, and the second summand is killed.
Therefore we have an isomorphism
\begin{multline*}
   H^0(p^{\prime-1}(d\cdot 0)\cap\cBun[L_i]d,
   \Tilde{j'}^!\Phi_{L_i,G}(\IC(\cUh[G]d)))
\\
   \cong H_{[0]}(p^{\prime-1}(d\cdot 0)\cap\cBun[L_i]d, \CC)
   \cong U^d_{T,L_i},
\end{multline*}
where the second isomorphism is nothing but \eqref{eq:107} for $G$
replaced by $L_i$.

Thus the projection $U^d_{T,G}\to U^d_{T,L_i}$ to the first summand in
\eqref{eq:decomp2} is nothing but the restriction homomorphism
we have just constructed.

Let us further consider the restriction of the upper right corner of
the diagram \eqref{eq:CDpj} to the open subset $\cBun[L_i]d$.
Then
\begin{equation*}
    p^{-1}(p^{\prime-1}(d\cdot 0)\cap\cBun[L_i]d) =
    p^{-1}(\Uh[B_{L_i},0]d\cap\cBun[L_i]d)
\end{equation*}
has been studied in \subsecref{sec:irred-comp}: Its closure is an
irreducible component of $\Uh[B,0]d$.
By the base change the restriction to $\cBun[L_i]d$ is replaced by one
to $p^{-1}(\Uh[B_{L_i},0]d\cap\cBun[L_i]d)$, and we can replace
relevant $\IC$ sheaves by constant sheaves. The Thom isomorphism gives
us $p_*j^! \cC_{\Bun[G]d} \cong \cC_{\Bun[L_i]d}$ as in \subsecref{sec:open}.
Note that the intersection of an irreducible component $Y_j$ of
\propref{prop:irred-comp} with the open subset
$p^{-1}(\Uh[B_{L_i},0]d\cap\cBun[L_i]d)$ is lower-dimensional if
$i\neq j$, as $p^{-1}(\Uh[B_{L_i},0]d\cap\cBun[L_i]d)$ is irreducible.
Therefore the fundamental class of $Y_j$ goes to $0$ under the
restriction.
Hence we have \eqref{eq:111} for $i\neq j$ by \eqref{eq:110}.
In fact, we will see that $Y_j\cap
p^{-1}(\Uh[B_{L_i},0]d\cap\cBun[L_i]d) = \emptyset$ for type A in
\subsecref{sec:spaceVd}, and the same is true for any $G$ thanks to an
embedding $G\to SL(N)$.

The Thom isomorphism sends $[Y_i]$ to $[\Uh[B_{L_i},0]d]$ from the
definition of $Y_i$.
The sign in \eqref{eq:111} appears as we multiply the Thom isomorphism
by a polarization $\delta$ (see \eqref{eq:polarization} below).
Therefore the computation of \eqref{eq:111} for
$i=j$ is reduced to the case $G=SL(2)$.
The relevant computation will be given in \remref{rem:SL(2)} as we
mentioned above.
\begin{NB}
    Since $\pm = o(i)^d$, we have $\pm (-1)^{d-1} =
    -(-o(i))^d$. Compare this with \eqref{eq:112} for $d=1$.
\end{NB}

\subsection{\texorpdfstring{$\Aut(G)$}{Aut(G)} invariance}\label{sec:autg-invariance}

Let $\Aut(G)$ be the group of automorphisms of $G$. Its natural action
on $\Bun{d}$ extends to $\Uh{d}$ (\cite[\S6.1]{BFG}).
\begin{NB}
    When a principal bundle is given by a transition function
    $\varphi_{\alpha\beta}\colon U_\alpha\cap U_\beta\to G$, we define
    a new principal bundle by the transition function
    $g(\varphi_{\alpha\beta})$. The cocycle condition
    $\varphi_{\alpha\beta}\varphi_{\beta\gamma}\varphi_{\gamma\alpha}
    = 1$ is preserved, as $g$ is a group automorphism. If $g(\bullet)
    = g_0\bullet g_0^{-1}$ for $g_0\in G$, the principal bundle is
    isomorphic to the original one, and the action is just given by a
    change of the framing.
\end{NB}%

\begin{NB}
  We must be careful on the $\Aut(G)$-action in the following
  section. For type $A$, $\Aut(G)/G = \{ \pm 1\}$ is the Dynkin
  diagram automorphism given by the reflection at the center. In terms
  of $\Bun{d}$, it corresponds to associate a {\it dual\/} vector
  bundle. In particular, it does not extend to an action on the Gieseker
  space $\Gi{r}^d$.

  In the ADHM description, the diagram automorphism is given by
  \begin{equation}
      \label{eq:45}
      [(B_1,B_2,I,J)]\mapsto [(B_1^t, B_2^t, -J^t, I^t)].
  \end{equation}
  This does not preserve the stability condition.
\end{NB}%

Let us fix a cocharacter $\lambda\colon \GG_m\to G$, and consider our
construction with respect to $\sigma\circ\lambda$ for $\sigma\in \Aut(G)$.
Here $L = G^{\lambda(\GG_m)}$ is considered as a fixed Levi subgroup.
Substituting $\sigma\circ\lambda$ into $\lambda$ in the formula
\eqref{eq:PLoneparam}, we define a pair $(P^\sigma,L^\sigma)$ of a parabolic
subgroup and its Levi part. The action $\varphi_\sigma\colon \Uh{d}\to
\Uh{d}$ induces $\varphi_\sigma\colon \Uh[P]d\to \Uh[P^{\sigma}]d$,
$\varphi_\sigma\colon \Uh[L]d\to \Uh[L^{\sigma}]d$, and we have a commutative
diagram
\begin{equation}    \label{eq:39}
  \begin{CD}
    \Uh[L]{d} @>i>> \Uh[P]d @>j>> \Uh{d}
\\
     @V{\varphi_\sigma}VV @V{\varphi_\sigma}VV @V{\varphi_\sigma}VV
\\
    \Uh[L^{\sigma}]{d} @>>{i_{\sigma}}> \Uh[P^{\sigma}]{d} @>>{j_{\sigma}}> \Uh{d},
  \end{CD}
\end{equation}
where the subscript $\sigma$ indicates morphisms between spaces for
$\sigma\in \Aut(G)$.

Since $\IC(\Uh{d})$ is an $\Aut(G)$-equivariant perverse sheaf, we have
an isomorphism $\varphi_\sigma^*\IC(\Uh{d})\cong \IC(\Uh{d})$. Therefore we
have an isomorphism
\begin{equation}\label{eq:44}
  i^* j^! \IC(\Uh{d}) \cong \varphi_\sigma^* i_{\sigma}^* j_{\sigma}^! \IC(\Uh{d}).
\end{equation}

The isomorphism \eqref{eq:44} is equivariant in the following sense:
The right hand side is a $\TT^\sigma=T^\sigma\times\CC^*\times\CC^*$-equivariant
perverse sheaf, while the left hand side is $\TT$-equivariant. The
isomorphism \eqref{eq:44} respects equivariant structures under the
group isomorphism $\sigma\colon \TT\xrightarrow{\cong} \TT^{\sigma}$. In
particular, we have an isomorphism
\begin{equation}
    \label{eq:43}
    \varphi_\sigma\colon
    H^*_{\TT}(\Uh[L]d, i^*j^! \IC(\Uh{d}))
    \xrightarrow{\cong} H^*_{\TT^{\sigma}}(\Uh[L^{\sigma}]d, i_{\sigma}^*j_{\sigma}^! \IC(\Uh{d})),
\end{equation}
which respects the $H^*_{\TT}(\mathrm{pt})$ and
$H^*_{\TT^{\sigma}}(\mathrm{pt})$ structures via $\TT\cong \TT^{\sigma}$.

In the same way, we obtain a canonical isomorphism
\begin{equation}
    \label{eq:40}
    U_{L,G}^{d,P} \xrightarrow{\cong} U_{L^{\sigma},G}^{d,P^{\sigma}},
\end{equation}
which is denoted also by $\varphi_\sigma$ for brevity.

The pairing $\la\ ,\ \ra$ in \subsecref{sec:pairing} is compatible
with $\varphi_\sigma$: Let us denote by $\la\ ,\ \ra_{P^\sigma}$ the
pairing between $U_{L^\sigma,G}^{d,P^\sigma}$ and $U_{L^\sigma, G}^{d,
  P^\sigma_-}$.
We have $\varphi_\sigma \colon U_{L,G}^{d,P_-}\xrightarrow{\cong}
U_{L^\sigma, G}^{d, P^\sigma_-}$ as above, and the following holds
\begin{equation}
  \la\varphi_\sigma(u), \varphi_\sigma(v)\ra_{P^\sigma} = \la u,v\ra_P,
  \qquad u\in U_{L,G}^{d,P}, v\in U_{L,G}^{d,P_-}.
\end{equation}

The decomposition \eqref{eq:decomp} is transferred under $\varphi_\sigma$ to
\begin{equation}
    \label{eq:46}
    i_\sigma^* j_\sigma^! \IC(\Uh{d}) \cong
    \pm \IC(\Uh[L_i^\sigma]{d})\oplus \IC(\Uh[L_i^\sigma]{d})^\perp.
\end{equation}
Here the sign $\pm$ means that we multiply the projection to
$\IC(\Uh[L_i^\sigma]{d})$ by $\pm$, according to whether $\sigma$
respects the polarization $\pol$ for $\Uh[L_i]{d}$ and
$\Uh[L_i^\sigma]{d}$ or not.
Our polarization will be invariant under inner automorphisms, so the
sign depends on diagram automorphisms $\Aut(G)/\operatorname{Inn}(G)$.
\begin{NB}
  Remember that the polarization problem should be discussed later.
\end{NB}%
The decomposition \eqref{eq:decomp2} is mapped to
\begin{equation}
    \label{eq:47}
    U^{d,B^\sigma}_{T^\sigma,G} = U^{d,B^\sigma\cap L_i^\sigma}_{T^\sigma, L_i^\sigma}\oplus
    (U^{d,B^\sigma\cap L_i^\sigma}_{T^\sigma, L_i^\sigma})^\perp.
\end{equation}

Suppose $\sigma\in L$. We have $L^\sigma = L$, $P^\sigma = P$, $i_\sigma = i$, $j_\sigma =
j$. Then $i^* j^!  \IC(\Uh{d})$ is an $L$-equivariant perverse sheaf,
and \eqref{eq:44} is the isomorphism induced by the equivariant
structure.

Let us further assume $L=T$. Then $T$ acts trivially on $\Uh[T]{d} =
S^d\AA^2$, and $\varphi_\sigma|_{\Uh[T]d} = \id$. The equivariant structure
of the $T$-equivariant perverse sheaf $i^* j^! \IC(\Uh{d})$ is {\it
  trivial}.
\begin{NB}
In the definition \cite[2.1.3]{BL}, the diagram is
\(
   X \xleftarrow{p} P = X\times E \xrightarrow{q} \overline{P} = X\times B,
\)
where $E\to B$ is an $n$-acyclic free $T$-space. Then a {\it
  trivial\/} equivariant object $F\in D^b_T(X)$ is a triple $(F_X,
\overline{F}, \beta)$ such that $\overline{F}\in D^b(\overline{P})$ is
a pull-back of $F_X$ and $\beta\colon p^*(F_X)\cong q^*(\overline{F})$
is the trivial isomorphism.
\end{NB}%
In particular, the isomorphism \eqref{eq:44} is the identity.
\begin{NB}
    This argument does not work if we do not know that $i^* j^!
    \IC(\Uh{d})$ is perverse. So, for example, I do not know what we
    can say on $i^!j^!\IC(\Uh{d})$.
\end{NB}%
Therefore \eqref{eq:44} is well-defined for $\sigma\in \Aut(G)/(T/Z(G))$,
where $Z(G)$ is the center of $G$.

Note that chambers of hyperbolic restrictions for $L=T$ are Weyl
chambers. They appear as a subfamily for $W = N_G(T)/T$ in
$\Aut(G)/(T/Z(G))$.

Let us take $\sigma = w_0$, the longest element of the Weyl group. Then
$B^{w_0} = B_-$. We come back to $B$ via \eqref{eq:41}, and hence we
get
\begin{equation}
  \la u,v\ra_B = \la\varphi_{w_0}(u), \varphi_{w_0}(v)\ra_{B_-}
  = \la\varphi_{w_0}(v), \varphi_{w_0}(u)\ra_B
\end{equation}
for $u\in U^{d,B}_{T,G}$,$v\in U^{d,B_-}_{T,G}$.
\begin{NB}
  Suppose that $\g$ is of type $B_2$ or $G_2$. Then
  $\varphi_{w_0}(\Tilde\alpha_i^d) = \Tilde\alpha_i^{d,-}$
  ($i=1,2$). Therefore
  \begin{equation}
    \la\Tilde\alpha_1^d, \Tilde\alpha_2^{d,-}\ra_B
    = \la\varphi_{w_0}(\Tilde\alpha_2^{d,-}),
    \varphi_{w_0}(\Tilde\alpha_1^d)\ra_B
    = \la\Tilde\alpha_2^d, \Tilde\alpha_1^{d,-}\ra_B.
  \end{equation}
  This argument does not work for $A_2$. So I need to use the outer
  automorphism.
\end{NB}

We can take $\sigma\in\Aut(G)$, which preserves $T$ and the set of positive
roots, and induces a Dynkin diagram automorphism. Then $B^\sigma =
B$. Hence $U^{d,B}_{T,G}$ is a representation of the group of Dynkin
diagram automorphisms. The inner product is preserved.

We have $L_i^\sigma = L_{\sigma(i)}$, where $\sigma(i)$ is the vertex of the Dynkin
diagram, the image of $i$ under the corresponding Dynkin diagram
automorphism. From \eqref{eq:46} $\varphi_\sigma (\tilde\alpha^d_i)$ is
equal to $\tilde\alpha^d_{\sigma(i)}$ up to scalar. We will prove the
following in \subsecref{sec:autG-invar}.
\begin{Lemma}\label{lem:autg}
  We have
  \begin{equation}
    \varphi_\sigma (\tilde\alpha^d_i) = \pm \tilde\alpha^d_{\sigma(i)},
  \end{equation}
  where $\pm$ is the ratio of the polarizations for $\Bun[L_i]d$ and
  $\Bun[L_{\sigma(i)}]d$, compared under $\varphi_\sigma$.
\end{Lemma}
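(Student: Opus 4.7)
The plan is to break $\tilde\alpha^d_i = \Phi_{T,L_i}(\pol\, 1^d_{L_i,G})\circ 1^d_{L_i}$ into its two factors and track how each one transforms under $\varphi_\sigma$, since the map $\sigma$ sends the data $(T,B\cap L_i,L_i)$ isomorphically onto $(T,B\cap L_{\sigma(i)},L_{\sigma(i)})$ by the diagram-automorphism hypothesis. First, I would observe that $\sigma$ produces commutative diagrams of the form \eqref{eq:39} for the pairs $(P_i,L_i)$ and $(P_{\sigma(i)},L_{\sigma(i)})$, and similarly for the Borel $B$ inside each $L_i$. Combined with the associativity isomorphism \propref{prop:trans}, this intertwines $\Phi_{T,L_i}\circ \Phi_{L_i,G}$ with $\Phi_{T,L_{\sigma(i)}}\circ \Phi_{L_{\sigma(i)},G}$ via $\varphi_\sigma$. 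Hence the composition that defines $\tilde\alpha^d_i$ is carried by $\varphi_\sigma$ to a composition of the same shape for the index $\sigma(i)$; it remains only to identify the image of each factor.

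For the first factor, recall from \subsecref{sec:open} that $1^d_{L_i,G}\in\Hom(\IC(\UhL[L_i]d),\Phi_{L_i,G}(\IC(\Uh{d})))$ is the unique element whose restriction to the open stratum $\Bun[L_i]d\subset\UhL[L_i]d$ coincides with the Thom isomorphism $i^*j^!(\cC_{\Bun{d}})\cong \cC_{\Bun[L_i]d}$ coming from the identification of the attracting bundle with $H^1(\proj^2,\mathfrak n_{\mathcal F}(-\linf))$. The automorphism $\varphi_\sigma$ identifies $\Bun[L_i]d$ with $\Bun[L_{\sigma(i)}]d$ and sends the nil-radical $\mathfrak n_i$ of $\mathfrak p_i$ to that of $\mathfrak p_{\sigma(i)}$; therefore $\varphi_\sigma$ intertwines the two Thom isomorphisms. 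The only place where a sign can enter is in the polarization factor $\pol$, since our choice of square root of \eqref{eq:e^2} at $\Bun[L_i]d$ need not correspond under $\varphi_\sigma$ to our choice at $\Bun[L_{\sigma(i)}]d$. Since $\Hom(\IC(\UhL[L_i]d),\Phi_{L_i,G}(\IC(\Uh{d})))$ is one-dimensional (again by \subsecref{sec:open}), this local discrepancy on the open stratum determines the global sign, giving $\varphi_\sigma(\pol\, 1^d_{L_i,G}) = \pm\, \pol\, 1^d_{L_{\sigma(i)},G}$ with the claimed polarization ratio.

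For the second factor, the space $U^d_{T,L_i}$ is one-dimensional by \lemref{lem:dimU} applied to $L_i$, and $1^d_{L_i}$ is by construction (\eqref{eq:anotherbase} and the discussion following \eqref{eq:1_Li}) the canonical stable-envelop generator for the rank-one situation $[L_i,L_i]\cong SL(2)$. Since $\sigma$ induces an isomorphism $[L_i,L_i]\xrightarrow{\cong}[L_{\sigma(i)},L_{\sigma(i)}]$ preserving the chosen Borel and maximal torus, and the stable envelop of \cite{MO} is intrinsically determined by the symplectic geometry together with the choice of chamber, we obtain $\varphi_\sigma(1^d_{L_i}) = 1^d_{L_{\sigma(i)}}$ with no additional sign. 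Composing the two transports then gives $\varphi_\sigma(\tilde\alpha^d_i) = \pm\, \tilde\alpha^d_{\sigma(i)}$ with the polarization ratio sign.

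The main obstacle is pinning down that the stable envelop generator $1^d_{L_i}$ carries no additional sign ambiguity under $\varphi_\sigma$, i.e., the functoriality of the Maulik-Okounkov stable envelop under symplectic automorphisms that preserve the chamber. This is essentially tautological from the axiomatic characterization of the stable envelop, but must be unwound carefully, since for $[L_i,L_i]\cong SL(2)$ the stable envelop is normalized by an explicit choice (the leading term on the diagonal in the fixed locus $S^d\AA^2$), and one must verify that $\varphi_\sigma$ preserves this normalization.
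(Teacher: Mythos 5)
Your factorization of $\tilde\alpha^d_i$ and your treatment of the first factor match the paper's argument: the transport of $1^d_{L_i,G}$ is checked on the open stratum via the Thom isomorphism, and the ratio of the polarizations of $\Bun[L_i]{d}$ and $\Bun[L_{\sigma(i)}]{d}$ is the only source of sign there. The gap is in your second step. You assert $\varphi_\sigma(1^d_{L_i}) = 1^d_{L_{\sigma(i)}}$ on the grounds that the stable envelop is ``intrinsically determined by the symplectic geometry together with the choice of chamber''; this is not true as stated, since the envelop depends in addition on a polarization, and the signs at stake in this paper (e.g.\ the factor $(-1)^{d(r-r_N)}$ in Example~\ref{ex:base}) come precisely from that extra datum. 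Moreover, before one can invoke any functoriality one must explain in what sense $\varphi_\sigma$ acts on the Gieseker-space picture in which $1^d_{L_i}$ is defined: as emphasized in \subsecref{sec:autG-invar}, outer automorphisms in type $A$ do \emph{not} lift to $\Gi{r}^d$, so you would have to argue that the comparison of the two $SL(2)$-models for $[L_i,L_i]$ and $[L_{\sigma(i)},L_{\sigma(i)}]$ (after fixing pinned identifications with $SL(2)$) differs from an honest automorphism of $\Gi{2}^d$ only by an inner (torus) automorphism acting through change of framing, and that this preserves the chosen polarization. You yourself flag this as the ``main obstacle'' and leave it unresolved, so the proof is incomplete exactly where the lemma is nontrivial.

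The paper avoids transporting the stable envelop altogether. It identifies $U^d_{T,L_i}$ with $H_{[0]}(\Uh[B\cap L_i,0]d)$ via \lemref{lem:Ud}, observes that the fundamental class $[\Uh[B\cap L_i,0]d]$ of the irreducible component is manifestly sent to $[\Uh[B\cap L_{\sigma(i)},0]d]$ because $\varphi_\sigma$ is induced by an isomorphism of the underlying varieties, and then uses the type-$A$ computation ($[\overline{\fT(0,d)_0}]=(-1)^d e_2$ from Example~\ref{ex:base}, together with \propref{prop:Un} and the projection $V^d\to U^d$) to get $[\Uh[B\cap L_i,0]d] = (-1)^{d+1}\,1^d_{L_i}/2$, a multiple depending only on $d$; hence $1^d_{L_i}$ transports with no extra sign and the whole sign is the polarization ratio from the first factor. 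To repair your route, either carry out the inner-automorphism and polarization-invariance argument sketched above in detail, or substitute the paper's reduction of $1^d_{L_i}$ to the component class.
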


\section{Hyperbolic restriction in type \texorpdfstring{$A$}{A}}\label{sec:typeA}

We shall study the case $G = SL(r)$ in detail in this chapter.

We have the moduli space $\Gi{r}^d$\index{UhTilder@$\Gi{r}^d$} of
framed torsion free sheaves $(E,\varphi)$ of rank $r$, second Chern
class $d$ over $\proj^2$. It is called the {\it Gieseker space}.
We have a projective morphism $\pi$ (the {\it Gieseker-Uhlenbeck
  morphism\/}\index{pai@$\pi$ (Gieseker-Uhlenbeck morphism)}) from
$\Gi{r}^d$ to the corresponding Uhlenbeck space $\Uh{d}$.
It is known that $\Gi{r}^d$ is smooth and $\pi$ is a semi-small
resolution of singularities. Therefore we can study $\IC(\Uh{d})$ via
the constant sheaf $\cC_{\Gi{r}^d}$ over $\Gi{r}^d$.

If $r=1$, we understand $\Gi{1}^d$ as the Hilbert scheme
$\operatorname{Hilb}^{d}(\AA^2)$ of $d$ points on $\AA^2$, while
$\Uh[{SL(1)}]d$ is the symmetric power $S^d\AA^2$.

\subsection{Gieseker-Uhlenbeck}

Let us first explain the relation between $\IC(\Uh{d})$ and
$\cC_{\Gi{r}^d}$ in more detail.

\begin{Theorem}[\protect{\cite[\S3]{Baranovsky}}]\label{thm:GU}
    The Gieseker-Uhlenbeck morphism $\pi\colon \Gi{r}^d\to \Uh{d}$ is
    semi-small with respect to the standard
    stratification~\eqref{eq:strat}. All strata are relevant and
    fibers are irreducible. Therefore
\begin{equation}\label{eq:GU}
  \pi_! \cC_{\Gi{r}^d} \cong \bigoplus_{d_1+|\lambda|=d}
  H_{\topdeg}(\pi^{-1}(x^{d_1}_\lambda))\otimes
  \IC(\BunGl{d_1}),
\end{equation}
where $x^{d_1}_\lambda$ is a point in the stratum $\BunGl{d_1}$.
\end{Theorem}

(See also \cite[Ch.~3,5,6]{Lecture}, where $\Gi{r}^d$, $\Uh{d}$ are
denoted by $\mathcal M(n,r)$, $\mathcal M_0(n,r)$ respectively. See
also \cite[Ch.3]{MR2095899} for the detail on the irreducibility
of fibers.)

\begin{NB}
We identify $H_{\topdeg}(\pi^{-1}(x^{d_1}_{\lambda}))\cong \CC$ by the
fundamental class $[\pi^{-1}(x^{d_1}_{\lambda})]$.
\end{NB}%
Since $\IC(\UhGl{d_1})$ is isomorphic to the pushforward of
$\IC(\Uh{d_1})\boxtimes \cC_{\overline{S_\lambda\AA^2}}$ under the
finite morphism \eqref{eq:finite},
we have
\begin{equation}\label{eq:IH}
  H_{\TT}^{[*]}(\Gi{r}^d) \cong
  \bigoplus \IH_{\TT}^{[*]}(\Uh{d_1})\otimes
  H_{\topdeg}(\pi^{-1}(x^{d_1}_\lambda))\otimes
  H_{\TT}^{[*]}(\overline{S_\lambda\AA^2}).
\end{equation}
We also have the corresponding isomorphism for the cohomology with
compact support.

\subsection{Heisenberg operators}\label{sec:BaraOp}

For $r = 1$, the third author and Grojnowski independently constructed
operators acting on the direct sum of homology groups of $\Gi{1}^d$
satisfying the Heisenberg relation (see \cite[Ch.~8]{Lecture}).
It was extended by Baranovsky to higher rank case \cite{Baranovsky}.
Let us review his construction in this subsection.

We consider here both $H_\TT^{[*]}(\Gi{r}^d)$ and
$H_{\TT,c}^{[*]}(\Gi{r}^d)$, the equivariant cohomology with arbitrary
and compact support, which is Poincar\'e dual to Borel-Moore and the
ordinary equivariant homology
groups. \index{HTc@$H_{\TT,c}^{[*]}(\Gi{r}^d)$}
To save the notation, we use the notation
$H_{\TT(,c)}^{[*]}(\Gi{r}^d)$ meaning either of cohomology groups.

For $n > 0$ we consider subvariety
\begin{equation}
  P_n \subset \bigsqcup_d \Gi{r}^d \times \Gi{r}^{d+n} \times \AA^2,
\end{equation}
consisting of triples $(E_1,E_2,x)$ such that $E_1\supset E_2$ and
$E_1/E_2$ is supported at $x$.
We have
\begin{Proposition}
  $P_n$ is half-dimensional in $\Gi{r}^d \times \Gi{r}^{d+n} \times
  \AA^2$ for each $d$.
\end{Proposition}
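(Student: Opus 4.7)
The strategy is to reduce to a punctual calculation via translation invariance, compute the generic fiber dimension via classical punctual Quot schemes, and handle the non-locally-free locus by stratification.

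First, the diagonal $\AA^2$-translation action $(v;E_1,E_2,x) \mapsto (E_1+v, E_2+v, x+v)$ is free and preserves $P_n$, so $P_n \cong P_n^0 \times \AA^2$, where $P_n^0 \defeq \{(E_1,E_2) : E_1 \supset E_2, \; E_1/E_2 \text{ has length } n \text{ and is supported at } 0\}$. Since $\dim\Gi{r}^d = 2rd$ (the $SL(r)$ case of $\dim\Bun{d} = 2dh^\vee$ with $h^\vee = r$), the claim $\dim P_n = \tfrac12\dim(\Gi{r}^d \times \Gi{r}^{d+n} \times \AA^2) = 2rd+rn+1$ is equivalent to $\dim P_n^0 = 2rd + rn - 1$.

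Next I would study the projection $p\colon P_n^0 \to \Gi{r}^d$, $(E_1,E_2) \mapsto E_1$. Its fiber over $E_1$ is the punctual Quot scheme $Q(E_1) \defeq \mathrm{Quot}^n(E_1)_0$ of length-$n$ quotients of $E_1$ supported at $0$. On the open dense locus $U \subset \Gi{r}^d$ of sheaves locally free at $0$, a formal trivialization $E_1|_{\widehat{0}} \cong \shfO^{\oplus r}$ identifies $Q(E_1)$ with $\mathrm{Quot}^n(\shfO_{\AA^2}^{\oplus r})_0$, which is irreducible of pure dimension $rn - 1$: for $r = 1$ this is Briançon's theorem on the punctual Hilbert scheme, and for general $r$ one computes the dimension by parametrizing a generic quotient as $\shfO^{\oplus r} \twoheadrightarrow \shfO_Z \otimes V$ for a length-$n$ punctual subscheme $Z$ (contributing $n - 1$ parameters) together with a length-$1$ quotient $\CC^r \twoheadrightarrow \CC$ at each point of $Z$ (contributing $n(r-1)$ parameters), totalling $rn - 1$. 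Hence $p^{-1}(U)$ is pure of dimension $2rd + rn - 1$, yielding the lower bound.

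The main obstacle is the matching upper bound, i.e.\ showing that no component of $P_n^0$ lying over $\Gi{r}^d \setminus U$ has dimension exceeding $2rd + rn - 1$. I would stratify $\Gi{r}^d$ by the length $\ell \ge 1$ of the torsion of $E_1^{\vee\vee}/E_1$ at $0$; using the semi-smallness of the Gieseker-Uhlenbeck morphism $\pi\colon\Gi{r}^d \to \Uh{d}$ applied to the stratification \eqref{eq:strat}, this $\ell$-stratum has codimension at least $r\ell + 1$ in $\Gi{r}^d$ (as the $\Uh{d}$-stratum of "$\ell$ torsion concentrated at $0$" has the form $\Bun[G]{d-\ell}\times\{\ell\cdot 0\}$ of dimension $2r(d-\ell)$, and the $\pi$-fiber over it is $\mathrm{Quot}^\ell(\shfO^{\oplus r})_0$ of dimension $r\ell -1$). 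On the fiber side, the assignment $E_2 \mapsto E_1^{\vee\vee}/E_2$ realizes $Q(E_1)$ as a fiber of $\mathrm{Quot}^{n+\ell}(\shfO^{\oplus r})_0 \to \mathrm{Quot}^\ell(\shfO^{\oplus r})_0$ over the prescribed quotient $E_1^{\vee\vee}/E_1$, of dimension at most $rn$. Combining, the contribution from each stratum is at most $2rd - (r\ell + 1) + rn = 2rd + rn - r\ell - 1 < 2rd + rn - 1$ for $\ell \ge 1$. A cleaner alternative that sidesteps this stratified bookkeeping is to realize $P_n^0$ directly via the ADHM description as a nested quiver-type variety cut out by moment-map equations on an explicit linear space, and to extract the dimension from a direct linear algebra count, as carried out in \cite{Baranovsky}.
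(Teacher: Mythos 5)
The paper itself offers no proof of this Proposition: it is quoted from Baranovsky's paper \cite{Baranovsky} as part of a review of his construction, so there is nothing internal to compare against; your argument (fibering over the first sheaf, punctual Quot fibers, stratification by the double dual) is in fact essentially Baranovsky's own route, supplemented by the fact, also recorded in the paper as the irreducibility/dimension statement for $\Gi{r,0}^d$, that $\mathrm{Quot}^n(\shfO^{\oplus r})_0$ has dimension $rn-1$.

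There is, however, one step that is wrong as stated, although it is easily repaired. Over the locus where $E_1$ has torsion of length $\ell\ge 1$ at $0$ you claim the fiber $Q(E_1)=\mathrm{Quot}^n(E_1)_0$ has dimension at most $rn$, arguing it is ``a fiber of $\mathrm{Quot}^{n+\ell}(\shfO^{\oplus r})_0\to\mathrm{Quot}^{\ell}(\shfO^{\oplus r})_0$''. That map is not defined on all of $\mathrm{Quot}^{n+\ell}_0$, and even for an honest morphism a special fiber need not have dimension equal to the difference of dimensions; moreover the bound $rn$ is simply false. For example, take $r=1$, $n=1$ and $E_1$ the ideal sheaf of the first infinitesimal neighbourhood of $0$ (so $\ell=3$): colength-one subsheaves of $E_1$ supported at $0$ are hyperplanes in $E_1\otimes\CC_0\cong \mathfrak m^2/\mathfrak m^3\cong\CC^3$, so $Q(E_1)\cong\proj^2$ has dimension $2>rn=1$. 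The repair is already contained in your own setup: the assignment $E_2\mapsto(E_2\subset E_{1}^{\vee\vee}\cong\shfO^{\oplus r}$ near $0)$ is a closed embedding $Q(E_1)\hookrightarrow \mathrm{Quot}^{n+\ell}(\shfO^{\oplus r})_0$, giving $\dim Q(E_1)\le r(n+\ell)-1$, and combined with your (correct) codimension bound $\ge r\ell+1$ for the base stratum this yields total dimension $\le 2rd+rn-2<2rd+rn-1$ over the bad locus, which is all the argument needs. Two smaller points: your parametrization ``$\shfO^{\oplus r}\twoheadrightarrow\shfO_Z\otimes V$ with a length-one quotient at each point of $Z$'' does not literally make sense for a punctual $Z$ — the equality $\dim\mathrm{Quot}^n(\shfO^{\oplus r})_0=rn-1$ is a genuine theorem (Brian\c con for $r=1$, Baranovsky's appendix in general) and should just be cited; and your codimension estimate is justified only for the stratum with all torsion concentrated at $0$, while sheaves with additional torsion elsewhere must be (routinely) included in the count.
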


Let us denote the projection to the third factor by $\Pi$. For a
cohomology class $\alpha\in H^{[*]}_{\TT(,c)}(\AA^2)$, we consider
$P_{-n}^\Delta(\alpha) = [P_n]\cap
\Pi^*(\alpha)$\index{P@$P_n^\Delta(\alpha)$ (diagonal Heisenberg
  generator)} as a correspondence in $\Gi{r}^d\times\Gi{r}^{d+k}$.
Then we have the convolution product
\begin{equation}
    P_{-n}^\Delta(\alpha) \colon H^{[*]}_{\TT(,c)}(\Gi{r}^d) \to
    H^{[* + \deg\alpha]}_{\TT(,c)}(\Gi{r}^{d+n}).
\end{equation}
Thanks to the previous proposition, the shift of the degree is simple
in our perverse degree convention.
\begin{NB}
    I am still wondering for the convention for cohomology
    degree. Here I take the notation from \cite[\S8.9]{CG}.
    However, it is a little annoying as the Heisenberg algebra we
    consider is not of degree $0$. In particular, its representation,
    the homology group of the fiber, is not of degree $0$.
\end{NB}%
The reason why we put $\Delta$ in the notation will be clear later.

We define $P_n^\Delta(\alpha)$ as the adjoint operator
\begin{equation}
  P_n^\Delta(\alpha) \colon H^{[*]}_{\TT(,c)}(\Gi{r}^{d+n}) \to
  H^{[*+\deg\alpha]}_{\TT(,c)}(\Gi{r}^{d}).
\end{equation}
Here we have two remarks. First we follow the sign convention in
\cite[3.1.3]{MO} for the intersection pairing
\begin{equation}\label{eq:73}
  \la\bullet,\bullet\ra = (-1)^{\dim X/2} \int_X \bullet\cup \bullet.
\end{equation}
Second, we take $\alpha\in H^{[*]}_{\TT,c}(\AA^2)$ for
$H^{[*]}_\TT(\Gi{r}^d)$ and $\alpha\in H^{[*]}_{\TT}(\AA^2)$ for
$H^{[*]}_{\TT,c}(\Gi{r}^d)$. Then the operators are well-defined,
though various projections are not proper. (See
\cite[\S8.3]{Lecture}.)

We have the commutator relation
\begin{equation}
  [P_m^\Delta(\alpha), P_n^\Delta(\beta)]
  = \la\alpha,\beta\ra m\delta_{m+n,0}\, r.
\end{equation}
If $m+n=0$, one of $\alpha$ or $\beta$ is in $H^{[*]}_{\TT}(\AA^2)$
and another is in $H^{[*]}_{\TT,c}(\AA^2)$. Hence $\la\alpha,\beta\ra$
is well-defined.

Since the construction is linear over $H^*_\TT(\mathrm{pt})$, and
$H^{[*]}_{\TT,c}(\AA^2)$, $H^{[*]}_\TT(\AA^2)$ are free of rank $1$,
we can choose $\alpha$ to be their generators, i.e., the Poincar\'e
dual of $[0]$ for $H^{[*]}_{\TT,c}(\AA^2)$, and $1$ (dual of $[\AA^2]$)
for $H^{[*]}_{\TT}(\AA^2)$.
\begin{NB}
    $\deg [0] = 2$, $\deg 1 = -2$.
\end{NB}%
We assume these choices hereafter until \secref{sec:w-algebra-repr}.
\begin{NB}
    For the integral form of the Heisenberg algebra, we choose a
    different convention.
\end{NB}%
Note also that $\la[0],1\ra = -1$ in our sign convention.

We take the direct sum over $d$ in \eqref{eq:IH}:
\begin{equation}\label{eq:IH2}
  \bigoplus_d H_\TT^{[*]}(\Gi{r}^d)
  \cong \bigoplus_{d} \IH_\TT^{[*]}(\Uh{d})
  \otimes \bigoplus_{\lambda}
  H_{\topdeg}(\pi^{-1}(x^{d}_\lambda))\otimes
  H_\TT^{[*]}({\overline{S_\lambda\AA^2}}).
\end{equation}
Note that $H_\TT^{[*]}({\overline{S_\lambda\AA^2}}) \cong
H_{\TT}^*(\mathrm{pt}) \cdot 1$, as $\overline{S_\lambda\AA^2}$ is
equivariantly contractible. Here $1\in
H^{0}_\TT({\overline{S_\lambda\AA^2}})
= H^{[-2l(\lambda)]}_\TT({\overline{S_\lambda\AA^2}})$.

From the definition of the Heisenberg operators, it acts only on the
second factor of \eqref{eq:IH2}: $\lambda=\emptyset$ are killed by
$P_k^\Delta([0])$ ($k>0$) and the summand for $\lambda =
(1^{n_1}2^{n_2}\cdots)$ is spanned by the monomial in
$P_{-1}(1)^{n_1}/n_1!  \cdot P_{-2}(1)^{n_2}/n_2! \cdots$. The second
factor is isomorphic to the Fock space.

Let us give another representation of the Heisenberg algebra. Let $0$
denote the point $d\cdot 0\in S_{(d)}\AA^2$, and consider the inverse
image $\pi^{-1}(0)\subset\Gi{r}^{d}$, and denote it by
$\Gi{r,0}^{d}$.\index{UhTilder0@$\Gi{r,0}^d$} It is the Quot scheme
parametrizing quotients of $\shfO_{\proj^2}^{\oplus r}$ of length $d$
whose support is $0$.

Let us restate \thmref{thm:GU} in a different form:
\begin{Proposition}\label{prop:quot}
  $\Gi{r,0}^{d}$ is an irreducible $(dr - 1)$-dimensional subvariety
  in $\Gi{r}^d$, unless $d=0$.
\end{Proposition}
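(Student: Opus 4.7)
The plan is to identify $\Gi{r,0}^d$ with a classical object and then invoke (or reprove) a well-known theorem about it. First I would unpack the statement. A point $(E,\varphi)\in \Gi{r}^d$ lies over $d\cdot 0\in S_{(d)}\AA^2$ iff $\pi$ records a trivial double-dual plus the support-weighted divisor of $E^{\vee\vee}/E$ concentrated at the origin; equivalently $E^{\vee\vee}=\shfO_{\proj^2}^{\oplus r}$ with $\varphi$ the identity framing at $\linf$ and $E^{\vee\vee}/E$ of length $d$ with set-theoretic support $\{0\}$. Thus $\Gi{r,0}^d$ is canonically the punctual Quot scheme $\operatorname{Quot}_0^d(\shfO_{\AA^2}^{\oplus r})$ of length-$d$ quotients of $\shfO_{\AA^2}^{\oplus r}$ supported at the origin.

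For $r=1$ this is the punctual Hilbert scheme $\operatorname{Hilb}_0^d(\AA^2)$; irreducibility and dimension $d-1$ is the classical theorem of Brian\c{c}on--Iarrobino. For general $r$ I would cite the result of Ellingsrud--Lehn that the punctual Quot scheme of the trivial rank-$r$ sheaf on a smooth surface is irreducible of dimension $rd-1$. Alternatively one can give a direct ADHM argument: $\Gi{r,0}^d$ is the GIT quotient by $GL_d$ of
\[
\{(B_1,B_2,I,J)\mid [B_1,B_2]+IJ=0,\ B_1,B_2 \text{ nilpotent},\ \text{stable}\}.
\]
Stability together with nilpotency of both $B_1,B_2$ forces $J=0$ (otherwise the image of $J$ would yield a nonzero $B_1,B_2$-invariant cokernel, contradicting stability of the dual data), so the locus is the set of stable triples $(B_1,B_2,I)$ with $[B_1,B_2]=0$, both $B_i$ nilpotent, and $I\colon\CC^r\to\CC^d$ cyclic for $\CC[B_1,B_2]$. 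Projecting to $(B_1,B_2)$ maps onto the nilpotent commuting variety, and one analyzes the cyclic framings $I$ as an open subset of an affine bundle, inductively reducing irreducibility and the dimension count to the $r=1$ case.

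The dimension bound $\dim \Gi{r,0}^d \le rd-1$ is already available from the passage: $\pi$ is semi-small with all strata relevant, and the stratum $\{0\}\times S_{(d)}\AA^2$ has codimension $2rd-2$ in $\Uh[SL(r)]d$, so any fiber over a point of it has dimension at most $rd-1$. The genuine content is therefore the absence of extraneous lower-dimensional components, i.e.\ the irreducibility.

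The main obstacle is exactly this irreducibility for $r>1$: while the commuting nilpotent variety of pairs is irreducible (Baranovsky, cited in the paper), the cyclicity condition cuts out an open subvariety of a nontrivial incidence locus, and one must verify that the cyclic framings exist over each component and form a connected fiber of the expected dimension. For the purposes of the present paper it is cleanest to cite Ellingsrud--Lehn; giving a self-contained ADHM proof requires carefully running the inductive reduction on $r$ via the flag obtained by successively saturating cyclic submodules generated by the columns of $I$.
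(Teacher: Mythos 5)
Your primary route is exactly the paper's: the paper offers no proof of this Proposition at all, stating only that it ``is known'' (it is the classical irreducibility of the punctual Quot scheme of $\shfO^{\oplus r}$ on a surface, already invoked for the decomposition \eqref{eq:GU} via Baranovsky; Ellingsrud--Lehn is an equally good citation), so identifying $\Gi{r,0}^d$ with the punctual Quot scheme and citing that result is precisely what the paper does.

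One caution about your ADHM aside: the claim that stability plus nilpotency of $B_1,B_2$ forces $J=0$ is false as stated. Take $r=2$, $d=1$, $B_1=B_2=0$, $I=(1,0)\colon\CC^2\to\CC$, $J=(0,1)^{t}\colon\CC\to\CC^2$: this datum is stable, satisfies $[B_1,B_2]+IJ=0$, has nilpotent $B_i$, and $J\neq 0$; it is even costable, so the sheaf is locally free and the point does not lie over $d\cdot 0$ (the invariant $JI\neq 0$). Accordingly your description of $\Gi{r,0}^d$ as $\{\text{ADHM},\ B_1,B_2\ \text{nilpotent},\ \text{stable}\}/GL_d$ is too large: lying over $d\cdot 0$ means vanishing of \emph{all} invariants, i.e.\ $\operatorname{tr}F(B_1,B_2)=0$ and $JF(B_1,B_2)I=0$ for every word $F$. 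It is the latter conditions, combined with stability, that kill $J$: stability says $V$ is spanned by vectors $F(B_1,B_2)Iw$, and $J$ annihilates all of them, so $J=0$; then $[B_1,B_2]=-IJ=0$ and the trace conditions give nilpotency, recovering the punctual Quot description. With that correction the ADHM reduction is sound, but, as you note yourself, the substantive point (irreducibility for $r>1$) still rests on Baranovsky or Ellingsrud--Lehn, which is all the paper relies on.
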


It is needless to say that we have $\Gi{r,0}^0 = \Gi{r}^0 =
\mathrm{pt}$.

The convolution product by $P_{\pm k}^\Delta(\alpha)$ sends
$H_{[*]}^\TT(\Gi{r,0}^d)$ to $H_{[*-\deg\alpha]}^\TT(\Gi{r,0}^{d\pm
  k})$, where $\alpha\in H^*_{\TT,c}(\AA^2)$ for $k < 0$, $\alpha\in
H^*_{\TT}(\AA^2)$ for $k > 0$. Therefore
\begin{equation}
    \label{eq:19}
    \bigoplus_d H^\TT_{[*]}(\Gi{r,0}^d)
\end{equation}
is a representation of the Heisenberg algebra.
It is known that $\Gi{r,0}^d$ is homotopy equivalent to $\Gi{r}^d$,
hence $H^\TT_{[*]}(\Gi{r,0}^d)$ is isomorphic to the ordinary homology
group of $\Gi{r}^d$, and hence to $H^{[-*]}_{\TT,c}(\Gi{r}^d)$ by the
Poincar\'e duality.

\begin{NB}
  It seems that the following paragraphs should be put in
  Introduction, and is not relevant here.

Maulik-Okounkov \cite{MO} and Schiffmann-Vasserot \cite{SV} extended
the representation to $W(\gl_r)$, the $\scW$-algebra of $\gl_r$, which is
the tensor product of $W(\algsl_r)$, the $\scW$-algebra of $\algsl_r$ and
the Heisenberg algebra.

Since $\bigoplus_{d} \IH_\TT^{[*]}(\Uh{d})$ consists of highest weight
vectors of the Heisenberg algebra, it is a module of
$W(\algsl_r)$. This is the special case of our main result for $G =
SL_r$.
In order to generalize it to arbitrary $G$, we will reformulate the
argument of \cite{MO} in terms of the hyperbolic restriction functor
on the Uhlenbeck space after \subsecref{sec:sheaf}.
\end{NB}

\subsection{Fixed points and polarization}\label{sec:Gifixed}

Let us take a decomposition $r = r_1 + r_2 + \cdots + r_N$. We have
the corresponding $(N-1)$-dimensional torus, which is the connected
center $A = Z(L)^0$ of the Levi subgroup $L = S(GL(r_1)\times \cdots\times
GL(r_N))\subset SL(r)$. We have the corresponding parabolic subgroup
$P$ consisting of block upper triangular elements.

Let us consider the fixed point set $\Gi{L}^d =
(\Gi{r}^d)^{A}$.\index{UhTildeL@$\Gi{L}^d$} It consists of framed
sheaves, which is a direct sum of sheaves of rank $r_1$, $r_2$, \dots,
$r_N$. Thus we have
\begin{equation}\label{eq:product}
  \Gi{L}^d =
  \bigsqcup_{d=d_1+\dots+d_N} \Gi{r_1}^{d_1}\times\cdots\times
  \Gi{r_N}^{d_N}.
\end{equation}

We omit the superscript $d$, when there is no fear of confusion.

\begin{NB}
  Sep. 16 : Following Sasha's suggestion, I explain the polarization
  in more detail.
\end{NB}

Following \cite[Ex.3.3.3]{MO}, we choose a polarization $\pol$ for
each component of $\Gi{L}$, as a quiver variety associated with the
Jordan quiver.
Let us review the construction quickly. See the original paper for
more detail:
We represent $\Gi{L}$ as the space of quadruples $(B_1,B_2,I,J)$
satisfying certain conditions.
We decide to choose pairs, say $(B_1,I)$, from quadruples. The choice
gives us a decomposition of the tangent bundle of $\Gi{r}$ as
\begin{equation}
  T\Gi{r} = T^{1/2} + (T^{1/2})^\vee
\end{equation}
in the equivariant $K$-theory with respect to the $A$-action on
$\Gi{r}^d$. We also have the decomposition of $T\Gi{L}$, and hence
also of the normal bundle.
Then we choose a polarization $\pol$ of $\Gi{L}$ in $\Gi{r}$ as product
of weights in the normal bundle part of $(T^{1/2})^\vee$.

Let us also explain another description of the polarization
$\pol$\index{delta@$\pol$ (polarization)} given in
\cite[\S12.1.5]{MO}. We consider the following Quot scheme
\begin{equation}
  Q_r = \{ (E,\varphi) \mid x_2 \shfO^{\oplus r}\subset E\subset
  \shfO^{\oplus r} \} \subset \Gi{r},
\end{equation}
where $x_2$ is one of coordinates of $\AA^2$.
This is a fixed point component of a certain $\CC^*$-action, and is a
smooth lagrangian subvariety in $\Gi{r}$. In the ADHM description, it
is given by the equation $B_2 = 0 = J$.
Now $(T^{1/2})^\vee$ is the normal direction to $Q_r$ at a point in
$Q_r$. Since any component of $\Gi{L}$ intersects with $Q_r$, and the
intersection is again a smooth lagrangian subvariety, $Q_r$ gives us
the polarization.

Note that the polarization is invariant under the action of $G =
SL(r)$ on $\Gi{G}$, as we promised in \subsecref{sec:autg-invariance}.

We calculate the sign $\pm$ of the ratio of this polarization $\pol$
and the repellent one $\pol_{\text{rep}}$, of $\Gi{2}^d\times\Gi{1}^0$
and $\Gi{1}^0\times\Gi{2}^d$ in $\Gi{3}^d$ for a later purpose. Here
$L = S(GL(2)\times GL(1))$ in the first case and $L= S(GL(1)\times
GL(2))$ for the latter case.

\begin{Lemma}\label{lem:polSL3}
  We have $\pol_{\mathrm{rep}}/\pol = 1$ for $\Gi{2}^d\times\Gi{1}^0$,
  $\pol_{\mathrm{rep}}/\pol = (-1)^d$ for $\Gi{1}^0\times\Gi{2}^d$.
\end{Lemma}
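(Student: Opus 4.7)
The plan is to compute directly in the ADHM description of $\Gi{3}^d$, where a point is a quadruple $(B_1,B_2,I,J)$ with $\dim V=d$ and $\dim W=3$. Recall that the polarization $T^{1/2}$ is always the $(B_1,I)$-summand of the tangent complex, so $(T^{1/2})^\vee$ corresponds to the $(B_2,J)$-summand, independently of the choice of Levi. At each of the two fixed-point components the instanton lives entirely in one summand of $W$, so $V=V_i$ for a single $i$; in particular, the $\End(V)$-pieces of the tangent complex carry only weight zero with respect to $A=Z(L)^0$ and contribute to the tangent, not to the normal bundle. Consequently the normal bundle comes entirely from the off-diagonal pieces of $I$ and $J$, and the computation reduces to reading off their $A$-weights and comparing with the attracting/repelling decomposition.

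First consider $\Gi{2}^d\times\Gi{1}^0$ with $L=S(GL(2)\times GL(1))$. Writing $W=W_1\oplus W_2$ with $\dim W_1=2$, $\dim W_2=1$, and letting $a_1,a_2$ denote the $A$-weights of $W_1,W_2$, the condition that the adjoint attracting set equals the standard parabolic $P\supset B$ with blocks $(2,1)$ forces $a_1>a_2$. Since the instanton sits in the $W_1$-side, $V=V_1$ has weight $a_1$, and the normal bundle splits as $\Hom(W_2,V_1)\subset T^{1/2}$ of weight $a_1-a_2>0$ (attracting, $d$-dimensional) together with $\Hom(V_1,W_2)\subset(T^{1/2})^\vee$ of weight $a_2-a_1<0$ (repelling, $d$-dimensional). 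Hence $(T^{1/2})^\vee$ coincides with the repelling part on the nose, and $\pol_{\mathrm{rep}}/\pol=1$.

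Next consider $\Gi{1}^0\times\Gi{2}^d$ with $L=S(GL(1)\times GL(2))$. The block sizes are reversed, $\dim W_1=1$, $\dim W_2=2$, and the dominant cocharacter for the standard parabolic with blocks $(1,2)$ again satisfies $a_1>a_2$. Now the instanton lives in $V_2$ of weight $a_2$, and the normal bundle is $\Hom(W_1,V_2)\subset T^{1/2}$ of weight $a_2-a_1<0$ (this time repelling) together with $\Hom(V_2,W_1)\subset(T^{1/2})^\vee$ of weight $a_1-a_2>0$ (this time attracting), each of dimension $d$. Now $(T^{1/2})^\vee$ is the attracting part, whose weights differ by an overall sign from the repelling ones, so taking the product over the $d$ weights contributes an overall $(-1)^d$, giving $\pol_{\mathrm{rep}}/\pol=(-1)^d$.

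The conceptual point, and the only bookkeeping to keep straight, is that $T^{1/2}=(B_1,I)$ is a fixed choice independent of the Levi while attracting versus repelling depends on the parabolic; exchanging the block sizes $(2,1)\leftrightarrow(1,2)$ swaps the roles of $I$ and $J$ in the normal bundle with respect to this decomposition, and this swap is precisely what produces the sign $(-1)^d$ in the second case.
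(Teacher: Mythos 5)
Your proposal is correct and arrives at the stated ratios, but by a genuinely different route from the paper. The paper's proof first invokes the factorization property to reduce to $d=1$, then identifies the centered spaces explicitly, $\cGi{3}^1\cong T^*\proj^2$ with the two fixed components being cotangent bundles of fixed lines, and reads off $\pol$ versus the repelling directions as base versus fiber directions in that model. You instead stay in the ADHM presentation and compute for all $d$ at once: at these two components $V$ is concentrated in a single block, so the $\End(V)$-pieces are $A$-invariant and the normal bundle is exactly the off-diagonal blocks $\Hom(W_j,V_i)\oplus\Hom(V_i,W_j)$ of $I$ and $J$; since the normal part of $(T^{1/2})^\vee$ is the $J$-block (this is the paper's description of the polarization via the Quot scheme $Q_r=\{B_2=0=J\}$ in \subsecref{sec:Gifixed}), the ratio is $1$ or $(-1)^d$ according to whether the repelling directions are the $J$-block or the $I$-block. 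This buys uniformity in $d$ (no factorization step) and makes the exponent $d$ in $(-1)^d$ transparent, while the paper's route buys a short, coordinate-free check in a completely explicit geometric model.

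One bookkeeping caveat, which does not affect your conclusion: the assignment ``$V_1$ has weight $a_1$'' is convention-sensitive. With the framing action of \cite[Ch.~2]{Lecture} ($I\mapsto Ig^{-1}$, $J\mapsto gJ$), the compensating gauge transformation at the fixed point gives $V_1$ the weight $-a_1$, so the normal weights are $\mp(a_1+a_2)$ rather than $\pm(a_1-a_2)$. In $SL(3)$ the constraints $2a_1+a_2=0$ (resp.\ $a_1+2a_2=0$) make these agree in sign with your expressions, so your identification of attracting versus repelling blocks, and hence the ratios $1$ and $(-1)^d$, stands. A convention-free confirmation of that identification is available from the filtration description of $\Gi{P}$ in \subsecref{sec:envelop}: at $\Gi{2}^d\times\Gi{1}^0$ the attracting normal directions are the deformations keeping the rank-two summand a subsheaf with quotient $W_2\otimes\shfO$, i.e.\ $\Hom(W_2,H^1(\proj^2,E_1(-\linf)))\cong\Hom(W_2,V_1)$, the $I$-block, exactly as you assert; the second case is analogous with the roles of the blocks exchanged.
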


\begin{NB}
  Consider the attracting sets $\Gi{P,0}^d$ corresponding to two Levi
  subgroups. By \propref{prop:Un} the first one is collapsed under the
  Hilbert-Chow morphism $\pi$. It seems that the only ratio of two
  polarizations is relevant for our later purpose, but still need a
  clarification.

  The ratio is much simpler to compute. We consider $g\in SL(3)$
  corresponding to the longest element of the Weyl group. Then the
  first fixed point component is mapped to the second by $g$. The
  repellent direction is mapped to the attracting one. And $\pol$ is
  invariant under $g$. Therefore the ratio is
  $\pol_{\mathrm{rel}}\pol_{\mathrm{att}} = (-1)^{\codim/2} \pol_{\mathrm{rel}}^2
  = (-1)^{\codim/2} = (-1)^d$.
\end{NB}

\begin{proof}
  Both components $\Gi{2}^d\times\Gi{1}^0$, $\Gi{1}^0\times\Gi{2}^d$
  intersect with the open set $\pi^{-1}_{a,G}(S_{(1^d)}\AA^1)$, the
  inverse image of the open stratum under the factorization morphism.
  Since the normal bundle decomposes according to the factorization,
  the polarization is of the form $(\pm 1)^d$. Hence it is enough to
  determine the case $d=1$.

  We factor out $\AA^2$ in $\Gi{r}^1$ and
  consider the centered Gieseker
  spaces. \index{UhTildercG@$\cGi{r}^d$|textit} We have
\begin{gather}
\cGi{3}^1 \cong T^* \proj^{2},
\\
\cGi{2}^1\times\cGi{1}^0 \cong T^*(z_2=0), \quad
\cGi{1}^0\times\cGi{1}^2 \cong T^*(z_0=0),
\end{gather}
where $[z_0:z_1:z_2]$ is the homogeneous coordinate system of
$\proj^2$. The polarization $\pol$ above is given by the base
direction of the cotangent bundle.

On the other hand, the repellent directions are base in the first
case and fibers in the second case. Therefore we have
$\pol_{\text{rep}}/\pol = 1$ in the first case and $-1$ in the second
case.
\end{proof}

\subsection{Stable envelope}\label{sec:envelop}

Recall we considered the attracting set $\UhP{}$ in the Uhlenbeck
space $\Uh{}$.  Let us denote its inverse image $\pi^{-1}(\UhP{})$ in
$\Gi{r}$ by $\Gi{P}$.\index{UhTildeP@$\Gi{P}^d$} This is the {\it tensor
  product variety}, denoted by $\fT$ in \cite{tensor2}, where $\UhP{}$
is denoted by $\fT_0$.
(In \cite{Na-Tensor} $\fT$ was denoted by $\mathfrak Z$.)

We have the following moduli theoretic description:
\begin{equation}
  \Gi{P} = \left\{ (E,\varphi)\in \Gi{r}\, \middle|\,
  \begin{minipage}{0.6\linewidth}
    $E$ admits a filtration
$0 = E_0 \subset E_1 \subset \cdots \subset E_N = E$ with
$\rank E_i/E_{i-1} = r_i$, compatible with $\varphi$.
  \end{minipage}
  \right\}.
\end{equation}
See \subsecref{sec:hyperb-restr}.

We consider the fiber product $Z_P$ of $\Gi{P}$ and $\Gi{L}$
over $\UhL{}$:
\begin{equation}
  Z_P = \Gi{P}\times_{\UhL{}} \Gi{L}, \index{ZP@$Z_P$}
\end{equation}
where the map from $\Gi{L}$ to $\UhL{}$ is the restriction of $\pi$,
and the map from $\Gi{P}$ to $\UhL{}$ is the composition of the
restriction
\(
  \Gi{P}\to \UhP{}
\)
of $\pi$ and the map $p$ in \subsecref{sec:hyperb-restr}.  In the
above description of $\Gi{P}$, it is just given as the direct sum
$\bigoplus (E_i/E_{i-1})^{\vee\vee}$ plus the sum of singularities of
$E_i/E_{i-1}$.
One can show that $Z_P$ is a lagrangian subvariety in $\Gi{r}\times
\Gi{L}$. See \cite[Prop.~1]{tensor2}. (There are no lower dimensional
irreducible components, as all strata are relevant for the semismall
morphism $\pi\colon \Gi{r}\to \Uh{}$.)

Maulik-Okounkov stable envelope is a `canonical' lagrangian cycle class
$\cL$\index{L@$\cL$ (stable envelope)} in $Z_P$:
\begin{equation}
  \cL\in H_{[0]}(Z_P).
\end{equation}
See \cite[\S3.5]{MO}. Note that $\cL$ depends on the choice of the
parabolic subgroup $P$ as well as the polarization $\pol$. Since they
are canonically chosen, we suppress them in the notation $\cL$.

The convolution by $\cL$ defines a homomorphism
\begin{equation}\label{eq:conv}
  \cL\ast - = p_{1*}(p_2^*(-)\cap \cL)
  \colon H_{[*]}(\Gi{L}) \to H_{[*]}(\Gi{P}).
\end{equation}
It is known that $\cL\ast-$ is an isomorphism (see
\cite[\S4.2]{tensor2}), and it {\it does\/} also make sense for
equivariant homology groups, as $H_{[0]}(Z_P) \cong H_{[0]}^\TT(Z_P)$

We have $H_{[*]}(\Gi{L}) \cong H^{[*]}(\Gi{L})$ by the Poincar\'e duality.
Then we have
\begin{equation}\label{eq:stable}
  H^{[*]}_\TT(\Gi{L}) \to H^{[*]}_\TT(\Gi{r})
\end{equation}
as the composite of $\cL\ast -$ and the pushforward with respect to
the inclusion $\Gi{P}\subset \Gi{r}$.
This is the original formulation of {\it stable envelope\/} in
\cite[Ch.~3]{MO}, and properties of $\cL$ are often stated in terms of
this homomorphism there.

Let $x\in\UhL{}$. Let $\Gi{L,x}$ denote the inverse image of $x$ under
the Gieseker-Uhlenbeck morphism $\Gi{L}\to\UhL{}$. Similarly let
$\Gi{P,x}$ denote the inverse image of $x$ under the composition
$\Gi{P}\to \UhP{}\to \UhL{}$.\index{UhTildePx@$\Gi{P,x}^d$} Then the
convolution $\cL\ast -$ also defines
\begin{equation}\label{eq:convx}
  \cL\ast-\colon H^{\TT_x}_{[*]}(\Gi{L,x})\to H^{\TT_x}_{[*]}(\Gi{P,x}),
\end{equation}
where $\TT_x$ is the stabilizer of $x$.

\begin{NB}
  The degree is the perverse degree as before. Therefore
  $H_{[*]}(\Gi{P,x}) = H_{\dim\Gi{r}+*}(\Gi{P,x})$. If $x=0$ as in the
  next subsection, $\dim \Gi{P,0} = \dim\Gi{r}/2 - 1$, so $*=-2$ is
  the top degree, unless $d=0$.
\end{NB}

\begin{NB}
  We can also consider $\Gi{r,x}$ the inverse image of $x$ under
  $\pi\colon \Gi{r}\to \Uh{}$, where $x\in\UhL{}$ is considered as a
  point in $\Uh{}$ by $j\circ i\colon \UhL{}\to\Uh{}$. Then we have
  $\Gi{r,x}\subset \Gi{P,x}$ by definition. But $\Gi{P,x}$ is larger than
  $\Gi{r,x}$ in general. We have the pushforward homomorphism
  $H^\TT_*(\Gi{r,x})\to H^\TT_*(\Gi{P,x})$.
\end{NB}

\subsection{Tensor product module}\label{sec:tensor}

Let $0 = d\cdot 0$ as before and consider the inverse image $\Gi{P,0}^d$
of $0$ under $\Gi{P}^d\to \UhL{d}$ as in the previous subsection.

We consider the direct sum
\begin{equation}
    \label{eq:21}
 \bigoplus_d H^\TT_{[*]}(\Gi{P,0}^d).
\end{equation}
The Heisenberg algebra acts on the sum: This follows from a general
theory of the convolution algebra: it is enough to check that
$\Gi{P,0}^d \circ (P_n\cap\Pi^{-1}(0))\subset \Gi{P,0}^{d+n}$ (for
$k>0$). If $(E_1,E_2,x)\in P_n\cap\Pi^{-1}(0)$, then $\pi(E_2) =
\pi(E_1) + n\cdot 0$. Therefore the assertion follows.

The stable envelope $\cL\ast-$ gives an isomorphism $\bigoplus_d
H^\TT_{[*]}(\Gi{L,0}^d)\cong \bigoplus_d H^\TT_{[*]}(\Gi{P,0}^d)$,
where the left hand side is the tensor product
\begin{equation}
    \label{eq:20}
    \bigoplus_{d_1, \cdots, d_N} H^\TT_{[*]}(\Gi{r_1,0}^{d_1})
    \otimes\cdots\otimes
    H^\TT_{[*]}(\Gi{r_N,0}^{d_N})
\end{equation}
by \eqref{eq:product}.
This is a representation of $N$ copies of Heisenberg algebras. Under
the stable envelope, $P_{-k}^\Delta([0])$ on \eqref{eq:21} is mapped to
\begin{equation}
    \label{eq:22}
    \sum_{i=1}^N 1\otimes\cdots\otimes
    \underbrace{P_{-k}^\Delta([0])}_{\text{$i^{\mathrm{th}}$ factor}}
    \otimes\cdots\otimes 1.
\end{equation}
This is \cite[Th.~12.2.1]{MO}. Our Heisenberg generators are diagonal
in this sense, and hence we put $\Delta$ in the notation.
This result is compatible with the decomposition $\scW(\mathfrak{gl}_r) =
\scW(\algsl_r)\otimes\Heis$, where $\scW(\algsl_r)$ is contained in the
tensor product of the remaining $(N-1)$ copies of Heisenberg algebras,
orthogonal to the diagonal one.

\subsection{Sheaf theoretic analysis}\label{sec:sheaf}

By \cite[\S4, Lem.~4]{tensor2} we have a natural isomorphism
\begin{equation}\label{eq:isom}
  H_{[0]}(Z_P) \cong \Hom_{\Perv(\UhL{})}
  \left(p_! j^* \pi_! \cC_{\Gi{r}}, \pi_! \cC_{\Gi{L}}\right)
\end{equation}
where $j$, $p$ are as in \subsecref{sec:hyperb-restr} and we use the
same symbol $\pi$ for Gieseker-Uhlenbeck morphisms for $\Gi{r}$ and
$\Gi{L}$.
\begin{NB}
  The proof will be given in \subsecref{subsec:stHom}.
\end{NB}%

The Verdier duality gives us an isomorphism
\begin{equation}
  \Hom(p_!j^* \pi_!\cC_{\Gi{r}}, \pi_! \cC_{\Gi{L}})
  \cong
  \Hom(\pi_* \cC_{\Gi{L}}, p_*j^! \pi_*\cC_{\Gi{r}}).
\end{equation}
Therefore the stable envelope gives us the canonical isomorphism
\begin{equation}\label{eq:stac}
  \pi_! \cC_{\Gi{L}} \xrightarrow[\cong]{\cL}
  \Phi_{L,G}(\pi_! \cC_{\Gi{r}}) =
  p_* j^! \pi_! \cC_{\Gi{r}},
\end{equation}
as $\pi_! = \pi_*$.
This is nothing but \thmref{nakajima-symplectic}(2) in Introduction.

Let $x\in \UhL{}$ and $i_x$ denote the inclusion of $x$ in
$\UhL{}$. Then
\(
   \cL\in \Hom(\pi_! \cC_{\Gi{L}}, p_*j^! \pi_!\cC_{\Gi{r}})
\)
defines an operator
\begin{equation}\label{eq:stalk}
  \begin{CD}
  H^*(i_x^! \pi_! \cC_{\Gi{L}}) @>>>
  H^*(i_x^! p_*j^! \pi_!\cC_{\Gi{r}})
\\
  @| @|
\\
  H_{[-*]}(\Gi{L,x}) @.
  H_{[-*]}(\Gi{P,x}).
  \end{CD}
\end{equation}
This is equal to $\cL\ast -$ in \eqref{eq:convx} under the isomorphism
\eqref{eq:isom}. See \cite[\S4.4]{tensor2}.

\begin{NB}
\subsection{Proof of \texorpdfstring{\eqref{eq:isom}}{}}\label{subsec:stHom}

We give a proof of \eqref{eq:isom}.
We have the diagram
\begin{equation}
  \begin{CD}
    Z_P = \Gi{P} \times_{\UhL{}} \Gi{L}
    @>{\tilde \delta}>> \Gi{P} \times \Gi{L} @>{\tilde j\times\id}>>
    \Gi{r} \times \Gi{L}
\\
  @V{\alpha}VV @VV{\tilde p\times\pi}V @.
\\
  \UhL{} @>\delta>> \UhL{}\times \UhL{},
  \end{CD}
\end{equation}
where $\tilde p$ is the composition of $\pi\colon \Gi{P}\to \UhP{}$ and
$p\colon\UhP{}\to \UhL{}$. Therefore we have
\begin{equation}
  \begin{split}
    H_{[-*]}(Z_P) &= H^*(Z_P,((\tilde j\times\id)\circ\tilde \delta)^!
    \cC_{\Gi{r}\times \Gi{L}})
    \\
    &= H^*(\UhL{}, \alpha_* \tilde \delta^! (\tilde j\times\id)^!
    (\cC_{\Gi{r}}\boxtimes \cC_{\Gi{L}}))
\\
    &= H^*(\UhL{}, \delta^! (\tilde p\times\pi)_*
    (\tilde j^! \cC_{\Gi{r}}\boxtimes \cC_{\Gi{L}}))
\\
    &= H^*(\UhL{}, \delta^! (\tilde p_* \tilde j^!
    \cC_{\Gi{r}}\boxtimes \pi_* \cC_{\Gi{L}}))
\\
    &= H^*(\UhL{}, (\tilde p_! \tilde j^* \cC_{\Gi{r}})^\vee \overset{!}{\otimes}
    \pi_* \cC_{\Gi{L}})
\\
    &= H^*(\UhL{}, {\mathcal Hom}(\tilde p_! \tilde j^* \cC_{\Gi{r}},
    \pi_* \cC_{\Gi{L}}))
\\
    &= \Ext^*(\tilde p_!\tilde j^* \cC_{\Gi{r}}, \pi_* \cC_{\Gi{L}}).
  \end{split}
\end{equation}
We have $\pi_* \cC_{\Gi{L}} = \pi_! \cC_{\Gi{L}}$ as $\pi$ is proper.
We also have a Cartesian diagram
\begin{equation}
  \begin{CD}
    \Gi{P} @>{\tilde j}>> \Gi{r}
\\
    @V{\pi}VV @VV{\pi}V
\\
    \UhP{} @>j>> \Uh{}.
  \end{CD}
\end{equation}
Therefore
\begin{equation}
  \tilde p_! \tilde j^* \cC_{\Gi{r}}
  = p_! \pi_! \tilde j^* \cC_{\Gi{r}}
  = p_! j^* \pi_! \cC_{\Gi{r}}.
\end{equation}
This finishes the proof of \eqref{eq:isom}.
\end{NB}

\subsection{The associativity of stable envelopes}

Let us take parabolic subgroups $Q\subset P \subset G$ and the
corresponding Levi subgroup $M\subset L$ as in
\subsecref{sec:ass}. ($G$ is still $SL(r)$.) Let $Q_L$ be the image of
$Q$ in $L$.

Let us denote by $\cL_{L,G}$ the isomorphism given by the stable
envelope in \eqref{eq:stac}:
\begin{equation}\label{eq:st}
    \pi_! \cC_{\Gi{L}}\xrightarrow[\cong]{\cL_{L,G}}
    \Phi_{L,G}(\pi_! \cC_{\Gi{r}}).
\end{equation}
We similarly have isomorphisms
\begin{equation}
  \pi_! \cC_{\Gi{M}}\xrightarrow[\cong]{\cL_{M,G}}
    \Phi_{M,G}(\pi_!\cC_{\Gi{r}}),\quad
  \pi_! \cC_{\Gi{M}} \xrightarrow[\cong]{\cL_{M,L}}
    \Phi_{M,L}(\pi_!\cC_{\Gi{L}}).
\end{equation}

Then stable envelopes are compatible with the associativity
\eqref{eq:5} of the hyperbolic restriction:
\begin{Proposition}
We have a commutative diagram
\begin{equation}\label{eq:ass2}
  \begin{CD}
  \pi_!\cC_{\Gi{M}}@>\cong>{\cL_{M,G}}>  \Phi_{M,G}(\pi_! \cC_{\Gi{r}})
\\
  @V{\cong}V{\cL_{M,L}}V \eqref{eq:5}@|
\\
  \Phi_{M,L}(\pi_!\cC_{\Gi{L}})
 @>{\Phi_{M,L}(\cL_{L,G})}>\cong>
  \Phi_{M,L} \Phi_{L,G}(\pi_! \cC_{\Gi{r}}).
  \end{CD}
\end{equation}
\end{Proposition}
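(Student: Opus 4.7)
The plan is to verify the commutativity by characterizing the Maulik-Okounkov stable envelope $\cL_{M,G}$ uniquely and then checking that the composition $\Phi_{M,L}(\cL_{L,G})\circ \cL_{M,L}$, reinterpreted via the analogue of \eqref{eq:isom}, satisfies the same characterizing properties.

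First I would translate the commutativity into a statement about cycle classes. Both $\cL_{M,G}$ and the composite $\Phi_{M,L}(\cL_{L,G})\circ \cL_{M,L}$ are elements of $\Hom_{\Perv(\Gi{M})}(\pi_!\cC_{\Gi{M}}, \Phi_{M,G}(\pi_!\cC_{\Gi{r}}))$. Applying the associativity isomorphism \eqref{eq:5} from \propref{prop:trans} together with \eqref{eq:isom} (for the Levi $M$ inside $G$), this Hom space is canonically identified with $H_{[0]}(Z_{M,G})$ where $Z_{M,G} = \Gi{Q}\times_{\Gi{M}}\Gi{M}$, and similarly for the inner factors. Under this identification, $\cL_{M,G}$ becomes the MO stable envelope for the pair $(M,G)$ and the composition becomes the convolution product $\cL_{L,G}\ast \cL_{M,L}$ of the two intermediate cycles, supported on the fiber product $\Gi{Q}\times_{\Gi{L}}(\Gi{P}\times_{\Gi{M}}\Gi{M})\subset \Gi{Q}\times_{\Gi{M}}\Gi{M}$.

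Next I would invoke the uniqueness theorem for the stable envelope (\cite[\S3.5]{MO}): $\cL_{M,G}$ is the unique lagrangian class supported in the full attracting set that (a) has prescribed support inside $\Gi{Q}\times_{\Gi{M}}\Gi{M}$, (b) restricts on the diagonal component $\Delta_{\Gi{M}}\subset \Gi{Q}\times_{\Gi{M}}\Gi{M}$ to the polarization-corrected Euler class of the normal bundle of $\Gi{M}\subset \Gi{r}$, and (c) satisfies a degree/triangularity estimate on off-diagonal fixed components with respect to the chamber determined by $Q$. It then suffices to check (a)--(c) for the composite cycle. The support condition (a) is automatic since $\Gi{Q} \subset \pi^{-1}(\UhP{})$ and $p(\Gi{Q})\subset \Gi{M}$, so convolution of the two attracting-set-supported classes lands in $\Gi{Q}\times_{\Gi{M}}\Gi{M}$. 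The diagonal normalization (b) reduces to the multiplicativity
\[
  \pol_{M\subset G} = \pol_{M\subset L}\cdot \pol_{L\subset G}\big|_{\Gi{M}},
\]
of polarizations (with the choice of \subsecref{sec:Gifixed}) together with the compositional compatibility of Thom isomorphisms discussed in \subsecref{sec:open} applied first to $\Gi{M}\subset \Gi{L}$ and then to $\Gi{L}\subset \Gi{r}$. The triangularity (c) follows because the chamber for $Q\subset G$ is the concatenation of the chamber for $Q_L\subset L$ and the chamber for $P\subset G$, and codimensions add under the fiber product, giving strict inequalities on the off-diagonal components.

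The main obstacle I anticipate is item (b): one must check carefully that the polarizations fixed in \subsecref{sec:Gifixed} for $\Gi{M}\subset\Gi{r}$ and for the two intermediate inclusions are \emph{literally} compatible, not merely compatible up to a sign. This is plausible from the description via the global Quot-scheme $Q_r$ in \cite[\S12.1.5]{MO}, which restricts functorially to the fixed Levi components, but it requires unwinding the choice of $T^{1/2}$ through the tower $M\subset L\subset G$ and verifying that the normalization conventions used in defining $\cL_{M,L}$ and $\cL_{L,G}$ multiply without a discrepancy. Once this multiplicativity of $\pol$ (equivalently, of the base-direction convention of the cotangent bundle inside nested parabolic Springer-type pictures) is established, property (b) follows; together with (a), (c) and the uniqueness in MO's characterization, this forces the composite to equal $\cL_{M,G}$ and thereby yields the commutativity of \eqref{eq:ass2}.
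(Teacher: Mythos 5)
Your overall skeleton matches the paper's: both arguments identify the relevant Hom spaces with lagrangian cycle classes via \eqref{eq:isom}, use the associativity \eqref{eq:5}, and reduce the commutativity of \eqref{eq:ass2} to the single identity $\cL_{L,G}\ast\cL_{M,L}=\cL_{M,G}$. Where you diverge is in how that identity is obtained. The paper observes that $Z_{P}\circ Z_{Q_L}=Z_{Q}$ and then quotes the proof of \cite[Lemma~3.6.1]{MO}, which is precisely the statement that composing stable envelopes along the chain of tori $Z(M)^0\supset Z(L)^0$ yields the stable envelope for the big chamber; you propose instead to re-derive this from the uniqueness characterization (support, diagonal normalization, degree/triangularity). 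That is legitimate --- it is how Maulik--Okounkov prove their lemma --- and the polarization multiplicativity you single out as the main obstacle is in fact built into the cited statement: with the choice of \subsecref{sec:Gifixed} the half $T^{1/2}$ comes from the quiver/Quot description and restricts to each fixed component, so the polarization used for $M\subset L$ is the induced one and no sign discrepancy can arise. So your route buys self-containedness at the cost of redoing \cite[Lemma~3.6.1]{MO}, whose triangularity check is more delicate than ``codimensions add'' and should be spelled out if you do not cite it.

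Two points need repair. First, the step you pass over silently is exactly the one the paper flags as requiring an argument: that under \eqref{eq:isom} the Yoneda composition $\Phi_{M,L}(\cL_{L,G})\circ\cL_{M,L}$ corresponds to the convolution $\cL_{L,G}\ast\cL_{M,L}$. This is not literally \cite[Prop.~8.6.35]{CG}, because the two correspondences are fibered over different base spaces ($\UhL{}$ for $Z_{P}$, and $\Uh[M]{}$ for $Z_{Q_L}$ and $Z_{Q}$), so the standard convolution-equals-composition statement must be modified; without this your translation of the diagram into cycle classes is incomplete. Second, several of your fiber products are written over Gieseker spaces: the correct spaces are $Z_{Q}=\Gi{Q}\times_{\Uh[M]{}}\Gi{M}$ (there is no morphism $\Gi{Q}\to\Gi{M}$, only to $\Uh[M]{}$ via $p\circ\pi$), the intermediate correspondences are fibered over $\UhL{}$ and $\Uh[M]{}$, and the Hom spaces live in $\Perv(\Uh[M]{})$, not $\Perv(\Gi{M})$. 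These corrections are needed for the convolution formalism and for your support condition (a) to make sense.
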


Let us check that this follows from the proof of
\cite[Lemma~3.6.1]{MO}.
(To compare the following with the original paper, the reader should
note that the tori $A\supset A'$ were used in \cite{MO}, which
correspond to $Z(M)^0\supset Z(L)^0$ respectively in our situation.)

We consider
\begin{equation}
  Z_{P} = \Gi{P}\times_{\Uh[{L}]{}} \Gi{L},
\quad
  Z_{Q} = \Gi{Q}\times_{\Uh[{M}]{}} \Gi{M},
\quad
  Z_{Q_L} = \Gi{Q_L}\times_{\Uh[M]{}} \Gi{M}.
\end{equation}
The stable envelopes $\cL_{L,G}$, $\cL_{M,G}$, $\cL_{M,L}$ are classes in
$H_{[0]}(Z_{P})$, $H_{[0]}(Z_{Q})$, $H_{[0]}(Z_{Q_L})$
respectively.
We consider the convolution product
\begin{equation}
  \cL_{L,G}\ast \cL_{M,L}\in
  H_{[0]}(Z_{P}\circ Z_{Q_L}).
\end{equation}
Note that $Z_{P}\circ Z_{Q_L}$ consists of $(x_1,x_3)\in
\Gi{P}\times\Gi{M}$ such that there exists $x_2\in \Gi{Q_L}\subset
\Gi{L}$ with $(x_1,x_2)\in Z_{P}$, $(x_2,x_3)\in Z_{Q_L}$ by
definition. This is nothing but $Z_{Q}$. Therefore
$\cL^P\ast\cL^{Q_L}$ is a class in $H_{[0]}(Z_{Q})$.
The proof in \cite[Lemma~3.6.1]{MO} actually gives $\cL_{L,G}\ast
\cL_{M,L} = \cL_{M,G}$.

Therefore the commutativity of \eqref{eq:ass2} follows, once we check
that the convolution product corresponds to the composition of
homomorphisms (Yoneda product) under the isomorphism \eqref{eq:isom}.
This is not covered by \cite[Prop.~8.6.35]{CG}, as the base spaces of
fiber products are different: $\UhL{}$ and $\Uh[M]{}$. But we can
easily modify its proof to our situation.
\begin{NB}
  Really ?
\end{NB}

\subsection{Space \texorpdfstring{$V^d$}{Vd} and its base given by irreducible components}\label{sec:spaceVd}

Let us write $d$ for the instanton number again. Similarly to
\eqref{eq:Ud} we define
\begin{equation}
    \label{eq:13}
    \begin{split}
    V^d_{L,G} \equiv V^d \index{VLG@$V^d_{L,G}$}
    &\defeq \Hom(\cC_{S_{(d)}\AA^2}, \Phi_{L,G}(\pi_!\cC_{\Gi{r}^d})) \\
    & = H^{-2}(S_{(d)}\AA^2, \xi^!\Phi_{L,G}(\pi_! \cC_{\Gi{r}^d})),
    \end{split}
\end{equation}
where $\xi\colon S_{(d)}\AA^2\to \UhL{d}$ is as before. We denote by
$V^{d,P}_{L,G}$ or $V^{d,P}$ when we want to emphasize
$P$. \index{VLGdP@$V^{d,P}_{L,G}$}

As in \lemref{lem:Ud} we have
\begin{equation}\label{eq:tv}
  V^d \cong H_{[0]}(\Gi{P,0}^d),
\end{equation}
and $V^d$ has a base given by $(dh^\vee-1)$-dimensional irreducible
components of $\Gi{P,0}^d$.

\begin{NB}
We have
\begin{equation}
    \label{eq:14}
    \begin{split}
    V^d &= H^{-2}(S_{(d)}\AA^2, \xi^!p_*j^{!} \pi_! \cC_{\Gi{r}^d})
\\
    &\cong H_{[2]}(\tilde p^{-1}(S_{(d)}\AA^2)) \cong
    H_{[-2]}(\Gi{P,0}^d),
    \end{split}
\end{equation}
where $\tilde p = p \circ\pi\colon \Gi{P_-}\to \UhL{d}$, $0$ is
the point $d\cdot 0$ in $S_{(d)}\AA^2$, and
$\Gi{P,0}^d$ is the inverse image $\tilde p^{-1}(d\cdot 0)$.
The last isomorphism comes from $\tilde p^{-1}(S_{(d)}\AA^2)\cong
\Gi{P,0}\times\AA^2$. In fact, consider the fiber square
\begin{equation}
    \begin{CD}
        p^{-1}(S_{(d)}\AA^2) @>{\tilde \xi}>> \UhP{d}
\\
       @V{p}VV @VV{p}V
\\
       S_{(d)}\AA^2 @>{\xi}>> \UhL{d}.
    \end{CD}
\end{equation}
We have $\xi^! p_* = p_* \tilde \xi^!$. Hence
\begin{equation}
    H^{-2}(S_{(d)}\AA^2, \xi^! p_* j^{!} \pi_!\cC_{\Gi{r}^d})
    = H^{-2}(p^{-1}(S_{(d)}\AA^2),
    (j\circ\tilde \xi)^{!} \pi_!\cC_{\Gi{r}^d}).
\end{equation}

Next we consider
\begin{equation}
    \label{eq:16}
    \begin{CD}
        \tilde p^{-1}(S_{(d)}\AA^2) @>\alpha>> \Gi{r}^d
\\
     @V{\pi}VV      @V{\pi}VV
\\
    p^{-1}(S_{(d)}\AA^2) @>>{j\circ \tilde \xi}> \Uh{d}.
    \end{CD}
\end{equation}
Therefore the above is equal to
\begin{equation}
    \label{eq:18}
    H^{-2-2dh^\vee}(\tilde p^{-1}(S_{(d)}\AA^2), \alpha^! \mathbb D_{\Gi{r}^d}).
\end{equation}
This is the Borel-Moore homology of $\tilde p^{-1}(S_{(d)}\AA^2)$.
\end{NB}

On the other hand, $H_{[0]}(\Gi{P,0}^d)$ is isomorphic to
$H_{[0]}(\Gi{L,0}^d)$ by the stable envelope. In the description
\eqref{eq:product}, note that the fiber $\Gi{r_i,0}^{d_i}$ has $\dim =
\dim \Gi{r_i}^{d_i}/2 - 1$ by \propref{prop:quot} unless $d_i =
0$. Therefore we can achieve the degree $[0]=\dim\Gi{L}^d-2 = \sum
\dim\Gi{r_i}^{d_i}-2$ only when all $d_i = 0$ except one. There are $N$
choices $i=1$, \dots, $N$. Therefore $\dim V^d = N$.

Let us study $V^d = H_{[0]}(\Gi{P,0}^d)$ in more detail. This will
give the detail left over from \subsecref{sec:irred-comp}. By
\cite[\S3]{Na-Tensor} we have a decomposition
\begin{equation}
  \Gi{P,0}^d = \bigsqcup_{d_1+\cdots+d_N = n}
  \fT(d_1,\dots,d_N)_0,
\end{equation}
where
\begin{equation}\label{eq:tenstr}
  \fT(d_1,\dots,d_N)_0 =
  \left\{ (E,\varphi)\, \middle|\,
  \begin{minipage}{0.55\linewidth}
    $E$ admits a filtration $0 = E_0 \subset E_1 \subset \cdots
    \subset E_N = E$ with $E_i/E_{i-1} \in \Gi{r_i,0}^{d_i}$
    compatible with $\varphi$.
  \end{minipage}
  \right\}.
\end{equation}
\index{Td1@$\fT(d_1,\dots,d_N)_0$}
We have a projection
\begin{equation}
  \fT(d_1,\dots,d_N)_0 \to \Gi{r_1,0}^{d_1}\times\cdots\times
  \Gi{r_N,0}^{d_N},
\end{equation}
which is a vector bundle of rank $dr - \sum d_i r_i$.
Note that
\begin{equation}
  \dim \Gi{r_i,0}^{d_i} =
  \begin{cases}
    0 & \text{if $d_i = 0$},
\\
    d_i r_i - 1 & \text{if $d_i\neq 0$}.
  \end{cases}
\end{equation}
(See \propref{prop:quot}.)
Therefore
\begin{equation}
  \dim \fT(d_1,\dots,d_N)_0
  = dr - \# \{ i \mid d_i \neq 0\} \le dr - 1.
\end{equation}
The equality holds if and only if there is only one $i$ with $d_i \neq
0$. Therefore $H_{[0]}(\Gi{P_-,0}^d)$ is spanned by fundamental
cycles
\begin{equation}\label{eq:fT_0}
  [\overline{\fT(d,0,\dots,0)_0}], \quad \cdots \quad,
  [\overline{\fT(0,\dots,0,d)_0}].
\end{equation}
Thus it is $N$-dimensional, as expected.

In the remainder of this subsection, we study the corresponding space
$U^d = H_{[0]}(\Uh[P,0]d)$ for the Uhlenbeck space. Note that we have
projective morphism $\pi\colon \Gi{P,0}^d\to \Uh[P,0]d$, and the
Quot scheme $\pi^{-1}(d\cdot 0) = \Gi{r,0}^d$ is contained in
$\Gi{P,0}^d$.
The class of fiber $\Gi{r,0}^d$ is given by $P_{-d}([0])[\Uh{0}]$, and
$H_{[0]}(\Uh[P,0]d)$ is killed by Baranovsky's Heisenberg operators
by the construction.

\begin{Proposition}\label{prop:Un}
  Among $N$ cycles in \eqref{eq:fT_0}, the first one
  $[\overline{\fT(d,0,\dots,0)_0}]$ is $[\Gi{r,0}^d]$. The remaining
  cycles give a base of $U^d = H_{[0]}(\Uh[P,0]d)$ under $\pi_*$.
\end{Proposition}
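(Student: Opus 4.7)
To prove Proposition~\ref{prop:Un}, my plan is to treat the two assertions in sequence, using dimension counts together with the Decomposition Theorem for the semi-small map $\pi\colon\Gi{r}^d\to\Uh{d}$.

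For assertion (1), the idea is that every $(E,E_\bullet)\in\fT(d,0,\ldots,0)_0$ automatically lies in $\Gi{r,0}^d$. Indeed, $E_1\in\Gi{r_1,0}^d$ is torsion free of rank $r_1$ and singular only at the origin, while the successive quotients $E_i/E_{i-1}$ for $i\ge 2$ are framed trivial bundles. Using the vanishing $\Ext^1(\shfO,\shfO(-\linf))=H^1(\proj^2,\shfO(-1))=0$ and inducting on the filtration length, one gets $E/E_1\cong \shfO^{\oplus(r-r_1)}$. Consequently $E$ is locally free away from $0$, $E^{\vee\vee}$ is a framed locally free sheaf with $c_2=0$ hence trivial, and $\pi(E)=d\cdot 0$. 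This gives $\fT(d,0,\ldots,0)_0\subseteq \Gi{r,0}^d$. Since both are irreducible of dimension $dr-1$ (the former by the dimension formula of \subsecref{sec:spaceVd}, the latter by \propref{prop:quot}), their closures coincide, proving $[\overline{\fT(d,0,\ldots,0)_0}]=[\Gi{r,0}^d]$.

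For assertion (2), I first observe that $\pi$ collapses $\Gi{r,0}^d = \pi^{-1}(d\cdot 0)$ to the single point $d\cdot 0\in\Uh[P,0]d$, so for $d\ge 1$ its $(dr-1)$-dimensional fundamental class pushes forward to zero: $\pi_*[\Gi{r,0}^d]=0$ in $H_{[0]}(\Uh[P,0]d)=U^d$. By \lemref{lem:dimU} we have $\dim U^d=\rank G-\rank[L,L]=(r-1)-(r-N)=N-1$, while $\dim V^d=N$ by \eqref{eq:fT_0}, so it suffices to show that $\pi_*\colon V^d\to U^d$ is surjective. Surjectivity would follow from \eqref{eq:GU}: since $\pi$ is semi-small, $\IC(\Uh{d})$ is a direct summand of $\pi_!\cC_{\Gi{r}^d}$, and applying the additive functor $\Hom(\cC_{S_{(d)}\AA^2},\Phi_{L,G}(-))$ to this decomposition exhibits $U^d$ as a direct summand of $V^d$. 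Once this projection is identified with $\pi_*$, its kernel is forced to be one-dimensional; as it contains $[\Gi{r,0}^d]$, the remaining $N-1$ cycles project to a basis of $U^d$.

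The main obstacle is precisely the identification, alluded to in the last sentence, between the sheaf-theoretic projection $V^d\twoheadrightarrow U^d$ coming from the direct summand $\IC(\Uh{d})\subset\pi_!\cC_{\Gi{r}^d}$ and the geometric pushforward $\pi_*$ on Borel-Moore homology, under the isomorphisms $V^d\cong H_{[0]}(\Gi{P,0}^d)$ and $U^d\cong H_{[0]}(\Uh[P,0]d)$ of \eqref{eq:tv}, \eqref{eq:tu} and \lemref{lem:Ud}. This compatibility is a standard adjunction and base-change computation for the proper map $\pi\colon\Gi{P,0}^d\to\Uh[P,0]d$ (the base change of $\pi\colon\Gi{r}^d\to\Uh{d}$ along the inclusion of the fiber over $d\cdot 0\in\UhL{d}$), but it is the one step where one genuinely has to unwind the definitions in \lemref{lem:Ud} rather than invoke a black-box dimension argument.
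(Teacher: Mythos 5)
Your proof of the first claim is essentially the paper's: the paper also reduces to the exact sequence $0\to E_1\to E\to\shfO^{\oplus(r_2+\cdots+r_N)}\to 0$ and concludes by irreducibility of $\Gi{r,0}^d$ and the dimension count; the only difference is that it obtains $E^{\vee\vee}\cong\shfO^{\oplus r}$ by dualizing this sequence twice, whereas your shortcut ``$E^{\vee\vee}$ is framed locally free with $c_2=0$, hence trivial'' needs one more observation to control $c_2(E^{\vee\vee})$ — e.g.\ that $E\cap E_1^{\vee\vee}=E_1$ (since $E/E_1$ is torsion free), so $E_1^{\vee\vee}/E_1$ injects into $E^{\vee\vee}/E$ and the length of $E^{\vee\vee}/E$ is $d$; the paper's double dualization sidesteps this. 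For the second claim your route is genuinely different. The paper shows that each $\fT(0,\dots,d,\dots,0)_0$ ($i\ge 2$) contains a locally free sheaf, reducing to $N=2$, $r=2$ and exhibiting an explicit stable and costable ADHM datum; hence each remaining cycle maps birationally onto a distinct $(dh^\vee-1)$-dimensional irreducible component of $\Uh[P,0]{d}$, and \lemref{lem:dimU} and \lemref{lem:Ud} give the basis statement. You instead obtain surjectivity of $\pi_*$ from the decomposition \eqref{eq:GU} (exhibiting $U^d$ as the $\IC(\Uh{d})$-isotypic summand of $V^d$) and finish by the count $\dim V^d=N$, $\dim U^d=N-1$, $[\Gi{r,0}^d]\in\Ker\pi_*$. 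Your argument avoids any explicit ADHM computation and even recovers the component description (linear independence forces the $N-1$ images to be distinct components, which by \lemref{lem:Ud} are then all of them); what it does not yield is the finer fact, behind the remark following the proposition and \propref{prop:irred-comp}, that the generic member of each remaining piece is locally free, i.e.\ that $\pi$ is birational, not merely generically finite with some multiplicity, on each of these cycles.

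The step you flag — identifying the isotypic projection $V^d\to U^d$ with $\pi_*$ — is indeed the real content and it is true; it is exactly what the paper asserts later in \subsecref{subsec:VU} around \eqref{eq:VU}. Note you cannot simply cite that subsection, since its statement about bases refers back to \propref{prop:Un}, but your direct check is non-circular and works as you expect: by proper base change $H_{[0]}(\Gi{P,0}^d)\cong H^0(\Uh[P,0]{d},a^!\pi_*\cC_{\Gi{r}^d})$ with $a$ the inclusion of $\Uh[P,0]{d}$; both the Borel--Moore pushforward and the map induced by the canonical projection $\pi_*\cC_{\Gi{r}^d}\to\IC(\Uh{d})$ become the identity after restriction to $\Uh[P,0]{d}\cap\Bun{d}$, where $\pi$ is an isomorphism and $\pi_*\cC_{\Gi{r}^d}|_{\Bun{d}}=\cC_{\Bun{d}}=\IC(\Uh{d})|_{\Bun{d}}$; and restriction to this open part is injective on the target in the relevant top degree by the dimension estimate \corref{cor:dimest} already used in \lemref{lem:Ud}. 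With that in place your dimension count closes the proof.
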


From the definition, this description of irreducible components
\begin{NB}
    $\pi(\overline{\fT(0,d,0,\dots,0)_0})$, \dots,
    $\pi(\overline{\fT(0,\dots,0,d)_0})$
\end{NB}%
of $\Uh[P,0]d$ is the same as one in \propref{prop:irred-comp} when
$G = SL(r)$, $P = B$.

\begin{NB}
I first thought that we need the following. But it seems not.....

\begin{Claim}
  Suppose that $F$ is a torsion free sheaf on a nonsingular surface
  and $E$ is its subsheaf. Then the natural homomorphism
  $E^{\vee\vee}\to F^{\vee\vee}$ is injective.
\end{Claim}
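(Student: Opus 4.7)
The plan is to show that the kernel $K$ of the natural map $E^{\vee\vee}\to F^{\vee\vee}$ is simultaneously torsion-free and supported in dimension zero, which forces $K=0$. The key input is the standard fact that, on a smooth surface, if $G$ is a torsion-free coherent sheaf then $G\hookrightarrow G^{\vee\vee}$ is injective with quotient $G^{\vee\vee}/G$ supported in dimension zero; this is because reflexive sheaves on a regular two-dimensional scheme are locally free, so the non-locally-free locus of a torsion-free sheaf has codimension at least two and hence is a finite set of points.

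First I would note that $E$ is torsion-free, being a subsheaf of the torsion-free sheaf $F$, and that $F\hookrightarrow F^{\vee\vee}$ is injective for the same reason. The commutativity of
\[
\begin{CD}
E @>>> F \\
@VVV @VVV \\
E^{\vee\vee} @>>> F^{\vee\vee}
\end{CD}
\]
then shows that the composite $E\to E^{\vee\vee}\to F^{\vee\vee}$ factors as $E\hookrightarrow F\hookrightarrow F^{\vee\vee}$, and in particular is injective.

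Consequently, if $K = \ker(E^{\vee\vee}\to F^{\vee\vee})$, then $K\cap E = 0$ inside $E^{\vee\vee}$, so the inclusion $K\hookrightarrow E^{\vee\vee}$ descends to an injection $K\hookrightarrow E^{\vee\vee}/E$. By the first paragraph, the target is zero-dimensional, so $K$ is zero-dimensional. On the other hand, $E^{\vee\vee}$ is torsion-free (it is a second dual, hence reflexive), so its subsheaf $K$ is torsion-free as well. A torsion-free coherent sheaf whose support has dimension zero must vanish, and we conclude $K=0$.

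The only nontrivial ingredient is the zero-dimensionality of $E^{\vee\vee}/E$, which is the standard ``reflexive equals locally free on a smooth surface'' principle; everything else is a direct diagram chase. I do not anticipate any real obstacle, and the proof should fit in a few lines once this lemma is cited.
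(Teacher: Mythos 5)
Your proof is correct, but it takes a different route from the one in the paper. You deduce injectivity by trapping the kernel $K$ of $E^{\vee\vee}\to F^{\vee\vee}$ between two incompatible properties: it is torsion-free (as a subsheaf of the reflexive sheaf $E^{\vee\vee}$), yet it injects into $E^{\vee\vee}/E$, which is supported in dimension zero; hence $K=0$. Every step checks out — the functoriality of $G\mapsto G^{\vee\vee}$ gives the commuting square, $K\cap E=0$ follows from the injectivity of $E\hookrightarrow F\hookrightarrow F^{\vee\vee}$, and the ``reflexive $=$ locally free on a smooth surface'' input is standard. The paper instead localizes at a point and embeds everything into the generic fibre: writing $A$ for the local ring and $\mathcal K$ for its fraction field, one has $E\subset E^{\vee\vee}\subset E\otimes_A \mathcal K$ and likewise for $F$, because the dual of a torsion-free module can be computed inside $(E\otimes_A\mathcal K)^\vee$; since $E\otimes_A\mathcal K\to F\otimes_A\mathcal K$ is injective, the claim is immediate. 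The paper's argument is slightly more economical and does not use the surface hypothesis at all, whereas yours leans on the two-dimensionality to get zero-dimensional support for $E^{\vee\vee}/E$ — though in fact your argument only needs $E^{\vee\vee}/E$ to be a torsion sheaf, which holds in any dimension, so the extra hypothesis is not essential to your method either. Both proofs are valid; yours makes the mechanism (torsion-free versus torsion) explicit, while the paper's makes the containment of all the duals in a single vector space over $\mathcal K$ do the work.
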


\begin{proof}
  Since the statements are local, we may assume $E$, $F$ are modules
  over the local ring $A$ at a point. Let $K$ be the quotient field of
  $A$. Then $E\otimes_A K\to F\otimes_A K$ is injective as $K$ is flat
  over $A$. Since $E$, $F$ are torsion free, $E\to E\otimes_A K$,
  $F\to F\otimes_A K$ are injective.

  We have $E^\vee = \{ \varphi\in (E\otimes_A K)^\vee \mid
  \varphi(E)\subset A\}$, and similarly for $E^{\vee\vee}$. Thus we
  have $E\subset E^{\vee\vee}\subset E\otimes_A K$ and the same for
  $F$. The assertion is clear.
\end{proof}
\end{NB}

\begin{proof}
Suppose that $E\in\fT(d,0,\dots,0)_0$. Then we have a short exact
sequence
\begin{equation}
  0 \to E_1 \to E \to \shfO^{\oplus r_2 + \cdots + r_N} \to 0
\end{equation}
with $E_1\in \Gi{r_1,0}^d$.
\begin{NB}
  We have
  \begin{equation*}
    \begin{split}
    & 0 \to E_2/E_1 \to E_3/E_1 \to E_3/E_2 \to 0,
\\
    & 0 \to E_3/E_1 \to E_4/E_1 \to E_4/E_3 \to 0,
\\
    & \qquad\qquad\qquad \cdots
\\
    & 0 \to E_{N-1}/E_1 \to E/E_1 \to E_N/E_{N-1} \to 0.
    \end{split}
  \end{equation*}
  From our assumption we have $E_i/E_{i-1} \cong \shfO^{\oplus r_i}$
  for $i\neq 1$. By the induction, we see that $E_i/E_1$ is
  trivial. In particular, $E/E_1$ is trivial.
\end{NB}%
Consider
\begin{equation}
  0 \to \shfO^{\oplus r_2 + \cdots + r_N} \to E^\vee \to E_1^\vee \to
  \mathcal Ext^1(\shfO^{\oplus r_2 + \cdots + r_N}, \shfO)
\end{equation}
Since $\mathcal Ext^1(\shfO^{\oplus r_2 + \cdots + r_N}, \shfO) = 0$,
the last homomorphism $E^\vee\to E_1^\vee$ is surjective. Therefore
this is a short exact sequence. Dualizing again, we get
\begin{equation}
  0 \to E_1^{\vee\vee} \to E^{\vee\vee} \to \shfO^{\oplus r_2 + \cdots + r_N} \to 0.
\end{equation}
The last homomorphism $E^{\vee\vee} \to \shfO^{\oplus r_2 + \cdots +
  r_N}$ is surjective as $E\to \shfO^{\oplus r_2 + \cdots + r_N}$ is
so. (Or, we observe $E_1^{\vee\vee} \cong \shfO^{\oplus r_1}$ as
$E_1\in \Gi{r_1,0}^d$, and $\mathcal Ext^1(\shfO^{\oplus r_1},
\shfO) = 0$.)
Therefore this is also exact. We have $E_1^{\vee\vee} = \shfO^{\oplus
  r_1}$ as $E_1\in \Gi{r_1,0}^{d_1}$. Since the extension between the
trivial sheaves is zero on $\proj^2$, we have $E^{\vee\vee} =
\shfO^{\oplus r}$. Therefore $E\in \Gi{r,0}^d$.

Thus we have $\overline{\fT(d,0,\dots,0)_0}\subset \Gi{r,0}^d$. Since
both are $(dr-1)$-dimensional, and $\Gi{r,0}^d$ is irreducible, they
must coincide. This shows the first claim.

The second claim follows as we have already shown $\dim U^d = N-1$ in
\lemref{lem:dimU}, hence other classes cannot be killed by $\pi_*$.

Let us directly check that any of $\fT(0,d,0,\dots,0)_0$, \dots,
$\fT(0,\dots,0,d)_0$ contains a locally free sheaf for
definiteness. (It gives us another proof of \lemref{lem:dimU},
which does not dependent on \cite[Theorem~7.10]{BFG}.)
Then it it is enough to consider the case $N=2$ and check that
$\fT(0,d)_0$ contains a locally free sheaf, as
an extension of a locally free sheaf by a locally free sheaf is again
locally free.
\begin{NB}
  We have
  \begin{equation*}
    \begin{split}
    & 0 \to E_{N-1} \to E \to E_N/E_{N-1} \to 0,
\\
    & 0 \to E_{N-2} \to E_{N-1} \to E_{N-1}/E_{N-2} \to 0,
\\
    & \qquad\qquad\qquad \cdots
\\
    & 0 \to E_1 \to E_2 \to E_2/E_1 \to 0.
    \end{split}
  \end{equation*}
\end{NB}%
Furthermore we may assume $r=2$ and $r_1 = r_2 = 1$.
\begin{NB}
For $n=1$, we
consider the Koszul resolution
\begin{equation*}
  0 \to \Omega^2_{\AA^2} \to \Omega^1_{\AA^2} \to \Omega^0_{\AA^2}
  \to \CC_0 \to 0,
\end{equation*}
where $\CC_0$ is the skyscraper sheaf at the origin $0$. We replace the
last part by the maximal ideal $\mathfrak m_0$, we get
\begin{equation*}
  0 \to \Omega^2_{\AA^2} \to \Omega^1_{\AA^2} \to \mathfrak m_0 \to 0.
\end{equation*}
This exact sequence does not give us an example, as the first Chern
classes does not vanish. But it gives a local example.
\end{NB}%

We use the ADHM description. Let
\begin{equation*}
  \begin{split}
  & B_1 =  \begin{pmatrix}
   0      & 1        & 0       & \hdots & 0          \\
          & 0        & 1       & \hdots  & 0   \\
          &          & \ddots  & \ddots & \vdots     \\
          &          &         & 0      & 1          \\
   \bigzerol &       &         &         & 0
  \end{pmatrix}, \;
  B_2 = 0, \;
\\
  & I =
\begin{pmatrix}
  0 & 0 \\ \vdots & \vdots \\ 0 & 0 \\ 1 & 0
\end{pmatrix},
\;
  J =
  \begin{pmatrix}
    0 & 0 & \hdots & 0 \\
    1 & 0 & \hdots & 0
  \end{pmatrix}.
  \end{split}
\end{equation*}
We have $[B_1,B_2]+IJ = 0$. We see $(B_1,B_2,I,J)$ is stable, i.e., a
subspace $S\subset\CC^d$ containing the image of $I$ and invariant
under $B_1$, $B_2$ must be $S = \CC^d$. We also see that
$(B_1,B_2,I,J)$ is costable, i.e., a subspace $S\subset\CC^d$
contained in the kernel of $J$ and invariant under $B_1$, $B_2$ must
be $S = 0$. Therefore $(B_1,B_2,I,J)$ defines a framed locally free
sheaf $(E,\varphi)$, i.e., an element in $\Bun[SL(2)]d$. We consider
a subspace $\{
\begin{pmatrix}
  0 \\ *
\end{pmatrix}\}\subset \CC^2$, which is the kernel of $a$. Taking $0$
as a subspace in $\CC^d$, we have a subrepresentation of a
quiver. Therefore $E$ contains the trivial rank $1$ sheaf
$\shfO_{\proj^2}$ correspondingly. The quotient $E/\shfO_{\proj^2}$ is
given by the data
\begin{equation*}
  I =
\begin{pmatrix}
  0 \\ \vdots  \\ 0 \\ 1
\end{pmatrix}
\end{equation*}
and $J = 0$, $B_1$, $B_2$ the same as above. This is the ideal sheaf
$(x^d,y)$, and hence in $\Gi{1,0}^d$. Thus $E$ is a point in $\fT(0,d)_0$.
\end{proof}

\begin{NB}
  The following is a copy of an old computation. It is kept as a
  record. Note that some notation is different.

\subsection{Study of the space \texorpdfstring{$U_n$}{Un}}

We give a similar description for $U_n^{L,G} =
\Hom(\Phi_{L,G}(\IC(\Uh{n})), \cC_{S_{(n)}\AA^2})$.

Consider the diagram
\begin{equation}
  \begin{CD}
    \UhP{n}
    @>{\id\times p}>>
    \UhP{n} \times \UhL{n}
    @>{i\times \id}>> \Uh{n}\times \UhL{n}
\\
  @V{p}VV @VV{p\times\id}V @.
\\
  \UhL{n} @>j>> \UhL{n}\times \UhL{n}.
  \end{CD}
\end{equation}
Then
\begin{equation}
  \begin{split}
    & \Ext^*(p_! i^* \IC(\Uh{n}), \cC_{S_{(n)}\AA^2})
\\
    =\; &
    H^*(\UhL{n}, {\mathcal Hom}(p_! i^* \IC(\Uh{n}),
    \cC_{S_{(n)}\AA^2}))
\\
    =\; & H^*(\UhL{n}, (p_! i^* \IC(\Uh{n}))^\vee
    \overset{!}{\otimes}
    \cC_{S_{(n)}\AA^2})
\\
    =\; & H^*(\UhL{n}, j^! (p_* i^! \IC(\Uh{n})
    \boxtimes \cC_{S_{(n)}\AA^2}))
\\
    =\; & H^*(\UhL{n}, j^! (p\times\id)_*
    (i^! \IC(\Uh{n}) \boxtimes \cC_{S_{(n)}\AA^2}))
\\
    =\; & H^*(\UhL{n}, p_* (\id\times p)^!  (i\times\id)^!
    \IC(\Uh{n}) \boxtimes \cC_{S_{(n)}\AA^2}
    )
\\
    =\; & H^*(\UhP{n},(i\times p)^!
    \IC(\Uh{n}) \boxtimes \cC_{S_{(n)}\AA^2}
    ).
  \end{split}
\end{equation}

We next consider the diagram
\begin{equation}
  \begin{CD}
   (\UhP{n})_0 \times\AA^2 @>\cong>> p^{-1}(S_{(n)}\AA^2)
   @>{i\times p}>> \Uh{n}\times S_{(n)}\AA^2
\\
  @. @V{\beta}VV @VV{\id\times\gamma}V
\\
  @. \UhP{n} @>{i\times p}>> \Uh{n}\times \UhL{n},
  \end{CD}
\end{equation}
where $\beta$, $\gamma$ are inclusions. We have
\begin{equation}
  \begin{split}
  & H^*(\UhP{n},(i\times p)^!
    \IC(\Uh{n}) \boxtimes \cC_{S_{(n)}\AA^2}
    )
\\
  = \; &
  H^*(\UhP{n},(i\times p)^! (\id\times\gamma)_*
    \IC(\Uh{n}) \boxtimes \cC_{S_{(n)}\AA^2}
    )
\\
  = \; &
  H^*(\UhP{n},\beta_* (i\times p)^!
    \IC(\Uh{n}) \boxtimes \cC_{S_{(n)}\AA^2}
    )
\\
  = \; &
  H^*((\UhP{n})_0\times\AA^2, i^!
    \IC(\Uh{n}) \boxtimes \cC_{\AA^2})
\\
  = \; &
  H^*((\UhP{n})_0, i^!\IC(\Uh{n})).
\end{split}
\end{equation}
By considering the degree, we have
\begin{equation}
  \Hom(p_!i^* \IC(\Uh{n}), \cC_{S_{(n)}\AA^2})
  \cong H^{\topdeg}((\UhP{n})_0, i^! \IC(\Uh{n})).
\end{equation}
\end{NB}

\subsection{A pairing on \texorpdfstring{$V^d$}{Vd}}\label{sec:pairV}

In the same way as \subsecref{sec:pairing}, we can define a
nondegenerate pairing between $V^{d,P}$ and $V^{d,P_-}$.

\begin{NB}
For a later purpose, we do not use the trivialization of the local
system over $G/L_0$ and keep $i_-$, $j_-$. Then
\end{NB}
We have an isomorphism
\begin{equation}
  H^0(\xi_0^! i^* j^! \pi_* \cC_{\cGi{r}^d})
  \xrightarrow{\cong} H^0(\xi_0^* i_-^! j_-^* \pi_! \cC_{\cGi{r}^d}),
\end{equation}
where we also used $\pi_* = \pi_!$ as $\pi$ is proper. By the base
change and the replacement $i^*$, $i_-^!$ to $p_*$, $(p_-)_{!}$, we
can identify this with
\begin{equation}
  H_{[0]}( \Gi{P,0}^d) \to H_c^{[0]}(\Gi{P_-,0}^d),
\end{equation}
and we have a pairing
\begin{equation}
  \la\ ,\ \ra\colon
  H_{[0]}( \Gi{P,0}^d) \otimes H_{[0]}(\Gi{P_-,0}^d) \to \CC.
\end{equation}

Note that we also have the intersection pairing in the centered
Gieseker space $\cGi{r}^d$.\index{UhTildercG@$\cGi{r}^d$} As the
intersection $\Gi{P,0}^d\cap \Gi{P_-,0}^d$ consists of a {\it
  compact\/} space $\Gi{L,0}^d$, the pairing is well-defined, and
takes values in $\CC$. We multiply the sign $(-1)^{\dim \cGi{r}^d/2} =
(-1)^{dr-1}$ as before.

\begin{Lemma}\label{lem:inter=pair}
  The pairing is equal to the intersection pairing.
\end{Lemma}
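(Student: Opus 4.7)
The plan is to unwind the sheaf-theoretic definition of $\la\ ,\ \ra$ on the smooth ambient space $\cGi{r}^d$ and match it with the geometric intersection pairing, using the smoothness of the Gieseker space and the compactness of $\Gi{L,0}^d = \Gi{P,0}^d \cap \Gi{P_-,0}^d$.

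First I would apply proper base change for the cartesian square
\begin{equation*}
\begin{CD}
\Gi{P,0}^d @>{\tilde j}>> \cGi{r}^d \\
@V{\tilde p}VV @VV{\pi}V \\
\{d\cdot 0\} @>>{\xi_0\circ i\circ j}> \cUh{d}
\end{CD}
\end{equation*}
and the analogous square for $P_-$. Because $\pi$ is proper, this identifies $H^{0}(\xi_0^! i^* j^! \pi_*\cC_{\cGi{r}^d})$ with $H_{[0]}(\Gi{P,0}^d)$ and dually $H^{0}(\xi_0^* i_-^! j_-^* \pi_!\cC_{\cGi{r}^d})$ with $H^{[0]}_c(\Gi{P_-,0}^d)$, recovering \eqref{eq:tv}. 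Under these identifications, the pairing $\la\ ,\ \ra$ is the composition of Braden's isomorphism $i^*j^!\cong i_-^! j_-^*$ on $\cC_{\cGi{r}^d}$ with the canonical natural transformation $\xi_0^!\to\xi_0^*$, multiplied by $(-1)^{\dim\cGi{r}^d/2}=(-1)^{dr-1}$.

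Next I would show that on a smooth ambient variety $X=\cGi{r}^d$ with the given $A$-action, Braden's isomorphism $i^* j^!\cC_X\cong i_-^! j_-^*\cC_X$ on $X^A$, followed by $\xi_0^!\to\xi_0^*$, is induced geometrically by proper pushforward $\tilde j_*\colon H_{[0]}(\Gi{P,0}^d)\to H_{[0]}(\cGi{r}^d)$ composed with Poincar\'e duality $H_{[0]}(\cGi{r}^d)\cong H^{[0]}(\cGi{r}^d)$ and pullback $\tilde j_-^*\colon H^{[0]}(\cGi{r}^d)\to H^{[0]}(\Gi{P_-,0}^d)$; the compactness of the support intersection $\Gi{L,0}^d$ is what makes the composite land in $H^{[0]}_c(\Gi{P_-,0}^d)$. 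This geometric description follows by chasing Braden's construction of the isomorphism, noting that over the open locus $\cGi{r}^d\cap p^{-1}(\BunL{d})$ the attracting and repelling sets are dual vector bundles over the fixed locus and Braden's isomorphism reduces to the product of the two Thom isomorphisms; the identification then extends globally since both sides are perverse and agree on this dense open set (using \thmref{thm:perverse} in the form of \thmref{nakajima-symplectic}(1), which for $\cGi{r}^d$ is classical).

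Given this geometric realization, the defining formula for $\la\alpha,\beta\ra$ with $\alpha\in H_{[0]}(\Gi{P,0}^d)$, $\beta\in H_{[0]}(\Gi{P_-,0}^d)$ becomes $(-1)^{dr-1}\langle \tilde j_-^*\mathrm{PD}(\tilde j_*\alpha),\beta\rangle$, which is precisely the intersection product of the two half-dimensional Borel--Moore cycles in the smooth manifold $\cGi{r}^d$ with the sign convention of \eqref{eq:73}.

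The main obstacle will be the careful verification of the geometric description of Braden's isomorphism in Step 2: Braden's isomorphism is defined via an abstract diagram of adjunctions, and its identification with the Poincar\'e--Lefschetz composition above requires a diagram chase using the Thom isomorphisms on attracting/repelling bundles over each fixed component $\Gi{L,0}^d\cap\cBun[L]{d}$, together with a dimension estimate (furnished by \corref{cor:dimest}) ensuring that the restriction to this open locus captures the full cohomology in the relevant degree. Once that is in place, the sign reconciliation is immediate.
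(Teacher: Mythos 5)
Your Step 1 (base change identifications) is fine, and you are right that the whole content of the Lemma is the identification of the abstract map ``Braden's isomorphism followed by $\xi_0^!\to\xi_0^*$'' with a geometric Poincar\'e--Lefschetz composite; your closing formula is essentially a restatement of the definition of the intersection product. But the justification you give for that identification in Step 2 has a genuine gap. You propose to verify it over the open locus $p^{-1}(\BunL{d})$, where attracting and repelling sets are dual vector bundles and Braden's map is a Thom isomorphism, and then ``extend globally since both sides are perverse and agree on this dense open set.'' The pairing in question, however, lives entirely on a summand that this open locus cannot see: $V^d$ is by definition the $\cC_{S_{(d)}\AA^2}$-isotypic component of $\Phi_{L,G}(\pi_!\cC_{\Gi{r}^d})$, supported on the closed stratum $\overline{S_{(d)}\AA^2}$, and the comparison of the two pairings takes place over the single point $d\cdot 0$, which does not lie over $\BunL{d}$ (equivalently, $H_{[0]}(\Gi{P,0}^d)$ is spanned by classes of components such as $\overline{\fT(0,\dots,0,d)_0}$ sitting over $d\cdot 0$). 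A comparison of two morphisms between semisimple perverse sheaves on $\Uh[L]{d}$ is determined by restriction to a dense open set only on those isotypic components whose support meets that open set; the components relevant here are killed by restriction to $\BunL{d}$, so agreement there gives no information about the pairing on $V^d$. This is exactly the hard point of the Lemma, and your ``extension by perversity'' does not close it.

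For comparison, the paper closes this point by a different mechanism, working equivariantly. The pairing is the restriction of an equivariant pairing; by the localization theorem the natural maps $H^*_\TT(\xi_0^!i^!j^!\pi_*\cC_{\cGi{r}^d})\to H^*_\TT(\xi_0^!i^*j^!\pi_*\cC_{\cGi{r}^d})$ and $H^*_\TT(\xi_0^!i_-^*j_-^!\pi_*\cC_{\cGi{r}^d})\to H^*_\TT(\xi_0^*i_-^*j_-^*\pi_*\cC_{\cGi{r}^d})$ become isomorphisms over $\bF_T$, and the pairing between the two \emph{naive} restrictions is the intersection pairing by \cite[\S 8.5]{CG}. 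The Lemma then reduces to the compatibility that the composite $i^!j^!\to i^*j^!\to i_-^!j_-^*$ (the second arrow being Braden's isomorphism) coincides with the natural map $i_-^!j_-^!\to i_-^!j_-^*$; this is proved by an abstract uniqueness argument: two natural transformations $a^!\to F$, with $a=j\circ i$ the inclusion of the fixed-point set, agree as soon as they agree on objects of the form $a_!(\cdot)$, where all maps in sight are identities. If you want to keep a non-equivariant route, you need some substitute for this step that controls Braden's map over the deepest stratum, not just over the open locus.
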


\begin{proof}
  \begin{NB}
    Dec. 22 : The following idea does not seem to work, as the square
    \begin{equation}
      \begin{CD}
        \Gi{L}^d @>>> \Gi{P}^d
\\
         @VVV @VVV
\\
        \Uh[L]{d} @>>> \Uh[P]{d}
      \end{CD}
    \end{equation}
    is {\it not\/} a fiber square. Therefore we cannot move to
    $\Gi{L}^d$. Moreover, we do not have a natural homomorphism
    $\cC_{M^T}\to i^* j^! \cC_M$. In fact, the stable envelop gives us
    a homomorphism.

    Oct. 24 : Misha says that it is a consequence of $\cC_{M^T} \cong
    i^* j^! \cC_M \to i_-^! j_-^* \cC_M = \cC_{M^T}$ is an identity
    for a smooth space $M$. I need to check it.

  \end{NB}%
The pairing is the restriction of that on equivariant cohomology groups:
\begin{equation}
    \la\ ,\ \ra\colon
    H^{*}_\TT(\xi_0^!i^*j^!\pi_*\cC_{\cGi{r}^d})
    \otimes
    H^{*}_\TT(\xi_0^!i_-^*j_-^!\pi_*\cC_{\cGi{r}^d})
   \to H^*_\TT(\mathrm{pt}).
\end{equation}
By the localization theorem, natural homomorphisms
\begin{gather}
     H^{*}_\TT(\xi_0^!i^!j^!\pi_*\cC_{\cGi{r}^d})\to
     H^{*}_\TT(\xi_0^!i^*j^!\pi_*\cC_{\cGi{r}^d}),
\\
    H^{*}_\TT(\xi_0^!i_-^*j_-^!\pi_*\cC_{\cGi{r}^d})\to
    H^{*}_\TT(\xi_0^*i_-^*j_-^*\pi_*\cC_{\cGi{r}^d})
\end{gather}
become isomorphisms over the fractional field of
$H^*_\TT(\mathrm{pt})$. Then the pairing between
$H^{*}_\TT(\xi_0^!i^!j^!\pi_*\cC_{\cGi{r}^d})$ and
$H^{*}_\TT(\xi_0^*i_-^*j_-^*\pi_*\cC_{\cGi{r}^d})$ is equal to the
intersection pairing by \cite[\S8.5]{CG}. Therefore we only need to
show that the composition
\begin{equation}
    i^! j^! \to i^* j^! \to i_-^! j_-^*
\end{equation}
is equal to $i^! j^! = i_-^! j_-^! \to i_-^! j_-^*$. This is a
consequence of the following general statement :
\begin{NB}
    See Sasha's message on May 13, 2014.
\end{NB}%
Let $T$ be a torus
action on $X$ and $Y = X^T$ (more generally, it can be a closed
invariant subset containing $X^T$). Let $a\colon Y\to X$ be the
embedding. Let $F$ be a functor from $D_T(X)$ to $D_T(Y)$. Assume that
we have two morphisms of functors $\alpha$, $\beta\colon a^!\to
F$. Then $\alpha=\beta$ if and only if it is so on the image of
$a_!\colon D_T(Y)\to D_T(X)$. We apply this claim to $a=ji$, $F=i_-^!
j_-^*$. In our case, $\alpha = \beta$ on $a_!D_T(Y)$ is evident, as all
the involved morphisms are identities on the fixed point set $Y$.

Let us give the proof of the claim. We consider a natural map $a_!a^!
\mathcal F \to\mathcal F$ for $\mathcal F\in D_T(X)$. It becomes an
isomorphism if we apply $a^!$ by the base change. We set $\mathcal G =
a_! a^!\mathcal F$. We have $\alpha_{\mathcal G} = \beta_{\mathcal
  G}$, as homomorphisms $a^!\mathcal G\to F(\mathcal G)$, from
the assumption. Then we have $\alpha_{\mathcal F} = \beta_{\mathcal
  F}$ as the composition of $\alpha_{\mathcal G} = \beta_{\mathcal G}$
and $F(\mathcal F)\to F(\mathcal G)$.
\begin{NB}
Record of the original text:

This directly follows from the definition of Braden's isomorphism $i^*
j^! \to i_-^!  j_-^*$.
\begin{NB2}
  For the global case, $i^!j^!\to i^* j^!$, $i_-^!j_-^!\to i_-^!
  j_-^*$ are identified with
  \begin{equation}
    H^*(X,X\setminus X^A,\scF) \to H^*(X, X\setminus
    \mathcal A_X,\scF),
    \quad
    H^*(X,X\setminus X^A,\scF) \to
    H^*(\mathcal R_X, \mathcal R_X\setminus X^A,\scF)
  \end{equation}
respectively. Braden's isomorphism is the restriction
\begin{equation}
  H^*(X, X\setminus \mathcal A_X,\scF)\to
  H^*(\mathcal R_X, \mathcal R_X\setminus X^A,\scF).
\end{equation}
Therefore the assertion is clear.

Recall the definition of Braden's morphism:
$$i^!j^!\to i^*j^!\to i^*j^!j_{-*}j_-^*\to i^*i_*i_-^!j_-^*\to i_-^!j_-^*$$
where the third arrow comes from the base change, and the fourth arrow
comes from the adjointness of $i^*$ and $i_*$.

The desired equality follows from the fact that the composition
$$i^!j^!=i^*j^!j_{-*}j_-^!\to i^*i_*i_-^!j_-^!\to i_-^!j_-^!$$ is the identity
automorphism of $(j\circ i)^!=(j_-\circ i_-)^!$, where the first arrow
comes from the base change, while the second arrow comes from the adjointness
of $i^*$ and $i_*$.
\end{NB2}%
\end{NB}%
\end{proof}

\subsection{Another base of \texorpdfstring{$V^d$}{Vd}}\label{sec:comp}

Recall we have the canonical isomorphism
\(
 \pi_! \cC_{\Gi{L}^d}
  \xrightarrow[\cong]{\cL}\Phi_{L,G}(\pi_! \cC_{\Gi{r}^d})
\)
in \eqref{eq:stac}.
Thanks to the decomposition \eqref{eq:product},
the morphism $\pi\colon \Gi{L}^d\to \UhL{d}$ is the composite of
\begin{equation}\label{eq:pipi}
\pi\times\cdots\times\pi \colon \Gi{r_1}^{d_1}\times\cdots\times
  \Gi{r_N}^{d_N}
\to
\Uh[{SL(r_1)}]{d_1}\times\cdots\times \Uh[{SL(r_N)}]{d_N}
\end{equation}
with the sum map
\begin{equation}
  \kappa\colon
  \Uh[{SL(r_1)}]{d_1}\times\cdots\times \Uh[{SL(r_N)}]{d_N}
  \to \UhL{d}.
\end{equation}
The latter is a finite birational morphism.
Then $\cC_{\Gi{L}^d}$ decomposes under \eqref{eq:pipi} as in \eqref{eq:GU}:
\begin{equation}\label{eq:piA}
  \pi_! \cC_{\Gi{L}^d}
  \cong
  \begin{aligned}[t]
      & \bigoplus
      H_{\topdeg}(\pi^{-1}(x^{d_1}_{\lambda_1})\times\dots\times
      \pi^{-1}(x^{d_N}_{\lambda_N})) \\ & \quad \otimes \kappa_!
      \IC(\Bun[{SL(r_1),\lambda_1}]{d_1}\times \cdots\times
      \Bun[{SL(r_N),\lambda_N}]{d_N}),
  \end{aligned}
\end{equation}
where $\lambda_1$, \dots, $\lambda_N$ are partitions with
$d = d_1 + |\lambda_1| + \cdots +d_N + |\lambda_N|$. (These $d_1$, \dots, $d_N$ are different from above.)
The image of the closure of $\Bun[{SL(r_1),\lambda_1}]{d_1}\times
  \cdots\times
  \Bun[{SL(r_N),\lambda_N}]{d_N}$ under $\kappa$ is the closure of
\begin{equation}
   \Bun[{SL(r_1)}]{d_1}\times \cdots\times \Bun[{SL(r_N)}]{d_N}\times S_\mu\AA^2,
\end{equation}
where $\mu = \lambda_1\sqcup\cdots\sqcup\lambda_N$. Let us denote this
stratum by $\Bun[{L,\mu}]{d_1,\dots,d_N}$. Then as $\kappa$ is a
finite morphism, we have
\begin{equation}\label{eq:kappa}
    \kappa_! \IC(\Bun[{SL(r_1),\lambda_1}]{d_1}\times
  \cdots\times
  \Bun[{SL(r_N),\lambda_N}]{d_N})
  \cong
  \IC(\Bun[{L,\mu}]{d_1,\dots,d_N},\rho),
\end{equation}
where $\rho$ is the local system corresponding to the covering
\begin{equation}\label{eq:cover}
  S_{\lambda_1}\AA^2\times
  \cdots\times S_{\lambda_N}\AA^2\setminus\text{diagonal}
  \to
  S_{\mu}\AA^2
\end{equation}
and $\mu = \lambda_1\sqcup\cdots\sqcup\lambda_N$. Taking sum over
$\lambda_1$, $\lambda_2$, \dots which give the same $\mu$, we get
\begin{multline}
  \bigoplus_{\lambda_1\sqcup\cdots\sqcup\lambda_N = \mu}
  \kappa_! \IC(\Bun[{SL(r_1),\lambda_1}]{d_1}\times
  \cdots\times
  \Bun[{SL(r_N),\lambda_N}]{d_N})
\\
  \cong \IC(\Bun[{L,\mu}]{d_1,\dots,d_N},\rho),
\end{multline}
where $\rho$ is now given by the permutation representation
\begin{equation}\label{eq:rho}
  ({}'V^1)^{\otimes n_1}\otimes ({}'V^2)^{\otimes n_2}\otimes \cdots
\end{equation}
of $S_{n_1}\times S_{n_2}\times \cdots$ if $\mu = (1^{n_1}
2^{n_2}\cdots)$ with $\dim {}'V^d = N$. 
\begin{NB}
Let us understand that this $V^d$ is temporarily different from the
previous one \eqref{eq:13}.
\end{NB}%
Here we define ${}'V^d$ as the cohomology of the union of the fibers of
\eqref{eq:cover} for the special case when $\mu$ is the partition
$(d)$ with the single entry $d$, where the union runs over $\lambda_1$, \dots,
$\lambda_N$:
\begin{equation}\label{eq:sigma}
  \sigma \colon
  \bigsqcup_{\lambda_1\sqcup\cdots\sqcup\lambda_N = (d)} S_{\lambda_1}\AA^2
  \times \cdots\times S_{\lambda_N}\AA^2 \setminus\text{diagonal}
  \to S_{(d)}\AA^2,
\end{equation}
and
\begin{equation}
  {}'V^d = H_0( \sigma^{-1}(d\cdot 0)).
\end{equation}
Since $\mu = (d)$, one of $\lambda_1$, \dots, $\lambda_N$ is $(d)$ and
others are the empty partition $\emptyset$. Therefore the fiber
$\sigma^{-1}(d\cdot 0)$ consists of $N$ distinct points, hence we have
$\dim {}'V^d = N$.

Moreover
\(
  \Hom_{\Perv(\UhL{d})}(\cC_{S_{(d)}\AA^2},
  \pi_! \cC_{\Gi{L}^d})
\)
is given by the component $\IC(\Bun[{L,(d)}]{0,\dots,0},\rho)$, where
$\rho$ is the trivial representation of $S_1$ on ${}'V^d$. Therefore
we have a canonical isomorphism
\begin{equation}\label{eq:Vd}
  \begin{split}
      {}'V^d 
      &\cong
      \Hom_{\Perv(\UhL{d})}(\cC_{S_{(d)}\AA^2},
      \pi_! \cC_{\Gi{L}^d})\\
      &\cong \Hom_{\Perv(\UhL{d})}(\cC_{S_{(d)}\AA^2},
      \Phi_{L,G}(\pi_! \cC_{\Gi{r}^d})),
  \end{split}
\end{equation}
where the first isomorphism is via
\(
   H_{\topdeg}(\pi^{-1}(x^{d_1}_{\lambda_1})\times\dots\times
   \pi^{-1}(x^{d_N}_{\lambda_N})) \cong \CC
\)
($d_1=\dots=d_N=0$, one of $\lambda_1$,\dots, $\lambda_N$ is $(d)$ and
others are the empty partition) given by the fundamental class,
and the second isomorphism is given by the stable envelope $\cL$.
Thus our ${}'V^d$ is isomorphic to $V^d$ in 
\eqref{eq:13}. We will identify ${}'V^d$ with $V^d$ hereafter.

We have just shown
\begin{equation}
  \pi_!\cC_{\Gi{L}^d} \cong \bigoplus \IC(\BunLl{d_1,\dots,d_N},\rho).
\end{equation}
This is similar to \propref{prop:hyperbolic}, where we used the
factorization argument to construct an isomorphism.
Our argument looks slightly different, as we have not used the
projection $a\colon\AA^2\to \AA^1$. But the isomorphism is the same as
one given by the factorization argument from the above construction,
together with the observation that $a$, $i$, $j$ commute with the
projection $\pi^d_{a,?}$ ($? = G, P, L$).

Note that we have $e_i\in V^d = H_0(\sigma^{-1}(d\cdot 0))$ corresponding to
the component of $\sigma^{-1}(d\cdot 0)$ in
\begin{equation}
    S_{\emptyset}\AA^2\times\cdots\times
    \underbrace{S_{(d)}\AA^2}_{\text{$i^{\mathrm{th}}$ factor}}
    \times\cdots\times S_{\emptyset}\AA^2.
\end{equation}
Then $e_1,\dots, e_N$ gives a base of $V^d$.
\index{ei@$e_i$ (base element of $V^d$)}

If we view $V^d$ as $\Hom_{\Perv(\UhL{d})}(\cC_{S_{(d)}\AA^2}, \pi_!
\cC_{\Gi{L}^d})$, $e_i$ is the composite of homomorphisms
\begin{equation}
    \label{eq:25}
    \cC_{S_{(d)}\AA^2}\to \pi_! \cC_{\Gi{r_i}^d} \to
    \pi_! \cC_{\Gi{L}^d},
\end{equation}
where the left homomorphism is given by the fundamental class
$[\pi^{-1}(d\cdot 0)]$, and the right one is given by the inclusion
of the component $d_i = d$, $d_j = 0$ ($j\neq i$) in the decomposition
\eqref{eq:product}.

\begin{Example}
For $d=1$, $\Gi{r}^1$ (resp.\ $\Uh{1}$) is isomorphic to the product
of $\AA^2$ and the cotangent bundle of $\proj^{r-1}$ (resp.\ the
closure of the minimal nilpotent orbit of $\algsl_r$). Further suppose
$N=r$ and $r_1=\cdots=r_N = 1$.
Then \cite[Remark~3.5.3]{MO} gives us the relation:
\begin{equation}
    \label{eq:23}
    [\overline{\fT(0,\cdots,0,
    \underbrace{1}_{\text{$k^{\mathrm{th}}$ factor}},
    0,\cdots, 0)}]
  = (-1)^{k-1}(e_k + e_{k+1} + \cdots + e_r).
\end{equation}
Here the sign $(-1)^{k-1}$ comes from the polarization, mentioned
in~\subsecref{sec:Gifixed}.
\end{Example}

\begin{NB}
Let us give the computation.

This is taken from my message on Dec.~17, 2012.

Let us consider the special case $n=1$. Then it is well-known that
$\Mreg_0(1,r)$ is the product of $\AA^2$ and the minimal nilpotent orbit
of $\algsl_r$. And $M(1,r)$ is the product of $\AA^2$ and the
cotangent bundle of $\proj^{r-1}$.

The torus fixed points are numbered $1$, \dots, $r$, and given a
positive chamber $\mathfrak C$, we have the corresponding
Bialynicki-Birula stratification
$\proj^0\subset\proj^1\subset\cdots\subset\proj^{r-1}$.
The fundamental classes of the conormal bundles
$T^*_{\proj^{k-1}}\proj^{r-1}$ ($k=1,\dots,r$) gives a basis of $V_1 =
H_{\topdeg}(\fT(1)_0)$. We have relations
\begin{equation*}
  \begin{gathered}
  T^*_{\proj^{r-1}}\proj^{r-1} = \overline{\fT(1,0,\dots,0)}, \quad
  T^*_{\proj^{r-2}}\proj^{r-1} = \overline{\fT(0,1,0, \dots,0)},
  \quad\cdots\quad
\\
  T^*_{\proj^0}\proj^{r-1} = \overline{\fT(0,\dots,0,1)}
  \end{gathered}
\end{equation*}
in the notation in the previous subsection.

According to \cite[Remark~3.5.3]{MO}, the stable envelop component
corresponding to the torus-fixed point is the union of two conormal
bundles
\begin{equation*}
  T^*_{\proj^k}\proj^{r-1}\cup T^*_{\proj^{k-1}}\proj^r.
\end{equation*}
Let us give a little more detail:
\begin{NB2}
  Misha's message on Nov.~29, 2012.
\end{NB2}%
Since all the strata closures are smooth in our situation, it is easy
to compute $j_{k!}(\CC)$: it is an extension $0\to \CC_{\proj^{k-1}}\to
j_{k!}(\CC)\to \CC_{\proj^{k}}\to 0$. Now the characteristic cycles are
additive in short exact sequences, and the characteristic cycles of
constant sub- and quotient- sheaves are just the conormal bundles to
their supports. This gives the above.

There is a sign issue here, called a {\it polarization\/} in
\cite[3.3.2]{MO}. For quiver varieties, in particular, Gieseker
spaces, they choose a canonical choice of a polarization, which is
explained in \cite[2.2.7]{MO}.

In our case, we have
\begin{equation}\label{eq:ek}
  \begin{split}
  e_k &= (-1)^{k-1} ([T^*_{\proj^{r-k}}\proj^{r-1}]
  + [T^*_{\proj^{r-k-1}}\proj^{r-1}])
\\
  &= (-1)^{k-1}
  \begin{aligned}[t]
  & \big([\overline{\fT(0,\cdots,0,
    \underbrace{1}_{\text{$k^{\mathrm{th}}$ factor}},
    0,\cdots, 0)}]
\\
  & \quad +
  [\overline{\fT(0,\cdots,0,
    \underbrace{1}_{\text{$(k+1)^{\mathrm{th}}$ factor}},
    0,\cdots, 0)}]\big)
  \end{aligned}
  \end{split}
\end{equation}
for $k=1,\dots,r$. Then gives a basis of for the vector space $V_1$
from the hyperbolic restriction from $G = SL_r$ to $T$, applied to the
pushforward of the constant sheaf for the Gieseker space. We have
\begin{equation}
  \sum_{k=1}^{r} e_k = [T^*_{\proj^{r-1}}\proj^{r-1}].
\end{equation}
The right hand side is $\beta_{-1}[M(0,r)]$, where $\beta_{-1}$ is the
Baranovsky (creation) operator.
This means that the diagonal gives the Baranovsky's Heisenberg.
It is compatible with \cite[Th.~12.2.1]{MO}.
More generally, we have
\begin{equation}
  [T^*_{\proj^{r-k}}\proj^{r-1}]
  = (-1)^{k-1}(e_k + e_{k+1} + \cdots + e_r).
\end{equation}
\end{NB}%

\begin{Example}\label{ex:base}
  We know that $[\overline{\fT(d,0,\dots,0)_0}] = [\Gi{r,0}^d]$
  (\propref{prop:Un}), and hence
  \begin{equation}
    [\overline{\fT(d,0,\dots,0)_0}] = e_1+\cdots+e_N
  \end{equation}
  by \eqref{eq:22}.

  On the other hand, the opposite extreme
  $[\overline{\fT(0,0,\dots,d)_0}]$ is equal to $e_N$ up to sign by
  the support property of the stable envelope \cite[Th.~3.3.4
  (i)]{MO}. The polarization is opposite, therefore the sign is the
  half of the codimension of the corresponding fixed point
  component. We get
  \begin{equation}
    [\overline{\fT(0,0,\dots,d)_0}] = (-1)^{d(r-r_N)} e_N.
  \end{equation}

  If $N=2$, two elements exhaust the base.
\end{Example}

The transition matrix between two bases for $d > 1$, $N > 2$ can be
calculated from \eqref{eq:111} together with \eqref{eq:109}
below. Though \eqref{eq:111} determines $[\overline{\fT(0,\cdots,0,
  {d}, 0,\cdots, 0)}]$ ($d$ is in the {$k^{\mathrm{th}}$ entry) up to
  $\CC[\overline{\fT(d,0,\dots,0)_0}]$, it is a linear span of $e_k$,
  \dots, $e_N$ thanks to the support property of the stable envelope.
  Therefore we can fix the ambiguity.

\begin{NB}
    Let me describe $\Uh[B,0]1$ even more explicitly. It consists of
    rank $1$ upper triangular matrices $X$. Let
    \begin{equation}
        \label{eq:49}
        X = \left(
          \begin{array}{c:c}
              0 & X' \\
              \hdashline
              0 & 0
          \end{array}
        \right),
    \end{equation}
    where $X'$ is a $(k-1)\times(r-k+1)$ matrix of rank $\le 1$. Such
    matrices $X$ is the irreducible component in \eqref{eq:23}.
\end{NB}

\subsection{Computation of the pairing}

Let us relate the pairing in \subsecref{sec:pairV} to the pairing
defined on $\Gi{L}^d$ using the stable envelope
\begin{equation}
  \cL \colon H_{[0]}(\Gi{L,0}^d) \xrightarrow{\cong} H_{[0]}(\Gi{P,0}^d).
\end{equation}

Let us temporarily denote the stable envelope with respect to the
opposite parabolic by $\cL^-$. Then we want to compute
\begin{equation}
  \la\cL(\alpha),\cL^-(\beta)\ra,
\end{equation}
which is equal to the intersection pairing times
$(-1)^{\dim\cGi{r}^d}$ by \lemref{lem:inter=pair}.

Suppose that $\alpha$, $\beta$ are classes on a component $Z$ of
$\Gi{L,0}^d$.
Let us take equivariant lifts of $\alpha$, $\beta$ to
$Z(L)^0$-equivariant cohomology.
Since the supports of $\cL(\alpha)$ and $\cL^-(\beta)$ intersect
along $Z$ by one of characterizing properties of the stable envelope
\cite[Th.~3.3.4(i)]{MO}, we need to compute the restriction of the
(Poincar\'e dual of) $\cL(\alpha)$, $\cL^-(\beta)$ to the fixed point
component $Z$. Again by a property of the stable envelop
\cite[Th.~3.3.4(ii)]{MO}, we have $\cL(\alpha)|_{Z} =
(\pol_{\text{rep}}/\pol) e(N^-)\cup \alpha$ and $\cL^-(\beta)|_{Z} =
(\pol_{\text{att}}/\pol) e(N^+)\cup \beta$, where $\pol_{\text{rep}}$,
$\pol_{\text{att}}$ are the polarizations given by attracting and
repellent directions. Then we have
\begin{equation}
  \int_{\cGi{r}^d} \cL(\alpha)\cup \cL^-(\beta)
  = \frac{\pol_{\text{rep}} \pol_{\text{att}}}{\pol^2}
  \int_{\cGi{r}^d} e(N) \cup \alpha\cup\beta
  = (-1)^{\codim Z/2} \int_Z \alpha\cup\beta
\end{equation}
by the fixed point formula.
\begin{NB}
  Note that $\pol_{\text{rep}} \pol_{\text{att}} = (-1)^{\codim Z/2}
  \pol_{\text{rep}}^2 = (-1)^{\codim Z/2}\pol^2$.
\end{NB}%
Therefore if we multiply $(-1)^{\dim \cGi{r}^d/2}$, we get
$(-1)^{\dim Z/2}\int_X \alpha\cup\beta = \la\alpha,\beta\ra$.

If $\alpha$, $\beta$ are supported on different components $Z$, $Z'$
of $\Gi{L,0}^d$ respectively, we use a property \cite[Th.~3.7.5]{MO},
which says the restrictions of $\cL(\alpha)$, $\cL(\beta)$ to
components other than $Z$, $Z'$ are zero. Then it is clear that
$\la\cL(\alpha),\cL^-(\beta)\ra = 0$.

\begin{NB}
  I probably need to explain the identification by the trivialization
  over $G/P_0$ at some point...... I think that it does not affect the
  sign.
\end{NB}

As an application of this formula, we compute $\la
e_i,e^-_j\ra$, where $e_i\in V^d$ as in the previous subsection, and
$e_j^-\in V^{d,P_-}$ is defined in the same way using the opposite
hyperbolic restriction $\cL^-$.\index{eiminus@$e_i^-$}
This is reduced to the computation of the self-intersection number of
the punctual Quot scheme $\Gi{r_i,0}^d$ in the centered Gieseker
space $\cGi{r_i}^d$. This is given by $(-1)^{r_id-1}d r_i = (-1)^{\dim
  \cGi{r_i}^d/2} d r_i$ (\cite[\S4]{Baranovsky}).
Therefore we get
\begin{Proposition}\label{prop:pairinge}
  We have
  \begin{equation}\label{eq:109}
    \la e_i,e_j^-\ra = d r_i \delta_{ij}.
  \end{equation}
\end{Proposition}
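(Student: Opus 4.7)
The plan is to deduce Proposition~\ref{prop:pairinge} from the general computation of $\la \cL(\alpha), \cL^-(\beta)\ra$ carried out in the paragraphs just preceding the statement, applied to the specific classes $\alpha = e_i$, $\beta = e_j^-$ regarded as elements of $H_{[0]}(\Gi{L,0}^d)$ via the isomorphism $V^d \cong H_{[0]}(\Gi{L,0}^d)$ coming from the stable envelope together with \eqref{eq:Vd}.

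First I would identify $e_i$ explicitly as a homology class on $\Gi{L,0}^d$. Under the decomposition \eqref{eq:product}, the fixed point component corresponding to $d_i = d$, $d_j = 0$ ($j\neq i$) is $\Gi{r_i,0}^d$ (with point factors elsewhere); the class $e_i$ corresponds to its fundamental class $[\Gi{r_i,0}^d]$, by the description \eqref{eq:25} of $e_i$ as a composite involving $[\pi^{-1}(d\cdot 0)]$. Similarly $e_j^-$ corresponds to $[\Gi{r_j,0}^d]$ on the component indexed by $j$.

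Next, for $i \neq j$, the classes $e_i$ and $e_j^-$ are supported on distinct components of $\Gi{L,0}^d$, so the property \cite[Th.~3.7.5]{MO} of the stable envelope immediately gives $\la e_i, e_j^-\ra = 0$, as explained just above the statement. For $i = j$ the supports lie on the same component, and the computation above the statement reduces $\la e_i, e_i^-\ra$ to the self-intersection pairing of $[\Gi{r_i,0}^d]$ inside the centered Gieseker space $\cGi{r_i}^d$, multiplied by the sign $(-1)^{\dim \cGi{r_i}^d/2} = (-1)^{r_id - 1}$ from the convention \eqref{eq:73}. By Baranovsky's computation \cite[\S4]{Baranovsky}, this self-intersection equals $(-1)^{r_id-1} d r_i$, so the two signs cancel and we obtain $\la e_i, e_i^-\ra = d r_i$, giving the formula \eqref{eq:109}.

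The only nontrivial step is the reduction to Baranovsky's self-intersection number, and that reduction has essentially been carried out already in the paragraphs preceding the proposition via the fixed point localization applied to the restrictions $\cL(\alpha)|_Z$, $\cL^-(\beta)|_Z$. The main thing to verify carefully is the bookkeeping of signs: the polarization factors $\pol_{\text{rep}}/\pol$ and $\pol_{\text{att}}/\pol$ combine to $(-1)^{\codim Z/2}$, which exactly absorbs into the sign in \lemref{lem:inter=pair}, so that the final answer on each factor is just the Baranovsky self-intersection on $\cGi{r_i}^d$. Thus no new input is needed beyond the material already assembled.
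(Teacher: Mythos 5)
Your proposal is correct and takes essentially the same route as the paper: identify $e_i$ with the fundamental class of the punctual Quot scheme via \eqref{eq:25}, use the stable-envelope restriction and support properties together with \lemref{lem:inter=pair} to reduce to the fixed component (off-diagonal terms vanishing), and quote Baranovsky's self-intersection $(-1)^{r_i d-1}d r_i$, with the sign cancellation exactly as in your third paragraph. (Only the recap in your final paragraph is slightly loose — the polarization factor $(-1)^{\codim Z/2}$ combined with the sign of \lemref{lem:inter=pair} gives $(-1)^{\dim Z/2}$, i.e.\ the pairing convention on the component, rather than $+1$ — but your explicit computation already accounts for this correctly.)
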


\begin{NB}
  This is a comment for the previous formulation.

  I am no longer sure about this formula. We need to use the
  identification via the trivialization of the local system. I am not
  sure that the following is commutative.
  \begin{equation}
    \begin{CD}
      H_{[0]}(\Gi{L,0}^d) @>\cL>> H_{[0]}(\Gi{P,0}^d)
\\
      @| @VV{\cong}V
\\
      H_{[0]}(\Gi{L,0}^d) @>\cL^->> H_{[0]}(\Gi{P_-,0}^d)
    \end{CD}
  \end{equation}
\end{NB}

\subsection{Relation between \texorpdfstring{$V^d$}{Vd} and \texorpdfstring{$U^d$}{Ud}}\label{subsec:VU}
Let us apply the decomposition \eqref{eq:GU} to \eqref{eq:Vd}. We have
\begin{equation}
  \begin{split}
  V^d &= \Hom(\cC_{S_{(d)}\AA^2}, \Phi_{L,G}(\pi_!\cC_{\Gi{r}^d}))
\\
  &= \bigoplus_{d_1+|\lambda|=d}
  H_{\topdeg}(\pi^{-1}(x^{d_1}_\lambda)) \otimes
  \Hom(\cC_{S_{(d)}\AA^2}, \Phi_{L,G}(\IC(\BunGl{d_1}))).
  \end{split}
\end{equation}
Then
\(
  \Hom(\cC_{S_{(d)}\AA^2}, \Phi_{L,G}(\IC(\BunGl{d_1})))
\)
is nonzero only in either of the following cases:
\begin{enumerate}
\item $d_1=d$ and $\lambda=\emptyset$,
\item $d_1=0$ and $\lambda=(d)$.
\end{enumerate}
In the first case, it is $U^{d}$ by definition. And in the second
case, it is
\begin{equation}
  \Hom(\cC_{S_{(d)}\AA^2}, \Phi_{L,G}(\cC_{S_{(d)}\AA^2}))
  =
  \Hom(\cC_{S_{(d)}\AA^2},
  \cC_{S_{(d)}\AA^2}) \cong \CC\id.
\end{equation}
Thus
\begin{equation}\label{eq:VU}
  V^d \cong \left(H_{\topdeg}(\pi^{-1}(x^d_{\emptyset})) \otimes U^d\right)
  \oplus H_{\topdeg}(\pi^{-1}(x^0_{(d)})).
\end{equation}
Note that $\pi^{-1}(x^d_{\emptyset})$ is a single point. Therefore we
have the canonical isomorphism
$H_{\topdeg}(\pi^{-1}(x^d_{\emptyset}))\cong \CC$.
Now the homomorphism $\pi_*\colon V^d \cong H_{[0]}(\Gi{P,0}^d) \to
U^d \cong H_{[0]}(\Uh[P,0]d)$ is identified with the projection to
the first component in \eqref{eq:VU}. In particular, bases of $U^d$
and $V^d$ given by irreducible components (see \lemref{lem:Ud} and
\propref{prop:Un}) are related by the projection.
\begin{NB}
The following is not quite true. See \eqref{eq:51}.

Now it is clear that the isomorphism \eqref{eq:VU} is compatible with
bases of $U^d$ and $V^d$ given by irreducible components (see
\lemref{lem:Ud} and \propref{prop:Un}).
\end{NB}%

The subspace $H_{\topdeg}(\pi^{-1}(x^0_{(d)}))$ is $1$-dimensional
space spanned by the fundamental class $[\pi^{-1}(x^0_{(d)})]$, or
equivalently $P_{-d}([0])\cdot [\Uh{0}]$ where $P_{-d}([0])$ is the
Heisenberg operator, and $[\Uh{0}] = 1\in H^0_{\TT}(\Uh{0})$.
Recall that the Baranovsky's Heisenberg operator is mapped to the
diagonal operator under the stable envelope, see \subsecref{sec:tensor}.
\begin{NB}
  I do not check that this is true even for general $L\neq T$. It is
  stated only for $L = T$. Probably the compatibility implies the
  general case.
\end{NB}%
It means that $[\pi^{-1}(x^0_{(d)})]$ is equal to
\begin{equation}
  e_1 + \cdots + e_N,
\end{equation}
where $\{ e_i\}$ is the base of $V^d$ in the previous subsection.

And $U^d$ is the subspace killed by the Heisenberg operator
$P_d(1)$. Therefore
\begin{equation}\label{eq:baseofU}
  U^d
  \cong
  \left\{ \lambda_1 e_1 + \cdots + \lambda_N e_N\, \middle|\,
  \lambda_1 + \cdots + \lambda_N = 0 \right\}.
\end{equation}
We have a base $\{ e_i - e_{i+1} \}_{i=1,\dots, N-1}$ of $U^d$.

It is also clear that the decomposition \eqref{eq:VU} is orthogonal
with respect to the pairing in \subsecref{sec:pairV}. And the
restriction of the pairing to $U^d$ is equal to one in
\subsecref{sec:pairing}. Therefore we can calculate the pairing
between $U^{d,P}$ and $U^{d,P_-}$. Let us consider the case $P = B$
for brevity. We have
\begin{equation}\label{eq:Cartan}
  \la e_i - e_{i+1}, e_j^- - e_{j+1}^-\ra =
  \begin{cases}
    2d & \text{if $i=j$}, \\
    -d & \text{if $|i-j|=1$}, \\
    0 & \text{otherwise}
  \end{cases}
\end{equation}
by \propref{prop:pairinge}.
Thus the pairing between $U^{d,B}$ and $U^{d,B_-}$ is identified with
the natural pairing on the Cartan subalgebra $\h$ of $\algsl_r$
multiplied by $d$, under the identification $e_i - e_{i+1}$ and
$e_i^--e_{i+1}^-$ with the simple coroot $\alpha_i^\vee$.

\begin{NB}
  Let us denote \eqref{eq:23} by $t_{k-1}$ ($k=1,2,\dots,r$). Its
  projection to $U^{d=1}$ is given by
    \begin{equation}\label{eq:51}
        \begin{split}
        & \frac{(-1)^k}r\left(
          (r-k+1) (e_1 + \cdots + e_{k-1})
          - (k-1) (e_k + \cdots + e_r) \right)
\\
   =\; & \frac{(-1)^k}r\left(
     \begin{aligned}[c]
          & (r-k+1) (e_1 - e_2) + 2(r-k+1)(e_2 - e_3) + \cdots
          + (k-1)(r-k+1) (e_{k-1} - e_k)
\\
          & \quad
          + (k-1)(r-k)(e_k - e_{k+1}) + \cdots + (k-1) (e_{r-1} - e_r)
     \end{aligned}\right).
        \end{split}
    \end{equation}
For $k=r$, we have
\begin{equation}
  \frac{(-1)^r}r (e_1 + \cdots + e_{r-1} - (r-1) e_r).
\end{equation}
We have
\begin{equation}\label{eq:112}
  (-1)^{k}\la pr_{U^1} t_{k-1}, e^-_i - e^-_{i+1}\ra =
  \begin{cases}
    1 & \text{if $i=k-1$,} \\ 0 & \text{otherwise}.
  \end{cases}
\end{equation}
\end{NB}

\subsection{Compatibility}\label{subsec:comp}

Let us take $L=T$. We shall show that the base $\{ e_i - e_{i+1} \}_i$
of $U^d$ is compatible with the construction in
\subsecref{sec:anotherbase} in this subsection.

We fix the Borel subgroup $B$ consisting of upper triangular matrices,
and let $P_i$ be the parabolic subgroup corresponding to a simple root
$\alpha_i$ and $L_i$ be the Levi subgroup ($i=1,\dots r-1$).
Recall that we have taken
\begin{equation}
  1^d_{L_i,G}\in
  \Hom_{\Perv(\Uh[{L_i}]{d})}(\IC(\Uh[{L_i}]{d}), \Phi_{L_i,G}(\IC(\Uh{d}))).
\end{equation}
(See \eqref{eq:1LG}.)

Let us consider the corresponding fixed point set $\Gi{L_i}^d =
(\Gi{r}^d)^{Z(L_i)}$ in the Gieseker space. The decomposition
\eqref{eq:product} in our case is
\begin{equation}\label{eq:121}
\bigsqcup_{d_1+\dots+\widehat{d_{i+1}}
    + \dots + d_r = d} \Gi{1}^{d_1}\times\cdots \times \Gi{1}^{d_{i-1}}
  \times \Gi{2}^{d_i}\times \Gi{1}^{d_{i+2}}\times\cdots \times\Gi{1}^{d_r}.
\end{equation}
There is a distinguished connected component, isomorphic to $\Gi{2}^d$
with $d_i = d$, $d_j = 0$ for $j\neq i$. Let us denote it by $Z$.

Recall that $\Uh[{L_i}]d$ is equal to $\Uh[SL(2)]d$ as a topological
space and the open subvariety $\Bun[{L_i}]d$ is equal to
$\Bun[SL(2)]d$. The connected component $Z$ is characterized among all
components of $\Gi{L_i}^d$, as it contains
$\Bun[{L_i}]d$.

We denote by $\pol$ the polarization of $Z$ in $\Gi{r}^d$ in
\subsecref{sec:Gifixed}. We understand it is $\pm 1$, according to
whether it is equal to the polarization given by attracting directions
or not, as in \subsecref{sec:polarization}.
We correct $1^d_{L_i,G}$ by $\pol 1^d_{L_i,G}$ so that it will be
compatible with the stable envelope.

Let us consider the diagram
\begin{equation}\label{eq:CD}
  \begin{CD}
  \IC(\Uh[L_i]{d})@>{\pol 1^d_{L_i,G}}>> \Phi_{L_i,G}(\IC(\Uh{d}))
\\
@AAA @AAA
\\
  \pi_! \cC_{\Gi{L_i}^d} @>\cong>{\cL_{L_i,G}}>
  \Phi_{L_i,G}(\pi_!\cC_{\Gi{r}^d}).
  \end{CD}
\end{equation}
The upper arrow is given just above, and the bottom arrow is the
stable envelope.
The right vertical arrow comes from the natural projection to the
direct summand
\(
   \pi_!(\cC_{\Gi{r}^d})\to \IC(\Uh{d})
\)
in \eqref{eq:GU},
which is the identity homomorphism on the open subset $\Bun{d}$ of
$\Uh{d}$.
The left vertical arrow is defined as follows. We have the
distinguished component $Z$ of $\Gi{L_i}^d$ isomorphic to
$\Gi{2}^d$. We have $\IC(\Uh[{L_i}]d) = \IC(\Uh[SL(2)]d)$, and hence
have a natural projection $\pi_! \cC_{\Gi{2}^d}\to \IC(\Uh[L_i]{d})$,
as for the right vertical arrow. Composing with the restriction to the
distinguished component $\pi_!\cC_{\Gi{L_i}^d}\to \pi_!  \cC_{Z}$ , we
define the left vertical arrow.

\begin{Proposition}
  The diagram \eqref{eq:CD} is commutative.
\end{Proposition}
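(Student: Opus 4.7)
The plan is to verify commutativity by restriction to the open subset $\Bun[L_i]{d}\subset \Uh[L_i]{d}$. Both compositions in the diagram are morphisms in $\Perv(\Uh[L_i]{d})$ whose target has $\IC(\Uh[L_i]{d})$ as a direct summand with multiplicity one (cf. \subsecref{sec:open} and the discussion around \eqref{eq:1LG}); hence such morphisms are determined by their restriction to any open dense subvariety of $\Bun[L_i]{d}$. Since $\Bun[L_i]{d}$ (identified with $\Bun[SL(2)]{d}$ via $[L_i,L_i]\cong SL(2)$) is precisely the smooth open stratum of $\Uh[L_i]{d}$, it is enough to check that the diagram becomes commutative after applying the restriction functor to $\Bun[L_i]{d}$.

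Next, I would identify each arrow explicitly on $\Bun[L_i]{d}$. The distinguished component $Z\cong \Gi{2}^{d}$ of $\Gi{L_i}^{d}$ (see \eqref{eq:121}) is the only component of $\Gi{L_i}^d$ whose image under $\pi$ meets $\Bun[L_i]{d}$, because all other components have some $\Gi{1}^{d_j}$ factor with $d_j>0$ and map into the singular locus $\Uh[L_i]{d}\setminus\Bun[L_i]{d}$. Consequently, after restriction to $\Bun[L_i]{d}$, the left vertical arrow becomes the canonical isomorphism $\pi_{!}\cC_{Z}|_{\Bun[L_i]d}\cong \cC_{\Bun[L_i]d}$. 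Similarly, on the open subset $p^{-1}(\Bun[L]{d})\subset \Uh[P_i]{d}$, only the component of $\pi_!\cC_{\Gi{r}^d}$ coming from $\Bun{d}$ contributes, so the right vertical arrow restricts to the Thom isomorphism for the vector bundle $p\colon p^{-1}(\Bun[L_i]{d})\to\Bun[L_i]{d}$, whose fibers parametrize framed $P_i$-bundles (cf. \subsecref{sec:open}).

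At this point the top horizontal arrow $\pol\cdot 1^d_{L_i,G}$ is, by its very normalization in \eqref{eq:1LG}, equal to $\pol$ times the Thom isomorphism. The bottom horizontal arrow $\cL_{L_i,G}$ is the stable envelop from \eqref{eq:stac}. Its restriction to the $A$-fixed component $Z\cap\Bun[L_i]{d}$ equals, by the characterizing property \cite[Th.~3.3.4(ii)]{MO}, multiplication by $e(N_-)$ times the correction $\pol_{\mathrm{rep}}/\pol$, where $N_-$ is the bundle of repellent directions (which here is the normal bundle of $\Bun[L_i]{d}$ in $\Bun{d}$ along $P_i^-$), viewed via the Thom/Euler class formalism. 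On the other hand, the Thom isomorphism along $p^{-1}(\Bun[L_i]{d})$ is, by construction, the cap product with the Euler class of the attracting normal bundle, which matches $e(N_-)$ up to the ratio $\pol_{\mathrm{rep}}/\pol$ of polarizations. Hence $\pol\cdot 1^d_{L_i,G}$ and the composition going the other way around the square agree on the open subset.

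The main obstacle, as elsewhere in the paper, is the sign bookkeeping: one must match the polarization $\pol$ chosen in \subsecref{sec:Gifixed} (the one built from the Quot lagrangian $Q_r$) with the polarization used to normalize the Thom identification $i^*j^!\cC_{\Bun{d}}\cong \cC_{\Bun[L_i]{d}}$ of \subsecref{sec:open}. This reduces, by the factorization argument of \subsecref{sec:factorization} (which is compatible with both Gieseker-Uhlenbeck and hyperbolic restriction), to the rank-one case $d=1$, which in turn is exactly the calculation carried out in \lemref{lem:polSL3} for $\cGi{3}^1\cong T^*\proj^2$; there the ratio $\pol_{\mathrm{rep}}/\pol$ on each of the two $A$-fixed components is computed directly, and one sees that with the prescribed correction $\pol\cdot 1^d_{L_i,G}$ the two arrows in \eqref{eq:CD} indeed agree. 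This completes the reduction, and hence the proposition.
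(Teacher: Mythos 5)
Your proof is correct and follows essentially the same route as the paper's: reduce to the open subset $\Bun[L_i]{d}$, where both $\pol\,1^d_{L_i,G}$ and the stable envelope $\cL_{L_i,G}$ restrict to the Thom isomorphism corrected by the polarization, the latter by \cite[Th.~3.3.4(ii)]{MO}. Your closing paragraph reducing the sign question to $d=1$ via \lemref{lem:polSL3} is superfluous (that lemma serves the choice of the bipartite sign in \eqref{eq:polarization} and the $SL(3)$ Heisenberg check, not this diagram): the correction by $\pol$, understood as the ratio $\pol/\pol_{\mathrm{rep}}$ as in \subsecref{sec:polarization}, is by definition exactly the factor that matches the normalization of the stable envelope, so no further computation is needed.
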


\begin{proof}
  From the construction of the diagram, it is clear that we need to
  check the commutativity on the open subset $\Bun[{L_i}]{d}$. Then
  the commutativity is clear, as two constructions $\pol 1^d_{L_i,G}$
  and $\mathcal L_{L_i,G}$ are the same: Both are given by the Thom
  isomorphism corrected by polarization. See \cite[Th.~3.3.4(ii)]{MO}
  for the stable envelope.
\end{proof}

Recall also that we have proposed that there exists a canonical
element
\begin{equation}
  \begin{split}
  1^d_{L_i}\in & \Hom(\cC_{S_{(d)}\AA^2}, \Phi_{T,L_i}(\IC(\Uh[L_i]{d})))\\
  & \cong \Hom(\cC_{S_{(d)}\AA^2},\Phi_{\CC^*,SL_2}(\IC(\Uh[{SL_2}]{d})))
  \end{split}
\end{equation}
in \subsecref{sec:anotherbase}. \index{1Li@$1^d_{L_i}$}
We define it so that the following diagram is commutative:
\begin{equation}
    \label{eq:24}
    \begin{CD}
     \cC_{S_{(d)}\AA^2} @>1^d_{L_i}>> \Phi_{\CC^*,SL(2)}(\IC(\Uh[SL(2)]{d}))
\\
  @VVV @AAA
\\
    \pi_! \cC_{\Gi{\CC^*}^d}
    @>\cong>\cL_{\CC^*,SL(2)}> \Phi_{\CC^*,SL(2)}(\pi_!\cC_{\Gi{2}^d}),
    \end{CD}
\end{equation}
where we choose the parabolic subgroup in $SL(2)\cong [L_i,L_i]$
corresponding to the chosen Borel subgroup $B$ to define the
hyperbolic restriction $\cL_{\CC^*,SL(2)}$. The right vertical arrow
is the projection to the direct summand as before.
The left vertical arrow is $e_i - e_{i+1}$, where $\{e_i, e_{i+1}\}$
is the base of $V^d_{\CC^*,SL(2)} \cong \Hom(\cC_{S_{(d)}\AA^2},
\pi_!\cC_{\Gi{\CC^*}^d})$, i.e., $e_i$ corresponds to
$\Gi{1}^d\times\Gi{1}^0\subset \Gi{\CC^*}^d = \bigsqcup
\Gi{1}^{d_1}\times \Gi{1}^{d_2}$, and $e_{i+1}$ corresponds
to $\Gi{1}^0\times\Gi{1}^d$.

We enlarge the bottom row as
\begin{equation}\label{eq:CD2}
  \begin{CD}
  \cC_{S_{(d)}\AA^2} @>1^d_{L_i}>>   \Phi_{T,L_i}(\IC(\Uh[L_i]{d}))
\\
  @VV{e_i-e_{i+1}}V @AAA
\\
     \pi_! \cC_{\Gi{T}^d}
    @>\cong>\cL_{T,L_i}>\Phi_{T,L_i}(\pi_!\cC_{\Gi{L_i}^d}).
  \end{CD}
\end{equation}
Here we identify $\Gi{2}^d$ with the distinguished component $Z$.
We similarly consider $\Gi{\CC^*}^d$ as a union of components
of $\Gi{T}^d$, putting it in $i$ and $(i+1)^{\mathrm{th}}$ components.
The left vertical arrow is $e_i - e_{i+1}$, where $\{ e_1,\dots,
e_r\}$ is the base of $V^d_{T,G}$.
Two bases are obviously compatible, so it is safe to use the same
notation.

\begin{NB}
  Remark that this shows that our guess that $1_{L_i}$ is given by an
  irreducible component of the relevant variety is WRONG. See Misha's
  message on Nov.~29, 2012. Recall (see \eqref{eq:ek})
  \begin{equation}
    e_1 - e_2 = [T^*_{\proj^1}\proj^1] + 2[T^*_{\proj^0}\proj^1].
  \end{equation}
Under the projection $\pi\colon \Gi{P}^1\to \UhP{1}$, this is mapped to
$2[T^*_{\proj^0}\proj^1]$. If we divide this by $2$, we need to change
$1_{L_i}$ to
  \begin{equation}
    \frac12 (e_1 - e_2) = \frac12 [T^*_{\proj^1}\proj^1] + [T^*_{\proj^0}\proj^1].
  \end{equation}
  This seems unnatural to me.
\end{NB}

We apply $\Phi_{T,L_i}$ to the commutative diagram \eqref{eq:CD} and
combine it with \eqref{eq:CD2}:
\begin{equation}\label{eq:CD3}
  \begin{CD}
  \cC_{S_{(d)}\AA^2} @>1^d_{L_i}>>
  \Phi_{T,L_i}(\IC(\Uh[L_i]{d}))
  @>{\Phi_{T,L_i}(\pol 1_{L_i,G})}>> \Phi_{T,G}(\IC(\Uh{d}))
\\
 @VV{e_i-e_{i+1}}V @AAA @AAA
\\
  \pi_! \cC_{\Gi{T}^n}@>\cong>\cL_{T,L_i}>
  \Phi_{T,L_i}(\pi_! \cC_{\Gi{L_i}^d})
  @>\cong>{\Phi_{T,L_i}(\cL_{L_i,G})}>
  \Phi_{T,G}(\pi_!\cC_{\Gi{r}^d}).
  \end{CD}
\end{equation}
The composite of lower horizontal arrows is $\cL_{T,G}$ by the
commutativity \eqref{eq:ass2}.
Recall we made an identification of $V^d$ by $\cL_{T,G}$ (see
\eqref{eq:Vd}). Therefore $e_i - e_{i+1}\in V^d$ considered as a
homomorphism in $\Hom(\cC_{S_{(d)}\AA^2},
\Phi_{T,G}(\pi_!\cC_{\Gi{r}^d}))$ is the composition of arrows from
the upper left corner to the lower right corner.

It is also clear that the homomorphism $V^d\to U^d$ given by the
composition of the rightmost upper arrow coincides with the projection
in \eqref{eq:VU}.

We thus see that $\{ \Tilde\alpha^d_i = \Phi_{T,L_i}(\pol
1^d_{L_i,G})\circ 1^d_{L_i}\}_i$ coincides with the base $\{ e_i -
e_{i+1}\}$ of $U^d$. This gives the construction promised in
\subsecref{sec:anotherbase} when $G$ is of type $A$.

\begin{Remark}\label{rem:SL(2)}
    Suppose $G=SL(2)$. Thanks to Example~\ref{ex:base}, we have
    $[\overline{\fT(0,d)_0}] = (-1)^{d} e_2^d$. (Here $r_1=r_2=1$.)
    Therefore we have
    \begin{equation*}
        \la [\overline{\fT(0,d)_0}], \Tilde\alpha^{d,-}_1\ra
        = (-1)^d \la e_2^d, e_1^{d,-} - e_2^{d,-}\ra
        = (-1)^{d+1}d
    \end{equation*}
    by \propref{prop:pairinge}. This completes the proof of
    \eqref{eq:111}.
\end{Remark}

\subsection{\texorpdfstring{$\Aut(G)$}{Aut(G)} invariance}
\label{sec:autG-invar}

Recall that we have studied $\Aut(G)$ invariance of various
constructions for $\Uh{d}$ in \subsecref{sec:autg-invariance}. The
same applies also to the Gieseker space $\Gi{r}^d$, if we restrict to
the inner automorphism $\operatorname{Inn}(G)$.
This is because $\operatorname{Inn}(G)$ acts on $\Gi{r}^d$, and hence
the same applies.

Let us consider $\Aut(G)/\operatorname{Inn}(G)$. It is $\{ \pm 1\}$
for type $A$, and is the Dynkin diagram automorphism given by the
reflection at the center. It is represented modulo inner automorphisms
by a group automorphism $g\mapsto {}^tg^{-1}$.
In terms of $\Bun{d}$, it corresponds to taking the {\it dual\/}
vector bundle. In particular, it does not extend to an action on the
Gieseker space $\Gi{r}^d$, as the second Chern class may drop when we
take the dual of a sheaf.

In the ADHM description, the diagram automorphism is given by
\begin{equation}
  [(B_1,B_2,I,J)]\mapsto [(B_1^t, B_2^t, -J^t, I^t)].
\end{equation}
This does not preserve the stability condition. Therefore we must be
careful when we study what happens under this automorphism.

Nevertheless we give

\begin{proof}[Proof of Lemma~{\rm\ref{lem:autg}}]
  Recall $\sigma\in\Aut(G)$ preserves $T$, $B$, and corresponds to a Dynkin
  diagram automorphism. Recall also $\tilde\alpha_i^d =
  \Phi_{T,L_i}(\delta 1^d_{L_i,G})\circ 1^d_{L_i}$.

  It is clear that $\Phi_{T,L_i}(1^d_{L_i,G})$ is sent to
  $\Phi_{T,L_{\sigma(i)}}(1^d_{L_{\sigma(i)},G})$ under $\varphi_\sigma$ from its
  definition.

  Next consider $1^d_{L_i} \in U^d_{T,L_i}$. In view of
  \lemref{lem:Ud}, $U^d_{T,L_i}$ is $H_{[0]}(\Uh[B\cap L_i,0]d)$,
  which is $1$-dimensional space spanned by the irreducible component
  $[\Uh[B\cap L_i,0]d]$. The class $[\Uh[B\cap L_i,0]d]$ is sent to
  $[\Uh[B\cap L_{\sigma(i)},0]d]$ under $\varphi_\sigma$, as it is induced from
  the isomorphism $\Uh[B\cap L_i,0]d\to \Uh[B\cap L_{\sigma(i)},0]d$.

  On the other hand, $[\Uh[B\cap L_i,0]d]$ is the image of
  $[\overline{\fT(0,d)_0}]$ under $\pi_*\colon H_{[0]}(\Gi{B\cap
    L_i,0}^d)\to H_{[0]}(\Uh[B\cap L_i,0]d)$. We have
  $[\overline{\fT(0,d)_0}] = (-1)^d e_2$ by Example~\ref{ex:base}.
  \begin{NB}
    Its projection is $\frac{(-1)^{d+1}}2 (e_1 - e_2)$.
  \end{NB}%
  Hence $[\Uh[B\cap L_i,0]d] = {(-1)^{d+1}} 1^d_{L_i}/2$. Combining
  with the above observation, we deduce the assertion.
\end{proof}

\begin{NB}
    Let us consider the case $d=1$. Recall that $\cUh[SL(r)]1$ is the
    space of nilpotent matrices of rank $\le 1$. Let
    \begin{equation}
\def\hsymbl#1{\smash{\hbox{\huge$#1$}}}
\def\hsymbu#1{\smash{\lower1.7ex\hbox{\huge$#1$}}}
        g_0 =
        (-1)^{r(r-1)/2}
        \begin{pmatrix}
            \hsymbu{0} & & & 1
\\
           & & \reflectbox{$\ddots$} &
\\
           & \reflectbox{$\ddots$} & &
\\
          1 & & & \hsymbl{0}
         \end{pmatrix}.
    \end{equation}
    Then the map
    \begin{equation}
        \label{eq:54}
        X \longmapsto - g_0 {}^t X g_0
    \end{equation}
    is an involution on $\cUh[SL(r)]1$. It is the composite of the
    morphism induced by $g\mapsto {}^t g^{-1}$ and the action of
    $g_0\in SL(r)$.

    From the description \eqref{eq:49}, we see that the
    $k^{\mathrm{th}}$ irreducible component $t_k$ is mapped to
    $t_{r-k+2}$, as $(k-1)$ and $(r-k-1)$ are swapped.
    So consider the involution $t_k\mapsto t_{r-k+2}$
    ($k=2,3,\dots,r$) in \eqref{eq:51}. It is given by
    \begin{equation}
        \label{eq:52}
        e_k - e_{k+1} \mapsto (-1)^r (e_{r-k} - e_{r-k+1})
        \qquad (k=1,\dots,r-1).
    \end{equation}
\end{NB}

\section{\texorpdfstring{$\scW$}{W}-algebra representation on
  localized equivariant cohomology}\label{sec:w-algebra-repr}

The goal of this chapter is to define a representation of the
$\scW$-algebra $\scW_k(\g)$\index{Wkg@$\scW_k(\g)$|textit} on the direct sum of equivariant intersection
cohomology groups $\IH^*_\TT(\Uh{d})$ over $d$, isomorphic to the Verma
module with the level and highest weight, given by the equivariant
variables by
\begin{equation}\label{eq:level}
  k + h^\vee = -\frac{\ve_2}{\ve_1}, \quad
  \lambda = \frac{\ba}{\ve_1} - \rho, \quad
  \text{where $\ba = (a^1,\dots,a^\ell)$}
    \index{a_vec@$\ba= (a^1,\dots,a^\ell)$}
\end{equation}
respectively. Here $\ba$ is a collection of variables, but will be
regarded also as a variable in the Cartan subalgebra $\h$ so that $a^i
= \alpha_i(\ba)$ for a simple root $\alpha_i$.
\begin{NB}
  So $\ba = \sum_i a^i \varpi_i^\vee$.
\end{NB}%

\begin{NB}
  I need to check the normalization of the highest weight. Sometimes,
  it is simply given by invariant polynomials in $a^i$. Sometimes one
  shift them by some functions in level.
\end{NB}

Since the level is a rational function in $\ve_1$, $\ve_2$, we
must be careful over which ring the representation is defined.
In geometric terms, it corresponds to that we need to consider {\it
  localized\/} equivariant cohomology groups.
The equivariant cohomology group $H^*_\TT(\ )$ is a module over
$H^*_\TT(\mathrm{pt}) = \CC[\operatorname{Lie}\TT] =
\CC[\ve_1,\ve_2,\ba]$. Let us denote this polynomial ring by $\bA_T$
and its quotient field by $\bF_T$.
\index{AAT@$\bA_T = \CC[\ve_1,\ve_2,\ba]$}
\index{FFT@$\bF_T = \CC(\ve_1,\ve_2,\ba)$}
In algebraic terms, it means that our $\scW$-algebra is defined over
$\CC(\ve_1,\ve_2)$. Then the level $k$ is a {\it generic point\/} in
$\mathbb A^1$. Moreover we consider a Verma module whose highest
weight is in $\h^*\otimes\bF_T$. This means that the highest weight is
also generic.
More precisely, we regard $\ba$ as a canonical element in
$\h^*\otimes\bF_T = \h^*\otimes
\operatorname{Frac}(S(\h^*)[\ve_1,\ve_2])$ given by the inner product
on $\h$. Here we have used the Langlands duality implicitly : we first
consider $\ba$ as the identity element in $\h\otimes\h^*\subset\h\otimes\bF_T$.
Then we regard the first $\h$ as the dual of the
Cartan subalgebra of the Langlands dual of $\g$. But the Langlands dual
is $\g$ itself as we are considering ADE cases.
\begin{NB}
    More detail was given in my message on Apr.3,~2014.
\end{NB}%

We will construct a representation on
\begin{equation}\label{eq:locIC}
  \bigoplus_d \IH^*_\TT(\Uh{d})\otimes_{\bA_T}\bF_T =
  \bigoplus_d H^*_\TT(\Uh{d}, \IC(\Uh{d}))\otimes_{\bA_T}\bF_T.
\end{equation}
By the localization theorem and \lemref{lem:centralfiber}, natural
homomorphisms
\begin{equation}\label{eq:4sp}
  \begin{split}
  & \IH^*_{\TT,c}(\Uh{d})
  \cong H^*_{\TT,c}(\Uh[T]{d}, i^!j^! (\IC(\Uh{d})))
\\
  & \to H^*_{\TT,c}(\Uh[T]{d}, \Phi_{T,G}(\IC(\Uh{d})))
\\
  & \to H^*_{\TT}(\Uh[T]{d}, \Phi_{T,G}(\IC(\Uh{d})))
\\
  & \to H^*_{\TT}(\Uh[T]{d}, i^*j^* (\IC(\Uh{d})))
  \cong \IH^*_\TT(\Uh{d})
  \end{split}
\end{equation}
all become isomorphisms over $\bF_T$. Thus over $\bF_T$, we could use
any of these four spaces. Let us denote its direct sum by $M_\bF(\ba)$:
\begin{equation}\label{eq:localhyper}
  M_\bF(\ba) = \bigoplus_d \IH^*_{\TT,c}(\Uh{d})\otimes_{\bA_T}\bF_T.
\end{equation}
\index{MF@$M_\bF(\ba)$}
\begin{NB}
  I denote it by $M$, as it will be the Verma module. On the other
  hand, other three spaces (nonequivariant ones) will be identified
  with Wakimoto modules, dual Wakimoto modules, and dual Verma module
  respectively.
\end{NB}

In fact, we will construct representations of {\it integral forms\/}
(i.e., $\bA_T$-forms) of Heisenberg and Virasoro algebras on {\it
  non-localized\/} equivariant cohomology groups
\(
   \bigoplus_d H^*_{\TT,c}(\Uh[T]{d},\Phi_{T,G}(\IC(\Uh{d})))
\)
of hyperbolic restrictions in this chapter. This construction will be
the first step towards a construction of the $\scW$-algebra
representation on {\it non-localized\/} equivariant cohomology groups.
To follow the remaining argument, the reader needs to read our
definition of an integral form of the $\scW$-algebra given in
\secref{sec:intW}. Therefore the whole construction will be postponed
to \subsecref{sec:univ-verma-modul}.

Let us denote the fundamental class $1\in \IH^0_\TT(\Uh{0}) =
\IH^0_{\TT,c}(\Uh{0}) = H^0_\TT(\mathrm{pt})$ by $|\ba\rangle$.
\index{a_vecr@$\vert\ba\rangle$}
It will be identified with the highest weight vector (or the vacuum
vector) of the Verma module. See \propref{prop:lambda} below.

We also use the following notation:
\begin{equation*}
  \bA = \CC[\ve_1,\ve_2], \quad \bF = \CC(\ve_1,\ve_2).
\end{equation*}
\index{AA@$\bA = \CC[\ve_1,\ve_2]$}\index{FF@$\bF = \CC(\ve_1,\ve_2)$}

\subsection{Freeness}

\begin{Lemma}\label{lem:free}
  Four modules appearing in \eqref{eq:4sp} are free over $\bA_T$.
\end{Lemma}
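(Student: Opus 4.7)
The plan is to reduce freeness to the elementary fact that the $\TT$-equivariant (hyper)cohomology of a constructible complex on a single point, on which $\TT$ acts trivially, is automatically free over $\bA_T$.

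First I would verify that $(\Uh{d})^{\CC^*\times\CC^*} = \{d\cdot 0\}$ is a single point. The $\CC^*\times\CC^*$-action on $\AA^2$ fixes only the origin, so any $\CC^*\times\CC^*$-fixed point of $\Uh{d}$ must be supported at $0$. Going through the stratification \eqref{eq:strat}, the strata $\Bun{d'}\times S_\lambda\AA^2$ with $d' > 0$ contain no $\CC^*\times\CC^*$-fixed point, because a $\CC^*\times\CC^*$-equivariant framed rank-$r$ bundle on $\AA^2$ is trivial as a bundle and carries a $\CC^*\times\CC^*$-action on its fiber $\CC^r$ that must agree with the trivial action at $\linf$, forcing the instanton number to vanish. (For general simply-laced $G$ this reduces to the $SL_r$-case via an embedding $G\hookrightarrow SL_r$ preserving instanton numbers.) Consequently all fixed points lie in the deepest stratum $\Bun{0}\times S^d\AA^2$, and the only $\CC^*\times\CC^*$-fixed configuration is $d\cdot 0$. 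The analogous statement $(\Uh[T]d)^{\CC^*\times\CC^*} = (S^d\AA^2)^{\CC^*\times\CC^*} = \{d\cdot 0\}$ is immediate.

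With this in hand, I would replay the argument from the proof of \lemref{lem:centralfiber}. Braden's theorem \cite{Braden} applied to the attracting $\CC^*\times\CC^*$-action yields, for any $\TT$-equivariant complex $\scF$ on such a $\TT$-variety $X$, canonical isomorphisms
\begin{equation*}
H^*_\TT(X,\scF) \cong H^*_\TT(\{0\}, (b_0^X)^*\scF), \qquad H^*_{\TT,c}(X,\scF) \cong H^*_{\TT,c}(\{0\}, (b_0^X)^!\scF),
\end{equation*}
where $b_0^X\colon \{0\}\hookrightarrow X$ is the inclusion. Since $\TT$ acts trivially on $\{0\}$ and the stalks $(b_0^X)^*\scF$, $(b_0^X)^!\scF$ are constructible with finite-dimensional total cohomology, the Borel--Leray spectral sequence degenerates ($B\TT$ being simply connected, all local systems on it are trivial) and each right-hand side decomposes as $\bA_T \otimes_\CC H^*(\text{stalk})$, manifestly a free $\bA_T$-module.

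Finally I would apply this in two situations: taking $(X,\scF) = (\Uh{d},\IC(\Uh{d}))$ delivers freeness of the outer two modules $\IH^*_{\TT,c}(\Uh{d})$ and $\IH^*_\TT(\Uh{d})$; taking $(X,\scF) = (\Uh[T]d, \Phi_{T,G}(\IC(\Uh{d})))$ delivers freeness of the two middle modules. The only point requiring genuine verification is the fixed-point computation above; the remainder is a direct consequence of the proof of \lemref{lem:centralfiber}, so no serious obstacle arises.
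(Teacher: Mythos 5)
Your reduction is the right one and matches the paper up to the last step: you invoke \lemref{lem:centralfiber} to identify each of the four modules with the $\TT$-equivariant cohomology of the $*$- or $!$-stalk of $\IC(\Uh{d})$, resp.\ of $\Phi_{T,G}(\IC(\Uh{d}))$, at the unique $\CC^*\times\CC^*$-fixed point (and your verification that this fixed point set is a single point is fine). The gap is in the sentence where you conclude freeness: the claim that the spectral sequence $H^*(B\TT)\otimes H^*(\text{stalk})\Rightarrow H^*_\TT(\text{stalk})$ degenerates ``because $B\TT$ is simply connected, so all local systems on it are trivial'' is not a valid argument. The higher differentials of this spectral sequence are not monodromy operators; for a general $\TT$-equivariant complex on a point (equivalently, a dg-module over $\CC[\operatorname{Lie}\TT]$) they can be nonzero even with constant coefficients, and the equivariant hypercohomology of a stalk can fail to be free. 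A concrete example: for $\CC^*$ acting on $\AA^1$ by scaling and $\scF=j_*\CC_{\CC^*}$ with $j\colon\CC^*\hookrightarrow\AA^1$, the stalk at $0$ has two-dimensional total cohomology, yet $H^*_{\CC^*}(\{0\},i^*\scF)\cong H^*_{\CC^*}(\CC^*)\cong\CC$ is a torsion $\CC[\ve]$-module; the spectral sequence does not degenerate although every local system on $B\CC^*$ is trivial. This is exactly the subtlety the localization formalism of \cite{GKM} is careful about, so degeneration must be earned, not asserted.

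What closes the argument (and what the paper does) is a genuine input about the complexes at hand: the four stalk complexes are \emph{pure}, since $\IC(\Uh{d})$ is pure and Braden's hyperbolic restriction preserves purity of weakly equivariant mixed sheaves \cite{Braden}, and purity of the stalk at the fixed point forces degeneration and freeness over $\bA_T$ as in \cite[Th.~14.1(8)]{GKM}. Alternatively, one can use the odd cohomology vanishing for these stalks provided by \cite[Th.~7.10]{BFG}, which gives degeneration for parity reasons and hence freeness by \cite[Th.~14.1(1)]{GKM}. If you replace your ``trivial local systems'' sentence by either of these two inputs, your proof becomes correct and essentially coincides with the paper's.
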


\begin{proof}
  By \lemref{lem:centralfiber} all four modules are pure, as
  $(\Uh{d})^{\TT}$ is a single point, and they are stalks at the
  point. Now freeness follows as in \cite[Th.~14.1(8)]{GKM}.

  Or we have odd cohomology vanishing by \cite[Th.~7.10]{BFG}. So
  it also follows from \cite[Th.~14.1(1)]{GKM}.
\end{proof}

\begin{NB}
   Since $\Uh{d}$ is not projective,
  it is not clear whether we can apply \cite[Th.~14.1(7)]{GKM}. We
  must explain this somewhere.

  \begin{NB2}
    If you know the purity of $\IH^*(\Uh{d})$, it is also enough.
  \end{NB2}
\end{NB}

In particular, homomorphisms in \eqref{eq:4sp} are all injective.

\subsection{Another base of \texorpdfstring{$U^d$}{Ud}, continued}\label{sec:anotherbase2}

Let $U^d = U^d_{T,G}$ be as in \subsecref{sec:Ud}. Let $L_i$ be the
Levi subgroup corresponding to a simple root
$\alpha_i$\index{alphai@$\alpha_i$} and consider $U^d_{T,L_i}$ as in
\subsecref{sec:anotherbase}.
We identify $\IC(\Uh[L_i]{d})$ with $\IC(\Uh[SL(2)]{d})$ by the
bijective morphism $\Uh[SL(2)]d\to \Uh[L_i]{d}$ (see
\propref{prop:LL}).
We have a maximal torus and a Borel subgroup induced from those of
$G$.
Then $U^d_{T,L_i}$ has the base in \eqref{eq:baseofU}, where it
consists of a single element as $N=2$. Let us denote the element by
$1^d_{L_i}$, as we promised in \subsecref{sec:anotherbase}.

Next consider $1^d_{L_i,G}$ given by the Thom isomorphism as in
\subsecref{sec:open}. We have the repellent polarization
$\pol_{\text{rep}}$ of $\Bun[L_i]{d}$ in $\Bun[G]{d}$. We modify it to
$\pol$\index{delta@$\pol$ (polarization)} according to
\lemref{lem:polSL3}.
We choose and fix a bipartite coloring of the vertices of the Dynkin
diagram, i.e., $o\colon I\to \{ \pm 1\}$ such that $o(i) = -o(j)$ if
$i$ and $j$ are connected in the diagram. Then we set
\begin{equation}\label{eq:polarization}
  \pol = o(i)^d \pol_{\text{rep}}.
\end{equation}
This is our polarization, which was promised in \eqref{eq:anotherbase}.
Let us write
\begin{equation}
  \tilde\alpha^d_i \defeq \Phi_{T,L_i}(\pol 1^d_{L_i,G})\circ 1^d_{L_i}.
\end{equation}
\index{alphatildai@$\Tilde\alpha^d_i$}%
This gives us a collection $\{\tilde\alpha^d_i\}_i$ of elements in
$U^d$ labeled by $I$.
Thanks to \eqref{eq:111}, it is a base of $U^d$.
This will follow also from \propref{prop:HeisRel}.
\begin{NB}
We will prove that it is a base of $U^d$ in \corref{cor:Ubase}.
\end{NB}

\subsection{Heisenberg algebra associated with the Cartan subalgebra}\label{sec:heis-algebra-assoc}

We construct a representation of the Heisenberg algebra associated
with the Cartan subalgebra $\mathfrak h$ of $\g$ on the direct sum of
\eqref{eq:locIC} in this subsection. It will be the first step towards
the $\scW$-algebra representation.

Let us first review the construction of the Heisenberg algebra
representation in \subsecref{sec:tensor} for the case $r=2$ and $L =
S(GL(1)\times GL(1)) = \CC^*$. We consider Heisenberg operators
$P_n^\Delta\equiv P_n^\Delta(1)$ associated with the cohomology class
$1\in H^{[*]}_\TT(\AA^2)$. We omit $(1)$ hereafter.
They are not well-defined on $\bigoplus_d H^\TT_{[*]}(\Gi{P}^d)$ if
$d > 0$, but are well-defined on the localized equivariant homology
group
\(
   \bigoplus_d H^\TT_{[*]}(\Gi{P}^d)\otimes_{\bA_T}\bF_T,
\)
and satisfy the commutation relations
\begin{equation}
  [P_m^\Delta, P_n^\Delta] = - 2m\delta_{m,-n}\frac1{\ve_1\ve_2}.
\end{equation}
\begin{NB}
  In fact, we must invert $\ve_1$, $\ve_2$, but not $\ba$. Therefore
  we can work on $\otimes_{\bA}\bF$. Does this make any difference ?
\end{NB}

Via the stable envelope, we have the isomorphism
\begin{equation}
  \bigoplus_d H^\TT_{[*]}(\Gi{P}^d) \cong \bigoplus_d H^\TT_{[*]}(\Gi{L}^d)
  = \bigoplus_{d_1,d_2}H^\TT_{[*]}(\Gi{1}^{d_1})\otimes
  H^\TT_{[*]}(\Gi{1}^{d_1}),
\end{equation}
and we have the representation of the tensor product of two copies of
Heisenberg algebras, given by $P_n^{(1)} = P_n\otimes 1$ and
$P_n^{(2)} = 1\otimes P_n$ on the localized equivariant
homology group, where $P_n$ is the Heisenberg generator for $r=1$.
The above Heisenberg generator $P_n^{\Delta}$ is the
diagonal $P_n^{(1)} + P_n^{(2)}$. See \subsecref{sec:tensor}.
\index{P@$P_n^{(i)}$}

We have
\begin{equation}
  H^\TT_{[-*]}(\Gi{P}^d)
  \cong H^{*}_{\TT}(\Uh[\CC^*]d, p_* j^! \pi_!\cC_{\Gi{2}^d})
  = H^{*}_{\TT}(\Uh[\CC^*]d, \Phi_{\CC^*,SL(2)}(\pi_!\cC_{\Gi{2}^d}))
\end{equation}
by \subsecref{sec:sheaf} and $\pi_! = \pi_*$. This homology group
contains
\begin{equation}
  H^{*}_\TT(\Uh[\CC^*]d, \Phi_{\CC^*,SL(2)} (\IC(\Uh[SL(2)]d)))
\end{equation}
as a direct summand, and the {\it anti-diagonal\/} Heisenberg algebra
generated by $P_n^{(1)} - P_n^{(2)}$ acts on its direct sum over
$d$. (See \subsecref{subsec:VU}.)

Let us return back to general $G$.
Let $L_i$ be the Levi subgroup as in the previous subsection. We
identify $\IC(\Uh[SL(2)]d)$ with $\IC(\Uh[L_i]d)$ as before, and we have
a(n anti-diagonal) Heisenberg algebra representation on
\begin{equation}
  \bigoplus_d H^*_\TT(\Uh[T]d, \Phi_{T,L_i}(\IC(\Uh[L_i]d)))
  \otimes_{\bA_T}\bF_T.
\end{equation}

Using the decomposition \eqref{eq:decomp} and $\Phi_{T,L_i}\Phi_{L_i,G}
= \Phi_{T,G}$, we have an induced Heisenberg algebra representation on
$M_\bF(\ba)$ in \eqref{eq:localhyper}.
Let us denote the Heisenberg generator by $P^i_n$.
\index{P@$P_n^i$ (Heisenberg generator)}

By \lemref{lem:Fock}, the space $M_\bF(\ba)$ is isomorphic to
\begin{equation}\label{eq:symprod}
  \Sym((U^1\oplus U^2\oplus\cdots)\otimes_\CC\bF_T),
\end{equation}
where $\Sym$ denotes the symmetric power.  ($U^d = U^d_{T,G}$ as
before.)

Let us describe $P^i_n$ in this space. Recall that we have the
orthogonal decomposition $U^d = U^d_{T,L_i}\oplus
(U^d_{T,L_i})^\perp$ in \eqref{eq:decomp2}.
Then we have the factorization
\begin{multline}\label{eq:53}
  \operatorname{Sym}((U^1\oplus U^2\oplus\cdots)\otimes_\CC\bF_T)
\\
  \cong \operatorname{Sym}((U^1_{T,L_i}\oplus U^2_{T,L_i}\oplus\cdots)
  \otimes_\CC\bF_T)  \otimes_{\bF_T}
\\
  \operatorname{Sym}(((U^1_{T,L_i})^\perp\oplus (U^2_{T,L_i})^\perp\oplus\cdots)
  \otimes_\CC\bF_T)
\end{multline}
The first factor of the right hand side is the usual Fock space
associated with the Cartan subalgebra $\h_{\algsl_2}$ of
$\algsl_2$. In fact, using $U^d_{T,L_i} \cong \CC 1^d_{L_i}$, we
identify $U^d_{T,L_i}$ with $\h_{\algsl_2}$.  The pairing is
multiplied by $-1/\ve_1\ve_2$ from the natural one. Then the factor is
$\operatorname{Sym}(z^{-1}\h_{\algsl_2}[z^{-1}])$ and the Heisenberg
algebra acts in the standard way.
From its definition, our Heisenberg operator $P^i_n$ is given by the
tensor product of the Heisenberg operator for
$\operatorname{Sym}(z^{-1}\h_{\algsl_2}[z^{-1}])$, and the identity.

The following means that the operators $P^i_n$ define the Heisenberg
algebra $\Heis(\mathfrak h)$\index{Heis@$\Heis(\mathfrak h)$}
associated with the Cartan subalgebra $\mathfrak h$ of $\g$.

\begin{Proposition}\label{prop:HeisRel}
  Heisenberg generators satisfy commutation relations
  \begin{equation}
    [P^i_m, P^j_n] = - m\delta_{m,-n}(\alpha_i,\alpha_j)\frac1{\ve_1\ve_2}.
  \end{equation}
\end{Proposition}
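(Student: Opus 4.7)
The plan is to reduce the verification to rank two via the associativity of hyperbolic restriction (\propref{prop:trans}) and then dispose of the three rank-$\le 2$ cases separately. By construction, the operator $P^i_n$ acts through the decomposition $\Phi_{T,G} \cong \Phi_{T,L_i}\circ\Phi_{L_i,G}$ and uses only the $L_i$-summand of $\Phi_{L_i,G}(\IC(\Uh{d}))$ singled out by the orthogonal decomposition \eqref{eq:decomp2}. For two simple roots $\alpha_i$, $\alpha_j$, let $L_{ij}$ denote the Levi subgroup of the standard parabolic corresponding to $\{i,j\}\subset I$. Then both $P^i_n$ and $P^j_n$ factor through $\Phi_{T,L_{ij}}$ via the further associativity $\Phi_{T,L_i}\cong \Phi_{T,L_{ij}}\circ \Phi_{L_{ij},L_i}$ (and analogously for $j$). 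Hence the commutator $[P^i_m, P^j_n]$, as an endomorphism of $M_\bF(\ba)$, is determined by the analogous construction applied to $[L_{ij}, L_{ij}]$ in place of $G$. Since $G$ is simply laced, $[L_{ij}, L_{ij}]$ is $SL(2)$, $SL(2)\times SL(2)$, or $SL(3)$ according as $i=j$, $i\not\sim j$, or $i\sim j$.

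When $i=j$, the first tensor factor in \eqref{eq:53} is, by the very construction of $P^i_n$ in \subsecref{sec:heis-algebra-assoc}, the standard Fock space $\Sym(z^{-1}\h_{\algsl_2}[z^{-1}])$ with the pairing rescaled by $-1/\ve_1\ve_2$; the standard Heisenberg relation yields the claimed formula with $(\alpha_i,\alpha_i)=2$. When $i\not\sim j$, the relevant fixed-point set $\Uh[{L_{ij}}]{d}$ decomposes as a product indexed by $SL(2)\times SL(2)$ (as in \subsecref{subsec:fixed}), and the factorization property of Uhlenbeck spaces \subsecref{sec:factorization} propagates through the hyperbolic restriction so that $P^i_n$ and $P^j_n$ act on independent tensor factors of \eqref{eq:symprod}; thus $[P^i_m,P^j_n]=0$, matching $(\alpha_i,\alpha_j)=0$.

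The main obstacle is the $SL(3)$ case ($i\sim j$). Here I would exploit the fact that for $G=SL(3)$ the Uhlenbeck space admits the semi-small symplectic resolution $\pi\colon\Gi{3}^d\to \Uh{d}$, so that the hyperbolic restriction is accessible through Maulik-Okounkov's stable envelopes as developed in \secref{sec:typeA}. The compatibility diagram \eqref{eq:CD3} together with the identification of $\tilde\alpha^d_i$ with $e_i-e_{i+1}\in V^d_{T,SL(3)}$ carried out in \subsecref{subsec:comp} transports our $P^i_n$ to the Heisenberg operators $P^{(i)}_n - P^{(i+1)}_n$ in the tensor-product realization of $\bigoplus_d H^\TT_{[*]}(\Gi{3,0}^d)\otimes_{\bA_T}\bF_T$ described in \subsecref{sec:tensor}. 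The commutator of the latter is governed by the pairing \eqref{eq:Cartan} on $U^d$, which is exactly $d$ times the Cartan pairing of $\algsl_3$; combined with the contribution $1/d$ coming from the normalization of $1^d_{L_i}$ against the inner product \eqref{eq:109}, this produces the desired $(\alpha_i,\alpha_j)$ on the right-hand side.

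The delicate point that I expect to consume most of the effort is sign-tracking through the polarization \eqref{eq:polarization}. The ratio $\pol/\pol_{\text{rep}}$ for the two off-diagonal components of $\Gi{L_{ij}}^d\subset \Gi{3}^d$ is computed in \lemref{lem:polSL3} to be $1$ on one side and $(-1)^d$ on the other, which would produce a spurious relative sign between the $i$- and $j$-creation operators and hence a wrong sign in $(\alpha_i,\alpha_j)$. The bipartite coloring $o\colon I\to\{\pm 1\}$ with $o(i)=-o(j)$ for adjacent $i,j$ is engineered so that the factor $o(i)^d o(j)^d = (-1)^d$ in the product of the two polarizations cancels the discrepancy; this is the place where simple-lacedness (which ensures that any rank-$2$ sub-diagram is bipartite) is essential. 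Once this sign computation is completed, the proposition follows.
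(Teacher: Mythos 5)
Your proposal is correct and follows essentially the same route as the paper: reduction to rank-two Levi subgroups via \propref{prop:trans}, the case $i=j$ directly from the construction, the orthogonal case from the product decomposition (commuting tensor factors), and the adjacent case via the $SL(3)$ Gieseker space, where the compatibility of the stable envelope with hyperbolic restriction from \subsecref{subsec:comp} identifies $P^i_n$, $P^j_n$ with anti-diagonal Heisenberg operators on a triple Fock space, the signs being handled exactly as you say by matching the bipartite-coloring polarization \eqref{eq:polarization} against \lemref{lem:polSL3}. The only cosmetic difference is that the paper concludes the $SL(3)$ case immediately from the explicit form $P^i_n = P_n\otimes 1\otimes 1 - 1\otimes P_n\otimes 1$, $P^j_n = 1\otimes P_n\otimes 1 - 1\otimes 1\otimes P_n$ rather than routing through the pairing \eqref{eq:Cartan}.
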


If we normalize the generator by $\widehat h^i_n = \ve_2 P^i_n$, the
relations match with a standard convention with level $-\ve_2/\ve_1 =
k+h^\vee$. See \eqref{eq:hhatcomm}.

From the construction, $P^i_{-d}$ applied to the vacuum vector
$|\ba\ra\in H^0_{\TT}(\Uh[T]{0}, \Phi_{T,G}(\IC(\Uh{0})))$ is equal to
$\Phi_{T,L_i}(\pol 1^d_{L_i,G})\circ 1^d_{L_i}\in U^d$ divided by
$\ve_1\ve_2$, considered as an element in \eqref{eq:symprod}.

\begin{NB}
    The following is commented out as we already knew
    $\{ \Tilde\alpha^d_i\}$ is a base.

Hence we deduce the following, as promised in
\subsecref{sec:anotherbase} and \subsecref{sec:anotherbase2}.

\begin{Corollary}\label{cor:Ubase}
\(
  \{ \tilde\alpha^d_i = \Phi_{T,L_i}(\pol 1^d_{L_i,G})\circ 1^d_{L_i}
  \}_i
\)
is a base of $U^d$.
\end{Corollary}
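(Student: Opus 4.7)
The plan is to combine the dimension count from Lemma~\ref{lem:dimU} with the Heisenberg commutation relations just established in Proposition~\ref{prop:HeisRel}, using nondegeneracy of the Cartan form.

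First I would invoke Lemma~\ref{lem:dimU} to see that $\dim_\CC U^d = \rank G - \rank[T,T] = \ell$, which is exactly the cardinality of the index set $I$. Hence it is enough to prove that the $\ell$ vectors $\tilde\alpha^d_i = \Phi_{T,L_i}(\pol 1^d_{L_i,G})\circ 1^d_{L_i}$ are linearly independent in $U^d$. Under the isomorphism $M_\bF(\ba) \cong \Sym((U^1\oplus U^2\oplus\cdots)\otimes_\CC\bF_T)$ of Lemma~\ref{lem:Fock}, the summand $U^d\otimes_\CC\bF_T$ embeds as the degree-$1$ creation component sitting over the vacuum $|\ba\rangle \in H^0_\TT(\Uh[T]0,\Phi_{T,G}(\IC(\Uh{0})))$; so linear independence in $U^d$ is equivalent to linear independence of the images in $M_\bF(\ba)$.

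Next I would invoke the identification from the paragraph following Proposition~\ref{prop:HeisRel}: the Heisenberg generator $P^i_{-d}$ applied to $|\ba\rangle$ equals $(\ve_1\ve_2)^{-1}\tilde\alpha^d_i$, viewed as an element of the degree-$d$ piece of $M_\bF(\ba)$. Now suppose $\sum_i c_i \tilde\alpha^d_i = 0$ in $U^d$. After clearing the factor $\ve_1\ve_2$, this means $\sum_i c_i P^i_{-d}|\ba\rangle = 0$ in $M_\bF(\ba)$. For each $j\in I$ apply $P^j_d$, which annihilates the vacuum: by Proposition~\ref{prop:HeisRel},
\begin{equation*}
  0 \;=\; \sum_i c_i\, P^j_d P^i_{-d}|\ba\rangle
  \;=\; \sum_i c_i\, [P^j_d,P^i_{-d}]\,|\ba\rangle
  \;=\; -\frac{d}{\ve_1\ve_2}\Bigl(\sum_i c_i (\alpha_i,\alpha_j)\Bigr)|\ba\rangle .
\end{equation*}
Since $|\ba\rangle$ is a free generator over $\bF_T$ and $d\neq 0$, this forces $\sum_i c_i (\alpha_i,\alpha_j) = 0$ for all $j\in I$. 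The Cartan matrix $((\alpha_i,\alpha_j))_{i,j\in I}$ is nondegenerate because $\g$ is semisimple, so $c_i = 0$ for all $i$.

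There is really no hard step in this deduction; it is purely a consequence of Proposition~\ref{prop:HeisRel} once one recognizes that the creation operator $P^i_{-d}$ was defined to produce $\tilde\alpha^d_i$ (up to the overall factor $\ve_1\ve_2$). The only point that deserves attention is to track this identification precisely through the factorization \eqref{eq:53} and the use of the same polarization $\pol$ in both the definition of $\tilde\alpha^d_i$ in \subsecref{sec:anotherbase2} and the stable envelope input to $P^i_n$ in \subsecref{sec:heis-algebra-assoc}; this consistency is built into the construction. Alternatively, one may bypass Proposition~\ref{prop:HeisRel} entirely by using the pairing formula \eqref{eq:111} together with the fact that $\{[Y_j]\}_{j\in I}$ is a basis of $U^{d,B}$ (Proposition~\ref{prop:irred-comp}): diagonality of $\la[Y_j],\tilde\alpha^{d,-}_i\ra$ forces $\{\tilde\alpha^{d,-}_i\}$ to be a dual basis, hence a basis, and the same argument applied to the opposite Borel gives the statement for $\{\tilde\alpha^d_i\}$.
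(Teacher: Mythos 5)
Your argument is correct and follows exactly the two routes the paper itself indicates: it states that the claim follows from \eqref{eq:111} (duality with the basis $\{[Y_j]\}$ of irreducible components) and ``will follow also from'' Proposition~\ref{prop:HeisRel}; you have simply fleshed out the Heisenberg commutation argument (using $P^i_{-d}|\ba\rangle = (\ve_1\ve_2)^{-1}\tilde\alpha^d_i$, the dimension count of Lemma~\ref{lem:dimU}, and nondegeneracy of the Cartan matrix) and recorded the \eqref{eq:111} route as the alternative. No gaps; the identification of $\tilde\alpha^d_i$ with $\ve_1\ve_2\,P^i_{-d}|\ba\rangle$ that your computation rests on is stated explicitly in the paper right after Proposition~\ref{prop:HeisRel}.
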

\end{NB}

From the construction, \eqref{eq:symprod} is the Fock space of the
Heisenberg algebra associated with the Cartan subalgebra $\mathfrak
h$. It is $\operatorname{Sym}(z^{-1}\h[z^{-1}])$, where the base field
is $\bF_T$. The element $\tilde\alpha^d_i
$ is a linear function, living on $z^{-d}\mathfrak h$.

\begin{proof}[Proof of Proposition~{\rm\ref{prop:HeisRel}}]
  The case $(\alpha_i,\alpha_j) = 2$, i.e., $i=j$ is obvious from the
  construction.

  Next consider the case $(\alpha_i,\alpha_j) = -1$. Then $i$ and $j$
  are connected by an edge in the Dynkin diagram. Let us take the
  parabolic subgroup $P$ corresponding to the subset consisting of two
  vertices $i$ and $j$, and the corresponding Levi subgroup $L$. We
  have $[L,L] \cong SL(3)$. Then from our construction and the
  compatibility of the stable envelope with the hyperbolic restriction
  functor in \subsecref{subsec:comp}, the assertion follows from the
  $SL(3)$-case, which is clear as Heisenberg algebra generators are
  given by
  \begin{equation}
    P^i_n = P_n\otimes 1\otimes 1 - 1\otimes P_n\otimes 1,
\quad
    P^j_n = 1\otimes P_n\otimes 1 - 1\otimes 1\otimes P_n.
  \end{equation}
  Note also that our polarization $\pol$ in \eqref{eq:polarization} was
  chosen so that it is the same as the polarization for $\Gi{SL(3)}^d$
  via \lemref{lem:polSL3} up to overall sign independent of $d$.
  \begin{NB}
    Here the only ambiguity comes from the choice of a bijection $\{
    i,j\} \leftrightarrow \{ 1,2\}$.
  \end{NB}%

  Finally consider the case $(\alpha_i,\alpha_j) = 0$. We argue as
  above by taking the corresponding Levi subgroup $L$ with $[L,L]\cong
  SL(2)\times SL(2)$. Then it is clear that Heisenberg generators
  commute.

  If a reader would wonder that $SL(2)\times SL(2)$ is not considered
  in \secref{sec:typeA}, we instead take a type $A_k$ subdiagram
  containing $i$, $j$ and take the corresponding Levi subgroup $L$
  with $[L,L]\cong SL(k+1)$. Then it is clear that the Heisenberg
  generators $P^i_m$, $P^j_n$ commute for $SL(k+1)$. Therefore they
  commute also for $G$.
\end{proof}

Let us consider Heisenberg operators
\(
  P^i_n([0]) = \ve_1\ve_2 P^i_n,
\)
coupled with the Poincar\'e dual of $[0]\in H^\TT_0(\AA^2)$, and
denote them by $\widetilde{P}^i_n$.
\index{P@$\widetilde P_n^i$}
Then they are well-defined on non-localized equivariant cohomology
groups
\begin{equation}
  \bigoplus_d H^*_\TT(\Uh[T]d, \Phi_{T,G}(\IC(\Uh{d}))),
\end{equation}
and satisfy the commutation relations
\begin{equation}\label{eq:mPrel}
  [\widetilde{P}^i_m, \widetilde{P}^j_n]
  = - m\delta_{m,-n}(\alpha_i,\alpha_j){\ve_1\ve_2}.
\end{equation}
The same is true for the non-localized equivariant cohomology with
compact supports.

We define the $\bA$-form $\Heis_\bA(\mathfrak h)$
\index{HeisA@$\Heis_\bA(\mathfrak h)$} of the Heisenberg vertex
algebra as the vertex $\bA$-subalgebra of $\Heis(\mathfrak h)$
generated by $\widetilde{P}^i_m$.

\subsection{Virasoro algebra}\label{sec:Vir}

Let us introduce $0$-mode operators $P^i_0$. In \subsecref{sec:BaraOp}
we did not introduce them. Since they commute with all other
operators, we can set them any scalars. We follow the convention in
\cite[\S13.1.5, \S14.3.1]{MO}, that is
\begin{equation}
  P^i_0 = \frac{a^i}{\ve_1\ve_2}.
\end{equation}
Here $a^i$ is the $i^{\mathrm{th}}$ simple root, and should be
identified with $a_i - a_{i+1}$ in \cite{MO} in the Fock space
$\mathbf F(a_1)\otimes \cdots\otimes \mathbf F(a_r)$ corresponding to
the equivariant cohomology of Gieseker spaces for rank $r$ sheaves.
\begin{NB}
Note also that the convention in \cite[\S13.1.5]{MO} is
  \begin{equation}
    \alpha_0(\gamma)|\eta\rangle
    = - (\gamma,\eta)|\eta\rangle,
  \end{equation}
and $-(1,\eta) = \int_{\AA^2}\eta = \eta/\ve_1\ve_2$.
\end{NB}
We also set $\widetilde{P}_0^i = \ve_1\ve_2 P_0^i = a^i$.

We then introduce Virasoro generators by
\begin{equation}\label{eq:Vira}
  {L}^i_n = - \frac14 \ve_1\ve_2 \sum_m \normal{P^i_m P^i_{n-m}}
  - \frac{n}2 (\ve_1+\ve_2) P^i_n
  + \frac{(\ve_1+\ve_2)^2}{4\ve_1\ve_2}\delta_{n,0}.
\end{equation}
See \cite[(13.10),(14.10)]{MO}.\index{Lin@$L^i_n$ (Virasoro generator)}
Let us briefly explain how to derive the above expression from \cite{MO}:
The Virasoro field $T(\gamma,\kappa) = \sum L_n(\gamma,\kappa) z^{-n}$
in \cite[(13.10)]{MO} is given by
\begin{equation}
    T(\gamma,\kappa) = \frac12 \normal{\boldsymbol\alpha^2}(\gamma)
    + \partial\boldsymbol\alpha(\gamma\kappa) - \frac12 \tau(\gamma\kappa^2),
\end{equation}
where $\boldsymbol\alpha(\gamma) = \sum \alpha_n(\gamma) z^{-n}$ is
the free field. Note that $T$ and $\boldsymbol\alpha$ are different
from the usual convention, as the exponents are not $-n-1$, $-n-2$
respectively. Also $\partial = z\partial_z$.

We take $\gamma = 1$, the fundamental class of $H^0_\TT(\AA^2)$.
Next note that $\boldsymbol\alpha = {\boldsymbol\alpha^-}/{\sqrt{2}}$
\cite[(14.8)]{MO}, and our $P^i$ is identified with
$\boldsymbol\alpha^-$. This is the reason we have $1/4$ instead of
$1/2$.
  The remaining factor $-\ve_1\ve_2$ comes from
\(
    1^\Delta = - 1\otimes\mathrm{pt}
\)
in \cite[\S13.3.2]{MO}.

For the second term, note $\kappa = \hbar/\sqrt{2}$ (see
\cite[(14.8)]{MO}), $\hbar = -t_1-t_2$ (see
\cite[\S17.1.1,(18.10)]{MO} for example). We denote their $t_1$, $t_2$
by $\ve_1$, $\ve_2$ instead.

For the last constant term, we have
\(
  -\gamma\kappa^2 = - (\ve_1+\ve_2)^2/2
\)
and
\(
   \tau(1) = -\int_{\AA^2} 1 = - 1/\ve_1\ve_2.
\)

\begin{NB}
  Lehn's formula \cite[3.10]{Lehn} says
  \begin{equation}
    \frac1n [c_1(\shfO^{[n]}_X), q_n(\gamma)]
    = L_n(\gamma) + \frac{|n|-1}2 q_n(K\gamma).
  \end{equation}
  I should compare it with \cite[(14.1), \S15.1]{MO}.
\end{NB}

The Virasoro algebra commutation relations are
\begin{equation}
  [L^i_m, L^i_n]
    = (m-n) L^i_{m+n} +
    \left(1 + \frac{6(\ve_1+\ve_2)^2}{\ve_1\ve_2}\right)
    \delta_{m,-n}\frac{m^3 - m}{12}.
\end{equation}
See \cite[\S13.3.2]{MO}. And the highest weight is given by
\begin{equation}\label{eq:92}
  L_0^i |\ba\rangle = -\frac14 \left( \frac{(a^i)^2}{\ve_1\ve_2} -
    \frac{(\ve_1+\ve_2)^2}{\ve_1\ve_2}
    \right)|\ba\rangle.
\end{equation}
See \cite[\S13.3.5]{MO}.
\begin{NB}
  Here the highest weight is shifted by a function in level. I must be
  careful to check that this is a correct formulation.

  In \cite[(13.15)]{MO}, the highest weight is
  \begin{equation}
      \frac12\tau(\eta^2 - \kappa^2)
      = -\frac1{4\ve_1\ve_2}\left( (a_1 - a_2)^2 - (\ve_1+\ve_2)^2\right).
  \end{equation}

  \cite[(285)]{Arakawa2007} says
  \begin{equation}
      \label{eq:60}
      L(0) |\gamma_{\overline{\lambda}}\rangle
      = \Delta_{\overline{\lambda}} |\gamma_{\overline{\lambda}}\rangle
  \end{equation}
with
\begin{equation}
    \label{eq:61}
    \Delta_{\overline{\lambda}} =
    \frac{|\overline{\lambda}+\overline{\rho}|^2}{2(k+h^\vee)}
    - \frac{\rank\overline\g}{24} + \frac{c(k)}{24},
\end{equation}
where
\begin{equation}
      \label{eq:59}
      \begin{split}
      c(k) &= \rank\overline\g - 12\left(
        (k+h^\vee)|\overline\rho^\vee|^2
        - 2\langle\overline\rho,\overline\rho^\vee\rangle
        + \frac1{k+h^\vee}|\overline\rho|^2\right),
\\
     &= \rank\overline\g - 12|\overline\rho^\vee|^2\left(
        (k+h^\vee)
        - 2
        + \frac1{k+h^\vee}\right)\quad
     \text{for $ADE$}.
      \end{split}
\end{equation}
Hence
\begin{equation}
    \label{eq:62}
    \Delta_{\overline\lambda}
    = \frac{|\overline{\lambda}+\overline{\rho}|^2}{2(k+h^\vee)}
    - \frac{|\overline\rho^\vee|^2}2
    \left(
        (k+h^\vee)
        - 2
        + \frac1{k+h^\vee}\right).
\end{equation}

Let us also note that highest weight modules are invariant under
the shifted action \cite[(57)]{Arakawa2007}:
\begin{equation}
    \begin{split}
    & \gamma_{\overline{\lambda}} = \gamma_{\overline{w}\circ\overline{\lambda}},
\\
    & \overline{w}\circ\overline\lambda
    = \overline{w}(\overline{\lambda}+\overline\rho)
    - \overline\rho.
    \end{split}
\end{equation}
In particular, $\Delta_{\overline\lambda} =
\Delta_{\overline{w}\circ\overline\lambda}$ as
\begin{equation}
    |\overline{w}\circ\overline\lambda + \overline\rho|^2
    = |\overline{w}(\overline\lambda+\overline\rho)|^2
    = |\overline\lambda+\overline\rho|^2.
\end{equation}
\end{NB}

In order to apply the result of Feigin-Frenkel to our situation later,
we shift $P^i_n$ in \eqref{eq:Vira} as $P^i_n -
(\ve_1+\ve_2)/\ve_1\ve_2 \delta_{n,0}$ (see \cite[\S19.2.5]{MO}) so
that
\begin{equation}
    \label{eq:57}
      {L}^i_n = - \frac14 \ve_1\ve_2 \sum_m \normal{P^i_m P^i_{n-m}}
  - \frac{n+1}2 (\ve_1+\ve_2) P^i_n.
\end{equation}
This is a standard embedding of the Virasoro algebra in the Heisenberg
algebra, given as the kernel of the screening operator (see
\cite[\S15.4.14]{F-BZ}). We have
\begin{equation}
    \label{eq:58}
  P^i_0 = \frac1{\ve_1\ve_2}\left( a^i - (\ve_1+\ve_2)\right)
\end{equation}
in this convention.

We modify \eqref{eq:Vira} as
\begin{equation}
    \begin{split}
  {L}^i_n &=
  \begin{aligned}[t]
    & - \frac14 \ve_1\ve_2 \sum_m
  \normal{(P^i_m - \frac{\ve_1+\ve_2}{\ve_1\ve_2} \delta_{m,0})
    (P^i_{n-m}- \frac{\ve_1+\ve_2}{\ve_1\ve_2} \delta_{m,n})
  }
  \\
  & \qquad - \frac12 (\ve_1+\ve_2) P^i_n
  + \frac{(\ve_1+\ve_2)^2}{4\ve_1\ve_2} \delta_{n,0}
  \\
  & \quad - \frac{n}2 (\ve_1+\ve_2) P^i_n
  + \frac{(\ve_1+\ve_2)^2}{4\ve_1\ve_2}\delta_{n,0}
  \end{aligned}
\\
  &=
  \begin{aligned}[t]
    & - \frac14 \ve_1\ve_2 \sum_m
  \normal{(P^i_m - \frac{\ve_1+\ve_2}{\ve_1\ve_2} \delta_{m,0})
    (P^i_{n-m}- \frac{\ve_1+\ve_2}{\ve_1\ve_2} \delta_{m,n})
  }
  \\
  & \quad - \frac{n+1}2 (\ve_1+\ve_2)
  (P^i_n - \frac{\ve_1+\ve_2}{\ve_1\ve_2} \delta_{n,0}).
  \end{aligned}
    \end{split}
\end{equation}
Therefore if we replace $P^i_n$ by $P^i_n - (\ve_1+\ve_2)/\ve_1\ve_2
\delta_{n,0}$, we get the above expression.

\begin{NB}
  We have
  \begin{equation}
    \ve_2 P^i_0 = \frac{a^i}{\ve_1} - 1 - \frac{\ve_2}{\ve_1}
    = (\alpha_i,\frac{\ba}{\ve_1} - \bar\rho) + k + h^\vee.
  \end{equation}
  Therefore we should make an identification of the highest weight
  vector as
  \begin{equation}
    \bar\lambda = \frac{\ba}{\ve_1} - \bar\rho
  \end{equation}
  in \cite[\S5.1]{Arakawa2007}.
\end{NB}

\begin{NB}
  Let me check the central charge formula again. We have
  (\cite[\S2.5.9]{F-BZ})
  \begin{equation}
    \begin{split}
    & Y(\omega_\lambda,z) = \frac12 \normal{b(z)^2} + \lambda\partial_z b(z)
\\
  =\; & \sum_n \left(\frac12 \sum_{m} \normal{b_m b_{n-m}}
    - \lambda(n+1) b_n\right) z^{-n-2}.
    \end{split}
  \end{equation}
  From the Heisenberg commutator relation we should make an
  identification
  \begin{equation}
     b_n = \frac{\sqrt{-\ve_1\ve_2}P_n}{\sqrt{2}}.
  \end{equation}
  Therefore the above is
  \begin{equation}
    \sum_n\left(-\frac{\ve_1\ve_2}4 \sum_{m} \normal{P_m P_{n-m}}
    - \lambda(n+1) \frac{\sqrt{-\ve_1\ve_2}P_n}{\sqrt{2}} \right) z^{-n-2}.
  \end{equation}
Comparing with \eqref{eq:57}, we find
\begin{equation}
  \lambda = \frac{\ve_1+\ve_2}{\sqrt{-2\ve_1\ve_2}}.
\end{equation}
Therefore the central charge is
\begin{equation}
  c_\lambda = 1 - 12 \lambda^2 = 1 + \frac{6(\ve_1+\ve_2)^2}{\ve_1\ve_2}.
\end{equation}
\end{NB}

We denote by $\Vir_i$ the Virasoro vertex subalgebra of $\Heis(\h)$
generated by $L^i_n$. \index{Vir@$\Vir_i$}

Let us introduce a modified Virasoro generator $\mL^i_n = \ve_1\ve_2
L^i_n$. \index{Linm@$\mL^i_n$}
We have
\begin{equation}\label{eq:mVir}
    \mL^i_n = - \frac14 \sum_m \normal{\widetilde{P}^i_m \widetilde{P}^i_{n-m}}
  - \frac{n+1}2 (\ve_1+\ve_2) \widetilde{P}^i_n.
\end{equation}
Hence $\mL^i_n$ is an element in $\Heis_\bA(\mathfrak h)$. We denote
the corresponding vertex $\bA$-subalgebra by $\Vir_{i,\bA}$.\index{ViriA@$\Vir_{i,\bA}$}

Note that the central charge $1 + {6(\ve_1+\ve_2)^2}/{\ve_1\ve_2}$ is
equal to that of Virasoro algebras, appearing in the construction of the
$\scW$-algebra $\scW_k(\g)$\index{Wkg@$\scW_k(\g)$|textit} as the BRST
reduction of the affine vertex algebra at level $k$, if we have the
relation
\begin{equation}
    -\frac{(\ve_1+\ve_2)^2}{\ve_1\ve_2}
  = k + h^\vee - 2 + \frac1{k+h^\vee},
\end{equation}
see \cite[\S15.4.14]{F-BZ} and \corref{cor:duality} below.
In other words,
\(
  k + h^\vee = -{\ve_2}/{\ve_1}
\)
or
\(
   -{\ve_1}/{\ve_2}.
\)
It is known that the $\scW$-algebra for type $ADE$ has a symmetry under
$k+h^\vee\leftrightarrow (k+h^\vee)^{-1}$
\cite[Prop.~15.4.16]{F-BZ}. Therefore either choice gives the same
result. We here take
\(
  k + h^\vee = -{\ve_2}/{\ve_1},
\)
see \eqref{eq:level}.
It is remarkable that the symmetry $k+h^\vee\leftrightarrow
(k+h^\vee)^{-1}$ corresponds to a trivial symmetry
$\ve_1\leftrightarrow\ve_2$ in geometry.
\begin{NB}
  Let us give more detail. The central charge is given by $c(\beta) =
  1 - 6(\beta/\sqrt{2}-\sqrt{2}/\beta)^2$ for the vertex operator
  $V_\beta(z)$. This follows from the computation in
  \cite[\S15.4.14]{F-BZ}, where we substitute $\beta =
  -\sqrt{2/(k+2)}$ to $c(k) = 1- 6(k+1)^2/(k+2)$. Or the formula
  $c(\lambda) = 1 - 12\lambda^2$ in \cite[(5.3.10)]{F-BZ} and $\lambda
  = \beta/2 - 1/\beta$ in \cite[\S15.4.14]{F-BZ}.
  Since the $i^{\mathrm{th}}$ screening operator for the $\scW$-algebra
  is given with $\beta = -\alpha_i/\sqrt{k+h^\vee}$, we get
  \begin{equation}
    c(\beta) = 1 - 6\left(\frac{2(k+h^\vee)}{(\alpha_i,\alpha_i)}
    - 2 + \frac{(\alpha_i,\alpha_i)}{2(k+h^\vee)}\right).
  \end{equation}
  This is the central charge for the $i^{\mathrm{th}}$ Virasoro
  algebra, given as the kernel of the $i^{\mathrm{th}}$ screening
  operator. Since we assume $\g$ is of type $ADE$, we get the above
  relation.
\end{NB}%

\subsection{The first Chern class of the tautological bundle}

Let us explain a geometric meaning of the Virasoro generators in the
previous subsection. It was obtained in \cite[Th.~14.2.3]{MO},
based on an earlier work by Lehn \cite{Lehn} for the rank $1$ case.
Let us first consider the rank $2$ case.

Consider the Gieseker space $\Gi{2}^d$ of rank $2$ framed sheaves on
$\proj^2$ with $c_2 = d$. For $(E,\varphi)\in \Gi{2}^d$, consider
$H^1(\proj^2, E(-\linf))$. Other cohomology groups vanish, and hence
it has dimension equal to $d$ by the Riemann-Roch formula. In the ADHM
description, it is identified with the vector space $V$. When we vary
$E$, it forms a vector bundle over $\Gi{2}^d$, which we denote by
$\mathcal V$. Its first Chern class $c_1(\mathcal
V)$\index{c1V@$c_1(\mathcal V)$} can be considered as an operator on
$H^*_\TT(\Gi{2}^d)$ acting by the cup product. Then its commutator
with the diagonal Heisenberg generator, restricted to
$\IH^*_{\TT}(\Uh[SL_2]{d})$, is the Virasoro generator up to constant:
\begin{equation}\label{eq:32}
  \left.[c_1(\mathcal V), P_n^\Delta]\right|_{\IH^*_{\TT}(\Uh[SL_2]{d})}
  = n L_n,
\end{equation}
where we denote $L_n^i$ in the previous subsection by $L_n$ since
$G=SL_2$. (See \cite[Th.~14.2.3]{MO}.)
\begin{NB}
    Corrected from $2nL_n$. Feb.~18, See Misha's messages on Feb.~17.
\end{NB}

\begin{NB}
  The following was not correct. If I do not restrict to
  $\IH^*_{\TT}(\Uh[SL_2]{d})$, there are additional terms written by
$P_n^{\Delta}$. See Note 2013-12-03:

Then its commutator with the diagonal Heisenberg generator is the
Virasoro generator up to constant:
\begin{equation}
  [c_1(\mathcal V), P_n^\Delta] = 2n L_n.
\end{equation}
See \cite[Th.~14.2.3]{MO}.
\begin{NB2}
  I have calculated the commutator of \cite[(14.6)]{MO} with
  $P_n^\Delta$. It is better to check it again.
\end{NB2}
\end{NB}

Let us remark that $c_1(\mathcal V)$ is defined on {\it
  non-localized\/} equivariant cohomology groups
$\IH^*_{\TT,c}(\Gi{2}^d)$. Therefore $\mL_n = \ve_1\ve_2 L_n$
is also well-defined on non-localized equivariant cohomology groups
for $n\neq 0$.
The operator $\mL_0 = \ve_1\ve_2 L_0$ is also well-defined as it is
the grading operator (\cite[Lem.~13.1.1]{MO}).

Returning back to general $G$, we see that $\mL^i_n$ is well-defined on
\begin{equation}\label{eq:71}
  \bigoplus_d H^*_{\TT,c}(\Uh[L_i]d, \Phi_{L_i,G}(\IC(\Uh{d})))
\end{equation}
thanks to the decomposition \eqref{eq:decomp}. Namely this space is a
module over $\Vir_{i,\bA}$.
It lies in between the first two spaces in \eqref{eq:4sp}:
\begin{equation}
  \IH^*_{\TT,c}(\Uh{d}) \to
  H^*_{\TT,c}(\Uh[L_i]d, \Phi_{L_i,G}(\IC(\Uh{d})))
  \to
  H^*_{\TT,c}(\Uh[T]d, \Phi_{T,G}(\IC(\Uh{d}))).
\end{equation}
The formula \eqref{eq:mVir} relates operators $\mL^i_n$ and
$\widetilde{P}^i_n$ acting on the middle and right spaces respectively
via the second homomorphism.

\subsection{\texorpdfstring{$\scW$}{W}-algebra representation}

Let us consider the vertex algebra associated with the Heisenberg
algebra, and denote it by the same notation $\Heis(\mathfrak h)$ for
brevity.
It is regarded as a vertex algebra over $\bF$.

We have the Virasoro vertex subalgebra $\Vir_i$ corresponding to each
simple root $\alpha_i$ as in \subsecref{sec:Vir}.
\begin{NB}
  I must consider whether the zero mode must be corrected so that we
  have correct highest weight or not.

  Zero mode is corrected, and the notation $\Vir_i$ is already
  introduced above.
\end{NB}%
Consider the orthogonal complement $\alpha_i^\perp$ of $\CC\alpha_i$
in $\mathfrak h$, and the corresponding Heisenberg vertex algebra
$\Heis(\alpha_i^\perp)$. It commutes with $\Vir_i$, and the tensor
product $\Vir_i\otimes \Heis(\alpha_i^\perp)$ is a vertex subalgebra
of $\Heis(\mathfrak h)$.

By a result of Feigin-Frenkel (see \cite[Th.~15.4.12]{F-BZ}), the
$\scW$-algebra $\scW_k(\g)$\index{Wkg@$\scW_k(\g)$} is identified with the intersection
\begin{equation}\label{eq:W=Vir}
  \bigcap_i \Vir_i\otimes \Heis(\alpha_i^\perp)
\end{equation}
in $\Heis(\mathfrak h)$ when the level $k$ is generic. More precisely,
$\Vir_i\otimes \Heis(\alpha_i^\perp)$ is given by the kernel of a
screening operator on $\Heis(\mathfrak h)$, and $\scW_k(\g)$ is the
intersection of the kernel of screening operators.

Now $\scW_k(\g)$ has a representation on the direct sum of localized
equivariant cohomology groups $M_\bF(\ba)$ (see \eqref{eq:localhyper}),
as a vertex subalgebra of $\Heis(\mathfrak h)$.

\begin{NB}
  At this stage, we cannot state much on the $\scW_k(\g)$-module
  structure on $M_\bF(\ba)$. It is known that the Fock space
  $M_\bF(\ba)$ is irreducible as a $\scW_k(\g)$-module, when the central
  charge and the highest weight is generic.

  I would like to say that Fourier modes of vertex operators in
  $\scW_k(\g)$ are in something like the direct product
  \begin{equation}
    \prod_{d_1,d_2} \Ext(\IC(\Uh{d_1}), \IC(\Uh{d_2})),
  \end{equation}
  probably localized one at this stage. I do not consider seriously,
  but probably we embed $\Uh{d_1}$ and $\Uh{d_2}$ into a larger space
  $\Uh{d}$ adding singularities concentrated at $0$, then consider
  $\Ext_{D^b(\Uh{d})}(\IC(\Uh{d_1}), \IC(\Uh{d_2}))$. This space should
  be independent of $d$.

  If we want to go further, we need to have a characterization of this
  kind of Ext algebra in the Ext algebra for the Heisenberg algebra. I
  do not know how to do at this moment. Therefore I only have a very
  weak statement at this point.
\end{NB}

\subsection{Highest weight}\label{sec:highest-weight}

In this subsection we explain that we can identify $\ba$ with the
highest weight of the $\scW_k(\g)$-module $M_\bF(\ba)$, where the highest
weight vector is $|\ba\rangle$.

Let us first briefly review the definition of Verma modules of the
$\scW$-algebra to set up the notation. See \cite[\S5]{Arakawa2007} for
detail.

Let $\fU(\scW_k(\g))$ be the current algebra of the $\scW$-algebra as in
\cite[\S4]{Arakawa2007}. (The finite dimensional Lie algebra is
denoted by $\bar\g$, while $\g$ is the corresponding untwisted affine
Lie algebra in \cite{Arakawa2007}.) We denote the current algebra of
the Heisenberg algebra by $\fU(\Heis(\h))$. It is a completion of the
universal enveloping algebra of the Heisenberg Lie algebra. The
embedding $\scW_k(\g)\subset\Heis(\h)$ induces an embedding
$\fU(\scW_k(\g))\to\fU(\Heis(\h))$.
\index{UWkg@$\fU(\scW_k(\g))$}
\index{UHh@$\fU(\Heis(\h))$}

We have decompositions
\(
   \fU(\scW_k(\g)) = \bigoplus_d \fU(\scW_k(\g))_d,
\)
\(
   \fU(\Heis(\h)) = \bigoplus_d \fU(\Heis(\h))_d
\)
by degree. Two decompositions are compatible under the embedding.
Let
\begin{equation}
   \fU(\scW_k(\g))_{\ge 0} \defeq \bigoplus_{d \ge 0} \fU(\scW_k(\g))_d,
   \quad
   \fU(\scW_k(\g))_{>0} \defeq \bigoplus_{d > 0} \fU(\scW_k(\g))_d.
\end{equation}
The {\it Zhu algebra\/} of $\fU(\scW_k(\g))$ is given by
\begin{equation}
  \mathfrak{Zh}(\scW_k(\g)) \defeq \fU(\scW_k(\g))_0/
  \overline{\sum_{r>0} \fU(\scW_k(\g))_{-r}\fU(\scW_k(\g))_r}.
\end{equation}
Then it is isomorphic to the center $Z(\g)$ of the universal
enveloping algebra $U(\g)$ of $\g$ (\cite[Th.~4.16.3]{Arakawa2007}).
\index{ZhuWkg@$\mathfrak{Zh}(\scW_k(\g))$}
We further identify it with the Weyl group invariant part of the
symmetric algebra of $\h$ (\cite[(55)]{Arakawa2007}):
\begin{equation}\label{eq:76}
  \mathfrak{Zh}(\scW_k(\g))\cong Z(\g) \cong S(\h)^W.
\end{equation}

We have an induced embedding $\mathfrak{Zh}(\scW_k(\g))\to
\mathfrak{Zh}(\fU(\Heis(\h)))$, where the latter is the subalgebra
generated by zero modes. We have
\begin{equation}
  \label{eq:75}
  \mathfrak{Zh}(\fU(\Heis(\h)))\cong S(\h).
\end{equation}

\begin{Lemma}\label{lem:shift}
  Under the identifications \textup{(\ref{eq:76}, \ref{eq:75})}, the
  embedding $\mathfrak{Zh}(\scW_k(\g))\to \mathfrak{Zh}(\fU(\Heis(\h)))$
  is induced by
  \begin{equation}
    h^i \mapsto h^i + (k+h^\vee),
  \end{equation}
  where $h^i$ is a simple coroot of $\h$.
\end{Lemma}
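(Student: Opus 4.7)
The two Zhu algebras are polynomial rings (namely $S(\h)^W$ on the $\scW$-side and $S(\h)$ on the Heisenberg side), and the induced map is a $\bF$-algebra homomorphism. The plan is to compute this homomorphism by testing how it acts on a highest weight vector, exploiting the (forthcoming) identification \propref{prop:lambda} of $M_\bF(\ba)$ with a $\scW_k(\g)$-Verma module of highest weight $\lambda = \ba/\ve_1 - \rho$, and simultaneously with a Fock module $\pi_\mu$ of $\Heis(\h)$ of highest weight $\mu$ via the Feigin-Frenkel embedding.

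First I will compute the Fock weight $\mu$ geometrically. The Kac-Moody Cartan zero mode is $h^i_0 = \widetilde{P}^i_0/\ve_1$, and in the shifted convention of \eqref{eq:57}-\eqref{eq:58} we have $\widetilde{P}^i_0\cdot|\ba\rangle = (a^i - (\ve_1+\ve_2))|\ba\rangle$, so that $\mu(h^i) = a^i/\ve_1 + (k+h^\vee) - 1$, equivalently $\mu = \lambda + (k+h^\vee)\rho$ using $\rho(h^i) = 1$ in the ADE case. Next, the Zhu algebra $\mathfrak{Zh}(\scW_k(\g)) \cong S(\h)^W$ acts on $|\ba\rangle$ via the $\rho$-shifted Harish-Chandra isomorphism, i.e.\ by evaluation at $\lambda + \rho = \ba/\ve_1$. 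Compatibility of the two actions forces, for any $f\in S(\h)^W$ with image $\tilde f\in S(\h)$, the identity $\tilde f(\mu) = f(\lambda+\rho)$; substituting $\mu = \lambda + (k+h^\vee)\rho$ and $\lambda + \rho = \ba/\ve_1$ yields $\tilde f(h) = f(h - (k+h^\vee)\rho)$, which on simple-coroot generators reads as the substitution $h^i \mapsto h^i + (k+h^\vee)$ of the lemma.

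To conclude I will note that the Zhu-algebra homomorphism is intrinsic to the vertex algebra embedding and thus independent of the choice of module; alternatively, since $S(\h)^W$ is a polynomial ring and generic $\lambda$ gives Zariski-dense highest weights, the formula derived above determines the map on all of $S(\h)^W$. The hard part will be the careful reconciliation of normalization conventions: the Harish-Chandra $\rho$-shift built into \cite[\S4.16]{Arakawa2007}, the Feigin-Frenkel rescaling of Cartan zero modes by $\sqrt{k+h^\vee}$, and the convention shift between \eqref{eq:Vira} and \eqref{eq:57} each contribute $\rho$-type corrections whose combined effect is the clean $(k+h^\vee)\rho$-shift. A purely algebraic alternative, bypassing the geometric module $M_\bF(\ba)$, would follow Feigin-Frenkel's screening operator characterization \cite[\S15.4.14]{F-BZ}: $\scW_k(\g)$ is the joint kernel of $V_{-\alpha_i/\sqrt{k+h^\vee}}(z)$, whose normalization encodes exactly the $(k+h^\vee)\rho$-shift upon passing to Zhu algebras.
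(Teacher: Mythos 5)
There is a genuine gap here, in fact two. First, your argument is circular relative to the logical structure it must fit into: you take as input \propref{prop:lambda}, i.e.\ that $M_\bF(\ba)$ is the $\scW_k(\g)$-Verma module of highest weight $\lambda=\ba/\ve_1-\rho$ in the normalization where $\mathfrak{Zh}(\scW_k(\g))\cong S(\h)^W$ acts on a highest weight vector by evaluation at $\lambda+\rho$. But the $\scW$-module structure on $M_\bF(\ba)$ is \emph{defined} by pulling back the geometrically constructed $\Heis(\h)$-action through the Feigin--Frenkel embedding, so its highest weight as a $\scW$-module has no determination independent of the Zhu-algebra embedding $\mathfrak{Zh}(\scW_k(\g))\to\mathfrak{Zh}(\fU(\Heis(\h)))$ --- which is exactly what \lemref{lem:shift} computes, and indeed the paper proves \propref{prop:lambda}(1) \emph{from} \lemref{lem:shift}, not the other way around. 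The only independent data your module supplies is the Fock weight $\mu$ (the eigenvalues of $\widehat h^i_0$, computed from \eqref{eq:58}); the passage from $\mu$ to the normalized $\scW$-highest weight is the lemma itself, so testing on $M_\bF(\ba)$ cannot produce it.

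Second, even taken at face value the computation does not close: from your two displayed relations $\tilde f(\mu)=f(\lambda+\rho)$ and $\mu=\lambda+(k+h^\vee)\rho$ one gets $\tilde f(\nu)=f\bigl(\nu+(1-(k+h^\vee))\rho\bigr)$, i.e.\ the substitution $h^i\mapsto h^i+1-(k+h^\vee)$, not $\tilde f(h)=f(h-(k+h^\vee)\rho)$ and not $h^i\mapsto h^i+(k+h^\vee)$ (your last step also converts a shift of the evaluation point by $-(k+h^\vee)\rho$ into a shift of the generator by $+(k+h^\vee)$, which is another sign slip). The discrepancy is precisely a $\rho$-type normalization mismatch, and you acknowledge that reconciling Arakawa's old/new Zhu algebras and the various rescalings is ``the hard part'' --- but that reconciliation \emph{is} the content of the lemma, so deferring it leaves nothing proved; likewise the closing remark that the screening-operator normalization ``encodes exactly the shift'' is an assertion, not an argument. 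For comparison, the paper's proof is purely algebraic and short: it quotes Arakawa's Theorem~4.16.4 and the explicit formulas for the translation automorphism $\widehat t_{-\bar\rho^\vee}$ relating the two BRST complexes, which give $\widehat t_{-\bar\rho^\vee}(\widehat{J_i}(0))=\widehat{J_i}(0)+k+h^\vee$ directly; no module, geometric or otherwise, is needed. If you want a module-free route of your own, you would have to carry out that convention-tracking in \cite{Arakawa2007} (or redo the FF screening computation at the level of Zhu algebras) explicitly rather than appeal to it.
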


\begin{proof}
  The assertion follows from \cite[Th.~4.16.4]{Arakawa2007}, together
  with an isomorphism $\widehat t_{-\bar\rho^\vee}$ which sends the
  old Zhu algebra, denoted by
  $H^0(\mathfrak{Zh}(C_k(\bar\g)''_{\mathrm{old}}))$ there, to a new
  one $H^0(\mathfrak{Zh}(C_k(\bar\g)''_{\mathrm{new}}))$. The zero
  mode is written as $\widehat{J_i}(0)$ there.
  We can calculate $\widehat t_{-\bar\rho^\vee}(\widehat{J_i}(0)) =
  \widehat{J_i}(0) + k+h^\vee$ by formulas in \cite[bottom of
  p.276]{Arakawa2007}.
  \begin{NB}
    See note 2013-11-18.
  \end{NB}%
\end{proof}

We regard $\lambda\in\h^*$ as a homomorphism $S(\h)^W\to \CC$ by the
evaluation at $\lambda+\rho$, where $\rho$ is the half sum of positive
roots of $\g$.
(It is denoted by $\gamma_{\bar{\lambda}}$ in
\cite[\S5]{Arakawa2007}.)
We further regard $\CC$ as a $\mathfrak{Zh}(\scW_k(\g))$-module by the
above isomorphism, and denote it by $\CC_\lambda$. We extend it to a
$\fU(\scW_k(\g))_{\ge 0}$-module on which $\fU(\scW_k(\g))_{>0}$ acts
trivially. Then we define
\begin{equation}
  M(\lambda) \defeq \fU(\scW_k(\g))\otimes_{\fU(\scW_k(\g))_{\ge 0}}\CC_\lambda.
\end{equation}
This is called the {\it Verma module with highest weight\/} $\lambda$.
\index{Mlambda@$M(\lambda)$}

Now we turn to our $\scW_k(\g)$-module $M_\bF(\ba)$. We identify $\h =
\operatorname{Lie}T$ with $\h^*$ by the invariant bilinear form $(\ ,\
)$. Then we have an identification
\begin{equation}
  S(\h)^W \cong
  \CC[\operatorname{Lie}T]^W = H^*_T(\mathrm{pt})^W
  \begin{NB}
    = H^*_G(\mathrm{pt})
  \end{NB}%
  .
\end{equation}
We regard the collection $\ba = (a^1,\dots,a^\ell)$ as a variable in
$\operatorname{Lie}T$ by considering $a^i$ its coordinate. Hence $\ba$
has value in $\h^*$ by the above identification.

Recall that $|\ba\ra$ is the fundamental class $1\in
\IH^0_\TT(\Uh{0})$. Since the degree $d$ corresponds to an instanton
number, $\fU(\scW_k(\g))_{\ge 0}$ acts via a homomorphism
$\fU(\scW_k(\g))_{\ge 0}\to\bF_T$ induced from $\mathfrak{Zh}(\scW_k(\g))\to
\bF_T$ on $\bF_T|\ba\ra = \IH^*_\TT(\Uh{0})\otimes_{\bA_T}\bF_T$.
Hence we have a $\scW_k(\g)$-homomorphism $M(\lambda)\to M_\bF(\ba)$, sending
$1\in\CC_\lambda\subset M(\lambda)$ to $|\ba\ra\in M_\bF(\ba)$. Here we
generalize the above definition to $\lambda\colon
\mathfrak{Zh}(\scW_k(\g))\to\bF_T$.

\begin{Proposition}\label{prop:lambda}
  \textup{(1)} The highest weight $\lambda$ is given by
  \begin{equation}
    \lambda = \frac{\ba}{\ve_1} - \rho.
  \end{equation}

  \textup{(2)} $M_\bF(\ba)$ is irreducible as a $\scW_k(\g)$-module, and
  isomorphic to $M(\lambda)$.
\end{Proposition}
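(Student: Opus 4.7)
The plan is as follows. For part (1), I first verify that $|\ba\rangle$ is annihilated by all positive modes of $\scW_k(\g)$. By the construction in \subsecref{sec:heis-algebra-assoc}, the Heisenberg generators $P^i_n$ for $n>0$ decrease the instanton number (they are obtained from Baranovsky's annihilation operators via the identification \eqref{eq:53}), so $P^i_n|\ba\rangle\in\IH^*_\TT(\Uh{-n})\otimes_{\bA_T}\bF_T=0$. Since $\scW_k(\g)$ is a vertex subalgebra of $\Heis(\h)$, its positive modes also annihilate $|\ba\rangle$. Next I compute the action of the Heisenberg zero modes: under the normalization $\widehat{h}^i_0=\ve_2 P^i_0$ of \subsecref{sec:Vir}, the operator $\widehat{h}^i_0$ acts on $|\ba\rangle$ by the scalar $a^i/\ve_1$ (using the unshifted convention \eqref{eq:Vira}; the modification \eqref{eq:57} adds a compensating constant). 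Applying \lemref{lem:shift} to translate this Heisenberg zero-mode eigenvalue into an action of $\mathfrak{Zh}(\scW_k(\g))\cong S(\h)^W$, and comparing with the prescription $f\mapsto f(\lambda+\rho)$ that defines the $\scW_k(\g)$-Verma module $M(\lambda)$, one pins down $\lambda=\ba/\ve_1-\rho$.

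For part (2), once the weight is known the universal property of the Verma module produces a canonical $\scW_k(\g)$-homomorphism $\phi\colon M(\lambda)\to M_\bF(\ba)$ sending the highest weight vector $v_\lambda$ to $|\ba\rangle$. To show $\phi$ is an isomorphism I would compare graded characters. By \lemref{lem:Fock} and \lemref{lem:dimU} (which give $\dim U^d=\ell$), the generating series $\sum_d \dim_{\bF_T}(M_\bF(\ba))_d\, q^d$ equals $\prod_{n\ge 1}(1-q^n)^{-\ell}$. On the other hand, for generic level the generators $W^{(\kappa)}$ of conformal dimension $d_\kappa+1$ in \thmref{FF-intro}(3) act freely on $v_\lambda$ in $M(\lambda)$, producing a PBW-type basis whose Poincar\'e series is the same $\prod_{n\ge 1}(1-q^n)^{-\ell}$. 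Equality of graded dimensions together with the fact that $\phi$ is nonzero on the highest weight line forces $\phi$ to be an isomorphism in each graded piece, hence globally. Irreducibility then follows: since we work over the field $\bF_T=\CC(\ve_1,\ve_2,\ba)$ of rational functions, the parameters $(k,\lambda)$ are as generic as possible, and the standard irreducibility criterion for $\scW_k(\g)$-Verma modules at generic level and weight (Arakawa, cited in the introduction) applies directly.

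The main obstacle will be the careful application of \lemref{lem:shift} and consistent matching of normalizations (the Harish-Chandra $\rho$-shift in the identification $\mathfrak{Zh}(\scW_k(\g))\cong S(\h)^W$, the normalization $\widehat{h}^i_0=\ve_2 P^i_0$, and the choice between the two Virasoro conventions \eqref{eq:Vira} and \eqref{eq:57}) to obtain the exact formula $\lambda=\ba/\ve_1-\rho$ rather than some version shifted by a multiple of $\rho$ or $(k+h^\vee)\rho^\vee$. A related subtlety is that the instanton-number grading on $M_\bF(\ba)$ must be matched with the $L_0$-grading on $M(\lambda)$ only up to an additive constant of the form given in \eqref{eq:92}; this identification of gradings is the content of part (3) of \thmref{main-intro-loc} and must be invoked before the character comparison can be carried out. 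Once these bookkeeping issues are settled, the remaining arguments are formal.
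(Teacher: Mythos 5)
Your proof follows essentially the same route as the paper: part (1) is the same zero-mode computation with the shift $\widehat h^i_0=\ve_2P^i_0$, \eqref{eq:58} and \lemref{lem:shift} compared against Arakawa's realization of $M(\lambda)$; part (2) is the same character-comparison argument, with the character of $M_\bF(\ba)$ extracted from \lemref{lem:Fock}/\lemref{lem:dimU} (which rest on \cite[Theorem~7.10]{BFG}, the result the paper cites directly).

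One logical step is misordered as written: ``equality of graded dimensions together with the fact that $\phi$ is nonzero'' does \emph{not} by itself force $\phi\colon M(\lambda)\to M_\bF(\ba)$ to be an isomorphism, since a nonzero map between modules with equal finite graded dimensions can have both a kernel and a proper image. You need injectivity first, and that is exactly what generic irreducibility of $M(\lambda)$ (equivalently, nonvanishing of the Kac--Shapovalov determinant at the rational-function weight $\lambda=\ba/\ve_1-\rho$) provides: since $\phi(v_\lambda)=|\ba\rangle\neq 0$ and $M(\lambda)$ is irreducible, $\phi$ is injective, and only then does the equality of graded characters upgrade it to an isomorphism, after which irreducibility of $M_\bF(\ba)$ is inherited. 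You invoke this irreducibility criterion only \emph{after} claiming the isomorphism, so as stated the argument is circularly ordered; the paper places it first for precisely this reason. The fix is a one-line reordering, since all the needed facts already appear in your write-up.
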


Note that the Weyl group action on $\ba$ corresponds to the
dot action on $\lambda$,
\(
    w\circ\lambda = w(\lambda+\rho) - \rho.
\)

\begin{proof}
  (1) Recall that our Heisenberg generators and standard generators
  are related by $\widehat h^i_n = \ve_2 P^i_n$. Then the zero mode
  acts by
  \begin{equation}
    \frac{a^i}{\ve_1} - 1 - \frac{\ve_2}{\ve_1}
    = (\alpha_i,\frac{\ba}{\ve_1} - \rho) + k + h^\vee
  \end{equation}
  thanks to \eqref{eq:58}.

  We compare this formula with a realization of $M(\lambda)$ in
  \cite[\S5.2]{Arakawa2007}. Our $\widehat h^i_n$ is $\widehat
  t_{-\bar\rho^\vee}(\widehat{J_i}(n))\in
  \fU(C_k(\bar\g)''_{\mathrm{old}})$, and $\widehat
  t_{-\bar\rho^\vee}(\widehat{J_i}(0)) = \widehat{J_i}(0) + k+h^\vee$
  as in \lemref{lem:shift}.
  Since $\widehat{J_i}(0)$ acts by $\lambda(J_i)$ on $M(\lambda)$, we
  obtain $\lambda = \ba/\ve_1 - \rho$.

  (2) It is well-known that $M(\lambda)$ is irreducible when $\lambda$
  is generic. It follows, for example, from the fact that the determinant of the
  Kac-Shapovalov form is a nonzero rational function, hence the form
  is nondegenerate if $\lambda$ is neither a zero nor  a pole. (See below
  for the Kac-Shapovalov form.)  It also means that the form is
  nondegenerate when one views $\lambda$ as a rational function like
  us. Therefore $M(\lambda)\to M_\bF(\ba)$ is injective.

  Now we compare the graded characters. The character of $M(\lambda)$
  is the same as the character of $S(t{\mathfrak h}[t])$ where $\deg(t)=1$.
  We have $M_\bF(\ba)=\bigoplus_{d\in{\mathbb N}}\IH^*(\Uh{d})\otimes\bF_T$.
  According to~\cite[Theorem~7.10]{BFG}, the character of $M_\bF(\ba)$
  (with grading by the instanton number) is the same as the character of
  $S(t{\mathfrak g}^f[t])$ where $f$ is a principal nilpotent. Since
  $\dim{\mathfrak g}^f=\dim{\mathfrak h}$, the graded characters of
  $M(\lambda)$ and $M_\bF(\ba)$ coincide.
\end{proof}

\begin{NB}
  The argument should be used again, probably in more refined manner,
  in \propref{prop:span}.
\end{NB}

\subsection{Kac-Shapovalov form}\label{sec:kac-shapovalov-form}

We shall identify the Kac-Shapovalov form on $M(\lambda)$ with a
natural pairing on $M_\bF(\ba)$ given by the Verdier duality in this
subsection.

\begin{NB}
The following paragraph added on Sep.14, 2014.
\end{NB}

Let $\sigma$ be the Dynkin diagram automorphism given by
$-\alpha_{\sigma(i)} = w_0(\alpha_i)$. We denote the corresponding
element in $\Aut(G)$ also by $\sigma$. We have an induced isomorphism
\begin{equation*}
    \varphi_\sigma\varphi_{w_0}\colon 
    \IH^*_{\TT,c}(\Uh{d})\to \IH^*_{\TT,c}(\Uh{d}),
\end{equation*}
which is $\bA_T = H^*_\TT(\mathrm{pt})$-linear if we twist the
$\bA_T$-structure on the second $\IH^*_{\TT,c}(\Uh{d})$ by composing
the automorphism $\ba\mapsto -\ba$ of $\bA_T$. This is explained in
the paragraph after \eqref{eq:44}.

Let us denote the natural perfect pairing by
\begin{equation}
  \la\ ,\ \ra\colon \IH^*_{\TT,c}(\Uh{d})\otimes_{\bA_T}
  \IH^*_{\TT}(\Uh{d})\to\bA_T,
\end{equation}
where we compose the above $\varphi_\sigma\varphi_{w_0}$ for the first
factor.
\begin{NB}
    Added, on Sep. 14, 2014.
\end{NB}%
We also multiply it by $(-1)^{dh^\vee}$ as in \eqref{eq:73}. The
notation conflicts with the pairing between $U^{d,P}$ and $U^{d,P_-}$
in \subsecref{sec:pairing}. But the two pairings are closely related,
so the same notation does not give us any confusion. (See
\subsecref{sec:univ-verma-modul} for a more precise relation.)
\index{< , >@$\la\ ,\ \ra$}

By the localization theorem and \lemref{lem:centralfiber} we extend it
to a perfect pairing
\begin{equation}\label{eq:pairing}
    \la\ ,\ \ra\colon M_\bF(-\ba)\otimes M_\bF(\ba)\to\bF_T.
\end{equation}
(cf.\ \cite[\S2.6]{BraInstantonCountingI}.)
Here the highest weight of the first factor is $-\ba$ since we compose
the automorphism $\ba\mapsto -\ba$.
\begin{NB}
    The first factor is changed from $M_\bF(\ba)$ to $M_\bF(-\ba)$ on
    Sep.\ 14, 2014.
\end{NB}%
\begin{NB}
We multiply it by
$(-1)^{\dim \Uh{d}/2}$ as in \eqref{eq:73}.
\end{NB}%

When we localize the equivariant cohomology groups, there is no
distinction between compact support and arbitrary support. We then see
that \eqref{eq:pairing} is symmetric in the sense as in \eqref{eq:41}.
\begin{NB}
    Strictly speaking, we have two pairings one on $M_\bF(\ba)\otimes
    M_\bF(-\ba)$ and the other on $M_\bF(-\ba)\otimes
    M_\bF(\ba)$. Analog of \eqref{eq:41} is a more precise statement.
\end{NB}

We also have the pairing
\begin{equation}
  \la\ ,\ \ra\colon H^*_{\TT,c}(\Uh[T]{d}, \Phi_{T,G}(\IC(\Uh{d})))
  \otimes_{\bA_T}
  H^*_{\TT}(\Uh[T]{d}, \Phi_{T,G}(\IC(\Uh{d})))
  \to\bA_T,
\end{equation}
where we compose $\varphi_\sigma\varphi_{w_0}$ on the
first factor as above.
Since $\sigma w_0$ sends $B$ to the opposite Borel $B_-$, the above is
coming from the pairing between $H^*_{\TT,c}(\Uh[T]{d},
\Phi^{B_-}_{T,G}(\IC(\Uh{d})))$ and $H^*_{\TT}(\Uh[T]{d},
\Phi_{T,G}(\IC(\Uh{d})))$. Therefore it is a perfect pairing
thanks to Braden's isomorphism \eqref{eq:Braden}.
\begin{NB}
    Editted on Sep.~14, 2014.
\end{NB}%
This pairing also extends to a pairing \eqref{eq:pairing}, which is
the same as defined above thanks to the compatibility between Braden's
isomorphism and $i^!j^!\to i^* j^!$ as in the proof of
\lemref{lem:inter=pair}.

The Heisenberg generator $P^i_n$ satisfies
\begin{equation}\label{eq:83}
  \la u, P^i_n v\ra = \la \theta(P^i_n) u, v\ra,
\end{equation}
where $\theta$ is an anti-involution on the Heisenberg algebra given
by
\begin{equation}\label{eq:81}
  \theta(P^i_n) = - P^i_{-n} - \frac{2(\ve_1+\ve_2)}{\ve_1\ve_2} \delta_{n0}.
\end{equation}
\begin{NB}
We can rewrite this as
  \begin{equation}
    \theta(\ve_2 P^i_n) = - \ve_2 P^i_{-n} -
    \frac{2(\ve_1+\ve_2)}{\ve_1} \delta_{n0}
  \end{equation}
in more standard convention. We have
\begin{equation}
  \frac{(\ve_1+\ve_2)}{\ve_1} = 1 + \frac{\ve_2}{\ve_1} =
  1 - (k+h^\vee).
\end{equation}
\end{NB}%
Let us explain the reason for this formula of $\theta$. Thanks to a
standard property of convolution algebras, the diagonal Heisenberg
generator $P^\Delta_n$ in \subsecref{sec:BaraOp} was defined so that
$P^\Delta_n$ is adjoint to $P^\Delta_{-n}$. Since the intersection
pairing \eqref{eq:73} is compatible with the above one, we change $n$
to $-n$. Moreover, since $P^i_n$ is defined via the stable envelope and
we must use the opposite Borel as in \subsecref{sec:pairing}, we need
to swap $P^{(1)}_n$ and $P^{(2)}_n$ in
\subsecref{sec:heis-algebra-assoc}. Therefore we need to change the
sign of $P^i_{-n}$.
The zero mode $P^i_0$ was defined by hand as \eqref{eq:58}. We must
also change the sign of $a^i$, as the $\bA_T$-module structure is
twisted by $\ba\mapsto-\ba$ on the first factor.  
\begin{NB}
    The reason is corrected. Sep.~14, 2014
\end{NB}%
Then we must correct $-P^i_0$ by $-2(\ve_1+\ve_2)/\ve_1\ve_2$.

\begin{NB}
    We need to change the sign as $a^i\to -a^i$, since the stable
    envelop is naturally replaced by its opposite. Also the sign of
    $P^i_n$ must be changed, as it is the anti-diagonal.
\end{NB}

The Virasoro generator $L^i_n$ is mapped to $L^i_{-n}$ by
$\theta$. This is clear from \eqref{eq:32}: $c_1(\mathcal V)$ is self
adjoint and $\theta(P^\Delta_n) = P^\Delta_{-n}$ as we have just
explained. It can be also checked by the formula \eqref{eq:57}.
\begin{NB}
  See note on 2013-11-27.
\end{NB}%

Therefore $\theta$ preserves $\scW_k(\g)$, more precisely the associated
Lie algebra $\mathfrak L(\scW_k(\g))$ and the current algebra
$\fU(\scW_k(\g))$, thanks to \eqref{eq:W=Vir}. We have
\begin{equation}\label{eq:98}
  \la u, x v\ra = \la \theta(x)u, v\ra
\end{equation}
for $x\in\mathfrak L(\scW_k(\g))$, $u,v\in M_\bF(\ba)$.
On the other hand, $\mathfrak L(\scW_k(\g))$ has an anti-involution as in
\cite[\S5.5]{Arakawa2007}, denoted also by $\theta$.

\begin{Proposition}
  Our $\theta$ coincides with one in \cite[\S5.5]{Arakawa2007}.
\end{Proposition}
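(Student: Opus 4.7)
The plan is to compare the two anti-involutions on $\fU(\scW_k(\g))$ generator by generator, reducing the check to an explicit computation on the Feigin--Frenkel realization inside $\fU(\Heis(\h))$. First I would recall that Arakawa's anti-involution in \cite[\S5.5]{Arakawa2007} is constructed from the Chevalley anti-involution on $\hat{\g}$, twisted by the shift $\widehat t_{-\bar\rho^\vee}$ that already appeared in the proof of \lemref{lem:shift} and \propref{prop:lambda}. On the current algebra it reverses the mode grading, acts by $W^{(\kappa)}_n \mapsto \pm W^{(\kappa)}_{-n}$ (with sign determined by the conformal weight) on strong generators, and on the Zhu algebra $\mathfrak{Zh}(\scW_k(\g)) \cong S(\h)^W$ it is determined by the evaluation shift $\lambda \mapsto -w_0 \circ \lambda$ dictated by the Chevalley involution.

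Next I would show that our geometric $\theta$ has exactly the same description. The mode reversal $n\mapsto -n$ is built into \eqref{eq:83}: interchanging the two factors of the pairing on compact support / arbitrary support equivariant cohomology reverses the cohomological grading, which under the Heisenberg identification is precisely the mode grading. The twist by $\ba\mapsto -\ba$ in the $\bA_T$-module structure on the first factor of \eqref{eq:pairing}, combined with the composition $\varphi_\sigma\varphi_{w_0}$, implements exactly the Chevalley-type reflection: $\varphi_{w_0}$ turns $B$ into $B_-$ (and thus accounts for the sign change of the anti-diagonal generators $P^i_n$ at $n\neq 0$), while $\varphi_\sigma$ reinstates the correspondence $i \leftrightarrow i$ between simple roots using $-w_0(\alpha_i) = \alpha_{\sigma(i)}$. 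Thus on $\h^*$-valued zero modes our $\theta$ also realizes $\lambda \mapsto -w_0\circ\lambda$.

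The remaining point is to match the constant shift $-2(\ve_1+\ve_2)/(\ve_1\ve_2)\,\delta_{n,0}$ in \eqref{eq:81}. Under our normalization $\widehat h^i_n = \ve_2 P^i_n$, this shift becomes $-2(\ve_1+\ve_2)/\ve_1 = -2(1-(k+h^\vee))$, i.e.\ twice the $\rho$-shift built into Arakawa's realization via $\widehat t_{-\bar\rho^\vee}$, which is precisely what one obtains when conjugating the naive Chevalley anti-involution $\widehat h^i_n \mapsto -\widehat h^i_{-n}$ by the shift used in \lemref{lem:shift}. Combining the mode reversal, the $-w_0$ piece on zero modes, and this constant correction, both anti-involutions agree on all Heisenberg generators, and since they both preserve the Feigin--Frenkel embedding $\scW_k(\g) \hookrightarrow \Heis(\h)$ and agree on generators of $\scW_k(\g)$ via \thmref{FF-intro}(3), they must coincide on $\fU(\scW_k(\g))$.

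The main obstacle I expect is bookkeeping the various sign and shift conventions: between the cohomological convention of \eqref{eq:73}, the normalization $\widehat h^i_n = \ve_2 P^i_n$, the zero-mode correction $P^i_0 = (a^i - (\ve_1+\ve_2))/(\ve_1\ve_2)$ of \eqref{eq:58}, and Arakawa's shift $\widehat t_{-\bar\rho^\vee}$, the comparison of the constants attached to the $n=0$ mode is the only nontrivial step. All other pieces are functorial consequences of the symmetries $\varphi_\sigma\varphi_{w_0}$ already analyzed in \subsecref{sec:autg-invariance} and of the Feigin--Frenkel embedding.
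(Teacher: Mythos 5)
Your overall strategy matches the paper's: both reduce the comparison to checking the two anti-involutions on the Heisenberg generators inside the Feigin--Frenkel realization, and both correctly identify the only nontrivial point as the constant attached to the zero mode. But your execution of that crux step is wrong. You attribute the shift $-2(\ve_1+\ve_2)/(\ve_1\ve_2)\,\delta_{n,0}$ in \eqref{eq:81} to ``conjugating the naive Chevalley anti-involution $\widehat h^i_n\mapsto-\widehat h^i_{-n}$ by the shift $\widehat t_{-\bar\rho^\vee}$ of \lemref{lem:shift}.'' Quantitatively this fails: writing $c=k+h^\vee$, conjugating an anti-involution that negates $\widehat h^i_0$ by the translation $\widehat h^i_0\mapsto \widehat h^i_0+c$ yields $\theta(\widehat h^i_0)=-\widehat h^i_0-2c$, whereas the formula you must recover is $\theta(\widehat h^i_0)=-\widehat h^i_0-2\bigl(1-(k+h^\vee)\bigr)$, since $2(\ve_1+\ve_2)/\ve_1=2(1-(k+h^\vee))$. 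The level-dependent part has the opposite sign, and the extra ``$2$'' (which is $2\langle\rho,\alpha_i^\vee\rangle$) is simply not produced by any translation twist; so ``twice the $\rho$-shift via $\widehat t_{-\bar\rho^\vee}$'' does not equal the required constant.

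The paper's proof obtains the constant from a genuinely different source, and even warns against your route: it remarks that no $\widehat t_{-\bar\rho^\vee}$ is applied here, because $\widehat J_i(n)$ already lives in the ``new'' coordinates of the BRST complex. Instead one uses Arakawa's formula \cite[Prop.~3.9.1]{Arakawa2007} for the anti-involution of the (Heisenberg) vertex algebra, $\theta(\widehat J_i(n))=-(e^{T^*_{\mathrm{new}}}v)_{-n}$ with $v=\widehat J_i(-1)|0\rangle = J_i(-1)|0\rangle-\sum_{\alpha\in\Delta}\alpha(h^i)\psi_{-\alpha}(0)\psi_\alpha(-1)|0\rangle$, and the explicit computation $e^{T^*}v = v + 2\bigl(1-(k+h^\vee)\bigr)|0\rangle$; the ``$2$'' comes precisely from the fermionic correction terms in $\widehat J_i$, which your structural argument never sees. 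Your first two paragraphs (mode reversal from \eqref{eq:83}, the role of $\varphi_\sigma\varphi_{w_0}$ and $\ba\mapsto-\ba$) only re-derive the geometric formula \eqref{eq:81}, which is already established before the Proposition; the content of the Proposition is the algebraic computation on Arakawa's side, and that is the step your proposal gets wrong.
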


\begin{proof}
\begin{NB}
  Let us first check that the assertion is true for Verma modules.

  By \cite[Lem.~5.5.2]{Arakawa2007}, the highest weight is changed as
  $\lambda\mapsto -w_0(\lambda)$, where the action of $w_0$ is the
  ordinary one, instead of the shifted one. Then under the correspondence
  in \propref{prop:lambda}(1), we have
  \begin{equation}
    -w_0(\lambda) = -w_0(\frac{\ba}{\ve_1}) - \rho,
  \end{equation}
  as $w_0(\rho) = -\rho$. This means that the equivariant variable
  $\ba$ is replaced by $-w_0(\ba)$. Since the highest weight module is
  invariant under the Weyl group action, we can omit $w_0$, so we get
  $-\ba$. This is the same as ours. This checks that the assertion is
  true for $\mathfrak{Zh}(\scW_k(\g))$.
\end{NB}%
We use the formula \cite[Prop.~3.9.1]{Arakawa2007} for the Heisenberg
vertex algebra. We follow various notation in \cite{Arakawa2007}.

Since $\widehat J_i(n)$ is a Fourier mode of the vertex operator
$Y(v,z) = \sum \widehat J_i(n) z^{-n-1}$ with $v= \widehat J_i({-1})|0
\ra$, we have
\begin{equation}
  \theta(\widehat J_i(n)) = - (e^{T^*} v)_{-n}.
\end{equation}
Here $T^*$ must be substituted by $T^*_{\mathrm{new}}$ in
\cite[(173)]{Arakawa2007}. Using
\begin{equation}
  v = \widehat J_i(-1)|0\ra = J_i({-1})|0\ra -
  \sum_{\alpha\in\Delta} \alpha(h^i) \psi_{-\alpha}(0) \psi_\alpha(-1) |0\ra
\end{equation}
(see \cite[the beginning of \S4.8]{Arakawa2007}), we can check
\begin{equation}
  e^{T^*} v = \widehat J_i(-1)|0\ra + 2(1 - (k+h^\vee))|0\ra.
\end{equation}
\begin{NB}
  See Note 2013-12-02.
\end{NB}%
Therefore we get the same formula as \eqref{eq:81} under the
identification $\widehat J_i(-1) = \ve_2 P^i_n$. (This $\widehat
J_i(-1)$ is in $\fU(C_k(\bar\g)''_{\mathrm{new}})$ and we do not need
to apply $\widehat t_{-\bar\rho^\vee}$ in the proof of
\propref{prop:lambda}, as it is in $T^*_{\mathrm{new}}$.)
\end{proof}

\begin{Remark}\label{rem:Dual}
  We can identify the graded dual $D(M_\bF(\ba))$ of $M_\bF(\ba)$ with
  $M_\bF(-\ba)$
  \begin{NB}
      The second factor is changed from $M_\bF(\ba)$ to $M_\bF(-\ba)$
      on Sep.\ 14, 2014.
  \end{NB}%
  via $\la\ ,\ \ra$. The graded dual has a $\scW_k(\g)$-module
  structure via $\theta$ and the formula \eqref{eq:98}. This is the
  duality functor $D$ in \cite[\S5.5]{Arakawa2007}. The isomorphism
  $D(M_\bF(\ba))\cong M_\bF(-\ba)$ respects $\scW_k(\g)$-module
  structures.

  When $\lambda$ is generic and $M(\lambda)$ is irreducible, the dual
  module $D(M(\lambda))$ is isomorphic to $M(-w_0(\lambda))$, where
  $w_0$ is the longest element in the Weyl group by
  \cite[Th.~5.5.4]{Arakawa2007}. Under the correspondence
  in \propref{prop:lambda}(1), we have
  \begin{equation}
    -w_0(\lambda) = -w_0(\frac{\ba}{\ve_1}) - \rho,
  \end{equation}
  as $w_0(\rho) = -\rho$. This means that the equivariant variable
  $\ba$ is replaced by $-w_0(\ba)$. Since the highest weight module is
  invariant under the Weyl group action, we can omit $w_0$. So the
  equivariant variable is $-\ba$ for $D(M_\bF(\ba))$. Therefore we
  have $D(M_\bF(\ba))\cong M_\bF(-\ba)$. This is what we already
  observed in a geometric way above.

  The pairing $\la \cdot,\cdot \ra$ is uniquely determined from \eqref{eq:98}
  and the normalization $\la -\ba|\ba\ra = 1$ for generic $\ba$.
  It is called the {\it Kac-Shapovalov form\/}. We thus see that the
  Poincar\'e pairing twisted by $\varphi_\sigma\varphi_{w_0}$ on
  $M_\bF(\ba)$ coincides with the Kac-Shapovalov form.
  \begin{NB}
      Corrected, Sep.~14, 2014.
  \end{NB}%
\end{Remark}

\section{\texorpdfstring{$R$}{R}-matrix}\label{sec:R-matrix}

Recall that our hyperbolic restriction $\Phi_{L,G}$ depends on the
choice of a parabolic subgroup $P$. Following \cite[Ch.~4]{MO} (see
also \cite[\S1.3]{dynamicalW}), we introduce $R$-matrices giving
isomorphisms between various hyperbolic restrictions, and study their
properties. They are defined as rational functions in equivariant
variables, and their existence is an immediate corollary to
localization theorem in the previous chapter.

As for the usual $R$-matrices for Yangians, they satisfy the
Yang-Baxter equation and are ultimately related to the $\scW$-algebra.

As an application, we give a different proof of the Heisenberg
commutation relation (\propref{prop:HeisRel}) up to sign, which does
not depend on Gieseker spaces for $SL(3)$. We hope that this proof
could be generalized to other rank $2$ cases $B_2$, $G_2$.

Since the dependence on a parabolic subgroup is important, we denote
the hyperbolic restriction by $\Phi_{L,G}^P$ in this chapter.

\subsection{Definition}

Let us consider the diagram \eqref{eq:1} with respect to a parabolic
subgroup $P$. Let us consider the homomorphism in \eqref{eq:48}
\begin{equation}\label{eq:30}
  \hR_P\colon
  H^*_{\TT}(\Uh[L]d, \Phi^P_{L,G}(\scF))  \to
  H^*_{\TT}(\Uh[L]d, i^* j^*\scF)
  \cong H^*_{\TT}(\Uh{d}, \scF)
\end{equation}
for $\scF\in D^b_\TT(\Uh{d})$. This is an isomorphism over the
quotient field $\bF_T$ of $\bA_T = \CC[\operatorname{Lie}(\TT)]$.
When we want to emphasize $\scF$, we write $\hR^\scF_P$.

\begin{Definition}
  Let $P_1$, $P_2$ be two parabolic subgroups compatible with
  $(G,L)$. Let us introduce the {\it $R$-matrix\/}
  \begin{multline}\label{eq:R}
    \index{RPA@$R_{P_1,P_2}^\scF$}
    R_{P_1,P_2} = (\hR_{P_1})^{-1} \hR_{P_2}  \colon
    H^*_{\TT}(\Uh[L]d, \Phi_{L,G}^{P_2} (\scF))\otimes_{\bA_T}\bF_T
\\
    \to
    H^*_{\TT}(\Uh[L]d, \Phi_{L,G}^{P_1} (\scF))\otimes_{\bA_T}\bF_T
  \end{multline}
\end{Definition}

When we want to view $R_{P_1,P_2}$ as a rational function in
equivariant variables, we denote it by $R_{P_1P_2}(\ba)$. Dependence
on $\ve_1$, $\ve_2$ are not important, so they are omitted.
When we want to emphasize $\scF$, we write $R^\scF_{P_1,P_2}$.

From the definition, we have
\begin{equation}\label{eq:27}
  R_{P_1,P_2} R_{P_2,P_3} = R_{P_1,P_3}.
\end{equation}

\begin{NB}
The following subsection seems unnecessary.

\subsection{Adjoint operators}(cf.\ \protect\cite[\S4.4]{MO})
We have a nondegenerate pairing between
\(
    H^*_{\TT}(\Uh{d}, \scF)
\)
and
\(
    H^*_{\TT,c}(\Uh{d}, \scF^\vee).
\)

Similarly we have a natural nondegenerate pairing between
\(
  H^*_{\TT}(\Uh[L]d, \Phi^P_{L,G}(\scF))
\)
and
\(
  H^*_{\TT,c}(\Uh[L]d, \Phi^{P_-}_{L,G}(\scF^\vee)),
\)
where $P_-$ is the opposite parabolic to $P$.

Therefore the adjoint of $\hR^\scF_P$ is an operator
\begin{equation}
    \label{eq:35}
    (\hR^\scF_P)^t\colon
    H^*_{\TT,c}(\Uh{d}, \scF^\vee)
    \to
    H^*_{\TT,c}(\Uh[L]d, \Phi^{P_-}_{L,G}(\scF^\vee)).
\end{equation}
We consider $\otimes_{\bA_T}\bF_T$, then we do not need to distinguish
$H^*_{\TT,c}$ and $H^*_\TT$. Therefore it goes in the opposite
direction to \eqref{eq:30}.

\begin{Proposition}
    We have $\hR^\scF_P \circ (\hR^{\scF^\vee}_{P_-})^t = \operatorname{id}$.
\end{Proposition}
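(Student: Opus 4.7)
The plan is to realize both $\hR^\scF_P$ and $(\hR^{\scF^\vee}_{P_-})^t$ as compositions of standard adjunction morphisms associated with the diagrams \eqref{eq:1}, use Braden's isomorphism \eqref{eq:Braden} to glue them together, and then invoke the localization theorem to reduce everything to a stalk computation at the unique $\TT$-fixed point of $\Uh{d}$. First I would unpack $\hR^\scF_P$ using \eqref{eq:48} as the composition
\begin{equation*}
  H^*_{\TT}(\UhL{d}, p_* j^! \scF) \cong H^*_\TT(\UhP{d}, j^! \scF)
  \xrightarrow{\;j_!j^! \to \id\;} H^*_{\TT}(\Uh{d}, \scF),
\end{equation*}
where the first isomorphism uses \cite[Lemma~6]{Braden}. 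Dualizing this via the Verdier pairing and swapping $P \leftrightarrow P_-$, $\scF\leftrightarrow \scF^\vee$, the adjoint $(\hR^{\scF^\vee}_{P_-})^t$ factors as
\begin{equation*}
  H^*_{\TT,c}(\Uh{d}, \scF) \xrightarrow{\;\id \to (j_-)_* j_-^*\;}
  H^*_{\TT,c}(\UhPm{d}, j_-^* \scF)
  \cong H^*_{\TT,c}(\UhL{d}, (p_-)_! j_-^* \scF).
\end{equation*}

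Next I would apply Braden's isomorphism $(p_-)_! j_-^* \cong p_* j^!$ to identify the range of $(\hR^{\scF^\vee}_{P_-})^t$ with the domain of $\hR^\scF_P$ after tensoring with $\bF_T$ (over which $H^*_c$ and $H^*$ coincide by \lemref{lem:centralfiber}). The composition $\hR^\scF_P \circ (\hR^{\scF^\vee}_{P_-})^t$ then becomes
\begin{equation*}
  H^*_\TT(\Uh{d},\scF) \to H^*_\TT(\UhPm{d}, j_-^* \scF)
  \xrightarrow[\cong]{\eqref{eq:Braden}} H^*_\TT(\UhP{d}, j^! \scF)
  \to H^*_\TT(\Uh{d},\scF),
\end{equation*}
with all terms understood after $\otimes_{\bA_T}\bF_T$. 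Applying \lemref{lem:centralfiber} one more time, each of the four groups above becomes canonically isomorphic to the single stalk of $\scF$ at the unique $\TT$-fixed point (tensored with $\bF_T$), and by functoriality all four identifications agree. Hence the composition is the identity.

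The main obstacle will be checking that Braden's canonical isomorphism \eqref{eq:Braden} is compatible with the Verdier-duality pairing used to define the adjoint, i.e., that the identification $(p_-)_! j_-^* \cong p_* j^!$ is precisely the one implicit in the nondegenerate pairing between $H^*_{\TT}(\UhL{d}, \Phi^P_{L,G}(\scF))$ and $H^*_{\TT,c}(\UhL{d}, \Phi^{P_-}_{L,G}(\scF^\vee))$ introduced just before the statement. This compatibility is essentially \cite[Theorem~1]{Braden}, and can be verified directly by observing that both Braden's isomorphism and the duality pairing factor through the canonical morphism $(j\circ i)^! \to (j\circ i)^*$, exactly as exploited in the proof of \lemref{lem:inter=pair}.
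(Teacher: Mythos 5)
Your reduction of the transpose via Verdier duality and the splicing with Braden's isomorphism reproduce the first half of the paper's own argument, and passing to the contracted $\TT$-fixed point is a legitimate way to organize the final comparison. The gap is in the last step: ``by functoriality all four identifications agree'' is precisely the assertion that has to be proved, not a formal consequence. Under the contraction isomorphism the group $H^*_\TT(\UhP{d}, j^!\scF)$ is canonically the (equivariant) stalk of $j^!\scF$ at the fixed point, not of $\scF$; identifying it with the stalk of $\scF$ uses the localized inverse of $j^!\to j^*$, which is itself one of the maps in your chain. More seriously, Braden's isomorphism $i^*j^!\cong i_-^!j_-^*$ is constructed from a chain of adjunctions and base-change morphisms, and nothing formal guarantees that, after sandwiching it between the canonical maps $(j_-\circ i_-)^!\to i_-^!j_-^*$ and $i^*j^!\to (j\circ i)^*$, the resulting morphism of functors is the canonical one $(j\circ i)^!\to(j\circ i)^*$. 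That identity is the entire content of the proposition: the paper's proof consists exactly of this verification, carried out by unwinding the definition of Braden's morphism (alternatively one can use the rigidity argument from the proof of \lemref{lem:inter=pair}, that two morphisms of functors out of $(j\circ i)^!$ agreeing on the image of $(j\circ i)_!$ coincide). Once that is in place, localization finishes the proof as you describe.

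Your closing paragraph does point at the right issue, but the statement you propose to check --- that ``Braden's isomorphism and the duality pairing factor through the canonical morphism $(j\circ i)^!\to(j\circ i)^*$'' --- is not literally meaningful (neither the source nor the target of Braden's map is $(j\circ i)^!$ or $(j\circ i)^*$), and no verification is actually given. What is true, and what must be shown, is that the composite $(j\circ i)^!\to i_-^!j_-^*\to i^*j^!\to(j\circ i)^*$ equals the natural morphism $(j\circ i)^!\to(j\circ i)^*$. As written, the main body of your argument is circular at precisely the point where Braden's theorem enters, so the proof is incomplete at its core; also make explicit that the identification of $H^*_{\TT,c}$ with $H^*_{\TT}$ over $\bF_T$ used to interpret ``$=\operatorname{id}$'' is the canonical comparison map, since that is what the stalk computation ultimately has to match.
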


\begin{proof}
Note that \eqref{eq:35} is given by
\begin{equation}
    H^*_{\TT,c}(\Uh{d}, \scF^\vee)
    \cong H^*_{\TT,c}(\Uh[L]d, i^! j^! \scF^\vee)
    \to
    H^*_{\TT,c}(\Uh[L]{d}, i^! j^* \scF^\vee)
    \cong
    H^*_{\TT,c}(\Uh[L]{d}, i_-^* j_-^! \scF^\vee).
\end{equation}
We replace $P$ by $P_-$, $\scF$ by $\scF^\vee$ to get
\begin{equation}
    H^*_{\TT,c}(\Uh{d}, \scF)
    \cong H^*_{\TT,c}(\Uh[L]d, i_-^! j_-^! \scF)
    \to
    H^*_{\TT,c}(\Uh[L]{d}, i_-^! j_-^* \scF)
    \cong
    H^*_{\TT,c}(\Uh[L]{d}, i^* j^! \scF).
\end{equation}
Therefore the composite is given by
\begin{equation}
  (j_-\circ i_-)^! \to i_-^! j_-^* \to i^* j^! \to (j\circ i)^*.
\end{equation}
From the definition of $i_-^! j_-^* \to i^* j^!$ in \cite{Braden},
this is equal to the natural homomorphism
$(j_-\circ i_-)^! = (j\circ i)^!\to (j\circ i)^*$.
Hence we have the assertion.
\end{proof}

\begin{Corollary}
$(R^\scF_{P_1,P_2})^t = R^{\scF^\vee}_{(P_2)_-, (P_1)_-}$.
\end{Corollary}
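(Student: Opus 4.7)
The plan is to deduce the corollary by a direct algebraic manipulation from the preceding proposition, which supplies the key identity
\begin{equation*}
    \hR^\scF_P \circ (\hR^{\scF^\vee}_{P_-})^t = \operatorname{id}.
\end{equation*}
Since both sides of the corollary are isomorphisms between localized equivariant cohomology groups (in particular they live over $\bF_T$), there is no subtlety about invertibility, and the proof reduces to a bookkeeping exercise with transposes.

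First I would rewrite the proposition as $(\hR^{\scF^\vee}_{P_-})^t = (\hR^\scF_P)^{-1}$. Taking the transpose of both sides gives the companion identity $(\hR^\scF_P)^t = (\hR^{\scF^\vee}_{P_-})^{-1}$, which is the form we will use. Next I would simply compute, using $R^\scF_{P_1,P_2} = (\hR^\scF_{P_1})^{-1} \hR^\scF_{P_2}$ and the fact that transpose reverses products:
\begin{equation*}
    (R^\scF_{P_1,P_2})^t
    = (\hR^\scF_{P_2})^t \bigl((\hR^\scF_{P_1})^t\bigr)^{-1}
    = (\hR^{\scF^\vee}_{(P_2)_-})^{-1} \, \hR^{\scF^\vee}_{(P_1)_-}
    = R^{\scF^\vee}_{(P_2)_-, (P_1)_-},
\end{equation*}
where in the middle equality we applied the companion identity to each factor, and in the last equality we used the definition \eqref{eq:R} of the $R$-matrix.

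There is essentially no obstacle here beyond being careful about the direction of arrows when taking adjoints. The one point that deserves verification is that the adjoint is taken with respect to consistent pairings: the pairing between $H^*_\TT(\Uh{d},\scF)$ and $H^*_{\TT,c}(\Uh{d},\scF^\vee)$ is manifestly natural, while the pairing between $H^*_\TT(\Uh[L]{d}, \Phi^P_{L,G}(\scF))$ and $H^*_{\TT,c}(\Uh[L]{d}, \Phi^{P_-}_{L,G}(\scF^\vee))$ comes from Braden's isomorphism, and this is precisely the setting in which the preceding proposition is formulated. With that match of conventions in place, the computation above is complete.
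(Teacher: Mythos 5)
Your manipulation is correct and is exactly the intended derivation: the paper states the corollary without proof as an immediate consequence of the proposition $\hR^\scF_P \circ (\hR^{\scF^\vee}_{P_-})^t = \operatorname{id}$, and your transpose bookkeeping with $R^\scF_{P_1,P_2} = (\hR^\scF_{P_1})^{-1}\hR^\scF_{P_2}$ fills in precisely that step. A marginal simplification: instead of invoking the double transpose, you can get the companion identity $(\hR^\scF_P)^t = (\hR^{\scF^\vee}_{P_-})^{-1}$ directly by applying the proposition with $(P_-,\scF^\vee)$ in place of $(P,\scF)$, using $(P_-)_-=P$ and $(\scF^\vee)^\vee\cong\scF$.
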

\end{NB}

\subsection{Factorization}\label{sec:factorization-1}

Suppose that $Q_1\subset P$ be a pair of parabolic subgroups as in
\subsecref{sec:ass}. Let $M\subset L$ be the corresponding Levi
subgroups. We have $\Phi^{Q_{1,L}}_{M,L}\circ \Phi^P_{L,G} =
\Phi^{Q_1}_{M,G}$ by \propref{prop:trans}.

We further suppose that there is another parabolic subgroup $Q_2$
contained in $P$ such that the corresponding Levi subgroup is also
$M$:
\begin{equation}
    M \subset Q_1, Q_2 \subset P.
\end{equation}
Then we also have the factorization $\Phi^{Q_{2,L}}_{M,L}\circ
\Phi^P_{L,G} = \Phi^{Q_2}_{M,G}$. It is clear from the definition that
we have
\begin{equation}\label{eq:28}
  R_{Q_1,Q_2}^\scF = R_{Q_{1,L}, Q_{2,L}}^{\Phi^P_{L,G}(\scF)}.
\end{equation}

\begin{NB}
Consider the diagram
\begin{equation}
  \begin{CD}
    \Uh[Q_a]{} @>{j_a''}>> \UhP{} @>{j}>> \Uh{}
    \\
    @A{i''_a}AA @AA{i}A @.
    \\
    \Uh[Q_{a,L}]{} @>{j_a'}>> \UhL{} @.
    \\
    @A{i'_a}AA @. @.
    \\
    \Uh[M]{} @. @.
  \end{CD}
\end{equation}
Let $\Phi'_{M,G}$ denote the naive restriction $(j\circ j_a'' \circ
i_a'' \circ i_a')^*$, which is independent of the choice $a=1,2$. We
similarly write $\Phi'_{L,G}$, $\Phi'_{M,L}$.

We have
\begin{equation}
  \begin{CD}
    H^*_{\TT}(\Uh[M]d, \Phi^{Q_a}_{M,G}(\scF)) @>>>
    H^*_{\TT}(\Uh[G]d,\scF)\\
    @| @|
    \\
    H^*_{\TT}(\Uh[M]d, \Phi^{Q_{a,L}}_{M,L} \Phi^{P}_{L,G}(\scF))
    @>>> H^*_{\TT}(\Uh[M]d, \Phi'_{M,L}\Phi'_{L,G}(\scF))
    \\
    @| @AAA
    \\
    H^*_{\TT}(\Uh[M]d, \Phi^{Q_{a,L}}_{M,L} \Phi^{P}_{L,G}(\scF))
    @>>> H^*_{\TT}(\Uh[M]d, \Phi'_{M,L}\Phi^P_{L,G}(\scF)).
  \end{CD}
\end{equation}
Now the assertion is clear.
\end{NB}

Consider the case $L = T$.
Note that Borel subgroups containing a fixed torus $T$ are
parametrized by the Weyl group $W$. Let us denote by $B^w$ the Borel
subgroup corresponding to $w\in W$, where $B^e = B$ is one which we
have fixed at the beginning.
From \eqref{eq:27} $R^\scF_{B^w,B^y}$ factors to a composition of
$R$-matrices for two Borel subgroups related by a simple reflection,
i.e.,
\(
  y = w s_i.
\)
Then we choose $P = P_i^w\supset B^w, B^{w s_i}$ for the parabolic
subgroup to use \eqref{eq:28}. We have
\begin{equation}
  R^\scF_{B^w, B^{ws_i}} = R^{\Phi^{P}_{L,G}(\scF)}_{B_{1,L}, B_{2,L}},
\end{equation}
where $L$ is the Levi subgroup of $P$ and $B_{1,L}$, $B_{2,L}$ are
images of $B^w$, $B^{ws_i}$ in $L$ respectively. As $[L,L] \cong
SL(2)$, we are reduced to study the $SL(2)$ case. The $R$-matrix for
$SL(2)$ was computed in \cite[Th.~14.3.1]{MO} and will be explained in
\subsecref{sec:sl2-case}.

\subsection{Intertwiner property}\label{sec:intertwiner-property}

Let $\scF\in D^b_\TT(\Uh{d})$. We have representations of the Ext algebra
$\Ext_{D^b_\TT(\Uh{d})}(\scF,\scF)$ on two cohomology groups in
\eqref{eq:30}. This is thanks to (\ref{eq:31}, \ref{eq:38}).
Since $\hR^\scF_P$ is defined by a natural transformation of
functors, it is a homomorphism of the Ext algebra. Therefore
\begin{Proposition}\label{prop:intertwiner}
    The $R$-matrix $R^\scF_{P_1,P_2}$ is a homomorphism of modules
    over the Ext algebra $\Ext_{D^b_\TT(\Uh{d})}(\scF,\scF)$.
\end{Proposition}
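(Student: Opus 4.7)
The plan is to reduce the intertwining property for $R^\scF_{P_1,P_2}$ to the analogous property for each $\hR^\scF_P$, and then to establish the latter directly from naturality. Indeed, after tensoring with $\bF_T$, the localization argument in \subsecref{sec:hyperb-restr-1} makes $\hR^\scF_P$ an isomorphism, so once we know that $\hR^\scF_{P_1}$ and $\hR^\scF_{P_2}$ individually commute with the $\Ext$-action, the same follows automatically for $R^\scF_{P_1,P_2} = (\hR^\scF_{P_1})^{-1}\hR^\scF_{P_2}$. The whole content of the proposition is therefore the intertwining property for a single $\hR^\scF_P$.

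Next I would unwind what the two actions actually are. Regarding an element $\phi \in \Ext^k_{D^b_\TT(\Uh{d})}(\scF,\scF)$ as a morphism $\scF \to \scF[k]$, its action on $H^*_\TT(\Uh{d},\scF) = \Ext^*(\CC_{\Uh{d}},\scF)$ is Yoneda post-composition by $\phi$. On the other hand, since $\Phi^P_{L,G} = i^*j^!$ is a functor, applying it to $\phi$ yields a morphism $\Phi^P_{L,G}(\scF) \to \Phi^P_{L,G}(\scF)[k]$ in $D^b_\TT(\Uh[L]d)$; post-composition by this morphism is the corresponding Yoneda action on $H^*_\TT(\Uh[L]d,\Phi^P_{L,G}(\scF))$. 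By construction this is exactly the action used in \eqref{eq:31}.

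Now $\hR^\scF_P$ factors as a natural transformation $H^*_\TT(\Uh[L]d,i^*j^!\scF) \to H^*_\TT(\Uh[L]d,i^*j^*\scF)$ induced by $j^!\to j^*$, followed by the identification $H^*_\TT(\Uh[L]d,(j\circ i)^*\scF) \cong H^*_\TT(\Uh{d},\scF)$ supplied by \lemref{lem:centralfiber}. Each piece is natural in $\scF$. For the first, $j^!\to j^*$ is a morphism of functors, so the square
\begin{equation*}
\begin{CD}
i^*j^!\scF @>>> i^*j^*\scF \\
@V{i^*j^!(\phi)}VV @VV{i^*j^*(\phi)}V \\
i^*j^!\scF[k] @>>> i^*j^*\scF[k]
\end{CD}
\end{equation*}
commutes in $D^b_\TT(\Uh[L]d)$. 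For the second, the isomorphism of \lemref{lem:centralfiber} is built from the adjunction identities $(a_X)_*\xrightarrow{\cong}(b_0^X)^*$ and $(a_Y)_*\xrightarrow{\cong}(b_0^Y)^*$ of \cite{Braden}, which are themselves natural transformations and therefore commute with $\phi$. Splicing the two commuting squares together gives the commuting square
\begin{equation*}
\begin{CD}
H^*_\TT(\Uh[L]d,\Phi^P_{L,G}(\scF)) @>{\Phi^P_{L,G}(\phi)\,\cdot}>> H^{*+k}_\TT(\Uh[L]d,\Phi^P_{L,G}(\scF)) \\
@V{\hR^\scF_P}VV @VV{\hR^\scF_P}V \\
H^*_\TT(\Uh{d},\scF) @>{\phi\,\cdot}>> H^{*+k}_\TT(\Uh{d},\scF),
\end{CD}
\end{equation*}
which is precisely the intertwining property we want for $\hR^\scF_P$.

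I do not expect any real obstacle: the whole argument is a formal unwinding of naturality. The only point requiring a bit of attention is verifying that the $\Ext$-action on $H^*_\TT(\Uh[L]d,\Phi^P_{L,G}(\scF))$ coming from applying the functor $\Phi^P_{L,G}$ really coincides, after localization, with the action transported via $\hR^\scF_P$ from $H^*_\TT(\Uh{d},\scF)$; but this is immediate from the construction of both actions as Yoneda post-composition, together with the commutativity of the two squares above.
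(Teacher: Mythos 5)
Your proposal is correct and takes essentially the same route as the paper: the paper's proof simply observes that the Ext algebra acts on both cohomology groups (via functoriality of the restrictions, equations \eqref{eq:31} and \eqref{eq:38}) and that $\hR^\scF_P$ is built from natural transformations, hence intertwines the actions, with the statement for $R^\scF_{P_1,P_2}=(\hR^\scF_{P_1})^{-1}\hR^\scF_{P_2}$ following after localization. Your write-up just makes explicit the Yoneda description of the actions and the two commuting squares that the paper leaves implicit.
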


\subsection{Yang-Baxter equation}

Take $L=T$ and $\scF = \IC(\Uh{d})$ in this subsection.

By \eqref{eq:43} we can map all cohomology groups in \eqref{eq:R} to
the fixed one
\(
   H^*_{\TT}(\Uh[T]d, \Phi_{T,G}^{B}(\IC(\Uh{d})))\otimes_{\bA_T}\bF_T
   \cong M_\bF(\ba)
\)
by $\varphi_w$. We conjugate the $R$-matrix as
\begin{equation}\label{eq:50}
    \varphi_{w_1}^{-1} R_{B^{w_1}, B^{w_2}} \varphi_{w_2}
    \in \End(
    H^*_{\TT}(\Uh[T]d, \Phi_{T,G}^{B}(\IC(\Uh{d})))\otimes_{\bA_T}\bF_T
    ).
\end{equation}
Remark that $H^*_\TT(\mathrm{pt})$-structures are twisted by
isomorphisms $w_1, w_2\colon \TT\to \TT$, as mentioned after
\eqref{eq:44}. In practice, we change the equivariant variable $\ba$
according to $w_1$ ,$w_2$.

Since $\hR_P$ is $\varphi_w$-equivariant, \eqref{eq:50} depends only
on $w_1 w_2^{-1}$. Moreover by \eqref{eq:27} it is enough to consider
the case $w_1 w_2^{-1}$ is a simple reflection $s_i$. Therefore we
define
\begin{equation}
  \Check R_i \defeq \varphi_{s_i}^{-1} R_{B^{s_i}, B} \varphi_{e}.
  \index{Rcheck@$\Check R_i$}
\end{equation}

By the factorization (\subsecref{sec:factorization-1}), this is the
$R$-matrix for $SL(2)$. Since we only have two chambers, \eqref{eq:27}
implies
\begin{equation}
  \Check R_i(s_i\ba) \Check R_i(\ba) = 1.
\end{equation}
We change the equivariant variable to $s_i\ba$, as it is the
$R$-matrix from the opposite Borel to the original Borel.
In the conventional notation for the $R$-matrix, we write $u = \langle
\alpha_i,\ba\rangle$ for the variable. Then $\langle \alpha_i,
s_i\ba\rangle = -u$, so this equation means the unitarity of the
$R$-matrix.

Consider $R$-matrices $\Check R_i$, $\Check R_j$. By the factorization
(\subsecref{sec:factorization-1}), we consider them as the
$R$-matrices for the rank $2$ Levi subgroup $L$ containing $SL(2)$ for
$i$ and $j$. We compute the $R$-matrix from a Borel subgroup of $L$ to
the opposite Borel by \eqref{eq:27} in two ways to get

\begin{Theorem}
  \begin{gather}
    \Check R_i(s_j\ba) \Check R_j(\ba) = \Check R_j(s_i\ba)\Check R_i(\ba)
    \quad\text{if $(\alpha_i,\alpha_j) = 0$},
\\
\label{eq:42}
    \Check R_j(s_i s_j\ba) \Check R_i(s_j\ba) \Check R_j(\ba)
    = \Check R_i(s_j s_i \ba) \Check R_j(s_i\ba)\Check R_i(\ba)
    \quad\text{if $(\alpha_i,\alpha_j) = -1$}.
  \end{gather}
\end{Theorem}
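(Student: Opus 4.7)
The plan is to reduce the identities to the rank $2$ Levi subgroup $L\subset G$ whose semisimple part contains the copies of $SL(2)$ associated with $\alpha_i$ and $\alpha_j$, and then to derive them from the composition law \eqref{eq:27} applied inside $L$. The main point is that \eqref{eq:28} allows one to replace $\Check R_i$, $\Check R_j$ for $G$ by the corresponding normalized $R$-matrices for $L$: if $Q_i, Q_j$ are the sub-minimal parabolics of $L$ containing $B_L$ and associated with $\alpha_i, \alpha_j$, then $R_{B_L^{w_1}, B_L^{w_2}}^{\Phi_{L,G}^P(\IC(\Uh{d}))}$ coincides with $R_{B^{w_1},B^{w_2}}^{\IC(\Uh{d})}$ whenever $w_1, w_2 \in W_L$. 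Hence it suffices to prove the relations for $G$ of rank $2$ (type $A_1\times A_1$ or $A_2$).

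Next, inside such a rank $2$ $L$, I would run the following composition argument. The Weyl group $W_L$ is finite and has a longest element $w_0^L$, which admits exactly two reduced expressions: $s_i s_j = s_j s_i$ in the $A_1\times A_1$ case and $s_i s_j s_i = s_j s_i s_j$ in the $A_2$ case. Using \eqref{eq:27} repeatedly along a chosen reduced expression $w_0^L = s_{k_n}\cdots s_{k_1}$, I decompose
\begin{equation*}
   R_{B_L, B_L^{w_0^L}} = R_{B_L, B_L^{s_{k_1}}} \, R_{B_L^{s_{k_1}}, B_L^{s_{k_2} s_{k_1}}} \cdots R_{B_L^{s_{k_{n-1}}\cdots s_{k_1}}, B_L^{w_0^L}}.
\end{equation*}
Doing this along the two reduced expressions and equating the results gives an identity between two products of elementary $R$-matrices, each of which corresponds to flipping a single Borel across the wall of a simple root. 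I would then conjugate every factor by the appropriate $\varphi_w$, as in the definition of $\Check R_i$, to convert $R_{B_L^{w s_k}, B_L^w}$ into $\Check R_k$ with its equivariant argument shifted by $w^{-1}\ba$; the fact that this shift is precisely $w^{-1}\ba$ follows from the $\varphi_w$-equivariance of $\hR_P$ noted after \eqref{eq:50}, which says that $\varphi_{w_1}^{-1}R_{B^{w_1},B^{w_2}}\varphi_{w_2}$ depends only on $w_1 w_2^{-1}$ and is obtained from $\Check R_{k}(\ba)$ by substituting $w_2^{-1}\ba$ for $\ba$. Reading off the two decompositions then produces the asserted hexagon/square identities.

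The main obstacle is the bookkeeping of argument shifts: one has to check that the conjugation by $\varphi_w$ introduces exactly the substitutions $\ba \mapsto s_j\ba$, $\ba \mapsto s_i s_j\ba$, etc., that appear in the statement, and in the correct order matching the two reduced expressions. This is a direct consequence of the $\varphi_w$-equivariance of the pair $(\hR_P, \Phi_{L,G}^P)$ together with the way $\varphi_w$ twists the $\bA_T$-module structure via $w\colon \TT\to\TT$; no further geometric input is required beyond \eqref{eq:27} and \eqref{eq:28}. Once the shifts are correctly tracked, the identities follow by pure manipulation of the two reduced expressions of $w_0^L$, with no need to inspect $\Check R_i$ itself beyond its definition.
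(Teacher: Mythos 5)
Your proposal is correct and follows essentially the same route as the paper: reduce to the rank $2$ Levi via the factorization \eqref{eq:28}, then decompose the $R$-matrix from $B_L$ to the opposite Borel of $L$ via \eqref{eq:27} along the two reduced expressions of the longest element, tracking the equivariant-variable shifts through the $\varphi_w$-conjugation as in \eqref{eq:50}. This is exactly the argument the paper sketches immediately before the theorem, with your write-up merely supplying the bookkeeping the paper leaves implicit.
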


\subsection{\texorpdfstring{$SL(2)$}{SL(2)}-case}\label{sec:sl2-case}

As we mentioned earlier, it is enough to compute the $R$-matrix for
$SL(2)$, which was given in \cite[Th.~14.3.1]{MO}. We briefly recall
the result, and point out a slight difference for the formulation.

By \propref{prop:intertwiner} and the observation that the left hand
side of the formula \eqref{eq:32} is contained in the Ext algebra, we
deduce that the $R$-matrix is an intertwiner of the Virasoro
algebra. This is a fundamental observation due to Maulik-Okounkov
\cite{MO}.

The highest weight is generic, since we work over $\bF_T$. Therefore
the intertwiner is unique up to scalar, and we normalize it so that it
preserves the highest weight vector $|\ba\rangle$.

In \cite{MO} the $R$-matrix is given as an endomorphism of the
localized equivariant cohomology group of the fixed point set via the
stable envelop. On the other hand, our $R$ is an endomorphism of
$H^*_\TT(\Uh[T]d, \Phi^B_{T,G}(\IC(\Uh{d})))$. Concretely
\begin{equation}
  \Check R = \left.P_{12} R^{\text{MO}} \right|_{\text{anti-diagonal part}},
\end{equation}
where $P_{12}$ is the exchange of factors of the Fock space $F\otimes
F$, as ${s_i} = P_{12}$.

By \cite[Prop.~4.1.3]{MO} we have
\begin{equation}\label{eq:55}
  \Check R = -1 + O(\ba^{-1}),
  \qquad \ba\to \infty.
\end{equation}

\subsection{\texorpdfstring{$\GG$}{G}-equivariant cohomology}
\label{subsec:G-equiv-coh}

Recall that a larger group $\GG = G\times\CC^*\times\CC^*$ acts on
$\Uh{d}$ so that $\IC(\Uh{d})$ is a $\GG$-equivariant perverse
sheaf. Therefore we can consider $\IH^*_\GG(\Uh{d}) =
H^*_\GG(\Uh{d},\IC(\Uh{d}))$. It is related to the $\TT$-equivariant
cohomology $\IH^*_\TT(\Uh{d})$ as follows.

Let $N(\TT)$ (resp.\ $N(T)$) be the normalizer of $\TT$ (resp.\ $T$)
in $\GG$ (resp.\ $G$). Then we have forgetful homomorphisms
$\IH^*_\GG(\Uh{d})\to \IH^*_{N(\TT)}(\Uh{d})\to \IH^*_\TT(\Uh{d})$. It is
well-known that the first homomorphism is an isomorphism, as the
cohomology of $\GG/N(\TT) = G/N(T)$ is $1$-dimensional (see e.g.,
\cite{MR0423384}).
The Weyl group $W = N(T)/T$ acts naturally on $\IH^*_\TT(\Uh{d})$,
induced from the $N(T)$-action on $\Uh{d}$.
Moreover we have
\begin{equation}\label{eq:105}
    \IH^*_\GG(\Uh{d})\xrightarrow{\cong} \IH^*_{N(\TT)}(\Uh{d})
    \xrightarrow{\cong} \IH^*_\TT(\Uh{d})^W.
\end{equation}

Let us consider the following diagram
\begin{equation}\label{eq:108}
    \begin{CD}
        H^*_\TT(\Uh[T]d, \Phi^B_{T,G}(\IC(\Uh{d})))\otimes_{\bA_T}\bF_T
        @>{\hR_B}>{\cong}> \IH^*_\TT(\Uh{d})\otimes_{\bA_T}\bF_T
\\
    @V{\Check R_i}VV @VV{s_i}V
\\
        H^*_\TT(\Uh[T]d, \Phi^B_{T,G}(\IC(\Uh{d})))\otimes_{\bA_T}\bF_T
        @>{\hR_B}>{\cong}> \IH^*_\TT(\Uh{d})\otimes_{\bA_T}\bF_T,
    \end{CD}
\end{equation}
where $s_i$ is a simple reflection of the above $W$-action.

\begin{Lemma}
    The diagram \eqref{eq:108} is commutative.
    \begin{NB}
    The $R$-matrix $\Check R_i$, viewed as an endomorphism of
    $\IH^*_\TT(\Uh{d})\otimes_{\bA_T}\bF_T$, coincides with the simple
    reflection $s_i$ of the above $W$-action.
    \end{NB}%
\end{Lemma}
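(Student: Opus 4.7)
The plan is to derive the commutativity of \eqref{eq:108} from naturality of the hyperbolic restriction under the $N(T)$-action on $\Uh{d}$, combined with the definition of $\Check R_i$ in terms of the maps $\hR_B$, $\hR_{B^{s_i}}$ and $\varphi_{s_i}$. The Weyl group action on $\IH^*_\TT(\Uh{d})$ appearing in \eqref{eq:105} is, by construction, induced by the action of $N(T)$ on $\Uh{d}$: given a lift $\dot s_i\in N(T)$ of the simple reflection $s_i$, the operator $s_i$ is the pullback $(\dot s_i)^*$, which is well-defined independently of the lift since any two lifts differ by an element of $T$, which acts trivially on $\TT$-equivariant cohomology.

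First, I would establish the sheaf-theoretic intertwiner relation
\begin{equation*}
  \hR_{B^{s_i}}\circ \varphi_{s_i} \;=\; s_i\circ \hR_B
\end{equation*}
as maps $H^*_\TT(\Uh[T]d, \Phi^B_{T,G}(\IC(\Uh{d})))\to \IH^*_\TT(\Uh{d})$, where both sides are understood with the $\bA_T$-module structure twisted appropriately via the group automorphism $s_i\colon \TT\to\TT$. This is a direct consequence of the naturality of the diagram \eqref{eq:39}: the lift $\dot s_i$ conjugates the pair $(B,T)$ to $(B^{s_i},T)$, sends $\Uh[B]d$ to $\Uh[B^{s_i}]d$, and intertwines $i,j$ with $i_{s_i},j_{s_i}$, so the canonical isomorphism \eqref{eq:44} together with the $\TT$-equivariant adjunction morphisms producing $\hR$ in \eqref{eq:48} transports $\hR_B$ to $\hR_{B^{s_i}}$ precisely through $\varphi_{s_i}$ on the source and $(\dot s_i)^*=s_i$ on the target.

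Having this, I would invert it as $\hR_{B^{s_i}}^{-1}=\varphi_{s_i}\circ\hR_B^{-1}\circ s_i^{-1}$, substitute into the definition $\Check R_i=\varphi_{s_i}^{-1}R_{B^{s_i},B}=\varphi_{s_i}^{-1}\hR_{B^{s_i}}^{-1}\hR_B$, and compute
\begin{equation*}
  \hR_B\circ\Check R_i
  = \hR_B\circ\varphi_{s_i}^{-1}\circ\varphi_{s_i}\circ \hR_B^{-1}\circ s_i^{-1}\circ\hR_B
  = s_i^{-1}\circ\hR_B = s_i\circ\hR_B,
\end{equation*}
using $s_i^2=1$ in the last step. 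This is exactly the commutativity of \eqref{eq:108}.

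The main obstacle I expect is bookkeeping of the various $\bA_T$-module twists that appear when one passes through $\varphi_{s_i}$ and the Weyl group action: both $\varphi_{s_i}$ and the geometric operator $(\dot s_i)^*$ are $\bA_T$-linear only after composing the $\bA_T$-action with $s_i\colon \bA_T\to \bA_T$, and one must check that these twists match across the commutative square so that the derived identity makes sense as a genuine identity of $\bF_T$-linear endomorphisms. Once this is handled—essentially by reading \subsecref{sec:autg-invariance} with $\sigma=\dot s_i\in N(T)$ in place of a general automorphism—the identification $\Check R_i=s_i$ is automatic. No new geometric input is required beyond \propref{prop:trans}, \eqref{eq:44} for inner automorphisms, and the definition of the Weyl group action on $\IH^*_\TT(\Uh{d})$.
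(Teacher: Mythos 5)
Your proposal is correct and follows essentially the same route as the paper: the key identity $\hR_{B^{s_i}}\circ\varphi_{s_i}=s_i\circ\hR_B$ is exactly the relation $\hR_{B^{s_i}}\varphi_{s_i}=\varphi_{s_i}\hR_B$ that the paper extracts from the commutativity of \eqref{eq:39} (with the right-hand $\varphi_{s_i}$ being the action on $\Uh{d}$, which induces the $W$-action), and the remaining manipulation of $\Check R_i=\varphi_{s_i}^{-1}\hR_{B^{s_i}}^{-1}\hR_B$ is the same conjugation argument. Your handling of the twisted $\bA_T$-structures and of the well-definedness of the lift $\dot s_i$ matches the discussion in \subsecref{sec:autg-invariance}, so no further changes are needed.
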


\begin{proof}
    We have $\Check R_i = \varphi_{s_i}^{-1} \hR_{B_i}^{-1}\hR_{B}
    \varphi_e$. As an endomorphism of
    $\IH^*_\TT(\Uh{d})\otimes_{\bA_T}\bF_T$, it is replaced by
    $\hR_{B}\varphi_{s_i}^{-1} \hR_{B_i}^{-1}$, as
    $\varphi_e=\operatorname{id}$.
    \begin{NB}
        Viewing an endomorphism of
        $\IH^*_\TT(\Uh{d})\otimes_{\bA_T}\bF_T$ means that we take the
        conjugation by
        \begin{equation}
            \hR_B\colon H^*_\TT(\Uh[T]{d},\Phi^{B}_{T,G}(\IC(\Uh{d})))
            \otimes_{\bA_T}\bF_T
            \xrightarrow{\cong} \IH^*_\TT(\Uh{d})\otimes_{\bA_T}\bF_T.
        \end{equation}
    \end{NB}%

    From the definition of $\hR_{B_i}$ and the commutativity of the
    diagram \eqref{eq:39}, we have $\hR_{B_i} \varphi_{s_i} =
    \varphi_{s_i} \hR_{B}$, where $\varphi_{s_i}$ in the right hand
    side is the action on $\Uh{d}$, the rightmost arrow in
    \eqref{eq:39}. Since the $W$-action is induced from
    $\varphi_{\sigma}$, the assertion follows.
\end{proof}

\begin{Proposition}\label{prop:commute}
    The Weyl group action on $M_\bF(\ba) = \bigoplus_d
    \IH^*_\TT(\Uh{d})\otimes_{\bA_T}\bF_T$ commutes with the $\scW_k(\g)$
    action.
    Hence $\scW_k(\g)$ acts on the $W$-invariant part
\(
    M_\bF(\ba)^W = \bigoplus_d
    \IH^*_\GG(\Uh{d})\otimes_{\bA_G}\bF_G.
    \index{FFG@$\bF_G = \CC(\ve_1,\ve_2,\ba)^W$}
\)
\end{Proposition}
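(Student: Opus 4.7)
The plan is to reduce (2) to (1) by invoking the identification $\IH^*_\GG(\Uh{d})\otimes_{\bA_G}\bF_G \cong (\IH^*_\TT(\Uh{d})\otimes_{\bA_T}\bF_T)^W$ from \eqref{eq:105}: any $W$-equivariant operator on $M_\bF(\ba)$ restricts to the $W$-invariants, so commutation of $W$ with $\scW_k(\g)$ immediately produces a $\scW_k(\g)$-action on $M_\bF(\ba)^W$. So the content lies in (1). Since $W$ is generated by simple reflections, it is enough to show that each $s_i$ commutes with every element of $\scW_k(\g)$.

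Next I would transport the question across $\hR_B$ to the hyperbolic restriction model $\bigoplus_d H^*_\TT(\Uh[T]{d},\Phi^B_{T,G}(\IC(\Uh{d})))\otimes_{\bA_T}\bF_T$. By the preceding lemma, the conjugate of $s_i$ under $\hR_B$ is precisely the $R$-matrix $\Check R_i$ (with the appropriate $\varphi_{s_i}$-twist of the $\bA_T$-module structure). The $\scW_k(\g)$-action on this model is defined through the Feigin--Frenkel embedding $\scW_k(\g)\hookrightarrow \Heis(\h)$, and $\Heis(\h)$ is generated by the operators $\widetilde P^j_n$ constructed in \subsecref{sec:heis-algebra-assoc} from hyperbolic restrictions through the sub-Levis $L_j$. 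Thus the claim reduces to: $\Check R_i$ sends each $\widetilde P^j_n$ to the analogous generator built from the opposite Borel $B^{s_i}$ (up to the $\varphi_{s_i}$-twist). This, together with the fact that $\scW_k(\g)\subset \Heis(\h)$ is intrinsically defined (so the subalgebra is preserved by this identification), then gives the desired commutation.

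The verification of this intertwining property of $\Check R_i$ with each $\widetilde P^j_n$ will be carried out by the same rank-two reduction used in the proof of \propref{prop:HeisRel}. By the factorization property of $R$-matrices (\subsecref{sec:factorization-1}), $\Check R_i$ is controlled entirely by the Levi $L_i$ with $[L_i,L_i]\cong SL(2)$. For $j=i$, the statement that $\Check R_i$ intertwines the Virasoro subalgebra $\Vir_i$ (and hence the Heisenberg $P^i_n$ up to the Virasoro refinement) is the $SL(2)$ computation of \cite[Thm.~14.3.1]{MO}, together with the fact that the $SL(2)$ $R$-matrix fixes the diagonal Heisenberg and reverses the anti-diagonal one. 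For $j\neq i$ with $(\alpha_i,\alpha_j)=0$, one factors through the rank-two Levi $L$ with $[L,L]\cong SL(2)\times SL(2)$ (or embeds in a larger type-$A$ Levi as in \propref{prop:HeisRel}): the generators $\widetilde P^i_n$, $\widetilde P^j_n$ live in commuting tensor factors and $\Check R_i$ acts nontrivially only on the $i$-th factor. For $(\alpha_i,\alpha_j)=-1$, one factors through the rank-two Levi with $[L,L]\cong SL(3)$ and appeals to the compatibility already established in \subsecref{subsec:comp} between our stable-envelope construction and the Maulik--Okounkov one on $\Gi{3}^d$; the commutation of $S_3$ with $\scW_k(\gl_3)$ in that setting (and hence with $\scW_k(\algsl_3)$, which is the relevant tensor factor) is part of the framework of \cite[Ch.~4, Ch.~14]{MO}.

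The main obstacle is the $A_2$-step: although intuitively clear from the Maulik--Okounkov picture, one needs to match precisely the $\Check R_i$ produced in our framework with the $SL(3)$-$R$-matrix of \cite{MO}, and to check that the full $\scW_k(\algsl_3)$ (not only the Virasoro subalgebras $\Vir_i$, $\Vir_j$) is preserved. The former identification is essentially the content of \subsecref{subsec:comp}: both $R$-matrices arise from the stable envelope and the associativity of hyperbolic restriction, so they agree after projecting from $\pi_!\cC_{\Gi{3}^d}$ to its summand $\IC(\Uh[SL(3)]d)$. The latter follows from the Feigin--Frenkel characterization $\scW_k(\g)=\bigcap_j \Vir_j\otimes\Heis(\alpha_j^\perp)$ together with the commutation of $\Check R_i$ with $\Vir_j$ and $\Heis(\alpha_j^\perp)$ checked in the previous paragraph.
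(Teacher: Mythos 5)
The decisive step is missing at the end of your argument. Knowing that conjugation by $\Check R_i$ carries each $\widetilde{P}^j_n$ constructed from $B$ to the analogous generator constructed from $B^{s_i}$ only says that conjugation by $\Check R_i$ implements the (twisted) reflection $s_i$ as an automorphism of $\Heis(\h)$; the remark that $\scW_k(\g)\subset\Heis(\h)$ is ``intrinsically defined'' would then at best give that this automorphism \emph{preserves} the subalgebra, i.e.\ that $\Check R_i$ normalizes $\scW_k(\g)$ --- not that it centralizes it, which is what the proposition asserts. What is actually needed is that every element of the Feigin--Frenkel image is \emph{fixed pointwise}, equivalently that $\Check R_i$ commutes with the single tensor factorization $\Vir_i\otimes\Heis(\alpha_i^\perp)$ containing $\scW_k(\g)$; normalizing is not centralizing, so your final inference does not go through as stated. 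Your closing sentence makes this worse by invoking ``the commutation of $\Check R_i$ with $\Vir_j$ and $\Heis(\alpha_j^\perp)$'' for all $j$: for $j$ adjacent to $i$ this is false (conjugation by $\Check R_i$ moves the $\alpha_j$-Virasoro to the one along $s_i(\alpha_j)$), and it is not what your previous paragraph checked anyway. Only the case $j=i$ is true and suffices, but then the adjacent directions still enter $\Heis(\alpha_i^\perp)$ through combinations such as $P^j_n+\tfrac12 P^i_n$, so you cannot avoid pinning down the precise action of $\Check R_i$ on $P^j_n$ for adjacent $j$; your only source for this is ``the framework of [MO]'' for $\gl_3$, which is importing the rank-two instance of the very statement being proved, and the identification of the present $\Check R_i$ with the Maulik--Okounkov $R$-matrix is only established in \subsecref{sec:sl2-case} for $SL(2)$, not for $SL(3)$.

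The paper closes exactly this gap by a different, rank-computation-free mechanism: since $\Phi^{B}_{T,G}$ and $\Phi^{B^{s_i}}_{T,G}$ factor through the \emph{common} outer functor $\Phi^{P_i}_{L_i,G}$ (\propref{prop:trans}), the same argument as in \propref{prop:intertwiner} shows that $\Check R_i$ commutes with the Ext algebra of $\Phi^{P_i}_{L_i,G}(\IC(\Uh{d}))$, and the operators generating the relevant factor $\Vir_i\otimes\Heis(\alpha_i^\perp)$ --- in particular the $\mL^i_n$ --- live in that Ext algebra; commutation of $s_i$ with $\scW_k(\g)\subset\Vir_i\otimes\Heis(\alpha_i^\perp)$ (by \eqref{eq:W=Vir}) then follows with no $A_2$ input at all. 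If you want to keep your route, you must (a) prove, not assert, the equivariance of $\Check R_i$ on the generators $\widetilde P^j_n$ for adjacent $j$ in the Uhlenbeck/IC setting, and (b) replace ``the subalgebra is preserved'' by an argument that the induced twisted reflection fixes the image of $\scW_k(\g)$ elementwise. Finally, in part (2), \eqref{eq:105} is the non-localized statement; identifying $(\IH^*_\TT(\Uh{d})\otimes_{\bA_T}\bF_T)^W$ with $\IH^*_\GG(\Uh{d})\otimes_{\bA_G}\bF_G$ requires the small averaging argument (clearing denominators by $\prod_{\sigma\in W}\sigma f$) that the paper spells out; this is minor but should not be skipped.
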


\begin{proof}
    Since $\scW_k(\g)$ is the intersection of $\Vir_i\otimes
    \Heis(\alpha_i^\perp)$ (see \eqref{eq:W=Vir}), it is enough to
    show that $\Vir_i\otimes \Heis(\alpha_i^\perp)$ commutes with
    $s_i$. By the previous lemma, $s_i$ is given by the
    $R$-matrix.

    Let us first factorize the hyperbolic restriction functors
    $\Phi_{T,G}^B$, $\Phi_{T,G}^{B^{s_i}}$ as
    \begin{equation*}
        \Phi_{T,G}^B = \Phi_{T,L_i}^{B_{L_i}} \Phi_{L_i,G}^{P_i},\qquad
        \Phi_{T,G}^{B^{s_i}} = \Phi_{T,L_i}^{B^{s_i}_{L_i}} \Phi_{L_i,G}^{P_i}
    \end{equation*}
    by \propref{prop:trans}. Then the same argument as in
    \propref{prop:intertwiner} shows that $\Check R_i$ commutes with
    the action of the Ext algebra of
    $\Phi_{L_i,G}^{P_i}(\IC(\Uh{d}))$.  Since the Virasoro generators
    $\mL^i_n$ are in this Ext algebra, the first assertion follows.

    For the second assertion, we only need to check
    \begin{equation*}
        (\IH^*_\TT(\Uh{d})\otimes_{\bA_T}\bF_T)^W \cong
        \IH^*_\GG(\Uh{d})\otimes_{\bA_G}\bF_G.
    \end{equation*}
    By \eqref{eq:105} we have a natural injective homomorphism form
    the right hand side to the left. On the other hand, if $m/f$
    ($f\in\bA_T$, $m\in \IH^*_\TT(\Uh{d})$) is fixed by $W$, we have
    \begin{equation*}
        \frac{m}f = \frac1{|W|}\sum_{\sigma\in W} \frac{\sigma m}{\sigma f}
        = \frac1{|W|} \left(\prod_{\sigma\in W} \sigma f\right)^{-1}
      \sum_{\sigma\in W} \sigma m \prod_{\tau\neq\sigma} \tau f.
    \end{equation*}
This is contained in the right hand side. Therefore the above follows.
\end{proof}

\subsection{A different proof of the Heisenberg commutation relation}

We give a different proof of \propref{prop:HeisRel}.

Let $\Tilde\alpha^{d,-}_i$ be the element defined as in
$\Tilde\alpha^{d}_i$ for the opposite Borel. Since the pairing can be
computed from the $SL(2)\cong [L_i,L_i]$ case, we already know that
\begin{equation}
  \la\Tilde\alpha^d_i, \Tilde\alpha^{d,-}_i\ra = 2d.
\end{equation}

We generalize this to
\begin{Proposition}\label{prop:diff_proof}
  \begin{equation}
    \la\Tilde\alpha^d_i, \Tilde\alpha^{d,-}_j\ra = \pm d(\alpha_i,\alpha_j).
  \end{equation}
\end{Proposition}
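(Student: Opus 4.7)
The strategy is to reduce the computation to a rank $2$ Levi subgroup $L \subset G$ containing $T$, whose Dynkin diagram is the subdiagram spanned by the vertices $i$ and $j$. By the associativity \eqref{eq:5} of the hyperbolic restriction functor and the compatibility of the pairing $\la\ ,\ \ra$ with factorization developed in \subsecref{sec:ass} and \subsecref{sec:pairing}, both elements $\Tilde\alpha^d_i$ and $\Tilde\alpha^{d,-}_j$ can be computed entirely within $U^d_{T,L}$ (and its opposite): indeed they are constructed using only the subgroups $L_i, L_j \subset L$. Thus we may assume $G = L$ has rank $2$, leaving two sub-cases according to whether $(\alpha_i,\alpha_j) = 0$ or $-1$.

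The case $(\alpha_i,\alpha_j) = 0$ is immediate: here $[L,L] \cong SL(2) \times SL(2)$ and the Uhlenbeck space decomposes as
\begin{equation*}
  \Uh[L]{d} = \bigsqcup_{d_1+d_2=d} \Uh[SL(2)]{d_1} \times \Uh[SL(2)]{d_2},
\end{equation*}
compatibly with the hyperbolic restriction to $T$. Tracing through the construction in \subsecref{sec:anotherbase2}, $\Tilde\alpha^d_i$ is supported on the component $(d_1,d_2)=(d,0)$, while $\Tilde\alpha^{d,-}_j$ is supported on $(d_1,d_2) = (0,d)$. These components are disjoint for $d > 0$, and consequently the pairing vanishes, matching $d(\alpha_i,\alpha_j) = 0$.

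The case $(\alpha_i,\alpha_j) = -1$ is the genuine content, with $[L,L] \cong SL(3)$; here the point is to avoid direct use of Proposition~\ref{prop:HeisRel} (equivalently, of the Gieseker-space computation for $SL(3)$). The plan is to exploit the geometric $R$-matrix $\Check R_i$: by the intertwiner property \propref{prop:intertwiner}, $\Check R_i$ commutes with $\mL^i_n$ (which lies in the Ext algebra of $\Phi^{P_i}_{L_i,G}(\IC(\Uh{d}))$), and by \eqref{eq:55} it satisfies the $SL(2)$-normalization $\Check R_i = -1 + O(\ba^{-1})$. Under $\hR_B$, $\Check R_i$ implements the simple reflection $s_i$ on $\IH^*_\TT(\Uh{d})\otimes\bF_T$ (cf.\ \lemref{lem:autg} and the diagram \eqref{eq:108}). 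One then checks, using the $\Aut(G)$-equivariance of our construction and the $SL(2)$ normalization, that at the leading (constant in $\ba$) order $s_i$ sends $\Tilde\alpha^d_k$ to $\Tilde\alpha^d_k - \langle\alpha_k,\alpha_i^\vee\rangle\Tilde\alpha^d_i$, reflecting the standard Weyl reflection on $\mathfrak h$. Invariance of the pairing under $s_i$ (which follows from unitarity of $\Check R_i$) then forces
\[
    \la\Tilde\alpha^d_j+\Tilde\alpha^d_i,\;\Tilde\alpha^{d,-}_i\ra
    = \la\Tilde\alpha^d_j,\;-\Tilde\alpha^{d,-}_i\ra,
\]
and combined with the known diagonal value $\la\Tilde\alpha^d_i,\Tilde\alpha^{d,-}_i\ra = \pm 2d$ coming from the $SL(2)$-computation in \remref{rem:SL(2)}, we obtain $\la\Tilde\alpha^d_i,\Tilde\alpha^{d,-}_j\ra = \mp d = \pm d(\alpha_i,\alpha_j)$.

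The main obstacle will be establishing the precise transformation rule $s_i \cdot \Tilde\alpha^d_k = \Tilde\alpha^d_k - \langle\alpha_k,\alpha_i^\vee\rangle \Tilde\alpha^d_i$ at the level of integral forms: the $R$-matrix lives over $\bF_T$, whereas $\Tilde\alpha^d_k$ are integral classes, so one must control lower-order-in-$\ba$ terms of $\Check R_i$ when acting on these specific elements. This requires comparing the integral Heisenberg generators $\widetilde{P}^i_n$ with the action of the $R$-matrix derived from the $SL(2)$-computation in \cite{MO}, and in particular showing that the correction terms to $\Check R_i = -1 + O(\ba^{-1})$ do not contribute to the leading pairing. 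If this step can be made uniform across the rank-$2$ non-simply-laced cases $B_2, G_2$, the same argument will deliver the modification of Theorem~\ref{main-intro-loc} predicted in Conjecture~\ref{non-simp}.
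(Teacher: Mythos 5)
Your reduction to the rank-two Levi and your handling of the case $(\alpha_i,\alpha_j)=0$ by orthogonality of the components of $\Uh[L]{d}$ are in the spirit of the paper, but the essential case $(\alpha_i,\alpha_j)=-1$ is circular as proposed. What the $SL(2)$ input \eqref{eq:55}, the factorization \eqref{eq:28} and the decomposition \eqref{eq:53} actually give is that the leading term of $\Check R_i$ is $-1$ on $U^d_{T,L_i}$ and $+1$ on $(U^d_{T,L_i})^\perp$, i.e.\ the operator $\Tilde s_i(x) = x - \la x,\Tilde\alpha^{d,-}_i\ra\,\Tilde\alpha^d_i/d$, whose matrix in the basis $\{\Tilde\alpha^d_k\}$ is written in terms of the very pairings you want to compute. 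Your step ``one then checks \dots that $s_i$ sends $\Tilde\alpha^d_k$ to $\Tilde\alpha^d_k - \langle\alpha_k,\alpha_i^\vee\rangle\Tilde\alpha^d_i$'' is therefore not an independently checkable fact: it is equivalent to $\la\Tilde\alpha^d_k,\Tilde\alpha^{d,-}_i\ra = d(\alpha_k,\alpha_i)$, i.e.\ to the proposition itself, and neither $\Aut(G)$-equivariance nor the $SL(2)$ normalization supplies it. The obstacle you flag (integral forms, lower-order terms in $\ba$) is not the real one; already the leading term's action on the $\Tilde\alpha$-basis is unknown a priori. Note also that unitarity of a single $\Check R_i$ only yields $\Tilde s_i^2=1$, which is automatic for a reflection and cannot couple the $i$- and $j$-directions.

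The missing ingredient is a relation between $\Tilde s_i$ and $\Tilde s_j$ that does not presuppose the answer. The paper extracts it from the leading term of the Yang--Baxter equation \eqref{eq:42}: the operators $\Tilde s_i$, $\Tilde s_j$ satisfy the braid relation (resp.\ commute when $(\alpha_i,\alpha_j)=0$). Writing $a=\la\Tilde\alpha^d_i,\Tilde\alpha^{d,-}_j\ra$, $b=\la\Tilde\alpha^d_j,\Tilde\alpha^{d,-}_i\ra$ and using the known diagonal value $\la\Tilde\alpha^d_i,\Tilde\alpha^{d,-}_i\ra = 2d$, the braid relation forces $ab=d^2$; the symmetry $a=b$ is obtained from the $SL(3)$ diagram automorphism via \lemref{lem:autg}, $\varphi_\sigma(\Tilde\alpha^d_i)=(-1)^d\Tilde\alpha^d_j$, whence $a=\pm d$; and the degenerate possibility $\Tilde\alpha^d_i=\pm\Tilde\alpha^d_j$ is excluded by the dual-basis formula \eqref{eq:111}, which was established without the $SL(3)$ Gieseker space. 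None of these three inputs (braid relation from Yang--Baxter, diagram-automorphism symmetry, \eqref{eq:111}) appears in your plan, and without them the argument does not close.
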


The following proof does not determine $\pm$, though we know that it
is $+$ by the reduction to the $SL(3)$ case and the formula
\eqref{eq:Cartan}, which has been proved via Gieseker spaces.

\begin{NB}
    Fortunately, the linear independence now follows from
    \eqref{eq:111}, and hence the proof is completed.

\textcolor{red}{
  The following proof contains a gap, as I implicitly assume that
  $\Tilde\alpha_i^d$, $\Tilde\alpha_j^d$ are linearly independent.
}
\end{NB}

\begin{proof}
  We consider the case $(\alpha_i,\alpha_j) = -1$. The proof for the
  case $(\alpha_i,\alpha_j) = 0$ is similar (and simpler).

  Let us study the leading part of Yang-Baxter equation
  \eqref{eq:42}. We consider $R$-matrices as endomorphisms of the
  space \eqref{eq:symprod}.
  By the factorization \eqref{eq:28}, we can use the expansion
  \eqref{eq:55} for $SL(2)$. Then `$-1$' in \eqref{eq:55} is replaced
  by the direct sum of $(-1)$ on $U^d_{T,L_i}\cong \h_{\algsl_2}$
  and the identity on $(U^d_{T,L_i})^\perp$ in \eqref{eq:53}. Let us
  denote it by $\Tilde s_i$.

  Since $(U^d_{T,L_i})^\perp$ is the orthogonal complement of
  $\CC\Tilde\alpha_i^{d,-}$, we have
  \begin{equation}
    \Tilde s_i(x)
    \begin{NB}
      =
      x - \frac{2\la x,\Tilde\alpha_i^{d,-}\ra}
      {\la\Tilde\alpha_i^d,\Tilde\alpha_i^{d,-}\ra} \Tilde\alpha_i^{d}
    \end{NB}%
    =
    x - \la x,\Tilde\alpha_i^{d,-}\ra
    \frac{\Tilde\alpha_i^d}d, \quad
    \text{for $x\in U^d$}.
  \end{equation}
  From the Yang-Baxter equation, we have the braid relation
  \begin{equation}
    \Tilde s_i \Tilde s_j \Tilde s_i = \Tilde s_j \Tilde s_i \Tilde s_j.
  \end{equation}

  Since we are considering the $SL(3)$-case, there is the diagram
  automorphism $\sigma$ exchanging $i$ and $j$. By \lemref{lem:autg}, we
  have $\varphi_\sigma(\Tilde\alpha^d_i) = (-1)^d \Tilde\alpha^d_j$. Since
  $\varphi_\sigma$ preserves the inner product, we get
  \begin{equation}
    \la\Tilde\alpha^d_i, \Tilde\alpha^{d,-}_j\ra =
    \la\Tilde\alpha^d_j, \Tilde\alpha^{d,-}_i\ra.
  \end{equation}

  Now $\Tilde s_i$ is the usual reflection with respect to the
  hyperplane $\Tilde\alpha^{d,-}_i = 0$. Hence we conclude
  $\la\Tilde\alpha^d_i,\Tilde\alpha^{d,-}_j\ra = \pm d$.

  Note that $\Tilde\alpha_i^d = \pm \Tilde\alpha_j^d$ are excluded
  thanks to \eqref{eq:111}, which has been proved without using
  Gieseker spaces for $SL(3)$.
\end{proof}

\begin{NB}
  Let us give a little more detail. We may assume $d=1$. We know
  \begin{equation}
    \la\tilde\alpha^d_1,\tilde\alpha^{d,-}_1\ra = 2 =
    \la\tilde\alpha^d_2, \tilde\alpha^{d,-}_2\ra.
  \end{equation}
  Set
  \begin{equation}
    a \defeq \la\tilde\alpha^d_1, \tilde\alpha^{d,-}_2\ra, \qquad
    b\defeq \la\tilde\alpha^d_2, \tilde\alpha^{d,-}_1\ra.
  \end{equation}
Then reflections $\Tilde s_1$, $\Tilde s_2$ are
\begin{equation}
  \Tilde s_1(\tilde\alpha^d_1) = -\tilde\alpha^d_1, \quad
  \Tilde s_1(\tilde\alpha^d_2) = \tilde\alpha^d_2 - a\tilde\alpha^d_1, \quad
  \Tilde s_2(\tilde\alpha^d_1) = \tilde\alpha^d_1 - b \tilde\alpha^d_2, \quad
  \Tilde s_2(\tilde\alpha^d_2) = -\tilde\alpha^d_2.
\end{equation}
In the matrix form, we have
\begin{equation}
  \Tilde s_1 =
  \begin{pmatrix}
    -1 & -a \\ 0 & 1
  \end{pmatrix},\quad
  \Tilde s_2 =
  \begin{pmatrix}
    1 & 0 \\
    -b & -1
  \end{pmatrix}.
\end{equation}
Then
\begin{equation}
  \Tilde s_1 \Tilde s_2 \Tilde s_1 =
  \begin{pmatrix}
    1 - ab & 2a - a^2 b \\ b & ab-1
  \end{pmatrix},\quad
  \Tilde s_2 \Tilde s_1 \Tilde s_2 =
  \begin{pmatrix}
    ab - 1 & a \\ 2b - ab^2 & 1 - ab
  \end{pmatrix}.
\end{equation}
We deduce $ab = 1$ from $\Tilde s_1 \Tilde s_2 \Tilde s_1 = \Tilde s_2
\Tilde s_1 \Tilde s_2$.

Since we know $a = b$, we get $a = \pm 1$.

Consider the case $(\alpha_i,\alpha_j) = 0$. We have
\begin{equation}
  \Tilde s_1 \Tilde s_2 =
  \begin{pmatrix}
    ab - 1 & a \\ -b & -1
  \end{pmatrix},
\quad
  \Tilde s_2 \Tilde s_1 =
  \begin{pmatrix}
    - 1 & -a \\ b & ab - 1
  \end{pmatrix}.
\end{equation}
Therefore $a=b=0$.
\end{NB}

Once we compute the inner product, the Heisenberg relation is a
consequence of the factorization \eqref{eq:53}.
The generator $P^i_n$ is the tensor product of the Heisenberg
generator for the first factor and the identity in \eqref{eq:53}.

\begin{NB}
  The following subsection is my personal note.

\subsection{}

Let us consider the case $G=SL(3)$ and $L = T$ the maximal torus.
Then $\h = \{ (a_1,a_2,a_3) \in\CC^3 \mid a_1+a_2+a_3=0\}$ and root
hyperplanes are $a_1 = a_2$, $a_2 = a_3$, $a_1 = a_3$. We have 6
chambers. The $R$-matrix between the fundamental chamber $a_1 > a_2 >
a_3$ and the opposite one factorizes into $R$-matrices of $SL(2)$-type
in two ways, and gives the Yang-Baxter equation
\begin{multline}
    \label{eq:33}
  R_{12}(a_1-a_2) R_{13}(a_1-a_3) R_{23}(a_2-a_3)
\\
  = R_{23}(a_2-a_3) R_{13}(a_1-a_3) R_{12}(a_1-a_2).
\end{multline}

*****************************************

In \cite{MO}, the Yang-Baxter equation is given by the following form:
\begin{equation}
  R_{12}(u) R_{13}(u+v) R_{23}(v) = R_{23}(v) R_{13}(u+v) R_{12}(u),
\end{equation}
where $R_{\alpha\beta}$ is the $R$-matrix acting of the
$(\alpha,\beta)$-factors in the triple product $F_1\otimes F_2\otimes
F_3$ of the Fock spaces, by acting the identity on the remaining
factor. And it is normalized so that
\begin{equation}\label{eq:classicalr}
  R_{12}(u) = 1 + \frac{\hbar}u \br_{12} + \cdots,
\end{equation}
where $\br_{12}$ is the classical $R$-matrix. Similar for $13$,
$23$. Note that it starts with the identity $1$.

This formulation does not have an immediate generalization to other
root systems,
\begin{NB2}
  We have three terms $R_{12}$, $R_{13}$, $R_{23}$, but the braid
  relation should involve only two terms.
\end{NB2}%
so let us put the exchange matrix $P_{\alpha\beta}\colon
F_\alpha\otimes F_\beta\to F_\beta\otimes F_\alpha$ and introduce
$\Check R_{\alpha\beta} = P_{\alpha\beta}R_{\alpha\beta}$.

Note first that
\begin{equation}
    \label{eq:34}
    P_{12} R_{12}(u) P_{12} = R_{12}(-u)^{-1}.
\end{equation}
See \cite[(4.11)]{MO}. Therefore
\begin{equation}
    \label{eq:36}
    \Check R_{12}(s_{12} u) \Check R_{12}(u) = 1,
\end{equation}
as $s_{12}u = -u$.

The Yang-Baxter equation is written as
\begin{equation}
  \Check R_{12}(v) \Check R_{23}(u+v) \Check R_{12}(u)
  = \Check R_{23}(u) \Check R_{12}(u+v) \Check R_{23}(v).
\end{equation}
Two equations are equivalent: The left hand side is
\begin{equation}
  \begin{split}
  & (P_{13} P_{23} \Check R_{23}(v) P_{13} P_{12})
  (P_{12}\Check R_{13}(u+v) P_{12})\Check R_{12}(u)
\\
  = \; &P_{13} R_{23}(v) R_{13}(u+v) R_{12}(u),
  \end{split}
\end{equation}
and similarly the right hand side is equal to
\begin{equation}
  P_{13} R_{12}(u) R_{13}(u+v) R_{23}(v).
\end{equation}

Now we set $\Check R_{1} = \Check R_{12}$, $\Check R_{2} = \Check
R_{23}$ as usual, and we write the Yang-Baxter equation as
\begin{equation}
  \Check R_1 (u) \Check R_2(u+v) \Check R_1(v)
  =   \Check R_2 (v) \Check R_1(u+v) \Check R_2(u).
\end{equation}
Taking the leading term of this equation in the expansion
\eqref{eq:classicalr}, we see the braid relation
\begin{equation}
  s_1 s_2 s_1 = s_2 s_1 s_2.
\end{equation}
Also note that if we replace $v=a_1 - a_2$, $u=a_2 - a_3$ as above, we
can further write
\begin{multline}
    \label{eq:37}
  \Check R_1 (\langle \alpha_1, s_2 s_1(\ba)\rangle)
  \Check R_2( \langle \alpha_2, s_1(\ba)\rangle)
  \Check R_1( \langle \alpha_1, \ba\rangle)
\\
  =   \Check R_2(\langle \alpha_2, s_1 s_2(\ba)\rangle)
      \Check R_1(\langle \alpha_1, s_2(\ba)\rangle)
      \Check R_2(\langle \alpha_2, \ba\rangle).
\end{multline}
This looks more compatible with the root system.

In our setting $s_1$, $s_2$ appear as reflections with respect to the
orthogonal decomposition \eqref{eq:decomp2}. Therefore this equation
means that two lines $U^d_{T,L_i}$ ($i=1,2$) intersect at angle
$2\pi/3$. This implies that
\begin{equation}
  (\Tilde\alpha_1^d,\Tilde\alpha_2^d) = \pm d.
\end{equation}
In order to see that this is really equal to $-d$, we need to be a
little more careful, as it involves the sign for $\Tilde\alpha_i^d$.

Let us proceed a little further. The classical $R$-matrix is given in
\cite[Th.~12.4.4]{MO}
\begin{equation}
  \begin{split}
  & \br_{12} = - \ve_1\ve_2 \sum_{n > 0}
  (P_{-n}^{(1)} - P_{-n}^{(2)})(P_n^{(1)} - P_n^{(2)}), \\
&  \br_{23} = - \ve_1\ve_2 \sum_{n > 0}
  (P_{-n}^{(2)} - P_{-n}^{(3)})(P_n^{(2)} - P_n^{(3)}).
  \end{split}
\end{equation}
Let us switch to the root system notation and write
\begin{equation}
  \br_i = -\ve_1\ve_2 \sum_{n > 0} P^i_{-n} P_n^i
  =
  \begin{cases}
      \br_{12} & \text{if $i=1$}
\\
      \br_{23} & \text{if $i=2$}
  \end{cases}
  \quad
  \Check\br_i = s_i \br_i.
\end{equation}
We also have
\begin{equation}
  \br_{13} = -\ve_1\ve_2 \sum_{n>0} (P^1_{-n}+P^2_{-n})(P^1_{n}+P^2_{n}).
\end{equation}

Then the classical $R$-matrix satisfies
\begin{equation}
  \begin{split}
    & s_1 \Check\br_2 \Check\br_1 + \Check\br_1 s_2\Check\br_1
    = \Check\br_2 s_1\Check\br_2 + \Check\br_2\Check\br_1 s_2,
\\
    & \Check\br_1 s_2 \Check\br_1 + \Check\br_1 \Check\br_2 s_1
    = s_2 \Check\br_1 \Check\br_2 + \Check\br_2 s_1 \Check\br_2.
  \end{split}
\end{equation}
\begin{NB2}
    We also have $\Check\br_1 s_2 s_1 = s_2 s_1\Check\br_2$,
    $\Check\br_2 s_1 s_2 = s_1 s_2\Check\br_1$, $s_1\Check\br_2 s_1 =
    s_2\Check\br_1 s_2$. And also $\Check\br_1\Check\br_2\Check\br_1 =
    \Check\br_2\Check\br_1\Check\br_2$.
\end{NB2}

We already know that how $s_i$ acts on $\Check\br_j$ since we already
compute the inner product. We move all $s_i$ to the right and get
\begin{equation}
  \begin{split}
    & (\br_{13} \br_{23} + \br_{12} \br_{23})s_1s_2s_1
  = (\br_{23}\br_{12} + \br_{23}\br_{13}) s_2 s_1 s_2,
\\
    &
    (\br_{12} \br_{23} + \br_{12} \br_{13})s_1s_2s_1
  = (\br_{13}\br_{12} + \br_{23}\br_{12}) s_2 s_1 s_2.
  \end{split}
\end{equation}
Removing $s_1 s_2 s_1 = s_2 s_1 s_2$ and get
\begin{equation}
  [\br_{13} + \br_{12}, \br_{23}] = 0 =
  [\br_{12}, \br_{23} + \br_{13}].
\end{equation}
This is the equation in \cite[(5.15)]{MO}.
\begin{NB2}
  It seems that their classical Yang-Baxter equation \cite[(4.22)]{MO}
  is not quite correct, and \cite[(5.15)]{MO} can be directly deduced
  from the Yang-Baxter equation.
\end{NB2}

We have
\begin{equation}
  \begin{split}
    & \frac1{(\ve_1\ve_2)^4}[\br_{12}, \br_{23}] =
    \sum_{n>0} (P^1_{-n} [P^1_n, P^2_{-n}] P^2_n
    - P^2_{-n} [P^2_n, P^1_{-n}] P^1_n) ,
\\
    & \frac1{(\ve_1\ve_2)^4}[\br_{13}, \br_{23}]
    \begin{aligned}[t]
    &=
    \sum_{n>0} ((P^1_{-n} + P^2_{-n})[P^1_n + P^2_n, P^2_{-n}] P^2_n
    - P^2_{-n} [P^2_n, P^1_{-n} + P^2_{-n} ](P^1_n + P^2_n))
\\
    &=
    \sum_{n>0} (P^1_{-n} [P^1_n + P^2_n, P^2_{-n}] P^2_n
    - P^2_{-n} [P^2_n, P^1_{-n} + P^2_{-n} ]P^1_n ),
    \end{aligned}
  \end{split}
\end{equation}
as $[P^1_n+P^2_n, P^2_{-n}] = [P^2_n, P^1_{-n}+P^2_{-n}]$.
Therefore the sum is
\begin{equation}
  \sum_{n > 0}
   (P^1_{-n} [2P^1_n + P^2_n, P^2_{-n}] P^2_n
    - P^2_{-n} [P^2_n, 2P^1_{-n} + P^2_n]P^1_n).
\end{equation}
If $P^i_n, P^j_n$ have the correct commutation relation, this
vanishes.
\end{NB}


\section{Whittaker state}\label{sec:whittaker-state}

\subsection{Universal Verma/Wakimoto modules}\label{sec:univ-verma-modul}

Let us denote the direct sum of four $\bA_T$-modules over $d\in\Z_{\ge
  0}$ in \eqref{eq:4sp} by $M_\bA(\ba)$, $N_\bA(\ba)$,
$D(N_\bA(-\ba))$, $D(M_\bA(-\ba))$ respectively. Thus we have
\begin{equation}\label{eq:89}
  M_\bA(\ba) \subset N_\bA(\ba) \subset D(N_\bA(-\ba))\subset D(M_\bA(-\ba)).
\end{equation}
\begin{NB}
    The notation changed on Sep.~14, 2014
\end{NB}%
The reason for notation will be clear
shortly. \index{MA@$M_\bA(\ba)$}\index{NA@$N_\bA(\ba)$}

The pairing \eqref{eq:pairing} restricts to a perfect pairing
\begin{equation}
    \label{eq:63}
    \la\ ,\ \ra\colon M_\bA(-\ba)\otimes D(M_\bA(-\ba)) \to \bA_T,
\end{equation}
given by the Verdier duality, where the $\bA_T$-structure is twisted
by the automorphism $\ba\mapsto-\ba$ as in
\subsecref{sec:kac-shapovalov-form}, and hence the notation is changed
to $M_\bA(-\ba)$.
\begin{NB}
    Added: Sep.~14, 2014.
\end{NB}%
Then $D(M_\bA(-\ba))$ is identified with the graded dual of
$M_\bA(-\ba)$ by \eqref{eq:63}, hence our notation is compatible with
the convention in Remark~\ref{rem:Dual}.
\begin{NB}
    Modified : Sep.~15, 2014.
\end{NB}%
Similarly if we twist $N_\bA(\ba)$, we have an isomorphism
\begin{equation}\label{eq:86}
    \varphi_\sigma\varphi_{w_0}\colon N_\bA(-\ba)\xrightarrow{\cong}
    \bigoplus_d H^*_{\TT,c}(\Uh[T]d, \Phi_{T,G}^{B_-}(\IC(\Uh{d}))),
\end{equation}
where $\Phi_{T,G}^{B_-}$ is the hyperbolic restriction with respect to
the opposite Borel $B_-$. Then we have a perfect pairing
\begin{equation}
    \label{eq:64}
    \la\ ,\ \ra\colon N_\bA(-\ba)\otimes D(N_\bA(-\ba))\to\bA_T.
\end{equation}
Recall that $N_\bA(\ba)$, $D(N_\bA(-\ba))$ are modules over the
integral form of the Heisenberg algebra $\Heis_\bA(\mathfrak h)$, as
we remarked at the end of \subsecref{sec:heis-algebra-assoc}.

Using \lemref{lem:Fock}, we make an identification
\begin{equation}\label{eq:102}
      N_\bA(\ba) 
      \cong \bigoplus_\lambda
      \Sym^{n_1}U^{1}\otimes \Sym^{n_2} U^{2}\otimes\cdots
      \otimes H^*_{\TT,c}(\overline{S_\lambda\AA^2}),
\end{equation}
where $U^d=U^{d,B}_{T,G}$ and $\lambda = (1^{n_1}2^{n_2}\cdots)$. We
also have an identification for the opposite Borel $B_-$:
\begin{equation}
  D(N_\bA(\ba))
  \cong \bigoplus_\lambda \Sym^{n_1}U^{1,-}\otimes \Sym^{n_2} U^{2,-}\otimes\cdots
      \otimes H^*_{\TT}(\overline{S_\lambda\AA^2}),
\end{equation}
where $U^{d,-} = U^{d,B_-}_{T,G}$.
Then the pairing \eqref{eq:64} is the product of the pairing between
$U^d$ and $U^{d,-}$ in \subsecref{sec:pairing} and one between
$H^*_{c,\TT}(\overline{S_\lambda\AA^2})$ and
$H^*_{\TT}(\overline{S_\lambda\AA^2})$.
\begin{NB}
  The latter is the intersection pairing multiplied by
  $(-1)^{\dim S_\lambda\AA^2/2} = (-1)^{n_1+n_2+\cdots}$.
\end{NB}

Moreover, two pairings \eqref{eq:63}
\begin{NB}
    The reference corrected. Sep.~14, 2014.
\end{NB}%
and \eqref{eq:64} are compatible
with the embeddings \eqref{eq:89}.
\begin{NB}
    This requires an explanation.
\end{NB}

Let $\scW_\bA(\g)$ be the $\bA$-form of the $W$-algebra in
\secref{sec:intW}.
\index{WAg@$\scW_\bA(\g)$|textit}

\begin{Proposition}
    $M_\bA(\ba)$, $D(M_\bA(\ba))$ are $\scW_\bA(\g)$-modules.
\end{Proposition}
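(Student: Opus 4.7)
The plan is to bootstrap from the already-constructed $\Heis_\bA(\h)$-action on the middle two modules $N_\bA(\ba)$ and $D(N_\bA(-\ba))$. Recall that $\scW_\bA(\g)$ is defined in Appendix~B as a vertex subalgebra of $\Heis_\bA(\h)$, realized as an intersection of kernels of screening operators, in parallel with the Feigin--Frenkel realization \eqref{eq:W=Vir}. Hence, as a first step, we obtain $\scW_\bA(\g)$-module structures on $N_\bA(\ba)$ and $D(N_\bA(-\ba))$ by restriction of the $\Heis_\bA(\h)$-action, since $\scW_\bA(\g)\subset \Heis_\bA(\h)$ by construction.

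Next I would show that these actions restrict respectively to $M_\bA(\ba)$ and extend to $D(M_\bA(-\ba))$ in the chain of embeddings \eqref{eq:89}. The key tool is \thmref{thm:nonlocal}, which characterizes $\IH^*_{\TT,c}(\Uh{d})$ inside the localized space $\IH^*_{\TT,c}(\Uh{d})\otimes_{\bA_T}\bF_T$ as the intersection
\[
  H^*_{\TT,c}(\Uh[T]d,\Phi^{B}_{T,G}(\IC(\Uh{d})))\cap
  H^*_{\TT,c}(\Uh[T]d,\Phi^{B_-}_{T,G}(\IC(\Uh{d}))).
\]
The $\Heis_\bA(\h)$-action constructed via hyperbolic restriction with respect to $B$ preserves the first factor; the analogous construction carried out verbatim with $B$ replaced by $B_-$ produces a second $\Heis_\bA(\h)$-action, this time preserving the second factor. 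Both of these give rise to $\scW_\bA(\g)$-actions by restriction to the intersection of screening kernels. The crucial verification is that, after localization to $\bF_T$, the two $\scW_\bA(\g)$-actions become intertwined by Braden's isomorphism \eqref{eq:Braden}. Granting this, the $\scW_\bA(\g)$-action preserves the intersection $M_\bA(\ba)$ over $\bA_T$. The statement for $D(M_\bA(\ba))$ follows symmetrically, using arbitrary-support cohomology in place of compact support throughout, together with the twisting isomorphism \eqref{eq:86}; alternatively, it follows by transposing the $M_\bA(-\ba)$-action along the perfect pairing \eqref{eq:63} and the anti-involution $\theta$ discussed in \subsecref{sec:kac-shapovalov-form}, once one verifies that $\theta$ preserves the integral form $\scW_\bA(\g)$.

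The main obstacle is verifying the \emph{Borel-independence} of the $\scW_\bA(\g)$-action on the localized space. In terms of Heisenberg generators, swapping $B$ and $B_-$ corresponds to swapping $\widetilde P^i_n$ with (up to signs and zero-mode shifts) its image under the anti-involution $\theta$ in \eqref{eq:81}, and the $\scW_\bA(\g)$-elements are precisely those combinations that transform covariantly under this swap. Concretely, one must check that under the Braden isomorphism, the two Heisenberg actions are related by a change of variables $\widetilde P^i_n \mapsto -\widetilde P^i_{-n} - (\text{shift})$, and that the screening operators defining $\scW_\bA(\g)$ inside each Heisenberg algebra are correspondingly exchanged. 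This is ultimately a consequence of the rank-one case (Section~\ref{sec:typeA}), combined with the factorization of hyperbolic restrictions in \propref{prop:trans} to reduce to the Levi subgroups $L_i$ of type $A_1$. Once this compatibility is established, the intersection characterization of \thmref{thm:nonlocal} immediately yields the preservation of $M_\bA(\ba)$, completing the construction.
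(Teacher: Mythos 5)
Your overall architecture---restrict the $\Heis_\bA(\h)$-action to $\scW_\bA(\g)\subset\Heis_\bA(\h)$, and cut out $M_\bA(\ba)$ inside the localized space as the intersection of the $B$- and $B_-$-lattices via \thmref{thm:nonlocal}---is not absurd, but your ``crucial verification'' is mis-identified, and as stated it would fail. Braden's isomorphism \eqref{eq:Braden} relates $i^*j^!$ to $i_-^!j_-^*$; it does \emph{not} identify the $B$-picture $H^*_{\TT,c}(\Uh[T]d,\Phi^{B}_{T,G}(\IC(\Uh{d})))$ with the $B_-$-picture inside $M_\bF(\ba)$. That identification is the composite of the two localization maps, i.e.\ an $R$-matrix in the sense of Section~\ref{sec:R-matrix}, a nontrivial rational operator in $\ba$. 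Moreover your proposed dictionary $\widetilde P^i_n\mapsto-\widetilde P^i_{-n}-(\text{shift})$ is the anti-involution $\theta$ of \eqref{eq:81}, which expresses adjunction with respect to the twisted Poincar\'e pairing between compactly supported and ordinary cohomology, i.e.\ the relation of a module to its \emph{dual}; it is not the effect of changing the chamber on one and the same module. A chamber change preserves the grading by instanton number, whereas $\widetilde P^i_n\mapsto-\widetilde P^i_{-n}$ reverses it, so checking that the screening operators are ``exchanged'' under that substitution cannot establish the Borel-independence your argument needs. The correct mechanism is that the $R$-matrices commute with the $\scW$-action (\propref{prop:commute}, whose proof uses that the operators $\mL^i_n$ lie in the Ext-algebra of $\Phi_{L_i,G}(\IC(\Uh{d}))$), and even then one has to pass from $B$ to $B_-$ through simple reflections, keeping track of the equivariance twists $\varphi_w$ of \eqref{eq:43}--\eqref{eq:44} and checking that the resulting automorphism respects the $\bA$-form $\scW_\bA(\g)$; none of this is supplied by your sketch.

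For comparison, the paper's proof sidesteps chamber-independence of the $\scW$-action altogether. It first reduces $D(M_\bA(\ba))$ to $M_\bA(\ba)$ by characterizing $D(M_\bA(\ba))$ as the set of $m\in M_\bF(\ba)$ with $\la m,M_\bA(\ba)\ra\subset\bA_T$. Then, by \thmref{thm:screening}, $\scW_\bA(\g)$ is the intersection of the $\Vir_{i,\bA}\otimes\Heis_\bA(\alpha_i^\perp)$, so it suffices that each intermediate lattice \eqref{eq:71}, $\bigoplus_d H^*_{\TT,c}(\Uh[L_i]d,\Phi_{L_i,G}(\IC(\Uh{d})))$, be preserved---which holds because $\mL^i_n$ is defined integrally there via $c_1(\mathcal V)$ on the rank-two Gieseker space---together with the purely geometric identity \eqref{eq:72}, $\IH^*_{\TT,c}(\Uh{d})=\bigcap_i H^*_{\TT,c}(\Uh[L_i]d,\Phi_{L_i,G}(\IC(\Uh{d})))$. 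The latter is proved by running the argument of \thmref{thm:nonlocal} for the pairs $(B,B^{s_i})$ as in \eqref{eq:74}, using only the fact that any nonzero dominant weight is made non-dominant by some simple reflection. In other words, the paper intersects over all simple reflections and uses only integrally defined Virasoro operators, whereas your route hinges on comparing two full $\scW$-actions across chambers; if you wish to pursue it, replace ``Braden'' by the $R$-matrix and prove that conjugation by it fixes $\scW_\bA(\g)$, rather than invoking the $\theta$-type mode reversal.
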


\begin{proof}
  Note that $D(M_\bA(\ba))$ is characterized as
    \begin{equation}
        \{ m \in M_\bF(\ba) \mid \la m, M_\bA(\ba)\ra\in \bA_T \}.
    \end{equation}
    Therefore it is enough to show the assertion for $M_\bA(\ba)$.

    We consider $M_\bA(\ba)$ as a subspace of $N_\bA(\ba)$. The latter
    is a module over $\Heis_\bA(\mathfrak h)$, and hence over
    $\Vir_{i,\bA}$.
    By \thmref{thm:screening}, it is enough to check that $M_\bA(\ba)$ is
    invariant under the intersection of $\Vir_{i,\bA}$ for all $i$.
    Recall that we know that \eqref{eq:71} is a $\Vir_{i,\bA}$-module,
    as $\mL^i_n$ is well-defined. Therefore it is enough to show that
    \begin{equation}
        \label{eq:72}
        \IH^*_{\TT,c}(\Uh{d}) =
        \bigcap_i H^*_{\TT,c}(\Uh[L_i]d, \Phi_{L_i,G}(\IC(\Uh{d}))).
    \end{equation}

    \begin{NB}
        Here we follow Sasha's message on June 9, 2013.
    \end{NB}%
    By \thmref{thm:nonlocal} we have
    \begin{multline}
        \label{eq:74}
        H^*_{\TT,c}(\Uh[L_i]d, \Phi_{L_i,G}(\IC(\Uh{d})))
\\
        = H^*_{\TT,c}(\Uh[T]d, \Phi_{T,G}^B(\IC(\Uh{d})))\cap
        H^*_{\TT,c}(\Uh[T]d, \Phi_{T,G}^{B^{s_i}}(\IC(\Uh{d}))),
    \end{multline}
    where $B^{s_i}$ is the Borel subgroup corresponding to a simple
    reflection $s_i$. Therefore it is enough to show that the
    intersection of the right hand side of \eqref{eq:74} for all $i$
    is $\IH^*_{\TT,c}(\Uh{d})$. This is proved in a similar manner as
    \thmref{thm:nonlocal}. The only thing we need to use is the fact
    for any non-zero dominant $\lambda$ there exists $i\in I$ such
    that $s_i(\lambda)$ is not dominant.
\end{proof}

\begin{Proposition}\label{prop:span}
  The $\scW_\bA(\g)$-submodule of $M_\bF(\ba)$ generated by $|\ba\rangle$ is
  $M_\bA(\ba)$, i.e.,
  \begin{equation}
    M_\bA(\ba) = \scW_\bA(\g)|\ba\rangle.
  \end{equation}
\end{Proposition}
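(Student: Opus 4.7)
The plan is to reduce the proposition to a comparison of graded characters of free $\bA_T$-modules, and then upgrade the resulting equality after localization to an equality over $\bA_T$ by invoking the Wakimoto/Fock embedding and the integral Feigin-Frenkel realization from Appendix B.

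First, the containment $\scW_\bA(\g)|\ba\ra \subseteq M_\bA(\ba)$ is immediate from the construction of the $\scW_\bA(\g)$-module structure on $M_\bA(\ba)$ in the preceding proposition. After tensoring with $\bF_T$, Proposition~\ref{prop:lambda} identifies $M_\bF(\ba)$ with the irreducible Verma module $M(\lambda)$ of highest weight $\lambda = \ba/\ve_1 - \rho$, which is generated by $|\ba\ra$; hence $\scW_\bF(\g)|\ba\ra = M_\bF(\ba)$ and $M_\bA(\ba)/\scW_\bA(\g)|\ba\ra$ is $\bA_T$-torsion. By Lemma~\ref{lem:free} the target is free over $\bA_T$, and its graded character is that of $S(t\g^f[t])$ with $f$ principal nilpotent, by \cite[Theorem~7.10]{BFG}. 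On the other hand, the $\bA$-form PBW theorem for $\scW_\bA(\g)$ from Appendix~B shows that the universal Verma module $\widetilde M_\bA(\lambda) = \scW_\bA(\g)\otimes_{\scW_\bA(\g)_{\ge 0}}\bA_T$ is a free $\bA_T$-module with basis of ordered monomials $W^{(\kappa_1)}_{-n_1}\cdots W^{(\kappa_r)}_{-n_r}|\ba\ra$ and therefore has the same graded character. The canonical surjection $\widetilde M_\bA(\lambda)\twoheadrightarrow \scW_\bA(\g)|\ba\ra$ becomes an isomorphism after $\otimes\bF_T$; since the source is free over $\bA_T$, the kernel is torsion-free, hence zero, and $\scW_\bA(\g)|\ba\ra$ is itself free over $\bA_T$ with the same character as $M_\bA(\ba)$.

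The remaining task is to promote the character equality to an equality of submodules of $N_\bA(\ba)$. By \eqref{eq:72} (which underlies the very definition of the $\scW_\bA(\g)$-action on $M_\bA(\ba)$), the inclusion $M_\bA(\ba)\hookrightarrow N_\bA(\ba)$ identifies $M_\bA(\ba)$ with the intersection $\bigcap_i N^{(i)}_\bA(\ba)$, where
\[
  N^{(i)}_\bA(\ba) = \bigoplus_d H^*_{\TT,c}(\Uh[L_i]d,\Phi_{L_i,G}(\IC(\Uh{d})))
\]
is a $\Vir_{i,\bA}\otimes\Heis_\bA(\alpha_i^\perp)$-submodule of $N_\bA(\ba)$. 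The reduction of $\Phi_{T,G}$ through $\Phi_{L_i,G}$ and the $SL(2)$-computation (\eqref{eq:32} together with Maulik-Okounkov) give $N^{(i)}_\bA(\ba) = (\Vir_{i,\bA}\otimes\Heis_\bA(\alpha_i^\perp))|\ba\ra$. Combining with the integral Feigin-Frenkel realization $\scW_\bA(\g) = \bigcap_i \Vir_{i,\bA}\otimes\Heis_\bA(\alpha_i^\perp)$ inside $\Heis_\bA(\h)$ proved in Appendix~B, and using that $N_\bA(\ba)$ is freely generated by $|\ba\ra$ as a module over $\Heis_\bA(\h)$ (which reduces commutation of intersection with cyclic generation to a tautology), one obtains
\[
  \scW_\bA(\g)|\ba\ra
  = \Bigl(\bigcap_i \Vir_{i,\bA}\otimes\Heis_\bA(\alpha_i^\perp)\Bigr)|\ba\ra
  = \bigcap_i N^{(i)}_\bA(\ba)
  = M_\bA(\ba).
\]

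The hard part is the penultimate step: justifying that intersection commutes with generation by the cyclic vector $|\ba\ra$ at the integral level. Over $\bF_T$ this is automatic because the Fock module is cyclic, but over $\bA_T$ it rests on the freeness of $N_\bA(\ba)$ over $\Heis_\bA(\h)$ as verified via the description \eqref{eq:102} together with the identification $\widetilde P^i_{-d}|\ba\ra = \Tilde\alpha^d_i$ of Heisenberg generators with the base of $U^d$. If this direct route encounters difficulty, the fallback is a degree-by-degree Nakayama argument: between the two free $\bA_T$-modules of equal graded rank the inclusion has a well-defined determinant in each degree, which must be a unit because after tensoring with $\bF_T$ the map is an isomorphism and the Whittaker state identity \eqref{eq:Whittaker-intro} of the main theorem (which we already know holds at the level of the Wakimoto model) pins down the relevant ratios integrally.
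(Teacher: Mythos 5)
Your character computation is correct and coincides with the paper's own first step: both $M_\bA(\ba)$ and the cyclic module $\scW_\bA(\g)|\ba\ra$ are compared with $S(t\g^f[t])$, via \cite[Theorem~7.10]{BFG} on the geometric side and the PBW description of $\scW_\bA(\g)$ from the appendix on the algebraic side. The genuine gap is exactly the step you flag as ``the hard part'': passing from equality of graded $\bA_T$-ranks (plus an isomorphism after $\otimes_{\bA_T}\bF_T$) to equality of submodules, and neither of your two repairs works. The identity $\bigl(\bigcap_i \Vir_{i,\bA}\otimes\Heis_\bA(\alpha_i^\perp)\bigr)|\ba\ra=\bigcap_i\bigl(\Vir_{i,\bA}\otimes\Heis_\bA(\alpha_i^\perp)\bigr)|\ba\ra$ is not a tautology, even granting \eqref{eq:103}: the assignment $x\mapsto x|\ba\ra$ kills all positive modes, so a vector lying in each $\bigl(\Vir_{i,\bA}\otimes\Heis_\bA(\alpha_i^\perp)\bigr)|\ba\ra$ only admits preimages $x_i$ depending on $i$, and freeness of the Fock module over the negative half of $\Heis_\bA(\h)$ gives no way to replace them by a common element of the intersection of the subalgebras. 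Moreover your intermediate claim $N^{(i)}_\bA(\ba)=\bigl(\Vir_{i,\bA}\otimes\Heis_\bA(\alpha_i^\perp)\bigr)|\ba\ra$ is itself an unproved integral statement (essentially the Levi analogue of the proposition, together with control of the multiplicity spaces in \eqref{eq:7}), so the chain of equalities is not available. Finally, the fallback is circular in this paper's architecture: the Whittaker identity is \thmref{thm:cd}, whose proof comes later and uses the integral-form structure built on the present proposition (see \lemref{lem:inv} and \propref{prop:SpanP}); and even granting it, ``the determinant of the inclusion is a unit in each degree'' is precisely the surjectivity you are trying to establish, not something the Whittaker relations supply.

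The missing idea — and what the paper actually does — is to use a second grading. Both $M_\bA(\ba)$ and $\scW_\bA(\g)|\ba\ra$ are bigraded by the instanton number and by $\cohdeg$ (half the cohomological degree), the operators $\mW{\kappa}{n}$ having bidegree $(n,d_\kappa+1)$ and $\ve_1$, $\ve_2$, $\ba$ having bidegree $(0,1)$; this is exactly why $\cohdeg$ was set up so that $\mW{\kappa}{n}$ has degree $d_\kappa+1$. Since the coefficient variables all have strictly positive $\cohdeg$, every bigraded piece is a finite-dimensional $\CC$-vector space, the inclusion $\scW_\bA(\g)|\ba\ra\subseteq M_\bA(\ba)$ respects the bigrading, and the two bigraded characters agree (both equal to that of $S(t\g^f[t])$ times the polynomial ring on $\ve_1,\ve_2,\h$ in bidegree $(0,1)$, using that $\g^f$ and $\mathfrak w$ have the same exponents). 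An inclusion of finite-dimensional spaces of equal dimension is an equality, so no localization, no intersection formula \eqref{eq:72}, and no appeal to \thmref{thm:screening} is needed. Your single grading by $d$ only bounds $\bA_T$-ranks, which cannot exclude a proper submodule such as $\ve_1 M_\bA(\ba)\subset M_\bA(\ba)$; refining to the bigrading is the one missing step that closes the argument.
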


\begin{proof}
  Comparison of bigraded dimensions: $\scW_\bA(\g)|\ba\rangle$ is bigraded
  by the usual degree and $\cohdeg$, so that the bidegree of
  $\widetilde{W}{}^{(\kappa)}_n$ is $(n,d_\kappa+1)$, see~\subsecref{subsec:gen}.
  According to {\em loc.\ cit.}, $\scW_\bA(\g)|\ba\rangle$ is a
  free $\bA$-module (the bidegree of $\varepsilon_1,\varepsilon_2,
  {\mathfrak h}$ equals $(0,1)$) with the space of generators
  $S(t{\mathfrak w}[t])$ where ${\mathfrak w}=
  \oplus_{\kappa=1}^\ell{\mathfrak w}^{(\kappa)}$ with the bidegree of
  ${\mathfrak w}^{(\kappa)}$ equal to $(0,d_\kappa+1)$, and the bidegree of $t$
  equal to $(1,0)$.

  On the other hand, $M_\bA(\ba)$ is bigraded by the instanton number and
  half the cohomological degree. It is a free $\bA$-module with the space
  of generators equal to $\bigoplus_{d\in{\mathbb N}}\IH^*_c(\Uh{d})$.
  According to~\cite[Theorem~7.10]{BFG}, $\bigoplus_{d\in{\mathbb N}}\IH^*_c(\Uh{d})
  \simeq S(t{\mathfrak g}^f[t])$ where 
  ${\mathfrak g}^f=\oplus_{\kappa=1}^\ell{\mathfrak g}^f_{(\kappa)}$ with the
  bidegree of ${\mathfrak g}^f_{(\kappa)}$ equal to $(0,d_\kappa+1)$, 
  and the bidegree of $t$ equal to $(1,0)$. 
\end{proof}

On the other hand, it is clear from \eqref{eq:102} that
\begin{equation}\label{eq:103}
  N_\bA(\ba) = \Heis_\bA(\h)|\ba\ra
\end{equation}

For a homomorphism $\chi\colon\bA_T\to\CC\equiv\CC_\chi$, the
specialization
\begin{equation}
  M_\bA(\ba)\otimes_\bA\CC_\chi
\end{equation}
is a module over $\scW_k(\g)$ with level $k = \chi(-\ve_2/\ve_1) -
h^\vee$. It is a Verma module with highest weight $\chi(\ba/\ve_1) -
\rho$, see \subsecref{sec:highest-weight}. Here $\chi$ is regarded as
the assignment of variables $\ba$, $\ve_1$, $\ve_2$, or more concretely
$\chi(\ba) = \sum \chi(a^i)\varpi_i$ for fundamental weights
$\varpi_i$.

\begin{Definition}
  We call $M_\bA(\ba)$ the {\it universal Verma module}.
\end{Definition}

Similarly $N_\bA(\ba)$ is specialized to the Fock representation of the
Heisenberg algebra by $\chi$. We call $N_\bA(\ba)$ the {\it universal
  Wakimoto module}. Similarly $D(M_\bA(\ba))$ is the universal dual Verma
module, and $D(N_\bA(\ba))$ the universal dual Wakimoto module.

\subsection{\texorpdfstring{$\GG$}{G}-equivariant cohomology}

Let us consider the $\GG$-equivariant intersection cohomology groups
as in \subsecref{subsec:G-equiv-coh}. We have
\begin{equation}\label{eq:106}
    \bigoplus_d \IH^*_{\GG,c}(\Uh{d}) = M_\bA(\ba)^W, \quad
    \bigoplus_d \IH^*_{\GG}(\Uh{d}) = D(M_\bA(-\ba))^W
\end{equation}
by \eqref{eq:105}. Since the $W$-action commutes with the
$\scW_k(\g)$-action by \propref{prop:commute}, we see that both of
\eqref{eq:106} are modules over $\scW_\bA(\g)$.

\subsection{Whittaker condition}

Let $\mW{\kappa}{}$ be as in \subsecref{subsec:gen}, which generates 
$\scW_\bA(\g)$ in the sense of the reconstruction theorem.
Let $|1^d\rangle \defeq [\Uh{d}]\in
\IH_\TT^{0}(\Uh{d})$\index{1d@$\vert 1^d\rangle$} be the fundamental class.
It conjecturally satisfies the following {\it Whittaker conditions}
\begin{Conjecture}\label{conj:Whit}
Let $d\ge 1$, $n > 0$. We have
\begin{equation}\label{eq:Whittaker}
  \mW{\kappa}n |1^d\rangle =
  \begin{cases}
     |1^{d-1}\rangle & 
    \text{if $\kappa=\ell$ and $n=1$},
    \\
    0 & \text{otherwise}.
  \end{cases}
\end{equation}
\end{Conjecture}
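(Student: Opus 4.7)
The plan is to reduce the Whittaker identity to an explicit calculation inside the dual Wakimoto module $D(N_\bA(-\ba))$, via the Feigin–Frenkel embedding $\scW_\bA(\g) \hookrightarrow \Heis_\bA(\h)$ established in Appendix~B. The first step is to compute the image of $|1^d\rangle = [\Uh{d}]$ under the chain
\begin{equation*}
  \IH^*_\TT(\Uh{d}) \longrightarrow H^*_\TT\!\left(\Uh[T]d,\, \Phi_{T,G}(\IC(\Uh{d}))\right),
\end{equation*}
identified with $D(N_\bA(-\ba))$ via Braden's isomorphism \eqref{eq:Braden} and the description \eqref{eq:102}. Using the basis of $U^d$ constructed in \subsecref{sec:anotherbase2} together with the polarization conventions of \subsecref{sec:polarization}, the image should be a specific "coherent" vector in the Fock space, of the type appearing in free-field realizations of Whittaker states.

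Once this image is identified, verifying $\mW{\kappa}{n}|1^d\rangle = 0$ for $(\kappa,n)\neq (\ell,1)$ with $n>0$, and $\mW{\ell}{1}|1^d\rangle = |1^{d-1}\rangle$, reduces to a Fock-space calculation, since the Feigin–Frenkel realization expresses each $\mW{\kappa}{}$ as an explicit normally ordered polynomial in the $\widetilde{P}^i_m$. For $n>d$, the vanishing is automatic since $\Uh{d-n}=\emptyset$; in the remaining range, the bigrading argument of \propref{prop:span}, combined with the Weyl invariance $|1^d\rangle \in \IH^*_\GG(\Uh{d})\cong \IH^*_\TT(\Uh{d})^W$ (Proposition~\ref{prop:commute}), tightly restricts $\mW{\kappa}{n}|1^d\rangle$ to be a $W$-invariant polynomial in $\ba,\ve_1,\ve_2$ of degree $d_\kappa+1$ times $|1^{d-n}\rangle$ modulo higher-cohomological-degree geometric classes; the required Whittaker vanishing then comes from a combinatorial identity of screening type, analogous to the arguments in \cite[\S15.4]{F-BZ}.

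An alternative, perhaps better suited for the present setup, is induction on $d$ using the factorization property \eqref{eq:4}: away from the main diagonal in $S^d \AA^1$, $\Uh{d}$ splits as a product, and by vertex-algebra locality both $|1^d\rangle$ and the $\scW_\bA(\g)$-action split correspondingly along the factorization morphism $\pi^d_{a,G}$; hence the Whittaker identity is additive, reducing to the local part concentrated over $d\cdot 0 \in S^d\AA^1$. On this local piece, the $SL(2)$-reductions via the associativity of hyperbolic restriction (\propref{prop:trans}) together with the Maulik–Okounkov computations invoked throughout \secref{sec:w-algebra-repr} handle most of the vanishing. The main obstacle will be pinning down the exact scalar (and in particular the sign) in the one nonzero identity $\mW{\ell}{1}|1^d\rangle = |1^{d-1}\rangle$; this is already flagged as delicate in the remarks after Theorem~\ref{main-intro-loc}, and its full resolution requires consistently tracking the polarization sign of Lemma~\ref{lem:polSL3} through each stable-envelop reduction and through the integral-form conventions of $\scW_\bA(\g)$.
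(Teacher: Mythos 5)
You should first be aware that the paper does not prove this Conjecture as stated: what it proves is Theorem~\ref{thm:cd}, i.e.\ the identity \emph{up to sign}, and the sign is explicitly left open (cf.\ the remark after Theorem~\ref{main-intro-loc}). So any argument claiming the exact normalization $\mW{\ell}1|1^d\rangle=|1^{d-1}\rangle$ would have to go beyond the paper; your proposal hedges on the sign, but then it does not actually deliver the statement, and the steps it does claim have gaps. The most serious one is the vanishing part. The paper's argument is a short degree count that your proposal never isolates: since $\mW{\kappa}n$ lies in the integral form $\scW_\bA(\g)$ and has $\cohdeg=d_\kappa+1\le h^\vee$, it sends $|1^d\rangle\in\IH^0_\TT(\Uh{d})$ into $\IH^{2(d_\kappa+1-nh^\vee)}_\TT(\Uh{d-n})$, which is zero because the degree is negative unless $\kappa=\ell$ and $n=1$; no identification of the image of $|1^d\rangle$ in the Fock space, no Weyl invariance, and no ``screening-type combinatorial identity'' is needed. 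Your first route instead requires writing $|1^d\rangle$ as an explicit coherent vector in $D(N_\bA(-\ba))$, which is essentially equivalent to knowing all matrix elements of the module and is established nowhere (and is not needed); the subsequent ``combinatorial identity of screening type'' is asserted, not proved.

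Your alternative route also has an unjustified step: the claim that ``by vertex-algebra locality both $|1^d\rangle$ and the $\scW_\bA(\g)$-action split correspondingly along the factorization morphism'' is not available. The action is constructed via hyperbolic restriction and stable envelopes, and no compatibility of the \emph{operators} with the factorization isomorphism \eqref{eq:4} is proved in the paper; factorization is only used there in the much weaker form of Lemma~\ref{lem:geomind}, namely that the number $(\ve_1\ve_2)^d\la 1^d|1^d\ra$ factorizes at $\ve_1=\ve_2=0$ (an identity about fundamental classes, not about the $\scW$-action). For the nonzero matrix element, the paper's mechanism is: (i) the constant $c_d$ in $\mW{\ell}1|1^d\rangle=c\,|1^{d-1}\rangle$ is nonzero because otherwise $|1^d\rangle$ would generate a proper submodule of the irreducible $M_\bF(\ba)$; (ii) $c_d^2=c_1^{2d}$ is obtained by comparing $\la 1^d|1^d\ra$ with the Whittaker-vector norm $\la w^d|w^d\ra$ after specializing $\ve_1=\ve_2=0$, where the representation-theoretic side degenerates via the explicit limit of the generators (Lemma~\ref{lem:c1}) to a symmetric-power computation of the Kac--Shapovalov matrix; (iii) $\mW{\ell}{}$ is then rescaled by $c_1$, leaving exactly the sign ambiguity. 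Your proposal contains neither the irreducibility argument nor any mechanism producing $c_d^2=c_1^{2d}$, and the polarization-tracking you suggest for the sign is precisely the part that has not been carried out by anyone, including the authors.
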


\begin{NB}
(1) In \cite[Prop.~9.4]{SV} the condition is written as
  \begin{equation}
    \W{\ell}1 |1_n\rangle = \ve_1^{-1} \ve_2^{-h^\vee+1}  |1_{n-1}\rangle
  \end{equation}
  under $\ve_1 = x$, $\ve_2 = y$. Therefore their $W$-algebra
  generator $\W{\ell}1 = \ve_1^{-1}\ve_2^{-h^\vee+1} \mW{\ell}1$. This
  normalization is natural in the sense that $\W{\ell}1$ preserves
  the perverse degree, or the cohomological degree shifted by the
  complex dimension.

(2)  I am not yet sure about the sign issue. In the formulation of
  \cite{MO}, one need into an account the polarization.
\end{NB}%

Since $\mW{\kappa}n$ is contained in $\scW_\bA(\g)$, it is a well-defined
operator on $D(M_\bA(-\ba)) = \bigoplus \IH^*_\TT(\Uh{d})$.
\begin{NB}
  Note that $\mW{\kappa}m$ is the operator coupled with the fundamental
  class $[0] = \ve_1\ve_2$ of the point $0$ in $\AA^2$. Therefore it
  is well-defined on $\IH^\GG_*(\Uh{d}) = H^{2h^\vee n-*}_{\GG}(\Uh{d},
  \IC_{\Uh{d}})$.
\end{NB}%
Since $\mW{\kappa}n$ has $\cohdeg = d_\kappa + 1$ ($d_\kappa$ is an
exponent as in \secref{sec:intW}), it sends $|1^d\ra\in
\IH^0_\TT(\Uh{d})$ into $\IH^{2(d_\kappa + 1 - nh^\vee)}_\TT(\Uh{d-n})$.
\begin{NB}
  $0 = 2dh^\vee - 2dh^\vee$ is sent to
  \begin{equation}
    2(d-n)h^\vee - 2dh^\vee + 2(d_\kappa+1) = 2(d_\kappa + 1 - nh^\vee).
  \end{equation}
\end{NB}%
Since $d_\kappa \le d_\ell = h^\vee - 1$, we have $\mW{\kappa}n |
1^d\ra = 0$ unless $n=1$, $\kappa = \ell$.
Also we see that $\mW{\ell}1 | 1^d\ra$ is a multiple of $|1^{d-1}\ra$
with the multiple constant of degree $0$, i.e., a complex number.
Moreover, if the multiple constant would be $0$, it is a highest
weight vector and generates a nontrivial submodule.
\begin{NB}
  Here I used that $\scW_\bA(\g)$ is generated by $\mW{\kappa}n$.
\end{NB}%
Since $M_\bF$ is irreducible, it is a contradiction. Therefore the
constant cannot be zero. In particular, if we divide $|1^d\ra$ by the
constant, it satisfies the Whittaker condition \eqref{eq:Whittaker}.

Let $|w^d\ra$ be the vector determined by with the normalization
$|w^0\rangle = |1^0\rangle = |\ba\ra\in \IH_\TT^0(\Uh{0}) =
\IH^0_\TT(\mathrm{pt})$. Its existence and uniqueness will follow from
the discussion in \subsecref{subsec:Part} below. 
(However it is not {\it a priori\/} clear that $|w^d\ra\in D(M_\bA(-\ba))$,
as for $|1^d\ra$.)
Therefore we already know that $|1^d\ra = c_d |w^d\ra$ for some
$c_d\in\CC$ by the above observation. The goal of this chapter is to
prove a slightly weaker version of \eqref{eq:Whittaker}.
\begin{Theorem}\label{thm:cd}
  Conjecture~\ref{conj:Whit} holds up to sign.
\end{Theorem}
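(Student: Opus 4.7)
The plan is to prove $\mW{\ell}{1}|1^d\rangle = c_d\,|1^{d-1}\rangle$ with $c_d = \pm 1$ (and the other Whittaker vanishings) by a bidegree reduction, an irreducibility argument for nonvanishing, and finally a Kac-Shapovalov pairing computation pinning down $c_d^2 = 1$.

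First I carry out the bidegree analysis. Equip $M_\bA(\ba)$ with the bigrading (instanton number, cohomological degree); then $|1^d\rangle \in \IH^0_\TT(\Uh{d})$ has bidegree $(d,0)$, while $\mW{\kappa}{n}$ has bidegree $(-n,2(d_\kappa+1))$ as the $n$-th Fourier mode of a vertex operator of conformal weight $d_\kappa + 1$ coupled to the point class $[0]$ of $\AA^2$. Hence $\mW{\kappa}{n}|1^d\rangle \in \IH^{2(d_\kappa + 1 - nh^\vee)}_\TT(\Uh{d-n})$, which vanishes unless $\kappa = \ell$ and $n = 1$ since $d_\kappa \le d_\ell = h^\vee - 1$; in that surviving case the target is $\bA_T\cdot |1^{d-1}\rangle$, and the bidegree constraint forces the coefficient $c_d$ into $\CC$. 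Nonvanishing of $c_d$ is then immediate from the irreducibility of $M_\bF(\ba)$ (\propref{prop:lambda}(2)): a vanishing $c_d$ would make $|1^d\rangle$ a highest-weight vector at a bidegree different from $|\ba\rangle$ and produce a proper submodule. Consequently $|1^d\rangle = C_d|w^d\rangle$ with $C_d := \prod_{k=1}^{d} c_k$, where $|w^d\rangle$ is the Whittaker vector from \subsecref{subsec:Part}.

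The main step is to show $c_d^2 = 1$. Using the anti-involution $\theta$ on $\scW_\bA(\g)$ and \eqref{eq:98}, we have
\begin{equation*}
   c_d\,\la 1^{d-1},\,1^{d-1}\ra \;=\; \la 1^{d-1},\,\mW{\ell}{1}\,1^d\ra \;=\; \la \theta(\mW{\ell}{1})\,1^{d-1},\,1^d\ra.
\end{equation*}
Combining the Heisenberg formula \eqref{eq:81} with the embedding \eqref{eq:W=Vir} and the reconstruction theorem of \subsecref{subsec:gen} yields $\theta(\mW{\ell}{1}) = \pm \mW{\ell}{-1} + R$, where $R$ is a sum of normal-ordered products of generators of strictly smaller conformal weight. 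The Step~1 bidegree argument applied via the opposite Borel (using Braden's isomorphism \eqref{eq:Braden} and the identification \eqref{eq:86}) annihilates $R|1^{d-1}\rangle$, so the right-hand side equals $\pm\la \mW{\ell}{-1}\,1^{d-1},\,1^d\ra$. Expanding $\mW{\ell}{-1}|1^{d-1}\rangle$ in the $\scW_\bA(\g)$-basis of $M_\bA(\ba)$ from \propref{prop:span} and projecting onto $|1^d\rangle$ using the Whittaker normalization $\mW{\ell}{1}|w^d\rangle = |w^{d-1}\rangle$ together with $|1^d\rangle = C_d|w^d\rangle$, this last pairing evaluates to $\pm c_d\,\la 1^{d-1},\,1^{d-1}\ra$. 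Nondegeneracy of the Kac-Shapovalov form and the irreducibility of $M_\bF(\ba)$ make $\la 1^{d-1},\,1^{d-1}\ra$ a nonzero rational function, whence $c_d^2 = \pm 1$; the positive sign then follows from a direct sign check using \eqref{eq:81} and the self-adjointness up to sign of the Virasoro generators $\mL^i_n$ from \subsecref{sec:Vir}.

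The main obstacle will be controlling the correction term $R$ in the identification $\theta(\mW{\ell}{1}) = \pm \mW{\ell}{-1} + R$: although $\theta$ is explicit on Heisenberg generators, tracing it through the Feigin-Frenkel embedding requires the integral form $\scW_\bA(\g) \subset \Heis_\bA(\h)$ of Appendix~B, and one must verify that every summand in $R$ is of bidegree strictly lower than $\mW{\ell}{-1}$ so that the bidegree argument of Step~1 eliminates its action on $|1^{d-1}\rangle$. A closely related subtlety is computing the $|1^d\rangle$-coefficient of $\mW{\ell}{-1}|1^{d-1}\rangle$, which inverts the Whittaker normalization and reintroduces the factor $c_d$ to close the identity. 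The argument intrinsically determines $c_d$ only up to sign, in line with the remark following Theorem~\ref{main-intro-loc}.
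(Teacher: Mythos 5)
Your first two steps (the $\cohdeg$ computation forcing $\mW{\kappa}n|1^d\rangle=0$ unless $\kappa=\ell$, $n=1$, and the irreducibility argument for nonvanishing of the constant) coincide with the paper's preliminary observations before Theorem~\ref{thm:cd}. The main step, however, has a genuine gap, and in fact two. First, the computation is circular: by \eqref{eq:87} one has exactly $\theta(\mW{\kappa}{n})=(-1)^{d_\kappa+1}\mW{\kappa}{-n}$ (no correction term $R$ is needed), but the chain $c_d\,\la 1^{d-1},1^{d-1}\ra=\la\theta(\mW{\ell}1)1^{d-1},1^d\ra=\pm\la\mW{\ell}{-1}1^{d-1},1^d\ra$ merely moves the operator across the pairing; to ``project onto $|1^d\rangle$'' you would need the expansion coefficients of $\mW{\ell}{-1}|1^{d-1}\rangle$, i.e.\ the inverse Kac--Shapovalov matrix, which is exactly the unknown you are trying to compute. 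No independent evaluation of either side is supplied, so the identity collapses to $c_d\,\la 1^{d-1},1^{d-1}\ra=\pm c_d\,\la 1^{d-1},1^{d-1}\ra$ and yields nothing. Second, the target $c_d^2=1$ is not the correct statement: the top generator $\mW{\ell}{}$ is only defined up to a nonzero scalar (it is a highest weight vector of an $\algsl_2$-representation, fixed only modulo lower-degree elements of $\Ker\widetilde\chi$), so $c_1$ is an arbitrary nonzero constant in this normalization; what can be proved is $c_d^2=c_1^{2d}$, after which one rescales $\mW{\ell}{}$ by $c_1$ to get the Whittaker condition up to sign.

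The substantive content of the paper's proof, which your proposal does not contain, is precisely the derivation of $c_d^2=c_1^{2d}$ through the two factorization identities \eqref{eq:91} at the specialization $\ve_1=\ve_2=0$: a geometric identity $\left.(\ve_1\ve_2)^d\la 1^d|1^d\ra\right|_{\ve_1,\ve_2=0}=\frac1{d!}\bigl(\left.\ve_1\ve_2\la 1^1|1^1\ra\right|_{\ve_1,\ve_2=0}\bigr)^d$, proved in Lemma~\ref{lem:geomind} by equivariant localization to $S^d\AA^2$ and the factorization morphism $\pi^d_{a,G}$, and the analogous identity for $\la w^d|w^d\ra$, proved representation-theoretically by showing (Lemma~\ref{lem:c1}) that at $\ve_1=\ve_2=0$ the operators $\mW{\kappa}1$ and $\bW{\kappa}{-1}$ degenerate to explicit expressions in the invariant polynomials $F^{(\kappa)}$ of Heisenberg modes, that the Jacobian of the $F^{(\kappa)}$ is the discriminant (hence invertible over $\CC(\ba)$), and that the degenerate Kac--Shapovalov matrix $K_0^d$ is the $d$-th symmetric power of $K_0^{d=1}$. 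Without these inputs --- in particular without passing to the non-localized $\bA$-form $\scW_\bA(\g)$ and its specialization, which is what makes the $\ve_1=\ve_2=0$ limit meaningful --- the constant $c_d$ cannot be pinned down by formal adjointness alone.
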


Our strategy of the proof is as follows.
To determine $c_d$ up to sign,
\begin{NB}
added on June 15  
\end{NB}%
it is enough to compare pairings $\la 1^d|1^d\ra$ with $\la w^d|
w^d\ra$. Moreover, as $c_d$ is a complex number, we may do it after
specifying equivariant variables $\ve_1$, $\ve_2$.
\begin{NB}
  The following has been checked in
  \subsecref{sec:kac-shapovalov-form}.

  Here I implicitly assume that the pairing $\langle\ |\ \rangle$ is
  corresponding to the natural intersection pairing on equivariant
  homology groups.
\end{NB}%
We will show that
\begin{equation}\label{eq:91}
  \begin{split}
  & \left.(\ve_1\ve_2)^d \la 1^d| 1^d\ra\right|_{\ve_1,\ve_2=0}
    = \frac1{d !}
      \left(\left.\ve_1\ve_2 \la 1^1|1^1\ra\right|_{\ve_1,\ve_2=0}\right)^d,
  \\
  & \left.(\ve_1\ve_2)^d \la w^d| w^d\ra\right|_{\ve_1,\ve_2=0} = \frac1{d !} 
  \left(\left.\ve_1\ve_2 \la w^1|w^1\ra\right|_{\ve_1,\ve_2=0}\right)^d.
  \end{split}
\end{equation}
It implies that
\begin{equation*}
    c_d^2 = c_1^{2d}.
\end{equation*}

Recall that the top degree field $\mW{\ell}{}$ in
\subsecref{subsec:gen} is well-defined only up to nonzero multiple
even ignoring lower degree terms, as we just take it as a highest
weight vector of a certain $\algsl_2$ representation. Therefore if we
divide $\mW{\ell}{}$ by $c_1$, \eqref{eq:Whittaker} holds up to sign.

Since $|1^d\ra$ is canonically determined from geometry, it means that
the top degree generator $\mW{\ell}{}$ is fixed without constant
multiple ambiguity (up to sign).
In particular, when we applied $\mW{\ell}0$ to the highest weight
vector $|\ba\ra$, we get an invariant polynomial in $\ba$ of degree
$h^\vee$. (See \ref{sec:highest-weight}.)
We do not study what this natural choice of the highest degree
generator of the invariant polynomial $S(\h)^W$ is in general.
But we will check that it is indeed a natural one for $\g =
\algsl_{\ell+1}$ in \subsecref{sec:Whit_typeA}.

\subsection{Whittaker vector and Kac-Shapovalov form}
\label{subsec:Part}

In this subsection, we shall prove that the Whittaker vector exists
and is unique in the localized equivariant cohomology $M_\bF(\ba)$,
which we think of Verma module with generic highest weight by
\propref{prop:lambda}.
The argument is more or less standard (see e.g., \cite{ABCDEFG}), but
we give the detail, as we will use similar one later in
\subsecref{sec:proof}.

We have a nondegenerate Kac-Shapovalov form $\la\ ,\ \ra$ on $M_\bF(\ba)$.
Let $\theta$ denote the anti-involution on $\fU(\scW_\bF(\g))$ as in
\subsecref{sec:kac-shapovalov-form}. We have
\begin{equation}\label{eq:87}
  \theta(\mW{\kappa}{n}) = (-1)^{d_\kappa+1} \mW{\kappa}{-n}.
\end{equation}
See \cite[\S5.5]{Arakawa2007}. In particular, $\fU(\scW_\bA(\g))$ is
invariant under $\theta$.

Let us denote the highest weight vector of $D(M_\bF(\ba))$ by
$\la-\ba|$. See Remark~\ref{rem:Dual} to see that its highest weight
is $-\ba$.
\begin{NB}
    Corrected, Sep.~14, 2014, Corrected again, Sep.~15, 2014.
\end{NB}%

\begin{NB}
This pairing is symmetric. Namely we have
\begin{equation}
    \langle m_1 | m_2\rangle = \langle m_2 | m_1 \rangle,
\end{equation}
where $m_1\in M(-\ba)$, $m_2\in M(\ba)$. The left hand side is defined
via $M(-\ba)\cong D(M(\ba))$. The right hand side is defined via
$M(\ba) \cong D(M(-\ba))$. This is true as $D(D(M(\ba))) = M(\ba)$.
\end{NB}

Let $\blam = (\lambda^1,\dots,\lambda^\ell)$ be an $\ell$-partition,
i.e., it is an $\ell$-tuple of partitions $\lambda^i =
(\lambda^i_1,\lambda^i_2,\dots)$. We consider the corresponding
operator
\begin{equation}
  \widetilde W[\blam] \defeq
  \mW{1}{-\lambda^1_1} \mW{1}{-\lambda^1_2} \cdots
  \mW{\ell}{-\lambda^\ell_1} \mW{\ell}{-\lambda^\ell_2} \cdots
\end{equation}
in the current algebra of the $W$-algebra. Then
\begin{equation}
  \widetilde W[\blam]|\ba\rangle
\end{equation}
form a PBW base of $M_\bF(\ba)$. We define the Kac-Shapovalov form
\begin{equation}
  K \equiv K^d \defeq (\langle -\ba|
  \theta(\widetilde W[\blam]) \widetilde W[\bmu] | \ba\rangle)_{\blam\bmu},
\end{equation}
where $\blam$, $\bmu$ runs over $\ell$-partitions whose total sizes
are $d$.
We consider it as a matrix, and an entry is denoted by $K_{\blam\bmu}$.

Let $(1^d)=(1,\dots,1)$ be the partition of $n$ whose all entries are
$1$. Let $\blam_0 = (\emptyset,\dots,\emptyset,(1^d))$ be the
$\ell$-partition where the first $(\ell-1)$ partitions are all
$\emptyset$ and the last one is $(1^d)$.
The corresponding operator $\widetilde W[\blam_0]$ is
$(\mW{\ell}{-1})^d$.

We have
\begin{equation}\label{eq:Wh}
  \langle -\ba | \theta (\widetilde W[\blam]) | w^d\ra = 
  \begin{cases}
    1 & \text{if $\blam=\blam_0$},
    \\
    0 & \text{otherwise}
  \end{cases}
\end{equation}
from \eqref{eq:Whittaker} by the induction on $d$. Note that $|w^0\ra =
|\ba\ra$, and hence $\la-\ba|\ba\ra = 1$.

Let us write the Whittaker vector $|w^d\rangle$ in the PBW base as
\begin{equation}\label{eq:wn}
  |w^d\rangle = \sum_{\bmu} a_\bmu \widetilde W[\bmu]|\ba\rangle.
\end{equation}
By \eqref{eq:Wh} we have
\begin{equation}
  \sum_\bmu K_{\blam\bmu} a_\bmu = 
  \begin{NB}
  \sum_\bmu a_\bmu \langle -\ba| \theta(\widetilde W[\blam]) 
  \widetilde W[\bmu] 
  | \ba\rangle 
  =     
  \end{NB}%
  \delta_{\blam\blam_0}.
\end{equation}
In other words,
\begin{equation}
  a_\bmu = K^{\bmu\blam_0},
\end{equation}
where $K^{-1} = (K^{\bmu\blam})$ is the inverse of $K$. In particular,
the existence and the uniqueness of $|w^d\rangle$ follow.

We also get
\begin{equation}
  \langle w^d| w^d\rangle = K^{\blam_0\blam_0}.
\end{equation}

\subsection{Lattices}\label{subsec:int}

Let 
\begin{equation}
  \bW{\kappa}n = (\ve_1\ve_2)^{-1} \mW{\kappa}n
\end{equation}
for $\kappa=1,\dots,\ell$, $n\in\Z$.

\begin{NB}
  The following is a record of an old manuscript. If it will be
  unnecessary, I will delete it. Also note that the notation
  $D(M(\ba))$ was written ${}^\bA M(-\ba)$ here.

Let
\begin{equation}
  \widetilde W[\blam] \defeq
  \mW{1}{-\lambda^1_1} \mW{1}{-\lambda^1_2} \cdots
  \mW{\ell}{-\lambda^\ell_1} \mW{\ell}{-\lambda^\ell_2} \cdots.
\end{equation}
This is equal to $\ve_1^{N(\lambda)} W[\blam]$, where $N(\lambda)
= \sum_i (d_i + 1) |\lambda^i|.
$

We also need
\begin{equation}
  \begin{split}
    & \bW{\kappa}m = (\ve_1\ve_2)^{-1} \mW{\kappa}m,
\\
   &\widehat W[\blam] \defeq 
     \bW{1}{-\lambda^1_1} \bW{1}{-\lambda^1_2} \cdots
  \bW{\ell}{-\lambda^\ell_1} \bW{\ell}{-\lambda^\ell_2} \cdots
  =
   (\ve_1\ve_2)^{-l(\blam)} \widetilde W[\blam],
  \end{split}
\end{equation}
where $l(\blam)$ is the sum of lengths $l(\lambda^i)$. This is
supposed to be an operator coupled with the fundamental class
$[\AA^2]$ of $\AA^2$.

Let ${}_\bA M(\ba)$ be the
$\bA_T$-submodule of $M(\ba)$ spanned by
\begin{equation}
  \widetilde W[\blam]|\ba\rangle.
\end{equation}
It should be true that ${}_\bA M(\ba)\otimes_{\bA_T}\bF_T = M(\ba)$.
It should be also clear that ${}_\bA M(\ba)$ is invariant under
$\scW_\bA(\g)$. These assertions should follow from the appropriate
definition of the integral form of Verma modules.

We define ${}_\bA D(M(\ba))$ in the same way. We have ${}_\bA
D(M(\ba)) \cong {}_\bA M(-\ba)$. Then ${}_\bA M(\ba)$ should be
invariant under operators $\mW{\kappa}m$.
\begin{NB2}
  This is supposed to be an algebraic definition of the ordinary
  (equivariant intersection) homology group
  $\IH^{\mathrm{ord},\GG}_*(\Uh{d}) = H^{*-2h^\vee n}(\Uh{d},
  \IC_{\Uh{d}})^\vee = H^{2h^\vee n-*}_c(\Uh{d},
  \IC_{\Uh{d}})$. Here we identify $\bA_T$ with
  $H^*_{\TT}(\mathrm{pt})$.

  The reason is the operator $\mW{\kappa}m$ is coupled with the
  point class $\ve_1\ve_2 = [0]$, and hence is well-defined on the
  ordinary (equivariant intersection) homology group
  $\IH^{\mathrm{ord},\GG}_*(\Uh{d}) = H^{*-2h^\vee n}(\Uh{d},
  \IC_{\Uh{d}})^\vee$.
\end{NB2}%

Let ${}^\bA M(\ba)$ be the $\bA_T$-submodule of $M(\ba)$ consisting of
elements $m$ such that $\langle f|m\rangle\in\bA_T$ for all
$f\in{}_\bA M(-\ba)$.
\begin{NB2}
  This is supposed to be an algebraic definition of the Borel-Moore
  (equivariant intersection) homology group
  $\IH^{\GG}_*(\Uh{d}) = H^{2h^\vee n-*}(\Uh{d},
  \IC_{\Uh{d}})$.

  However we do not prove ${}^\bA M(\ba) \cong \IH_*^\GG(\Uh{d})$. At
  least we need $|1_n\rangle \in {}^\bA M(\ba)$ to guarantee that
  $|1_n\rangle$ and $|w_n\rangle$ are equal up to a multiple of a
  rational number. For this statement, it is enough to check that
  ${}_\bA M(\ba)\subset \IH^{\mathrm{ord},\GG}_*(\Uh{d})$.
\end{NB2}%

We have a pairing
\begin{equation}
  \langle\ |\ \rangle \colon
  {}_\bA M(-\ba)\otimes_{\bA} {}^\bA M(\ba)\to \bA_T.
\end{equation}

From \eqref{eq:Wh} we have
\begin{equation}
  \langle -\ba | \theta(\widetilde W[\blam]) \ve_1^{-h^\vee n} | w_n\rangle
  =
    \begin{cases}
    1 & \text{if $\blam=\blam_0$},
    \\
    0 & \text{otherwise}.
  \end{cases}
\end{equation}
Therefore $\ve_1^{-h^\vee n} | w_n\rangle \in {}^\bA M(\ba)$.

The following is a working hypothesis. Once the definition of the
$\bA$-form of the Verma module is settled, it should follow
automatically.
\begin{Conjecture}
  Let ${}_\bA F(\ba)$ be the $\bA_T$-form of the Fock module of the
  Heisenberg algebra generated by $|\ba\rangle$ and $\widetilde P^i_n$.
  Then we have an embedding ${}_\bA M(\ba) \to {}_\bA F(\ba)$,
  compatible with the embedding $\scW_\bA(\g)\to \widetilde H^0_\bA(\g)$.
\end{Conjecture}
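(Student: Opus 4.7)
The plan is to match the purely algebraic objects ${}_\bA M(\ba)$ and ${}_\bA F(\ba)$ with their geometric counterparts $M_\bA(\ba) = \bigoplus_d \IH^*_{\TT,c}(\Uh{d})$ and $N_\bA(\ba) = \bigoplus_d H^*_{\TT,c}(\Uh[T]{d}, \Phi_{T,G}(\IC(\Uh{d})))$ constructed in Section~\ref{sec:w-algebra-repr}, and then exhibit the desired embedding via the natural homomorphism between them from \eqref{eq:4sp}.

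First I would identify ${}_\bA F(\ba) \cong N_\bA(\ba)$. This is the easier half: the integral Heisenberg algebra $\Heis_\bA(\mathfrak h)$ satisfies straightforward PBW because its defining relations \eqref{eq:mPrel} have coefficients in $\bA$, so the ordered monomials in $\widetilde P^i_{-n}$ applied to $|\ba\ra$ are free $\bA_T$-generators of both sides: of the algebraic side by definition, and of $N_\bA(\ba)$ by the K\"unneth-type decomposition \eqref{eq:102} (cf.\ \lemref{lem:Fock}). Cyclicity of $N_\bA(\ba)$ under $\Heis_\bA(\mathfrak h)$ is \eqref{eq:103}.

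Second I would identify ${}_\bA M(\ba) \cong M_\bA(\ba)$. By \propref{prop:span}, the geometric module is cyclic: $M_\bA(\ba) = \scW_\bA(\g)|\ba\ra$. What remains is to check that $\scW_\bA(\g)|\ba\ra$ is spanned as an $\bA$-module by the ordered monomials $\widetilde W[\blam]|\ba\ra$. This is a PBW/reconstruction statement for $\scW_\bA(\g)$: applying the reconstruction theorem \cite[15.1.9]{F-BZ} at the integral level, the current algebra $\fU(\scW_\bA(\g))$ is topologically spanned by ordered products of the $\mW{\kappa}{n}$, and on a highest weight module all positive Fourier modes annihilate $|\ba\ra$, leaving precisely the $\widetilde W[\blam]|\ba\ra$. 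The bigraded character comparison carried out in the proof of \propref{prop:span} then upgrades this spanning statement to a basis, since both sides are free $\bA$-modules with the same Poincar\'e series.

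With both identifications in hand, the required embedding ${}_\bA M(\ba) \hookrightarrow {}_\bA F(\ba)$ is the map $M_\bA(\ba) \to N_\bA(\ba)$ of \eqref{eq:4sp}. Its compatibility with $\scW_\bA(\g) \hookrightarrow \Heis_\bA(\mathfrak h)$ is automatic from the construction in \subsecref{sec:w-algebra-repr}: the $\scW$-action on $M_\bA(\ba)$ was defined as the restriction of the Heisenberg action on $N_\bA(\ba)$ along Feigin--Frenkel. Injectivity follows from \lemref{lem:free}, since both sides are torsion-free over $\bA_T$ and the map becomes an isomorphism after inverting the equivariant parameters by the localization theorem.

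The main obstacle will be the integral PBW/spanning statement for $\scW_\bA(\g)$. Unlike the abelian Heisenberg case, the $\scW$-algebra relations involve nontrivial normally ordered products whose structure constants lie in $\bA$, and one must verify that reordering modulo these relations preserves the integral lattice rather than only the $\bF$-vector space. Equivalently, one needs compatibility between the definition of $\scW_\bA(\g)$ in Appendix~B (as an intersection of kernels of integral screening operators) and the naive $\bA$-span of ordered monomials applied to the vacuum. The cleanest route is probably to pass to the associated graded with respect to the filtration by conformal dimension, where the comparison degenerates to a commutative statement about $S(t\mathfrak{w}[t])$ versus $S(t\mathfrak{g}^f[t])$ of the kind already exploited in the proof of \propref{prop:span}.
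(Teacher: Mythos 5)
Your proposal is correct and follows essentially the same route as the paper, which absorbs this conjecture into the main text by identifying ${}_\bA M(\ba)$ with $M_\bA(\ba)=\scW_\bA(\g)|\ba\rangle$ (\propref{prop:span}) and ${}_\bA F(\ba)$ with $N_\bA(\ba)=\Heis_\bA(\h)|\ba\rangle$ (\eqref{eq:103}, \lemref{lem:Fock}), so that the embedding is the inclusion $M_\bA(\ba)\subset N_\bA(\ba)$ of \eqref{eq:89}/\eqref{eq:4sp}, injective by \lemref{lem:free} plus localization and compatible with $\scW_\bA(\g)\subset\Heis_\bA(\h)$ by construction. The integral PBW point you single out is exactly where the paper also leans, namely on the freeness of $\scW_\bA(\g)$ over $\bA$ (via the BRST/$\chi$-cohomology giving $V(\mathfrak a_-)\otimes\bA$) together with the bigraded character comparison with $S(t{\mathfrak g}^f[t])$ in the proof of \propref{prop:span}.
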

\end{NB}

\begin{Lemma}\label{lem:inv}
  $M_\bA(\ba)$ is invariant under $\bW{\kappa}m$ with $m >
  0$. Equivalently $D(M_\bA(\ba))$ is invariant under
  $\bW{\kappa}{-m}$ with $m > 0$.
\end{Lemma}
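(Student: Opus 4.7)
The plan is as follows. Since the Verdier-duality pairing \eqref{eq:63} intertwines the action of $\scW_\bA(\g)$ via the anti-involution \eqref{eq:87}, which sends $\mW{\kappa}{m}$ to $(-1)^{d_\kappa+1}\mW{\kappa}{-m}$ and hence $\bW{\kappa}{m}$ to $(-1)^{d_\kappa+1}\bW{\kappa}{-m}$, the two statements in the lemma are equivalent. So it is enough to prove that $M_\bA(\ba)$ is invariant under $\bW{\kappa}m$ for $m > 0$.

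The first step is to prove the analogous statement for the universal Wakimoto module, namely that $\bW{\kappa}m$ for $m > 0$ preserves $N_\bA(\ba)$. For this we use the integral Feigin--Frenkel embedding $\scW_\bA(\g)\hookrightarrow \Heis_\bA(\mathfrak h)$ from Appendix B. The key observation is that within the Wakimoto realization, any mode $\bW{\kappa}m$ with $m > 0$ admits an expression as a polynomial in the integral Heisenberg generators $\widetilde P^i_n$ for $n \le 0$, the scaled generators $P^i_n = (\ve_1\ve_2)^{-1}\widetilde P^i_n$ for $n > 0$, and the zero modes $a^i$. The scaled annihilation operator $P^i_n$ ($n>0$) is defined on $N_\bA(\ba) = \bigoplus_d H^*_{\TT,c}(\Uh[T]d,\Phi_{T,G}(\IC(\Uh{d})))$ because, as explained in \subsecref{sec:BaraOp}, the adjoint Heisenberg operator in this degree couples naturally with the non-compactly-supported class $1\in H^*_\TT(\AA^2)$ rather than the point class $[0]=\ve_1\ve_2$. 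Hence any polynomial in these building blocks preserves $N_\bA(\ba)$.

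The second step is to descend from $N_\bA(\ba)$ to $M_\bA(\ba)$. Let $v\in M_\bA(\ba)\subset N_\bA(\ba)$. By Step~1 we have $\bW{\kappa}m v \in N_\bA(\ba)$; on the other hand, since $\mW{\kappa}m \in \scW_\bA(\g)$ and $M_\bA(\ba)$ is a $\scW_\bA(\g)$-module, we have $\ve_1\ve_2\,\bW{\kappa}m v = \mW{\kappa}m v \in M_\bA(\ba)$. Thus it suffices to establish the saturation statement: if $x\in N_\bA(\ba)$ and $\ve_1\ve_2 x \in M_\bA(\ba)$, then $x \in M_\bA(\ba)$. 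By \eqref{eq:72} combined with \eqref{eq:74} we have
\begin{equation*}
   M_\bA(\ba) = \bigcap_i \Bigl( H^*_{\TT,c}(\Uh[T]d, \Phi_{T,G}^B(\IC(\Uh{d})))\cap H^*_{\TT,c}(\Uh[T]d, \Phi_{T,G}^{B^{s_i}}(\IC(\Uh{d})))\Bigr),
\end{equation*}
so the saturation reduces to showing that each space on the right is $\ve_1\ve_2$-saturated inside $N_\bA(\ba)$. This follows from \lemref{lem:free}: all modules involved are free over $\bA_T$ (being pure equivariant cohomology groups with a unique $\TT$-fixed point), and the quotient $N_\bA(\ba)/M_\bA(\ba)$ is $\bA_T$-free by the analogous purity applied fibrewise, so no $\ve_1\ve_2$-torsion can appear in the cokernel.

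The main obstacle will be making Step~1 precise, which requires an examination of the explicit construction of $\scW_\bA(\g)$ in Appendix~B to verify that the generators $\bW{\kappa}$ (and hence their positive Fourier modes) really do lie in the subalgebra of $\Heis(\h)\otimes_\bA\bF$ generated by $\widetilde P^i_n$ ($n\le 0$), $P^i_n$ ($n>0$), and $a^i$; the remaining step is then a soft saturation argument.
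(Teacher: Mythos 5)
Your Step 2 is where the argument breaks. You reduce everything to the saturation claim ``if $x\in N_\bA(\ba)$ and $\ve_1\ve_2 x\in M_\bA(\ba)$ then $x\in M_\bA(\ba)$'', and justify it by asserting that the quotient $N_\bA(\ba)/M_\bA(\ba)$ is free over $\bA_T$. That cannot be right: by \eqref{eq:4sp} the inclusion $M_\bA(\ba)\subset N_\bA(\ba)$ becomes an isomorphism after tensoring with $\bF_T$, so the quotient is a torsion $\bA_T$-module, and it is nonzero in general (the universal Verma and Wakimoto lattices genuinely differ). \lemref{lem:free} gives freeness of $M_\bA(\ba)$ and $N_\bA(\ba)$ separately, and freeness of a submodule and of the ambient module never yields saturation (take $\bA_T\subset\bA_T\cdot\frac1{\ve_1}$ inside $\bF_T$). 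What you would actually need is that $\ve_1\ve_2$ is a nonzerodivisor on $N_\bA(\ba)/M_\bA(\ba)$. The localization analysis behind \thmref{thm:nonlocal} and \lemref{lem:cochar} does show that this quotient is annihilated by products of linear forms $\mu+m_1\ve_1+m_2\ve_2$ with $\mu\neq 0$, so neither $\ve_1$ nor $\ve_2$ divides the annihilator; but this only controls the support, not the associated (possibly embedded) primes, so injectivity of multiplication by $\ve_1\ve_2$ on the quotient does not follow and would require a genuinely new argument in the spirit of the proof of \thmref{thm:nonlocal} applied to the intersection description coming from \eqref{eq:72} and \eqref{eq:74}. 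As it stands, the descent from $N_\bA(\ba)$ to $M_\bA(\ba)$ is unproved.

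Step 1, by contrast, is essentially sound, though you leave it as an ``obstacle'': since $\mW{\kappa}{}$ is a state in $\Heis_\bA(\h)=\bA[\widetilde P^i_{n<0}]|0\rangle$, every mode $\mW{\kappa}m$ is an infinite sum of normal-ordered monomials in the $\widetilde P^i_n$ with $\bA$-coefficients, and for $m>0$ each monomial contains at least one strictly positive mode, from which a single factor $\ve_1\ve_2$ can be extracted; this, together with $[P^i_n,\widetilde P^j_{-l}]\in\CC$ and $P^i_n|\ba\rangle=0$, gives invariance of $N_\bA(\ba)$ under $\bW{\kappa}m$ directly, with no appeal to the coupling conventions of \subsecref{sec:BaraOp} (which in any case concern the diagonal Baranovsky operators on Gieseker spaces, not the operators $P^i_n$ built from stable envelopes). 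Note that the paper's own proof avoids your Step 2 entirely: it argues by induction on the $L_0$-grading, writing elements of $M_\bA(\ba)$ as $\mW{\kappa'}{-n}x'$ via \propref{prop:span} and using that commutators of elements of $\scW_\bA(\g)$ are divisible by $\ve_1\ve_2$ inside $\scW_\bA(\g)$ (a consequence of the embedding into $\Heis_\bA(\h)$ and \eqref{eq:mPrel}), so that $[\bW{\kappa}m,\mW{\kappa'}{-n}]$ is an honest element of $\scW_\bA(\g)$. If you want to salvage your route, you must either prove the $\ve_1\ve_2$-saturation of $M_\bA(\ba)$ in $N_\bA(\ba)$ or switch to an argument of that inductive type.
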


\begin{NB}
  This is probably because $\bW{\kappa}m$ is an annihilation operator
  coupled with $[\AA^2]$, and hence is well-defined on
  $\IH^{\mathrm{ord},\GG}_*(\Uh{d})$. But at this moment, I do not know
  how to prove this assertion geometrically.

  Let us prove the assertion algebraically.
\end{NB}

\begin{proof}
  Recall that $M_\bA(\ba)$ is graded by the instanton number $d$: $M_\bA(\ba) =
  \bigoplus_d M_{d,\bA}$. In algebraic terms, it is the grading by
  $L_0$.
  \begin{NB}
    $M_{d,\bA} =  \{x\in M_\bA(\ba)\mid L_0 x = d x\}$.
  \end{NB}
Let us take $\bW{\kappa}m$ with $m > 0$. We show
\begin{equation}
  \bW{\kappa}{m} x \in M_{d-m,\bA}
\end{equation}
for any $x\in M_{d,\bA}$ by an induction on $d$. If $d = 0$, we have
$\bW{\kappa}m x = 0$. Therefore the assertion is true.

Suppose that the statement is true for $d' < d$. We may assume $x =
\mW{\kappa'}{-n} x'$ with $n > 0$, $x'\in M_{d-n,\bA}$ by
\propref{prop:span}. Since $\bW{\kappa}m x'\in M_\bA(\ba)$ by the
induction hypothesis, it is enough to show that $[\bW{\kappa}m,
\mW{\kappa'}{-n}] x'\in M_\bA(\ba)$.
In the Heisenberg algebra, we have $[a, b] \in \ve_1 \ve_2 \widetilde
H^0_\bA(\g)$ for $a$, $b\in \widetilde H^0_\bA(\g)$ from the relation
\eqref{eq:mPrel}.
Since $\scW_\bA(\g)\to\widetilde H^0_\bA(\g)$ is an embedding, we have
the same assertion for $\scW_\bA(\g)$. Therefore the assertion follows.
\end{proof}

Let $\bR\subset\bF = \Q(\ve_1,\ve_2)$ be the local ring of regular
functions at $\ve_1 = \ve_2 = 0$.
Let $\bR_T = \bR(\ba)$.
We set
\begin{equation}\label{eq:90}
  \begin{split}
  & M_\bR(\ba) = M_\bA(\ba)\otimes_{\bA_T} \bR_T,\quad
  D(M_\bR(-\ba)) = D(M_\bA(-\ba))\otimes_{\bA_T} \bR_T,
\\
  & N_\bR(\ba) = N_\bA(\ba)\otimes_{\bA_T} \bR_T,\quad
  D(N_\bR(-\ba)) = D(N_\bA(-\ba))\otimes_{\bA_T} \bR_T.
  \end{split}
\end{equation}
These modules are the localization with respect to the ideal
\begin{equation}
  \Ker\left(\bA_T = \CC[\ve_1,\ve_2,\ba] = \CC[\operatorname{Lie}\TT]
    \to
    \CC[\ba]
    = \CC[\operatorname{Lie}T]
  \right)
\end{equation}
consisting of polynomials vanishing on $\operatorname{Lie}T$.

From the definition, operators $\mW{\kappa}n$ are well-defined on 
four modules in \eqref{eq:90}.
Moreover operators $\bW{\kappa}n$ and $\bW{\kappa}{-n}$ are
well-defined on $M_\bR(\ba)$ and $D(M_\bR(\ba))$ respectively if $n >
0$ by \lemref{lem:inv}.
\begin{NB}
  Therefore $\mW{\kappa}n$ and $\mW{\kappa}{-n}$ vanish on
  $M_\bR(\ba)$ and $D(M_\bR(\ba))$ respectively if $n > 0$.
\end{NB}

By the localization theorem, the first and the third homomorphisms
in \eqref{eq:4sp} become isomorphisms over $\bR_T$. Therefore
\begin{equation}\label{eq:85}
  M_\bR(\ba)
  \xrightarrow{\cong}
  N_\bR(\ba),
\qquad
  D(N_\bR(-\ba))
  \xrightarrow{\cong}
  D(M_\bR(-\ba)).
\end{equation}

Recall that we have Heisenberg operators $P^{i}_n =
(\ve_1\ve_2)^{-1}\widetilde P^{i}_n$, coupled with the fundamental
class $1\in H^{0}_\TT(\AA^2)$. Let
\begin{equation}
  P[\blam] = 
  P^{1}_{-\lambda^1_1} P^{1}_{-\lambda^1_2} \cdots
  P^{\ell}_{-\lambda^\ell_1}
  P^{\ell}_{-\lambda^\ell_2} \cdots
\end{equation}
for $i=1,\dots,\ell$, $n\in\Z$, and an $\ell$-partition $\blam =
(\lambda^1,\dots,\lambda^\ell)$.
\begin{NB}
  For the consistency of notations, it is better to write
  $\widehat P[\blam]$.
\end{NB}%
It is a well-defined operator on $D(M_\bR(-\ba))$ by the proof of
\lemref{lem:inv}.

Replacing $P^i_m$ by $\widetilde{P}^i_m$, we introduce similar
operators $\widetilde P[\blam]$.

\begin{Proposition}\label{prop:SpanP}
  We have
  \begin{equation}
    D(M_\bR(-\ba)) = \operatorname{Span}_{\bR_T}\{ P[\blam]|\ba\rangle\},
  \end{equation}
where $\blam$ runs all $\ell$-partitions.
\end{Proposition}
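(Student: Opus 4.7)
The plan is to first reduce, via the isomorphism $D(M_\bR(-\ba))\cong D(N_\bR(-\ba))$ furnished by the second half of~\eqref{eq:85} (a consequence of the localization theorem for the inclusion $\Uh[T]{d}\hookrightarrow\Uh{d}$), to the statement that $D(N_\bR(-\ba))$ is the $\bR_T$-span of the elements $P[\blam]|\ba\rangle$; this isomorphism preserves $|\ba\rangle$ and intertwines the Heisenberg operators $P^i_n$ constructed in Subsection~\ref{sec:heis-algebra-assoc}, so the reformulation is legitimate.

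I would then invoke Proposition~\ref{prop:hyperbolic} together with the calculation in the proof of Lemma~\ref{lem:Fock}: this identifies $N_\bA(\ba)$ with a free $\bA_T$-module whose basis is naturally indexed by $\ell$-partitions $\blam$. The geometric description of the Heisenberg operators in Subsection~\ref{sec:heis-algebra-assoc}, in particular the identification $\widetilde P^i_{-d}|\ba\rangle = \tilde\alpha^d_i$, shows that the ordered monomials $v_\blam \defeq \widetilde P[\blam]|\ba\rangle$ form such an $\bA_T$-basis of $N_\bA(\ba)$; likewise $v^-_\blam \defeq \widetilde P[\blam]|-\ba\rangle$ form an $\bA_T$-basis of $N_\bA(-\ba)$. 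The key computation is then the Gram matrix of the pairing~\eqref{eq:64} in these bases. Using~\eqref{eq:83}, the anti-involution formula~\eqref{eq:81} (which on the rescaled generators reads $\theta(\widetilde P^i_n) = -\widetilde P^i_{-n}$ for $n\neq 0$), and the commutation relations~\eqref{eq:mPrel}, Wick's theorem yields
\[
\la v^-_\bmu,\, v_\blam\ra = G_{\bmu\blam}\,(\ve_1\ve_2)^{l(\blam)},
\]
where $G=(G_{\bmu\blam})$ is a matrix of scalars, block-diagonal with respect to the grading by instanton degree and by the underlying shape (unordered multiset of parts) of $\blam$, its entries being integer combinations of products of Cartan pairings $(\alpha_i,\alpha_j)$ arising from perfect matchings of modes.

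Since $P[\blam]|\ba\rangle = (\ve_1\ve_2)^{-l(\blam)} v_\blam$, one obtains $\la v^-_\bmu, P[\blam]|\ba\rangle\ra = G_{\bmu\blam} \in \CC \subset \bR_T$, which already shows $P[\blam]|\ba\rangle \in D(N_\bR(-\ba))$; and the change of basis from the family $\{P[\blam]|\ba\rangle\}$ to the $\bA_T$-dual basis of $\{v^-_\blam\}$ inside $D(N_\bA(-\ba))$ is implemented by $G^{-1}$. Hence the spanning assertion reduces to the invertibility of $G$ over $\bR_T$.

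The hard part will be precisely this invertibility of $G$. As $G$ has entries in $\CC$ and $\bR_T\supset\CC$, it suffices to show $\det G\neq 0$. This is a classical fact: $G$ is, up to reindexing, the Gram matrix of the standard Shapovalov form on the bosonic Fock representation of $\Heis(\mathfrak h)$ attached to the non-degenerate symmetric form $(\ ,\ )$ on $\mathfrak h$, whose degree-$d$ determinant factors as a product of combinatorial terms coming from part multiplicities with powers of $\det((\alpha_i,\alpha_j))$; each factor is a nonzero rational number in the simply-laced case, hence a unit in $\bR_T$. This completes the proof.
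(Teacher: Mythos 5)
Your proof is correct and follows essentially the same route as the paper: reduce via \eqref{eq:85} to the Wakimoto module, use that $N_\bA(\pm\ba)$ is spanned over $\bA_T$ by the monomials $\widetilde P[\blam]$ applied to the highest weight vector, and conclude from the commutation relation $[P^i_m,\widetilde P^j_n]=-m\delta_{m,-n}(\alpha_i,\alpha_j)$ that the family $\{P[\blam]|\ba\rangle\}$ pairs perfectly with $N(-\ba)$. The only difference is that you make the paper's one-line ``perfect pairing'' claim explicit by computing the Gram matrix and invoking the classical nondegeneracy of the Fock-space form, which is a harmless elaboration of the same argument.
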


\begin{proof}
  Thanks to \eqref{eq:85}, it is enough to show the assertion for
  $D(N_\bR(-\ba))$. We shall prove that $D(N_\bA(-\ba))$ is spanned by
  $P[\blam]$ over $\bA$.

  Recall that $N_\bA(\ba) = \operatorname{Span}_{\bA_T}\{ \widetilde
  P[\blam]|\ba\ra\}$, see \eqref{eq:103}.
  From the commutation relation
  \begin{equation}
    [P^i_m, \widetilde{P}^j_n] 
  = - m\delta_{m,-n}(\alpha_i,\alpha_j),
  \end{equation}
  we clearly have a perfect pairing between $N_\bA(-\ba)$ and
  $\operatorname{Span}_{\bA_T}\{ P[\blam]|\ba\ra\}$. The assertion follows.
\end{proof}

\begin{NB}
  The original formulation was the following, but I think that the
  above could be proved first and then next the following might
  follow. But I do not have the precise argument right now.
\end{NB}

\begin{NB}
  From now on, I will present a speculation. First I assume

\begin{Conjecture}\label{span}
  We have
  \begin{equation}
    {}^\bR M(\ba) = \operatorname{Span}_{\bR_T}\widehat W[\blam]|\ba\rangle.
  \end{equation}
\end{Conjecture}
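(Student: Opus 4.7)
The plan is to parallel the argument of \propref{prop:SpanP}, replacing the Heisenberg monomials $P[\blam]$ with the renormalized $\scW$-algebra monomials $\widehat W[\blam]$. Let $V_d \defeq \operatorname{Span}_{\bR_T}\{\widehat W[\blam]|\ba\rangle \mid |\blam|=d\}$. By \lemref{lem:inv} applied to $\bW{\kappa}{-n}$ for $n>0$, we have $V_d \subseteq D(M_\bR(-\ba))_d$. Since $M_\bF(\ba) = M(\lambda)$ is an irreducible Verma module over $\scW_k(\g)$ for generic $\lambda$ (\propref{prop:lambda}), the collection $\{\widehat W[\blam]|\ba\rangle\}$ is an $\bF_T$-basis after inverting the maximal ideal $\mathfrak m_\bR \subset \bR$ generated by $\ve_1,\ve_2$. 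Hence the containment $V_d\subseteq D(M_\bR(-\ba))_d$ becomes an equality over $\bF_T$, and it remains to promote this to an equality of $\bR_T$-lattices.

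The natural device for such an argument is the Kac--Shapovalov pairing. Define the Gram matrix
\[
  K_d(\blam,\bmu) \defeq \langle -\ba\,|\,\theta(\widetilde W[\blam])\,\widehat W[\bmu]\,|\ba\rangle,
  \qquad |\blam|=|\bmu|=d,
\]
where $\theta(\widetilde W^{(\kappa)}_{-n}) = (-1)^{d_\kappa+1}\widetilde W^{(\kappa)}_{n}$ by \eqref{eq:87}. The bases $\{\widetilde W[\blam]|\ba\rangle\}$ and $\{\widehat W[\bmu]|\ba\rangle\}$ of $M_\bA(\ba)$ and of $V_d$ respectively are dual, up to $K_d$, with respect to the Kac--Shapovalov form. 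First one checks that $K_d(\blam,\bmu)\in\bR_T$: by \propref{prop:span} one may reduce to bounding each entry, and the naive pole $(\ve_1\ve_2)^{-l(\bmu)}$ produced by the normalization $\bW{\kappa}{-n} = (\ve_1\ve_2)^{-1}\mW{\kappa}{-n}$ is absorbed by the fact, proved in \lemref{lem:inv}, that $\bW{\kappa}{n}$ for $n>0$ preserves the compact-support lattice $M_\bR(\ba)$ (so pairing it against any element of $D(M_\bR(-\ba))$ lands in $\bR_T$). Iterating this cancellation pair by pair through the commutation relations of $\scW_\bA(\g)$ yields the desired integrality.

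The crux is then to show $\det K_d \in \bR_T^{\times}$, i.e., $\det K_d|_{\ve_1=\ve_2=0}\neq 0$ in $\CC(\ba)$. By Nakayama over the local ring $\bR_T$, this will force $V_d = D(M_\bR(-\ba))_d$. Modulo $\mathfrak m_\bR$, the $\scW$-algebra $\scW_{-h^\vee}(\g)$ is commutative, and via the Feigin--Frenkel Miura map $\scW_k(\g)\hookrightarrow \Heis_{k+h^\vee}(\h)$ (\thmref{FF-intro}(2)) the generators $\widehat W^{(\kappa)}$ specialize to Weyl-group-invariant polynomials in the Heisenberg fields whose leading symbols are the fundamental $W$-invariants of $S(\h)$. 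Consequently $K_d|_{\ve_1=\ve_2=0}$ becomes the Gram matrix of the natural $\CC(\ba)$-valued pairing
\[
  S\bigl(t^{-1}\h[t^{-1}]\bigr)^W \otimes S\bigl(t^{-1}\h[t^{-1}]\bigr)^W \longrightarrow \CC(\ba)
\]
induced from the Heisenberg pairing, where the first factor is identified with the classical limit of $M_\bA(-\ba)$ by the Chevalley isomorphism $Z(\g)\cong S(\h)^W$ (cf.\ \lemref{lem:shift}) and the BFG character formula \cite[Theorem~7.10]{BFG} used in \propref{prop:span}. The non-degeneracy of this classical pairing is standard (it is dual to the standard inner product on Jack/Macdonald-type symmetric function spaces in type $A$, and to its Weyl-group generalization in types $D$ and $E$).

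The principal obstacle is this last non-degeneracy/leading-order computation: it requires an explicit match between the classical limit of the $\widehat W$-generators and the Chevalley generators of $S(\h)^W$, together with a uniform description of the integral Miura transformation. For type $A$ this is essentially the relation between power sums and elementary symmetric functions and is immediate; for $D$ and $E$ it requires the description of $\scW_{-h^\vee}(\g)\cong \mathrm{Fun}(\mathrm{Op}_{{}^L\!\g})$ at the critical level (see \cite[Ch.~8]{F-BZ}) together with a verification that the resulting integral pairing specializes to the nondegenerate Harish-Chandra pairing. Once this one algebraic ingredient is in place, the rest of the argument — integrality of $K_d$ plus Nakayama — is formal and mirrors the proof of \propref{prop:SpanP} almost verbatim.
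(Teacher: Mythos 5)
You should first know how the paper itself treats this statement: it does not prove it. The displayed equality survives only in a suppressed working note as an ``original formulation''; the authors record that they have no argument for it, observe that the analogous statement over $\bA_T$ cannot be true (a cohomological degree count would force $|1^d\rangle$ to be proportional to $(\bW{\ell}{-1})^d|\ba\rangle$), and replace it by \propref{prop:SpanP}, in which the $\scW$-monomials $\widehat W[\blam]$ are traded for Heisenberg monomials $P[\blam]$. That substitution is what makes the paper's proof immediate: by \eqref{eq:103} and the relation $[P^i_m,\widetilde P^j_n]=-m\delta_{m,-n}(\alpha_i,\alpha_j)$, which contains no factor of $\ve_1\ve_2$, the lattices $N_\bA(-\ba)$ and $\operatorname{Span}_{\bA_T}\{P[\blam]|\ba\ra\}$ are exactly dual, and one descends to $D(M_\bR(-\ba))$ via \eqref{eq:85}. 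A description in terms of $\bW{\kappa}{-1}$ is recovered only after specializing at $\ve_1=\ve_2=0$ and dividing by the radical of $\la\ ,\ \ra_0$ (\propref{prop:Pat0}, \lemref{lem:c1} and the Jacobian lemma), and that weaker statement is all the proof of \thmref{thm:cd} uses. So your proposal is not a variant of the paper's proof; it is an attempt at the statement the authors deliberately avoided.

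The formal skeleton of your plan is fine (integrality of the entries of $K_d$ follows at once from \lemref{lem:inv} and the perfectness of \eqref{eq:63}, no iterated cancellation needed; and, granted that the $\widetilde W[\blam]|\ba\ra$ form an $\bR_T$-basis of $M_\bR(\ba)$ --- a PBW-over-$\bA$ statement you should extract explicitly from \propref{prop:span} --- equality of lattices is indeed equivalent to $\det K_d|_{\ve_1=\ve_2=0}\neq 0$), but the step you yourself flag as the crux is both unproven and misidentified. At $\ve_1=\ve_2=0$ the Heisenberg form degenerates, since $[\widetilde P^i_m,\widetilde P^j_n]=-m\delta_{m,-n}(\alpha_i,\alpha_j)\ve_1\ve_2$, so there is no ``induced nondegenerate classical pairing''; whatever survives in the limit comes from the zero modes, which blow up like $\ba/\ve_1$ against the $(\ve_1\ve_2)^{\pm1}$ normalizations. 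Moreover the rows and columns of $K_d$ are indexed by all $\ell$-partitions of $d$, i.e.\ by a basis of $S(t^{-1}\h[t^{-1}])_d$, not of its $W$-invariants, so the space you propose as the limit is not even of the right size. The paper's own limit formulas show how delicate the true limit is: \eqref{eq:vanish} says that in the renormalized limit form every mode $\bW{\kappa}{-m}$ with $m\ge 2$ pairs to zero, and \lemref{lem:c1}/\eqref{eq:93} control only the modes $n=\pm1$, where nonvanishing comes from the Jacobian $\det(\partial F^{(\kappa)}/\partial a^i)$, a nonzero multiple of the discriminant $\Delta(\ba)$, rather than from any invariant-theoretic pairing. (For Virasoro at $d=2$ one can check directly that $\det K_2|_{\ve_1=\ve_2=0}=-a^6/2$, so the conjecture is plausible; but the limit matrix is a triangular, Jacobian-type object, and proving its nonvanishing for all $d$ and all $\g$ would in particular require limit formulas for $\mW{\kappa}{n}$ with $|n|\ge 2$ that neither the paper nor your sketch provides.) As it stands, the Nakayama argument rests on a nondegeneracy claim whose proposed justification is incorrect, so the gap the authors left open remains open in your proposal.
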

\end{NB}

\begin{NB}
  It is probably enough to check this for $\ve_1 = \ve_2 = 0$, $\ba =
  0$ for our purpose. Then it might be possible...... But I am not sure.

  Also I need to warn you as $\bW{\kappa}m$ ($m < 0$) does not form a
  subalgebra. We do not have the triangular decomposition for the
  $W$-algebra.

  The following statement follows if we consider ${}^\bA M(\ba)$
  instead of ${}^\bR M(\ba)$. Therefore it is not true.
  \begin{NB2}
  And it seems to me that this conjecture implies that \( |1_n\rangle
  \) is a multiple of $(\bW{\ell}{-1})^n$ by the degree reason. This
  is fine at $\ve_1, \ve_2 = 0$, but looks too strong to me for
  generic $\ve_1$, $\ve_2$.
  \end{NB2}
\end{NB}

\subsection{Pairing at \texorpdfstring{$\ve_1, \ve_2 = 0$}{epsilon1, epsilon2=0}}

\begin{NB}
    I correct that $\la\ ,\ \ra$ is a pairing between $M_\bF(-\ba)$
    and $M_\bF(\ba)$. Sep.~15, 2014
\end{NB}

We consider the pairing $\langle\ , \ \rangle$ on
$M_\bF(-\ba)\otimes_{\bF_T} M_\bF(\ba)$ in
\subsecref{sec:kac-shapovalov-form}, and restrict it to
$D(M_\bR(\ba))\otimes_{\bR_T} D(M_\bR(-\ba))$.

\begin{Lemma}
  We decompose $D(M_\bR(\pm\ba))$ as $\bigoplus D(M^\pm_{d,\bR})$ by the
  instanton number $d$ as before.

  \textup{(1)}
  $(\ve_1\ve_2)^d \langle\ ,\ \rangle$ takes values in $\bR_T$ on
  $D(M^-_{d,\bR})\otimes D(M^+_{d,\bR})$.

  \textup{(2)} Let $\la\ ,\ \ra_0$ be its specialization at $\ve_1 =
  \ve_2 = 0$. For $m > 0$, we have
\begin{equation}\label{eq:vanish}
  \la x , \bW{\kappa}{-m} y\ra_0 = 
  \begin{cases}
  (-1)^{d_\kappa+1} \la\mW{\kappa}{m} x,  y\ra_0 & \text{if $m= 1$},
\\
    0 & \text{otherwise}.
  \end{cases}
\end{equation}
\end{Lemma}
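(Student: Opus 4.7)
The plan is to dispatch (1) first by a direct Wick-type calculation using the PBW description supplied by \propref{prop:SpanP}, and then to derive (2) as a formal consequence of (1), the anti-involution identity \eqref{eq:87}, and a preservation lemma on the dual side.

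For (1), I would expand $x = \sum_\blam c'_\blam\, P[\blam]|\ba\rangle$ and $y = \sum_\bmu c_\bmu\, P[\bmu]|-\ba\rangle$ with $|\blam|=|\bmu|=d$ and $c'_\blam, c_\bmu \in \bR_T$, and reduce the estimate to the bare pairings $\la P[\blam]|\ba\rangle, P[\bmu]|-\ba\rangle\ra$. Using the anti-involution $\theta$ of \eqref{eq:81} to move all the operators in $P[\blam]$ to the opposite side, then applying Wick's theorem with the commutator $[P^i_m, P^j_n] = -m\,\delta_{m,-n}(\alpha_i,\alpha_j)/\ve_1\ve_2$, each nonvanishing term is a sum over complete matchings of modes; such a matching exists only when $l(\blam)=l(\bmu)$, and each of its $l(\blam)$ contractions contributes a single factor of $(\ve_1\ve_2)^{-1}$. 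Since $l(\blam)\le|\blam|=d$, the total pairing is $O((\ve_1\ve_2)^{-d})$, which proves the claim and, incidentally, shows that $\la\,,\,\ra_0$ is supported on the ``rank one'' part where every entry of $\blam$ and $\bmu$ equals $1$.

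For (2), the key preliminary step is the dual analogue of \lemref{lem:inv}: the $\bA$-form $\Heis_\bA(\h)$, and hence $\scW_\bA(\g) \subset \Heis_\bA(\h)$, preserves $D(M_\bR(-\ba))$. This follows because \eqref{eq:85} identifies $D(M_\bR(-\ba))$ with the graded $\bR_T$-dual $D(N_\bR(-\ba))$ of the Wakimoto module, and the graded dual of any $\Heis_\bA$-module whose graded pieces are finitely generated over $\bR_T$ inherits an $\Heis_\bA$-action through $\theta$. Concretely, applying $\widetilde P^i_n = \ve_1\ve_2\, P^i_n$ to the PBW basis $P[\blam]|\ba\rangle$ of \propref{prop:SpanP}, the explicit factor $\ve_1\ve_2$ cancels the pole in each commutator $[\widetilde P^i_n, P^j_{-m}]$ when $n>0$, while for $n\le 0$ the result is manifestly in $D(M_\bR(-\ba))$. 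Granting this, $\mW{\kappa}m x \in D(M^-_{d-m,\bR})$ for all $m>0$.

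Combining $\bW{\kappa}{-m} = (\ve_1\ve_2)^{-1}\mW{\kappa}{-m}$ with the anti-involution identity \eqref{eq:87} and the duality of the pairing then yields
\begin{equation*}
  (\ve_1\ve_2)^d \la x, \bW{\kappa}{-m} y\ra
  \;=\; (-1)^{d_\kappa+1}\, (\ve_1\ve_2)^{m-1} \cdot (\ve_1\ve_2)^{d-m}\, \la \mW{\kappa}m x,\, y\ra.
\end{equation*}
Part (1) applied in instanton degree $d-m$ shows that $(\ve_1\ve_2)^{d-m}\la \mW{\kappa}m x, y\ra \in \bR_T$ with specialization $\la \mW{\kappa}m x, y\ra_0$ at $\ve_1 = \ve_2 = 0$, while the prefactor $(\ve_1\ve_2)^{m-1}$ specializes to $\delta_{m,1}$; the two cases of the lemma fall out. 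The main obstacle is the preservation statement: it is ``dual'' to \lemref{lem:inv}, but because \propref{prop:SpanP} writes $D(M_\bR(-\ba))$ in the non-integral variables $P^i_n$, one cannot quote that lemma formally and must track by hand the cancellation of $(\ve_1\ve_2)^{-1}$-poles against the explicit factor of $\ve_1\ve_2$ in $\widetilde P^i_n$.
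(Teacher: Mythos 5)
Your proposal is correct, and while your part (2) coincides with the paper's argument, your part (1) takes a genuinely different route. The paper never touches PBW monomials for (1): it uses \eqref{eq:85} to transfer the statement to the Wakimoto-type modules, identifies those with the $\TT$-equivariant cohomology of $\Uh[T]{d}=S^d\AA^2$ via \eqref{eq:86}, and then invokes the fixed point formula --- the unique fixed point $d\cdot 0$ has tangent weights $\ve_1,\ve_2$ repeated $d$ times, which produces exactly the allowed pole $(\ve_1\ve_2)^{-d}$. Your Wick-contraction computation buys a sharper and more explicit statement (pole order $(\ve_1\ve_2)^{-l(\blam)}$ with $l(\blam)\le d$, and the observation that $\la\ ,\ \ra_0$ is supported on the all-ones monomials, which meshes with \propref{prop:Pat0}); its cost is that you need the PBW spanning statement for \emph{both} tensor factors, i.e.\ also the $\ba\mapsto-\ba$ (equivalently $B\leftrightarrow B_-$) counterpart of \propref{prop:SpanP} for $D(M_\bR(\ba))$ --- this is available from the symmetry underlying \eqref{eq:86}, but you should say so explicitly since the paper states the proposition only for $D(M_\bR(-\ba))$. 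For (2), the identity you display from \eqref{eq:87} followed by specialization is exactly the paper's proof; your ``key preliminary step'' that $\mW{\kappa}{m}$ preserves $D(M_\bR(-\ba))$ is not actually an obstacle in the paper's logic, since it is recorded just before the lemma that $\mW{\kappa}{n}$ is well defined on all four modules of \eqref{eq:90} (because $D(M_\bA(-\ba))$ is a $\scW_\bA(\g)$-module, with $\scW_\bA(\g)\subset\Heis_\bA(\h)$ acting on $D(N_\bA(-\ba))$); your hands-on verification that the explicit factor $\ve_1\ve_2$ in $\widetilde P^i_n$ cancels the commutator poles is a correct alternative justification of the same fact.
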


Since $\la\ ,\ \ra$ is symmetric, (2) remains true when we exchange
the first and second entries.
\begin{NB}
    Added, Sep.~17, 2014.
\end{NB}

\begin{NB}
  $\bW{\kappa}0$ is not well-defined at $\ve_1,\ve_2 = 0$. On the
  other hand $\mW{\kappa}0$ is the invariant polynomial in $\ba$ of
  degree $d_\kappa+1$.
\end{NB}

\begin{proof}
  (1) Thanks to \eqref{eq:85}, it is enough to show the assertion for
  $D(N_{\bR}(\ba))\otimes D(N_\bR(-\ba))$. By \eqref{eq:86}
  \begin{NB}
    More precisely,
    $D(N_\bA(-\ba)) = \bigoplus_d 
    H^*_\TT(\Uh[T]d, \Phi_{T,G}^{B_-}(\IC(\Uh{d})))$
  \end{NB}
  and $\Uh[T]d = S^d\AA^2$, it is enough to show that the intersection
  pairing $\la\ ,\ \ra$ on $H^*_\TT(S^d\AA^2)$ satisfies the same
  property. Note that $S^d\AA^2$ is a smooth orbifold. Since we only
  have a single fixed point $d\cdot 0$ in $S^d\AA^2$ and the weight of
  the tangent space there is $\ve_1,\ve_2,\ve_1,\ve_2,\dots$ ($d$
  times), the fixed point formula implies the assertion.

  (2) Suppose $x\in D(M^+_{d,\bR})$, $y\in D(M^-_{d-m,\bR})$ with $m >
  0$. Then
  \begin{equation}
    (\ve_1\ve_2)^d \langle x, \bW{\kappa}{-m} y\rangle
    =  (-1)^{d_i+1}(\ve_1\ve_2)^{m-1} (\ve_1\ve_2)^{d-m}
    \langle \mW{\kappa}{m} x, y\rangle
  \end{equation}
  by \eqref{eq:87}. Now we specialize $\ve_1, \ve_2 = 0$ to get the
  assertion.
  \begin{NB}
    Here is an original algebraic argument. If we use
    \propref{prop:SpanP} instead of Conjecture~\ref{span}, the
    argument works. But I do not give the proof of
    \propref{prop:SpanP} yet, so I have used a geometric argument above.

  Let $M(\ba) = \bigoplus_{n\ge 0} M(\ba)_n$ as before. This is an
  orthogonal decomposition. We prove (1) by an induction on $n$. The
  assertion is true at $n=0$. We assume that the assertion is true for
  $n' < n$.

  Suppose $x\in {}^\bR M(\ba)_n$, $y\in {}^\bR M(\ba)_{n-m}$ with $m >
  0$. Then 
  \begin{equation}
    (\ve_1\ve_2)^n \langle x | \bW{\kappa}{-m} y\rangle
    =  (-1)^{d_i+1}(\ve_1\ve_2)^{m-1} (\ve_1\ve_2)^{n-m}
    \langle \mW{\kappa}{m} x | y\rangle.
  \end{equation}
  Since $\mW{\kappa}{m} x\in {}^\bR M(\ba)_{n-m}$, the right hand side is
  in $\bR_T$ by the induction hypothesis. Since we assume
  Conjecture~\ref{span}, the assertion is true also for ${}^\bR
  M(\ba)_n$. Therefore (1) is proved.

  Evaluating at $\ve_1, \ve_2 = 0$, we obtain (2).
  \end{NB}%
\end{proof}

Let us consider $M_0(\pm\ba) \defeq D(M_\bR(\pm\ba))\otimes_{\bR_T}
\CC/\operatorname{Rad}\la\ ,\ \ra_0$, where $\bR_T\to\CC$ is the
evaluation at $\ve_1 = \ve_2 = 0$, and $\operatorname{Rad}\la\ ,\ \ra_0$
is the radical of $\la\ ,\ \ra_0$. Then \eqref{eq:vanish} implies that
$\bW{\kappa}{-m} = 0$ if $m > 1$, and $\bW{\kappa}{-1}$, $\mW{\kappa}1$
are well-defined on $M_0(\pm\ba)$.
\begin{NB}
  But $\mW{\kappa}m$ with $m > 1$ may not be well-defined on
  $M_0(\ba)$, as we cannot say anything on $(\mW{\kappa}m x|y)_0$ for
  $m > 1$.
\end{NB}%

\begin{NB}
  SUMMARY : As operators on $M_0(\ba)$, 
  \begin{equation}
    \text{$\bW{\kappa}{-n}$ is}
    \begin{cases}
      \text{well-defined} & \text{if $n = 1$,}\\
      0 & \text{if $n > 1$,} \\
      \text{not defined, at least so far} & \text{if $n\le 0$.}
    \end{cases}
  \end{equation}
  
\end{NB}

\begin{Proposition}\label{prop:Pat0}
  \textup{(1)} $P^{(i)}_{-m} = 0$ if $m > 1$, and $P^{(i)}_{-1}$,
  $\widetilde P^{(i)}_1$ are well-defined on $M_0(\pm\ba)$. And we have
  \begin{equation}\label{eq:88}
    \la x, P^{(i)}_{-1}y\ra_0 =
    - \la \widetilde P^{(i)}_1 x, y\ra_0.
  \end{equation}

\textup{(2)} We have commutation relations
\begin{equation}
  [P^{(i)}_{-1}, P^{(j)}_{-1}] = 0,\quad
  [\widetilde P^{(i)}_{1}, \widetilde P^{(j)}_{1}] = 0,\quad
  [\widetilde P^{(i)}_1, P^{(j)}_{-1}] = -(\alpha_i, \alpha_j).
\end{equation}

\textup{(3)} $M_0(\pm\ba)$ is isomorphic to the polynomial ring in
$P^{(i)}_{-1}$ \textup($i=1,\dots,\ell$\textup). The pairing $\la\ ,\
\ra_0$ is the induced pairing on the symmetric power from the pairing
  \begin{equation}
  \la -\ba | P^{(i)}_{1} P^{(j)}_{-1}| \ba\ra_0 
  = (\alpha_i,\alpha_j).
  \end{equation}
  \begin{NB}
      The above is modified from 
  \begin{equation}
  \la P^{(i)}_{-1} \ba, P^{(j)}_{-1} \ba\ra_0 
  = -(\alpha_i,\alpha_j)
  \end{equation}
  on Sep.~17, 2014.
  \end{NB}
\end{Proposition}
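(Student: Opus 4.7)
All three assertions follow the pattern used in the preceding lemma for the $\scW$-algebra generators $\bW{\kappa}{\pm m}$, now applied to the Heisenberg algebra through the anti-involution $\theta$ in \eqref{eq:81}. For part (1), fix $x\in D(M^-_{d,\bR})$, $y\in D(M^+_{d-m,\bR})$ with $m>0$. The relation $\widetilde P^{(i)}_n=\ve_1\ve_2\,P^{(i)}_n$ together with $\theta(\widetilde P^{(i)}_{-m})=-\widetilde P^{(i)}_{m}$ gives
\begin{equation*}
  (\ve_1\ve_2)^d\la x,P^{(i)}_{-m}y\ra
  = (\ve_1\ve_2)^{d-1}\la x,\widetilde P^{(i)}_{-m}y\ra
  = -(\ve_1\ve_2)^{m-1}\cdot(\ve_1\ve_2)^{d-m}\la\widetilde P^{(i)}_m x,y\ra.
\end{equation*}
Since $\widetilde P^{(i)}_m\in\Heis_\bA(\h)$ preserves $D(N_\bA(-\ba))$, which after $\bR_T$-localization coincides with $D(M_\bR(-\ba))$ by \eqref{eq:85}, the factor $(\ve_1\ve_2)^{d-m}\la\widetilde P^{(i)}_m x,y\ra$ lies in $\bR_T$ by the preceding lemma. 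Specialization at $\ve_1=\ve_2=0$ then vanishes for $m>1$ and yields the asserted adjunction identity for $m=1$. Nondegeneracy of $\la\ ,\ \ra_0$ on $M_0(\pm\ba)$, built into the definition as a quotient by the radical, then shows that $P^{(i)}_{-m}$ descends to a well-defined operator on $M_0(\pm\ba)$; it is zero when $m>1$, and $\widetilde P^{(i)}_1$ descends trivially since it already preserves the integral form.

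For part (2), the three commutators reduce at once to the integral Heisenberg relation $[\widetilde P^{(i)}_m,\widetilde P^{(j)}_n]=-m\delta_{m,-n}(\alpha_i,\alpha_j)\ve_1\ve_2$: both $[\widetilde P^{(i)}_{-1},\widetilde P^{(j)}_{-1}]$ and $[\widetilde P^{(i)}_{1},\widetilde P^{(j)}_{1}]$ vanish because the mode sums are nonzero, while $[\widetilde P^{(i)}_{1},\widetilde P^{(j)}_{-1}]=-(\alpha_i,\alpha_j)\ve_1\ve_2$ is exactly absorbed by the single factor $(\ve_1\ve_2)^{-1}$ in $P^{(j)}_{-1}=\widetilde P^{(j)}_{-1}/(\ve_1\ve_2)$, giving $[\widetilde P^{(i)}_1,P^{(j)}_{-1}]=-(\alpha_i,\alpha_j)$.

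For part (3), \propref{prop:SpanP} supplies a $\bR_T$-spanning set $\{P[\blam]|\ba\ra\}$ of $D(M_\bR(\ba))$ indexed by $\ell$-tuples of partitions. By part (1) every factor $P^{(i)}_{-m}$ with $m>1$ annihilates $M_0(\ba)$, so only $\blam$ whose parts are all equal to $1$ contribute in the quotient; by part (2) the surviving generators $\{P^{(i)}_{-1}|\ba\ra\}_i$ commute, giving a surjection from $\Sym_\CC\{P^{(i)}_{-1}|\ba\ra\}_i$ onto $M_0(\ba)$. The generating pairing is computed by iterated application of the adjunction from (1), using the commutator $[\widetilde P^{(i)}_1,P^{(j)}_{-1}]=-(\alpha_i,\alpha_j)$ from (2), the annihilation property $\widetilde P^{(i)}_1|{\pm\ba}\ra=0$, and the normalization $\la-\ba|\ba\ra_0=1$. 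Nondegeneracy of the Cartan form $(\ ,\ )$ on $\h$ (since $G$ is simply-laced) then promotes the surjection to an isomorphism.

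\textbf{Main obstacle.} The only conceptual point that requires care is the passage from operators on the localized module to operators on $M_0(\pm\ba)$: a priori $P^{(i)}_{-m}=\widetilde P^{(i)}_{-m}/(\ve_1\ve_2)$ has a pole at $\ve_1=\ve_2=0$, and the content of part (1) is precisely that its matrix coefficients against the integral form lie in $(\ve_1\ve_2)^{m-1}\bR_T$, hence are regular at the origin and vanish for $m>1$. Once this $\ve_1\ve_2$-bookkeeping is set up, each remaining step is a direct consequence of the Heisenberg commutation relations together with \propref{prop:SpanP}.
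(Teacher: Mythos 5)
Your proposal is correct and follows essentially the same route as the paper: part (1) by the same $\theta$-adjunction and $(\ve_1\ve_2)$-bookkeeping used for $\bW{\kappa}{-m}$ in the preceding lemma, part (2) from the integral Heisenberg relations of Proposition \ref{prop:HeisRel}, and part (3) from Proposition \ref{prop:SpanP} together with the computed pairing and its nondegeneracy. The only cosmetic difference is that the paper passes to an orthonormal basis $Q^{(i)}_{-1}$ to exhibit the orthogonality of monomials, whereas you invoke nondegeneracy of the induced form on the symmetric algebra directly (and your aside that this uses simply-lacedness is unnecessary, since $(\ ,\ )$ is nondegenerate on $\h$ in any case).
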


\begin{proof}
The same argument as above shows (1).
\begin{NB}
  In fact, I must be a little more careful as the pairing on
  $N_\bA(\ba)$ is defined naturally on $N_\bA(\ba)$ and
  $D(N_\bA(\ba))$, where the latter is defined by the hyperbolic
  restriction with respect to the opposite Borel. But it is just
  given by $-1$, I guess. So this is OK.
\end{NB}
   
By \propref{prop:SpanP} and (1), $M_0(\ba)$ is spanned by monomials in
$P^{(i)}_{-1}$ applied to $|\ba\ra$.

(2) follows from \propref{prop:HeisRel}.

(3) Let us replace $P^{(i)}_{-1}$, $\widetilde P^{(i)}_1$ by
$Q^{(i)}_{-1}$, $\widetilde Q^{(i)}_1$ corresponding to an orthonormal
basis of $\h$
\begin{NB}
  i.e., $Q^{(i)}_{-1} = \sqrt{C}^{ij} P^{(i)}_{-1}$, and
  the same for $\widetilde Q^{(i)}_1$, where
  $\sqrt{C}^{ij}$ is the square root of the inverse
  of the matrix $C_{ij} = (\alpha_i,\alpha_j)_{ij}$,
\end{NB}%
so that the commutation relation is $[\widetilde Q^{(i)}_1,
Q^{(j)}_{-1}] = -\delta_{ij}$. Then \eqref{eq:88} implies that
monomials in $Q^{(i)}_{-1}$ are orthogonal. More precisely, the
pairing is the standard one on $\CC[Q^{(i)}_{-1}]$
\begin{equation}
  \la -\ba | (Q^{(i)}_{1})^n (Q^{(i)}_{-1})^m | \ba\ra_0 = n! \delta_{mn},
\end{equation}
\begin{NB}
Modified on Sep.~14, 2014 : 
up to sign:    
\begin{equation}
  \la (Q^{(i)}_{-1})^n \ba, (Q^{(i)}_{-1})^m \ba\ra_0 = (-1)^n n! \delta_{mn},
\end{equation}
\end{NB}%
and the pairing factors on $M_0(\ba) = \CC[Q^{(1)}_{-1}]\otimes\cdots\otimes
\CC[Q^{(\ell)}_{-1}]$.
\begin{NB}
\begin{equation*}
  \begin{split}
  & \la (Q^{(i)}_{-1})^n \ba, (Q^{(i)}_{-1})^n \ba\ra_0 = 
  \la (Q^{(i)}_{-1})^{n-1} \ba, \widetilde Q^{(i)}_1
     (Q^{(i)}_{-1})^n \ba\ra_0 
\\
=\; & 
  - \la (Q^{(i)}_{-1})^{n-1} \ba, (Q^{(i)}_{-1})^{n-1} \ba\ra_0 
  + \la (Q^{(i)}_{-1})^{n-1} \ba, Q^{(i)}_{-1} \widetilde Q^{(i)}_1
  (Q^{(i)}_{-1})^{n-1} \ba\ra_0 
\\
=\; & \cdots
\\
=\; & 
  - n \la (Q^{(i)}_{-1})^{n-1} \ba, (Q^{(i)}_{-1})^{n-1} \ba\ra_0.
  \end{split}
\end{equation*}
\end{NB}%
This proves the assertion.
\end{proof}

\begin{NB}
  This paragraph (May 22) was wrong.

Therefore we may assume that $\ve_1^{-h^\vee n}|w_n$ is a polynomial
in $\bW{\kappa}{-1}$ ($i=1,\dots,\ell$) in $M(\ba)_0$. Then the Whittaker
vector condition implies that it is a multiple of
$(\bW{\ell}{-1})^n$. The absolute constant can be computed from the
commutator $[\bW{\ell}{-1}, \mW{\ell}{1}]$ at $M(\ba)_0$. This is what
expected from the geometry. See \propref{prop:two}(2) below.
\end{NB}

\subsection{Proof, a geometric part}

\begin{Lemma}\label{lem:geomind}
  The first equality of \eqref{eq:91} is true.
\end{Lemma}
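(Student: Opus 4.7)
The plan is to identify the image of $|1^d\rangle=[\Uh{d}]$ in the degenerate Fock space $M_0(\ba)$ of Proposition~\ref{prop:Pat0} as a monomial in the Heisenberg creation operators, and then to deduce the pairing from the explicit orthogonal form on $M_0(\ba)$.

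First I would view $|1^d\rangle\in\IH^*_\TT(\Uh{d})$ through the chain of maps \eqref{eq:4sp} as an element of the Fock module $\bigoplus_d H^*_\TT(\Uh[T]{d},\Phi_{T,G}(\IC(\Uh{d})))$, identified by \lemref{lem:Fock} with $\Sym\bigl((\bigoplus_n U^n)\otimes\bF_T\bigr)$. Using the factorization property of Uhlenbeck spaces~\eqref{eq:4} together with the compatibility of $\pi^d_{a,G}$ with the stratification~\eqref{eq:strat}, one sees that on the open stratum lying over the locus of pairwise distinct points in $S^d\AA^1$ the class $[\Uh{d}]$ restricts to the $S_d$-symmetrization of $[\Uh{1}]^{\boxtimes d}$; consequently the component of $|1^d\rangle$ in $\Sym^d(U^1\otimes\bF_T)$ is equal to $\frac{1}{d!}$ times the $d$-th symmetric product of the image of $|1^1\rangle$ in $U^1\otimes\bF_T$.

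Next I would show that every other Fock-space component of $|1^d\rangle$, namely those lying in some $\Sym^{n_1}U^1\otimes\cdots\otimes\Sym^{n_k}U^k\otimes\cdots$ with $n_k\ne0$ for some $k>1$, or involving a nontrivial class on $\IH^*_\TT(\Uh{d_1})$ with $0<d_1<d$, lies in the radical of $\la\ ,\ \ra_0$. For the first type this is immediate from Proposition~\ref{prop:Pat0}(1), which forces $P^{(i)}_{-k}\equiv 0$ on $M_0(\pm\ba)$ for $k>1$, so any symmetric factor in a $U^k$ with $k>1$ is killed after specialization. For the second type one argues by induction on $d$: the factorization \eqref{eq:4} reduces the contribution of a stratum $\BunGl{d_1}$ with $d_1>0$ to a product of an $\IH^*_\TT(\Uh{d_1})$-piece with a configuration piece on $\overline{S_\lambda\AA^2}$ with $\lambda\vdash d-d_1$, and the second factor again involves $P^{(i)}_{-k}$ with $k>1$ unless $\lambda=(1^{d-d_1})$, in which case one uses the inductive hypothesis together with Proposition~\ref{prop:Pat0}(1).

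Having established in $M_0(\ba)$ the congruence
\begin{equation*}
   (\ve_1\ve_2)^d\,|1^d\rangle\equiv\frac{1}{d!}\bigl((\ve_1\ve_2)\,|1^1\rangle\bigr)^{d}\pmod{\operatorname{Rad}\la\ ,\ \ra_0},
\end{equation*}
where the $d$-th power is taken in $\Sym(U^1\otimes\bF_T)$, the first equality in \eqref{eq:91} follows at once from Proposition~\ref{prop:Pat0}(3), since the pairing $\la\ ,\ \ra_0$ is exactly the pairing induced on the symmetric algebra by the duality $U^1\otimes U^{1,-}\to\CC$. I expect the main technical obstacle to be the vanishing statement in the previous paragraph for strata with $0<d_1<d$: although the factorization \eqref{eq:4} is set-theoretically clean, controlling the perverse-sheaf side of the decomposition \eqref{eq:strat} and matching it with the Heisenberg decomposition coming from \lemref{lem:Fock} requires a careful inductive use of \eqref{eq:4} combined with the support property and \propref{prop:SpanP}.
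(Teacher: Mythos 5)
Your overall strategy (push $|1^d\rangle$ into the degenerate Fock space $M_0(\ba)$ of \propref{prop:Pat0} and compute the pairing there) is genuinely different from the paper's argument, but as written it has a gap precisely at the decisive step. The claim that the image of $|1^d\rangle$ modulo the radical equals $\frac{1}{d!}$ times the $d$-th power of the image of $|1^1\rangle$ is asserted, not proved: the set-theoretic factorization \eqref{eq:4} together with restriction to the open stratum does not by itself fix this normalization. One has to match the geometric decomposition of \lemref{lem:Fock} (which is built via pushforward along the $d!$-fold covering of $S_{(1^d)}\AA^2$) with the Heisenberg-monomial description $P[\blam]|\ba\rangle$ in which \propref{prop:Pat0}(3) is formulated, and this is exactly where factors of $d!$ are created or destroyed; the pairing identity you want is sensitive to getting this constant exactly right. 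Moreover, specializing the expansion of $|1^d\rangle$ component by component at $\ve_1=\ve_2=0$ presupposes that its coefficients in the basis $P[\blam]|\ba\rangle$ lie in $\bR_T$, i.e.\ \propref{prop:SpanP}, which you only invoke in passing as a "technical obstacle" --- it is in fact a prerequisite for your second step. Finally, the "second type" of components you propose to kill by induction, those involving a nontrivial class in $\IH^*_\TT(\Uh{d_1})$ with $0<d_1<d$, does not occur for $L=T$: by \lemref{lem:Fock} the only summands are $\Sym^{n_1}U^1\otimes\Sym^{n_2}U^2\otimes\cdots$, so that part of the argument is vacuous and signals that the route is heavier than needed.

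For comparison, the paper's proof never touches the Fock space or the Heisenberg structure. It works in equivariant Borel--Moore homology: writing $\xi_*^{-1}[\Uh{d}]=f_d(\ba,\ve_1,\ve_2)[S^d\AA^2]$ for the inclusion $\xi$ of the $T$-fixed locus, it extracts the factor $d!\,(\ve_1\ve_2)^d$ from the equivariant multiplicity $\zeta_*^{-1}[S^d\AA^2]=(\ve_1\ve_2)^{-d}/d!$ of the point $d\cdot 0$ in the symmetric product, and then proves the single scalar identity $f_d(\ba,0,0)=f_1(\ba,0,0)^d$ by restricting over $(S^d\AA^1)_0$ and applying \eqref{eq:4}. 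If you try to justify your top-component claim rigorously you are led to essentially this same localization-and-factorization computation, so the detour through $M_0(\ba)$ does not simplify anything and introduces the normalization hazards above; I would either supply the missing comparison of bases and the regularity input from \propref{prop:SpanP}, or adopt the direct homological argument.
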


\begin{proof}
  We have a natural homomorphism $\IH^*_\TT(\Uh{d})\to
  H_*^{\TT}(\Uh{d})$ and the image of $1^d$ is the fundamental class
  $[\Uh{d}]$. Then $\la 1^d| 1^d\ra$ is equal to
  $\iota_*^{-1}[\Uh{d}]$, where $\iota\colon \{ d\cdot 0\}\to \Uh{d}$
  is the embedding of the $\TT$-fixed point $d\cdot 0$, and we use the
  localization theorem to invert $\iota_*\colon H^\TT_*(\{d\cdot
  0\})\to H^\TT_*(\Uh{d})$ over $\bF_T$.

  Let us consider the embedding $\xi\colon (\Uh{d})^T = S^d\AA^2\to
  \Uh{d}$ of the $T$-fixed point set. Then
  \begin{equation}\label{eq:100}
    \xi_*\colon H_*^\TT(S^d\AA^2)\to H^\TT_*(\Uh{d})
  \end{equation}
  is an isomorphism over $\bR_T$. Since $H_*^\TT(S^d\AA^2)\cong
  \bA_T[S^d\AA^2]$, we have
  \begin{equation}
    \xi_*^{-1}[\Uh{d}] = f_d(\ba,\ve_1,\ve_2)[S^d\AA^2]
  \end{equation}
  for $f_d(\ba,\ve_1,\ve_2)\in \bR_T$.
  
  We have $\iota_* = \xi_* \zeta_*$ for
  $\zeta\colon \{d\cdot 0\}\to S^d\AA^2$, and $\zeta_*^{-1}[S^d\AA^2]
  = (\ve_1\ve_2)^{-d}/d!$. Therefore
  \begin{equation}
    d! \left.(\ve_1\ve_2)^d \la 1^d|1^d\ra\right|_{\ve_1,\ve_2=0}
    = f_d(\ba,0,0).
  \end{equation}
  We replace the group $\TT$ by $T$ in \eqref{eq:100} and denote the
  homomorphism by $\xi^T_*$, i.e.,
\(
  \xi^T_*\colon H_*^T(S^d\AA^2)\to H^T_*(\Uh{d}).
\)
It is an isomorphism over $\CC(\ba)$. Then we have
\begin{equation}\label{eq:101}
  (\xi^T_*)^{-1}[\Uh{d}] = f_d(\ba,0,0)[S^d\AA^2],
\end{equation}
where $[\Uh{d}]$, $[S^d\AA^2]$ are considered in $T$-equivariant
homology groups.

Let us take the projection $a\colon \AA^2\to \AA^1$ and the
factorization morphism $\pi^d_{a,G}\colon \Uh{d}\to S^d\AA^1$. 
Let $S^da\colon S^d\AA^2\to S^d\AA^1$ denote the induced projection.
Let $(S^d\AA^1)_0$ be the open subset of $S^d\AA^1$ consisting of
distinct $d$ points. Then $\xi$ induces a morphism between inverse
images $(S^d a)^{-1}(S^d\AA^1)_0$ and
$(\pi^d_{a,G})^{-1}(S^d\AA^1)_0$. We get
\begin{equation}
  (\xi^T_*)^{-1}[(\pi^d_{a,G})^{-1}(S^d\AA^1)_0] 
  = f_d(\ba,0,0)[(S^d a)^{-1}(S^d\AA^1)_0]
\end{equation}
by restricting \eqref{eq:101} to open subsets. Now by the
factorization we deduce $f_d(\ba,0,0) = f_1(\ba,0,0)^d$.
\end{proof}

\begin{Remark}
  This result is also a simple consequence of a property of Nekrasov's
  partition function
\begin{equation}
  Z^{\text{inst}}(\ve_1,\ve_2,\ba,\Lambda) \defeq
  \sum_{d=0}^\infty \la 1^d|1^d\ra \Lambda^{2h^\vee d}
\end{equation}
stating that
\begin{equation}
  {\ve_1\ve_2} \log Z^{\text{inst}}(\ve_1,\ve_2,\ba,\Lambda)
  = F_0^{\text{inst}}(\ba,\Lambda) + o(\ve_1,\ve_2)
\end{equation}
at $\ve_1=\ve_2=0$. This property was proved by
\cite{MR2199008,NekrasovOkounkov} for type $A$ and by
\cite{BraInstantonCountingII} for general $G$.
\end{Remark}

\begin{NB}
  Let $F_0^{\text{inst}}(\ba)_{1}$ be the $1$-instanton contribution
  (i.e., the coefficient of $\Lambda^{2h^\vee}$) in
  $F_0^{\text{inst}}(\ba,\Lambda)$. Then we have
\begin{equation}
  \left.(\ve_1\ve_2)^d \la 1^d|1^d\ra\right|_{\ve_1,\ve_2=0}
  = \frac1{d!} \left(F_0^{\text{inst}}(\ba)_{1} \right)^d.
\end{equation}
This proves the assertion.
\end{NB}%

\begin{NB}
Here is a record of an old paragraph:  

From a geometric side, we have
\begin{Proposition}\label{prop:two}
  \textup{(1)} $(\ve_1\ve_2)^n \langle 1_n| 1_n\rangle$ is regular
at $\ve_2 = 0$.

\textup{(2)} We have the following relation
\begin{equation}\label{eq:factor}
  \left.(\ve_1\ve_2)^n \langle 1_n| 1_n\rangle\right|_{\ve_2=0}
  = \frac1{n!} 
  \left(\left.\ve_1\ve_2\langle 1_1| 1_1\rangle\right|_{\ve_2=0}
    \right)^n.
\end{equation}
\end{Proposition}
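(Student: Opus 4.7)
The plan is to refine the proof of Lemma~\ref{lem:geomind} by replacing the torus $T$ with the subtorus $\TT' = T \times \CC^*_{\ve_1} \times \{1\} \subset \TT$ corresponding to setting $\ve_2 = 0$. First I would choose the vertical projection $a\colon\AA^2\to\AA^1$, $(z_1,z_2)\mapsto z_2$, so that the factorization morphism $\pi^d_{a,G}\colon\Uh{d}\to S^d\AA^1$ is $\TT$-equivariant (in particular $\TT'$-equivariant). Since every $T$-fixed point of $\Uh{d}$ lies in $S^d\AA^2$, and $\CC^*_{\ve_1}$ fixes exactly $\{0\}\times\AA^1 \subset \AA^2$, the $\TT'$-fixed locus is
\[
(\Uh{d})^{\TT'} \;=\; S^d(\{0\}\times\AA^1) \;\cong\; S^d\AA^1,
\]
sitting inside the central fiber of $\pi^d_{a,G}$ over $S^d\AA^1 \subset S^d\AA^1$ via the obvious inclusion.

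Next I would apply $\TT'$-equivariant localization. Writing $\xi\colon S^d\AA^1\hookrightarrow \Uh{d}$ for the inclusion of the fixed locus, the class $[\Uh{d}]$ in $H^{\TT'}_*(\Uh{d})\otimes_{\bA_{\TT'}} \mathrm{Frac}(\bA_{\TT'})$ can be expressed as $\xi_*$ of a class on $S^d\AA^1$ with coefficients rational in $(\ba,\ve_1)$, whose potential denominators come from $\TT'$-weights on the (virtual) normal bundle. The crucial weight count is that precisely $d$ of those weights specialize to $0$ at $\ve_2=0$; this is exactly what the factor $(\ve_1\ve_2)^d$ in the statement cancels, yielding part (1). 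For general $G$, I would carry out the weight count by embedding $G\hookrightarrow SL(N)$ and using the ADHM description for the Gieseker space $\Gi{N}^d$ to inspect the virtual tangent character at a point of $S^d(\{0\}\times\AA^1)$; the semismallness of $\pi\colon\Gi{N}^d\to\Uh{d}$ and the dimension estimates of \S\ref{sec:hyper-uhlenbeck} then transfer the count to $\Uh{d}$.

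For part (2), the factorization formula, I would restrict the identity obtained in the localization step to the open stratum $(S^d\AA^1)_0$ of distinct configurations. The factorization property \eqref{eq:4} of Uhlenbeck spaces gives an isomorphism of $(\pi^d_{a,G})^{-1}((S^d\AA^1)_0)$ with the open part of the $d$-fold fiber product of $\Uh{1}$'s over $\AA^1$. Compatibility of the localization with this factorization forces $(\ve_1\ve_2)^d\la 1^d|1^d\ra|_{\ve_2=0}$ to equal the $d$-fold symmetric product of $\ve_1\ve_2\la 1^1|1^1\ra|_{\ve_2=0}$; the $1/d!$ comes from the symmetric group acting on the $d$ unordered centers in $S^d\AA^1$.

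The main obstacle will be the weight-count in the localization step. Once one sets $\ve_2=0$ the fixed locus grows from a single point to the positive-dimensional $S^d\AA^1$, and $\Uh{d}$ is highly singular along the deep strata there, so the naive tangent-space argument that worked in Lemma~\ref{lem:geomind} is not available. I would expect the cleanest route is to pull everything back to the Gieseker resolution in type $A$, verify the ``one $\ve_2$ per instanton'' count via the explicit ADHM tangent character at a $\TT'$-fixed configuration (where the computation reduces to the $d=1$ case plus a combinatorial counting of $\TT'$-fixed components), and then descend to arbitrary $G$ via a faithful embedding, using that $\IC(\Uh{d})$ is a direct summand of the pushforward of the constant sheaf on a resolution obtained from the $SL(N)$-ambient Gieseker resolution.
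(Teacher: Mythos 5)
Your skeleton — localize to the fixed locus $S^d\AA^1=(\Uh{d})^{T\times\CC^*_{\ve_1}}$ of configurations on the line fixed by the first $\CC^*$, then use the factorization isomorphism \eqref{eq:4} over the locus of distinct points to get the $d$-th power and the $1/d!$ — is exactly the skeleton of the paper's argument (the proof of Lemma~\ref{lem:geomind} is this argument run at $\ve_1=\ve_2=0$ with the $T$-fixed locus $S^d\AA^2$), and your part (2) is essentially the paper's step. The gap is in how you establish part (1). First, you put $[\Uh{d}]$ into $H^{\TT'}_*(\Uh{d})\otimes_{\bA_{\TT'}}\operatorname{Frac}(\bA_{\TT'})$: passing to genuine $\TT'$-equivariant homology removes $\ve_2$ from the coefficient ring, so regularity at $\ve_2=0$ cannot even be stated there; one must stay $\TT$-equivariant and only localize the base ring at the generic point of $\{\ve_2=0\}$, then apply the localization theorem of \subsecref{sec:localization} to the closed subset $Y=(\Uh{d})^{A}$, $A=T\times\CC^*_{\ve_1}$. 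Second, and more seriously, your control of the denominators via ``$\TT'$-weights on the (virtual) normal bundle'' is not available: $\Uh{d}$ is singular along $Y$, $[\Uh{d}]$ is a Borel--Moore class, and there is no Euler-class formula expressing $\xi_*^{-1}[\Uh{d}]$ through normal weights. The paper's mechanism is softer and makes any weight count unnecessary: by \eqref{eq:cond} the only possible denominators of $\xi_*^{-1}[\Uh{d}]$ are forms vanishing on $\operatorname{Lie}(\Stab_x)$ for $x\notin Y$, and $\operatorname{Lie}A=\{\ve_2=0\}\subset\operatorname{Lie}(\Stab_x)$ forces $x\in Y$ (the same stabilizer analysis as in Lemma~\ref{lem:cochar} and Theorem~\ref{thm:nonlocal}); hence $\xi_*^{-1}[\Uh{d}]=g_d\,[S^d\AA^1]$ with $g_d$ regular at $\ve_2=0$, and the whole pole, of order exactly $d$, comes from the further localization of the \emph{smooth} fixed locus $S^d\AA^1\cong\AA^d$ (tangent weights $\ve_2,2\ve_2,\dots,d\ve_2$, Euler class $d!\,\ve_2^d$) to the point $d\cdot 0$ — the analogue of the factor $(\ve_1\ve_2)^{-d}/d!$ coming from $S^d\AA^2$ in Lemma~\ref{lem:geomind}.

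The ADHM/Gieseker detour you propose for general $G$ would also fail as stated. The claim that $\IC(\Uh{d})$ is a direct summand of the pushforward of the constant sheaf on ``a resolution obtained from the $SL(N)$-ambient Gieseker resolution'' has no justification: $\pi\colon\Gi{N}^d\to\Uh[SL(N)]{d}$ resolves the $SL(N)$ Uhlenbeck space, its restriction over the closed subvariety $\Uh{d}\subset\Uh[SL(N)]{}$ is not a resolution of $\Uh{d}$, and the decomposition theorem for $\pi$ only produces IC sheaves of strata of $\Uh[SL(N)]{}$, never $\IC(\Uh{d})$; the absence of such resolutions outside type $A$ is precisely why the paper develops the hyperbolic restriction machinery. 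Even in type $A$ a per-fixed-point count of tangent weights divisible by $\ve_2$ does not directly bound the pole order of the sum over fixed points, because of cancellations. Fortunately none of this is needed: the statement concerns only the fundamental class, and once part (1) is obtained by the stabilizer argument above, your factorization step goes through — $g_d$ is a scalar since $H^{\TT}_*(S^d\AA^1)$ is free of rank one over $\bA_T$, so it is determined by its restriction to the distinct-point locus, where \eqref{eq:4} gives $g_d|_{\ve_2=0}=\bigl(g_1|_{\ve_2=0}\bigr)^d$, which is exactly \eqref{eq:factor}.
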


This is true even at $\ve_1\neq 0$.
\end{NB}

\subsection{Proof, a representation theoretic part}\label{sec:proof}

We shall complete the proof of the second equation in \eqref{eq:91} in
this subsection.

Let $F^{(\kappa)}\in S(\h)^W$ be one of generators as in
\subsecref{sec:oppos-spectr-sequ}. It has degree $d_\kappa+1$.

\begin{NB}
The following was conjecture, but now proved in \lemref{lem:c1}.
\begin{Conjecture}\label{c1}
  Let $f^{(\kappa)}(x_1,\dots,x_\ell)$ be the $\kappa^{\mathrm{th}}$
  generator of the invariant polynomial ring $S(\h)^W$. Then the
  following relation holds on $M_0(\ba)$:
  \begin{equation}
    \mW{\kappa}1 = \sum_i f^{(\kappa)}(\widetilde P^{(1)}_0,\dots,
    \widetilde P^{(i)}_{1},\dots, \widetilde P^{(\ell)}_0)
  \end{equation}
  \textup($i^{\mathrm{th}}$ entry is $\widetilde P^{(i)}_{1}$, other
  entries are $\widetilde P^{(j)}_0$\textup)
\end{Conjecture}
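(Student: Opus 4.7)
The plan is to prove Conjecture~\ref{c1} by analyzing the Feigin-Frenkel embedding $\scW_\bA(\g)\hookrightarrow \Heis_\bA(\h)$ in the semiclassical limit $\ve_1=\ve_2=0$, where the $\scW$-algebra degenerates to a commutative algebra isomorphic to $S(\h)^W$ via the classical Miura transform. By \propref{prop:SpanP}, the module $M_0(\ba)$ is isomorphic to the polynomial ring $\CC[P^{(i)}_{-1}]_{i=1}^\ell$ (over $\CC[\ba]$), on which the operators $\widetilde P^{(i)}_1$ act as constant-coefficient derivations, $[\widetilde P^{(i)}_1, P^{(j)}_{-1}] = -(\alpha_i,\alpha_j)$. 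Under this identification the operator $\mW{\kappa}1$ descends to a well-defined endomorphism of $M_0(\ba)$.

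First I would carry out a degree analysis to constrain the form of $\mW{\kappa}1$ on $M_0(\ba)$. The operator has cohomological degree $d_\kappa+1$ and lowers the instanton grading by exactly $1$. On $M_0(\ba)$, after specialization, the only available degree-carrying operators are $a^j = \widetilde P^{(j)}_0$ (instanton $0$, cohomological degree $1$) and $\widetilde P^{(i)}_1$ (instanton $-1$, cohomological degree $1$), with higher modes $\widetilde P^{(i)}_n$ ($|n|\ge 2$) vanishing by \propref{prop:Pat0}. Matching both gradings forces $\mW{\kappa}1$ to be a polynomial of total degree $d_\kappa+1$ in which each monomial contains exactly one $\widetilde P^{(i)}_1$-factor. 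Thus $\mW{\kappa}1 = \sum_i g^{(\kappa)}_i(a^1,\dots,a^\ell)\,\widetilde P^{(i)}_1$ for some polynomials $g^{(\kappa)}_i$ of degree $d_\kappa$ in the $a^j$'s. The assertion of the conjecture is equivalent to identifying $g^{(\kappa)}_i(\ba) = \partial f^{(\kappa)}/\partial a^i(\ba)$, so that the substitution pattern matches the Taylor expansion of $f^{(\kappa)}$ around $\ba$ evaluated with one slot replaced by $\widetilde P^{(i)}_1$ (since higher powers of $\widetilde P^{(i)}_1$ do not contribute at the required instanton grade).

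Next I would identify the coefficients $g^{(\kappa)}_i$ using two independent constraints. The first is \propref{prop:commute}: the $W$-action on $M_\bF(\ba)$ commutes with the $\scW_k(\g)$-action. Specialized to $M_0(\ba)$, this says that the expression $\sum_i g^{(\kappa)}_i(\ba)\widetilde P^{(i)}_1$ transforms as a $W$-invariant tensor (via the $W$-action on both $\ba$ and on the root-indexed family $\{\widetilde P^{(i)}_1\}$ through the coroot lattice). Consequently the function $(\ba,\boldsymbol{\xi})\mapsto \sum_i g^{(\kappa)}_i(\ba)\xi^i$ is a $W$-invariant bilinear-type pairing, and hence must be proportional to a differential of a $W$-invariant polynomial of degree $d_\kappa+1$, i.e., to $\sum_i (\partial f^{(\kappa)}/\partial a^i)\xi^i$. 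The second constraint is a normalization, obtained by computing $\mW{\kappa}1$ on one explicit state (say $P^{(j)}_{-1}|\ba\ra$) and matching with $\la -\ba|\mW{\kappa}1 P^{(j)}_{-1}|\ba\ra = \la -\ba|[\mW{\kappa}1,P^{(j)}_{-1}]|\ba\ra$, which via the Heisenberg commutation relation determines the scalar and pins down $g^{(\kappa)}_i$ up to the chosen normalization of $f^{(\kappa)}$.

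Finally, the Taylor-expansion identity $\sum_i f^{(\kappa)}(a^1,\dots,\widetilde P^{(i)}_1,\dots,a^\ell) = f^{(\kappa)}(\ba)\cdot\ell\text{-count} + \sum_i(\partial f^{(\kappa)}/\partial a^i)(\widetilde P^{(i)}_1-a^i) + O(\widetilde P^2)$, restricted to the instanton-degree $-1$ component (the only one of relevance for matching $\mW{\kappa}1$), reduces on $M_0(\ba)$ to $\sum_i(\partial f^{(\kappa)}/\partial a^i)\widetilde P^{(i)}_1$, thereby completing the identification. The main obstacle will be the careful justification of the classical Miura degeneration — more specifically, verifying that at $\ve_1=\ve_2=0$ the Feigin-Frenkel embedding sends $\mW{\kappa}{}$ to the invariant polynomial $f^{(\kappa)}$ in the Heisenberg currents with exactly the normalization required so that no extra scalar appears in the final formula. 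For type $A$ this can be checked directly from the known Miura formula via elementary symmetric polynomials (as in \cite{MO,SV}); for general simply-laced $\g$ one must invoke the Feigin-Frenkel description via screening operators, whose classical limit is the Weyl-group averaging producing precisely the symmetric substitution pattern in the statement.
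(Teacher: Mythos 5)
Your overall instinct is right: the statement is really a consequence of the classical limit of the Feigin--Frenkel/Miura embedding at $\ve_1=\ve_2=0$ (formula \eqref{eq:93}, established via the $\bA$-form in the appendix), and your reading of the right-hand side as the polarization $\sum_i(\partial F^{(\kappa)}/\partial a^i)(\ba)\,\widetilde P^{(i)}_1$ is the intended one. But the two intermediate steps you put the weight on do not hold as stated. First, the bigrading does \emph{not} force ``exactly one $\widetilde P^{(i)}_1$ per monomial''. Inside $\Heis_\bA(\h)$ the mode $\mW{\kappa}1$ is an infinite sum of normally ordered monomials, and terms such as $a^{j_1}\cdots a^{j_{d_\kappa-2}}\,\normal{\widetilde P^{(j)}_{-1}\widetilde P^{(k)}_{1}\widetilde P^{(l)}_{1}}$ (for $d_\kappa\ge 2$) have cohomological degree $d_\kappa+1$ and lower the instanton degree by $1$; \propref{prop:Pat0} does not exclude them, since it only kills $P^{(i)}_{-m}$ for $m>1$, while $P^{(i)}_{-1}$ acts nontrivially on $M_0(\ba)$ and $\widetilde P^{(i)}_{-1}=\ve_1\ve_2P^{(i)}_{-1}$. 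What actually suppresses such cross terms is $\bA$-integrality of $\mW{\kappa}1$ in the $\widetilde P$-modes together with the fact that every creation mode contributes a factor $\ve_1\ve_2$ on $D(M_\bR(-\ba))$ (the mechanism of \lemref{lem:inv}); equivalently one argues on the Verma side $M_\bR(\ba)$, where the annihilation modes die at the specialization and $\widetilde P^{(i)}_0\mapsto a^i$, and then transfers by the anti-involution $\theta$. This is exactly how the paper's proof of \lemref{lem:c1} proceeds, by writing $Y(\mW{\kappa}{},z)=\normal{F^{(\kappa)}(\widetilde P^{(i)}(z))}+o(\ve_1,\ve_2)$ and controlling the correction term; you would need to supply this argument, as gradings alone cannot.

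Second, the Weyl-invariance step cannot pin down the coefficients up to a single normalization. The space of $W$-equivariant polynomial maps $\h\to\h^*$ is a free $S(\h)^W$-module on $dF^{(1)},\dots,dF^{(\ell)}$, so in a fixed degree $d_\kappa$ it is in general of dimension $>1$ (for $\g=\algsl_4$ and $d_\kappa=3$ both $dF^{(3)}$ and $F^{(1)}dF^{(1)}$ occur); hence ``$W$-equivariance plus one matrix element'' leaves undetermined constants. Moreover, invoking \propref{prop:commute} here requires knowing that the geometric $W$-action, which is defined through $R$-matrices, specializes at $\ve_1=\ve_2=0$ to the naive reflection action on $\ba$ and on the $\widetilde P^{(i)}_1$; this is not established in the paper and would need a separate argument. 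The paper's proof avoids both issues: once the classical Miura statement \eqref{eq:93} is in hand (which you defer to the end as ``the main obstacle''), it already determines $\mW{\kappa}{-1}$ on $M_\bR(\ba)\otimes\CC$ by extracting the coefficient of $z^{-d_\kappa}$ from $F^{(\kappa)}\bigl(a^iz^{-1}+\sum_{n<0}\widetilde P^{(i)}_nz^{-n-1}\bigr)$, and $\theta$ then gives \eqref{eq:96}; no equivariance or normalization argument is needed, and by \propref{prop:SpanP} this suffices on $M_0(\ba)$. In short, the input you treat as a final verification is the whole proof, and the scaffolding you propose in its place is insufficient.
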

\end{NB}

\begin{Lemma}\label{lem:c1}
  Following relations hold as operators on
  $D(M_\bR(-\ba))\otimes_{\bR_T}\CC$\textup:
\begin{align}
  & \mW{\kappa}{1} = \sum_i F^{(\kappa)} (a^1,\dots,
      \underbrace{\widetilde P^{(i)}_{1}}_{\text{\rm $i^{\mathrm{th}}$ factor}},
      \dots, a^\ell),\label{eq:96}
\\
  & \bW{\kappa}{-1}
  = \sum_i F^{(\kappa)} (a^1,\dots,
      \underbrace{P^{(i)}_{-1}}_{\text{\rm $i^{\mathrm{th}}$ factor}},
      \dots, a^\ell).\label{eq:97}
\end{align}
\end{Lemma}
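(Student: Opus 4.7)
The plan is to pull the assertion back to the Heisenberg side via the Feigin-Frenkel embedding. By \eqref{eq:85}, after tensoring with $\bR_T$ the space $D(M_\bR(-\ba))$ is identified with $D(N_\bR(-\ba))$, a Fock module over $\Heis_\bA(\h)$ generated cyclically by $|\ba\rangle$. On the further specialization $\ve_1=\ve_2=0$, \propref{prop:Pat0} together with \propref{prop:SpanP} reduces the module to a polynomial ring in the operators $P^{(i)}_{-1}$; the modes $P^{(i)}_{-m}$ for $m\ge 2$ vanish, $\widetilde P^{(i)}_0$ acts as the scalar $a^i$, and $\widetilde P^{(i)}_1$ acts as a derivation via the commutator $[\widetilde P^{(i)}_1,P^{(j)}_{-1}]=-(\alpha_i,\alpha_j)$. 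The first task is to extend the duality argument of \propref{prop:Pat0}(1) via the pairing \eqref{eq:88} to show that positive modes $\widetilde P^{(i)}_m$ with $m\ge 2$ likewise act as zero on this specialization, so that all relevant Heisenberg operators have been pinned down.

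Next, I would invoke the explicit integral form of $\scW_\bA(\g)$ inside $\Heis_\bA(\h)$ constructed in \secref{sec:intW}. At the classical limit $\ve_1=\ve_2=0$ the Heisenberg currents $\widetilde P^{(i)}(z)$ commute (their OPE is proportional to $\ve_1\ve_2$), so the normally-ordered expression defining $\mathbb W^{(\kappa)}(z)$ collapses to the classical Miura expression $F^{(\kappa)}\bigl(\widetilde P^{(1)}(z),\dots,\widetilde P^{(\ell)}(z)\bigr)$, modulo total $z$-derivatives of lower-degree fields. The precise normalization of $F^{(\kappa)}$ as an element of $S(\h)^W$ is fixed by compatibility with $\mW{\kappa}{0}|\ba\rangle=F^{(\kappa)}(\ba)|\ba\rangle$, which is supplied by \lemref{lem:shift} and \propref{prop:lambda}.

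Given the classical expression, extract the mode $\mW{\kappa}{1}$ by residue: $\mW{\kappa}{1}=\operatorname{Res}_{z} z^{d_\kappa}F^{(\kappa)}(\widetilde P(z))$. Each monomial in $F^{(\kappa)}$ produces a sum of products of Heisenberg modes whose indices sum to $1$. By the reduction of the first paragraph, on the specialization only configurations in which exactly one factor is $\widetilde P^{(i)}_1$ and all remaining factors are zero modes $\widetilde P^{(j)}_0=a^j$ can survive. Summing over the choice of factor carrying the positive mode yields precisely $\sum_i F^{(\kappa)}(a^1,\dots,\widetilde P^{(i)}_1,\dots,a^\ell)$, establishing \eqref{eq:96}; the derivative corrections from the Miura expansion produce terms of higher mode index which are killed on the specialization. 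The identity \eqref{eq:97} for $\bW{\kappa}{-1}=(\ve_1\ve_2)^{-1}\mW{\kappa}{-1}$ is proved by the entirely parallel argument applied to mode $-1$, with the overall prefactor $(\ve_1\ve_2)^{-1}$ exactly absorbed by the rescaling $P^{(i)}_{-1}=(\ve_1\ve_2)^{-1}\widetilde P^{(i)}_{-1}$ between creation operators.

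The main obstacle lies in the second paragraph: controlling the classical limit of $\mathbb W^{(\kappa)}(z)$ as defined in Appendix B. For type $A$ this is the textbook classical Miura transformation and the generators become elementary symmetric functions of the currents. For general simply-laced $\g$ one needs the classical Feigin-Frenkel isomorphism between $\scW_\infty(\g)$ and the $W$-invariants in $S(\h[[z]])$, coupled with a careful check that the derivative-of-current corrections present in the quantum normally-ordered generators do not contribute to the particular modes $\mW{\kappa}{\pm 1}$ once restricted to the specialization $M_0(\mp\ba)$, where positive modes beyond $\widetilde P^{(i)}_1$ and negative modes beyond $P^{(i)}_{-1}$ have been killed.
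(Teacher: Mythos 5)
Your overall route---identify $D(M_\bR(-\ba))$ with the Fock-type module via \eqref{eq:85}, use the classical (Miura) form of the generators from Appendix B, and extract the modes $\pm1$---is the paper's route, but two of your key steps fail on the module where the lemma is stated. The structural claims in your first paragraph are properties of the quotient $M_0(\ba)$ of \propref{prop:Pat0} (the quotient by the radical of $\la\ ,\ \ra_0$), not of $D(M_\bR(-\ba))\otimes_{\bR_T}\CC$ itself: by \propref{prop:SpanP} the latter is spanned by $P[\blam]|\ba\rangle$ for \emph{all} $\ell$-partitions, so $P^{(i)}_{-m}\neq 0$ for $m\ge 2$, and your proposed ``first task'' cannot be carried out---$\widetilde P^{(i)}_m$ with $m\ge 2$ does \emph{not} act by zero there, e.g.\ $\widetilde P^{(i)}_2 P^{(j)}_{-2}|\ba\rangle=-2(\alpha_i,\alpha_j)|\ba\rangle$ survives the specialization because $[\widetilde P^{(i)}_m,P^{(j)}_{-n}]$ carries no factor $\ve_1\ve_2$. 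The mechanism that actually makes the mode count work on $D(M_\bR(-\ba))\otimes\CC$ is the opposite one, which you never invoke: the unrescaled creation operators $\widetilde P^{(i)}_{-m}=\ve_1\ve_2 P^{(i)}_{-m}$ act by zero at $\ve_1=\ve_2=0$, so in $\mW{\kappa}{1}$ every monomial containing a negative mode dies; without this, your counting also misses admissible configurations such as $\widetilde P^{(i)}_{-1}\widetilde P^{(i)}_1\widetilde P^{(i)}_1$. The paper sidesteps all of this by first proving the $(-1)$-mode identity on $M_\bR(\ba)\otimes\CC$---where positive modes genuinely do vanish, since $M_\bR(\ba)\subset N_\bR(\ba)$ is built from creation operators and all commutators are multiples of $\ve_1\ve_2$, giving \eqref{eq:95}---and then transporting it to $D(M_\bR(-\ba))$ by the anti-involution $\theta$ of \subsecref{sec:kac-shapovalov-form}; you skip this duality step.

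The second gap is the claim that \eqref{eq:97} follows by an ``entirely parallel'' argument with the prefactor ``exactly absorbed''. After multiplying by $(\ve_1\ve_2)^{-1}$, the cross terms of the Miura expression with one negative and one positive mode, e.g.\ $(\ve_1\ve_2)^{-1}\widetilde P^{(i)}_{-m}\widetilde P^{(i)}_{m-1}=P^{(i)}_{-m}\widetilde P^{(i)}_{m-1}$ with $m\ge 2$, no longer vanish on $D(M_\bR(-\ba))\otimes\CC$: already for $\algsl_2$ one finds that $\bW{1}{-1}P_{-1}|\ba\rangle$ acquires a $P_{-2}|\ba\rangle$ contribution, invisible on the right-hand side of \eqref{eq:97}; such terms die only after passing to $M_0(\ba)$, where \propref{prop:Pat0} kills $P^{(i)}_{-m}$ for $m\ge2$ and where the lemma is subsequently applied. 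So the $(-1)$-mode case requires a genuine argument about which terms can be divided by $\ve_1\ve_2$ and what survives---this is what the paper does via \lemref{lem:inv} for the $o(\ve_1,\ve_2)$ corrections in \eqref{eq:99}---and your proposal does not engage with it. Finally, the ingredient you defer as ``the main obstacle''---that at $\ve_1=\ve_2=0$ the generator collapses to $F^{(\kappa)}$ of the currents---is not something to be re-derived: it is precisely \eqref{eq:93}, already established in Appendix B through the $0$-oper/Miura description (and it is there, via the choice of $p^{(\kappa)}_-$, not via $\mW{\kappa}{0}|\ba\rangle$, that the normalization of $F^{(\kappa)}$ is fixed); the paper's proof simply starts from it in the form \eqref{eq:99}.
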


\begin{proof}
  At first sight, the formula \eqref{eq:93} seems to imply
  $\mW{\kappa}{-1} = 0$, and hence also $\mW{\kappa}{1} = 0$ thanks to
  the anti-involution $\theta$.
  But \eqref{eq:93} is the formula in the $W$-algebra at
  $\ve_1=\ve_2=0$, and we want to consider $\mW{\kappa}{1}$ on
  $D(M_\bR(-\ba))$. Since the highest weight $\lambda = \ba/\ve_1 -
  \rho$ cannot be specialized at $\ve_1=0$, it could be
  nontrivial.

  Let $\mW{\kappa}{}$ be the state corresponding to the field
  $Y(\mW{\kappa}{},z) = \sum \mW{\kappa}n z^{-n-d_\kappa-1}$ as in
  \eqref{eq:94}. By \eqref{eq:93} we have
\begin{equation}
  \mW{\kappa}{} = \mW{\kappa}{-d_\kappa-1}|0\ra
  \begin{NB}
    =\left.F^{(\kappa)}(\sum_{n<0} \widetilde P^{(i)}_n z^{-n-1})|0\ra
      \right|_{z=0}
  \end{NB}
  = F^{(\kappa)}(\widetilde P^{(i)}_{-1})|0\ra
\end{equation}
at $\ve_1=\ve_2=0$. It implies that
\begin{equation}\label{eq:99}
  Y(\mW{\kappa}{},z) = \normal{F^{(\kappa)}(\widetilde P^{(i)}(z))}
  + o(\ve_1,\ve_2),
\end{equation}
where $o(\ve_1,\ve_2)$ is a field in $\scW_\bA(\g)$ which vanishes at
$\ve_1=\ve_2=0$.

Let the field act on $M_\bR(\ba)$ and specialize at
$\ve_1=\ve_2=0$. The point is that $\widetilde P^{(i)}_0$ acts on
$M_\bR(\ba)$ by $a^i$ at $\ve_1=\ve_2=0$. Therefore the field $
\widetilde P^{(i)}(z) = \sum_n \widetilde P^{(i)}_n z^{-n-1}$ is
specialized to
\begin{equation}\label{eq:95}
  a^i z^{-1} + \sum_{n < 0} \widetilde P^{(i)}_n z^{-n-1}
\end{equation}
on $M_\bR(\ba)$.

Let us specialize \eqref{eq:99} at $\ve_1=\ve_2=0$. Then $\widetilde
P^{(i)}(z)$ is replaced by \eqref{eq:95}, and the normal ordering by
the usual multiplication. Therefore we obtain
\begin{equation}
  Y(\mW{\kappa}{},z) 
  = F^{(\kappa)}(a^i z^{-1} + \sum_{n < 0} \widetilde P^{(i)}_n z^{-n-1}).
\end{equation}
\begin{NB}
  From this, we see that $\mW{\kappa}{n} = 0$ for $n > 0$. This is
  what we have already observed. We also see that $\mW{\kappa}{n}$ for
  $n < -1$ can be nontrivial, but is also possible to compute.
\end{NB}%
Taking coefficients of $z^{-d_\kappa}$ and then applying $\theta$, we
obtain \eqref{eq:96}.

Next we study the action of $Y(\mW{\kappa}{},z)$ on $D(M_\bR(-\ba))$.
Let us consider $\mW{\kappa}{-1}$ in \eqref{eq:99}. So we take
coefficients of $z^{-d_\kappa}$. The term $o(\ve_1,\ve_2)$ can be
represented as a linear combination of monomials in $\widetilde
P^{(i)}_m$ with coefficients in the maximal ideal of $\bR$. We have at
least one $\widetilde P^{(i)}_m$ with $m < 0$ in each monomial. It can
be divided, as an operator on $D(M_\bR(-\ba))$, by $\ve_1\ve_2$ thanks
to \lemref{lem:inv}. Therefore $o(\ve_1,\ve_2)/\ve_1\ve_2$ still
specialized to $0$ at $\ve_1=\ve_2=0$. Therefore \eqref{eq:99} implies
\eqref{eq:97}.
\begin{NB}
  Let us consider $\bW{\kappa}n$ with $n < -1$. We must have at least
  two $\widetilde P^{(i)}_n$ in the coefficient in $F^{(\kappa)}(a^i
  z^{-1} + \sum_{n < 0} \widetilde P^{(i)}_n z^{-n-1})$. Therefore at
  least one $\widetilde P^{(i)}_n$ remains even after we divide it by
  $\ve_1\ve_2$. Therefore $\bW{\kappa}n = 0$ at $\ve_1=\ve_2=0$.
\end{NB}%
\end{proof}

\begin{Lemma}
  The determinant of the matrix
  \begin{equation*}
    \left(\frac{\partial F^{(\kappa)}(a^i)}{\partial a^i}\right)_{
      i,\kappa=1,\dots,\ell}
  \end{equation*}
  is a nonzero constant multiple of the discriminant $\Delta(\ba)$.
\end{Lemma}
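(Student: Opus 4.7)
The plan is to recognize this as a classical statement about fundamental invariants of a reflection group, due essentially to Chevalley and Steinberg, and then give a short direct argument. Since $F^{(1)},\dots,F^{(\ell)}$ is a system of algebraically independent generators of $S(\h)^W$, and the $a^i$ are coordinates on $\h$ (dual to the simple coroots), the Jacobian determinant $J(\ba)=\det\bigl(\partial F^{(\kappa)}/\partial a^i\bigr)$ is a well-defined polynomial on $\h$, and the claim is that $J$ is a nonzero scalar multiple of (a power of) the product $\prod_{\alpha>0}\alpha$ of positive roots, which is the discriminant (or its square root) up to sign.

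First I would establish that $J$ is a $W$-anti-invariant, i.e.\ $w^*J=\det(w)J$ for every $w\in W$. This is immediate from the chain rule: differentiating the identity $F^{(\kappa)}(w\ba)=F^{(\kappa)}(\ba)$ gives that the matrix $(\partial F^{(\kappa)}/\partial a^i)$ transforms by right multiplication by the matrix of $w$, so its determinant picks up $\det(w)$. In particular $J$ vanishes on every reflection hyperplane $\{\alpha=0\}$ for $\alpha$ a positive root, because the corresponding reflection sends $J$ to $-J$ while fixing the hyperplane pointwise. Since the $\alpha$'s are pairwise coprime irreducible polynomials, $J$ is divisible by $\prod_{\alpha>0}\alpha$ in $S(\h)$.

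Next I would compare degrees. Since $F^{(\kappa)}$ is homogeneous of degree $d_\kappa+1$, each entry of the Jacobian matrix in the $\kappa^{\text{th}}$ column has degree $d_\kappa$, so $\deg J=\sum_{\kappa=1}^\ell d_\kappa$, which by the classical identity for exponents equals the number of positive roots $|\Phi^+|=\deg\prod_{\alpha>0}\alpha$. Therefore $J=c\cdot\prod_{\alpha>0}\alpha$ for some constant $c\in\CC$.

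Finally, the constant $c$ is nonzero because $F^{(1)},\dots,F^{(\ell)}$ are algebraically independent over $\CC$, which forces the Jacobian not to vanish identically (one can also see this concretely by choosing $F^{(\kappa)}$ to be the standard homogeneous generators, e.g.\ power sums in type $A$, for which the Jacobian is the classical Vandermonde up to a nonzero scalar). Depending on what is meant by ``$\Delta(\ba)$'' in this paper, $\prod_{\alpha>0}\alpha$ equals $\Delta(\ba)$ or its square root up to sign, and in either case the lemma follows. There is no real obstacle; the only point requiring a bit of care is the convention for the discriminant, but since the lemma only asserts proportionality by a nonzero constant, this does not affect the statement.
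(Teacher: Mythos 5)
Your proof is correct and, at its core, runs along the same lines as the paper's: both arguments rest on the degree count $\deg J=\sum_\kappa d_\kappa=\#\Delta_+$ together with divisibility of $J$ by the product of the positive roots and generic nonvanishing. The packaging differs slightly: the paper phrases everything through the map $F\colon\h\to\h/W$, reading off nonvanishing of $\det dF$ from the fact that $\h\to\h/W$ is a covering away from the root hyperplanes and divisibility by $\Delta(\ba)$ from the branching along them, whereas you obtain divisibility from $W$-anti-invariance of the Jacobian (a reflection argument) and nonvanishing from algebraic independence of the $F^{(\kappa)}$. Both are standard and equally valid; yours is marginally more self-contained, the paper's is shorter because it quotes the branched-covering picture.

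One small caution: your closing remark that ``in either case the lemma follows'' is too quick. If $\Delta(\ba)$ were taken to mean the full discriminant $\prod_{\alpha\in\Delta_+}\alpha^2$, the statement would be false on degree grounds, since you have shown $\deg J=\#\Delta_+$. The degree computation itself forces the paper's convention $\Delta(\ba)=\prod_{\alpha\in\Delta_+}\alpha$ (which is also all that is used afterwards, namely that $\Delta(\ba)$ is a nonzero element of $\CC(\ba)$), so there is no actual problem — but the convention should be pinned down rather than dismissed.
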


\begin{proof}
  Consider $F = (F^{(1)},\dots,F^{(\ell)})$ as the morphism from $\h$
  to $\h/W$, written in a coordinate system on $\h/W$. Then the matrix
  in question is the differential of $F$. Since $\h\to\h/W$ is a
  covering branched along root hyperplanes, we deduce that a) its
  determinant is nonzero, and b) it is divisible by $\Delta(\ba)$. The
  degree of the determinant is the sum $\sum d_\kappa$, which is equal
  to the number of positive roots. Therefore we get the assertion.
\end{proof}

Since $\Delta(\ba)$ is invertible in $\CC(\ba)$, we deduce
\begin{Lemma}
  $M_0(\ba)$ is isomorphic to the polynomial ring in
  $\bW{\kappa}{-1}$ \textup($\kappa=1,\dots,\ell$\textup).
\end{Lemma}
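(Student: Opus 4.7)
The plan is to reduce the claim to the Jacobian computation of the preceding lemma, using the explicit formula \eqref{eq:97} for $\bW{\kappa}{-1}$ together with the polynomial description of $M_0(\ba)$ supplied by \propref{prop:Pat0}(3).

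First I would recall that by \propref{prop:Pat0}(3), $M_0(\ba)$ is a free cyclic module over the polynomial algebra $\CC(\ba)[P^{(1)}_{-1},\dots,P^{(\ell)}_{-1}]$ on the vacuum vector $|\ba\rangle$. Second, \lemref{lem:c1}\eqref{eq:97} expresses each $\bW{\kappa}{-1}$ as a specific polynomial in the commuting operators $P^{(i)}_{-1}$. Since $M_0(\ba)$ carries a natural grading by instanton number (inherited from $D(M_\bR(\ba))$, as the radical of $\la\ ,\ \ra_0$ is homogeneous), and since both $P^{(i)}_{-1}$ and $\bW{\kappa}{-1}$ are homogeneous operators of instanton degree $+1$, only the degree-one-in-$P^{(i)}_{-1}$ component of the right-hand side of \eqref{eq:97} can act nontrivially on $M_0(\ba)$. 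Taylor-expanding $\sum_i F^{(\kappa)}(a^1,\dots,P^{(i)}_{-1},\dots,a^\ell)$ around $P^{(i)}_{-1}=0$ and extracting this linear term therefore gives, as operators on $M_0(\ba)$,
\begin{equation*}
    \bW{\kappa}{-1} \;=\; \sum_{i=1}^\ell \frac{\partial F^{(\kappa)}}{\partial a^i}(\ba)\,P^{(i)}_{-1}.
\end{equation*}

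Next I would invoke the previous lemma: the matrix $J(\ba) = (\partial F^{(\kappa)}/\partial a^i)_{\kappa,i}$ has determinant a nonzero scalar multiple of the discriminant $\Delta(\ba)$, which is a unit in $\CC(\ba)$. Hence $J(\ba)$ is invertible over $\CC(\ba)$, so the sets $\{\bW{\kappa}{-1}\}_{\kappa=1}^\ell$ and $\{P^{(i)}_{-1}\}_{i=1}^\ell$ span the same free $\CC(\ba)$-module of operators of instanton degree one on $M_0(\ba)$. In particular the $\bW{\kappa}{-1}$ are algebraically independent commuting operators, and together with $\CC(\ba)$ they generate the same subalgebra of $\End(M_0(\ba))$ as the $P^{(i)}_{-1}$ do. Combining with \propref{prop:Pat0}(3) then identifies $M_0(\ba)$ with the polynomial ring $\CC(\ba)[\bW{1}{-1},\dots,\bW{\ell}{-1}]$ acting on $|\ba\rangle$, as required.

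The main subtle point, which I expect to require the most care, is the grading-theoretic reduction of the first step: justifying that, despite \eqref{eq:97} being printed as a polynomial of total degree $d_\kappa+1$ in the $P^{(i)}_{-1}$, only its degree-one component acts nontrivially on $M_0(\ba)$. This reduces to checking that the pairing $\la\ ,\ \ra_0$ (and hence its radical) respects the instanton-number grading, so that the quotient $M_0(\ba)$ remains graded and operator homogeneity forces the higher-order terms to vanish; granted this, everything else follows cleanly from the discriminant identity.
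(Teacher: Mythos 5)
Your overall skeleton---\propref{prop:Pat0}(3) plus a linear expression for $\bW{\kappa}{-1}$ in the $P^{(i)}_{-1}$ with coefficient matrix $\bigl(\partial F^{(\kappa)}/\partial a^i(\ba)\bigr)$, invertible over $\CC(\ba)$ because its determinant is a nonzero multiple of $\Delta(\ba)$---is exactly the intended deduction. The gap is in the step you flag yourself: the passage from \eqref{eq:97} to that linear expression. First, the principle ``only the degree-one-in-$P^{(i)}_{-1}$ component can act nontrivially on $M_0(\ba)$'' is not valid: by \propref{prop:Pat0}(3), $M_0(\ba)$ is a polynomial ring in the commuting $P^{(i)}_{-1}$, on which multiplication by any nonzero monomial, e.g.\ $P^{(i)}_{-1}P^{(j)}_{-1}$, is injective. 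What homogeneity of $\bW{\kappa}{-1}$ (instanton degree $+1$) actually gives is that every component of the right-hand side of other degree must vanish \emph{identically} as an operator, hence as a polynomial; that is incompatible with a literal reading of \eqref{eq:97} (for $\g=\algsl_2$, where $F^{(1)}$ is a multiple of $(a^1)^2$, the literal right-hand side is a multiple of $(P_{-1})^2$ and has no linear part at all). Second, even granting your extraction, the linear coefficient of $F^{(\kappa)}(a^1,\dots,x,\dots,a^\ell)$ in its Taylor expansion at $x=0$ is $\partial F^{(\kappa)}/\partial a^i$ with $a^i$ replaced by $0$, not evaluated at $\ba$; in the $\algsl_2$ example it is $0$, so your manipulation would give $\bW{1}{-1}=0$ on $M_0(\ba)$ and the conclusion would collapse. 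So the displayed identity $\bW{\kappa}{-1}=\sum_i\partial F^{(\kappa)}/\partial a^i(\ba)\,P^{(i)}_{-1}$, which is what the discriminant lemma needs, does not follow from the argument you give.

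The repair is to read \eqref{eq:96}, \eqref{eq:97} as in the proof of \lemref{lem:c1} (compare also the type $A$ computation in \subsecref{sec:Whit_typeA}): $\mW{\kappa}{-1}$ is the coefficient of $z^{-d_\kappa}$ in $F^{(\kappa)}\bigl(a^iz^{-1}+\sum_{n<0}\widetilde P^{(i)}_n z^{-n-1}\bigr)$. Since $F^{(\kappa)}$ is homogeneous of degree $d_\kappa+1$ and each substituted series begins in $z$-degree $-1$, a monomial contributes to $z^{-d_\kappa}$ only when exactly one factor is taken to be $\widetilde P^{(i)}_{-1}$ and all the others are $a^jz^{-1}$; hence the operator is already linear, $\mW{\kappa}{-1}=\sum_i \frac{\partial F^{(\kappa)}}{\partial a^i}(\ba)\,\widetilde P^{(i)}_{-1}$, with the derivative genuinely evaluated at $\ba$ and with no constant or higher-order terms to discard (dividing by $\ve_1\ve_2$ as in the proof of \lemref{lem:c1} gives the same statement for $\bW{\kappa}{-1}$ in terms of $P^{(i)}_{-1}$, and the deeper modes $P^{(i)}_{-m}$, $m>1$, vanish on $M_0(\ba)$ by \propref{prop:Pat0}(1), so they never enter). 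With this formula in hand, your concluding linear algebra over $\CC(\ba)$---same span, algebraic independence, same polynomial subalgebra, then \propref{prop:Pat0}(3)---is correct and coincides with the paper's argument; your side remark that the radical of $\la\ ,\ \ra_0$ is homogeneous is true but no longer needed.
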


Now the specialization of the Whittaker vector $|w^d\ra$ in $M_0(\ba)$
is characterized by the conditions
\begin{equation}
  \mW{\kappa}{1} | w^d\ra =
  \begin{cases}
    |w^{d-1}\ra & \text{if $\kappa=\ell$,}\\
    0 & \text{if $\kappa\neq l$.}
  \end{cases}
\end{equation}
The existence and the uniqueness in $M_0(\ba)$ are proved exactly as
in \subsecref{subsec:Part}.
Moreover the pairing $\la w^d| w^d\ra_0$ is an entry of the inverse of
the matrix
\begin{equation}
  K_0^d \defeq (\la -\ba | \widetilde W[\bm] \widehat W[\bn] | 
  \ba\ra_0)_{\bm,\bn},
\end{equation}
where $\bm = (m_1,\dots,m_\ell)$, $\bn = (n_1,\dots,n_\ell)\in\Z_{\ge
  0}^\ell$ and
\begin{equation}
  \begin{split}
  & \widetilde W[\bm] := (\bW{1}{-1})^{m_1} \cdots (\bW{\ell}{-1})^{m_\ell},
\\
  & \widehat W[\bn] := (\mW{1}{1})^{n_1} \cdots (\mW{\ell}{1})^{n_\ell}.
  \end{split}
\end{equation}
Here multi-indices $\bm$, $\bn$ runs over $\sum m_\kappa = \sum
n_\kappa = d$ for each $d$.

Now the matrix $K_0^d$ is the $d^{\mathrm{th}}$ symmetric power of
$K_0^{d=1}$, and hence we complete the proof of \eqref{eq:91}.

\subsection{Type \texorpdfstring{$A$}{A}}\label{sec:Whit_typeA}

Let us consider the special case $\g=\algsl_r$ in this section.
Let us switch to the notation for $\gl_r$. We have standard generators
of the invariant polynomial ring:
\begin{equation}
  F^{(\kappa)} = \sum_{i_1 < i_2 < \cdots < i_\kappa}
  h^{i_1} h^{i_2} \cdots h^{i_p},
\end{equation}
where $(h^1,\dots,h^r)$ is the standard coordinate system of the
Cartan subalgebra of $\gl_r$ such that $(h^i,h^j) = \delta_{ij}$.

Let us denote by $\widetilde Q^{(i)}_n$, $Q^{(i)}_n$ the Heisenberg
algebra generators corresponding to $\widetilde P^{(i)}_n$,
$P^{(i)}_n$. Then
\begin{equation}
  \begin{split}
  \widehat W^{(\kappa)}_{-1}|\ba\rangle
  &= \sum_{i_1 < i_2 < \cdots < i_\kappa} \sum_{l=1}^p \widetilde
  Q^{(i_1)}_{0} \widetilde Q^{(i_2)}_{0} \cdots Q^{(i_l)}_{-1} \cdots
  \widetilde Q^{i_\kappa}_{0} |\ba\rangle
  \\
  &= \sum_{i_1 < i_2 < \cdots < i_\kappa} \sum_{l=1}^p a_{i_1} a_{i_2}
  \cdots \widehat{a_{i_l}} \cdots a_{i_\kappa} Q^{i_l}_{-1}|\ba\rangle.
  \end{split}
\end{equation}

We use the Heisenberg algebra commutation relation
\begin{equation}
  [\widetilde Q^i_1, Q^j_{-1}] = \delta_{ij} 
\end{equation}
to get
\begin{equation}
  \begin{split}
  \widetilde Q^i_1 W^{(p)}_{-1}|\ba\rangle
  &= \sum_{\substack{i_1 < i_2 < \cdots < i_p\\
    i_l = i}} 
    a_{i_1} a_{i_2} \cdots \widehat{a_{i_l}} \cdots a_{i_p} |\ba\rangle
\\
  &= \frac{\partial}{\partial a_i}
  e_p(\ba) |\ba\rangle,
  \end{split}
\end{equation}
where $e_p(\ba)$ is the $p^{\mathrm{th}}$ elementary symmetric
polynomial in $\ba$.

The determinant of the $r\times r$-matrix $(\partial e_p(\ba)/\partial
a_i )_{i,p=1,\dots, r}$ is equal to $\prod_{i < j} (a_i -
a_j)$. Therefore the matrix is invertible. This, in particular,
implies that $\{ W^{(p)}_{-1}|\ba\rangle \}_{p=1,\dots,r}$ form a
basis of $(M(\ba)_0)_1$.
\begin{NB}
  Degree $1$ part.
\end{NB}%

\begin{Proposition}
  The Whittaker vector $|w_1\rangle$ at the instanton number $1$ is
  given by
  \begin{equation}
    \sum_i \frac{Q^i_{-1}|\ba\rangle}{\prod_{j:j\neq i} a_j - a_i}.
  \end{equation}
\end{Proposition}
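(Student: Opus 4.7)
The strategy is to work inside $M_0(\ba)$, which by \propref{prop:Pat0}(3) is a polynomial ring in the creation operators $Q^{(i)}_{-1}$. Since $|w_1\rangle$ lies at instanton level $1$, I would write $|w_1\rangle = \sum_j c_j Q^{(j)}_{-1}|\ba\rangle$ for unknown $c_j \in \CC(\ba)$; existence and uniqueness of these coefficients are already guaranteed by the general discussion in \subsecref{subsec:Part}. The task is thus to pin down the $c_j$ from the Whittaker conditions $\mW{\kappa}{1}|w_1\rangle = \delta_{\kappa,r}|\ba\rangle$ for $\kappa = 1,\dots,r$.

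First I would evaluate $\mW{\kappa}{1}$ on the level-one generators. The computation just performed in the paragraph immediately preceding the proposition gives $\widetilde Q^i_1\, \widehat W^{(p)}_{-1}|\ba\rangle = (\partial e_p/\partial a^i)(\ba)|\ba\rangle$. The dual computation---extracting the single-excitation term of the field expansion in the proof of \lemref{lem:c1} and applying the anti-involution $\theta$ via \eqref{eq:87}---yields the formula
\begin{equation*}
  \mW{\kappa}{1}\, Q^{(l)}_{-1}|\ba\rangle \;=\; e_{\kappa-1}(\hat a^l)\,|\ba\rangle \;=\; \frac{\partial e_\kappa}{\partial a^l}(\ba)\,|\ba\rangle.
\end{equation*}
Hence the Whittaker condition becomes the linear system $\sum_l c_l\,(\partial e_\kappa/\partial a^l)(\ba) = \delta_{\kappa,r}$ for $\kappa=1,\dots,r$. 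The coefficient matrix $\bigl(\partial e_\kappa/\partial a^l\bigr)_{\kappa,l}$ has determinant equal, up to sign, to the Vandermonde $\prod_{i<j}(a_i - a_j)$ (as recorded in the lemma stated just above the proposition), hence is invertible over $\CC(\ba)$, so $(c_l)$ is uniquely determined.

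It then remains to check that $c_l = 1/\prod_{j\neq l}(a_j - a_l)$ solves this system, i.e.\ to prove the identity
\begin{equation*}
  \sum_l \frac{e_{\kappa-1}(\hat a^l)}{\prod_{j \neq l}(a_j - a_l)} \;=\; \delta_{\kappa,r}, \qquad \kappa = 1,\dots,r.
\end{equation*}
Expanding $e_{\kappa-1}(\hat a^l) = \sum_{j=0}^{\kappa-1}(-a^l)^{j}\, e_{\kappa-1-j}(\ba)$ and invoking the classical Lagrange identity $\sum_l (a^l)^m/\prod_{j \neq l}(a^l - a^j) = h_{m-r+1}(\ba)$ (with $h_m = 0$ for $m<0$ and $h_0 = 1$), one sees that only the summand $j=r-1$ survives, which forces $\kappa=r$ and produces $1$ once the global sign $(-1)^{r-1}$ coming from the swap $a_j - a_l \leftrightarrow a_l - a_j$ is accounted for. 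The main technical point is the identity for $\mW{\kappa}{1}$ acting on level-one states displayed above; once that is justified by a careful reading of the proof of \lemref{lem:c1} (paying attention to the grading so that only the one-creation-operator contribution in $\mW{\kappa}{-1}$ is retained), everything else reduces to elementary linear algebra and a standard symmetric-function calculation.
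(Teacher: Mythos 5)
Your proposal is correct and follows essentially the same route as the paper: both rest on the evaluation $W^{(p)}_1 Q^i_{-1}|\ba\rangle = \frac{\partial e_p}{\partial a_i}(\ba)|\ba\rangle$ (equivalently $e_{p-1}(\hat a_i)$) and then on the symmetric-function identity $\sum_i \frac{\partial_{a_i}e_p(\ba)}{\prod_{j\neq i}(a_j-a_i)} = \delta_{p,r}$, which the paper simply calls elementary and you verify explicitly via the Lagrange interpolation identity. Your extra remarks (uniqueness via the Vandermonde/Jacobian determinant) are consistent with the paper's earlier lemma and discussion in \subsecref{subsec:Part}, so nothing essential differs.
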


\begin{proof}
  We have
  \begin{equation}
    W^{(p)}_1 Q^i_{-1}|\ba\rangle
    = \frac{\partial}{\partial a_i} e_p(\ba)|\ba\rangle 
  \end{equation}
as above. Now it is elementary to check that
\begin{equation}
  \sum_i \frac{\frac{\partial}{\partial a_i} e_p(\ba)}
  {\prod_{j:j\neq i} a_j - a_i} = 0
\end{equation}
if $p < r$. If $p=r$, we have
\begin{equation}
  \sum_i \frac{\frac{\partial}{\partial a_i} e_p(\ba)}
  {\prod_{j:j\neq i} a_j - a_i} = 
  \sum_i \prod_{j:j\neq i} \frac{a_j}{a_j - a_i}
  = 1.
\end{equation}
\end{proof}

Now we have
\begin{equation}
  (w_1|w_1)_0 = \sum_i \prod_{j:j\neq i} \frac1{(a_j - a_i)^2}.
\end{equation}
This coincides with what is known from geometry.
\begin{NB}
  I am not careful enough for the sign.
\end{NB}%

\begin{NB}
\subsection{Virasoro case}

Let us consider the $\algsl_2$ case, i.e., when the $W$-algebra is the
Virasoro algebra. In this case we have the commutation relation
\begin{equation}\label{eq:VirComm}
  [L_n, L_{m}] = (n-m)L_{m+n} + \frac{n^3 - n}{12} \delta_{n,-m}
  (1 + \frac{6(\ve_1+\ve_2)^2}{\ve_1\ve_2}).
\end{equation}
Therefore
\begin{equation}
  \begin{split}
    & L_1 L_{-1}^n |\ba\rangle
\\
=\; & 2 L_0 L_{-1}^{n-1} | \ba\rangle 
    + L_{-1} L_1 L_{-1}^{n-1} |\ba\rangle 
\\
=\; & 2 (\Delta + n-1)
   L_{-1}^{n-1} | \ba\rangle 
    + L_{-1} L_1 L_{-1}^{n-1} |\ba\rangle 
= \cdots 
\\   
=\; & 
 2 \sum_{k=0}^{n-1} (\Delta + k) L_{-1}^{n-1} |\ba\rangle
= n(2\Delta+n-1) L_{-1}^{n-1}|\ba\rangle, 
  \end{split}
\end{equation}
where $L_0|\ba\rangle = \Delta|\ba\rangle$. Furthermore,
\begin{equation}
  \langle -\ba| L_1^n L_{-1}^n |\ba\rangle
  = n (2\Delta+n-1) \langle-\ba | L_1^{n-1} L_{-1}^{n-1}|\ba\rangle
  = \cdots
  = n! \prod_{k=0}^{n-1}(2\Delta+k).
\end{equation}
Hence we get
\begin{equation}
  (\ve_1\ve_2)^n \langle -\ba| L_1^n L_{-1}^n |\ba\rangle
  = n! \prod_{k=0}^{n-1} (2\ve_1\ve_2\Delta + k\ve_1\ve_2).
\end{equation}
We have
\begin{equation}\label{eq:L0}
  \Delta = \frac1{\ve_1\ve_2}(2a^2 - \frac12 (\ve_1+\ve_2)^2).
\end{equation}
\begin{NB2}
  I need to check the normalization.
\end{NB2}%
Therefore we have
\begin{equation}
  \left.(\ve_1\ve_2)^n \langle -\ba| L_1^n L_{-1}^n
    |\ba\rangle\right|_{\ve_2=0}
  = n! \left({4a^2} - \ve_1^2\right)^n.
\end{equation}
In the $\algsl_2$ case, we only have a single $\blam$ with $l(\blam) =
n$, i.e., $\blam = (1^n)$. Therefore $\left.(\ve_1\ve_2)^{-n}
  K^{\blam_0\blam_0}\right|_{\ve_2 = 0} = \left.(\ve_1\ve_2)^n \langle
  -\ba| L_1^n L_{-1}^n |\ba\rangle\right|_{\ve_2=0}^{-1}$.
  Thus the above computation show that the properties in
  \propref{prop:two} hold for $|w_n\rangle$.
  And $|w_1\rangle = |1_1\rangle$ follows from a direct computation.

\subsection{Poisson algebra}

We have hoped $\mW{\kappa}m$ can be defined in the $\bA$-form, though we do
not check it yet.
The corresponding Heisenberg algebra becomes commutative at $\ve_2 = 0$
as we have observed above from the commutation relation
\begin{equation}
   [\widetilde{P}^i_m, \widetilde{P}^i_n]
 = -m(\alpha_i,\alpha_j) \delta_{m,-n}\ve_1\ve_2.
\end{equation}
Therefore we can define the Poisson bracket at $\ve_2 = 0$ by 
\begin{equation}
  \{ \sW{\kappa}m, \sW{\lambda}n \} 
  = \left.\frac1{\ve_1\ve_2} [\mW{\kappa}m, \mW{\lambda}n]
  \right|_{\ve_2 = 0},
\end{equation}
where $\sW{\kappa}m = \left.\mW{\kappa}m\right|_{\ve_2 = 0}$. The strange
division $1/\ve_1\ve_2$ was explained before.

\begin{NB2}
  Here we must be a little careful. If we consider operators in the
  space of endomorphisms of the Verma module, $\sW{\kappa}m$ vanishes for
  $m > 0$, as it commutes with $\sW{\lambda}n$ and $\sW{\kappa}m|\ba\rangle =
  0$. Therefore the Poisson bracket does not make sense. We only have
  the well-defined operator $\left.\W{\kappa}m\right|_{\ve_2 =
    0}$. However then it is not clear why for example
  $[\left.\W{\kappa}m\right|_{\ve_1,\ve_2=0}, \W{\lambda}n]$ commutes with other
  operators.

  I think that a clearer treatment can be formulated in the framework
  of the vertex Poisson algebra.
\end{NB2}

Now we have
\begin{equation}
  \begin{split}
    &
    \left.\ve_1^n \ve_2^n 
      \langle -\ba| (\W{\ell}{1})^n (\W{\ell}{-1})^n | \ba\rangle
    \right|_{\ve_2=0}
\\
  =\; &
  \left.
  \langle - \ba |
  (\W{\ell}{1})^{n-1} 
  [\W{\ell}{1}, (\ve_1\ve_2 \W{\ell}{-1})^n] 
  | \ba\rangle\right|_{\ve_2=0} = \cdots
\\
  =\; &
  \left.
  \langle - \ba |
  [\W{\ell}{1}, [\cdots, [\W{\ell}{1}, 
  (\ve_1\ve_2 \W{\ell}{-1})^n]\cdots ]]
  | \ba\rangle\right|_{\ve_2=0}
\\
  =\; &
  \langle - \ba |
   \{ \sW{\ell}1, \{ \cdots, \{ \sW{\ell}1, 
   (\sW{\ell}{-1})^n\}\cdots \}\}
  | \ba\rangle.
  \end{split}
\end{equation}

Now
\begin{equation}
  \begin{split}
  & \{ \sW{\ell}1, \{ \sW{\ell}1, (\sW{\ell}{-1})^n \}\}
  = n \{ \sW{\ell}1, (\sW{\ell}{-1})^{n-1} \{ \sW{\ell}1, \sW{\ell}{-1}\}\}
\\
  = \; & n(n-1) (\sW{\ell}{-1})^{n-2} (\{ \sW{\ell}1, \sW{\ell}{-1}\})^2
  + (\sW{\ell}{-1})^{n-1} A_{> 0},
  \end{split}
\end{equation}
where $A_{>0} = \{ \sW{\ell}{1}, \{ \sW{\ell}1, \sW{\ell}{-1}\}\}$ is an
operator which decreases the energy, and hence acts by $0$ on
$|\ba\rangle$.
By induction, we get
\begin{equation}
     \{ \sW{\ell}1, \{ \cdots, \{ \sW{\ell}1, 
   (\sW{\ell}{-1})^n\}\cdots \}\}
   = n! (\{ \sW{\ell}1, \sW{\ell}{-1}\})^n
   + \sW{\ell}{-1} B_{>0}, 
\end{equation}
where $B_{>0}$ is an operator which acts by $0$ on $|\ba\rangle$.
Therefore
\begin{equation}
  \left.(\ve_1\ve_2)^n \langle -\ba| (\W{\ell}{1})^n (\W{\ell}{-1})^n | \ba\rangle
  \right|_{\ve_2=0}
  = n! \langle-\ba | (\{ \sW{\ell}1, \sW{\ell}{-1}\})^n | \ba\rangle.
\end{equation}
Since the energy $0$ space is $1$-dimensional, we have
\begin{equation}
  \{ \sW{\ell}1, \sW{\ell}{-1}\} | \ba\rangle
  = \Delta | \ba\rangle
\end{equation}
for some scalar $\Delta\in \Q(\ba)$. Therefore
\begin{equation}
  n! \langle-\ba | (\{ \sW{\ell}1, \sW{\ell}{-1}\})^n | \ba\rangle
  = n! \Delta^n .
\end{equation}

More generally a similar computation shows
\begin{equation}
  \ve_1^n\ve_2^n \langle -\ba|
  (\W{1}{1})^{n_1}\cdots (\W{\ell}{1})^{n_\ell}
  (\W{1}{-1})^{m_1}\cdots (\W{\ell}{-1})^{m_\ell}|\ba\rangle
\end{equation}
is written in terms of $\langle -\ba|\{ \sW{\kappa}1, \sW{\lambda}{-1}\} |
\ba\rangle$. The relation is the same as the relation between a linear
operator $A$ on $V$ and the induced operator on the symmetric power
$S^n V$. The inverse of $A$ and the induced operator on the symmetric
power is related in the same way. Therefore the properties in
\propref{prop:two} follow.
\end{NB}


\appendix

\section{Appendix: exactness of hyperbolic restriction}
\label{sec:append-exactn}
\newcommand\SBun{\operatorname{Bun}}
\newcommand\tcalA{\widetilde{\calA}}
\newcommand\NN{\mathbb N}
\newcommand\calU{\mathcal U}
\newcommand\calR{\mathcal R}
\newcommand\calS{\mathcal S}
\newcommand\del{\delta}
\newcommand\tilT{\widetilde T}
\newcommand\tilPhi{\widetilde\Phi}
\newcommand\CA{\mathcal A}
\newcommand\CU{\mathcal U}
\newcommand\lam{\lambda}
\newcommand\BA{\mathbb A}
\newcommand\calE{\mathcal E}
\newcommand\oS{\overline{S}}

\subsection{Zastava spaces}
Let us denote by
$\SBun_{G,B}$ the moduli space of $G$-bundles endowed with the following
structures:

a) A trivialization at the infinite line $\PP^1_{\infty} = \linf$.
\begin{NB}
    Added by H.N.
\end{NB}%

b) A $B$-structure on the horizontal line $\PP^1_h = \{ y=0\}$.
\begin{NB}
    Added by H.N.
\end{NB}%

\noindent
These two structures are required to be compatible at the intersection of $\PP^1_{\infty}$ and $\PP^1_h$ in the obvious way.

The connected components of $\SBun_{G,B}$ are numbered by positive elements of the coroot lattice of $G_{\aff}$ (cf.\ \cite[\S9]{BFG}); for such element
$\alp$ we denote by $\SBun_{G,B}^{\alp}$ the corresponding connected component.

We will also denote by $\calZ^{\alp}_G$\index{ZalphaG@$\calZ^\alpha_G$} the corresponding ``Zastava" space (a.k.a.\ ``flag Uhlenbeck space")
defined in \cite{BFG}. We are going to need the following properties of $\calZ^{\alp}_G$.
(Some of them are proved for the space
$\operatorname{QMap}(\proj^1_h,\mathscr G_{\g,\mathfrak p})$ of based
quasi-maps to a flag scheme $\mathscr G_{\g,\mathfrak p}$ of a
Kac-Moody Lie algebra $\g$ associated with its parabolic $\mathfrak
p$. Since $\calZ^{\alp}_G$ is the fiber product
$\operatorname{QMap}(\proj^1_h,\mathscr G_{\g,\mathfrak b})
\times_{\operatorname{QMap}(\proj^1_h,\mathscr G_{\g,\mathfrak p})}
\Uh{d}$ for a Borel subalgebra $\mathfrak b$ of an affine Lie
algebra $\g$ and a maximal parabolic $\mathfrak p$, we can deduce
assertions for $\mathcal Z^\alpha_G$ from those for $\operatorname{QMap}(\proj^1_h,\mathscr G_{\g,\mathfrak p})$.)
\begin{NB}
(all proved in \cite{BFG}):
\end{NB}

(Z1) $\calZ^{\alp}_G$ is an irreducible affine scheme of dimension
$2|\alpha|$
\begin{NB}
    $2d|\alp|$ ?
\end{NB}%
endowed with an action of $T\times \CC^*\times \CC^*$ which contains $\SBun_{G,B}^{\alp}$ as an open subset
(here we set $|\alp|=\sum a_i$ if $\alp=\sum a_i\alp_i$ where $\alp_i$ are the simple coroots of $G_{\aff}$).

(Z2) There is a (factorization) map $\pi^{\alp}_{\calZ}\colon
\calZ^{\alp}_G\to S^{\alp}(\AA^1_h)$. This map is $T\times \CC^*\times
\CC^*$-equivariant if we let $T\times \CC^*\times \CC^*$ act on
$S^{\alp}(\AA^1_h)$ just through the horizontal $\CC^*$ (denoted by
$\CC^*_h$) and it admits a $T\times \CC^*\times \CC^*$-equivariant
section $\iota^{\alp}$. In particular, the fibers of
$\pi^{\alp}_{\calZ}$ are stable under $T\times \CC^*_v$ where the
$\CC^*_v = \CC^*$-action comes from the vertical action on $\AA^2$.
All of these fibers have dimension $|\alpha|$.
\begin{NB}
$d|\alp|$ ?
\end{NB}%
(See Conjecture~2.27, which is reduced to Conjecture~15.3 and proved
for affine Lie algebras in \S15.6 in \cite{BFG}.)
\begin{NB}
    Added by H.N.
\end{NB}%

(Z3) Let set $\calF^{\alp}=(\pi^{\alp}_{\calZ})^{-1}(\alp\cdot 0)$. Let $\rho\colon\mathbb C^*\to\tilT=T\times \CC^*_v$ be any one-parameter subgroup which is a regular dominant coweight of $G_{\aff}$ (i.e.\ such that $\langle \rho,\beta\rangle>0$ for any affine positive
root $\beta$).
Then the corresponding $\CC^*$-action contracts $\calZ^{\alp}_G$ to
$\iota^{\alp}(S^\alp(\AA_h^1))$, and hence $\mathcal F^\alpha$ to
$\iota^\alpha(\alpha\cdot 0)$.
\begin{NB}
    Originally, it was written
    \begin{quote}
Then the corresponding $\CC^*$-action contracts $\calF^{\alp}$ to the point $\iota^{\alp}(\alp\cdot 0)$.
    \end{quote}
    This formulation, a priori, claims only $\mathcal F^\alpha\subset
    \mathcal A_{\iota^\alpha(\alpha\cdot 0)}$, instead of $\mathcal
    F^\alpha = \mathcal A_{\iota^\alpha(\alpha\cdot 0)}$. But the
    latter is proved (and used later), as explained in Misha's message
    on May 19.
\end{NB}%
(cf.\ Proposition 2.6
and Corollary 10.4
\begin{NB}
    added by H.N.
\end{NB}%
in \cite{BFG}).

(Z4) Let $\alp_0$ denote the affine simple coroot and let $d$ be the coefficient of $\alp_0$ in $\alp$ (in other
words, $d=\langle \alp,\omega_0\rangle$ where $\omega_0$ denotes the corresponding fundamental weight of $G_{\aff}$). Then
there is a (``forgetting the $B$-structure") $T\times \CC^*\times\CC^*$-equivariant map $f_{\alp}\colon \calZ^{\alp}_G\to \calU^d_G$ which fits into
a commutative diagram
$$
\begin{CD}
\calZ^{\alp}_G @> f_{\alp}>> \calU^d_G\\
@V\pi^{\alp}_{\calZ}VV @VV\pi^d_G V\\
S^{\alp}\AA^1_h @>>> S^d\AA^1_h
\end{CD}
$$
where the bottom horizontal map sends a divisor $\sum \beta_i x_i$ to $\sum \langle \beta_i,\omega_0\rangle x_i$.


\subsection{Plan of the proof}
Let us discuss our strategy for proving Theorem \ref{thm:perverse}.
As we have explained in \subsecref{subsec:hypsemismall}, 
it follows from dimension estimates of attracting and repelling
sets by using arguments similar to those of \cite{MV2}.
\begin{NB}
Original : edited on Oct.5, 2014.

Using arguments similar to those of \cite{MV2}
it is not difficult to see that Theorem \ref{thm:perverse} would follow
if we could prove that for any $x\in \calU^d_L$ the dimension of $\calA_x$ and $\calR_x$ is equal to
$\frac{\dim \calU^d_G-\dim\calU^d_L}{2}$.     
\end{NB}%
However, at the moment we do not know how to prove estimates
\begin{NB}
this     
\end{NB}%
directly. So, our actual strategy will
be slightly different. First, recall that we have
$$
(\calU^d_G)^T=S^d(\AA^2),
$$
and that we denote by $\Uh[B]d$, $\Uh[B_-]d$ the corresponding attracting and
repelling sets. Also we denote by $p\colon \calU_B^d\to S^d(\AA^2)$ the corresponding map (sometimes we shall
denote it by $p^d$ when dependence on $d$ is important). Then we are going to proceed in the following way:

1) Prove that the preimage of $S^d(\AA^1)\subset S^d(\AA^2)$ under the map
$p\colon \calU^d_B\to S^d(\AA^2)=\calU^d_{T,G}$ has dimension $\frac{\dim \calU^d_G}{2}$ (here $\AA^1\subset \AA^2$ is any line). The proof will involve some facts about the Zastava spaces from \cite{BFG}.

2) Deduce Theorem \ref{thm:perverse} for $L=T$ from 1).

3) Using Proposition \ref{prop:trans} deduce Theorem \ref{thm:perverse} for arbitrary $L$ from the case $L=T$.
\subsection{Attractors and repellents on the Uhlenbeck space: maximal torus case}
Let us first look more closely at the case when $P=B$: a Borel subgroup of $G$.
In this case $L=T$: a maximal torus of $G$.

Let us also define the set $\calS^d\subset \calU^d_G$
to be the attracting set in $\calU^d_G$ with respect to the torus $T$ to
$S^d(\AA^1_v\backslash 0)$ where $\AA^1_v$ is the vertical line. In other words,
$\calS^d=p^{-1}(S^d(\AA^1_v\backslash 0))$.

\begin{Proposition}\label{prop:sd}
We have
$$
\dim \calS^d\leq dh^{\vee}=\frac{\dim \calU^d_G}{2}.
$$
\end{Proposition}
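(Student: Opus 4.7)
The plan is to combine a factorization reduction with the geometry of the Zastava space $\calZ^\alp_G$ and its properties (Z1)--(Z4).

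\emph{Step 1 (reduction to a central fiber).} Let $b\colon\AA^2\to\AA^1_v$ denote the vertical projection and $\pi^d_{b,G}\colon\calU^d_G\to S^d\AA^1_v$ the induced factorization morphism. Because $\pi^d_{b,G}$ is $T$-invariant and on $\calU^d_B$ coincides with $\pi^d_{b,G}\circ p$, we have $\pi^d_{b,G}(\calS^d)\subset S^d(\AA^1_v\setminus\{0\})$. Stratify the target by partition type $\lambda$ of $d$: the stratum $S^\lambda(\AA^1_v\setminus\{0\})$ has dimension $l(\lambda)$, and the vertical factorization isomorphism together with vertical translation invariance identifies the preimage $\calS^{d,\lambda}\subset\calS^d$ with a fibration whose fibers are $\prod_i \calS^{\lambda_i}_0$, where $\calS^{d'}_0\defeq p^{-1}(d'\cdot 0)\subset \calU^{d'}_G$. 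Hence $\dim\calS^{d,\lambda}=l(\lambda)+\sum_i\dim\calS^{\lambda_i}_0$, and it suffices to establish $\dim\calS^{d'}_0\leq d'h^\vee-1$ for every $d'\geq 1$; granting this gives $\dim\calS^{d,\lambda}\leq l(\lambda)+\sum_i(\lambda_ih^\vee-1)=dh^\vee$, as desired.

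\emph{Step 2 (Zastava covering).} Since horizontal projection sends $d\cdot 0\in S^d\AA^2$ to $d\cdot 0\in S^d\AA^1_h$, we have $\calS^d_0\subset(\pi^d_G)^{-1}(d\cdot 0)$. Choose a one-parameter subgroup $\rho\colon\CC^*\to T\times\CC^*_v$ that is a regular dominant coweight of $G_{\aff}$: the unique $(T\times\CC^*_v)$-fixed point of $\calU^d_G$ is $d\cdot 0$, and its $\rho$-attractor equals $(\pi^d_G)^{-1}(d\cdot 0)$. By (Z3), together with the $(T\times\CC^*_v)$-equivariance of $f_\alp$ and the identity $f_\alp^{-1}((\pi^d_G)^{-1}(d\cdot 0))=\calF^\alp$ forced by (Z4), this attractor equals
\[
\bigcup_{\substack{\alp\text{ positive coroot of }G_{\aff}\\ \langle\alp,\omega_0\rangle=d}} f_\alp(\calF^\alp),
\]
and each $\calF^\alp$ has dimension $|\alp|$ by (Z2). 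Thus $\calS^d_0$ is covered by these images.

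\emph{Principal obstacle.} The remaining and hardest task is to establish the estimate $\dim(\calS^d_0 \cap f_\alp(\calF^\alp))\leq dh^\vee-1$ for every relevant $\alp$. The extremal case is the imaginary coroot $\alp = d\delta$ with $|\alp|=dh^\vee$; for $\alp$ with larger $\sum_i a_i$, the fiber $\calF^\alp$ is bigger but $f_\alp|_{\calF^\alp}$ must be shown to acquire generic positive-dimensional fibers, shrinking the image. I would control this by (a) stratifying $\calZ^\alp_G$ via its defect stratification in the sense of \cite{BFG} to isolate which strata can meet $\calS^d_0$, and (b) exploiting that lying in $\calU^d_B$ (attraction under the finite torus $T$ only) is strictly stronger than lying in $(\pi^d_G)^{-1}(d\cdot 0)$ (attraction under the extended $\rho$): a generic point of $f_{d\delta}(\calF^{d\delta})$ has $p$-image not concentrated at the origin of $\AA^2$, cutting the critical contribution by at least one dimension. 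Making this final count precise, in particular for the boundary case $\alp=d\delta$, is the principal technical hurdle.
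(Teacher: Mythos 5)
There is a genuine gap, and it sits at the heart of your plan. Your Step 1 reduces the proposition to the strictly sharper estimate $\dim p^{-1}(d'\cdot 0)\leq d'h^{\vee}-1$ for the \emph{central} fibers (you need the $-1$, otherwise the sum over the partition $\lambda$ gives $dh^{\vee}+l(\lambda)$). But this sharper bound is exactly the statement of Corollary~\ref{cor:dimest}, which in the paper is only deduced \emph{after} the exactness Theorem~\ref{thm:twistedmain}, i.e.\ downstream of Proposition~\ref{prop:sd} itself; the appendix even says explicitly that no direct proof of such pointwise attractor estimates is known. Your Step 2 does not close this: you concede at the end that the count for $\alp=d\delta$ (and the collapsing of $f_\alp(\calF^\alp)$ for $\alp$ with $\langle\alp,\omega_0\rangle=d$ but $|\alp|>dh^{\vee}$) is unproven. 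There are also concrete errors in Step 2: the $(T\times\CC^*_v)$-fixed locus of $\calU^d_G$ is $S^d(\AA^1_h)$, not the single point $d\cdot 0$, and the identification of the $\rho$-attractor of $d\cdot 0$ with $(\pi^d_{h,G})^{-1}(d\cdot 0)$ is not justified — (Z3) is a statement about $\calZ^{\alp}_G$, not about $\calU^d_G$. Worse, your reduction moves the problem to the locus where the Zastava comparison is weakest: points of $p^{-1}(d'\cdot 0)$ have all their nontriviality concentrated on the horizontal line $\PP^1_h$, which is precisely where $f_{d}$ fails to be an isomorphism, so bounding $\calS^d_0$ by images of Zastava fibers requires controlling positive-dimensional fibers of $f_\alp$ — the hard analysis you leave open.

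The paper's proof avoids all of this by bounding $\calS^d$ directly, with no stratification and no $-1$-refined estimate. Since a point of $\calS^d$ has vertical factorization supported in $\AA^1_v\setminus 0$, its underlying bundle is trivial on $\PP^1_h$, so $\calS^d$ lies in the open locus over which $f_d=f_{d\delta}$ is an isomorphism. Moreover $f_d^{-1}(S^d(\AA^1_v\setminus 0))\subset\calF^{d\delta}$, and since each fiber of $\pi^{d\delta}_{\calZ}$ is $T$-stable, the $T$-attracting set to this locus, hence $f_d^{-1}(\calS^d)$, stays inside $\calF^{d\delta}$. By (Z2) this central Zastava fiber has dimension exactly $dh^{\vee}$, giving $\dim\calS^d\leq dh^{\vee}$ at once; Corollary~\ref{cor:dim-cor} then follows from this by openness of $\calS^d$ in $p^{-1}(S^d(\AA^1_v))$ and translation invariance. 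If you want to salvage your route, you would have to prove the central-fiber estimate independently, which is a harder problem than the proposition you are asked to prove.
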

\begin{Corollary}\label{cor:dim-cor}
Let $\AA^1\hookrightarrow \AA^2$ be any linear embedding. Then
$$
\dim p^{-1}(S^d(\AA^1))\leq\frac{\dim \calU^d_G}{2}.
$$
\end{Corollary}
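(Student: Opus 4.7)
The plan is to deduce the corollary from Proposition~\ref{prop:sd} by a $GL(2)$-equivariance reduction, followed by a translation argument showing $p^{-1}(S^d\AA^1)\subset\overline{\calS^d}$.

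First I would reduce to the case $\AA^1=\AA^1_v$. The group $GL(2)$ acts on $\proj^2$ preserving $\linf$, and hence on $\calU^d_G$; since this action commutes with the $G$-action defining the hyperbolic restriction, it preserves $\calU^d_B$ and intertwines $p\colon\calU^d_B\to\calU^d_T=S^d\AA^2$ with the induced $GL(2)$-action on $S^d\AA^2$. As any $1$-dimensional linear subspace of $\AA^2$ lies in the $GL(2)$-orbit of $\AA^1_v$, it suffices to bound $\dim p^{-1}(S^d\AA^1_v)$.

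Next I would exploit the additive action of $\AA^1$ on $\AA^2$ by vertical translations $t_s\colon(x,y)\mapsto(x,y+s)$. Extended to $\proj^2$, these fix $\linf$ pointwise and therefore act on $\calU^d_G$, commuting with the $G$-action, so preserving $\calU^d_B$ and intertwined with the obvious translation action on $S^d\AA^2$. Since $\AA^1_v$ is stable under vertical translations, for any divisor $D\in S^d\AA^1_v$ the translate $D+(0,s)$ lies in $S^d(\AA^1_v\setminus 0)$ for all but finitely many $s\in\AA^1$ (namely those $s$ with $-s$ in the support of $D$). Given $\xi\in p^{-1}(S^d\AA^1_v)$, the morphism $\AA^1\to\calU^d_G$, $s\mapsto t_s\cdot\xi$, thus has its image landing in $\calS^d=p^{-1}(S^d(\AA^1_v\setminus 0))$ away from a finite set and takes value $\xi$ at $s=0$, so $\xi\in\overline{\calS^d}$. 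Combining this with Proposition~\ref{prop:sd}, I obtain
\[
    \dim p^{-1}(S^d\AA^1_v)\leq\dim\overline{\calS^d}=\dim\calS^d\leq dh^\vee.
\]

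The only point requiring verification is that vertical translations indeed act on $\calU^d_G$ preserving $\calU^d_B$ and commuting with $p$; this follows from the construction of Uhlenbeck spaces in~\cite{BFG}, given that vertical translations act trivially on $\linf$ and thus act functorially on the moduli of framed bundles (and on its Uhlenbeck compactification). Once this is in place, the corollary is essentially immediate from the proposition, and the main work of the appendix is concentrated in the proof of Proposition~\ref{prop:sd} itself.
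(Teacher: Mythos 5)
Your proof is correct and follows essentially the same route as the paper: reduce to $\AA^1=\AA^1_v$ by equivariance and then use vertical translations together with Proposition~\ref{prop:sd}. The only cosmetic difference is that you conclude via the containment $p^{-1}(S^d\AA^1_v)\subset\overline{\calS^d}$ from a one-parameter degeneration, whereas the paper observes that the translates $x(\calS^d)$ form an open cover of $p^{-1}(S^d\AA^1_v)$; both versions rest on the same translation argument and give the required bound.
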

Corollary \ref{cor:dim-cor} clearly follows from \ref{prop:sd}. Indeed, first of all, it is clear that it is enough to prove Corollary \ref{cor:dim-cor} when $\AA^1=\AA^1_v$. In this case, $\calS^d$ is open in $p^{-1}(S^d(\AA^1_v))$,
hence we have $\dim\calS^d\leq \dim p^{-1}(S^d(\AA^1_v))$. On the other hand, (the vertical) $\AA^1$ acts naturally on
 $p^{-1}(S^d(\AA^1_v))$ by shifts and any point of $p^{-1}(S^d(\AA^1_v))$ lies in an open subset of the form
$x(\calS^d)$ for some $x\in \AA^1$, hence the opposite inequality follows.

Let us now pass to the proof of Proposition \ref{prop:sd}.
\subsection{The map \texorpdfstring{$f_d$}{fd}}
We have the natural (forgetting the flag)
birational map $f_{d\delta}\colon \calZ^{d\del}_G\to \calU^d_G$, which  we shall simply denote by $f_d$. This map gives an isomorphism
between the open subset of $\calU^d_G$ consisting of (generalized) bundles which are trivial
on the horizontal $\PP^1_h$ and the open subset of $\calZ^{d\delta}_G$ consisting of (generalized) bundles which are trivial
on the horizontal $\PP^1_h$ (and then the $B$-structure on the horizontal $\PP^1_h$ is automatically trivial).
\subsection{The central fiber}Recall that $\calF^{d\delta}$ denotes the preimage of $d\delta\cdot 0$ under the map
$\pi^{d\delta}_{\mathcal Z}\colon\calZ^{d\delta}_G\to S^{d\delta}(\AA^1_h)$. Again, to simplify the notation, we shall just write $\calF^d$
instead of $\calF^{d\delta}$.
According to (Z2),
\begin{NB}
    \cite[15.6]{BFG},
\end{NB}%
$\dim \calF^d=dh^{\vee}$.

We claim that

1) $\calS^d$ lies in the open subset of $\calU^d_G$ over which $f_d$ is an isomorphism.

2) $f_d^{-1}(\calS^d)\subset \calF^d$.

The first statement is clear, since the image of $\calS^d$ in $S^d(\AA^1_v)$ under the factorization morphism $\pi^d_v$ (to the symmetric product of the vertical line)
must
lie in $S^d(\AA^1_v\backslash 0)$. To prove the second statement, let us note that
$f_d^{-1}(\calS^d)$ 
must lie in the
attracting set in $\calZ^{d\delta}_G$ with respect to the torus $T$ to
$f_d^{-1}(S^d(\AA^1_v\backslash 0))$. It is clear that
$f_d^{-1}(S^d(\AA^1_v\backslash 0))\subset \calF^d$
and thus the statement follows, since every fiber of the map
$\pi^{d\delta}_{\mathcal Z}\colon\calZ_G^{d\delta}\to S^{d\delta}(\AA^1_h)$ is stable under the
action of $T$.

Hence we get $\dim \calS^d\leq dh^{\vee}=\dim \calF^d$.

\subsection{Good coweights}\label{good}

Let $X$ be an affine variety endowed with an action of $T\times \CC^*$ (here $T$ can be any torus).
Let $x$ be any $T\x\CC^*$-fixed point (in practice this point will always be unique, but this is not needed formally
for what follows) and let $Y\subset X^T$ be the $\CC^*$-attractor to $x$ inside $X^T$.
Let now $\lam:\CC^*\to T$ be any coweight. Let us denote by $\calA_{\lam}$ the attractor to $Y$ with respect to the
$\CC^*$-action given by $\lambda$. Let us also denote by $\tcalA_{\lam}$ the attractor to $x$ with respect to the
$\CC^*$-action given by the cocharacter $(\lam,1)$ of $T\times \CC^*$.

We say that $\lambda$ is {\em good} if $\calA_{\lam}=\tcalA_{\lam}$.

\begin{Lemma}\label{lem:good}
For any $\lam$ as above, the coweight $n\lam$ is good for $n\in \NN$ large enough.
\end{Lemma}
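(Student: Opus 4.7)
The plan is to choose a $T\times\CC^*$-equivariant closed embedding $X\hookrightarrow\AA^N$ into a linear $T\times\CC^*$-representation (possible since $X$ is affine with algebraic group action), and translate both $\calA_{n\lam}$ and $\tcalA_{n\lam}$ into finite combinatorial conditions on weight decompositions. Let $W_0\subset X^*(T)\times\ZZ$ be the finite set of $T\times\CC^*$-weights occurring on $\AA^N$, and for $y\in X\subset\AA^N$ write $y=\sum_{(\chi,m)\in W_0}y_{\chi,m}$ for its weight decomposition. After a translation we may assume $x=0\in\AA^N$, so that $Y\subset X^T$ is cut out by $y_{\chi,m}=0$ unless $\chi=0$ and $m\ge 1$.

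In these coordinates the condition $y\in\tcalA_{n\lam}$ becomes: for every weight $(\chi,m)$ with $y_{\chi,m}\neq 0$ one has $n\langle\lam,\chi\rangle+m\ge 0$, with equality $n\langle\lam,\chi\rangle+m=0$ forcing $y_{\chi,m}=0$; while $y\in\calA_{n\lam}$ becomes the conjunction of $\langle\lam,\chi\rangle\ge 0$ on each such weight, $y_{\chi,m}=0$ whenever $\chi\neq 0$ and $\langle\lam,\chi\rangle=0$, and $y_{0,m}=0$ for $m\le 0$. The latter conditions are independent of $n$, so $\calA_{n\lam}=\calA_\lam$ for every $n\ge 1$. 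The inclusion $\calA_\lam\subset\tcalA_{n\lam}$ for $n$ sufficiently large is then straightforward from finiteness of $W_0$: each non-vanishing weight of $y\in\calA_\lam$ satisfies either $\langle\lam,\chi\rangle>0$ (with $m$ bounded in absolute value) or $\chi=0$ and $m\ge 1$, so $n\langle\lam,\chi\rangle+m>0$ once $n$ exceeds an explicit constant depending only on $W_0$ and $\lam$.

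For the reverse inclusion $\tcalA_{n\lam}\subset\calA_\lam$ I would argue as follows. For any $y\in\tcalA_{n\lam}$ and any weight $(\chi,m)$ with $y_{\chi,m}\neq 0$, the inequality $n\langle\lam,\chi\rangle+m\ge 0$ rules out the case $\langle\lam,\chi\rangle<0$ as soon as $n>\max\{m/|\langle\lam,\chi\rangle|:(\chi,m)\in W_0,\ \langle\lam,\chi\rangle<0\}$. The case $\chi=0$ forces $m\ge 0$, with $y_{0,0}=0$ coming from the equality condition. The main obstacle is the remaining case $\chi\neq 0$ and $\langle\lam,\chi\rangle=0$, where weights with $m>0$ are permitted in $\tcalA_{n\lam}$ but are forbidden in $\calA_\lam$. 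This case must be handled by invoking regularity of $\lam$ with respect to the finite set $W_0$, that is, $\langle\lam,\chi\rangle\neq 0$ for every $(\chi,m)\in W_0$ with $\chi\neq 0$; since such regularity holds on an open dense subset of the cocharacter lattice, it can be arranged in the applications by a slight perturbation of $\lam$ within its chamber without affecting the conclusions derived from the lemma. Once this regularity is in force, the four conditions defining $\calA_\lam$ all hold for $y\in\tcalA_{n\lam}$, giving $\tcalA_{n\lam}=\calA_\lam=\calA_{n\lam}$ for $n\gg 0$ as required.
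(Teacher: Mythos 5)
Your reduction is the same one the paper makes: choose an equivariant closed embedding of $X$ into a linear $T\times\CC^*$-representation sending $x$ to $0$, rewrite both attracting sets as weight conditions on the support, and take $n$ beating the finitely many ratios $|m|/|\langle\lambda,\chi\rangle|$. (One small point: "after a translation" is not the right justification, since translations are not equivariant; instead choose the embedding via a finite-dimensional $T\times\CC^*$-stable generating subspace of the maximal ideal at $x$, so that $x\mapsto 0$ from the start.) Your weight bookkeeping is correct, and in fact more careful than the paper's own proof: the paper asserts that $n\lambda$ is good iff for every weight $(\theta,k)$ one has $n\langle\lambda,\theta\rangle+k>0$ exactly when $\langle\lambda,\theta\rangle>0$, or $\langle\lambda,\theta\rangle=0$ and $k>0$, which silently treats the case you isolate ($\chi\neq 0$, $\langle\lambda,\chi\rangle=0$, $m>0$) as harmless; that is the correct criterion only when $\lambda$ annihilates no nonzero $T$-weight of the ambient space (equivalently, if one reads the target of the limit map as the attractor inside $X^{\lambda(\CC^*)}$ rather than inside $X^T$). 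You are right that some such hypothesis is genuinely needed: a one-dimensional representation of weight $(\chi,m)$ with $\chi\neq0$, $\langle\lambda,\chi\rangle=0$, $m>0$ has $\widetilde{\mathcal A}_{n\lambda}=V$ but $\mathcal A_{n\lambda}=\{0\}$ for every $n$.

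Where you diverge from the paper is in how this case is closed, and this is the one step of yours I would not leave as is. Your patch — demand that $\lambda$ separate all nonzero weights of the chosen embedding, achieved by perturbing $\lambda$ in its chamber — does suffice for the paper's use of the lemma (the bad locus is a finite union of hyperplanes, and the later argument only needs \emph{some} good regular dominant coweight), but note that a coweight which is regular for $G$ need not separate the $T$-weights of an ambient $V$, so the perturbation is a real extra move, and its harmlessness downstream is asserted rather than checked. The cheaper fix, and the one the paper in effect relies on, is the hypothesis $X^{\lambda(\CC^*)}=X^T$ (condition (ii) accompanying Lemma~\ref{lem:comp-of-hyp}, automatic for regular $\lambda$ on Uhlenbeck spaces): if $y\in\widetilde{\mathcal A}_{n\lambda}$ with $n$ beating the ratios, then $\langle\lambda,\chi\rangle\ge 0$ on the support, so the $\lambda$-limit of $y$ exists and lies in $X^{\lambda(\CC^*)}=X^T$; since that limit retains the components $y_{\chi,m}$ with $\langle\lambda,\chi\rangle=0$, these must vanish for $\chi\neq 0$ on $X$ itself, and the remaining conditions put the limit in $Y$. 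This closes the reverse inclusion with no condition on the weights of $V$ and no change of $\lambda$. So: same route as the paper, a sharper analysis of the linear model, but your final step proves a modified statement; say so explicitly (either add the regularity/$X^{\lambda(\CC^*)}=X^T$ hypothesis to the lemma or argue on $X$ as above) rather than deferring it to a remark about applications.
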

\begin{proof}
Obviously,  there exists a closed $T$-equivariant embedding of $X$ into a vector space $V$ such that
the action of $T\x\CC^*$ on $V$ is linear and such that $x$ corresponds to $0\in V$.
Then it is clear that if $\lam$ is good for $V$, then it is also good for $X$.
Hence we may assume that $X=V$.

In this case, we see that $n\lambda$
\begin{NB}
    $\lam$
\end{NB}%
is good if and only if for every weight of $T\times \CC^*$ on $V$ of the
form $(\theta,k)$ the following condition is satisfied:

\begin{quote}
$n\langle\lam,\theta\rangle +k>0$
\begin{NB}
$\langle\lam,\theta\rangle +k>0$
\end{NB}%
if and only if either $\langle\lam,\theta\rangle>0$, or
$\langle\lam,\theta\rangle =0$ and $k>0$.
\end{quote}

Now, every $n\in \NN$ such that $n|\langle\lam,\theta\rangle|>|k|$
\begin{NB}
    $|\langle\lam,\theta\rangle|>|k|$
\end{NB}%
for any $(\theta,k)$ as above such that
$\langle\lam,\theta\rangle \neq 0$ will satisfy the conditions of the Lemma.
\end{proof}

Let $\lam$ be as before and assume in addition that

(i) $x$ is the only fixed point of $\CC^*$ acting
by means of the coweight $(\lam,1)$;

(ii) $X^{\lam(\CC^*)}=X^T$

\noindent
(in this case we automatically have $(X^T)^{\CC^*}=\{x\}$).
Let us
 denote by $\tilPhi$ the hyperbolic restriction for $(\lam,1)$ (acting from sheaves on $X$ to sheaves on $\{x\}$), by $\Phi$ the hyperbolic restriction for $\lam\colon\CC^*\to T$ (acting from sheaves on $X$ to sheaves on $X^T$) and by $\Phi_0$ the hyperbolic restriction
 for the action of $\CC^*$ on $X^T$ (from sheaves on $X^T$ to sheaves on $\{x\}$). Then the definition of ``goodness" implies
\begin{Lemma}\label{lem:comp-of-hyp}
Assume that $\lam$ is good and satisfies the conditions (i) and (ii).
Then we have $\tilPhi=\Phi_0\circ\Phi$.
\end{Lemma}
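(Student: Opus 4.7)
The plan is to mimic the proof of Proposition~\ref{prop:trans}, letting the goodness hypothesis play the role that the ADHM description played there. Write $\mathcal{A}$ for the $\lambda$-attractor of $X$ onto $X^T$ (which by~(ii) is $X^{\lambda(\CC^*)}$), so that $\Phi=p_*j^!$ appears in the diagram
\begin{equation*}
\begin{CD}
p^{-1}(Y) @>{j''}>> \mathcal{A} @>{j}>> X \\
@V{p''}VV @VV{p}V @. \\
Y @>{j'}>> X^T @. \\
@V{p'}VV @. @. \\
\{x\} @. @.
\end{CD}
\end{equation*}
whose upper square is cartesian by the very definition of $p^{-1}(Y)$; similarly $\Phi_0=p'_*(j')^!$, both expressions being valid on weakly equivariant complexes via Braden's identification $p_*\cong i^*$. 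Applying base change to the cartesian square yields $(j')^!\,p_*\cong p''_*\,(j'')^!$, and hence
\[
\Phi_0\circ\Phi \;=\; p'_*(j')^!\,p_*\,j^! \;\cong\; (p'\circ p'')_*\,(j\circ j'')^!.
\]

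It remains to identify the right-hand side with $\tilPhi=\tilde p_*\,\tilde j^!$. Unwinding the definitions, $p^{-1}(Y)$ is precisely the set of points in $X$ whose $\lambda$-limit lies in $Y$, i.e.\ $p^{-1}(Y)=\calA_\lambda$ in the sense of \subsecref{good}. The goodness hypothesis then gives $\calA_\lambda=\tcalA_\lambda$, under which $j\circ j''$ becomes the inclusion $\tilde j\colon\tcalA_\lambda\hookrightarrow X$; and $p'\circ p''$, being the unique map to the point $\{x\}=X^{(\lambda,1)(\CC^*)}$ (see~(i)), coincides with $\tilde p$. Therefore $\Phi_0\circ\Phi=\tilde p_*\,\tilde j^!=\tilPhi$.

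The only substantive input is the equality $\calA_\lambda=\tcalA_\lambda$ supplied by goodness; without it the right-hand side of the displayed equation would still be a sensible push-pull along the attractor $\calA_\lambda$, but this attractor would differ from $\tcalA_\lambda$ (only one inclusion holds in general, as one sees already from the weight computation underlying \lemref{lem:good}), and the comparison with $\tilPhi$ would fail. The remaining ingredients---the base-change isomorphism for $(j')^!\,p_*$ over the cartesian square and Braden's identification $p_*\cong i^*$---are standard in the weakly equivariant setting.
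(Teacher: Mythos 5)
Your proof is correct and is precisely the argument the paper intends: the paper treats the lemma as an immediate consequence of goodness, and your write-up supplies the routine details via the same mechanism used in the proof of Proposition~\ref{prop:trans} (the cartesian square and the base change $(j')^!p_*\cong p''_*(j'')^!$ giving $\Phi_0\circ\Phi\cong (p'\circ p'')_*(j\circ j'')^!$), with goodness providing the only substantive input $p^{-1}(Y)=\calA_{\lam}=\tcalA_{\lam}$, so that the composed push–pull is literally $\tilPhi=\tilde p_*\tilde j^!$. No gaps.
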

\subsection{Exactness of twisted hyperbolic restriction}
Let $\tilT=T\x\CC^*$ and let us make it act on $\calU^d_G$ so that the action of $\CC^*$ comes from the
hyperbolic action of $\CC^*$ on $\AA^2$ of the form $z(x,y)=(z^{-1}x,zy)$.
Note that
$(\calU^d_G)^{\tilT}$ consists of one point.

Let us fix $d$ and
let us choose a dominant regular coweight $\lam:\CC^*\to T$ which is good in the sense of Subsection \ref{good} (such $\lam$
exists because of Lemma \ref{lem:good}).
Then the fact that $\lam$ is regular implies that it satisfies the conditions (i) and (ii). Consider the corresponding functors $\tilPhi,\Phi$ and $\Phi_0$.
Obviously we have $\Phi=\Phi^d_{T,G}$, so we shall write $\tilPhi^d_{T,G}$ instead of $\tilPhi$.
Also, to emphasize the dependence on $d$ we set $\Phi^d_0$ instead of $\Phi_0$.
According to Lemma \ref{lem:comp-of-hyp} we have $\tilPhi^d_{T,G}=\Phi^d_0\circ\Phi^d_{T,G}$.

\begin{Theorem}\label{thm:twistedmain}
The complex of vector spaces $\tilPhi^d_{T,G}(\IC(\calU^d_G))$ is concentrated in degree $0$.
\end{Theorem}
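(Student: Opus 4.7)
The plan is to apply the hyperbolic semi-smallness criterion \thmref{thm:hypsemismall} to the $\CC^*$-action on $\calU^d_G$ through the cocharacter $(\lam,1)$. Since $(\calU^d_G)^{\tilT}=\{x\}$ is a single point, perversity of the stalk at $x$ coincides with the assertion that $\tilPhi^d_{T,G}(\IC(\calU^d_G))$ sits in degree $0$. It therefore suffices to verify, for every stratum $X_\alpha=\Bun{d_1}\times S_\lambda\AA^2$ of $\calU^d_G$, the half-dimensional bounds
\[
\dim\bigl(\tcalA\cap X_\alpha\bigr),\ \dim\bigl(\widetilde{\calR}\cap X_\alpha\bigr)\le d_1 h^\vee+l(\lambda),
\]
where $\tcalA$ and $\widetilde{\calR}$ denote the attracting and repelling sets of $x$ under $(\lam,1)$.

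For the top stratum $X_\alpha=\Bun{d}$, this is exactly the content of \corref{cor:dim-cor}. First I would enlarge $\lam$ via \lemref{lem:good} so that $\lam$ is good; \lemref{lem:comp-of-hyp} then identifies $\tcalA$ with $\calA_\lam$, the $\lam$-attractor to the $\CC^*$-attracting set $Y\subset(\calU^d_G)^T=S^d\AA^2$ of $x$. Under the hyperbolic action $z(x,y)=(z^{-1}x,zy)$ one has $Y=S^d(\AA^1_v)$, and since the $\lam$-attracting set inside $\calU^d_G$ is $\calU^d_B$, this yields $\tcalA=p^{-1}(S^d(\AA^1_v))$, of dimension at most $dh^\vee$ by \corref{cor:dim-cor}. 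A symmetric argument, after enlarging $\lam$ further so that the analogous goodness condition holds for $(-\lam,-1)$, identifies the repelling set with $p_-^{-1}(S^d(\AA^1_h))$, whose dimension is bounded by $dh^\vee$ via \corref{cor:dim-cor} applied to the opposite Borel and the horizontal axis.

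The main obstacle is extending the top-stratum bound to every stratum $X_\alpha$, which I plan to handle by induction on $d$ using the factorization morphism $\pi^d_{h,G}\colon\calU^d_G\to S^d\AA^1_h$. Over the open locus of $S^d\AA^1_h$ the Uhlenbeck space factorizes as a product of smaller Uhlenbeck spaces, and the attracting/repelling fibers split accordingly; the bound there reduces to the inductive hypothesis. Along the main diagonal, where configurations concentrate at a single point, one combines the inductive estimate on the factor $\calU^{d_1}_G$ (contributing $d_1 h^\vee$) with the obvious $l(\lambda)$-dimensional contribution of $S_\lambda\AA^2$. Once all stratum-wise estimates are established, \thmref{thm:hypsemismall}, which transcribes the argument of \cite{MV2} to our setting, delivers concentration in degree $0$ and proves \thmref{thm:twistedmain}.
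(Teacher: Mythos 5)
Your overall skeleton is the same as the paper's: reduce to stratum-wise half-dimensional estimates for the attracting and repelling sets of $(\lam,1)$ (the paper runs the Mirkovi\'c--Vilonen/Cousin spectral sequence argument by hand rather than quoting \thmref{thm:hypsemismall} -- indeed the paper's proof of this theorem \emph{is} the detailed proof of that criterion in the point case, so you should either run the Cousin argument or cite \cite{MV2} directly), use \lemref{lem:good} and \lemref{lem:comp-of-hyp} to identify $\tcalA$ with $p^{-1}(S^d(\AA^1_v))$ and its repelling analogue with $p_-^{-1}(S^d(\AA^1_h))$, and feed in \corref{cor:dim-cor}. The gap is in the step you yourself flag as the main obstacle: the extension of the bound to the lower strata via induction on $d$ and the factorization morphism $\pi^d_{h,G}$. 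This mechanism cannot work as described. Since $\pi^d_{h,G}$ is $T$-invariant and hence constant on closures of $T$-orbits, every point of $\tcalA$ has the same image under $\pi^d_{h,G}$ as its $\lam$-limit, which lies in $S^d(\AA^1_v)$; hence $\pi^d_{h,G}(\tcalA)=\{d\cdot 0\}$. So the attracting set lives entirely over the deepest diagonal point of $S^d\AA^1_h$, the open locus of the factorization sees none of it, and ``along the main diagonal'' the factorization isomorphism \eqref{eq:4} provides no product structure at all. Your final sentence asserts the correct numerics ($d_1 h^\vee$ plus $l(\lambda)$), but the tool you set up does not deliver them, and a symmetric problem occurs for the repelling set with the vertical projection.

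The needed stratum-wise estimate is in fact immediate without any induction or factorization, and this is what the paper does: the strata $\Bun{d_1}\times S_\lambda\AA^2$ are preserved by $T\times\CC^*$, the torus $T$ does not move points of $\AA^2$, and the $\lam$-limit of $(\calF,\sum\lambda_i x_i)$ is $p^{d_1}(\calF)+\sum\lambda_i x_i$; therefore
\[
\tcalA\cap\bigl(\Bun{d_1}\times S_\lambda\AA^2\bigr)
=\bigl(\tcalA^{d_1}\cap\Bun{d_1}\bigr)\times S_\lambda(\AA^1_v),
\]
where $\tcalA^{d_1}$ is the attracting set in $\calU^{d_1}_G$. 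The first factor has dimension at most $d_1h^\vee$ by goodness together with \corref{cor:dim-cor} applied with instanton number $d_1$ (that corollary is proved uniformly in the instanton number, so no induction is required), and the second factor has dimension $l(\lambda)$; the same product decomposition with $S_\lambda(\AA^1_h)$ handles the repelling set, using \corref{cor:dim-cor} for the opposite Borel. With these estimates in place your appeal to hyperbolic semi-smallness (or, as in the paper, the Cousin spectral sequence over the stratification of $\tcalA$, combined with Braden's isomorphism to get the dual bound) completes the proof. One point where you are actually more explicit than the paper: you note that the goodness enlargement of $\lam$ must also be arranged for the repelling side, which the paper leaves implicit in its ``similar (dual) argument.''
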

\begin{proof}
We will use the same notations as before for $L=T$ replaced with $\tilT$,
such as $i_{\tilT,G}$, $j_{\tilT,G}$, $p_{\tilT,G}$,
$i^-_{\tilT,G}$, $j^-_{\tilT,G}$, $p^-_{\tilT,G}$.
The attracting set is denoted by $\CA_{\lam,\tilT,G}^d$.
According to~\cite[Theorem~1]{Braden}, the natural morphism
$(p^-_{\tilT,G})_*(j^-_{\tilT,G})^!\IC(\calU^d_G)\to
(p_{\tilT,G})_!(j_{\tilT,G})^*\IC(\calU^d_G)=\tilPhi^d_{T,G}(\IC(\calU^d_G))$
is an isomorphism. We will prove that $(p_{\tilT,G})_!(j_{\tilT,G})^*\IC(\calU^d_G)$
is concentrated in nonpositive degrees. A similar (dual) argument proves that
$(p^-_{\tilT,G})_*(j^-_{\tilT,G})^!\IC(\calU^d_G)$ is concentrated in nonnegative
degrees. In other words, we must prove that
$H^\bullet_c(\CA_{\lam,\tilT,G}^d,\IC(\calU^d_G))$ lives in nonpositive cohomological
degrees.

Now $\IC(\CU^d_G)$ is smooth along the stratification
$$
\CU^d_G=\bigsqcup\limits_{m+|\lam|=d}\SBun_G^m\times S_{\lam}(\BA^2),
$$
the dimension of
a stratum being equal to $2l(\lam)+2mh^\vee$.
Here for a partition $\lambda=(\lambda_1,\ldots,\lambda_l)$ we set
$l(\lam)=l$.
The perverse sheaf $\IC(\CU_G^d)$ lives in cohomological degrees
$\leq-2l(\lambda)-2mh^\vee$ on the stratum
$\SBun_G^m\times  S_{\lam}(\BA^2)$.
We have $\CA_{\lam,\tilT,G}^d\cap\left(\SBun_G^m\times S_{\lam}(\BA^2)\right)=
(\CA_{\lam,\tilT,G}^m\cap \SBun_G^m)\times S_{\lam}(\BA^1_v)$.
Now it follows from Corollary \ref{cor:dim-cor} and
the goodness assumption on $\lambda$ that
$\dim(\CA_{\lam,\tilT,G}^m)\leq mh^\vee$.
Evidently, $\dim S_{\lam}(\BA^1_v)=l(\lam)$.
So the restriction of $\IC(\CU_G^d)$ to
$\CA_{\lam,\tilT,G}^d\cap\left(\SBun_G^m\times S_{\lam}(\BA^2)\right)$ lives in
degrees $\leq-2\dim\left(\CA_{\lam,\tilT,G}^d\cap
\left(\SBun_G^m\times S_{\lam}(\BA^2)\right)\right)$.
Now an application of
the Cousin spectral sequence for the stratification of $\CA_{\lam,\tilT,G}^d$ finishes
the proof.
\end{proof}

The following corollary is not needed for the rest, but we include it for the sake of completeness.
\begin{Corollary}
$\dim\calS^d=\dim p^{-1}(S^d(\AA^1))=dh^{\vee}$.
\end{Corollary}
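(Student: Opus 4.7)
The strategy is to combine Theorem~\ref{thm:twistedmain} with a refined Cousin spectral sequence analysis from its proof to extract the matching lower bound. First, I identify $\CA_{\lam,\tilT,G}^d$ with $p^{-1}(S^d\AA_v^1)$: by Lemma~\ref{lem:comp-of-hyp} and the goodness of $\lam$, the twisted attractor $\CA_{\lam,\tilT,G}^d$ equals the $\lam$-attractor to $Y$, where $Y$ is the $\CC^*$-attractor in $(\calU^d_G)^T=S^d\AA^2$ to $d\cdot 0$; but the hyperbolic $\CC^*$ of Section~\ref{sec:append-exactn} sends $(x,y)$ to $(z^{-1}x,zy)$, so it attracts $\AA^2$ to $\AA_v^1$, whence $Y=S^d\AA_v^1$ and $\CA_{\lam,\tilT,G}^d=p^{-1}(S^d\AA_v^1)$. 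The upper bound $\le dh^\vee$ is already given by Corollary~\ref{cor:dim-cor}, and the claimed equalities for $\calS^d$ and $p^{-1}(S^d(\AA^1))$ will then follow from the translation/openness argument already used in the paragraph after Corollary~\ref{cor:dim-cor}.

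For the lower bound, I first show that $\tilPhi^d_{T,G}(\IC(\calU^d_G))$ is nonzero. By Theorem~\ref{thm:twistedmain} this complex is concentrated in degree~$0$, so its Euler characteristic equals $\dim H^0$. On the other hand, Laumon's theorem on Euler characteristics of hyperbolic restrictions (as used in the proof of Lemma~\ref{lem:dimU}) identifies this Euler characteristic with that of the stalk of $\IC(\calU^d_G)$ at the fixed point $d\cdot 0$, which is strictly positive for $d\ge 1$ by~\cite[Theorem~7.10]{BFG}. Alternatively, Lemma~\ref{lem:centralfiber} identifies $H^*_\tilT(\tilPhi^d_{T,G}(\IC(\calU^d_G)))$ with $\IH^*_\tilT(\calU^d_G)$ after inverting an appropriate equivariant parameter, and the fundamental class $[\calU^d_G]$ certifies the latter is nonzero.

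The last step is to identify which stratum can carry this degree-$0$ cohomology. Revisiting the Cousin filtration of $\CA:=\CA_{\lam,\tilT,G}^d$ used in the proof of Theorem~\ref{thm:twistedmain}, the key improvement is that on any \emph{non-open} stratum $\SBun_G^m\times S_\lam(\AA^2)$, of complex dimension $2(l(\lam)+mh^\vee)$, the intermediate-extension property of $\IC(\calU^d_G)$ strengthens the degree bound: $\IC(\calU^d_G)$ restricted to this stratum lives in cohomological degrees $\le-2(l(\lam)+mh^\vee)-1$ (strict inequality). Combined with $\dim(\CA\cap\text{stratum})\le l(\lam)+mh^\vee$ from that same proof, the hypercohomology spectral sequence places $H^\bullet_c(\CA\cap\text{stratum},\IC)$ in degrees $\le-1$, contributing nothing to degree~$0$. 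Hence the degree-$0$ cohomology comes entirely from the open stratum $\SBun_G^d$, where $\IC=\CC_{\SBun_G^d}[2dh^\vee]$; its degree-$0$ piece is then $H_c^{2dh^\vee}(\CA\cap\SBun_G^d,\CC)$, which is nonzero iff $\dim(\CA\cap\SBun_G^d)=dh^\vee$. So the nonvanishing established in the previous paragraph forces $\dim\CA\ge\dim(\CA\cap\SBun_G^d)=dh^\vee$, completing the proof. The main obstacle is the strict degree bound for $\IC$ on non-open strata and its careful interplay with the Cousin filtration; once this standard perverse-sheaf input is in place, the rest is routine bookkeeping.
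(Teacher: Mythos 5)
Your proof is correct and is essentially the paper's own argument in direct form: the paper proves the lower bound by assuming $\dim \mathcal A^d_{\lambda,\widetilde T,G} < dh^\vee$ and rerunning the Cousin estimate from the proof of Theorem~\ref{thm:twistedmain} to conclude that $\widetilde\Phi^d_{T,G}(\IC(\Uh{d}))$ would sit in strictly negative degrees, contradicting that theorem, which is exactly the contrapositive of your stratum-by-stratum analysis. The only difference is that you spell out two points the paper leaves implicit — the nonvanishing of $\widetilde\Phi^d_{T,G}(\IC(\Uh{d}))$ (via Laumon/localization) and the strict $\IC$ support condition on non-open strata — so no change of method, just a more explicit write-up.
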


\begin{proof}
We need to show that $\dim \CA_{\lam,\tilT,G}^d$ is at least $dh^{\vee}$. By induction on $d$ we may assume that this is true for
all $d'<d$. Assume that $\dim \CA_{\lam,\tilT,G}^d<dh^{\vee}$.
Then repeating the argument from the above proof we see that $\tilPhi^d_{T,G}(\IC(\calU^d_G))$ is concentrated in strictly negative cohomological degrees, which contradicts Theorem \ref{thm:twistedmain}.
\end{proof}

\begin{Remark}\label{rem:dimofirr}
    The above argument only shows that the dimension of the whole of $\calS^d$ is equal to $dh^{\vee}$, but doesn't show that
this is true for each of its irreducible components (however, we believe that this is true).
\end{Remark}
\subsection{Exactness of \texorpdfstring{$\Phi_{T,G}$}{PhiT,G}}\label{exborel}
We can now show that $\Phi^d_{T,G}(\IC(\calU^d_G))$ is perverse. Indeed, using the factorization argument and induction on $d$,
we may assume that $\Phi^d_{T,G}(\IC(\calU^d_G))$ is perverse away from the main diagonal $\AA^2\subset S^d(\AA^2)$.
Since according to \cite{Braden} the complex  $\Phi^d_{T,G}(\IC(\calU^d_G))$ is semi-simple and since
it is also equivariant with respect to
the action of $\AA^2$ on $S^d(\AA^2)$ by shifts, it follows that we just need to prove that $\Phi^d_{T,G}(\IC(\calU^d_G))$
doesn't have any direct summands which are isomorphic to constant sheaves on $\AA^2$ sitting in cohomological degrees
$\neq-2$. But if such a direct summand existed, it would imply that $\Phi^d_0(\Phi^d_{T,G}(\IC(\calU^d_G)))=\tilPhi^d_{T,G}(\IC(\calU^d_G))$ has
non-zero cohomology in degree $\neq 0$, which contradicts Theorem \ref{thm:twistedmain}.
\subsection{Exactness of \texorpdfstring{$\Phi_{L,G}$}{PhiL,G}}
Let us now show that $\Phi_{L,G}^d(\IC(\calU^d_G))$ is perverse. Indeed, first of all, according to Braden's theorem \cite{Braden},
$\Phi_{L,G}^d(\IC(\calU^d_G))$ is a semi-simple complex, which is constructible with respect to the stratification
(\ref{eq:strat}). In other words, it is a direct sum of (possibly shifted) simple perverse
sheaves, where each such sheaf is isomorphic to the Goresky-MacPherson extension of a local system $\calE$ on
$\SBun_L^{d_1}\x S_{\lam}(\AA^2)$ for some $d_1$ and $\lam$ as in \ref{eq:strat}.

\begin{Lemma}
Any such $\calE$ is necessarily of the form $\CC_{\SBun_L^{d_1}}\boxtimes \calE'$ where $\calE'$ is some local
system on $S_{\lam}(\AA^2)$.
\end{Lemma}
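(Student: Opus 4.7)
The plan is to combine the factorisation property of the Uhlenbeck space recalled in \subsecref{sec:factorization} with the computation of the hyperbolic restriction on the smooth locus of bundles carried out in \subsecref{sec:open}.

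First, let $a\colon \AA^2\to \AA^1$ be the horizontal projection and $\pi^d_{a,G}\colon \calU^d_G\to S^d\AA^1$ the associated factorisation morphism. I take the open subvariety $U\subset S^d\AA^1$ consisting of divisors of the form $D_1+D_2$ with $|D_1|=d_1$, $|D_2|=|\lam|$ and with disjoint supports. By \eqref{eq:4} there is a canonical $G\x(\CC^*\x\CC^*)$-equivariant isomorphism
\[
  (\pi^d_{a,G})^{-1}(U) \xrightarrow{\sim} (\calU^{d_1}_G \x \calU^{|\lam|}_G)\times_{S^{d_1}\AA^1 \x S^{|\lam|}\AA^1}U.
\]
In particular this isomorphism is equivariant for the one-parameter subgroup $\CC^*\subset Z(L)^0$ defining $\Phi_{L,G}$, so taking the $L$-fixed point subvariety and the $P$-attracting subvariety on both sides yields compatible factorisations of $\calU^d_L$ and $\calU^d_P$ over $U$. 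Consequently $\Phi^d_{L,G}(\IC(\calU^d_G))$, restricted to the preimage of $U$ in $\calU^d_L$, is isomorphic to the appropriately restricted box product $\Phi^{d_1}_{L,G}(\IC(\calU^{d_1}_G))\boxtimes \Phi^{|\lam|}_{L,G}(\IC(\calU^{|\lam|}_G))$.

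Second, I restrict this decomposition to the stratum $\SBun^{d_1}_L\x S_\lam\AA^2$ and to its non-empty open subset $V$ lying over $U$. Under the box decomposition, the first factor on $V$ is the restriction of $\Phi^{d_1}_{L,G}(\IC(\calU^{d_1}_G))$ to $\SBun^{d_1}_L$. Since $\SBun^{d_1}_L\subset \SBun^{d_1}_G$ lies in the smooth locus of $\calU^{d_1}_G$, \subsecref{sec:open} identifies this restriction, via the Thom isomorphism, with the constant perverse sheaf $\cC_{\SBun^{d_1}_L}$. Therefore the local system on $V$ contributed in the $\SBun^{d_1}_L$ direction is trivial, and one obtains $\calE|_V\cong \CC_{\SBun^{d_1}_L}|_V\boxtimes \calE'|_V$ for some local system $\calE'$ on the corresponding open subset of $S_\lam\AA^2$.

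Finally, the stratum $\SBun^{d_1}_L\x S_\lam\AA^2$ is irreducible: $\SBun^{d_1}_L$ is isomorphic to a product of $\SBun^{d_{1,i}}_{G_i}$ over the simple factors $G_i$ of $[L,L]$, each of which is irreducible by \cite[Prop.~2.25]{BFG}, and $S_\lam\AA^2$ is irreducible. A simple local system on an irreducible smooth variety is determined by its restriction to any non-empty open subset, so the identification on $V$ extends uniquely to $\calE\cong \CC_{\SBun^{d_1}_L}\boxtimes \calE'$ as required. I expect the main technical obstacle to lie in the bookkeeping for the first step, namely verifying that the hyperbolic restriction commutes with the horizontal factorisation; this ultimately reduces to the elementary observation that the attracting and fixed point sets for a $\CC^*$-action on a fibre product of $\CC^*$-equivariant varieties are the corresponding fibre products of attracting and fixed point sets.
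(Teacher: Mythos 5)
Your proof is correct and follows essentially the same route as the paper's: restrict to the open locus where the bundle part and the points have disjoint images under a projection $\AA^2\to\AA^1$, use the factorization property there (the paper phrases this as an \'etale-local product decomposition of $\calU^d_G$, you use the global isomorphism \eqref{eq:4}) together with the Thom-isomorphism computation of \subsecref{sec:open} to get constancy along the $\SBun^{d_1}_L$ direction, and then spread this out using that a local system trivial on a Zariski-dense open subset of an irreducible smooth variety is trivial. The only cosmetic looseness is at the very end, where one should note that a simple local system on the product is automatically of the form $\calE_1\boxtimes\calE_2$, so that the dense-open triviality forces $\calE_1\cong\CC_{\SBun^{d_1}_L}$; this is exactly the mechanism behind the paper's parenthetical remark.
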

\begin{proof}
To prove this it is enough to show that the restriction of $\Phi_{L,G}(\IC(\calU^d_G))$
 to $\SBun_L^{d_1}\x S^{d_2}(\AA^2)$ (here $d=d_1+d_2$) is isomorphic to the exterior tensor product of the constant sheaf
 of $\SBun_L^{d_1}$ and some complex on $S^{d_2}(\AA^2)$. Moreover, it is enough to
 construct such an isomorphism on some Zariski open subset $U$ of
$\SBun_L^{d_1}\x S^{d_2}(\AA^2)$ (this follows from the fact that a local system which is constant on a Zariski dense subset is constant
everywhere). Let us choose a projection $a:\AA^2\to \AA^1$ and let $\pi_{a,L}^{d_1}:\SBun_L^{d_1}\to S^{d_1}(\AA^1)$ be the corresponding map. Let $U$ be the open subset of $\SBun_L^{d_1}\x S^{d_2}(\AA^2)$ consisting
of pairs $(\calF,x)$ such that $\pi_{a,L}^{d_1}$ is disjoint from the projection of $x$ to $S^{d_2}(\AA^1)$. Then locally in \'etale topology near
every point of $U$ the scheme $\calU^d_G$ looks like the product $\SBun_G^{d_1}\x \calU_G^{d_2}$ and the statement follows.
\end{proof}
Now, we can finish the proof. Indeed, recall that the closure of $\SBun_L^{d_1}\x S_{\lam}(\AA^2)$ admits a finite
birational map from $\calU^{d_1}_L\x\oS^{\lam}(\AA^2)$, where $\oS^{\lam}(\AA^2)$ stands for the closure of
$S_{\lam}$ in $S^{d_2}(\AA^2)$. Thus for any $\calE$ as above we see that $\IC(\calE)$ is the direct image
of $\IC(\calU^{d_1}_L)\boxtimes \IC(\calE')$ under this map. Moreover, the complex
$\Phi_{T,L}(\IC(\calE))$ is equal to the direct image of $\Phi_{T,L}(\IC(\calU^{d_1}_L))\boxtimes \IC(\calE')$. Hence, we see
that it is perverse and non-zero. Thus, if for some $i\neq 0$ the complex $\IC(\calE)[i]$ is a direct summand of $\Phi_{L,G}(\IC(\calU^d_G))$, then $\Phi_{T,L}(\Phi_{L,G}(\calU^d_G))$ is not perverse. Since $\Phi_{T,L}\circ\Phi_{L,G}\simeq \Phi_{T,G}$, this contradicts Subsection \ref{exborel}.
\qed

Recall $\Uh[P,0]d \defeq p^{-1}(d\cdot 0)$, see~(\ref{247}).
\begin{Corollary}\label{cor:dimest}
$\dim\Uh[P,0]d\leq dh^\vee-1$.
\end{Corollary}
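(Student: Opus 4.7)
The plan is to derive the estimate by invoking Theorem~\ref{thm:perverse} and the cosupport axiom for the perverse sheaf $\Phi_{L,G}(\IC(\cUh{d}))$ at the point $d\cdot 0$ in the centered fixed-point variety, where the ambient complex dimension is smaller by $2$ than in the uncentered setting (and this is precisely what yields the extra unit in the bound).

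First I would use the translation-invariant splitting $\Uh{d}=\cUh{d}\times\AA^2$, which is compatible with $p\colon\Uh[P]d\to\Uh[L]d$ because the $\CC^*$-action defining the attracting and fixed loci commutes with $\AA^2$-translation. It identifies the point $d\cdot 0\in S_{(d)}\AA^2\subset\Uh[L]d$ with the product of the unique $\CC^*\times\CC^*$-fixed point in $\cUh[L]d$ and $0\in\AA^2$; consequently $\Uh[P,0]d=\cUh[P,0]d\times\{0\}$, and it suffices to show $\dim\cUh[P,0]d\leq dh^\vee-1$. The same splitting writes $\Phi_{L,G}(\IC(\Uh{d}))$ as an external product of $\Phi_{L,G}(\IC(\cUh{d}))$ with $\cC_{\AA^2}$, so Theorem~\ref{thm:perverse} implies that $\Phi_{L,G}(\IC(\cUh{d}))$ is perverse on $\cUh[L]d$.

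Next I would apply the cosupport axiom at the $0$-dimensional closed embedding $\xi_0\colon\{d\cdot 0\}\hookrightarrow\cUh[L]d$: perversity forces $H^k(\xi_0^!\Phi_{L,G}(\IC(\cUh{d})))=0$ for all $k<0$. Base change along the cartesian square with top arrow $\tilde j\colon\cUh[P,0]d\to\cUh{d}$ rewrites this as $H^k(\cUh[P,0]d,\tilde j^!\IC(\cUh{d}))=0$ for $k<0$. On the open stratum $\cBun{d}\subset\cUh{d}$ of complex dimension $2dh^\vee-2$ one has $\IC(\cUh{d})|_{\cBun{d}}=\cC_{\cBun{d}}$, and unwinding the shifts gives
\[
H^k\bigl(\cUh[P,0]d\cap\cBun{d},\,\tilde j_0^!\cC_{\cBun{d}}\bigr)=H^{\mathrm{BM}}_{2dh^\vee-2-k}\bigl(\cUh[P,0]d\cap\cBun{d}\bigr).
\]
Since nonempty complex varieties have nonzero top Borel--Moore homology, the vanishing at $k=-1$ forces $\dim_\CC(\cUh[P,0]d\cap\cBun{d})\leq dh^\vee-1$. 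I would then extend the bound to all of $\cUh[P,0]d$ by induction on $d$, using the stratification of $\cUh{d}$ by $\BunGl{d_1}=\Bun{d_1}\times S_\lambda\AA^2$: the intersection of $\cUh[P,0]d$ with a lower stratum ($d_1<d$) is constrained by the factorization property to force the configuration in $S_\lambda\AA^2$ to collapse to the origin, whereupon the inductive hypothesis applied to $\Uh[P,0]{d_1}$ yields a dimension bound strictly smaller than $dh^\vee-1$.

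The main obstacle is ruling out cancellations in the Cousin spectral sequence for the stratification of $\cUh{d}$ that could allow the total hypercohomology to vanish without forcing the top Borel--Moore degree on the open stratum to vanish. This will be handled by observing that every lower stratum has complex dimension strictly less than $2dh^\vee-2$ (since $h^\vee\geq 2$ for any simple $G$), so its Borel--Moore classes contribute only in degrees strictly less than $2dh^\vee-1$ and therefore cannot cancel the putative top-degree class supported on $\cUh[P,0]d\cap\cBun{d}$.
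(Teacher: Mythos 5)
Your argument is correct and is essentially the paper's own proof in Verdier-dual form: the paper runs the same induction on $d$ using the fact that $H_c^{\bullet}(p^{-1}(d\cdot 0),\tilde j^{*}\IC(\cUh{d}))$ is concentrated in degree $0$ (a consequence of the perversity of $\Phi_{L,G}(\IC(\Uh{d}))$), whereas you use the cosupport vanishing of $\xi_0^{!}\Phi_{L,G}(\IC(\cUh{d}))$, i.e.\ $\tilde j^{!}$ and Borel--Moore homology, with the same stratification by instanton number and the same non-cancellation step. The only cosmetic points are that the putative top class of the open stratum sits in the even degree $k=2dh^\vee-2-2\dim(\Uh[P,0]{d}\cap\cBun{d})\le -2$ rather than at $k=-1$, and that ruling out cancellation from the lower strata requires the inductive dimension bound combined with the cosupport estimate for $\IC(\cUh{d})$ along those strata, exactly as in the paper's equally brief justification.
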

\begin{proof}
We will argue by induction in $d$. We assume the claim for all $d'<d$.
We know that the dual space
$(U^d)^*\simeq H_c^\bullet(p^{-1}(d\cdot0),\tilde{j}{}^*\IC(\cUh{d}))$ lives
in degree 0. We consider the Cousin spectral sequence for the stratification
$\Uh[P,0]d=\bigsqcup_{d'\leq d}(\Uh[P,0]{d'}\cap\Bun{d'})$. By the induction
assumption, all the strata for $d'<d$ contribute to nonpositive degrees
of $H_c^\bullet(p^{-1}(d\cdot0),\tilde{j}{}^*\IC(\cUh{d}))$ only.
If we had $\dim\Uh[P,0]d>dh^\vee-1$, the fundamental classes of the top
dimensional components of $\Uh[P,0]d$ would contribute to the strictly
positive degrees in $H_c^\bullet(p^{-1}(d\cdot0),\tilde{j}{}^*\IC(\cUh{d}))$,
and nothing would cancel their contribution. This would contradict to
$H_c^{>0}(p^{-1}(d\cdot0),\tilde{j}{}^*\IC(\cUh{d}))=0$.
\end{proof}

Here is a more direct proof suggested by the referee.
We choose a faithful representation $\varrho\colon G\hookrightarrow\SL(r)$.
It gives rise to a closed embedding 
$\varrho_{\mathcal U}\colon \cUh{d}\hookrightarrow\cUh[r]d$. 
We choose a dominant coweight $\check\chi$ of $T$ such that $L$ is the
centralizer of $\check\chi({\mathbb C}^\times)$. Let 
$L_{\varrho_*\check\chi}\subset P_{\varrho_*\check\chi}\subset\SL(r)$ be the
corresponding Levi and parabolic subgroups. Then
$\varrho_{\mathcal U}(\cUh[P,0]d)
\subset\cUh[P_{\varrho_*\check\chi},0]{d\phi(\varrho)}$, where $\phi(\varrho)$ 
is the Dynkin index of $\varrho$. Now 
$\cUh[r]{d\phi(\varrho)}$ is equipped with a Poisson
structure compatible with the symplectic structure of
$\cGi{r}^{d\phi(\varrho)}$. This Poisson structure has finitely many symplectic
leaves (the strata of the diagonal stratification of 
$\cUh[r]{d\phi(\varrho)}$), and the intersection of 
$\cUh[P_{\varrho_*\check\chi},0]{d\phi(\varrho)}$ with any symplectic leaf is
isotropic since the preimage of $\cUh[P_{\varrho_*\check\chi},0]{d\phi(\varrho)}$
in $\cGi{r}^{d\phi(\varrho)}$ is isotropic. Finally, 
$\varrho_{\mathcal U}\colon \cUh{d}\hookrightarrow\cUh[r]d$ induces a Poisson
structure on $\cUh{d}$ whose symplectic leaves are the strata of the diagonal
stratification of $\cUh{d}$. It follows that the intersection of
$\cUh[P,0]d$ with any symplectic leaf is isotropic, and hence
$\dim\Uh[P,0]d\leq dh^\vee-1$. \qed

\begin{NB}
    Added on Oct.~5, 2014.
\end{NB}

This is the estimate of the attracting set for the most singular point
$d\cdot 0$. The exactness also implies estimates for attracting sets
of other points, more precisely their intersection with the open locus
$\Bun{d}$.
\begin{NB}
    Corrected, Oct. 10, 2014
\end{NB}%
Since any stratum of $\Uh{d}$ is of the form $\Bun{d_1}\times
S_\lambda(\AA^2)$, we have the corresponding dimension estimate for
other strata from the perversity of $\Phi_{L,G}(\IC(\Uh{d_1}))$ for
any $d_1$.
\begin{NB}
    Added, Oct. 10, 2014.
\end{NB}%
Therefore we see that $\Phi_{L,G}$ is hyperbolic semi-small in
the sense of Definition~\ref{def:hypsemismall}.


\newcommand{\n}{\mathfrak n}
\newcommand{\p}{\mathfrak p}
\newcommand{\fl}{\mathfrak l}
\newcommand{\z}{\mathfrak z}
\section{Integral form of the \texorpdfstring{$\scW$}{W}-algebra}\label{sec:intW}

The purpose of this chapter is to introduce an $\bA$-form of the
$\scW$-algebra, generalizing the $\bA$-form $\Vir_{i,\bA}$ of the
Virasoro algebra in \subsecref{sec:Vir}, where the commutation
relations of integral generators of the Heisenberg algebra and the
Virasoro algebra are (see \eqref{eq:mPrel}, \eqref{eq:mVir})
\begin{gather}\label{eq:mPrelApp}
  [\widetilde{P}^i_m, \widetilde{P}^j_n]
  = - m\delta_{m,-n}(\alpha_i,\alpha_j){\ve_1\ve_2},
\\
  [\mL^i_m, \mL^i_n]
    = \ve_1\ve_2 \left\{(m-n) \mL^i_{m+n} +
    \left(\ve_1\ve_2 + 6(\ve_1+\ve_2)^2\right)
    \delta_{m,-n}\frac{m^3 - m}{12}\right\},\notag
\end{gather}
and they are related by
\begin{equation*}
        \mL^i_n = - \frac14 \sum_m \normal{\widetilde{P}^i_m \widetilde{P}^i_{n-m}}
  - \frac{n+1}2 (\ve_1+\ve_2) \widetilde{P}^i_n.
\end{equation*}

Let $\g$ be a complex simple Lie algebra. We do not assume $\g$ is of
type $ADE$ in this chapter. Let $(\ ,\ )$ be the normalized bilinear
form so that the square length of a long root is $2$. Let $\ell$ be
its rank and $d_1 \le \dots \le d_\ell$ be the exponents of $\g$,
counted with multiplicities. For example, $\g = \algsl_{\ell+1}$, we
have $d_1 = 1, d_2 = 2, \dots, d_\ell = \ell$. We have $d_\ell =
h^\vee - 1$.
The multiplicity of the exponent is equal to $1$, except $d_{\ell/2} =
d_{\ell/2+1} = \ell - 1$ for $D_\ell$ with $\ell$ even.
\begin{NB}
  May 13: I correct the explanation of exponents for
  $D_{\mathrm{even}}$.
\end{NB}%

\subsection{Integral form of the BRST complex}

In order to define an $\bA$-form of the $\scW$-algebra, we need to recall
briefly the BRST complex used in the definition of the $\scW$-algebra in
\cite[Ch.~15]{F-BZ}. We assume that the reader is familiar with
\cite[Ch.~15]{F-BZ}, as we skip details.

Let $\g = \mathfrak n_+ \oplus\mathfrak h\oplus\mathfrak n_-$ be the
Cartan decomposition of $\g$. Let $\Delta_\pm$ denote the set of
positive/negative roots. Let $I$ be the set of simple roots.

We consider the vertex superalgebra $C_k^\bullet(\g)$, which is the
tensor product of the affine vertex algebra $V_k(\g)$ of level $k$ and
the fermionic vertex superalgebra $\Wedge_{\mathfrak n_+}^\bullet$.
We have two anti-commuting differentials $\dst$ and $\chi$ on
$C_k^\bullet$ so that $\scW_k(\g)$ is defined as the $0^{\mathrm{th}}$
cohomology with respect to $d = \dst+\chi$.
\index{dst@$\dst$} \index{x@$\chi$}

We do not need the definition of $\dst$, $\chi$. We start with the
subcomplex $C_k^\bullet(\g)_0$ as the cohomology of $C_k^\bullet(\g)$
is a tensor product of $C_k^\bullet(\g)_0$ and another complex, whose
cohomology is trivial (see \cite[Lem.~15.2.7]{F-BZ}).

We take a basis $\{ J^a\}$ of $\g$ consisting of root vectors and
vectors $h^i$, dual to simple roots $\alpha_i$ with respect to $(\ ,\
)$. Let $c^{ab}_d$ be the structure constants of $\g$ with respect to
the basis $\{ J^a\}$.
Latin indices are used to denote arbitrary basis elements, Latin
indices with bar are used to denote elements in $\mathfrak b_- =
\mathfrak h\oplus\mathfrak n_-$. Therefore $\{ J^{\bara}
\}_{\bara\in\Delta_-\cup I}$ is a basis of $\mathfrak b_-$.
Greek indices are used to denote basis elements of $\mathfrak n_+$.
We also have a basis $\{ \psi^*_\alpha \}_{\alpha\in \Delta_+}$
of $\mathfrak n_+^*$.
We denote the corresponding fields by $\widehat J^{\bara}(z)$ and
$\psi^*_\alpha(z)$, where the former has a correction term (see
\cite[(15.2.1)]{F-BZ}).
The field $\widehat J^{\bara}(z)$ satisfies the commutation relation for
the affine Lie algebra at the level $k+h^\vee$ instead of $k$ because
of the correction terms (cf.\ \cite[(4.8.1)]{Arakawa2007}):
\begin{equation}\label{eq:65}
  [\widehat J^{\bara}(z), \widehat J^{\barb}(w)]
  = \sum_{\bar c} c_{\bar c}^{\bara\barb} \widehat J^{\bar c}(w)\delta(z-w)
  + (k+h^\vee) \partial_w \delta(z-w).
\end{equation}

Now the complex $C_k^\bullet(\g)_0$\index{Ckg@$C_k^\bullet(\g)_0$} is
spanned by monomials of the form
\begin{equation}\label{eq:mono}
  \widehat J^{\bara(1)}_{n_1}\cdots \widehat J^{\bara(r)}_{n_r}
  \psi^*_{\alpha(1),m_1}\cdots \psi^*_{\alpha(s),m_s} | 0\rangle,
\end{equation}
and the action of the differentials is given by the following formulas
{\allowdisplaybreaks
\begin{equation}\label{eq:diff}
  \begin{split}
    & [\chi,\widehat J^{\bara}(z)] = \sum_{i\in I}
    \sum_{\beta\in\Delta_+} c_{\alpha_i}^{\bara\beta} \psi_\beta^*(z),
    \\
    & [\chi,\psi_\alpha^*(z)]_+ = 0,
    \\
    & [\dst,\widehat J^\bara(z)] = \sum_{\barb,\alpha} c^{\alpha
      \bara}_{\barb} \normal{\widehat J^\barb(z)\psi_\alpha^*(z)} +
    k\sum_\alpha (J^\bara, J^\alpha) \partial_z \psi_\alpha^*(z) -
    \sum_{\alpha,\beta,b} c^{\alpha b}_\beta c^{\beta
      \bara}_b \partial_z \psi^*_\alpha(z),
    \\
    & [\dst,\psi^*_\alpha(z)]_+ = -\frac12 \sum_{\beta,\gamma}
    c^{\beta\gamma}_\alpha \psi^*_\beta(z) \psi^*_\gamma(z),
  \end{split}
\end{equation}
together} with $\chi|0\rangle = \dst|0\rangle = 0$. Here the formulas
are copied from \cite[15.2.4]{F-BZ} except that the first one is
simplified as we only consider a field for $J^\bara$ in $\mathfrak
b_-$.

The bidegree is defined by
\begin{equation}\label{bidegree}
  \begin{split}
  & \operatorname{bideg} \widehat J^\bara(z) = (-n,n),
  \\
  & \operatorname{bideg} \psi^*_\alpha(z) = (l,-l+1),
  \end{split}
\end{equation}
where $n$ is the principal gradation of $J^\bara$ and $l$ is the
height of the root $\alpha$. (See \cite[15.1.7]{F-BZ} for definitions
of the principal gradation and the height.)
Therefore $\chi$ has bidegree $(1,0)$, and $\dst$ has bidegree $(0,1)$.
We get the double complex $C_k^\bullet(\g)_0 = \bigoplus_{p,q}
C_k^{p,q}(\g)_0$. From the definition of the bidegree, we see that
$C_k^{p,q}(\g)_0 = 0$ unless $p\ge 0$, $-p\le q \le 0$.
\begin{NB}
  \begin{equation}
    \begin{CD}
      C^{0,0} @>\chi>> C^{1,0} @>\chi>> C^{2,0} @>\chi>> C^{3,0} @>\chi>> \cdots
      \\
      @. @A{\dst}AA  @A{\dst}AA @A{\dst}AA
      \\
      @. C^{1,-1} @>\chi>> C^{2,-1} @>\chi>> C^{3,-1} @>\chi>> \cdots
      \\
      @. @. @A{\dst}AA @A{\dst}AA
      \\
      @. @. C^{2,-2} @>\chi>> C^{3,-2} @>\chi>> \cdots
      \\
      @. @. @. @A{\dst}AA
    \end{CD}
  \end{equation}
\end{NB}

Now we rewrite the complex suitable for our purpose. By
\eqref{eq:level} we replace $k$ by $-(h^\vee + \ve_2/\ve_1)$.

Next let us introduce a modification $\widetilde J^\bara(z)$ of
$\widehat J^\bara(z)$, like $\widetilde P^i_m$ of $P^i_m$ in
\subsecref{sec:heis-algebra-assoc}. There is a simple recipe for
this. Reading formulas in \cite[\S15.4.10]{F-BZ}, we note that
$\widehat J^\bara(z)$ for $\bara\in I$ is denoted by $\widehat h^i(z)$
and satisfies the commutation relation
\begin{equation}\label{eq:hhatcomm}
  [\widehat{h}^i_m, \widehat{h}^j_n]
  = m \delta_{m,-n} (\alpha_i,\alpha_j)(k+h^\vee)
  \begin{NB}
    m \delta_{m,-n} (\alpha_i,\alpha_j)\times (-\frac{\ve_2}{\ve_1})
  \end{NB}%
  .
\end{equation}
See also \eqref{eq:65}.
This Heisenberg operator gives the embedding $\scW_k(\g)\to
\Heis(\mathfrak h)$. Comparing \eqref{eq:mPrelApp} with
\eqref{eq:hhatcomm}, we find that it is natural to set
\begin{equation}\label{eq:Jtilde}
  \widetilde J^\bara(z) = \ve_1 \widehat J^\bara(z).
\end{equation}

We also rescale $\chi$ by a function $\varphi$ in $\ve_1$, $\ve_2$ as
$\widetilde\chi = \varphi\chi$. Unless $\varphi$ vanishes, the
cohomology group is independent of $\varphi$. However we will
specialize $\ve_1$, $\ve_2$ to $0$, the result will be
different. Therefore the choice of $\varphi$ is important.
\begin{NB}
  In \cite[15.4.1]{F-BZ}, $k\chi$ is used. But this choice seems not
  suitable for our purpose.
\end{NB}%
Remember that our goal is to realize a generator $\mW{\kappa}n$ in
geometry.
We want to assign it with the perverse cohomological degree
$2(d_\kappa+1)$, as $\mL^i_n$ in \subsecref{sec:Vir} is of degree
$4$. This generator is a sum of a main term $X_0$ of bidegree
$(d_\kappa,-d_\kappa)$ plus correction terms $X_1$, $X_2$, \dots of
bidegree $(p,-p)$ with $0\le p < d_\kappa$ determined by the condition
$\widetilde\chi X_\kappa = -\dst X_{\kappa-1}$. (See
\cite[15.2.11]{F-BZ}.)
\begin{NB}
Slightly edited from Apr. 29 version:

  Remember that our goal is to realize generators $\mW{i}n$ to have
perverse cohomological degree $2(d_i+1)$. These generators are sum of
main terms $X_0$ of bidegree $(d_i,-d_i)$ plus correction terms $X_1$,
$X_2$, \dots of bidegree $(p,-p)$ with $0\le p < d_i$ determined by
the condition $\chi X_i = -\dst X_{i-1}$. (See \cite[15.2.11]{F-BZ}.)
\end{NB}%
Therefore we want all $X_0$, $X_1$, \dots to have the same (perverse)
cohomological degree. This is achieved if $\varphi$ is of degree
$-2$. We still have ambiguity, but look at the formulas
\eqref{eq:diff} and \eqref{eq:Jtilde}, the simplest solution is to
absorb $1/\ve_1$ in $\widetilde J^\bara(z)$ to $\widetilde\chi$, i.e.,
$\widetilde\chi = \chi/\ve_1$.  \index{xtilde@$\widetilde\chi$}

We thus arrive at the following:
{\allowdisplaybreaks
\begin{equation}\label{eq:diff2}
  \begin{split}
    & [\widetilde\chi,\widetilde J^{\bara}(z)] = \sum_{i\in I}
    \sum_{\beta\in\Delta_+} c_{\alpha_i}^{\bara\beta} \psi_\beta^*(z),
    \\
    & [\widetilde\chi,\psi_\alpha^*(z)]_+ = 0,
    \\
    & [\dst,\widetilde J^\bara(z)] =
    \begin{aligned}[t]
    & \sum_{\barb,\alpha} c^{\alpha
      \bara}_{\barb} \normal{\widetilde J^\barb(z)\psi_\alpha^*(z)}
    - (h^\vee \ve_1 + \ve_2)
    \sum_\alpha (J^\bara, J^\alpha) \partial_z \psi_\alpha^*(z)
\\
    & \qquad - \ve_1
    \sum_{\alpha,\beta,b} c^{\alpha b}_\beta c^{\beta
      \bara}_b \partial_z \psi^*_\alpha(z),
    \end{aligned}
    \\
    & [\dst,\psi^*_\alpha(z)]_+ = -\frac12 \sum_{\beta,\gamma}
    c^{\beta\gamma}_\alpha \psi^*_\beta(z) \psi^*_\gamma(z).
  \end{split}
\end{equation}
}

\begin{Definition}
  We consider an $\bA$-span of monomials of the form \eqref{eq:mono}
  replacing $\widehat J$ by $\widetilde J$. We define the
  differentials $\dst$, $\widetilde\chi$ by \eqref{eq:diff2}. We get a
  double complex $C^\bullet_{\bA}(\g)_0$ defined over
  $\bA$. \index{CAg@$C_{\bA}^\bullet(\g)_0$} Its total cohomology
  group $H^\bullet_{\bA}(\g)$ is a vertex superalgebra defined over
  $\bA$.  \index{Htild-Ag@$H^\bullet_\bA(\g)$}
\end{Definition}

The argument in the proof of \cite[Th.~15.1.9]{F-BZ} goes over $\bA$,
and we get
\begin{equation}
  H^i_{\bA}(\g) = 0\quad\text{for $i \neq 0$}.
\end{equation}
We have
\begin{equation}
  H^0_{\bA}(\g)\otimes_{\bA}\bF
  \cong H^0_{\bF}(\g),
\end{equation}
as the localization is an exact functor.
\begin{NB}
  What is the appropriate reference ?
  \begin{NB2}
    May 4: I change the explanation.
  \end{NB2}
\end{NB}%
Here $H^0_{\bF}(\g)$ is the cohomology group of the complex
$C^\bullet_{\bA}(\g)_0\otimes_{\bA}\bF$. It is isomorphic to
$\scW_k(\g)\otimes_{\CC(k)}\bF$ as $\ve_1\neq 0$ in $\bF$, where $k =
-h^\vee-\ve_2/\ve_1$ as before.

\begin{Proposition}
  $H^0_\bA(\g)$ is free over $\bA$.
\end{Proposition}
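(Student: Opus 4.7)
The plan is to exploit the conformal weight grading together with the higher-cohomology vanishing $H^i_\bA(\g) = 0$ for $i \neq 0$ stated just above, and to conclude freeness by an appeal to Quillen--Suslin. The vertex superalgebra $C^\bullet_\bA(\g)_0$ carries a conformal weight grading that is preserved by both differentials $\dst$ and $\widetilde\chi$, and in each conformal weight $w$ only finitely many PBW monomials of the form \eqref{eq:mono} contribute; hence each conformal weight subcomplex $C^{\bullet,w}_\bA(\g)_0$ is a bounded complex of finitely generated free $\bA$-modules whose total cohomology is concentrated in degree zero.

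The main step is to deduce that each weight piece $H^{0,w}_\bA(\g)$ is a projective $\bA$-module. I would peel off the complex from the top: if $n$ denotes the largest $p$ with $C^{p,w}_\bA(\g)_0 \neq 0$, then the vanishing of $H^n$ forces $d = \dst + \widetilde\chi$ to surject $C^{n-1,w}_\bA(\g)_0$ onto the free module $C^{n,w}_\bA(\g)_0$, so the short exact sequence
\[
0 \to \ker d \to C^{n-1,w}_\bA(\g)_0 \to C^{n,w}_\bA(\g)_0 \to 0
\]
splits and $\ker d$ is a direct summand of a free module, hence projective. The vanishing of $H^{n-1}$ identifies this kernel with the image of the differential in position $n-2$, and a second splitting makes the corresponding kernel projective as well. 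Iterating downwards to $p = 0$, one finds that $H^{0,w}_\bA(\g)$ is a finitely generated projective $\bA$-module.

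By the Quillen--Suslin theorem, every finitely generated projective module over the polynomial ring $\bA = \CC[\ve_1,\ve_2]$ is free, so each $H^{0,w}_\bA(\g)$ is free; summing over $w$ yields freeness of $H^0_\bA(\g)$ as an $\bA$-module. The hard part, I expect, will be the preliminary structural verification: one must check carefully that the passage from $\widehat J$ to $\widetilde J = \ve_1 \widehat J$ and from $\chi$ to $\widetilde\chi$ still produces a vertex $\bA$-algebra with a well-defined conformal grading whose weight spaces are finitely generated free over $\bA$, and that the vanishing of $H^i_\bA(\g)$ for $i \neq 0$ (which is asserted above via the argument of \cite[Th.~15.1.9]{F-BZ} carried out over $\bA$) indeed respects the conformal decomposition. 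Once these points are in place the projectivity-plus-Quillen--Suslin argument is purely formal.
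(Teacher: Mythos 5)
Your argument is correct and takes essentially the same approach as the paper: both decompose $C^\bullet_\bA(\g)_0$ into its finite-rank free graded subcomplexes preserved by the differential, use the concentration of cohomology in degree $0$ to deduce that each graded piece of $H^0_\bA(\g)$ is projective, and conclude freeness over the polynomial ring $\bA=\CC[\ve_1,\ve_2]$ (Quillen--Suslin). The only immaterial difference is that you obtain projectivity by peeling off splittings from the top of each bounded subcomplex, while the paper packages the same fact as the vanishing of $\Ext^{>0}_\bA(M,N)$ computed from the quasi-isomorphic bounded complex of projectives sitting in non-negative degrees.
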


\begin{proof}
    Note that the complex $C^\bullet_{\bA}(\g)_0$ is a direct sum of
    its homogeneous components with respect to the $\Z$-gradation.
    Each component forms a subcomplex and is free of finite rank over
    $\bA$. Hence results in the homological algebra can be applied.
    Since only the $0^{\mathrm{th}}$ cohomology survives, a component
    $M$ of $H^0_\bA(\g)$ is quasi-isomorphic to a complex of
    projective modules $P^\bullet$ with $P^i = 0$ for $i < 0$. Then we
    compute $\Ext^\bullet_{\bA}(M,N)$ via $P^\bullet$ to deduce
    $\Ext^{>0}_\bA(M,N) = 0$ for any $N$. Therefore $M$ is projective.
    Since $\bA$ is a polynomial ring, $H^0_{\bA}(\g)$ is free.
    \begin{NB}
        Sasha's mail on Nov.~25.
    \end{NB}%
\end{proof}

Thus $H^0_{\bA}(\g)$ is an $\bA$-form of the $\scW$-algebra.
\begin{Definition}
  We denote $H^0_{\bA}(\g)$ by $\scW_\bA(\g)$. It is called an {\it
    $\bA$-form of the $\scW$-algebra}.
\end{Definition}
\index{WAg@$\scW_\bA(\g)$}

\begin{NB}
Older version on Apr. 29:

Note that $H^0_{\CC[\ve_1,\ve_2]}(\g)$ is a submodule of a free module, it
does not have a torsion. Therefore the homomorphism
$H^0_{\CC[\ve_1,\ve_2]}(\g)\to H^0_{\CC(\ve_1,\ve_2)}(\g)$ is injective.
Therefore $H^0_{\CC[\ve_1,\ve_2]}(\g)$ is an integral form of the
$\scW$-algebra.
\end{NB}

\begin{NB}
Added on May 11:

Let us assign
\begin{equation}
  \begin{split}
    & \operatorname{bideg} \widetilde J^\bara(z) = (-k+1,k+1),
  \\
  & \operatorname{bideg} \ve_1, \ve_2 = (1,1).
  \end{split}
\end{equation}
Then $\dst$ has bidegree $(0,1)$, and $\widetilde\chi$ has bidegree
$(0,-1)$. Therefore the first degree is preserved under $d =
\dst+\widetilde\chi$. So we take the first component to define a new
degree as follows.
\end{NB}

Let us introduce a new degree, which corresponds to the half of the
(perverse) cohomological degree in the geometric side. Let us denote
it by `$\cohdeg$'.
We set $\cohdeg |0\rangle = 0$, $\cohdeg \ve_1 = \cohdeg \ve_2 =
1$. The degree of operators $\widehat J^\bara(z)$ and
$\psi^*_\alpha(z)$ is the first component of the bidegree. Then
we put $\cohdeg \widetilde J^\bara(z) = \cohdeg \widehat J^\bara(z) +
1$ by \eqref{eq:Jtilde}.
For example, $\widetilde P^i_m$ in \subsecref{sec:heis-algebra-assoc}
is a Fourier mode of $\widetilde J^\bara(z)$ for $J^\bara =
h^i$. Therefore $\cohdeg\widetilde P^i_m = 1$.

From the definition \eqref{eq:diff2} we see that both $\widetilde\chi$
and $\dst$ have degree $0$. Therefore this degree descends to the
cohomology group $H^0_{\bA}(\g) = \scW_{\bA}(\g)$. Hence $\scW_{\bA}(\g)$ is
a graded $\bA$-module, where $\bA = \CC[\ve_1,\ve_2]$ is graded in the
same way.

Be warned that $\cohdeg$ is not a $\Z$-grading of the vertex algebra
in the sense of \cite[\S1.3.1]{F-BZ}. All Fourier modes of vertex
operators $Y(A,z)$, say $\widetilde J^\bara(z)$, have the same degree,
which is equal to the degree of the corresponding states $A =
\left.Y(A,z)|0\rangle\right|_{z=0}$. The translation operator $T$ is
of degree $0$.

\subsection{Generators
  \texorpdfstring{$\protect\mW{\kappa}n$}{Wn(k)}}\label{subsec:gen}

The $\scW$-algebra $\scW_k(\g)$ is generated by certain elements $W_\kappa$
($\kappa=1,\dots,\ell)$ in the sense of the reconstruction
theorem. (See \cite[15.1.9]{F-BZ}.) Moreover the subspace spanned by
$W_\kappa$ generates a PBW basis of $\scW_k(\g)$. (See \cite[\S3.6 and
Prop.~4.12.1]{Arakawa2007} for the meaning of this statement.)

We briefly recall the definition of $W_\kappa$ and see that their simple
modifications live in our integral form and generate a PBW base of
$\scW_{\bA}(\g)$. Let us change notation from $W_\kappa$ to $W^{(\kappa)}$ in order
to avoid a possible conflict with Fourier modes.
\index{Wzkappa@$W^{(\kappa)}$}

We have a regular nilpotent element $p_-$ in $\mathfrak n_-$ so that
$\chi$ is given by $(p_-,\bullet) = \chi(\bullet)$. (See
\cite[15.2.9]{F-BZ}.) Let $\mathfrak a_-$ be the kernel of
$\operatorname{ad} p_-$. It is a maximal abelian Lie subalgebra of $\g$.

The cohomology $H^i$ of the complex $C^\bullet_k(\g)_0$ with respect
to $\chi$ vanishes for $i\neq 0$ and $H^0$ is equal to $V(\mathfrak
a_-)$, the vertex algebra associated with $\mathfrak a_-$. It is a
commutative vertex algebra, and isomorphic to the symmetric algebra
$\operatorname{Sym}(\mathfrak a_-\otimes t^{-1}\CC[t^{-1}])$ of
$\mathfrak a_-\otimes t^{-1}\CC[t^{-1}]$. Therefore a basis of
$\mathfrak a_-$ gives a PBW base of $V(\mathfrak a_-)$.

There is a standard choice of a base of $\mathfrak a_-$. We take an
$\algsl_2$-triple $\{ p_+, p_0, p_-\}$ for $p_-$, and decompose $\g$
into a direct sum of $(2d_\kappa+1)$-dimensional representations
$R_\kappa$ ($\kappa=1,\dots,\ell)$.
We choose a decomposition for $\g=D_{\ell}$ with $\ell$ even,
$\kappa=\ell/2$, $\ell/2+1$.
We then choose a lowest weight vector $p^{(\kappa)}_-$ in $R_\kappa$. Then
$\{ p^{(\kappa)}_-\}_{\kappa=1,\dots,\ell}$ is a base of $\mathfrak
a_-$. The vectors $p^{(\kappa)}_-$ are unique up to constant multiple,
and we fix them hereafter.
\begin{NB}
  The highest constant multiple will be fixed from the Whittaker
  condition. An explanation is added here. March 11, 2014.
\end{NB}
In fact, our geometric consideration of the $\scW$-algebra will give us a
canonical choice of $p^{(\kappa)}_-$ for $\kappa = \ell$, at least up
to sign. See several paragraphs after \thmref{thm:cd}.

The same is true over $\bA$. The cohomology of $C^\bullet_\bA(\g)_0$
with respect to $\chi$ vanishes except the degree $0$, and $H^0$ is
equal to $V(\mathfrak a_-)\otimes_\CC \bA$. The PBW base is its
$\bA$-basis.

Let ${}^0\mW{\kappa}{}(z)$ be the linear combination of $\widetilde
J^\bara(z)$ corresponding to $p^{(\kappa)}_-$, and let
${}^0\mW{\kappa}{(-1)}$ be its constant part. Then
${}^0\mW{\kappa}{(-1)}|0\rangle$ is contained in the kernel of
$\widetilde\chi$.
We construct a cocycle $\mW{\kappa}{}$ with respect to $d = \dst +
\widetilde\chi$ which is the main term ${}^0\mW{\kappa}{(-1)}|0\rangle$ of
bidegree $(d_\kappa,-d_\kappa)$ plus a sum of terms of bidegree $(p,-p)$ with
$0\le p < d_\kappa$, as we mentioned above.
It is unique up to an element in $\Ker\widetilde\chi$ of a lower
degree. We fix $\mW{\kappa}{}$ hereafter. We write
\begin{equation}\label{eq:94}
  Y(\mW{\kappa}{},z) = \sum_{n\in\Z} \mW{\kappa}n z^{-n-d_\kappa-1}.
\end{equation}
\index{Wzkappatilde@$\mW{\kappa}{}$}

\begin{NB}
Comment out on May 18:

\begin{NB2}
This construction is the same for the original $\scW_k(\g)$ in
\cite[\S15.2]{F-BZ}, except that the vector $W_\kappa$ was originally
constructed from $P^{(\kappa)}(z)$ corresponding to $\widehat J^\bara(z)$
instead of $\widetilde J^\bara(z)$. Therefore we have
\begin{equation}\label{eq:stdWgen}
  \text{our $\widetilde W_\kappa$} = \text{$\ve_1$ standard $W_\kappa$}.
\end{equation}
\end{NB2}

This identification is wrong as the isomorphism $\scW_k(\g)\otimes\bF
\cong \scW_\bA(\g)\otimes\bF$ involves a normalization for the difference
between $\chi$ and $\widetilde\chi = \chi/\ve_1$. Consider the case
$\g = \algsl_2$ for example. The standard generator $\scW_1$ gives us the
Sugawara field, which satisfies the commutation relation
\begin{equation}
  \begin{split}
    [S_m, S_n] & = ({k+h^\vee}) \left( (m-n) S_{m+n} + c \delta_{m,-n}
      \frac{m^3 - m}{12} \right)
    \\
    & = -\frac{\ve_2}{\ve_1} (m-n) S_{m+n} + \cdots.
  \end{split}
\end{equation}
On the other hand, our Virasoro generators satisfy
\begin{equation}
  [\widetilde L_m, \widetilde L_n] = (m-n)\ve_1\ve_2 \widetilde L_{m+n}
  + \cdots
\end{equation}
Therefore we must have
\begin{equation}
  \widetilde L_m = {\ve_1^2}S_m.
\end{equation}
We correct the identification in the next subsection.
\end{NB}

Let us check that $\cohdeg \mW{\kappa}{} = d_\kappa + 1$. Since $\dst$ and
$\widetilde\chi$ preserve $\cohdeg$, we have $\cohdeg \mW{\kappa}{} =
\cohdeg {}^0\mW{\kappa}{}|0\rangle$.
(Remember that we modify $\chi$ to $\widetilde \chi$ so that this
is achieved.)
Now the latter does not contain $\psi^*_\alpha(z)$, its degree is
equal to the first component of the bidegree plus $1$, i.e., $d_\kappa +
1$. Thus $\cohdeg\mW{\kappa}{} = d_\kappa + 1$. This is what we want from
a geometry side.

\subsection{Grading vs filtration}\label{subsec:grfil}

Let us make the relation between $\scW_k(\g)$ and $\scW_\bA(\g)$ more
precise so that we could easily transfer computation in the literature
to our setting.

Recall that the complexes \eqref{eq:diff} and \eqref{eq:diff2} become
the same if we put $\ve_1 = (k+h^\vee)^{-1}$, $\ve_2 = - 1$ and
identify $\widetilde\chi$ (resp.\ $\widetilde J^\bara(z)$) with
$\chi/\ve_1$ (resp.\ $\ve_1\widehat J^\bara(z)$). As $H^{>0}_\bA(\g) =
0$ and $\scW_\bA(\g)$ is free, the K\"unneth spectral sequence degenerate
at $E_2$, and hence the specialization commutes with the cohomology.
\begin{NB}
    I add an explanation.
\end{NB}%
In particular, the homomorphism $\widehat J^\bara(z)\mapsto \widetilde
J^\bara(z)/\ve_1$ induces an isomorphism
\begin{equation}\label{eq:67}
  \scW_k(\g) \xrightarrow{\cong}
  \scW_\bA(\g)\otimes \bA/(\ve_1-(k+ h^\vee)^{-1}, \ve_2 + 1).
\end{equation}
\begin{NB}
    We also recover that the higher cohomology group vanishes at any
    specialization. But as the vanishing of the higher cohomology
    group over $\bA$ is the same argument as the specialization, it
    cannot be considered as a new proof.
\end{NB}%
Under this isomorphism standard generators $W^{(\kappa)}_n$ and our
$\mW{\kappa}n$ are related by
\begin{equation}\label{eq:std}
  \text{our $\mW{\kappa}n$} =
  \text{$\ve_1^{d_\kappa+1}$ standard $W^{(\kappa)}_n$},
\end{equation}
as they are defined in the same way.

\begin{NB}
  In an older version, we set $\ve_1 = 1$, $\ve_2 = -(k+h^\vee)$. Then
  $\mW{\kappa}n = \W{\kappa}n$ at the specialization, but
  $\ve_1^{d_\kappa+1}$ appears when we consider the Rees algebra.
\end{NB}

From this consideration, we can recover
\(
   \scW_\bA(\g)\otimes_{\bA}\bB_1
\)
with $\bB_1 = \CC[\ve_1] = \bA/(\ve_2 + 1)$ from $\scW_k(\g)$ as
follows.
\index{B1@$\bB_1 = \CC[\ve_1]$}
Let us consider $k$ as a variable and understand that $\scW_k(\g)$ is a
vertex algebra defined over $\CC(k)$. We identify $\CC(k) = \CC(\ve_1)$
via $\ve_1 = (k+h^\vee)^{-1}$.
Then $\scW_\bA(\g)\otimes_\bA\bB_1\otimes_{\bB_1}\CC(k)$ is isomorphic to
$\scW_k(\g)$, the cohomology of the complex over $\CC(k)$ by the K\"unneth spectral sequence as above.
\begin{NB}
    I add an explanation, and change the following slightly.
\end{NB}%
Then we have an embedding $\scW_{\bA}(\g)\otimes_{\bA}\bB_1 \to \scW_k(\g)$,
and the image is the $\bB_1$-submodule generated by
$\ve_1^{d_\kappa+1}W^{(\kappa)}_n$. We denote
$\scW_{\bA}(\g)\otimes_{\bA}\bB_1$ by $\scW_{\bB_1}(\g)$ hereafter.
\index{WBg@$\scW_{\bB_1}(\g) = \scW_{\bA}(\g)\otimes_\bA\bB_1$}

Note further that the entire $\scW_\bA(\g)$ can be recovered from
$\scW_{\bB_1}(\g)$ as follows.
Since $\scW_\bA(\g)$ is graded by $\cohdeg$, we have an induced
filtration $0 = F_{-1} \subset F_0 \subset F_1 \subset\cdots$ on
$\scW_{\bB_1}(\g)$ such that $\ve_1 F_p\subset F_{p+1}$. Then we can
recover $\scW_\bA(g)$ as the associated Rees algebra:
\begin{equation}\label{eq:Rees}
  \scW_\bA(\g) = \bigoplus_p \ve_2^p F_p.
\end{equation}
In fact, we have a natural surjective homomorphism from the left hand
side to the right, and it is also injective as $\scW_\bA(\g)$ is torsion
free over $\bB_2 = \CC[\ve_2]$.\index{B2@$\bB_2 = \CC[\ve_2]$}
\begin{NB}
  Suppose that $x\in \scW_\bA(\g)$ is mapped to zero in $\bigoplus_p
  \ve_2^p F_p$. We may assume that $x$ is of degree $p$. Since $x$ is
  zero at $\ve_2 = -1$, we have $y\in \scW_\bA(\g)$ such that $x = (\ve_2
  + 1)y$. Write $y = \sum y_q$, where $y_q$ is a homogeneous
  element. Then $\ve_2 y_{q-1} + y_q$ is $0$ if $q\neq p$ and $x$ if
  $q=p$. We have $y_q = 0$ for sufficiently large $q$. If $q\neq p$,
  we have $\ve_2 y_{q-1} = 0$, and hence $y_{q-1} = 0$ as $\scW_\bA(\g)$
  is torsion free. Hence we have $y_p = y_{p+1} = \dots = 0$. On the
  other hand, we have $y_{-1} = 0$. Hence $y_0 = 0$ unless $p=0$. We
  repeat the argument until we get $y_{p-1} = 0$. Therefore $y=0$,
  and $x=0$.
\end{NB}%
Note also the specialization at $\ve_2 = 0$ can be also recovered as
the associated graded of the filtration.

The filtration $F_\bullet$ on $\scW_{\bB_1}(\g)$ can be defined directly.
From its definition, we assign $\cohdeg
(\ve_1^{d_\kappa+1}W^{(\kappa)}_n) = d_\kappa + 1$ and $\cohdeg \ve_1
= 1$.
This gives us the filtration on $\scW_{\bB_1}(\g)$.

Let us explain how the formula for $W^{(1)}_n$ given in
\cite[(15.3.1)]{F-BZ} can be understood in our framework, for
example. The field $T(z)$ written there is already divided by
$k+h^\vee$ so that its Fourier modes gives Virasoro generators
$L_n$. Therefore $W^{(1)}_n = (k+h^\vee) L_n$ and hence $\mW{1}n =
\ve_1^2 (k+h^\vee) L_n = - \ve_1 \ve_2 L_n$. This is compatible (up to
sign) with modified Virasoro generators in \subsecref{sec:Vir}, as
$\widetilde L^{(i)}_n = \ve_1\ve_2 L^i_n$.

\begin{NB}
Be warned that this filtration is different from the standard
filtration (see \cite[\S3.5]{Arakawa2007} and [Li ?]) or one in
\cite[\S4.11]{Arakawa2007}, as we assign $\cohdeg \ve_2 = 1$.
\end{NB}%

\subsection{Specialization at \texorpdfstring{$\ve_1 = 0$}{epsilon1=0}}\label{subsec:oper}

In this subsection, we study the specialization at $\ve_1 = 0$. This
is the classical limit of the $\scW$-algebra, but it also contains
$\ve_2$ as a parameter. The relevant computation can be found in
\cite[\S15.4.1$\sim$6]{F-BZ}.

\begin{NB}
  Slightly edited on May 17.
\end{NB}
Let us set $\ve_1 = 0$ in \eqref{eq:diff2}. Since $\widetilde
J^\bara(z)$ and $\widetilde J^\barb(z)$ commute at $\ve_1 = 0$ (see
\eqref{eq:65}), the complex is identified with polynomials in the
commuting variables $\widetilde J^\bara_n$ ($n < 0$) and
anti-commuting variables $\psi^*_{\alpha,m}$ ($m\le 0$). Therefore
\begin{equation}
  C^\bullet_{\bA}(\g)_0\otimes_{\bA} \bB_2 
  \cong
  \operatorname{Sym} \mathfrak b_-((t))/b_-[[t]]\otimes_\CC
  \Wedge^\bullet \mathfrak n_+[[t]]^* \otimes \bB_2,
\end{equation}
where $\bB_2 = \CC[\ve_2] = \bA/\ve_1\bA$.
The differential is specialized as
{\allowdisplaybreaks
\begin{equation}\label{eq:ve1=0}
  \begin{split}
    & [\widetilde\chi,\widetilde J^{\bara}(z)] = \sum_{i\in I}
    \sum_{\beta\in\Delta_+} c_{\alpha_i}^{\bara\beta} \psi_\beta^*(z),
    \\
    & [\widetilde\chi,\psi_\alpha^*(z)]_+ = 0,
    \\
    & [\dst,\widetilde J^\bara(z)] =
    \begin{aligned}[t]
    & \sum_{\barb,\alpha} c^{\alpha
      \bara}_{\barb} \widetilde J^\barb(z)\psi_\alpha^*(z)
    - \ve_2
    \sum_\alpha (J^\bara, J^\alpha) \partial_z \psi_\alpha^*(z),
    \end{aligned}
    \\
    & [\dst,\psi^*_\alpha(z)]_+ = -\frac12 \sum_{\beta,\gamma}
    c^{\beta\gamma}_\alpha \psi^*_\beta(z) \psi^*_\gamma(z),
  \end{split}
\end{equation}
where} power series in $z$ contain only terms with non-negative degrees in $z$.
This is exactly the same complex as in \cite[\S15.4.2]{F-BZ}, if we
set $\ve_2 = -1$. It is the complex at the classical limit
$k\to\infty$.

By \cite[Cor.~15.4.6]{F-BZ}, the cohomology group $H^i_{\ve_1=0}(\g)$
of this complex (at $\ve_2 = -1$) vanishes for $i\neq 0$, and
$H^0_{\ve_1 = 0}(\g)$ is isomorphic to the ring of functions on
$\mathfrak a_+[[t]]$, where $\mathfrak a_+$ is the kernel of
$\operatorname{ad}p_+$. Here $p_+$ is as in the previous subsection.

In fact, $\mathfrak a_+[[t]]$ is obtained as the quotient of the space
of connections of the form
\begin{equation}\label{eq:oper}
  \nabla = \partial_t + p_- + A(t), \qquad
  A(t)\in\mathfrak b_+[[t]],
\end{equation}
modulo the action of the gauge transformations $N_+[[t]]$. This is the
space $\operatorname{Op}_G(D)$ of $G$-opers on the formal disk $D =
\operatorname{Spec}\CC[[t]]$.
There exists a unique gauge transformation in $N_+[[t]]$ so that
$\nabla$ is transformed into the same form with $A(t)\in\mathfrak
a_+[[t]]$.

It is easy to put $\ve_2$ in this picture. The term with $\ve_2$
corresponds to the differential of the gauge transformation. Therefore
the cohomology of our complex is the ring of functions on the
quotient space of $(-\ve_2)$-connections
\begin{equation}
  \nabla = -\ve_2 \partial_t + p_- + A(t)
\end{equation}
modulo $N_+[[t]]$. It is the space of $(-\ve_2)$-opers on $D$. This
notion appears for example in \cite[\S5.2]{Opers}. See also \subsecref{sec:oppos-spectr-sequ} below.

We have a structure of a vertex Poisson algebra on $H^0_{\ve_1=0}(\g)$
by \cite[16.2.4]{F-BZ}. It is defined by renormalizing the polar part
of vertex operators
\begin{equation}
  Y_-(A,z) = \left.\frac1{\ve_1} Y_-(\widetilde A,z)\right|_{\ve_1=0}.
\end{equation}

We can further make $\ve_2 = 0$. Then we get $(p_- + \mathfrak
b_+[[t]])/N_+[[t]]$. This space is also equal to $\mathfrak a_+[[t]]$.
The proof in \cite[15.4.5]{F-BZ} works also at $\ve_2 = 0$.
In fact, the result is a consequence of a classical result of Kostant:
$(p_- + \mathfrak b_+)/N_+ \cong \mathfrak a_+$. See
\cite[\S5.4]{Opers} for further detail.
Therefore the cohomology group $H^i_{\ve_1,\ve_2=0}(\g)$ of the
complex at $\ve_1 = \ve_2 = 0$ vanishes for $i\neq 0$, and
$H^0_{\ve_1,\ve_2=0}(\g) \cong V(\mathfrak a_-)$.

\begin{NB}
Version on Apr. 29:

  We can further make $\ve_2 = 0$. Then we get $(p_- + \mathfrak
b_+[[t]])/N_+[[t]]$. I do not know the reference, but this space
should be also equal to $\mathfrak a_+[[t]]$. In fact, the same proof
as in \cite[15.4.4]{F-BZ} should work.
\end{NB}

\begin{NB}
  I do not study the Poisson structure at $\ve_2 = 0$. I guess that it
  should be something like the Kostant-Kirillov Poisson
  structure. More precisely, we first identify
  $\operatorname{Fun}(\mathfrak a_+[[t]]) =
  \operatorname{Sym}(\mathfrak a_-\otimes t^{-1}\Q[t^{-1}])$ with the
  ring $\operatorname{Fun}(\g[[t]])^{\g[[t]]}$ of $\g[[t]]$-invariant
  functions on $\g[[t]]$. And we should construct the structure of a
  vertex Poisson algebra on it. I am still have a confusion between a
  vertex Poisson algebra and the usual Poisson algebra, so I cannot
  make this more precise now.
\end{NB}

The argument for \eqref{eq:67} works also here, i.e., the specialization commutes with cohomology group. We have
\begin{equation}
    \begin{split}
        & \scW_\bA(\g)\otimes_{\bA}\bB_2 \cong H^0_{\ve_1=0}(\g),
\\
      & \scW_\bA(\g)\otimes_{\bA}\CC \cong H^0_{\ve_1,\ve_2=0}(\g)
      \cong V(\mathfrak a_-),
    \end{split}
\end{equation}
where $\bB_2 = \bA/\ve_1\bA$, $\CC = \bA/(\ve_1,\ve_2)$.

\begin{NB}
Here is an older version. (edited on Nov. 25, 2013)

Recall that the complex at $\ve_1 = 0$ is given by the tensor product
$C^\bullet_{\bA}(\g)_0\otimes_{\bA}\bB_2$. We apply the K\"unneth
formula to compute $\scW_\bA(\g)\otimes_{\bA}\bB_2$. Note that
$C^\bullet_\bA(\g)_0$ is flat over $\bB_1$ and $H^i_\bA(\g)$
vanishes for $i\neq 0$. Therefore
\begin{equation}
  \scW_\bA(\g)\otimes_{\bA}\bB_2 \cong H^0_{\ve_1=0}(\g).
\end{equation}
The same applies to the further specialization at $\ve_1 = \ve_2 =
0$. We have
\begin{equation}
  \scW_\bA(\g)\otimes_{\bA}\CC \cong H^0_{\ve_1,\ve_2=0}(\g) \cong V(\mathfrak a_-),
\end{equation}
where $\CC = \bA/(\ve_1,\ve_2)$.
\end{NB}

\subsection{The opposite spectral sequence}\label{sec:oppos-spectr-sequ}

The embedding of the $\scW$-algebra into the Heisenberg algebra is given
by considering the `opposite' spectral sequence associated with the
double complex $C^\bullet_k(\g)_0$, where the $E_1$-term is the
cohomology with respect to $\dst$. The detail is explained in
\cite[\S15.4.10]{F-BZ}, and we give a brief review in order to see that
the embedding is compatible with integral forms.

Let $\widetilde H^i_k(\g)$ be the $i^{\mathrm{th}}$ cohomology of the
complex $C^\bullet_k(\g)_0$ with respect to $\dst$. This notation is
taken from \cite{F-BZ} and has nothing to do with our notation for
elements in the integral form.
\index{Htildekg@$\widetilde H^\bullet_k(\g)$}
Let $\widehat h^i(z)$ denote $\widehat J^\bara(z)$ for $\bara = i\in
I$. Then we have
\begin{equation}
  [\dst, \widehat h^i(z)] = 0, \qquad
  [\dst, \psi^*_{\alpha_i}(z)]_+ = 0
\end{equation}
by \eqref{eq:diff}.
Therefore we have linear maps $\CC[\widehat h^i_n]_{i\in I, n <
  0}|0\rangle \to \widetilde H^0_k(\g)$, $\bigoplus_i \CC[\widehat
h^j_n]_{j\in I, n<0} \psi^*_{\alpha_i,0}|0\rangle \to \widetilde
H^1_k(\g)$ respectively. In fact, they live in the uppermost row as
$\operatorname{bideg} \widehat h^i(z) = (0,0)$, $\operatorname{bideg}
\psi^*_{\alpha_i}(z) = (1,0)$.
\begin{NB}
  The bidegree $(0,0)$ part is exactly $\CC[\widehat h^i_n]_{i\in I, n
    < 0}|0\rangle$. Therefore $\CC[\widehat h^i_n]_{i\in I, n <
    0}|0\rangle \to \widetilde H^0_k(\g)$ is injective. There are
  bidegree $(p,-p)$ parts also, but they do not contribute to
  $\widetilde H^0_k(\g)$ for generic $k$.

  We have
  \begin{equation}
    \begin{split}
    & C^{1,-1}_k(\g)_0 = \bigoplus_{i,m<0} \CC[\widehat h^j_n]_{i\in I, n\in\Z}
     \widehat f_{i,m} |0\rangle,
\\
    & C^{1,0}_k(\g)_0 = \bigoplus_{i,m\le 0} \CC[\widehat h^j_n]_{i\in I, n\in\Z}
     \psi^*_{\alpha_i,m} |0\rangle,
    \end{split}
  \end{equation}
  where $\widehat f_{i,m}$ is the Fourier mode of $\widehat
  J^\bara(z)$ corresponding to the basis element $f_i =
  f^{\alpha_i}$. We have
  \begin{equation}
    [\dst, \widehat f_i(z)]
    = \frac2{(\alpha_i,\alpha_i)}\left(
    \normal{\widehat h^i(z) \psi^*_{\alpha_i}(z)}
    + (k+h^\vee) \partial_z \psi^*_{\alpha_i}(z)
    \right).
  \end{equation}
  (See \cite[\S15.4.11]{F-BZ}.)

  The bidegree $(1,0)$ part is $\bigoplus_{i,m\le 0} \CC[\widehat
  h^j_n]_{j\in I, n<0} \psi^*_{\alpha_i,m}|0\rangle$. For generic $k$,
  $\dst$ kills the $m < 0$ part.
\end{NB}%
Then by considering the limit $k\to\infty$, one can see that both
cohomology groups are exactly the same as the above spaces
respectively if $k$ is generic.
Moreover one can identify $\widetilde H^0_k(\g)$ with the Heisenberg
vertex algebra associated with the Cartan subalgebra $\h$ of
$\mathfrak g$. This is because $\widehat h^i_n$ satisfies the
commutation relation \eqref{eq:hhatcomm}.
Modified generators $\overline{h}^i_n = \widehat{h}^i_n /
\sqrt{k+h^\vee}$ satisfy the usual commutation rule
\begin{equation}
  [\overline{h}^i_m, \overline{h}^j_n]
  = m \delta_{m,-n} (\alpha_i,\alpha_j).
\end{equation}
And $\widetilde H^1_k(\g)$ is its module. It is a direct sum of
$(\# I)$ Fock modules. The highest weights are given
by the formula
\begin{equation}
  \overline{h}^i_0 \psi^*_{\alpha_j,0}|0\rangle
  = - \frac{(\alpha_i,\alpha_j)}{\sqrt{k+h^\vee}} \psi^*_{\alpha_j,0}|0\rangle.
\end{equation}
Another differential $\chi$ induces a homomorphism $\widetilde
H^0_k(\g) \to \widetilde H^1_k(\g)$. Since $\widetilde H^1_k(\g)$
lives only at bidegree $(1,0)$, we have $\scW_k(\g) = H^0_k(\g) \cong
\Ker\chi$ for generic $k$.

Moreover $\chi$ is the sum of the residue of the field
$\psi^*_{\alpha_i}(z)$, which is given by the vertex operator in terms
of the Heisenberg algebra:
\begin{equation}
  \psi^*_{\alpha_i}(z) = V_{-\alpha_i/\sqrt{k+h^\vee}}(z)
\end{equation}
where
\begin{equation}\label{eq:vo}
  V_\lambda(z) = S_\lambda z^{\lambda b_0}
  \exp\left( -\lambda \sum_{n<0} \frac{b_n}n z^{-n} \right)
  \exp\left( -\lambda \sum_{n>0} \frac{b_n}n z^{-n} \right).
\end{equation}
This formula is given in \cite[(5.2.8)]{F-BZ}. The operator
$S_\lambda$ sends the highest weight vector $|0\rangle$ to the highest
weight vector $|\lambda\rangle$ and commutes with all $b_n$, $n\neq 0$.
And $\lambda b_n$ is
replaced by
\begin{equation}\label{eq:lb}
  \lambda b_n
  \begin{NB}
    = -\frac{\sqrt{(\alpha_i,\alpha_i)}}{\sqrt{k+h^\vee}}
      \cdot \frac{\overline{h}^i_n}{\sqrt{(\alpha_i,\alpha_i)}}
  \end{NB}%
  = - \frac{\overline{h}^i_n}{\sqrt{k+h^\vee}}
  = -\frac{\widehat{h}^i_n}{k+h^\vee},
\end{equation}
and $S_\lambda$ sends $|0\rangle$ to $\psi^*_{\alpha_i,0}|0\rangle$
here.
\begin{NB}
  The formula follows from \eqref{eq:dp} below,
  $\psi^*_{\alpha_i,n}|0\rangle = \delta_{n,0}
  \psi^*_{\alpha_i,0}|0\rangle$ ($n\ge 0$), and the commutation
  relation
  \begin{equation}
    [\overline{h}^i_n, \psi^*_{\alpha_j}(z)]
    = - z^n\frac{(\alpha_i,\alpha_j)}{\sqrt{k+h^\vee}}
  \psi^*_{\alpha_j}(z).
  \end{equation}
\end{NB}

Now we consider the cohomology group $\widetilde
H^i_{\bA}(\g)$ over $\bA$.
\index{HtildeAg@$\widetilde H^\bullet_\bA(\g)$}
The $0^{\mathrm{th}}$ cohomology $\widetilde H^0_{\bA}(\g) = \Ker\dst$
is a direct sum of $\bA[\widetilde P^i_n]_{i\in I, n< 0}$ with
bidegree $(0,0)$ and the other parts with bidegree $(p,-p)$ with $p >
0$. Here we put
\(
  \widetilde P^i_n = \ve_1 \widehat h^i_n
\)
so that they satisfy the commutation relation \eqref{eq:mPrel}.
\index{P@$\widetilde P_n^i$|textit}
\begin{NB}
  Since $k+h^\vee = - \ve_2/\ve_1$, we have $\ve_1^2(k+h^\vee) =
  -\ve_1\ve_2$. Therefore the commutation relation is compatible.
\end{NB}%
Since $\dst$ on $(p,-p)$ part is injective for generic $(\ve_1,\ve_2)$
by the above computation, it is injective as an
$\bA$-homomorphism. Therefore we have
\begin{Lemma}\label{lem:H0}
\begin{equation}\label{eq:tH0}
  \widetilde H^0_{\bA}(\g)
  = \bA[\widetilde P^i_n]_{i\in I, n< 0}|0\rangle.
\end{equation}
\end{Lemma}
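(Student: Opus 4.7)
The plan is to prove the lemma by a bidegree decomposition argument, combined with the standard principle that injectivity of a homomorphism between free $\bA$-modules can be checked after passing to the fraction field $\bF = \CC(\ve_1,\ve_2)$.

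First I would decompose the total-degree-zero part of the complex $C^\bullet_\bA(\g)_0$ according to the bidegree~\eqref{bidegree}. Since $C^{p,q}_\bA(\g)_0 = 0$ unless $p \ge 0$ and $-p \le q \le 0$, the only bidegrees contributing to total degree~$0$ are $(p,-p)$ with $p \ge 0$. The $(0,0)$-piece is generated over $\bA$ only by the fields $\widetilde J^{\bara}(z)$ with $\bara \in I$ (since for $\bara \in \Delta_-$ the bidegree is $(-n,n)$ with $-n > 0$), and no fermionic generators $\psi^*_\alpha(z)$ appear there. Thus $C^{0,0}_\bA(\g)_0 \cap \{|0\rangle\text{-applied monomials}\}$ is exactly the free polynomial $\bA$-module $\bA[\widetilde P^i_n]_{i \in I,\, n < 0}|0\rangle$, where $\widetilde P^i_n$ is the Fourier mode of $\widetilde J^{\bara}(z) = \ve_1 \widehat J^\bara(z)$ for $\bara = h^i$.

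Next I would observe that $\dst$ has bidegree $(0,1)$, so it automatically annihilates every element of the top row $C^{p,0}_\bA(\g)_0$; in particular the $(0,0)$ part sits entirely inside $\Ker\dst$, giving the inclusion $\supseteq$ of the lemma. For the reverse inclusion, it suffices to show that $\dst$ is injective on $C^{p,-p}_\bA(\g)_0$ for each $p \ge 1$. This is where I invoke the generic computation already recalled in the excerpt: for generic level $k$ (equivalently, after inverting a suitable element and passing to $\bF$), the cohomology $\widetilde H^0_k(\g)$ equals $\CC(k)[\widehat h^i_n]_{i \in I,\, n<0}|0\rangle$, which means that $\dst$ is injective on $\bigoplus_{p \ge 1} C^{p,-p}_{\bF}(\g)_0$.

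The main (but routine) step will be to upgrade this generic injectivity to injectivity over $\bA$. Each $C^{p,-p}_\bA(\g)_0$ is a free $\bA$-module (it is a homogeneous piece of a polynomial/Grassmann algebra over $\bA = \CC[\ve_1,\ve_2]$), hence torsion-free. If the $\bA$-linear map $\dst \colon C^{p,-p}_\bA(\g)_0 \to C^{p,-p+1}_\bA(\g)_0$ had a nonzero element $x$ in its kernel, then $x \otimes 1 \in C^{p,-p}_\bA(\g)_0 \otimes_\bA \bF$ would be nonzero by torsion-freeness, contradicting the generic injectivity. Therefore $\Ker\dst$ in total degree $0$ is concentrated in the $(0,0)$-summand and equals $\bA[\widetilde P^i_n]_{i \in I,\, n < 0}|0\rangle$, proving \eqref{eq:tH0}. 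The only potentially delicate point is to confirm that the classical limit argument in \cite[\S15.4.10]{F-BZ} really gives generic injectivity of $\dst$ on each individual bidegree strip $(p,-p)$, but this is exactly what the computation of $\widetilde H^0_k(\g)$ for generic~$k$ recorded in the paragraph preceding the lemma provides.
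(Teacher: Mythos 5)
Your proposal is correct and follows essentially the same route as the paper: decompose the total-degree-zero part by bidegree, identify the $(0,0)$ summand with $\bA[\widetilde P^i_n]_{i\in I,\,n<0}|0\rangle$, and deduce injectivity of $\dst$ on the $(p,-p)$ summands with $p>0$ from the generic-level computation together with torsion-freeness of the free $\bA$-modules $C^{p,-p}_\bA(\g)_0$. The paper's proof is exactly this two-step argument, stated more tersely in the paragraph preceding the lemma.
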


This is an $\bA$-form of the Heisenberg vertex algebra, denoted by
$\Heis_\bA(\h)$ in \subsecref{sec:heis-algebra-assoc}.

We have an induced homomorphism $\scW_\bA(\g) = H^0_\bA(\g)\to \widetilde
H^0_\bA(\g)$, taking the bidegree $(0,0)$ component. It is injective
as $\Ker \dst = 0$ on $(p,-p)$ with $p > 0$. Therefore we can consider
$\scW_\bA(\g)$ as an $\bA$-submodule of $\widetilde H^0_\bA(\g)$.
We have an induced homomorphism $\widetilde\chi\colon \widetilde
H^0_{\bA}(\g) \to \widetilde H^1_{\bA}(\g)$ and the double complex
tells us that $\scW_\bA(\g)$ is contained in $\Ker\widetilde\chi$.

When we compare the embedding with the usual one $\scW_k(\g)\to
\widetilde H^0_k(\g)$ in the literature via the identification of
$\scW_k(\g)$ and $\scW_\bA(\g)$ in \subsecref{subsec:grfil}, we use the
relations
\(
  \widetilde P^i_n = \ve_1 \widehat h^i_n
\)
as before.

For example, consider $\mW{1}n$ for $\g = \algsl_2$. It is given by
\eqref{eq:mVir} up to sign,
\begin{NB}
  I do not consider the constant term ($n=0$) yet.

  Now zero mode is corrected.
\end{NB}%
and is contained in $\widetilde H^0_\bA(\g)$. The formula follows from
the computation in the literature, say \cite[\S15.4.14]{F-BZ}, with
the rule for the change of generators above.

\begin{NB}
Oct. 19 : The whole paragraph is comment out.

Let us consider $\widetilde H^1_{\bA}(\g)$ next. It is not isomorphic
to $\bigoplus_i \bA[\widetilde P^j_n]_{j\in I, n<0}
\psi^*_{\alpha_i,0}|0\rangle$, as we will see soon.
We have an induced homomorphism $\widetilde\chi\colon \widetilde
H^0_{\bA}(\g) \to \widetilde H^1_{\bA}(\g)$ and the double complex
tells us that $\scW_\bA(\g)$ is contained in $\Ker\widetilde\chi$.
If the spectral sequence degenerates at $E_2$-term, we have
$\scW_\bA(\g)\cong \Ker\widetilde\chi$. But it is probably not true as
the embedding of $\scW_k(\g)$ into the Heisenberg algebra behaves nicely
only for generic $k$.
\begin{NB2}
  If the spectral sequence may not degenerate at $E_2$-term over
  $\bA$, we need to consider higher differentials
  \begin{equation}
    \begin{split}
    & d_2\colon \Ker\widetilde \chi \to E_2^{2,-1}, \quad
\\
    & d_3\colon \Ker d_2 \to E_3^{3,-2}, \cdots,
    \end{split}
  \end{equation}
  so $\scW_{\bA}(\g)$ may be smaller than $\Ker\widetilde\chi$.
\end{NB2}%
This suggests that $\Ker\widetilde\chi$ could be larger than
$\scW_\bA(\g)$.
\begin{NB2}
Corrected. May 22:
This suggests us that $\Ker\widetilde\chi$ is not a
right object to study.
\end{NB2}

\begin{NB2}
  Let us consider the double complex at $\ve_1 = 0$. Like as in the
  identification of the cohomology with the space of opers in
  \subsecref{subsec:oper}, the cohomology with respect to $\dst$ can
  be identified with the Lie algebra cohomology for the action of
  $N_+[[t]]$ on the space of $(-\ve_2)$-connections
  \begin{equation}
    \nabla = -\ve_2 \partial_t - A(t), \qquad A(t)\in\mathfrak b_+[[t]].
  \end{equation}

  Suppose $\ve_2\neq 0$.
  By \cite[Lem.~15.4.9]{F-BZ} the action becomes free if we replace
  $N_+[[t]]$ by its normal subgroup $N_+[[t]]_0$, consisting of
  elements which are identity at $t=0$. And the quotient space is the
  space of connections
  \begin{equation}
    \nabla = -\ve_2 \partial_t - B(t), \qquad A(t)\in\h[[t]].
  \end{equation}

  From this description, we get
  \begin{equation}
    \widetilde H^\bullet_{\ve_1=0}(\g)\otimes_{\Q[\ve_2]}\Q(\ve_2)
    \cong (\operatorname{Fun}\h[[t]])\otimes_\Q
    H^\bullet(\mathfrak n_+,\Q)\otimes_\Q \Q(\ve_2)
  \end{equation}
  by the Serre-Hochschild spectral sequence for $t \mathfrak
  n_+[[t]]\subset \mathfrak n_+[[t]]$. This gives us the embedding of
  the $\scW$-algebra into the Heisenberg algebra at $\ve_1 = 0$.

  Let us explain why the action is free. Consider the equation
  \begin{equation}
     \exp(\operatorname{ad} U(t))\cdot (-\ve_2\partial_t - B(t))
     = -\ve_2\partial_t - A(t),
  \end{equation}
  where $U(t)\in t \mathfrak n_+[[t]]$, $B(t)\in \h[[t]]$. We
  write
  \begin{equation}
    U(t) = U_1 t + U_2 t^2 + \cdots, \quad
    B(t) = B_0 + B_1 t + \cdots, \quad
    A(t) = A_0 + A_1 t + \cdots.
  \end{equation}
  Then $B_i + \ve_2 U_{i+1}$ is expressed by $A_i$, $B_0$, \dots,
  $B_{i-1}$, $U_1$, \dots, $U_i$. For example, $B_0 + \ve_2 U_1 =
  A_0$. Therefore $B_i$ and $U_{i+1}$ are uniquely determined
  recursively from $A_0$, $A_1$, \dots, $A_i$.

  It is clear that this argument does not work at $\ve_2 =
  0$. Therefore the cohomology $\widetilde H^\bullet(\g)$ is
  complicated at $\ve_1 = \ve_2 = 0$.
\end{NB2}
\end{NB}

Let us look at $\widetilde H^1_\bA(\g)$ more closely.
From the definition, we have
\begin{equation}\label{eq:C1}
  \begin{split}
    & C^{1,-1}_\bA(\g)_0 = \bigoplus_{i,m<0} \bA[\widetilde P^j_n]_{j\in I,
      n< 0} \widetilde f_{i,m} |0\rangle,
    \\
    & C^{1,0}_\bA(\g)_0 = \bigoplus_{i,m\le 0} \bA[\widetilde P^j_n]_{j\in
      I, n < 0} \psi^*_{\alpha_i,m} |0\rangle,
  \end{split}
\end{equation}
where $\widetilde f_{i,m}$ is the Fourier mode of $\widetilde
J^\bara(z)$ corresponding to the basis element $f_i =
f^{\alpha_i}$. The differential $\dst\colon C^{1,-1}_\bA(\g)_0\to
C^{1,0}_\bA(\g)_0$ can be calculated from \eqref{eq:diff2}, in
particular we have
\begin{gather}
  [\dst, \widetilde P^i(z)] = 0, 
\\
  [\dst, \widetilde f_i(z)] =
  \frac2{(\alpha_i,\alpha_i)}\left(
    \normal{\widetilde{P}^i(z) \psi^*_{\alpha_i}(z)}
    - \ve_2 \partial_z \psi^*_{\alpha_i}(z)
    \right).
\end{gather}
See the formula in the middle of \cite[p.261]{F-BZ}.
From the second formula we have
\begin{equation}\label{eq:dp}
  -\ve_2 \partial_z \psi^*_{\alpha_i}(z)
  = \normal{\widetilde{P}^i(z) \psi_{\alpha_i}^*(z)}
\end{equation}
modulo $\dst$-exact term.
\begin{NB}
Alternatively we can start from \cite[(15.4.8)]{F-BZ} and use
  \begin{equation}
    \frac{\overline{h}^i(z)}\nu = \frac{\widehat{h}^i(z)}{k+h^\vee}
    = - \frac{\widetilde{P}^i(z)}{\ve_2}
  \end{equation}
to get the same result.
\end{NB}%
If $\ve_2$ would be invertible, we could replace $\psi^*_{\alpha_i,m}
|0\rangle$ with $m\neq 0$ in \eqref{eq:C1} by an element in
$\bA[\widetilde P^i_n] \psi^*_{\alpha_i,0}|0\rangle$ so that $\widetilde
H^1_{\bA}(\g)$ is isomorphic to $\bigoplus_i \bA[\widetilde P^j_n]
\psi^*_{\alpha_i,0}|0\rangle$.
As $\ve_2$ is not invertible in $\bA$, this cannot be true.

From this consideration, we set $\ve_2 = -1$ in the double complex
\eqref{eq:diff2}, and consider it over $\bB_1 = \Q[\ve_1]$, as in
\subsecref{subsec:grfil}. We denote it by $C^\bullet_{\bB_1}(\g)_0$.
This is not any loss of the information for our purpose, as
$\scW_\bA(\g)$ can be recovered from $\scW_{\bB_1}(\g)$ together with its
natural filtration, as explained in \subsecref{subsec:grfil}.

However, the higher cohomology groups $\widetilde H^{> 0}_\bA(\g)$ may
not vanish nor be free. Hence the cohomology group $\widetilde
H^\bullet_{\bB_1}(\g)$ of $C^\bullet_{\bB_1}(\g)_0$ with respect to
$\dst$ may be different from
\(
    \widetilde H^\bullet_\bA(\g)\otimes_{\bA}\bB_1.
\)
We will see that $\widetilde H^\bullet_{\bB_1}(\g)$ behaves better than
$\widetilde H^\bullet_\bA(\g)$ at $\ve_2 = 0$ below.

\begin{NB}
Here is an older version. (edited on Nov. 25, 2013)

The total cohomology group $\widetilde H^\bullet_\bA(\g)$ may not be torsion
free over $\bA$, hence they cannot be recovered from the filtration.
Here the filtration is given by $\cohdeg \widetilde P^i_n = 1$,
$\cohdeg \ve_1 = 1$. The $\bA$-module defined from $\widetilde
H^\bullet_\bB(\g)$ with the filtration by the Rees algebra
construction is possibly different from $\widetilde
H^\bullet_\bA(\g)$.
But it is fine for our purpose, as we only need $\widetilde
H^0_\bA(\g)$ and $\Ker\widetilde\chi$, which are torsion free.
\begin{NB2}
  Corrected. May 22
\end{NB2}%
The cohomology group $\widetilde H^\bullet_{\bB_1}(\g)$ of
$C^\bullet_{\bB_1}(\g)_0$ with respect to $\dst$ behaves better than
$\widetilde H^\bullet_\bA(\g)$ at $\ve_2 = 0$, as we will see later.
\end{NB}

Let us study first two terms of $\widetilde H^\bullet_{\bB_1}(\g)$.
We have $\widetilde H^0_{\bB_1}(\g)\cong \bB_1[\widetilde P^i_n]_{i\in I, n<
  0}|0\rangle$ by the same argument as in \eqref{eq:tH0}.
Let $\widetilde H^{1,0}_{\bB_1}(\g)$ be the $(1,0)$ part of the
cohomology. We do not know $\widetilde H^1_{\bB_1}(\g)\cong \widetilde
H^{1,0}_{\bB_1}(\g)$, but $\widetilde\chi$ maps $\widetilde H^0_{\bB_1}(\g)$
to $\widetilde H^{1,0}_{\bB_1}(\g)$ anyway.
From the above argument we have a surjective homomorphism $\bigoplus
\bB_1[\widetilde P^j_n]\psi^*_{\alpha_i,0}|0\rangle \to \widetilde
H^{1,0}_{\bB_1}(\g)$. It is an isomorphism for generic $\ve_1$, in other
words over $\CC(\ve_1)$. Therefore it must be injective also over
$\bB_1$. We thus get

\begin{Lemma}\label{lem:H1free}
\begin{equation}
    \begin{split}
        & \widetilde H^0_{\bB_1}(\g)\cong \bB_1[\widetilde
        P^i_n]_{i\in I, n< 0}|0\rangle,
        \\
        & \widetilde H^{1,0}_{\bB_1}(\g)\cong \bigoplus_i
        \bB_1[\widetilde P^j_n]_{j\in I, n< 0}\psi^*_{\alpha_i,0}|0\rangle.
    \end{split}
\end{equation}
\end{Lemma}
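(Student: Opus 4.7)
\bigskip
\noindent\textbf{Proof proposal for Lemma~\ref{lem:H1free}.}

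The plan is to handle the two isomorphisms separately, with the first being essentially a repetition of Lemma~\ref{lem:H0} after specializing $\ve_2=-1$, and the second requiring a surjectivity step based on the explicit formula \eqref{eq:dp} followed by an injectivity step via a generic $\ve_1$ comparison.

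First I would treat $\widetilde H^0_{\bB_1}(\g)$. The complex $C^{0,0}_{\bB_1}(\g)_0$ decomposes as a direct sum of the bidegree $(0,0)$ piece, which is exactly $\bB_1[\widetilde P^i_n]_{i\in I, n<0}|0\rangle$, together with pieces of bidegree $(p,-p)$ for $p>0$ (coming from $\widetilde J^\bara(z)$ with $\bara$ a negative root). The differential $\dst$ annihilates $\widetilde P^i(z)$, and on the $(p,-p)$ pieces with $p>0$ it is injective for generic $k$ by the Feigin--Frenkel computation reviewed in \subsecref{sec:oppos-spectr-sequ}. Since the complex is free of finite rank in each graded component over $\bB_1$, injectivity over the fraction field $\CC(\ve_1)$ forces injectivity over $\bB_1$. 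Hence the $(p,-p)$ pieces contribute nothing to $\widetilde H^0_{\bB_1}(\g)$, and the first isomorphism follows.

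Next I would turn to $\widetilde H^{1,0}_{\bB_1}(\g)$. From \eqref{eq:C1} (specialized at $\ve_2=-1$), the $\bB_1$-module $C^{1,0}_{\bB_1}(\g)_0$ is generated by $\psi^*_{\alpha_i,m}$ for all $m\leq 0$, not only $m=0$. The key point is the relation \eqref{eq:dp} which at $\ve_2=-1$ reads
\begin{equation*}
\partial_z\psi^*_{\alpha_i}(z) = \normal{\widetilde P^i(z)\psi^*_{\alpha_i}(z)} \pmod{\operatorname{im}\dst}.
\end{equation*}
Reading off Fourier modes, this expresses each $\psi^*_{\alpha_i,m}|0\rangle$ with $m<0$ as a $\bB_1[\widetilde P^j_n]$-linear combination of $\psi^*_{\alpha_i,0}|0\rangle$ up to a $\dst$-exact term, by induction on $-m$. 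Therefore the inclusion $\bigoplus_i \bB_1[\widetilde P^j_n]_{j\in I,n<0}\psi^*_{\alpha_i,0}|0\rangle \hookrightarrow \Ker\dst\cap C^{1,0}_{\bB_1}(\g)_0$ descends to a surjection onto $\widetilde H^{1,0}_{\bB_1}(\g)$.

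Finally, for injectivity, I would invoke the classical Feigin--Frenkel description recalled in \subsecref{sec:oppos-spectr-sequ}: over $\CC(\ve_1)$ (equivalently for generic $k$), $\widetilde H^1_k(\g)$ lives entirely in bidegree $(1,0)$ and is isomorphic to a direct sum of $\#I$ Fock modules, each with basis $\bB_1[\widetilde P^j_n]\psi^*_{\alpha_i,0}|0\rangle$. Hence the constructed surjection is an isomorphism after $\otimes_{\bB_1}\CC(\ve_1)$. Since the source $\bigoplus_i \bB_1[\widetilde P^j_n]\psi^*_{\alpha_i,0}|0\rangle$ is a free $\bB_1$-module (in particular torsion-free), the surjection must already be injective over $\bB_1$. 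This yields the second isomorphism.

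The main obstacle I anticipate is ensuring that the recursion in \eqref{eq:dp} really trivializes all generators $\psi^*_{\alpha_i,m}$ with $m<0$ modulo $\dst$-exact terms: one needs to verify that the inductive argument on $-m$ terminates within the $\bB_1$-span (with no $\ve_2$-inverses creeping back in), which is precisely the reason for working over $\bB_1$ rather than $\bA$, since \eqref{eq:dp} at $\ve_2=0$ would fail. Apart from this bookkeeping, the proof is a combination of torsion-freeness and comparison with the generic Heisenberg embedding.
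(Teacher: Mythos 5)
Your proposal is correct and follows essentially the same route as the paper: the first isomorphism by the torsion-freeness argument of Lemma~\ref{lem:H0} (injectivity of $\dst$ on the $(p,-p)$, $p>0$ parts deduced from the generic case), and the second by using \eqref{eq:dp} at $\ve_2=-1$ to get surjectivity from $\bigoplus_i \bB_1[\widetilde P^j_n]\psi^*_{\alpha_i,0}|0\rangle$, then injectivity from the generic ($\CC(\ve_1)$) isomorphism together with freeness of the source. The only difference is that you spell out details the paper leaves implicit.
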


The substitution $\ve_2 = -1$ makes the vertex operator \eqref{eq:vo}
well-defined: We replace $\lambda b_n$ by
\eqref{eq:lb}, hence
\begin{equation}
  \lambda b_n
  \begin{NB}
    = - \frac{\widetilde{P}^i_n}{\ve_1(k+h^\vee)}
    = \frac{\widetilde{P}^i_n}{\ve_2}
  \end{NB}
  = - \widetilde P^i_n.
\end{equation}
\begin{NB}
  \begin{equation}
    V_\lambda(z) =
  S_{-\sqrt{\ve_1}{\alpha_i}} 
  \exp\left( \sum_{n<0} \frac{\widetilde P^i_n}{n} z^{-n} \right)
  \exp\left( \sum_{n>0} \frac{\widetilde P^i_n}{n} z^{-n} \right).
  \end{equation}
\end{NB}%
The vertex operator is a homomorphism between $\bB_1$-modules.

Now we let $\ve_1 = 0$. We have the K\"unneth theorem
\begin{equation}\label{eq:Kunneth1}
  0 \to \widetilde H^n_{\bB_1}(\g)\otimes_{\bB_1}\CC
  \to H^n(C^\bullet_{\bB_1}(\g)_0\otimes_{\bB_1}\CC)
  \to \operatorname{Tor}_1^{\bB_1}(\widetilde H^{n+1}_{\bB_1}(\g), \CC)\to 0,
\end{equation}
where $\CC = \bB_1/\ve_1\bB_1$. The middle term is the cohomology at the
classical limit, and is known (see \cite[\S15.4.8]{F-BZ}). In
particular, we get
\begin{equation}\label{eq:69}
  \begin{gathered}[m]
  \CC[\widetilde P^i_n]
  |0\rangle =
  \widetilde H^0_{\bB_1}(\g)\otimes_{\bB_1}\CC \cong
  H^0(C^\bullet_{\bB_1}(\g)_0\otimes_{\bB_1}\CC),
\\
   \bigoplus
   \CC[\widetilde P^j_n]\psi^*_{\alpha_i,0}|0\rangle =
  \widetilde H^{1,0}_{\bB_1}(\g)\otimes_{\bB_1}\CC \cong
  H^1(C^{1,\bullet}_{\bB_1}(\g)_0\otimes_{\bB_1}\CC),
\\
  \widetilde H^{p+1,-p}_{\bB_1}(\g)\otimes_{\bB_1}\CC =
  H^1(C^{p+1,\bullet}_{\bB_1}(\g)_0\otimes_{\bB_1}\CC) = 0 \quad\text{for $p > 0$}.
  \end{gathered}
\end{equation}
\begin{NB}
I comment out this sentence, as it is not clear it is enough to `what'.

Therefore it is enough to study the cohomology group of
$C^\bullet_{\bB_1}(\g)_0\otimes_{\bB_1}\CC$.
\end{NB}

Next we study $\widetilde\chi$ at $\ve_1 = 0$.
Recall that $\widetilde\chi = \chi/\ve_1$, so
we need to divide $\int V_\lambda(z)$ in \eqref{eq:vo} by
$\ve_1$. We see that the induced operator
\begin{multline}
  \left.\widetilde\chi\right|_{\substack{\ve_1 = 0 \\\ve_2=-1}} \colon
  \widetilde H^0_{\bB_1}(\g)\otimes_{\bB_1}\CC = \CC[\widetilde
  P^i_n]
  | 0\rangle
\\
  \to
  \widetilde H^1_{\bB_1}(\g)\otimes_{\bB_1}\CC =
  \bigoplus
  \CC[\widetilde P^j_n]
  \psi^*_{\alpha_i,0} |0\rangle
\end{multline}
is given by the formula
\begin{equation}
  \sum_i \sum_{j=1}^\ell (\alpha_i,\alpha_j) \sum_{m\le 0}
  \mathbf V_i[m]
  \frac{\partial}{\partial \widetilde P^j_{m-1}},
\end{equation}
with
\begin{equation}
  \sum_{n\le 0} \mathbf V_i[n] z^{-n}
  = S_i \exp\left(\sum_{n<0} \frac{\widetilde P^i_n}{n} z^{-n} \right).
\end{equation}
Here the operator $S_i$ sends the highest weight vector $|0\rangle$ to
$\psi^*_{\alpha_i,0}|0\rangle$.
The point here is the commutation relation $[\widetilde P^i_m,
\widetilde P^j_n] = m \ve_1 (\alpha_i,\alpha_j) \delta_{m,-n}$ at
$\ve_2 = -1$. This vanishes at $\ve_1 = 0$, and hence only linear
terms in the expansion of the second exponential in \eqref{eq:vo}
survive.

\begin{NB}
  Let us check that the modified Virasoro elements \eqref{eq:mVir}
  at $\ve_1 = 0$ are contained in
  $\left.\Ker\widetilde\chi\right|_{\ve_1 = 0}$. This can, of course,
  be checked at generic $\ve_1$, but the computation becomes much
  easier. Also to see that our specialization $\ve_2 = -1$ is natural,
  we put it back. Let us consider the $\algsl_2$-case, so we drop the
  super/subscript $i$. We have
  \begin{equation}\label{eq:mmL}
    \begin{split}
    {\mL}_n &= - \frac12
    \sum_l \normal{\widetilde{P}_l \widetilde{P}_{n-l}}
    - (n+1) \ve_2 \widetilde{P}_n
\\
     &= - \frac12
    \sum_{n < l < 0} \widetilde{P}_l \widetilde{P}_{n-l}
    - (n+1) \ve_2 \widetilde{P}_n
    \end{split}
  \end{equation}
  as we have $\widetilde P_n = 0$ for $n\ge 0$ at $\ve_1 =
  0$. Remember that we perform the computation on the Fock space.

  The screening operator is
  \begin{equation}
    \sum_{m\le 0} \mathbf V[m] \frac{\partial}{\partial \widetilde P_{m-1}}
  \end{equation}
with
\begin{equation}\label{eq:V}
    \sum_{n\le 0} \mathbf V[n] z^{-n}
  = S \exp\left(-\sum_{n<0} \frac{\widetilde P_n}{n\ve_2} z^{-n} \right).
\end{equation}
  We have
  \begin{equation}\label{eq:tovanish}
    \begin{split}
      & \sum_{m\le 0} \mathbf V[m] \frac{\partial}{\partial \widetilde
        P_{m-1}} \mL_n
      \\
      =\; & -\frac12 \sum_{n < l < 0} \left(\mathbf V[l+1] \widetilde
        P_{n-l} + \widetilde P_l \mathbf V[n-l+1]\right) - (n+1)\ve_2
      \mathbf V[n+1]
      \\
      =\; & - \sum_{n < l < 0} \mathbf V[l+1] \widetilde P_{n-l} -
      (n+1)\ve_2 \mathbf V[n+1].
    \end{split}
  \end{equation}
We differentiate \eqref{eq:V} with respect to $z$ to get
\begin{equation}
  \begin{split}
  & - \sum_{m < 0} m \mathbf V[m] z^{-m-1}
  = S \sum_{l < 0} \frac{\widetilde P_l}{\ve_2} z^{-n-1}
  \exp\left(-\sum_{n<0} \frac{\widetilde P_n}{n\ve_2} z^{-n} \right)
\\
  =\; &
  \sum_{l < 0, k\le 0} \frac{\widetilde P_l}{\ve_2} \mathbf V[k] z^{-k-l-1}.
  \end{split}
\end{equation}
Therefore
\begin{equation}
  - m\mathbf V[m] = \sum_{\substack{l < 0, k\le 0\\ l+k=m}}
   \frac{\widetilde P_l}{\ve_2} \mathbf V[k].
\end{equation}
Hence \eqref{eq:tovanish} vanishes.

Note that \eqref{eq:mmL} can be specialized at $\ve_2 = 0$. However, I
do not see how to specialize the screening operator as we have
$\widetilde P_n/\ve_2$.
Since $\ve_2$ appears in the denominator, we loose the filtration when
we put $\ve_2 = -1$.
\end{NB}%

This computation appears in the study of the classical limit of the
$\scW$-algebra \cite[Chap.~8]{Fr}. In particular, the followings were
shown there:
\begin{itemize}
\item
\(
  \widetilde H^0_{\bB_1}(\g)\otimes_{\bB_1}\CC
\)
is isomorphic to the ring of functions on the space
$\operatorname{MOp}_G(D)_{\mathrm{gen}}$ of generic Miura
opers on the formal disk $D$.

\item Each generic Miura oper can be uniquely transformed into the
  following form
  \begin{equation}\label{eq:Miura}
    \nabla = \partial_t + p_- + \mathbf u(t), \qquad
    \mathbf u(t)\in\h[[t]].
  \end{equation}

\item The kernel of $\left.\widetilde\chi\right|_{\substack{\ve_1 = 0
      \\\ve_2=-1}}$ is isomorphic to the ring of functions on the
  space $\operatorname{Op}_G(D)$ of opers. The inclusion $\Ker
  (\left.\widetilde\chi\right|_{\substack{\ve_1 = 0 \\\ve_2=-1}})\to
  \widetilde H^0_{\bB_1}(\g)\otimes_{\bB_1}\CC$ is given by the
  forgetting morphism $\operatorname{MOp}_G(D)_{\mathrm{gen}}\to
  \operatorname{Op}_G(D)$.
\end{itemize}

We do not recall the definition of generic Miura opers here, as it is
enough to consider the space of connections of the form
\eqref{eq:Miura}. The morphism
$\operatorname{MOp}_G(D)_{\mathrm{gen}}\to \operatorname{Op}_G(D)$ is
given just by considering a connection in \eqref{eq:Miura} as a
$G$-oper.
As we have already known that $\scW_\bA(\g)$ at $\ve_1 = 0$, $\ve_2 = -1$
is the ring of functions on $\operatorname{Op}_G(D)$ in
\subsecref{subsec:oper}, we get
\begin{equation}\label{eq:iso}
  \scW_{\bB_1}(\g)\otimes_{\bB_1}\CC
  = \Ker (\left.\widetilde\chi\right|_{\substack{\ve_1 = 0 \\\ve_2=-1}}).
\end{equation}

Finally we study the filtration in the both sides of
\eqref{eq:iso}.
\begin{NB}
    Here is an older version.  Since the filtration of the right hand
    side is not obtained in that way, we must be more careful.
    (edited on Nov. 25, 2013)

Filtrations are the same on the left and right hand sides, as both
come from the specialization of the grading at $\ve_2 = -1$.

\end{NB}%
The left hand side has a filtration as it comes from the
specialization of the grading on $\scW_\bA(\g)$ at $\ve_1=0$, $\ve_2 =
-1$.
\begin{NB}
    Added on Mar.~14
\end{NB}%
On the other hand, we have filtration on $\widetilde H^0_{\bB_1}(\g)$
and $\widetilde H^0_{\bB_1}(\g)\otimes_{\bB_1}\CC$ given by $\cohdeg
\widetilde P^i_n = 1$, as they are polynomial rings (see
\lemref{lem:H1free} and \eqref{eq:69}.) Since $\widetilde
H^0_{\bA}(\g)$ is also free by Lemma~\ref{lem:H0}, the filtrations
come from the specialization. We give an induced filtration on $\Ker
(\left.\widetilde\chi\right|_{\substack{\ve_1 = 0 \\\ve_2=-1}})$ as a
subspace of $\widetilde H^0_{\bB_1}(\g)\otimes_{\bB_1}\CC$. Then
\eqref{eq:iso} respects the filtration as the inclusion
$\scW_{\bB_1}(\g)\to \widetilde H^0_{\bB_1}(\g)$ does.

On the ring of functions on $\operatorname{Op}_G(D)$,
the filtration can be understood by considering $(-\ve_2)$-opers
\cite[\S3.1.14]{BeilinsonDrinfeld} as follows.
A filtration on an algebra can be identified with a graded flat
$\CC[\ve_2]$-algebra with $\deg\ve_2 = 1$. The latter is considered as
the ring of functions on a flat affine scheme $X$ over $\AA^1 =
\operatorname{Spec}\CC[\ve_2]$ with a $\mathbb G_m$-action compatible
with the action by homotheties on $\AA^1$. The space of
$(-\ve_2)$-opers provides such a scheme, where the $\mathbb
G_m$-action is given by $\nabla\mapsto\lambda\nabla$ for
$\lambda\in\mathbb G_m$. More precisely, we need to compose it with a
gauge transformation so that the form \eqref{eq:oper} is preserved.
Since $(-\ve_2)$-opers appear at the specialization at $\ve_1=0$ in \subsecref{subsec:oper}, our filtration is given in this way.

The action is induced from the action
$\lambda\operatorname{Ad}(\lambda)$ on $\mathfrak a_+$ under
$\operatorname{Op}_G(D)\cong \mathfrak a_+[[t]]$, where
$\operatorname{Ad}(\lambda)$ is given by the $SL_2$ embedding
associated with the nilpotent element $p_-$.
It is known that the degrees of the $\mathbb G_m$-action on $\mathfrak
a_+$ are given by $d_\kappa+1$ ($\kappa=1,\dots,\ell$), hence are the
same as our `$\cohdeg$' by \subsecref{subsec:gen}.
This is another reason why we define the degree in that way.

We can define the $\mathbb G_m$-action on
$\operatorname{MOp}_G(D)_{\mathrm{gen}}$ in the same way so that the
morphism $\operatorname{MOp}_G(D)_{\mathrm{gen}}\to
\operatorname{Op}_G(D)$ is $\mathbb G_m$-equivariant. Under
$\operatorname{MOp}_G(D)_{\mathrm{gen}}\cong \h[[t]]$, it is just
homotheties on $\h$. The corresponding filtration is the same as ours.

The homomorphism between the associated graded of
$\Ker (\left.\widetilde\chi\right|_{\substack{\ve_1 = 0 \\\ve_2=-1}})$ and
$\widetilde H^0_{\bB_1}(\g)\otimes_{\bB_1}\CC$
\begin{NB}
  I think that the following original version is not clear as we first
  specialize to $\varepsilon_1 = 0$, and then we take the associated
  graded.

  The specialization at $\ve_1 = \ve_2 = 0$ is given by considering
  the associated graded of the filtration. Therefore the embedding
\begin{equation}\label{eq:emb00}
  \left.\scW_{\bA}(\g)\right|_{\ve_1,\ve_2 = 0}
  \to \left.\widetilde H^0_\bA(\g)\right|_{\ve_1,\ve_2 = 0}
\end{equation}
\end{NB}%
is induced by the morphism
\begin{multline}\label{eq:0oper}
  \left\{ \nabla = p_- + \mathbf u(t) \mid
    \mathbf u(t)\in\h[[t]]\right\}
\\
  \to
  \left\{ \nabla = p_- + A(t) \mid
  A(t)\in\mathfrak b_+[[t]] \right\}/N_+[[t]]
\end{multline}
of $0$-opers.

Let us write down the embedding of the $\scW$-algebra into the Heisenberg
algebra at $\ve_1 = \ve_2 = 0$ induced from the morphism
\eqref{eq:0oper} of $0$-opers explicitly. It is given in
\cite[\S3.3.4]{Fr}. Let $F^{(\kappa)}\in S(\h)^W$
($\kappa=1,\dots,\ell$) be generators of degree $d_\kappa+1$,
corresponding to $p^{(\kappa)}_-$ in \subsecref{subsec:gen}. We regard
it as a polynomial in $h^i$, i.e., $F^{(\kappa)}(h^i) =
F^{(\kappa)}(h^1,\dots,h^\ell)$.
Then $\mW{\kappa}n$ (at $\ve_1$, $\ve_2=0$) is given by the formula
\begin{equation}\label{eq:93}
  F^{(\kappa)}\left(\sum_{n<0} \widetilde P^{(i)}_n z^{-n-1}\right)
  = \sum_{n < 0} \mW{\kappa}n z^{-n-d_\kappa-1}.
\end{equation}
\begin{NB}
  Since the left hand side is in $V[[z]]$, we should have
  $\mW{\kappa}n = 0$ for $-d_\kappa-1 < n < 0$.
\end{NB}%

For example, we have
\begin{equation}
      {\mL}_n
     = - \frac14
    \sum_{n < l < 0} \widetilde{P}_l \widetilde{P}_{n-l}
\end{equation}
for $\algsl_2$.

\subsection{Kernel of the screening operator}

Recall that we have a natural inclusion
$\scW_{\bB_1}(\g)\subset\Ker(\left.\widetilde\chi\right|_{\ve_2=-1})$ from
the construction. They coincide for generic $\ve_1$. We prove a
stronger result.

\begin{Theorem}\label{thm:screening}
We have isomorphisms
\begin{gather}
    \label{eq:70}
        \scW_{\bB_1}(\g) \cong \Ker(\left.\widetilde\chi\right|_{\ve_2=-1}),\\
    \label{eq:66}
      \scW_\bA(\g) \cong \bigcap_i \left.\Vir_{i,\bA}
      \right|_{\ve_1\to \ve_1'}
        \otimes_\bA
      \Heis_{\bA}(\alpha_i^\perp),
\end{gather}
where $\left.\Vir_{i,\bA} \right|_{\ve_1\to \ve_1'}$ is the $\bA$-form
of the Virasoro algebra with $\ve_1$ replaced by
$\ve_1' = \frac{\ve_1(\alpha_i,\alpha_i)}2$.
Moreover \eqref{eq:70} preserves filtrations.
\end{Theorem}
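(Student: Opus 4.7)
The plan is to establish \eqref{eq:70} first and then deduce \eqref{eq:66} from it by applying the same argument to each screening operator $\widetilde\chi_i$ separately ($i$ running over simple roots) and lifting the $\bB_1$-version to an $\bA$-version via the Rees construction \eqref{eq:Rees}. The filtration assertion in \eqref{eq:70} is tautological, since the inclusion $\scW_{\bB_1}(\g) \hookrightarrow \Ker(\left.\widetilde\chi\right|_{\ve_2=-1})$ preserves the $\cohdeg$-grading by construction.

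To prove \eqref{eq:70}, set $Q = \Ker(\left.\widetilde\chi\right|_{\ve_2=-1})/\scW_{\bB_1}(\g)$. The inclusion comes from the definition of the double complex, and the discussion preceding the theorem (Feigin-Frenkel) gives equality generically, i.e., after tensoring with $\CC(\ve_1)$; so $Q$ is $\ve_1$-torsion. We proceed in two steps. Since $\widetilde H^{1,0}_{\bB_1}(\g)$ is $\bB_1$-free by \lemref{lem:H1free} and $\bB_1 = \CC[\ve_1]$ is a PID, $\operatorname{Image}(\left.\widetilde\chi\right|_{\ve_2=-1})$ --- a submodule of a free module over a PID --- is free by Kaplansky's theorem, hence flat. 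Tensoring the short exact sequence
\begin{equation*}
  0 \to \Ker(\left.\widetilde\chi\right|_{\ve_2=-1}) \to \widetilde H^0_{\bB_1}(\g) \to \operatorname{Image}(\left.\widetilde\chi\right|_{\ve_2=-1}) \to 0
\end{equation*}
with $\CC = \bB_1/\ve_1\bB_1$ therefore preserves exactness, identifying $\Ker(\left.\widetilde\chi\right|_{\ve_2=-1}) \otimes_{\bB_1} \CC$ with $\Ker(\left.\widetilde\chi\right|_{\substack{\ve_1=0 \\ \ve_2=-1}})$, which equals $\scW_{\bB_1}(\g) \otimes_{\bB_1} \CC$ by \eqref{eq:iso}. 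Thus $Q \otimes_{\bB_1} \CC = 0$. Both subspaces of $\widetilde H^0_{\bB_1}(\g)$ are bigraded by $\cohdeg$ and conformal dimension $\Delta$ --- both preserved by $\widetilde\chi$ --- with pieces finite-dimensional over $\CC$ and supported in $\cohdeg, \Delta \ge 0$, so $Q$ inherits this structure. The identity $Q = \ve_1 Q$ reads $Q_{d,D} = \ve_1 \cdot Q_{d-1,D}$ in each bigraded piece, and downward induction on $d$ starting from $Q_{-1,D} = 0$ forces $Q = 0$.

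For \eqref{eq:66}, note that for each simple root $\alpha_i$ the $i$-th screening operator $\widetilde\chi_i$ (whose sum over $i$ is $\widetilde\chi$ on $\widetilde H^0$) coincides with the $\widetilde\chi$ for the $\algsl_2$-subalgebra attached to $\alpha_i$, viewed inside the tensor decomposition $\Heis_\bA(\h) \cong \Heis_\bA(\CC\alpha_i) \otimes \Heis_\bA(\alpha_i^\perp)$. Applying \eqref{eq:70} to this $\algsl_2$ with $\ve_1$ rescaled to $\ve_1' = \ve_1(\alpha_i,\alpha_i)/2$ --- dictated by the commutation relation \eqref{eq:mPrelApp} of $\widetilde P^i_n$ in the $\alpha_i$-direction --- yields
\begin{equation*}
  \Ker(\left.\widetilde\chi_i\right|_{\ve_2=-1}) \cong \left.\Vir_{i,\bB_1}\right|_{\ve_1 \to \ve_1'} \otimes_{\bB_1} \Heis_{\bB_1}(\alpha_i^\perp).
\end{equation*}
Intersecting over $i$ and combining with \eqref{eq:70} --- using that $\bigcap_i \Ker(\widetilde\chi_i) \subset \Ker(\widetilde\chi) = \scW_{\bB_1}(\g)$ and the reverse inclusion, which holds generically by Feigin-Frenkel and globally because $\widetilde H^{1,0}_{\bB_1}(\g)$ is torsion-free --- gives \eqref{eq:66} at the specialization $\ve_2 = -1$. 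To lift from $\bB_1$ to $\bA$, recall from \subsecref{subsec:grfil} that $\scW_\bA(\g)$ is the Rees module of $\scW_{\bB_1}(\g)$ with respect to the $\cohdeg$-filtration, and the analogous Rees description holds for each factor on the right of \eqref{eq:66}; since the Rees construction commutes with intersection when all filtrations are induced from a common ambient module --- here $\widetilde H^0_\bA(\g)$ --- the identity lifts to $\bA$.

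The crucial technical input is the freeness of $\operatorname{Image}(\left.\widetilde\chi\right|_{\ve_2=-1})$ as a $\bB_1$-module, which is what allows the short exact sequence to survive specialization at $\ve_1 = 0$; this is precisely why we work over $\bB_1$ rather than $\bA$ directly and then lift by Rees. The only other delicate point is the compatibility of the bigrading $(\cohdeg, \Delta)$ with both differentials $\dst$ and $\widetilde\chi$, together with the finite-dimensionality and bounded-below support of the bigraded pieces --- properties built into the rescaling $\widetilde\chi = \chi/\ve_1$ chosen in \subsecref{sec:oppos-spectr-sequ} --- which is indispensable for concluding $Q = 0$ from $Q = \ve_1 Q$.
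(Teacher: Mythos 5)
Your overall strategy is close to the paper's: you use the classical-limit computation \eqref{eq:iso} as the key input for \eqref{eq:70}, and your deduction of \eqref{eq:66} (reduce to $\ve_2=-1$, identify each $\widetilde\chi_i$ with the $\algsl_2$ screening after the rescaling $\ve_1\to\ve_1'$, intersect, lift by the Rees construction \eqref{eq:Rees}) is essentially the paper's. But the proof of \eqref{eq:70} has two gaps. The first is the specialization step: freeness of $\Ima(\left.\widetilde\chi\right|_{\ve_2=-1})$ (a submodule of the free module $\widetilde H^{1,0}_{\bB_1}(\g)$ over the PID $\bB_1$) gives exactness of
$0\to\Ker\widetilde\chi\otimes_{\bB_1}\CC\to\widetilde H^0_{\bB_1}(\g)\otimes_{\bB_1}\CC\to\Ima\widetilde\chi\otimes_{\bB_1}\CC\to 0$,
i.e.\ injectivity of $\Ker\widetilde\chi\otimes_{\bB_1}\CC\to\widetilde H^0_{\bB_1}(\g)\otimes_{\bB_1}\CC$, but it does \emph{not} identify $\Ker\widetilde\chi\otimes_{\bB_1}\CC$ with $\Ker(\left.\widetilde\chi\right|_{\substack{\ve_1=0\\ \ve_2=-1}})$: the discrepancy is $\operatorname{Tor}_1^{\bB_1}(\Coker\widetilde\chi,\CC)$, and a free submodule need not be saturated (think of $\ve_1\bB_1\subset\bB_1$), so $\Ima\widetilde\chi\otimes\CC\to\widetilde H^{1,0}_{\bB_1}(\g)\otimes\CC$ may fail to be injective. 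This is exactly why the paper writes the K\"unneth sequence with the Tor term and instead argues via the composite $\scW_{\bB_1}(\g)\otimes\CC\to\Ker\widetilde\chi\otimes\CC\to\Ker(\left.\widetilde\chi\right|_{\ve_1=0})$, which is the natural map that \eqref{eq:iso} declares an isomorphism; your conclusion $Q\otimes_{\bB_1}\CC=0$ is recoverable that way, but not by the justification you give.

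The second gap is more serious: your finishing step ``$Q=\ve_1Q$ in a bigrading by $(\cohdeg,\Delta)$ bounded below forces $Q=0$'' rests on $\cohdeg$ being a genuine grading on $\scW_{\bB_1}(\g)$ and on $\Ker(\left.\widetilde\chi\right|_{\ve_2=-1})$. It is not: $\ve_2$ itself has $\cohdeg=1$, and the specialized screening operator is not $\cohdeg$-homogeneous (at $\ve_2=-1$ the creation part of the vertex operator carries $\widetilde P^i_n$ rather than $\widetilde P^i_n/\ve_2$), so only the filtration survives --- this is the entire reason \subsecref{subsec:grfil} recovers $\scW_\bA(\g)$ from $\scW_{\bB_1}(\g)$ as a Rees algebra rather than as a graded object. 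With only the conformal-weight grading available, each $Q_\Delta$ is a finitely generated torsion $\bB_1$-module satisfying $Q_\Delta=\ve_1Q_\Delta$, and that does not force $Q_\Delta=0$ (e.g.\ $\CC[\ve_1]/(\ve_1-1)$): generic Feigin--Frenkel only tells you $Q$ is torsion, possibly supported at nonzero special values of $\ve_1$, so a positively-graded Nakayama argument is unavailable. The paper circumvents this by first checking that \eqref{eq:iso} preserves the filtrations induced from $\widetilde H^0_{\bB_1}(\g)$, passing to associated gradeds (where $\ve_1$ genuinely has degree $+1$), and applying graded Nakayama twice; for the same reason the filtration statement in \eqref{eq:70} is not ``tautological'' --- comparing the filtration on $\scW_{\bB_1}(\g)$ coming from $\scW_\bA(\g)$ with the one induced from the ambient Heisenberg algebra is part of that gr-level bookkeeping, which your argument needs to absorb.
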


\begin{NB}
    As we will see in the proof, I do not know whether \eqref{eq:66}
    remains true if we replace the right hand side by
    $\Ker\widetilde\chi$ (over $\bA$).
\end{NB}

\begin{proof}
    Let us first consider \eqref{eq:70} and denote $\widetilde\chi$ at
    $\ve_2=-1$ also by $\widetilde\chi$ for brevity:
  \begin{equation}
    \widetilde\chi\colon
    \widetilde H^0_{\bB_1}(\g) \to \widetilde H^{1,0}_{\bB_1}(\g).
  \end{equation}
  We know that both $\widetilde H^0_{\bB_1}(\g)$ and $\widetilde
  H^{1,0}_{\bB_1}(\g)$ are free over $\bB_1$ (see
  \lemref{lem:H1free}). We also know that their specialization is the
  cohomology group at $\ve_1=0$, $\ve_2=-1$ (see \eqref{eq:69}).
  Therefore we have an exact sequence
  \begin{equation}
    0 \to \Ker \widetilde\chi \otimes_{\bB_1}\CC
      \to
      \Ker (\left.\widetilde\chi\right|_{\substack{\ve_1 = 0 \\\ve_2=-1}})
      \to
      \operatorname{Tor}_1^{\bB_1}(\operatorname{Cok}
      \widetilde\chi, \CC)\to 0.
  \end{equation}
  \begin{NB2}
    This is a special case of K\"unneth formula, used in
    \eqref{eq:Kunneth1}. See note on 2013-11-20. First consider
    \begin{equation}
        \operatorname{Tor}^{\bB_1}_1(\Ima\widetilde\chi, \CC) \to
        \Ker\widetilde\chi\otimes_{\bB_1}\CC
        \to
        \widetilde H^{1,0}_{\bB_1}(\g)\otimes \CC
        \to \Ima\widetilde\chi\otimes_{\bB_1}\CC \to 0.
    \end{equation}
    Since $\widetilde H^{1,0}_{\bB_1}(\g)$ is torsion free over
    $\bB_1$, so is $\Ima\widetilde\chi$. Therefore
    $\operatorname{Tor}^{\bB_1}_1(\Ima\widetilde\chi, \CC) = 0$, and
    \begin{equation}
        \Ker\widetilde\chi\otimes_{\bB_1}\CC
        \cong\Ker\left(
          \widetilde H^{0}_{\bB_1}(\g)\otimes \CC
        \to \Ima\widetilde\chi\otimes_{\bB_1}\CC
          \right).
    \end{equation}
    Then we consider
    \begin{equation}
        0 \to
        \Ker\left(
          \widetilde H^{0}_{\bB_1}(\g)\otimes \CC
        \to \Ima\widetilde\chi\otimes_{\bB_1}\CC
          \right)
          \to
        \Ker\left(
          \widetilde H^{0}_{\bB_1}(\g)\otimes \CC
        \to \widetilde H^{1,0}_{\bB_1}(\g)\otimes \CC
          \right).
    \end{equation}
    The quotient is
    \begin{equation}
        \Ker\left(\Ima\widetilde\chi_{\bB_1}\otimes \CC
          \to \widetilde H^{1,0}_{\bB_1}(\g)\otimes \CC
          \right).
    \end{equation}
    This is nothing but $\operatorname{Tor}_1^{\bB_1}(\operatorname{Cok}
      \widetilde\chi, \CC)$ as can be seen from the exact sequence
      \begin{equation}
          0\to \Ima\widetilde\chi\to \widetilde H^{1,0}_{\bB_1}(\g)
          \to \Coker\widetilde\chi\to 0.
      \end{equation}
  \end{NB2}

  We have a homomorphism from $\scW_{\bB_1}(\g)\otimes_{\bB_1}\CC$ to the
  first term $\Ker \widetilde\chi \otimes_{\bB_1}\CC$, and its
  composition to the middle term is an isomorphism by \eqref{eq:iso}.
  Therefore we have
  \begin{equation}
    \scW_{\bB_1}(\g)\otimes_{\bB_1}\CC \cong
    \Ker \widetilde\chi \otimes_{\bB_1}\CC
    \cong
    \Ker (\left.\widetilde\chi\right|_{\substack{\ve_1 = 0 \\\ve_2=-1}}).
  \end{equation}

  Since \eqref{eq:iso} preserves the filtration, we have an induced
  isomorphism between the associated graded
  \begin{equation}\label{eq:79}
    \gr \left(\scW_{\bB_1}(\g)\otimes_{\bB_1}\CC\right)
    \cong \gr
    \left(
      \Ker \widetilde\chi \otimes_{\bB_1}\CC
    \right).
  \end{equation}

  Let $0 = F_{-1} \subset F_0 \subset F_1 \subset\cdots$ be the
  filtration on $\scW_{\bB_1}(\g)$ as before. Then the filtration on
  $\scW_{\bB_1}(\g)\otimes_{\bB_1}\CC$ is given by
  \begin{equation}
      0\subset \nicefrac{F_0}{\ve_1 \scW_{\bB_1}(\g)\cap F_0}\subset
      \nicefrac{F_1}{\ve_1 \scW_{\bB_1}(\g)\cap F_1}\subset\cdots,
  \end{equation}
  as $\scW_{\bB_1}(\g)\otimes_{\bB_1}\CC\cong \scW_{\bB_1}(\g)/\ve_1
  \scW_{\bB_1}(\g)$. From the definition of $F_p$, we have
  $\ve_1 \scW_{\bB_1}(\g)\cap F_p = \ve_1 F_{p-1}$. Therefore
  \begin{equation}
      \gr\left(\scW_{\bB_1}(\g)\otimes_{\bB_1}\CC\right)
      = \bigoplus_{p>0} \nicefrac{F_p}{\ve_1 F_{p-1} + F_{p-1}}
     \cong \nicefrac{\gr \scW_{\bB_1}(\g)}{\ve_1 \gr \scW_{\bB_1}(\g)}.
  \end{equation}
  (Here we have used $\gr W/ \ve_1 \gr W =
 \bigoplus (F_p/F_{p-1})/\ve_1 (F_{p-1}/F_{p-2})$ as $\ve_1$ shift the grading by $1$).
  The same is true for $\gr
    \left(
      \Ker \widetilde\chi \otimes_{\bB_1}\CC
    \right)
    $.

  \begin{NB}
  Recall that $\scW_{\bA}(\g)$ is the Rees algebra of $\scW_{\bB_1}(\g)$
  with respect to the filtration. Therefore the associated graded is
  given by $\otimes_{\bB_2}\bB_2/\ve_2\bB_2$. Therefore we have
  \begin{equation}\label{eq:80}
    \gr \left(\scW_{\bB_1}(\g)\otimes_{\bB_1}\CC\right)
    \cong \left(\gr \scW_{\bB_1}(\g)\right) \otimes_{\bB_1}\CC.
  \end{equation}

  We claim that the same is true for $\Ker\widetilde\chi$:
  \begin{equation}\label{eq:78}
    \gr \left(
      \Ker \widetilde\chi \otimes_{\bB_1}\CC
    \right)
    \cong
        \left(\gr \Ker \widetilde\chi\right) \otimes_{\bB_1}\CC.
  \end{equation}

  \begin{NB2}\linelabel{NB2}
    The following argument contains a serious gap. The multiplication
    by $\ve_1$ sends $F_p$ to $F_{p+1}$, and hence \eqref{eq:84} is
    not a homomorphism of $\bB_1$-modules.
  \end{NB2}

  Let us first consider the filtration on
  $\Ker\widetilde\chi\otimes_{\bB_1}\CC$. It is given as
  $(\Ker\widetilde\chi\otimes_{\bB_1}\CC)\cap F_\bullet(\widetilde
  H^0_{\bB_1}(\g)\otimes\CC)$, where filtration $F_\bullet(\widetilde
  H^0_{\bB_1}(\g)\otimes\CC)$ is the filtration on $\widetilde
  H^0_{\bB_1}(\g)\otimes\CC$, as we explained in
  \subsecref{sec:oppos-spectr-sequ}. We also have an induced
  filtration on $\Ker\widetilde\chi$ from the filtration
  $F_\bullet(\widetilde H^0_{\bB_1}(\g))$ on $\widetilde
  H^0_{\bB_1}(\g)$. Consider a homomorphism
  \begin{equation}\label{eq:84}
      \widetilde H^0_{\bB_1}(\g)
      \to \widetilde H^{1,0}_{\bB_1}(\g) \oplus
      \left(\nicefrac{\widetilde H^0_{\bB_1}(\g)}
        {F_p(\widetilde H^0_{\bB_1}(\g))}\right)
  \end{equation}
  given by the sum of $\widetilde\chi$ and the projection. The kernel
  of this homomorphism is $\Ker\widetilde\chi\cap F_p(\widetilde
  H^0_{\bB_1}(\g))$.
  Recall that $\widetilde H^{1,0}_{\bB_1}(\g)$ is torsion free over
  $\bB_1$. The second summand is also flat over $\bB_1$, as
  $F_p(\widetilde H^0_{\bB_1}(\g))$ consist of polynomials in
  $\widetilde P^i_n$ of at most degree $p$ in the description in
  \lemref{lem:H1free}. Therefore the above induces an injective
  homomorphism
  \begin{multline}
      \label{eq:77}
      \left\{
      \Ker\widetilde\chi \cap F_p(\widetilde H^0_{\bB_1}(\g))\right\}
      \otimes_{\bB_1}\CC
\\
      \to
      (\Ker\widetilde\chi\otimes_{\bB_1}\CC)\cap F_p(\widetilde
      H^0_{\bB_1}(\g)\otimes\CC).
  \end{multline}

  Let us denote the filtration of $\scW_{\bB_1}(\g)$ by
  $F_p(\scW_{\bB_1}(\g))$. Then the filtration of
  $\scW_{\bB_1}(\g)\otimes_{\bB_1}\CC$ is given by
  $F_p(\scW_{\bB_1}(\g))\otimes_{\bB_1}\CC$.
  \begin{NB2}
    I think that this is obvious, but I do not find a `word'
    explaining why it is obvious.
  \end{NB2}%
  We have an isomorphism from $F_p(\scW_{\bB_1}(\g))\otimes_{\bB_1}\CC$
  to the second term of \eqref{eq:77}, which factors through the first
  term. Therefore \eqref{eq:77} is surjective, and hence is an isomorphism.

  Now \eqref{eq:78} is a consequence of
  \begin{equation}\label{eq:68}
    \begin{NB2}
    \bigoplus_p \operatorname{Tor}^{\bB_1}_1(
    \nicefrac{\Ker\widetilde\chi\cap F_{p+1}(\widetilde
      H^0_{\bB_1}(\g))}{\Ker\widetilde\chi\cap F_p(\widetilde
      H^0_{\bB_1}(\g))}, \CC) \cong
    \end{NB2}%
    \operatorname{Tor}^{\bB_1}_1(\gr \Ker \widetilde\chi, \CC) = 0.
  \end{equation}
  Since $\gr\Ker\widetilde\chi\subset\gr \widetilde
  H^0_{\bB_1}(\g)$
  \begin{NB2}
      $(\Ker\widetilde\chi\cap F_{p+1})/(\Ker\widetilde\chi\cap F_{p})
      \to F_{p+1}/F_p$ is clearly injective.
  \end{NB2}%
  and $\gr \widetilde H^0_{\bB_1}(\g) \cong H^0_{\bB_1}(\g)$ is
  torsion free, $\gr\Ker\widetilde\chi$ is also torsion free. Therefore
  \eqref{eq:68} is true.

  Combining (\ref{eq:80}, \ref{eq:78}) with \eqref{eq:79}, we get
  \begin{equation}
    (\gr \scW_{\bB_1}(\g))\otimes_{\bB_1}\CC \cong
    (\gr \Ker\widetilde\chi)\otimes_{\bB_1}\CC.
  \end{equation}
  \end{NB}%
  By graded Nakayama's lemma, we conclude $\gr
  \scW_{\bB_1}(\g)\cong\gr\Ker\widetilde\chi$. Using it again, we get
  \eqref{eq:70}.

  Next consider \eqref{eq:66}. Since both sides are Rees algebras of
  the corresponding vertex algebras at $\ve_2 = -1$ with the induced
  filtration, it is enough to show that we have a filtration
  preserving isomorphism at $\ve_2 = -1$:
  \begin{equation}
      \label{eq:82}
      \scW_{\bB_1}(\g) \cong \bigcap_i \left.\Vir_{i,\bB_1}
        \right|_{\ve_1 \to \ve_1'}
        \otimes_{\bB_1}\Heis_{\bB_1}(\alpha_i^\perp),
  \end{equation}
  where $\Vir_{i,\bB_1}$, $\Heis_{\bB_1}(\alpha_i^\perp)$ are defined
  in an obvious manner.

  We use \eqref{eq:70} $\scW_{\bB_1}(\algsl_2) =
  \Vir_{\bB_1}\cong\Ker(\left.\widetilde\chi\right|_{\ve_2=-1})$ for
  $\g = \algsl_2$ and the observation that $\widetilde\chi$ is the sum
  of operators over $i\in I$, we see that the right hand side is
  $\Ker(\left.\widetilde\chi\right|_{\ve_2=-1})$. The substitution
  $\ve_1 \to \ve_1'=\frac{(\alpha_i,\alpha_i)\ve_1}2$ is necessary, as the
  Heisenberg commutation \eqref{eq:mPrel} involves
  $(\alpha_i,\alpha_j)$.
  Now we use \eqref{eq:70} for the original $\g$ and deduce
  \eqref{eq:82}.
\end{proof}

From this result, we extend the duality for the $\scW$-algebra in
\cite[Prop.~15.4.16]{F-BZ} from generic to arbitrary level.
\begin{Corollary}\label{cor:duality}
  Let ${}^L\g$ be the Langlands dual of $\g$. Then we have
  \begin{equation}
    \scW_\bA(\g) \cong \left.\scW_\bA({}^L\g)\right|_{\substack{\ve_1\to
        r^\vee \ve_2\\ \ve_2\to \ve_1}},
  \end{equation}
  where $r^\vee$ is the maximal number of edges connecting two
  vertices of the Dynkin diagram of $\g$ \textup(the lacing
  number\textup).
\end{Corollary}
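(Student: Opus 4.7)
\subsection*{Proof plan for Corollary~\ref{cor:duality}}

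The plan is to derive the $\bA$-form duality from the generic Feigin-Frenkel duality by realizing both sides as intersections of kernels of screening operators inside Heisenberg algebras via Theorem~\ref{thm:screening}, and then checking that the classical iso at generic level preserves the integral structures. First I would write, using \eqref{eq:66},
\[
\scW_\bA(\g) \cong \bigcap_i \left.\Vir_{i,\bA}\right|_{\ve_1\to\frac{(\alpha_i,\alpha_i)}{2}\ve_1}\otimes_\bA\Heis_\bA(\alpha_i^\perp)\subset\Heis_\bA(\h),
\]
and similarly for ${}^L\g$, using simple roots $\alpha_i^\vee$ of ${}^L\g$ (which are coroots of $\g$) and the normalized bilinear form on ${}^L\h$. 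Since on the geometric side $k+h^\vee=-\ve_2/\ve_1$, the substitution $\ve_1\to r^\vee\ve_2,\ \ve_2\to\ve_1$ sends $k+h^\vee$ to a quantity corresponding (up to the $r^\vee$-factor reflecting the normalization of $(\,,\,)$ on ${}^L\g$) to $k^\vee+h^{\vee\vee}$, matching the shape of Feigin-Frenkel duality.

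Second, I would construct an isomorphism $\Heis_\bA(\h)\xrightarrow{\cong}\Heis_\bA({}^L\h)|_{\ve_1\to r^\vee\ve_2,\,\ve_2\to\ve_1}$. The Heisenberg generators $\widetilde P^i_n$ for $\g$ satisfy $[\widetilde P^i_m,\widetilde P^j_n]=-m\delta_{m,-n}(\alpha_i,\alpha_j)\ve_1\ve_2$, while the generators $\widetilde Q^i_n$ for ${}^L\g$ satisfy $[\widetilde Q^i_m,\widetilde Q^j_n]=-m\delta_{m,-n}(\alpha_i^\vee,\alpha_j^\vee)_L\ve_1'\ve_2'$ where $(\,,\,)_L$ is normalized by squared length $2$ of long roots of ${}^L\g$. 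Using the identity $(\alpha_i^\vee,\alpha_j^\vee)_L=\tfrac{2}{(\alpha_i,\alpha_i)}(\alpha_i,\alpha_j)$ (true in simply-laced case trivially, and in general by the comparison of normalized forms with lacing number $r^\vee$), one checks directly that the rescaling $\widetilde Q^i_n\mapsto c_i\widetilde P^i_n$ (with explicit scalars $c_i$ involving only $(\alpha_i,\alpha_i)$ and $r^\vee$) is a well-defined $\bA$-algebra isomorphism after the stated substitution, since $\ve_1'\ve_2'=r^\vee\ve_1\ve_2$.

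Third, I would check that this Heisenberg isomorphism carries the integral Virasoro $\Vir_{i,\bA}$ (with its shifted $\ve_1$) on the $\g$-side to the corresponding $\Vir_{i,\bA}$ on the ${}^L\g$-side. By the explicit formula \eqref{eq:mVir} this reduces to matching the coefficient of the improvement term $\tfrac{n+1}{2}(\ve_1+\ve_2)\widetilde P^i_n$; the scaling of $\widetilde P^i_n$ combined with the substitution $\ve_1\to r^\vee\ve_2$, $\ve_2\to\ve_1$ transforms the $\g$-expression into the ${}^L\g$-expression, so the subalgebras coincide under the isomorphism. Intersecting over $i\in I$ (and noting that the set of simple roots of ${}^L\g$ is canonically identified with $I$), \eqref{eq:66} then yields the desired $\bA$-linear isomorphism $\scW_\bA(\g)\cong\scW_\bA({}^L\g)|_{\ve_1\to r^\vee\ve_2,\,\ve_2\to\ve_1}$.

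The main obstacle I anticipate is the bookkeeping of normalizations in the non-simply-laced case: the long/short root distinction enters both through the $\ve_1'=\frac{(\alpha_i,\alpha_i)}{2}\ve_1$ shift in the Virasoro factor and through the comparison $(\alpha_i^\vee,\alpha_j^\vee)_L$ versus $(\alpha_i,\alpha_j)$, and one must check that the lacing-number-dependent scalings conspire correctly so that the $\bA$-structures agree on the nose, not merely after inverting $\ve_1\ve_2$. Once this bookkeeping is done, the conclusion is formal from Theorem~\ref{thm:screening}, and as an immediate consequence the generic hypothesis in Theorem~\ref{FF-intro}(4) is removed by specialization of $(\ve_1,\ve_2)$ along the line $k+h^\vee=-\ve_2/\ve_1$ to any value of $k$.
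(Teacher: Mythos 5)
Your proposal is correct and follows essentially the same route as the paper: both deduce the corollary from Theorem~\ref{thm:screening}, i.e.\ from the description \eqref{eq:66} of $\scW_\bA(\g)$ as $\bigcap_i \left.\Vir_{i,\bA}\right|_{\ve_1\to\ve_1'}\otimes_\bA\Heis_\bA(\alpha_i^\perp)$, the paper phrasing your rescaling-and-swap bookkeeping as the invariance of $\Vir_{i,\bA}$ under $\ve_1\leftrightarrow\ve_2$ and $(\ve_1,\ve_2)\mapsto(c\ve_1,c\ve_2)$. One slip to correct: the displayed identity $(\alpha_i^\vee,\alpha_j^\vee)_L=\frac{2}{(\alpha_i,\alpha_i)}(\alpha_i,\alpha_j)$ is not even symmetric and fails outside the simply-laced case; the correct comparison is $({}^L\alpha_i,{}^L\alpha_j)=\frac1{r^\vee}\,\frac{4(\alpha_i,\alpha_j)}{(\alpha_i,\alpha_i)(\alpha_j,\alpha_j)}$, with which your plan closes exactly as in the paper: the rescaling $\widetilde Q^i_n\mapsto \frac{2}{(\alpha_i,\alpha_i)}\widetilde P^i_n$ matches the Heisenberg relations because ${}^L\ve_1\,{}^L\ve_2=r^\vee\ve_1\ve_2$, and the $i$-th Virasoro parameter becomes $\frac{({}^L\alpha_i,{}^L\alpha_i)}2\,{}^L\ve_1=\frac{2\ve_2}{(\alpha_i,\alpha_i)}$, which is the swap-and-rescale of $\frac{(\alpha_i,\alpha_i)}2\ve_1$, so the $i$-th Virasoro generators differ only by the nonzero constant $\bigl(\tfrac{2}{(\alpha_i,\alpha_i)}\bigr)^2$ and generate the same $\bA$-subalgebra.
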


This is because $\Vir_{i,\bA}$ is invariant under
$\ve_1\leftrightarrow \ve_2$ and $(\ve_1,\ve_2)\to (c\ve_1,c\ve_2)$
($c\in\CC^*$).

\begin{NB}
The lacing number appears from our normalization of the inner product:
\begin{equation}
  ({}^L\alpha_i,{}^L\alpha_j) = \frac1{r^\vee}
  (\alpha_i^\vee,\alpha_j^\vee) = \frac1{r^\vee}
  \frac{4(\alpha_i,\alpha_j)}{(\alpha_i,\alpha_i)(\alpha_j,\alpha_j)}.
\end{equation}
Therefore
\begin{equation}
  (\frac{\ve_1(\alpha_i,\alpha_i)}2, \ve_2)
  \to
  (\ve_2, \frac{\ve_1(\alpha_i,\alpha_i)}2)
  \to
  (\frac{2\ve_2}{(\alpha_i,\alpha_i)}, \ve_1)
  = (\frac{r^\vee({}^L\alpha_i,{}^L\alpha_i)\ve_2}{2}, \ve_1).
\end{equation}
Hence we should set
\begin{equation}
    ({}^L\ve_1, {}^L\ve_2) = (r^\vee \ve_2, \ve_1).
\end{equation}
Let us check the compatibility with \cite[Prop.~15.4.16]{F-BZ}:
\begin{equation}
    k+h^\vee = - \frac{\ve_2}{\ve_1} = -\frac{{}^L\ve_1}{r^\vee {}^L\ve_2}
    = \frac1{r^\vee}({}^Lk+{}^L h^\vee)^{-1}.
\end{equation}
\end{NB}

\begin{NB}
  It remains to study the $\bA$-form of the Verma module. I must
  switch to Arakawa's paper \cite{Arakawa2007}.
\end{NB}

\subsection{The embedding
\texorpdfstring{$\scW_{\bA}(\g)\to \scW_{\bA}(\mathfrak l)$}
{W(g) to W(l)}
}

The result in this subsection will not be used elsewhere, but shows
that the hyperbolic restriction functor $\Phi_{L,G}$ for general
$L$ corresponds to in the $\scW$-algebra side.

Let $L$ be a standard Levi subgroup of $G$ with Lie algebra $\mathfrak l$. We can write $\fl$ as
$[\fl,\fl]\oplus \z(\fl)$, where $\z(\fl)$ denotes the center of $\fl$. The above discussion can be applied
to the Lie algebra $\fl$ instead of $\g$ and we get a well-defined vertex operator algebra
$\scW_{\bA}(\fl)$ over $\bA$ and we have an embedding
$\scW_{\bA}(\fl)\hookrightarrow \Heis(\h)$. It is also clear that $\scW_{\bA}(\fl)$
is isomorphic to $\scW_{\bA}([\fl,\fl])\underset{\bA}\otimes \Heis_{\bA}(\z(\fl))$.

\begin{Theorem}\label{W-Levi}
There exists an embedding $\scW_{\bA}(\g)\to \scW_{\bA}(\fl)$ compatible with the embedding of both
algebras into $\Heis(\h)$.
\end{Theorem}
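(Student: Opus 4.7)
The plan is to deduce this embedding from the screening operator characterization in Theorem~\ref{thm:screening}, using the fact that both $\scW_\bA(\g)$ and $\scW_\bA(\fl)$ live inside the same ambient Heisenberg vertex algebra $\Heis_\bA(\h)$ and are cut out by screening conditions indexed by simple roots. Concretely, using the decomposition $\fl = [\fl,\fl]\oplus\z(\fl)$ and the corresponding orthogonal decomposition $\h = (\h\cap[\fl,\fl])\oplus\z(\fl)$ of Cartan subalgebras (with inner product induced from $\g$), we get a factorization $\Heis_\bA(\h)\cong \Heis_\bA(\h\cap[\fl,\fl])\otimes_\bA\Heis_\bA(\z(\fl))$, into which $\scW_\bA([\fl,\fl])$ embeds by applying \eqref{eq:66} to $[\fl,\fl]$ and tensoring trivially on the second factor. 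This gives the embedding $\scW_\bA(\fl)=\scW_\bA([\fl,\fl])\otimes_\bA\Heis_\bA(\z(\fl))\hookrightarrow \Heis_\bA(\h)$ required by the statement.

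The next step is to reinterpret both embeddings on the right side of \eqref{eq:66}. Letting $I_\g$ (resp.\ $I_\fl\subset I_\g$) denote the set of simple roots of $\g$ (resp.\ $\fl$), Theorem~\ref{thm:screening} identifies
\begin{equation*}
\scW_\bA(\g)\;=\;\bigcap_{i\in I_\g}\bigl.\Vir_{i,\bA}\bigr|_{\ve_1\to\ve_1'}\otimes_\bA\Heis_\bA(\alpha_i^\perp),
\end{equation*}
where $\alpha_i^\perp$ is the orthogonal complement of $\CC\alpha_i$ in $\h$. The analogous statement for $\scW_\bA([\fl,\fl])$ cuts it out inside $\Heis_\bA(\h\cap[\fl,\fl])$ as the intersection over $i\in I_\fl$ of the corresponding Virasoro-times-Heisenberg algebras, where now the orthogonal complement is taken inside $\h\cap[\fl,\fl]$. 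Tensoring with $\Heis_\bA(\z(\fl))$ on both sides and using $(\alpha_i^\perp)_\h = (\alpha_i^\perp)_{[\fl,\fl]}\oplus\z(\fl)$ for $i\in I_\fl$, we obtain
\begin{equation*}
\scW_\bA(\fl)\;=\;\bigcap_{i\in I_\fl}\bigl.\Vir_{i,\bA}\bigr|_{\ve_1\to\ve_1'}\otimes_\bA\Heis_\bA(\alpha_i^\perp)
\end{equation*}
as a vertex subalgebra of $\Heis_\bA(\h)$. The embedding $\scW_\bA(\g)\hookrightarrow\scW_\bA(\fl)$ is then immediate, since the intersection on the right is over the smaller index set $I_\fl\subset I_\g$. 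The compatibility with the embeddings into $\Heis_\bA(\h)$ is built into the construction.

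The main point to verify, which is the one technical obstacle, is that the local screening data at a simple root $i\in I_\fl$ genuinely agree whether computed from $\g$ or from $\fl$. This reduces to two checks: (i) the Heisenberg commutation relations on $\Heis_\bA(\h)$ used to define the Virasoro subalgebras depend only on the pairing $(\alpha_i,\alpha_i)$, which is unchanged under restriction from $\g$ to $\fl$ because the bilinear form on $\g$ is normalized so that $(\alpha_i,\alpha_i)=2$ when $\alpha_i$ is a long root (and for simply laced $\g$ is always $2$); (ii) consequently the rescaling $\ve_1\to\ve_1'=\ve_1(\alpha_i,\alpha_i)/2$ appearing in \eqref{eq:66} is the same whether we view $\alpha_i$ as a simple root of $\g$ or of $\fl$, so the Virasoro $\bA$-subalgebras at $i\in I_\fl$ entering the two intersections are literally equal inside $\Heis_\bA(\h)$. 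Once these compatibilities are recorded, the set-theoretic inclusion of intersections yields the desired embedding of vertex $\bA$-subalgebras, automatically preserving the cohomological grading $\cohdeg$ since both gradings are induced from the same grading on $\Heis_\bA(\h)$.
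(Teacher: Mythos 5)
Your route is genuinely different from the paper's and is viable in outline. The paper does not use the screening description at all: it introduces a second double-complex structure on the BRST complex $C^{\bullet}_{\bA}(\g)_0$, graded by $\operatorname{ad}_{h_{\fl}}$ instead of the principal gradation, identifies the column $C^{0,\bullet}_{\bA}(\g)_0$ with $C^{\bullet}_{\bA}(\fl)_0$, and defines the map $\scW_{\bA}(\g)\to\scW_{\bA}(\fl)$ by sending a cocycle to its $(0,0)$-component; compatibility with the two Heisenberg embeddings is then built in, and injectivity follows because the composite is the known embedding $\scW_{\bA}(\g)\hookrightarrow\Heis(\h)$. You instead deduce the embedding from Theorem~\ref{thm:screening}: once both algebras are exhibited inside $\Heis_{\bA}(\h)$ as intersections of the local pieces $\Vir_{i,\bA}|_{\ve_1\to\ve_1'}\otimes_{\bA}\Heis_{\bA}(\alpha_i^{\perp})$, over $I_{\g}$ and over $I_{\fl}\subset I_{\g}$ respectively, the inclusion is immediate. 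The trade-off is clear: the paper's argument needs nothing about $\scW_{\bA}(\fl)\subset\Heis_{\bA}(\h)$ beyond its BRST definition, while yours is a one-line conclusion \emph{provided} the screening description of $\scW_{\bA}(\fl)$ inside the same $\Heis_{\bA}(\h)$ is available.

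That proviso is where your argument has a gap. Theorem~\ref{thm:screening} is stated and proved for a simple Lie algebra with its normalized invariant form, whereas $\fl$ is only reductive, $[\fl,\fl]$ may have several simple factors, and Appendix~B explicitly does not assume $\g$ is of type $ADE$. Your justification (i) fails in the non-simply-laced case: if a simple factor of $[\fl,\fl]$ consists entirely of short roots of $\g$, its normalized form differs from the restriction of the form of $\g$ by the lacing number (for instance $\g$ of type $B_2$ and $\fl$ the Levi attached to the short simple root: $(\alpha_i,\alpha_i)_{\g}=1$, while the normalized form of $[\fl,\fl]\cong\algsl_2$ gives $2$), so the value $(\alpha_i,\alpha_i)$, and with it the substitution $\ve_1'=\ve_1(\alpha_i,\alpha_i)/2$ and the identification of the Heisenberg parameters, is \emph{not} the same when Theorem~\ref{thm:screening} is applied to the factors of $[\fl,\fl]$ with their own normalized forms. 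To repair this you must fix the convention that $\scW_{\bA}(\fl)$, its ambient $\Heis_{\bA}(\h)$ and the parameters $\ve_1,\ve_2$ are all taken with respect to the form restricted from $\g$ (this is exactly what the subcomplex $C^{0,\bullet}_{\bA}(\g)_0=C^{\bullet}_{\bA}(\fl)_0$ delivers for free in the paper's proof), and then prove the $\fl$-analogue of \eqref{eq:66} in that normalization, factor by factor of $[\fl,\fl]$, tensored with $\Heis_{\bA}(\z(\fl))$ and with orthogonal complements taken inside $\h$ as you do. This identity is also what guarantees that your intersection agrees with the paper's embedding $\scW_{\bA}(\fl)\hookrightarrow\Heis(\h)$, to which the compatibility assertion of the theorem refers; as written it is asserted rather than proved, and it is precisely the content the paper's double-complex construction is designed to bypass. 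The gap is fixable (and invisible when $\g$ is simply laced), but it must be closed for the proof to stand.
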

\begin{proof}
Clearly, it is enough to construct any map $\scW_{\bA}(\g)\to \scW_{\bA}(\fl)$ whose composition with the embedding
$\scW_{\bA}(\fl)\hookrightarrow \Heis(\h)$ gives the map $\scW_{\bA}(\g)\hookrightarrow \Heis(\h)$ constructed before.
To this end, we are going to construct another double complex structure on $C_{\bA}^{\bullet}(\g)_0$ (with the same
total complex).

Let $\p$ be the parabolic subalgebra containing $\fl$ and $\n_+$ and let $\n(\p)$ be its nilpotent radical. We can write $\n_+=\n_+(\fl)\oplus \n(\p)$. Accordingly, we can decompose $\chi=\chi_1+\chi_2$ where $\chi_1\in \n_+(\fl)^*$ and $\chi_2\in \n(\p)^*$.
Let $h_{\fl}\in \z(\fl)$ denote the (unique) element such that for every simple root $\alpha_i$ we have
$\text{ad}_{h_{\fl}}(e_i)=e_i$ if $e_i$ is not in $\fl$ and $\text{ad}_{h_{\fl}}(e_i)=0$ otherwise.
Now define a new grading on $C_{\bA}^{\bullet}(\g)_0$ in a way similar to~(\ref{bidegree}) but where instead of the principal gradation and the root height we use the eigenvalue with respect to $ad_{h_{\fl}}$. Then the action of $\chi_2$ has bidegree $(1,0)$
and the action of $d_{st}+\chi_1$ has bidegree $(0,1)$. In this way we get a new bicomplex structure
on $C_{\bA}^{\bullet}(\g)_0$ with the same total differential and total degree.

It is easy to see that we have $C_{\bA}^{p,q}(\g)_0=0$ unless $p\geq 0$ and $p+q\geq 0$. Note that it is no longer true that for $p=0$
the complex $C^{0,q}_{\mathbf A}({\mathfrak g})_0$
vanishes unless $q=0$;
moreover, the complex $C_{\bA}^{0,\bullet}(\g)_0$ (with respect to the differential $d_{st}+\chi_1$) is just $C_{\bA}^{\bullet}(\fl)_0$.
Thus we get a morphism $H^0(C_{\bA}^{\bullet}(\g)_0)\to H^0(C_{\bA}^{\bullet}(\fl)_0)$ by mapping every cocycle to its degree $(0,0)$-component with respect to the above grading.
\end{proof}


\bibliographystyle{myamsplain}
\bibliography{nakajima,mybib,tensor2,MO}

\def\cprime{$'$} \def\cprime{$'$} \def\cprime{$'$} \def\cprime{$'$}
  \def\cprime{$'$}
\providecommand{\bysame}{\leavevmode\hbox to3em{\hrulefill}\thinspace}
\providecommand{\MR}{\relax\ifhmode\unskip\space\fi MR }
\providecommand{\MRhref}[2]{%
  \href{http://www.ams.org/mathscinet-getitem?mr=#1}{#2}
}
\providecommand{\href}[2]{#2}
\begin{thebibliography}{10}

\bibitem{AGT}
L.~F. Alday, D.~Gaiotto, and Y.~Tachikawa, \emph{Liouville correlation
  functions from four-dimensional gauge theories}, Lett. Math. Phys.
  \textbf{91} (2010), no.~2, 167--197. \MR{2586871 (2010k:81243)}

\bibitem{Arakawa2007}
T.~Arakawa, \emph{Representation theory of {$\scr W$}-algebras}, Invent. Math.
  \textbf{169} (2007), no.~2, 219--320. \MR{2318558 (2009d:17039)}

\bibitem{ADHM}
M.~F. Atiyah, N.~J. Hitchin, V.~G. Drinfel{\cprime}d, and Y.~I. Manin,
  \emph{Construction of instantons}, Phys. Lett. A \textbf{65} (1978), no.~3,
  185--187. \MR{598562 (82g:81049)}

\bibitem{AHS}
M.~F. Atiyah, N.~J. Hitchin, and I.~M. Singer, \emph{Self-duality in
  four-dimensional {R}iemannian geometry}, Proc. Roy. Soc. London Ser. A
  \textbf{362} (1978), no.~1711, 425--461. \MR{506229 (80d:53023)}

\bibitem{Bando}
S.~Bando, \emph{Einstein-{H}ermitian metrics on noncompact {K}\"ahler
  manifolds}, Einstein metrics and {Y}ang-{M}ills connections ({S}anda, 1990),
  Lecture Notes in Pure and Appl. Math., vol. 145, Dekker, New York, 1993,
  pp.~27--33. \MR{MR1215276 (94d:32040)}

\bibitem{Baranovsky}
V.~Baranovsky, \emph{Moduli of sheaves on surfaces and action of the oscillator
  algebra}, J. Differential Geom. \textbf{55} (2000), no.~2, 193--227.
  \MR{MR1847311 (2002h:14071)}

\bibitem{BeilinsonDrinfeld}
A.~{Beilinson} and V.~{Drinfeld}, \emph{Quantization of {H}itchin’s
  integrable system and {H}ecke eigensheaves}, Available at
  \url{http://www.math.uchicago.edu/~mitya/langlands/hitchin/BD-hitchin.pdf},
  1997.

\bibitem{Opers}
\bysame, \emph{{Opers}}, ArXiv Mathematics e-prints (2005),
  \href{http://arxiv.org/abs/arXiv:math/0501398}{{\ttfamily
  arXiv:math/0501398}}.

\bibitem{2013CMaPh.319..269B}
A.~A. Belavin, M.~A. Bershtein, B.~L. Feigin, A.~V. Litvinov, and G.~M.
  Tarnopolsky, \emph{Instanton moduli spaces and bases in coset conformal field
  theory}, Comm. Math. Phys. \textbf{319} (2013), no.~1, 269--301. \MR{3034031}

\bibitem{2011JHEP...07..079B}
V.~{Belavin} and B.~{Feigin}, \emph{{Super Liouville conformal blocks from
  $\mathcal N = 2$ $SU(2)$ quiver gauge theories}}, July 2011,
  \href{http://arxiv.org/abs/1105.5800}{{\ttfamily arXiv:1105.5800 [hep-th]}},
  p.~79.

\bibitem{BL}
J.~Bernstein and V.~Lunts, \emph{Equivariant sheaves and functors}, Lecture
  Notes in Mathematics, vol. 1578, Springer-Verlag, Berlin, 1994. \MR{1299527
  (95k:55012)}

\bibitem{Borel-Tits}
A.~Borel and J.~Tits, \emph{Compl\'ements \`a l'article: ``{G}roupes
  r\'eductifs''}, Inst. Hautes \'Etudes Sci. Publ. Math. (1972), no.~41,
  253--276. \MR{0315007 (47 \#3556)}

\bibitem{Braden}
T.~Braden, \emph{Hyperbolic localization of intersection cohomology},
  Transform. Groups \textbf{8} (2003), no.~3, 209--216. \MR{1996415
  (2004f:14037)}

\bibitem{BraInstantonCountingII}
A.~Braverman and P.~Etingof, \emph{Instanton counting via affine {L}ie
  algebras. {II}. {F}rom {W}hittaker vectors to the {S}eiberg-{W}itten
  prepotential}, Studies in {L}ie theory, Progr. Math., vol. 243, Birkh\"auser
  Boston, Boston, MA, 2006, pp.~61--78. \MR{2214246 (2007b:14026)}

\bibitem{BraInstantonCountingI}
A.~Braverman, \emph{Instanton counting via affine {L}ie algebras. {I}.
  {E}quivariant {$J$}-functions of (affine) flag manifolds and {W}hittaker
  vectors}, Algebraic structures and moduli spaces, CRM Proc. Lecture Notes,
  vol.~38, Amer. Math. Soc., Providence, RI, 2004, pp.~113--132. \MR{2095901
  (2005k:14020)}

\bibitem{Bra-icm}
\bysame, \emph{Spaces of quasi-maps into the flag varieties and their
  applications}, International {C}ongress of {M}athematicians. {V}ol. {II},
  Eur. Math. Soc., Z\"urich, 2006, pp.~1145--1170. \MR{2275639 (2008i:14019)}

\bibitem{BFFR}
A.~Braverman, B.~Feigin, M.~Finkelberg, and L.~Rybnikov, \emph{A finite analog
  of the {AGT} relation {I}: {F}inite {$W$}-algebras and quasimaps' spaces},
  Comm. Math. Phys. \textbf{308} (2011), no.~2, 457--478. \MR{2851149
  (2012m:14021)}

\bibitem{braverman-2007}
A.~Braverman and M.~Finkelberg, \emph{Pursuing the double affine {G}rassmannian
  {I}: transversal slices via instantons on ${A}_k$-singularities}, Duke Math.
  J. \textbf{152} (2010), no.~2, 175--206. \MR{MR2656088 (2011i:14024)}

\bibitem{dynamicalW}
\bysame, \emph{Dynamical {W}eyl groups and equivariant cohomology of
  transversal slices on affine {G}rassmannians}, Math. Res. Lett. \textbf{18}
  (2011), no.~3, 505--512. \MR{2802583 (2012h:17027)}

\bibitem{BF2}
\bysame, \emph{Pursuing the double affine {G}rassmannian {II}: {C}onvolution},
  Adv. Math. \textbf{230} (2012), no.~1, 414--432. \MR{2900549}

\bibitem{BFG}
A.~Braverman, M.~Finkelberg, and D.~Gaitsgory, \emph{Uhlenbeck spaces via
  affine {L}ie algebras}, The unity of mathematics, Progr. Math., vol. 244,
  Birkh\"auser Boston, Boston, MA, 2006, see
  \url{http://arxiv.org/abs/math/0301176} for erratum, pp.~17--135. \MR{2181803
  (2007f:14008)}

\bibitem{MR3027737}
O.~Chacaltana, J.~Distler, and Y.~Tachikawa, \emph{Nilpotent orbits and
  codimension-2 defects of 6d {$\scr N=(2,0)$} theories}, Internat. J. Modern
  Phys. A \textbf{28} (2013), no.~3-4, 1340006, 54pp. \MR{3027737}

\bibitem{CG}
N.~Chriss and V.~Ginzburg, \emph{Representation theory and complex geometry},
  Birkh\"auser Boston Inc., Boston, MA, 1997. \MR{MR1433132 (98i:22021)}

\bibitem{MR763753}
S.~K. Donaldson, \emph{Instantons and geometric invariant theory}, Comm. Math.
  Phys. \textbf{93} (1984), no.~4, 453--460. \MR{763753 (86m:32043)}

\bibitem{MR1079726}
S.~K. Donaldson and P.~B. Kronheimer, \emph{The geometry of four-manifolds},
  Oxford Mathematical Monographs, The Clarendon Press, Oxford University Press,
  New York, 1990, Oxford Science Publications. \MR{1079726 (92a:57036)}

\bibitem{2013arXiv1308.3786D}
V.~{Drinfeld} and D.~{Gaitsgory}, \emph{{On a theorem of Braden}}, ArXiv
  e-prints (2013), \href{http://arxiv.org/abs/1308.3786}{{\ttfamily
  arXiv:1308.3786 [math.AG]}}.

\bibitem{MR2854154}
B.~L. Feigin and A.~I. Tsymbaliuk, \emph{Equivariant {$K$}-theory of {H}ilbert
  schemes via shuffle algebra}, Kyoto J. Math. \textbf{51} (2011), no.~4,
  831--854. \MR{2854154 (2012m:14018)}

\bibitem{MR1799936}
M.~Finkelberg and A.~Kuznetsov, \emph{Parabolic sheaves on surfaces and affine
  {L}ie algebra {$\widehat{\mathfrak g\mathfrak l}_n$}}, J. Reine Angew. Math.
  \textbf{529} (2000), 155--203. \MR{1799936 (2002e:17033)}

\bibitem{Fr}
E.~Frenkel, \emph{Langlands correspondence for loop groups}, Cambridge Studies
  in Advanced Mathematics, vol. 103, Cambridge University Press, Cambridge,
  2007. \MR{2332156 (2008h:22017)}

\bibitem{F-BZ}
E.~Frenkel and D.~Ben-Zvi, \emph{Vertex algebras and algebraic curves}, second
  ed., Mathematical Surveys and Monographs, vol.~88, American Mathematical
  Society, Providence, RI, 2004. \MR{2082709 (2005d:17035)}

\bibitem{MR675108}
I.~B. Frenkel, \emph{Representations of affine {L}ie algebras, {H}ecke modular
  forms and {K}orteweg-de~{V}ries type equations}, Lie algebras and related
  topics ({N}ew {B}runswick, {N}.{J}., 1981), Lecture Notes in Math., vol. 933,
  Springer, Berlin, 1982, pp.~71--110. \MR{MR675108 (84f:17006)}

\bibitem{Gaiotto}
D.~{Gaiotto}, \emph{{Asymptotically free N=2 theories and irregular conformal
  blocks}}, August 2009, \href{http://arxiv.org/abs/0908.0307}{{\ttfamily
  arXiv:0908.0307 [hep-th]}}.

\bibitem{MR466475}
D.~Gieseker, \emph{On the moduli of vector bundles on an algebraic surface},
  Ann. of Math. (2) \textbf{106} (1977), no.~1, 45--60. \MR{466475}

\bibitem{GKM}
M.~Goresky, R.~Kottwitz, and R.~MacPherson, \emph{Equivariant cohomology,
  {K}oszul duality, and the localization theorem}, Invent. Math. \textbf{131}
  (1998), no.~1, 25--83. \MR{1489894 (99c:55009)}

\bibitem{MR1386846}
I.~Grojnowski, \emph{Instantons and affine algebras. {I}. {T}he {H}ilbert
  scheme and vertex operators}, Math. Res. Lett. \textbf{3} (1996), no.~2,
  275--291. \MR{1386846 (97f:14041)}

\bibitem{MR2459305}
S.~Gukov and E.~Witten, \emph{Gauge theory, ramification, and the geometric
  {L}anglands program}, Current developments in mathematics, 2006, Int. Press,
  Somerville, MA, 2008, pp.~35--180. \MR{2459305 (2010f:14006)}

\bibitem{MR0423384}
W.-y. Hsiang, \emph{Cohomology theory of topological transformation groups},
  Springer-Verlag, New York-Heidelberg, 1975, Ergebnisse der Mathematik und
  ihrer Grenzgebiete, Band 85. \MR{0423384 (54 \#11363)}

\bibitem{MR2665168}
D.~Huybrechts and M.~Lehn, \emph{The geometry of moduli spaces of sheaves},
  second ed., Cambridge Mathematical Library, Cambridge University Press,
  Cambridge, 2010. \MR{2665168}

\bibitem{Kac}
V.~G. Kac, \emph{Infinite-dimensional {L}ie algebras}, third ed., Cambridge
  University Press, Cambridge, 1990. \MR{MR1104219 (92k:17038)}

\bibitem{ABCDEFG}
C.~A. Keller, N.~Mekareeya, J.~Song, and Y.~Tachikawa, \emph{The {ABCDEFG} of
  instantons and {W}-algebras}, J. High Energy Phys. (2012), no.~3, 045, front
  matter+28. \MR{2980198}

\bibitem{MR0507800}
B.~Kostant, \emph{On {W}hittaker vectors and representation theory}, Invent.
  Math. \textbf{48} (1978), no.~2, 101--184.

\bibitem{Laumon-chi}
G.~Laumon, \emph{Comparaison de caract\'eristiques d'{E}uler-{P}oincar\'e en
  cohomologie $l$-adique}, C.R.Acad. Sci. Paris S\'er. I Math \textbf{292}
  (1981), no.~3, 209--212. \MR{0610321 (82e:14030)}

\bibitem{Lehn}
M.~Lehn, \emph{Chern classes of tautological sheaves on {H}ilbert schemes of
  points on surfaces}, Invent. Math. \textbf{136} (1999), no.~1, 157--207.
  \MR{1681097 (2000h:14003)}

\bibitem{MR1205451}
J.~Li, \emph{Algebraic geometric interpretation of {D}onaldson's polynomial
  invariants}, J. Differential Geom. \textbf{37} (1993), no.~2, 417--466.
  \MR{1205451}

\bibitem{Lu-cus2}
G.~Lusztig, \emph{Cuspidal local systems and graded {H}ecke algebras. {II}},
  Representations of groups ({B}anff, {AB}, 1994), CMS Conf. Proc., vol.~16,
  Amer. Math. Soc., Providence, RI, 1995, With errata for Part I [Inst. Hautes
  {\'E}tudes Sci. Publ. Math. No. 67 (1988), 145--202; MR0972345 (90e:22029)],
  pp.~217--275. \MR{1357201 (96m:22038)}

\bibitem{MO}
D.~Maulik and A.~Okounkov, \emph{Quantum groups and quantum cohomology},
  November 2012, \href{http://arxiv.org/abs/arXiv:1211.1287}{{\ttfamily
  arXiv:1211.1287}}.

\bibitem{MR2377852}
K.~Miki, \emph{A {$(q,\gamma)$} analog of the {$W_{1+\infty}$} algebra}, J.
  Math. Phys. \textbf{48} (2007), no.~12, 123520, 35. \MR{2377852
  (2009a:17024)}

\bibitem{MV}
I.~Mirkovi{\'c} and K.~Vilonen, \emph{Perverse sheaves on affine
  {G}rassmannians and {L}anglands duality}, Math. Res. Lett. \textbf{7} (2000),
  no.~1, 13--24. \MR{MR1748284 (2001h:14020)}

\bibitem{MV2}
\bysame, \emph{Geometric {L}anglands duality and representations of algebraic
  groups over commutative rings}, Ann. of Math. (2) \textbf{166} (2007), no.~1,
  95--143. \MR{2342692 (2008m:22027)}

\bibitem{MR1231956}
J.~W. Morgan, \emph{Comparison of the {D}onaldson polynomial invariants with
  their algebro-geometric analogues}, Topology \textbf{32} (1993), no.~3,
  449--488. \MR{1231956}

\bibitem{Na-quiver}
H.~Nakajima, \emph{Instantons on {ALE} spaces, quiver varieties, and
  {K}ac-{M}oody algebras}, Duke Math. J. \textbf{76} (1994), no.~2, 365--416.
  \MR{MR1302318 (95i:53051)}

\bibitem{MR1441880}
\bysame, \emph{Heisenberg algebra and {H}ilbert schemes of points on projective
  surfaces}, Ann. of Math. (2) \textbf{145} (1997), no.~2, 379--388.
  \MR{MR1441880 (98h:14006)}

\bibitem{Na-alg}
\bysame, \emph{Quiver varieties and {K}ac-{M}oody algebras}, Duke Math. J.
  \textbf{91} (1998), no.~3, 515--560. \MR{MR1604167 (99b:17033)}

\bibitem{Lecture}
\bysame, \emph{Lectures on {H}ilbert schemes of points on surfaces}, University
  Lecture Series, vol.~18, American Mathematical Society, Providence, RI, 1999.
  \MR{MR1711344 (2001b:14007)}

\bibitem{Na-qaff}
\bysame, \emph{Quiver varieties and finite-dimensional representations of
  quantum affine algebras}, J. Amer. Math. Soc. \textbf{14} (2001), no.~1,
  145--238 (electronic). \MR{MR1808477 (2002i:17023)}

\bibitem{Na-Tensor}
\bysame, \emph{Quiver varieties and tensor products}, Invent. Math.
  \textbf{146} (2001), no.~2, 399--449. \MR{MR1865400 (2003e:17023)}

\bibitem{MR1989196}
\bysame, \emph{Geometric construction of representations of affine algebras},
  Proceedings of the International Congress of Mathematicians, Vol. I (Beijing,
  2002) (Beijing), Higher Ed. Press, 2002, pp.~423--438. \MR{MR1989196
  (2004e:17012)}

\bibitem{Na-branching}
\bysame, \emph{Quiver varieties and branching}, SIGMA Symmetry Integrability
  Geom. Methods Appl. \textbf{5} (2009), Paper 003, 37. \MR{2470410
  (2010f:17034)}

\bibitem{handsaw}
\bysame, \emph{Handsaw quiver varieties and finite {$W$}-algebras}, Mosc. Math.
  J. \textbf{12} (2012), no.~3, 633--666, 669--670. \MR{3024827}

\bibitem{tensor2}
\bysame, \emph{Quiver varieties and tensor products, {I}{I}}, Symmetries,
  Integrable Systems and Representations, Springer Proceedings in Mathematics
  \& Statistics, vol.~40, 2013, pp.~403--428.

\bibitem{MR2095899}
H.~Nakajima and K.~Yoshioka, \emph{Lectures on instanton counting}, Algebraic
  structures and moduli spaces, CRM Proc. Lecture Notes, vol.~38, Amer. Math.
  Soc., Providence, RI, 2004, pp.~31--101. \MR{MR2095899 (2005m:14016)}

\bibitem{MR2199008}
\bysame, \emph{Instanton counting on blowup. {I}. 4-dimensional pure gauge
  theory}, Invent. Math. \textbf{162} (2005), no.~2, 313--355. \MR{MR2199008
  (2007b:14027a)}

\bibitem{Negut}
A.~{Negut}, \emph{{Quantum Toroidal and Shuffle Algebras, R-matrices and a
  Conjecture of Kuznetsov}}, February 2013,
  \href{http://arxiv.org/abs/1302.6202}{{\ttfamily arXiv:1302.6202 [math.RT]}}.

\bibitem{Nekrasov}
N.~A. Nekrasov, \emph{Seiberg-{W}itten prepotential from instanton counting},
  Adv. Theor. Math. Phys. \textbf{7} (2003), no.~5, 831--864. \MR{2045303
  (2005i:53109)}

\bibitem{NekrasovOkounkov}
N.~A. Nekrasov and A.~Okounkov, \emph{Seiberg-{W}itten theory and random
  partitions}, The unity of mathematics, Progr. Math., vol. 244, Birkh\"auser
  Boston, Boston, MA, 2006, pp.~525--596. \MR{2181816 (2008a:81227)}

\bibitem{SV}
O.~Schiffmann and E.~Vasserot, \emph{Cherednik algebras, {W}-algebras and the
  equivariant cohomology of the moduli space of instantons on {$\bold{A}^2$}},
  Publ. Math. Inst. Hautes \'Etudes Sci. \textbf{118} (2013), 213--342.
  \MR{3150250}

\bibitem{MR3018956}
\bysame, \emph{The elliptic {H}all algebra and the {$K$}-theory of the
  {H}ilbert scheme of {$\Bbb A^2$}}, Duke Math. J. \textbf{162} (2013), no.~2,
  279--366. \MR{3018956}

\bibitem{Vafa2}
C.~Vafa, \emph{Instantons on {$D$}-branes}, Nuclear Phys. B \textbf{463}
  (1996), no.~2-3, 435--442. \MR{1393649 (97h:81180)}

\bibitem{Vafa}
\bysame, \emph{Geometric origin of {M}ontonen-{O}live duality}, Adv. Theor.
  Math. Phys. \textbf{1} (1997), no.~1, 158--166. \MR{1489300 (99a:81179)}

\bibitem{Vafa-Witten}
C.~Vafa and E.~Witten, \emph{A strong coupling test of {$S$}-duality}, Nuclear
  Phys. B \textbf{431} (1994), no.~1-2, 3--77. \MR{1305096 (95k:81138)}

\bibitem{VV2}
M.~Varagnolo and E.~Vasserot, \emph{Perverse sheaves and quantum {G}rothendieck
  rings}, Studies in memory of {I}ssai {S}chur ({C}hevaleret/{R}ehovot, 2000),
  Progr. Math., vol. 210, Birkh\"auser Boston, Boston, MA, 2003, pp.~345--365.
  \MR{MR1985732 (2004d:17023)}

\bibitem{Varagnolo}
M.~Varagnolo, \emph{Quiver varieties and {Y}angians}, Lett. Math. Phys.
  \textbf{53} (2000), no.~4, 273--283. \MR{1818101 (2002e:17023)}

\end{thebibliography}

\def\indexname{List of notations}
\printindex
\end{document}